\newtheorem{theorem}{Theorem}
\newtheorem{lemma}{Lemma}
\newtheorem{corollary}{Corollary}
\newcommand{\integers}{{\mathbb Z}}
\newcommand{\realnos}{{\mathbb R}}
\newcommand{\rationalnos}{{\mathbb Q}}
\def\diag{\mathrm{diag}}
\def\span{\mathrm{Span}}
\def\ov{\overline}
\def\Beta{{\rm B}}
\def\Nu{{\rm N}}
\def\Mu{{\rm M}}
\def\Tau{{\rm T}}
\def\Eta{{\rm H}}
\def\Kappa{{\rm K}}
\def\Rho{{\rm P}}
\begin{document}

\title{Fibered Orbifolds and Crystallographic Groups, II }

\author{John G. Ratcliffe and Steven T. Tschantz}

\address{Department of Mathematics, Vanderbilt University, Nashville, TN 37240
\vspace{.1in}}

\email{j.g.ratcliffe@vanderbilt.edu}

\date{}

\begin{abstract}
Let $\Gamma$ be an $n$-dimensional crystallographic group ($n$-space group). 
If $\Gamma$ is a $\integers$-reducible, 
then the flat $n$-orbifold $E^n/\Gamma$ has a nontrivial fibered orbifold structure. 
We prove that this structure can be described by a generalized Calabi construction, 
that is, $E^n/\Gamma$ is represented as the quotient of 
the Cartesian product of two flat orbifolds under the diagonal action of a structure group of isometries. 
We determine the structure group and prove that it is finite if and only if the fibered orbifold structure
has an orthogonally dual fibered orbifold structure. 

A geometric fibration of $E^n/\Gamma$ 
corresponds to a space group extension $1 \to \Nu \to \Gamma \to \Gamma/\Nu \to 1$. 
We give a criterion for the splitting of a  
space group extension in terms of the structure group action 
that is strong enough to detect the splitting 
of all the space group extensions corresponding to the 
standard Seifert fibrations of a compact, connected, flat 3-orbifold. 

If $\Gamma$ is an arbitrary $n$-space group, 
we prove that the group $\mathrm{Isom}(E^n/\Gamma)$ of isometries of $E^n/\Gamma$ is a compact Lie group 
whose component of the identity is a torus of dimension equal to the first Betti 
number of $\Gamma$.  This implies that $\mathrm{Isom}(E^n/\Gamma)$ is finite 
if and only if $\Gamma/[\Gamma, \Gamma]$ is finite. 

We describe how to classify all the geometric fibrations 
of compact, connected, flat $n$-orbifolds, over a 1-orbifold, up to affine equivalence. 
We apply our classification theory to the scientifically important case $n = 3$,  
and classify all the geometric fibrations 
of compact, connected, flat $3$-orbifolds, over a 1-orbifold, up to affine equivalence. 

We end the paper with three applications of the results of our paper. 
The first application is to give nice, explicit, geometric descriptions 
of all the fibered, compact, connected, flat 3-orbifolds. 
The second application is to give an explanation of the enantiomorphic 3-space 
group pairs from the point of view of our classification theory. 
The third application is to apply our splitting criteria to 
the space group extensions corresponding to a geometric fibration 
of a compact, connected, flat 3-orbifold over a 1-orbifold.

\end{abstract}

\maketitle

\section{Introduction} 
An {\it $n$-dimensional crystallographic group} ({\it $n$-space group}) 
is a discrete group $\Gamma$ of isometries of Euclidean $n$-space $E^n$ 
whose orbit space $E^n/\Gamma$ is compact. 
The 3-space groups are the symmetry groups of crystalline structures,  
and so are of fundamental importance in the science of crystallography. 

If an $n$-space group $\Gamma$ is torsion-free, then $E^n/\Gamma$ 
is a compact, connected, flat $n$-manifold, 
and conversely every compact, connected, flat $n$-manifold 
is isometric to $E^n/\Gamma$ for some torsion-free $n$-space group $\Gamma$. 
All torsion-free $n$-space groups, for $n= 2,3,4$ are $\integers$-reducible, 
and so the theory of this paper applies to these groups. 
Compact, connected, flat manifolds occur naturally in the theory of hyperbolic 
manifolds as horispherical cross-sections of cusps of hyperbolic manifolds 
of finite volume. Understanding the geometry of cusps of hyperbolic manifolds 
and orbifolds was our original motivation for studying space groups.  

In our previous paper \cite{R-T}, we proved that the fibered orbifold structures 
on $E^n/\Gamma$ are in one-to-one correspondence 
with the complete normal subgroups of $\Gamma$.  
A normal subgroup $\mathrm{N}$ 
of $\Gamma$ is complete precisely when $\Gamma/{\mathrm N}$ is also a space group. 
As an application, we showed in \cite{R-T} that all the Seifert fibered orbifold structures 
for the reducible 3-space groups described by Conway et al.\ \cite{C-T} have 
an orthogonally dual fibered orbifold structure, 
which we called a co-Seifert fibration.  
The fibers of a Seifert fibration are 1-dimensional, whereas the base of a co-Seifert 
fibration is 1-dimensional. 

In this paper, we prove that a fibered orbifold structure on $E^n/\Gamma$ 
can be described by a generalized Calabi construction, 
that is, $E^n/\Gamma$ is represented as the quotient of 
the Cartesian product of two flat orbifolds under the diagonal action of a structure group of isometries. 
We determine the structure group and prove that it is finite if and only if the fibered orbifold structure
has an orthogonally dual fibered orbifold structure. 
As an application, we show that our generalized Calabi construction simultaneously 
describes, in a geometrically intrinsic way, the dual Seifert and co-Seifert 
fibered orbifold structures for a reducible 3-space group. 
Our generalized Calabi construction fully explains the relationship between 
the dual Seifert and co-Seifert fibered orbifold structures for a reducible 3-space group. 

We also give a geometric condition for the splitting of a
space group extension $1 \to \Nu \to \Gamma \to \Gamma/\Nu$ 
that improves the splitting condition given 
in Theorem 18 of \cite{R-T}.  
Our new splitting condition is strong enough to detect the splitting 
of all the space group extensions corresponding to the 
Seifert fibered orbifold structures for a reducible 3-space group
described by Conway et al.\ \cite{C-T}. 

If $\Gamma$ is an arbitrary $n$-space group, 
we prove that $\mathrm{Isom}(E^n/\Gamma)$ is a compact Lie group 
whose component of the identity is a torus of dimension equal to the first Betti 
number of $\Gamma$.  In particular, $\mathrm{Isom}(E^n/\Gamma)$ is finite 
if and only if $\Gamma/[\Gamma, \Gamma]$ is finite. 

We describe how to classify all the co-Seifert fibrations 
of compact, connected, flat $n$-orbifolds up to affine equivalence. 
We apply our classification theory to the scientifically important case $n = 3$,  
and classify all the co-Seifert fibrations 
of compact, connected, flat $3$-orbifolds up to affine equivalence. 
In the process, we give detailed descriptions of the group of affinities of all 
the compact, connected, flat 2-orbifolds. 
In particular, we show that the group of affinities of a flat pillow or a flat torus 
has a natural free product with amalgamation structure. 

We end the paper with three applications of the results of our paper. 
The first application is to give nice, explicit, geometric descriptions 
of all the fibered, compact, connected, flat 3-orbifolds. 
The second application is to give an explanation of the enantiomorphic 3-space 
group pairs from the point of view of our classification theory. 
The third application is to apply our splitting criteria to 
the space group extensions corresponding to a co-Seifert fibration 
of a compact, connected, flat 3-orbifold. 

\section{Geometric Fibrations of Flat Orbifolds}  

In this section, we review some of the definitions 
and results from our previous paper \cite{R-T}, which will facilitate reading this paper. 
A map $\phi:E^n\to E^n$ is an isometry of $E^n$ 
if and only if there is an $a\in E^n$ and an $A\in {\rm O}(n)$ such that 
$\phi(x) = a + Ax$ for each $x$ in $E^n$. 
We shall write $\phi = a+ A$. 
In particular, every translation $\tau = a + I$ is an isometry of $E^n$. 

Let $\Gamma$ be an $n$-space group. 
Define $\eta:\Gamma \to {\rm O}(n)$ by $\eta(a+A) = A$. 
Then $\eta$ is a homomorphism whose kernel is the group $\mathrm{T}$ 
of translations in $\Gamma$. 
The image of $\eta$ is a finite group $\Pi$ called the 
{\it point group} of $\Gamma$.

Let $\Eta$ be a subgroup of an $n$-space group $\Gamma$. 
Define the {\it span} of $\Eta$ by the formula
$${\rm Span}(\Eta) = {\rm Span}\{a\in E^n:a+I\in \Eta\}.$$
Note that ${\rm Span}(\Eta)$ is a vector subspace $V$ of $E^n$.  
Let $V^\perp$ denote the orthogonal complement of $V$ in $E^n$. 
The following theorem is Theorem 2 in  \cite{R-T}. 

\begin{theorem} 
Let ${\rm N}$ be a normal subgroup of an $n$-space group $\Gamma$, 
and let $V = {\rm Span}(\Nu)$. 
\begin{enumerate}
\item If $b+B\in\Gamma$, then $BV=V$. 
\item If $a+A\in \Nu$,  then $a\in V$ and $ V^\perp\subseteq{\rm Fix}(A)$. 
\item The group $\Nu$ acts effectively on each coset $V+x$ of $V$ in $E^n$ 
as a space group of isometries of $V+x$. 
\end{enumerate}
\end{theorem}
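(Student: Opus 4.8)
The plan is to establish the three statements in turn, deriving each from its predecessors. Write elements of $\Gamma$ as $b+B$ with $B\in{\rm O}(n)$, recall that composition satisfies $(b+B)(a+A)=(b+Ba)+BA$, and set $\mathrm{T}_{\Nu}=\{a\in E^n:a+I\in\Nu\}$, so that $V=\mathrm{Span}(\Nu)=\mathrm{Span}(\mathrm{T}_{\Nu})$. Since $\mathrm{T}_{\Nu}$ is the intersection of $\Nu$ with the translation lattice $\mathrm{T}$ of $\Gamma$, it is a discrete subgroup of $E^n$, hence a full-rank lattice in the subspace $V$ it spans. For statement (1) I would conjugate translations: a direct computation gives $(b+B)(a+I)(b+B)^{-1}=Ba+I$. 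If $a+I\in\Nu$ and $b+B\in\Gamma$, then normality forces $Ba+I\in\Nu$, so $B$ carries $\mathrm{T}_{\Nu}$ into itself and $BV\subseteq V$; equality holds because $B$ is an invertible linear map of the finite-dimensional space $V$. In particular every such $B$ also preserves $V^\perp$.

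Statement (2) is the crux. By (1), each $A$ with $a+A\in\Nu$ preserves both $V$ and $V^\perp$, so the assignment $a+A\mapsto a_{V^\perp}+A_{V^\perp}$ (subscripts denoting orthogonal components) is a well-defined homomorphism $\rho:\Nu\to{\rm Isom}(V^\perp)$, namely the map induced on $E^n/V\cong V^\perp$; the content of (2) is exactly that $\rho$ is trivial. First I would kill the translational part of the image: if $a+A\in\Nu$ has $A_{V^\perp}=I$ and $A$ has order $m$ in the point group, then $(a+A)^m$ is the translation by $(I+A+\cdots+A^{m-1})a\in\mathrm{T}_{\Nu}\subseteq V$, whose $V^\perp$-component is $m\,a_{V^\perp}$; hence $a_{V^\perp}=0$ and $\rho(a+A)$ is trivial. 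Therefore $\rho$ factors through the finite point group of $\Nu$, so $\rho(\Nu)$ is a \emph{finite} subgroup of ${\rm Isom}(V^\perp)$.

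Showing this finite group is trivial is the step I expect to be the main obstacle, and the idea is to play finiteness of $\rho(\Nu)$ off against cocompactness of $\Gamma$. The orthogonal projection $\pi:E^n\to V^\perp$ is equivariant for the action $\gamma=b+B\mapsto\overline{\gamma}=b_{V^\perp}+B_{V^\perp}$, so $\overline{\Gamma}=\{\overline{\gamma}\}$ acts cocompactly on $V^\perp$, since $V^\perp/\overline{\Gamma}$ is a continuous image of the compact orbifold $E^n/\Gamma$. Moreover $\rho(\Nu)$, being the image of the normal subgroup $\Nu$, is normal in $\overline{\Gamma}$. If $\rho(\Nu)$ were nontrivial, then every nonidentity element would have nontrivial linear part (its translations being trivial), so the nonempty fixed-point set of the finite group $\rho(\Nu)$ would be a \emph{proper} affine subspace $W\subsetneq V^\perp$, and normality would make $W$ invariant under $\overline{\Gamma}$. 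Then $y\mapsto\mathrm{dist}(y,W)$ would be $\overline{\Gamma}$-invariant and unbounded on $V^\perp$, contradicting compactness of $V^\perp/\overline{\Gamma}$. Hence $\rho(\Nu)=1$, that is $A_{V^\perp}=I$, so $V^\perp\subseteq{\rm Fix}(A)$; feeding $A_{V^\perp}=I$ back into the power computation gives $a_{V^\perp}=0$, i.e. $a\in V$.

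Statement (3) then follows quickly. By (2), each $a+A\in\Nu$ sends $V+x$ to $V+(Ax+a)=V+x$, because the $V^\perp$-component of $Ax+a$ equals that of $x$; thus $\Nu$ acts by isometries on each affine subspace $V+x$. This action is effective, since an element fixing $V+x$ pointwise must have $A_V=I$, hence $A=I$ by (2), and then zero translation. It is properly discontinuous as the restriction of the properly discontinuous $\Gamma$-action to a closed invariant subset, and it is cocompact because the full-rank lattice $\mathrm{T}_{\Nu}\subseteq\Nu$ already acts cocompactly on $V+x$ by translations. Therefore $\Nu$ acts on $V+x$ as a space group of isometries.
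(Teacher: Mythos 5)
Your proof is correct. Bear in mind that the paper does not actually prove this statement; it quotes it as Theorem 2 of the authors' earlier paper \cite{R-T}, where the argument is the classical algebraic one. Your parts (1) and (3) coincide with that route: conjugation of translations gives $BV=V$, effectiveness follows from $V\cup V^\perp\subseteq\mathrm{Fix}(A)\Rightarrow A=I$, and cocompactness comes from the full-rank lattice $\mathrm{T}_\Nu$ in $V$. Where you genuinely diverge is the heart of part (2). The standard proof is a commutator computation: for $a+A\in\Nu$ and any translation $c+I$ of $\Gamma$, normality gives $[a+A,\,c+I]=(A-I)c+I\in\Nu$, hence $(A-I)c\in V$; since the translation lattice of $\Gamma$ spans $E^n$ (Bieberbach), $\mathrm{Im}(A-I)\subseteq V$ and therefore $V^\perp\subseteq(\mathrm{Im}(A-I))^\perp=\mathrm{Fix}(A)$, after which $a\in V$ follows from the same power computation you use. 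You instead analyze the induced action of $\Nu$ on $V^\perp$: you show its image $\rho(\Nu)$ is finite by killing translational parts, and then rule out a nontrivial finite image because its fixed-point set would be a nonempty proper affine subspace invariant under $\overline{\Gamma}$, contradicting cocompactness of $V^\perp/\overline{\Gamma}$. This is more geometric and buys you independence from the spanning property of the translation lattice of $\Gamma$, at the cost of invoking the centroid fixed-point theorem for finite isometry groups and the compactness of the projected orbit space; the commutator proof is shorter and purely algebraic. Every step of your version checks out, including the degenerate cases $V=E^n$ and $V=\{0\}$.
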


Let $\Gamma$ be an $n$-space group. 
The {\it dimension} of $\Gamma$ is $n$. 
If $\Nu$ is a normal subgroup of $\Gamma$, 
then $\Nu$ is a $m$-space group with $m= \mathrm{dim}(\mathrm{Span}(\Nu))$ 
by Theorem 1(3). 

\vspace{.15in}
\noindent{\bf Definition:}
Let $\Nu$ be a normal subgroup $\Nu$ of an $n$-space group $\Gamma$, and let $V = {\rm Span}(\Nu)$.  Then $\Nu$ is said to be a {\it complete normal subgroup} of $\Gamma$ if 
$$\Nu= \{a+A\in \Gamma: a\in V\ \hbox{and}\ V^\perp\subseteq{\rm Fix}(A)\}.$$
\noindent{\bf Remark 1.}
If $\Nu$ is a normal subgroup of an $n$-space group $\Gamma$, 
then $\Nu$ is contained in a complete normal subgroup $\overline{\Nu}$ 
of $\Gamma$ such that $\Nu$ has finite index in $\overline{\Nu}$, 
moreover $\overline{\Nu}$ is the unique maximal element of the 
commensurability class of normal subgroups of $\Gamma$ containing $\Nu$. 
The group $\overline{\Nu}$ is called the {\it completion} of $\Nu$ in $\Gamma$.  
The following lemma is Lemma 1 in \cite{R-T}. 

\begin{lemma} 
Let $\Nu$ be a complete normal subgroup of an $n$-space group $\Gamma$, 
and let $V={\rm Span}(\Nu)$. 
Then $\Gamma/\Nu$ acts effectively as a space group of isometries of $E^n/V$ 
by the formula
$({\rm N}(b+B))(V+x) = V+ b+Bx.$
\end{lemma}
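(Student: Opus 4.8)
The plan is to realize $E^n/V$ as the Euclidean space $V^\perp$ via the orthogonal projection $P:E^n\to V^\perp$, under which the coset $V+x$ corresponds to $Px$ and the quotient metric on $E^n/V$ agrees with the Euclidean metric of $V^\perp$. With this identification I would define a map $\rho:\Gamma\to {\rm Isom}(V^\perp)$ by $\rho(b+B)=Pb+B|_{V^\perp}$, show that the displayed formula is exactly the action of $\rho(b+B)$, and then prove that $\rho$ is a homomorphism with kernel $\Nu$ whose image is a space group. First I would check that $\rho$ makes sense: by Theorem 1(1), $BV=V$ for every $b+B\in\Gamma$, and since $B$ is orthogonal it also preserves $V^\perp$, so $B|_{V^\perp}\in{\rm O}(V^\perp)$ and $PB=BP$. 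This gives $\rho(b+B)(Px)=Pb+B|_{V^\perp}Px=P(b+Bx)$, so $\rho(b+B)$ sends $V+x$ to $V+b+Bx$, matching the formula, and exhibits $\rho(b+B)$ as an isometry of $V^\perp$. A short computation with $(a+A)(b+B)=(a+Ab)+AB$ together with $PA=AP$ confirms that $\rho$ is a homomorphism.

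Next I would identify $\ker\rho$, which is where completeness is essential. If $\rho(b+B)=I$, then $Px$ is fixed for all $x$, forcing $Pb=0$, i.e. $b\in V$, and $B|_{V^\perp}=I$, i.e. $V^\perp\subseteq{\rm Fix}(B)$; by the definition of a complete normal subgroup this says precisely that $b+B\in\Nu$. Conversely, Theorem 1(2) shows every element of $\Nu$ lies in $\ker\rho$. Hence $\ker\rho=\Nu$, and $\rho$ induces a faithful, hence effective, action of $\Gamma/\Nu$ on $E^n/V$ by the stated formula.

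Finally, to see that $\Gamma/\Nu\cong\rho(\Gamma)$ is a space group of $V^\perp$, I would analyze its translations and its quotient. The translation lattice $\Tau$ of $\Gamma$ has rank $n$; its subgroup $\Tau_V$ of translations whose vector lies in $V$ is a primitive sublattice, automatically, since $V$ is a vector subspace, and it spans $V$ because $V={\rm Span}(\Nu)$ and, by completeness, $\Tau_V$ contains the translation lattice of $\Nu$. Choosing a basis of $\Tau$ adapted to $\Tau_V$ then shows that the projections $\rho(\Tau)=P(\Tau)$ form a full-rank, hence discrete, lattice in $V^\perp$. Since $\Tau$ is normal of finite index $|\Pi|$ in $\Gamma$, its image $\rho(\Tau)$ is normal of finite index in $\rho(\Gamma)$, so $\rho(\Gamma)$ contains a lattice of finite index and is therefore discrete; cocompactness follows because the composite $E^n\to V^\perp\to V^\perp/\rho(\Gamma)$ factors through the compact orbifold $E^n/\Gamma$, exhibiting $V^\perp/\rho(\Gamma)$ as a continuous image of a compact space. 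I expect the main obstacle to be this last step, namely verifying that the projected translations form a genuine discrete full-rank lattice and that the induced action is properly discontinuous, rather than the essentially formal verifications of well-definedness, isometry, and effectiveness.
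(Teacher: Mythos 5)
Your proof is correct and follows essentially the route the paper itself relies on: although the lemma's proof is deferred to Lemma 1 of \cite{R-T}, Section 3 of this paper is built on exactly your identification of $E^n/V$ with $V^\perp$ via orthogonal projection, with Theorem 1(1) making $B$ preserve $V^\perp$, completeness entering precisely to identify the kernel of $\rho$ with $\Nu$, and the primitive sublattice $\Tau\cap V$ giving a full-rank discrete image $P(\Tau)$ of finite index in $\rho(\Gamma)$. One minor over-attribution: $\Tau_V$ contains the translation lattice of $\Nu$ simply because $\Nu\leq\Gamma$ and $V=\mathrm{Span}(\Nu)$, so no appeal to completeness is needed at that particular step.
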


\noindent{\bf Remark 2.} A normal subgroup $\Nu$ of a space group $\Gamma$ is complete 
precisely when $\Gamma/\Nu$ is a space group by Theorem 5 of \cite{R-T}.

\vspace{.15in}

A {\it flat $n$-orbifold} is a $(E^n,{\rm Isom}(E^n))$-orbifold 
as defined in \S 13.2 of Ratcliffe \cite{R}. 
A connected flat $n$-orbifold has a natural inner metric space structure. 
If $\Gamma$ is a discrete group of isometries of $E^n$, 
then its orbit space $E^n/\Gamma = \{\Gamma x: x\in E^n\}$ 
is a  connected, complete, flat $n$-orbifold, 
and conversely if $M$ is a connected, complete, flat $n$-orbifold, 
then there is a discrete group $\Gamma$ of isometries of $E^n$  
such that $M$ is isometric to $E^n/\Gamma$ by Theorem 13.3.10 of \cite{R}. 

\vspace{.15in}
\noindent{\bf Definition:}
A flat $n$-orbifold $M$ {\it geometrically fibers} over a flat $m$-orbifold $B$, 
with {\it generic fiber} a flat $(n-m)$-orbifold $F$, if there is a surjective map $\eta: M \to B$, 
called the {\it fibration projection},  
such that for each point $y$ of $B$,  
there is an open metric ball $B(y,r)$ of radius $r > 0$ centered at $y$ in $B$
such that $\eta$ is isometrically equivalent on $\eta^{-1}(B(y,r))$
to the natural projection $(F\times B_y)/G_y \to B_y/G_y$, 
where $G_y$ is a finite group acting diagonally on $F\times B_y$, isometrically on $F$,  
and effectively and orthogonally on an open metric ball $B_y$ in $E^m$ of radius $r$. 
This implies that the fiber $\eta^{-1}(y)$ is isometric to $F/G_y$. 
The fiber $\eta^{-1}(y)$ is said to be {\it generic} if $G_y = \{1\}$  
or {\it singular} if $G_y$ is nontrivial. 

\vspace{.15in}
The following theorem is Theorem 4 of \cite{R-T}. 

\begin{theorem}  
Let ${\rm N}$ be a complete normal subgroup of an $n$-space group $\Gamma$, 
and let $V = {\rm Span}({\rm N})$.  
Then the flat orbifold $E^n/\Gamma$ geometrically fibers over the flat orbifold 
$(E^n/V)/(\Gamma/{\rm N})$ with generic fiber the flat orbifold $V/{\rm N}$ and 
fibration projection $\eta_V: E^n/\Gamma \to (E^n/V)/(\Gamma/{\rm N})$ defined by the formula 
$\eta_V(\Gamma x) = (\Gamma/\Nu)(V+x).$
\end{theorem}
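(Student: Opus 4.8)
The plan is to verify directly that the map $\eta_V$ satisfies the definition of a geometric fibration by producing, over a small metric ball about each point of the base, the required local product model. First I would check that $\eta_V$ is well defined and surjective: if $\Gamma x = \Gamma x'$, say $x' = (b+B)x$ with $b+B\in\Gamma$, then by the action formula of Lemma 1 we have $V + x' = V + b + Bx = (\Nu(b+B))(V+x)$, so $(\Gamma/\Nu)(V+x') = (\Gamma/\Nu)(V+x)$; thus $\eta_V$ is well defined on $\Gamma$-orbits. Writing $p\colon E^n\to E^n/V$ and $q\colon E^n/V\to (E^n/V)/(\Gamma/\Nu)$ for the quotient maps, $\eta_V$ is the map induced by $q\circ p$ on $E^n/\Gamma$, so it is also surjective and continuous.

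Next I would set up the local model. Fix a point $y$ of the base and choose $x_0\in E^n$ with $q(V+x_0)=y$. By Lemma 1 the group $\Gamma/\Nu$ acts effectively as a space group on $E^n/V$, so its action is properly discontinuous and the stabilizer $\overline G$ of the coset $V+x_0$ is finite. I would then invoke proper discontinuity to choose $r>0$ so small that the closed ball $\bar B(V+x_0,r)$ in $E^n/V$ is $\overline G$-invariant, meets its $(\Gamma/\Nu)$-translates only under elements of $\overline G$, and maps under $q$ onto $B(y,r)$ realized as $\bar B(V+x_0,r)/\overline G$, which is the standard orbifold chart of the base. Identifying $E^n/V$ isometrically with $V^\perp$, the ball $\bar B(V+x_0,r)$ becomes a ball $B_y$ in $V^\perp\cong E^{\,n-\dim V}$ centered at the $V^\perp$-component of $x_0$, on which $\overline G$ acts orthogonally about that center.

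Now comes the diagonal structure. Let $\Gamma_0$ be the preimage of $\overline G$ under $\Gamma\to\Gamma/\Nu$, so $\Nu\trianglelefteq\Gamma_0$ and $\Gamma_0/\Nu=\overline G$. The slab $U=p^{-1}(\bar B(V+x_0,r))$ is exactly the set of $x$ whose $V^\perp$-component lies in $B_y$, and the orthogonal splitting $E^n=V\oplus V^\perp$ gives an isometry $U\cong V\times B_y$. By Theorem 1(1) every $g=b+B\in\Gamma_0$ satisfies $BV=V$, hence $BV^\perp=V^\perp$, so $g$ preserves the splitting and acts diagonally on $V\times B_y$, isometrically on $V$ and orthogonally about the center on $B_y$. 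By the smallness of $r$, every orbit lying over $B(y,r)$ has a representative in $U$ and the only elements of $\Gamma$ that identify points of $U$ lie in $\Gamma_0$, so $\eta_V^{-1}(B(y,r))$ is isometric to $U/\Gamma_0=(V\times B_y)/\Gamma_0$. By Theorem 1(2) each element of $\Nu$ fixes $V^\perp$ pointwise, so $\Nu$ acts trivially on the $B_y$ factor and, by Theorem 1(3), as a space group on $V$; quotienting first by $\Nu$ therefore yields $(V\times B_y)/\Nu=(V/\Nu)\times B_y=F\times B_y$ with $F=V/\Nu$, and the residual group $G_y:=\Gamma_0/\Nu=\overline G$ acts diagonally on $F\times B_y$, isometrically on $F$ and orthogonally on $B_y$; effectiveness on $B_y$ follows since an element of $\overline G$ acting trivially on $V^\perp$ would act trivially on $E^n/V$ and hence be trivial in $\Gamma/\Nu$ by Lemma 1. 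This exhibits $\eta_V$ over $B(y,r)$ as the natural projection $(F\times B_y)/G_y\to B_y/G_y$, matching the definition.

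The conceptual heart is the orthogonal decomposition $E^n=V\oplus V^\perp$: parts (1)--(3) of Theorem 1 are precisely what make the $\Gamma_0$-action split as a diagonal action with $\Nu$ acting only along the fiber direction $V$, and this is what lets the quotient factor as $(F\times B_y)/G_y$. The main technical obstacle I expect is the careful use of proper discontinuity to choose $r$ small enough that the base ball $B(y,r)$ is genuinely modeled by $\bar B(V+x_0,r)/\overline G$ and that no element of $\Gamma$ outside $\Gamma_0$ creates identifications among points lying over $B(y,r)$; once $r$ is fixed this way, the remaining verifications that the induced $G_y$-action is diagonal, orthogonal on $B_y$, and effective are routine.
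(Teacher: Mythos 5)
Your argument is correct and is essentially the standard one: the paper states this result as Theorem 4 of its predecessor \cite{R-T} and does not reprove it here, but the proof there (and the generalized Calabi construction developed in \S 3 of this paper) rests on exactly the ingredients you use — the orthogonal splitting $E^n = V\times V^\perp$, the diagonal action guaranteed by Theorem 1(1), the triviality of the $\Nu$-action on $V^\perp$ from Theorem 1(2)--(3), and proper discontinuity of the space group $\Gamma/\Nu$ on $E^n/V$ to produce the local chart $(F\times B_y)/G_y\to B_y/G_y$. The only point worth tightening is your passage from ``trivial on $B_y$'' to ``trivial on $V^\perp$'' when checking effectiveness of $G_y$, which you should justify by noting that an isometry fixing an open subset of $V^\perp$ pointwise is the identity; with that remark the verification is complete.
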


A {\it geometrically fibered orbifold structure} on a flat orbifold $M$ 
is the partition of $M$ by the fibers of a geometric fibration projection $\eta$ 
from $M$ to a flat orbifold $B$.   
Let $\Gamma$ be an $n$-space group. 
Given a geometrically fibered orbifold structure on $E^n/\Gamma$,  
there exists, by Theorem 7 of \cite{R-T}, a complete normal subgroup $\mathrm{N}$ of $\Gamma$, 
with $V = \mathrm{Span}(\Nu)$, 
such that the geometrically fibered orbifold structure on $E^n/\Gamma$ is equal to the partition 
of $E^n/\Gamma$ by the fibers of the fibration projection
$$\eta_V: E^n/\Gamma\to (E^n/V)/(\Gamma/\mathrm{N}).$$ 

\section{The Generalized Calabi Construction} 

Let $\Nu$ be a complete normal subgroup of an $n$-space group $\Gamma$, 
let $V = \mathrm{Span}(\Nu)$, and let $V^\perp$ be the orthogonal complement of $V$ in $E^n$.  
Euclidean $n$-space $E^n$ decomposes as the Cartesian product $E^n = V \times V^\perp$. 
Let $b+B\in\Gamma$ and let $x\in E^n$.  Write $b= c+d$ with $c\in V$ and $w\in V^\perp$. 
Write $x = v+w$ with $v\in V$ and $w\in V^\perp$. Then 
$$(b+B)x = b+Bx = c+d + Bv + Bw = (c+Bv) + (d+Bw).$$
Hence the action of $\Gamma$ on $E^n$ corresponds to the diagonal action of $\Gamma$ 
on $V\times V^\perp$ defined by the formula
$$(b+B)(v,w) = (c+Bv,d+Bw).$$
Here $\Gamma$ acts on both $V$ and $V^\perp$ via isometries. 
The kernel of the corresponding homomorphism from $\Gamma$ to $\mathrm{Isom}(V)$ 
is the group
$$\Kappa = \{b+B\in\Gamma: b \in V^\perp\ \hbox{and}\ V \subseteq \mathrm{Fix}(B)\}.$$
We call $\Kappa$ the {\it kernel of the action} of $\Gamma$ on $V$. 
The group $\Kappa$ is a normal subgroup of $\Gamma$. 
The action of $\Gamma$ on $V$ induces an effective action of $\Gamma/\Kappa$ on $V$ 
via isometries.  
Note that $\Nu\cap\Kappa = \{I\}$,  
and each element of $\Nu$ commutes with each element of $\Kappa$.   
Hence $\Nu\Kappa$ is a normal subgroup of $\Gamma$,  
and $\Nu\Kappa$ is the direct product of $\Nu$ and $\Kappa$. 

The action of $\Nu$ on $V^\perp$ is trivial and the action of $\Kappa$ on $V$ is trivial. 
Hence $E^n/\Nu\Kappa$ decomposes as the Cartesian product 
$E^n/\Nu\Kappa = V/\Nu \times V^\perp/\Kappa.$

The action of $\Gamma/\Nu\Kappa$ on $E^n/\Nu\Kappa$ corresponds 
to the diagonal action of $\Gamma/\Nu\Kappa$ on $V/\Nu \times V^\perp/\Kappa$ via isometries 
defined by the formula
$$(\Nu\Kappa(b+B))(\Nu v,\Kappa w) = (\Nu(c+Bv),\Kappa(d+Bw)).$$
Hence we have the following theorem.
\begin{theorem} 
Let $\Nu$ be a complete normal subgroup of an $n$-space group $\Gamma$, 
and let $\Kappa$ be the kernel of the action of $\Gamma$ on $V= \mathrm{Span}(\Nu)$. 
Then the map
$$\chi: E^n/\Gamma \to (V/\Nu\times V^\perp/\Kappa)/(\Gamma/\Nu\Kappa)$$
defined by $\chi(\Gamma x) = (\Gamma/\Nu\Kappa)(\Nu v, \Kappa w)$, 
with  $x = v + w$ and $v\in V$ and $w\in V^\perp$, is an isometry. 
\end{theorem}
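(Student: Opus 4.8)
The plan is to verify that the map $\chi$ is a well-defined bijective isometry by tracking the correspondences already set up in the preceding discussion. The author has essentially assembled all the pieces: the diagonal action of $\Gamma$ on $V \times V^\perp$, the decomposition $E^n/\Nu\Kappa = V/\Nu \times V^\perp/\Kappa$, and the induced diagonal action of $\Gamma/\Nu\Kappa$ on this product. So the proof is largely a matter of checking that the composite of natural quotient maps is an isometry.

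First I would establish that $\chi$ is well defined. Starting from a point $\Gamma x \in E^n/\Gamma$, I write $x = v + w$ with $v \in V$ and $w \in V^\perp$, which is unique since $E^n = V \times V^\perp$. If $\Gamma x = \Gamma x'$, then $x' = (b+B)x$ for some $b+B \in \Gamma$, and by the displayed computation $(b+B)x = (c+Bv) + (d+Bw)$ with $c+Bv \in V$ and $d+Bw \in V^\perp$. Thus the $V$-component of $x'$ is $c+Bv$ and its $V^\perp$-component is $d+Bw$, and by the formula for the induced action of $\Gamma/\Nu\Kappa$ we get $(\Gamma/\Nu\Kappa)(\Nu(c+Bv),\Kappa(d+Bw)) = (\Gamma/\Nu\Kappa)(\Nu v,\Kappa w)$. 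Hence $\chi(\Gamma x') = \chi(\Gamma x)$, so $\chi$ is well defined. I would next note that $\chi$ is surjective, since every point of the target is represented by some $(\Nu v, \Kappa w)$, which is the image of $\Gamma(v+w)$.

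For injectivity, suppose $\chi(\Gamma x) = \chi(\Gamma x')$ with $x = v+w$ and $x' = v'+w'$. Then there is an element $\Nu\Kappa(b+B) \in \Gamma/\Nu\Kappa$ carrying $(\Nu v, \Kappa w)$ to $(\Nu v', \Kappa w')$, so $\Nu v' = \Nu(c+Bv)$ and $\Kappa w' = \Kappa(d+Bw)$. This means $v' = g(c+Bv)$ for some $g \in \Nu$ and $w' = h(d+Bw)$ for some $h \in \Kappa$; since elements of $\Nu$ fix $V^\perp$ pointwise and act on $V$, while elements of $\Kappa$ fix $V$ pointwise and act on $V^\perp$, the element $gh(b+B) \in \Gamma$ sends $x = v+w$ to $v' + w' = x'$. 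Therefore $\Gamma x = \Gamma x'$, giving injectivity. The key observation throughout is that $\Nu$ and $\Kappa$ act independently on the two orthogonal factors and commute, which is exactly what the preceding paragraph records via $\Nu\Kappa = \Nu \times \Kappa$.

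The main obstacle, and the step deserving the most care, is confirming that $\chi$ is an isometry of the inner metric space structures rather than merely a bijection. Since $E^n = V \times V^\perp$ is an orthogonal (hence metric) product, the quotient metric on $E^n/\Gamma$ and the quotient metric on $(V/\Nu \times V^\perp/\Kappa)/(\Gamma/\Nu\Kappa)$ are both induced from the same Euclidean metric on $E^n$ via the same orbit decomposition, so $\chi$ is a bijection that preserves the underlying orbit structure and therefore preserves distances. I would make this precise by observing that both quotient projections $E^n \to E^n/\Gamma$ and $E^n \to E^n/\Nu\Kappa \to (E^n/\Nu\Kappa)/(\Gamma/\Nu\Kappa)$ are the canonical maps for the $\Gamma$-action on $E^n$, so the orbit spaces coincide as metric spaces and $\chi$ is the identification of these two descriptions of the same quotient. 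This reduces the isometry claim to the already-established bijectivity together with the fact that the composite quotient realizes the full $\Gamma$-orbit equivalence, which follows from $\Nu\Kappa \trianglelefteq \Gamma$ and the induced action formula.
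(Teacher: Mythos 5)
Your proposal is correct and follows essentially the same route as the paper, which presents the theorem as an immediate consequence of the identifications $E^n/\Nu\Kappa = V/\Nu\times V^\perp/\Kappa$ and the induced diagonal action of $\Gamma/\Nu\Kappa$, so that $E^n/\Gamma = (E^n/\Nu\Kappa)/(\Gamma/\Nu\Kappa)$. You have merely made explicit the well-definedness, bijectivity, and metric-compatibility checks that the paper leaves implicit.
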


We call $\Gamma/\Nu\Kappa$ the {\it structure group} 
of the geometric fibered orbifold structure on $E^n/\Gamma$ 
determined by the complete normal subgroup $\Nu$ of $\Gamma$. 

The group $\Nu$ is the kernel of the action of $\Gamma$ on $V^\perp$, and so we have 
an effective action of $\Gamma/\Nu$ on $V^\perp$. 
The natural projection from $V/\Nu\times V^\perp/\Kappa$ to $V^\perp/\Kappa$ 
induces a continuous surjection
$$\pi^\perp: (V/\Nu\times V^\perp/\Kappa)/(\Gamma/\Nu\Kappa) \to V^\perp/(\Gamma/\Nu).$$
Orthogonal projection from $E^n$ to $V^\perp$ induces an isometry from $E^n/V$ to $V^\perp$ 
which in turn induces an isometry 
$$\psi^\perp: (E^n/V)/(\Gamma/\Nu) \to V^\perp/(\Gamma/\Nu).$$
\begin{theorem} 
The following diagram commutes
\[\begin{array}{ccc}
E^n/\Gamma & {\buildrel \chi\over\longrightarrow} &
(V/\Nu\times V^\perp/\Kappa)/(\Gamma/\Nu\Kappa) \\
\eta_V \downarrow \  & & \downarrow\pi^\perp \\
(E^n/V)/(\Gamma/\Nu) & {\buildrel \psi^\perp\over\longrightarrow}  & V^\perp/(\Gamma/\Nu). 
\end{array}\] 
\end{theorem}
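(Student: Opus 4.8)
The plan is to establish commutativity by an element chase: I will follow an arbitrary point of $E^n/\Gamma$ around both sides of the square and verify that the two images coincide. Fix $\Gamma x$ in $E^n/\Gamma$ and write $x = v+w$ with $v\in V$ and $w\in V^\perp$. Every arrow in the diagram is induced by an explicit map on a Euclidean factor, so it suffices to check that the two composites $\pi^\perp\circ\chi$ and $\psi^\perp\circ\eta_V$ both send $\Gamma x$ to the orbit $(\Gamma/\Nu)w$ of $w$ in $V^\perp/(\Gamma/\Nu)$.

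For the composite $\pi^\perp\circ\chi$, I would start from the formula of Theorem 3, namely $\chi(\Gamma x)=(\Gamma/\Nu\Kappa)(\Nu v,\Kappa w)$. The map $\pi^\perp$ is induced by the coordinate projection $(\Nu v,\Kappa w)\mapsto \Kappa w$ onto $V^\perp/\Kappa$, which intertwines the diagonal $\Gamma/\Nu\Kappa$-action with the action $\Kappa w\mapsto\Kappa(d+Bw)$ on $V^\perp/\Kappa$ and hence descends to the quotients. So $\pi^\perp\chi(\Gamma x)$ is the class of $\Kappa w$ in $(V^\perp/\Kappa)/(\Gamma/\Nu\Kappa)$. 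Since $\Nu\cap\Kappa=\{I\}$, the group $\Kappa$ maps isomorphically onto $\Nu\Kappa/\Nu$ inside $\Gamma/\Nu$, and $(\Gamma/\Nu)/(\Nu\Kappa/\Nu)\cong\Gamma/\Nu\Kappa$; iterating these two quotients identifies $(V^\perp/\Kappa)/(\Gamma/\Nu\Kappa)$ canonically with $V^\perp/(\Gamma/\Nu)$, and under this identification $\Kappa w\mapsto (\Gamma/\Nu)w$. Thus $\pi^\perp\chi(\Gamma x)=(\Gamma/\Nu)w$.

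For the composite $\psi^\perp\circ\eta_V$, I would use $\eta_V(\Gamma x)=(\Gamma/\Nu)(V+x)$ from Theorem 2. The isometry $\psi^\perp$ is induced by orthogonal projection $p:E^n\to V^\perp$, $p(v+w)=w$, which descends to the isometry $E^n/V\to V^\perp$ sending $V+x\mapsto w$. This descended map is $\Gamma/\Nu$-equivariant: by Lemma 1 the element $\Nu(b+B)$ sends $V+x$ to $V+b+Bx$, and since $BV=V$ by Theorem 1(1) and $B\in\mathrm{O}(n)$, we have $BV^\perp=V^\perp$, so the $V^\perp$-component of $b+Bx$ is $d+Bw$, which is exactly the image of $w$ under the isometry of $V^\perp$ determined by $b+B$. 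Hence $\psi^\perp$ carries $(\Gamma/\Nu)(V+x)$ to $(\Gamma/\Nu)w$, so $\psi^\perp\eta_V(\Gamma x)=(\Gamma/\Nu)w$ as well, and the square commutes.

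I expect the main obstacle to be not the chase itself, which is immediate once the maps are unwound, but the careful justification that $\pi^\perp$ and $\psi^\perp$ are the maps asserted in the statement. The crux is the canonical identification of the doubly iterated quotient $(V^\perp/\Kappa)/(\Gamma/\Nu\Kappa)$ with the single quotient $V^\perp/(\Gamma/\Nu)$, which rests on $\Nu\cap\Kappa=\{I\}$ together with the fact that $\Nu$ is the kernel of the $\Gamma$-action on $V^\perp$, so that $\Gamma/\Nu$ genuinely acts on $V^\perp$. With this bookkeeping settled, both composites extract the $V^\perp$-component of $x$ and pass to its $\Gamma/\Nu$-orbit, and commutativity is clear.
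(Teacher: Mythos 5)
Your proof is correct and follows essentially the same element chase as the paper: both composites send $\Gamma x$ (with $x = v+w$) to $(\Gamma/\Nu)w$. The extra bookkeeping you supply — the identification $(V^\perp/\Kappa)/(\Gamma/\Nu\Kappa) = V^\perp/(\Gamma/\Nu)$ and the equivariance of the orthogonal projection — is left implicit in the paper (the former is recorded separately in Theorem 6), but your filling it in is sound and changes nothing essential.
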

\begin{proof}
Let $x\in E^n$.  Write $x = v+w$ with $v\in V$ and $w\in V^\perp$. 
Then we have 
\begin{eqnarray*}
\pi^\perp(\chi(\Gamma x)) & =  & \pi^\perp((\Gamma/\Nu\Kappa)(\Nu v,\Kappa w)) \\
			      & = & (\Gamma/\Nu) w \\
			      & =  & \psi^\perp((\Gamma/\Nu)(V+x))  = \psi^\perp(\eta_V(\Gamma x)). 
\end{eqnarray*}

\vspace{-.25in}
\end{proof}		

Let ${\rm N}$ be a complete normal subgroup of an $n$-space group $\Gamma$,  
and let $\Kappa$ be the kernel of the action of $\Gamma$ on $V = \mathrm{Span}(\Nu)$. 
If $\mathrm{Span}(\Kappa) = V^\perp$, then $\Kappa$ is a complete normal subgroup 
of $\Gamma$ called the {\it orthogonal dual} of $\Nu$ in $\Gamma$, 
and we write $\Kappa = \Nu^\perp$. 

Suppose $\Kappa = \Nu^\perp$.  
The natural projection from $V/\Nu\times V^\perp/\Kappa$ to $V/\Nu$ 
induces a continuous surjection
$$\pi: (V/\Nu\times V^\perp/\Kappa)/(\Gamma/\Nu\Kappa) \to V/(\Gamma/\Kappa).$$
Orthogonal projection from $E^n$ to $V$ induces an isometry from $E^n/V^\perp$ to $V$ 
which in turn induces an isometry 
$$\psi: (E^n/V^\perp)/(\Gamma/\Kappa) \to V/(\Gamma/\Kappa).$$
The next corollary follows from Theorem 4 by reversing the roles of $\Nu$ and $\Kappa$. 
\begin{corollary} 
The following diagram commutes
\[\begin{array}{ccc}
E^n/\Gamma & {\buildrel \chi\over\longrightarrow} &
(V/\Nu\times V^\perp/\Kappa)/(\Gamma/\Nu\Kappa) \\
\eta_{V ^\perp}\downarrow\ \ \  & & \downarrow\pi \\
(E^n/V^\perp)/(\Gamma/\Kappa) & {\buildrel \psi\over\longrightarrow}  & V/(\Gamma/\Kappa). 
\end{array}\] 
\end{corollary}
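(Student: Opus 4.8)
The plan is to deduce the Corollary from Theorem 4 by interchanging the roles played by $\Nu$ and $\Kappa$, after first checking that this interchange is legitimate. The hypothesis $\Kappa = \Nu^\perp$ guarantees that $\Kappa$ is a complete normal subgroup of $\Gamma$ with $\mathrm{Span}(\Kappa) = V^\perp$, so Theorem 4 applies with $\Kappa$ in place of $\Nu$. To complete the setup I would verify that the kernel of the action of $\Gamma$ on $V^\perp$ is exactly $\Nu$. By the formula defining the kernel of an action, this kernel is $\{b+B \in \Gamma : b \in (V^\perp)^\perp \ \hbox{and}\ V^\perp \subseteq \mathrm{Fix}(B)\} = \{b+B \in \Gamma : b \in V \ \hbox{and}\ V^\perp \subseteq \mathrm{Fix}(B)\}$, which is precisely $\Nu$ because $\Nu$ is complete. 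Thus the situation is genuinely symmetric: the kernel of the action on $V$ is $\Kappa$, and the kernel of the action on $V^\perp$ is $\Nu$.

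With this in hand, applying Theorem 4 to the complete normal subgroup $\Kappa$ yields a commuting square whose top map is the isometry $\chi'(\Gamma x) = (\Gamma/\Kappa\Nu)(\Kappa w, \Nu v)$, whose left map is $\eta_{V^\perp}$, whose bottom map is $\psi$ (induced by orthogonal projection $E^n \to V$), and whose right map is the projection onto the $V/\Nu$ factor landing in $V/(\Gamma/\Kappa)$. Since $\Nu$ and $\Kappa$ commute and $\Nu \cap \Kappa = \{I\}$, the product $\Nu\Kappa = \Kappa\Nu$ is unambiguous, and $\chi'$ differs from $\chi$ only by the isometry that swaps the two Cartesian factors $V/\Nu$ and $V^\perp/\Kappa$. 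Under this swap the projection of the reversed Theorem 4 becomes exactly the projection $\pi$ of the Corollary, and its bottom map becomes $\psi$, so the Corollary's diagram is the reversed Theorem 4 diagram composed with the factor-swap isometry and hence commutes.

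The cleaner route to write out, however, is simply to repeat the one-line computation of Theorem 4 in the reversed order. Writing $x = v + w$ with $v \in V$ and $w \in V^\perp$, I would compute $\pi(\chi(\Gamma x)) = \pi((\Gamma/\Nu\Kappa)(\Nu v, \Kappa w)) = (\Gamma/\Kappa)v$, and separately $\psi(\eta_{V^\perp}(\Gamma x)) = \psi((\Gamma/\Kappa)(V^\perp + x)) = (\Gamma/\Kappa)v$, the last equality holding because orthogonal projection $E^n \to V$ carries $V^\perp + x$ to $v$. Comparing the two computations gives $\pi \circ \chi = \psi \circ \eta_{V^\perp}$, exactly as required.

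There is no serious obstacle here: the content of the Corollary is entirely formal once the symmetry has been established. The only step that requires genuine care, and the one I would state explicitly rather than bury in the phrase ``reversing the roles,'' is the verification that the kernel of the action of $\Gamma$ on $V^\perp$ equals $\Nu$, since this is precisely where the completeness of $\Nu$ enters and is what makes the roles of $\Nu$ and $\Kappa$ symmetric in the first place.
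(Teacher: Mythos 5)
Your proposal is correct and matches the paper's approach: the paper derives this corollary from Theorem 4 precisely by reversing the roles of $\Nu$ and $\Kappa$, relying on the already-noted fact that $\Nu$ is the kernel of the action of $\Gamma$ on $V^\perp$ (which follows from completeness, as you verify). Your explicit check of that kernel identification and the optional direct computation are just fuller versions of the same one-line argument the paper gives.
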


Theorem 4 says that the fibration projection $\eta_V: E^n/\Gamma \to (E^n/V)/(\Gamma/\Nu)$  
is equivalent to the projection 
$\pi^\perp: (V/\Nu\times V^\perp/\Kappa)/(\Gamma/\Nu\Kappa) \to V^\perp/(\Gamma/\Nu)$ 
induced by the projection on the second factor of $V/\Nu\times V^\perp/\Kappa$;   
while Corollary 1 says that the orthogonally dual fibration projection 
$\eta_{V^\perp}:E^n/\Gamma \to(E^n/V^\perp)/(\Gamma/\Kappa)$ is equivalent to the projection 
$\pi: (V/\Nu\times V^\perp/\Kappa)/(\Gamma/\Nu\Kappa) \to V/(\Gamma/\Kappa)$ 
induced by the projection on the first factor of $V/\Nu\times V^\perp/\Kappa$. 
Our generalized Calabi construction reveals the intimate relationship between 
a flat orbifold fibration and its orthogonally dual flat orbifold fibration 
as being similar to that of the two sides of the same coin. 

\begin{theorem} 
Let ${\rm N}$ be a complete normal subgroup of an $n$-space group $\Gamma$,  
and let $\Kappa$ be the kernel of the action of $\Gamma$ on $V= \mathrm{Span}(\Nu)$. 
Then $\Nu$ has an orthogonal dual in $\Gamma$ 
 if and only if the structure group $\Gamma/\Nu\Kappa$ is finite. 
\end{theorem}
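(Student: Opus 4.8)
The plan is to detect the finiteness of the structure group $\Gamma/\Nu\Kappa$ by comparing ranks of translation lattices, using the standard fact that a subgroup of the $n$-space group $\Gamma$ has finite index if and only if its subgroup of translations has rank $n$. Let $\Tau$ denote the group of translations of $\Gamma$; by Bieberbach's theorem $\Tau$ is free abelian of rank $n$ and $[\Gamma:\Tau]<\infty$. For any subgroup $H$ of $\Gamma$, the translations in $H$ are exactly $\Tau\cap H$, and since $[\Gamma:\Tau]<\infty$ one has $[\Gamma:H]<\infty$ if and only if $\Tau\cap H$ has rank $n$: if $[\Gamma:H]<\infty$ then $[\Tau:\Tau\cap H]\le[\Gamma:H]<\infty$, while if $\Tau\cap H$ has rank $n$ then $[\Gamma:H]\le[\Gamma:\Tau\cap H]=[\Gamma:\Tau][\Tau:\Tau\cap H]<\infty$. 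Applying this with $H=\Nu\Kappa$, which is normal in $\Gamma$, reduces the theorem to computing the rank of the translation subgroup of $\Nu\Kappa$.

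Next I would determine which elements of $\Nu\Kappa=\Nu\times\Kappa$ are translations. Using the decomposition $E^n=V\times V^\perp$ together with Theorem 1(2), each $\nu=a+A\in\Nu$ acts on $V$ as $a+A|_V$ and fixes $V^\perp$ pointwise, while each $\kappa=b+B\in\Kappa$ fixes $V$ pointwise and acts on $V^\perp$ as $b+B|_{V^\perp}$. Hence a product $\nu\kappa$ is a translation of $E^n$ precisely when $A|_V=I$ and $B|_{V^\perp}=I$, that is, precisely when $\nu$ is a translation and $\kappa$ is a translation. Thus the translation subgroup of $\Nu\Kappa$ is the direct sum of the translation subgroup of $\Nu$ and the translation subgroup of $\Kappa$.

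By Theorem 1(3), $\Nu$ is an $m$-space group acting on $V$ with $m=\dim V$, so its translations form a lattice of rank $m$ spanning $V$; likewise $\Kappa$ is a $k$-space group acting on $\mathrm{Span}(\Kappa)$ with $k=\dim\mathrm{Span}(\Kappa)$, so its translations form a lattice of rank $k$ spanning $\mathrm{Span}(\Kappa)$. Since translations of $\Kappa$ have their vector part in $V^\perp$, we have $\mathrm{Span}(\Kappa)\subseteq V^\perp$. Combining with the previous step, the translation subgroup of $\Nu\Kappa$ has rank $m+k$. Therefore $\Gamma/\Nu\Kappa$ is finite if and only if $m+k=n$, i.e.\ $k=\dim V^\perp$, which (as $\mathrm{Span}(\Kappa)\subseteq V^\perp$) holds exactly when $\mathrm{Span}(\Kappa)=V^\perp$; and this is by definition the condition that $\Nu$ has an orthogonal dual $\Kappa=\Nu^\perp$ in $\Gamma$.

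I expect the main obstacle to be the bookkeeping in the second step: one must verify that no ``diagonal'' element $\nu\kappa$ can accidentally be a translation unless both factors already are, and that the two translation lattices meet only in the identity and span complementary subspaces. These facts follow cleanly from $V^\perp\subseteq\mathrm{Fix}(A)$ for $\nu=a+A\in\Nu$ and $V\subseteq\mathrm{Fix}(B)$ for $\kappa=b+B\in\Kappa$, together with $\Nu\cap\Kappa=\{I\}$, so the argument is conceptually straightforward once the product decomposition of the isometries is handled with care.
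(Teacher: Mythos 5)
Your proof is correct and rests on the same underlying idea as the paper's: finiteness of $[\Gamma:\Nu\Kappa]$ is detected by whether the relevant translation lattice has full rank, which forces $\mathrm{Span}(\Kappa)=V^\perp$. The only difference is one of packaging — the paper passes to the quotient space group $\Gamma/\Nu$ and runs the chain $|\Gamma/\Nu\Kappa|<\infty \Leftrightarrow \dim(\Nu\Kappa/\Nu)=\dim(\Gamma/\Nu) \Leftrightarrow \dim(\Kappa)=\dim(V^\perp)$, while you stay in $\Gamma$ and compute the rank $m+k$ of the translation subgroup of $\Nu\Kappa$ directly (correctly noting, via $V^\perp\subseteq\mathrm{Fix}(A)$ and $V\subseteq\mathrm{Fix}(B)$, that $\nu\kappa$ is a translation only when both factors are), which makes your version slightly longer but more self-contained.
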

\begin{proof} 
The group $\Nu\Kappa/\Nu$ is a normal subgroup of the space group $\Gamma/\Nu$. 
Observe that we have
\begin{eqnarray*}
|\Gamma/\Nu\Kappa| < \infty &  \Leftrightarrow & [\Gamma/\Nu:\Nu\Kappa/\Nu] < \infty \\
		&  \Leftrightarrow & \mathrm{dim}(\Nu\Kappa/\Nu) =\mathrm{dim}(\Gamma/\Nu) \\
		&  \Leftrightarrow & \mathrm{dim}(\Kappa) =\mathrm{dim}(V^\perp) \\
		&  \Leftrightarrow & \mathrm{Span}(\Kappa) =V^\perp  \ \Leftrightarrow \ \Kappa = \Nu^\perp. 
\end{eqnarray*} 

\vspace{-.25in} 
\end{proof}

\noindent{\bf Example 1.}  
Let $e_1$ and $e_2$ be the standard basis vectors of $E^2$, 
and let $x \in \realnos$ with $0 \leq x < 1$. 
Let $\Gamma$ be the group generated by $t_1 = e_1+I$ and $t_2 = xe_1+e_2+I$.  
Then $\Gamma$ is a 2-space group, and $E^2/\Gamma$ is a torus. 
Let $\Nu=\langle t_1\rangle$.  
Then $\Nu$ is a complete normal subgroup of $\Gamma$, 
with $V = \mathrm{Span}(\Nu) = \mathrm{Span}\{e_1\}$.  
Let $\Kappa$ be the kernel of the action of $\Gamma$ on $V/\Nu$. 
The structure group $\Gamma/\Nu\Kappa$ is a cyclic group 
generated by $\Nu\Kappa t_2$, which acts on the circle $V/\Nu$, of length one, 
by rotating a distance $x$. 
Let $c, d \in \integers$.  Then $t_1^ct_2^d = (c+dx)e_1+de_2 + I$, 
and $t_1^ct_2^d \in \Kappa$ if and only if $c+dx = 0$.  
Thus if $x$ is irrational, then $\Kappa = \{I\}$, and $\Gamma/\Nu\Kappa$ is infinite.  
If $x = a/b$ with $a, b\in\integers$, $b> 0$, and $a, b$ coprime, 
then $\Kappa$ is generated by $t_1^at_2^{-b}$, and $\Gamma/\Nu\Kappa$ has order $b$. 
Thus $\Nu$ has an orthogonal dual in $\Gamma$ if and only if $x$ is rational.

\begin{theorem} 
Let ${\rm N}$ be a complete normal subgroup of an $n$-space group $\Gamma$,  
and let $\Kappa$ be the kernel of the action of $\Gamma$ on $V= \mathrm{Span}(\Nu)$. 
Then the structure group $\Gamma/\Nu\Kappa$ acts effectively on $V/\Nu$, and $V^\perp/\Kappa$, 
and $V/\Nu \times V^\perp/\Kappa$.  
Moreover $(V/\Nu)/(\Gamma/\Nu\Kappa) = V/(\Gamma/\Kappa)$  
and $(V^\perp/\Kappa)/(\Gamma/\Nu\Kappa) = V^\perp/(\Gamma/\Nu)$. 
The natural projections from $V/\Nu$ to $V/(\Gamma/\Kappa)$, from $V^\perp/\Kappa$ to 
$V^\perp/(\Gamma/\Nu)$, and from $V/\Nu\times V^\perp/\Kappa$ to 
$(V/\Nu\times V^\perp/\Kappa)/(\Gamma/\Nu\Kappa)$ 
are orbifold regular covering projections corresponding, respectively, 
to the short exact sequences 
$$1\to \Nu\Kappa/\Kappa \to \Gamma/\Kappa \to \Gamma/(\Nu\Kappa)\to 1,$$
$$1\to \Nu\Kappa/\Nu \to \Gamma/\Nu \to \Gamma/(\Nu\Kappa)\to 1, $$
$$1\to \Nu\Kappa \to \Gamma \to \Gamma/(\Nu\Kappa)\to 1. $$
Moreover,  $\Nu\Kappa/\Kappa \cong \Nu$ and $\Nu\Kappa/\Nu \cong \Kappa$, 
since $\Nu\cap \Kappa = \{I\}$. 
\end{theorem}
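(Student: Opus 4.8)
The plan is to prove the assertions in four groups, isolating the single genuinely geometric step. First I would dispose of the algebra. Since $\Nu\cap\Kappa=\{I\}$ has already been noted, the second isomorphism theorem immediately gives $\Nu\Kappa/\Kappa\cong\Nu$ and $\Nu\Kappa/\Nu\cong\Kappa$, which is the last assertion. I would then record that $\Kappa$ is itself a complete normal subgroup of $\Gamma$: writing $W=\mathrm{Span}(\Kappa)\subseteq V^\perp$, Theorem 1(2) gives $\Kappa\subseteq\{a+A\in\Gamma: a\in W\ \hbox{and}\ W^\perp\subseteq\mathrm{Fix}(A)\}$, and since $W\subseteq V^\perp$ forces $V\subseteq W^\perp$, any $a+A$ in the right-hand set satisfies $a\in V^\perp$ and $V\subseteq\mathrm{Fix}(A)$ and so lies in $\Kappa$; hence the two sets coincide. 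By Remark 2 both $\Gamma/\Nu$ and $\Gamma/\Kappa$ are therefore space groups. By construction $\Gamma/\Kappa$ acts effectively on $V$ with kernel $\Kappa$, while $\Gamma/\Nu$ acts effectively on $V^\perp\cong E^n/V$ by Lemma 1; moreover the subgroup $\Nu\Kappa/\Kappa$ acts on $V$ exactly as $\Nu$ does (Theorem 1(2)), while $\Nu\Kappa/\Nu$ acts on $V^\perp$ exactly as $\Kappa$ does (by the definition of $\Kappa$).

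The geometric heart is effectiveness, which I would reduce to one lemma: if $G$ is a discrete group of isometries of a Euclidean space $U$ and $h$ is an isometry of $U$ inducing the identity on $U/G$, then $h\in G$. Indeed each $x\in U$ satisfies $hx=g_xx$ for some $g_x\in G$; on the set $U_0$ of points with trivial $G$-stabilizer the element $g_x$ is uniquely determined and, by discreteness, locally constant, and $U_0$ is open, dense, and connected because its complement is a locally finite union of proper affine subspaces, so $g_x$ equals a single $g_0\in G$ throughout $U_0$ and $h=g_0$ by continuity. Applying this with $U=V$ and $G=\Nu\Kappa/\Kappa\cong\Nu$, a space group on $V$ by Theorem 1(3), shows that any element of $\Gamma/\Nu\Kappa=(\Gamma/\Kappa)/(\Nu\Kappa/\Kappa)$ acting trivially on $V/\Nu$ lifts to $\Nu\Kappa/\Kappa$ and is thus trivial; hence $\Gamma/\Nu\Kappa$ acts effectively on $V/\Nu$. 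The symmetric application with $U=V^\perp$ and $G=\Nu\Kappa/\Nu\cong\Kappa$ gives effectiveness on $V^\perp/\Kappa$, and effectiveness on the product $V/\Nu\times V^\perp/\Kappa$ follows at once, since an element acting trivially on the product acts trivially on the first factor.

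Finally I would read off the quotients and the coverings. The elementary identity $(X/M)/(G/M)=X/G$, applied to $\Gamma/\Kappa$ acting on $V$ with $M=\Nu\Kappa/\Kappa$ and to $\Gamma/\Nu$ acting on $V^\perp$ with $M=\Nu\Kappa/\Nu$, gives $(V/\Nu)/(\Gamma/\Nu\Kappa)=V/(\Gamma/\Kappa)$ and $(V^\perp/\Kappa)/(\Gamma/\Nu\Kappa)=V^\perp/(\Gamma/\Nu)$. Each of the three natural projections is then the quotient map of a flat orbifold by the effective isometric action just identified, hence a regular orbifold covering projection with covering group $\Gamma/\Nu\Kappa$; naming the kernel of each action produces the three displayed short exact sequences, the last being the restriction of the universal orbifold covering $E^n\to E^n/\Gamma$ to the intermediate cover $E^n/\Nu\Kappa=V/\Nu\times V^\perp/\Kappa$. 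I expect the main obstacle to be exactly the effectiveness lemma, together with the fact that the projections onto $V^\perp/(\Gamma/\Nu)$ and onto $E^n/\Gamma$ are genuine coverings because $\Gamma/\Nu$ and $\Gamma$ act properly discontinuously, whereas $\Gamma/\Kappa$ acts properly discontinuously on $V$ precisely when $W=V^\perp$; by Theorem 5 this is the case of a finite structure group, so confirming that the first projection is a regular orbifold covering in the remaining case is the delicate point and must be handled in the generalized sense appropriate to a quotient by a non-discrete group of isometries.
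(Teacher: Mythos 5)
Your overall strategy is the same as the paper's: the heart of the matter is showing that an element of $\Gamma$ inducing the identity on $V/\Nu$ agrees on $V$ with some element of $\Nu$, hence lies in $\Nu\Kappa$; the quotient identities then follow from $(X/M)/(G/M)=X/G$ together with the triviality of the $\Kappa$-action on $V$. Your packaging of the key step as a stand-alone lemma (an isometry of $U$ inducing the identity on $U/G$ lies in $G$) is a clean way to organize what the paper does with a fundamental domain, and your closing remark that the first projection is only a covering in a generalized sense when the structure group is infinite is a genuine subtlety that the paper's proof does not address at all.

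There is, however, one step in your proof of the lemma that fails as written: the claim that $U_0$, the set of points with trivial $G$-stabilizer, is connected because its complement is a locally finite union of proper affine subspaces. Those subspaces can be hyperplanes, and a union of hyperplanes disconnects $U$; this happens precisely in the cases this paper cares about, e.g.\ $G$ an infinite dihedral group acting on a line ($U_0$ is the complement of a discrete set) or $G=2{*}22$ acting on a plane by reflections ($U_0$ is a union of open chambers). Local constancy of $g_x$ therefore only tells you that $h$ agrees with a single $g_0\in G$ on one component of $U_0$. The conclusion is rescued not by connectedness but by rigidity: $h$ and $g_0$ are both affine maps of $U$, so agreement on any nonempty open set forces $h=g_0$ everywhere. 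This is exactly how the paper's own proof closes the argument --- it establishes $c+Bx=a+Ax$ only on a small ball $B(v,r)\cap V$ and then concludes equality on all of $V$. With that one-line repair your lemma, and hence your proof, is correct, and the deduction that a trivial action on $V/\Nu$ forces membership in $\Nu\Kappa$ (via $\nu^{-1}\gamma\in\Kappa$) matches the paper's computation $(a+A)^{-1}(b+B)=d+A^{-1}B\in\Kappa$.
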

\begin{proof}
Let $b+B$ be an element of $\Gamma$ such that $\Nu\Kappa(b+B)$ acts trivially on $V/\Nu$. 
Write $b = c+d$ with $c\in V$ and $d\in V^\perp$. 
Then for each $v\in V$, we have 
$$\Nu\Kappa(b+B)(\Nu v) = \Nu(c+Bv) = \Nu v.$$
Let $D$ be a fundamental domain for $\Nu$ in $V$, and suppose $v\in D$. 
Then there exists $a+A \in \Nu$ such that $c+Bv = a+ Av$. 
Hence $c+Bv\in (a+A)D$.  As $c+B$ is continuous at $v$, there exists $r > 0$ 
such that if $B(v,r)$ is the open ball with center $v$ and radius $r$ in $E^n$, then 
$$(c+B)(B(v,r)\cap V) \subseteq (a+A)D.$$
Hence $c+Bx = a+Ax$ for all $x\in B(v,r)\cap V$. 
Therefore $c+Bx = a + Ax$ for all $x\in V$. 
Hence $(a+A)^{-1}(c+B)x = x$ for all $x\in V$. 
Now we have 
$$(a+A)^{-1}(c+B) = (-A^{-1}a + A^{-1})(c+B) = -A^{-1}a + A^{-1}c+A^{-1}B.$$
Hence $a = c$ and $V\subseteq \mathrm{Fix}(A^{-1}B)$.  Next, observe that 
\begin{eqnarray*}
(a+A)^{-1}(b+B) & = & (-A^{-1}a + A^{-1})(b+B) \\ 
			& = &  -A^{-1}a + A^{-1}b+A^{-1}B \\
			& = &  -A^{-1}a + A^{-1}c + A^{-1}d+A^{-1}B \ = \ d + A^{-1}B 
\end{eqnarray*}
with $d \in V^\perp$ and $V\subseteq \mathrm{Fix}(A^{-1}B)$. 
Therefore $(a+A)^{-1}(b+B) \in \Kappa$, and so $b+B \in \Nu\Kappa$. 
Hence $\Gamma/\Nu\Kappa$ acts effectively on $V/\Nu$. 

The same argument, with the roles of $V$ and $V^\perp$ reversed, 
shows that $\Gamma/\Nu\Kappa$ acts effectively on $V^\perp/\Kappa$. 
Therefore $\Gamma/\Nu\Kappa$ acts effectively on $V/\Nu\times V^\perp/\Kappa$. 

Observe that 
 $$(V/\Nu)/(\Gamma/\Nu\Kappa) = (V/\Nu\Kappa)/(\Gamma/\Nu\Kappa) = V/\Gamma = 
 V/(\Gamma/\Kappa),$$
 and 
 $$(V^\perp/\Kappa)/(\Gamma/\Nu\Kappa) = (V^\perp/\Nu\Kappa)/(\Gamma/\Nu\Kappa) 
 = V^\perp/\Gamma = V^\perp/(\Gamma/\Nu). $$ 
 
 \vspace{-.2in}
\end{proof}

\begin{corollary} 
Let ${\rm N}$ be a complete normal subgroup of an $n$-space group $\Gamma$,  
and let $\Kappa$ be the kernel of the action of $\Gamma$ on $V= \mathrm{Span}(\Nu)$. 
If the group of isometries of $V/\Nu$ is finite, then $\Nu$ has an orthogonal dual in $\Gamma$. 
\end{corollary}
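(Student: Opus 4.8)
The plan is to deduce this corollary directly from Theorem 6 and Theorem 5, which together reduce the statement to a one-line argument about faithful isometric actions. First I would invoke Theorem 6, which asserts that the structure group $\Gamma/\Nu\Kappa$ acts \emph{effectively} on $V/\Nu$. The key observation is that this action is by isometries: the generalized Calabi construction of Theorem 3, together with the diagonal action formula $(\Nu\Kappa(b+B))(\Nu v,\Kappa w)=(\Nu(c+Bv),\Kappa(d+Bw))$, shows that each element of $\Gamma/\Nu\Kappa$ acts on the first factor $V/\Nu$ via the isometry induced by $c+B|_V$. Hence effectiveness means precisely that the associated homomorphism $\Gamma/\Nu\Kappa\to\mathrm{Isom}(V/\Nu)$ is injective, so $\Gamma/\Nu\Kappa$ is isomorphic to a subgroup of $\mathrm{Isom}(V/\Nu)$.

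Next I would use the finiteness hypothesis. If $\mathrm{Isom}(V/\Nu)$ is finite, then every subgroup of it is finite, and in particular the image of $\Gamma/\Nu\Kappa$ is finite; by injectivity $\Gamma/\Nu\Kappa$ itself is finite. Finally, Theorem 5 states that $\Nu$ has an orthogonal dual in $\Gamma$ if and only if the structure group $\Gamma/\Nu\Kappa$ is finite. Applying this equivalence with the finiteness just established yields that $\Nu$ has an orthogonal dual in $\Gamma$, which is the desired conclusion.

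I do not expect a substantive obstacle here, since the corollary is essentially a packaging of the two preceding results. The only point deserving explicit care is the remark that the effective action of $\Gamma/\Nu\Kappa$ on $V/\Nu$ is genuinely isometric, so that it factors through $\mathrm{Isom}(V/\Nu)$ rather than merely through the group of self-homeomorphisms of $V/\Nu$; this is what makes the finiteness of the isometry group transfer to the structure group. Once that is noted, the proof is immediate from Theorems 5 and 6.
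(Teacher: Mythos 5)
Your proof is correct and follows exactly the same route as the paper: Theorem 6 gives the effective (isometric) action of the structure group on $V/\Nu$, the finiteness of $\mathrm{Isom}(V/\Nu)$ then forces $\Gamma/\Nu\Kappa$ to be finite, and Theorem 5 converts this into the existence of the orthogonal dual. Your extra remark that the action genuinely lands in $\mathrm{Isom}(V/\Nu)$ is a sensible precaution but the paper treats it as already established by the construction in Section 3.
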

\begin{proof}
By Theorem 6, the structure group $\Gamma/\Nu\Kappa$ acts effectively on $V/\Nu$. 
If the group of isometries of $V/\Nu$ is finite, then $\Gamma/\Nu\Kappa$ 
is finite, and so $\Nu$ has an orthogonal dual in $\Gamma$ by Theorem 5. 
\end{proof}

\noindent{\bf Example 2.}  
Let $\Nu$ be a complete normal subgroup of an $n$-space group $\Gamma$,  
and let $\Kappa$ be the kernel of the action of $\Gamma$ on $V=\mathrm{Span}(\Nu)$. 
Suppose that $\Nu$ is an infinite dihedral group.  Then $V/\Nu$ is a closed interval.  
The group of isometries of $V/\Nu$ is generated by the reflection of $V/\Nu$ in 
the midpoint of $V/\Nu$, and so has order 2.  
Hence $\Kappa = \Nu^\perp$ by Corollary 2,  
and the structure group has order 1 or 2.  If $\Gamma/\Nu\Kappa$ has order 1, 
then $E^n/\Gamma = V/\Nu\times V^\perp/\Kappa$.  
If $\Gamma/\Nu\Kappa$ has order 2, 
then $E^n/\Gamma$ is isometric to $(V/\Nu\times V^\perp/\Kappa)/(\Gamma/\Nu\Kappa)$,  
which is a twisted $I$-bundle over $V^\perp/(\Gamma/\Nu)$ determined by 
the quotient map $V^\perp/\Kappa \to V^\perp/(\Gamma/\Nu)$;  
moreover $(V/\Nu)/(\Gamma/\Nu\Kappa)$ is a closed interval. 
By Theorem 6, we have that $(V/\Nu)/(\Gamma/\Nu\Kappa)=V/(\Gamma/\Kappa)$. 
Hence $V/(\Gamma/\Kappa)$ is a closed interval. 
The group $\Gamma/\Kappa$ acts as a space group of isometries on $V$ by Lemma 1 of \cite{R-T}. 
Therefore $\Gamma/\Kappa$ is an infinite dihedral group. 
These facts first appeared in Theorems 19 and 20 in \cite{R-T}.

\begin{theorem} 
Let $\Nu$ be an infinite cyclic, complete, normal subgroup of an $n$-space group $\Gamma$, 
and let $\Kappa$ be the kernel of the action of $\Gamma$ on $V = \span(\Nu)$, 
and suppose $\Kappa = \Nu^\perp$. 
Then $V/\Nu$ is a circle,  and 
the structure group $\Gamma/\Nu\Kappa$ is either a finite cyclic group generated 
by a rotation of $V/\Nu$ or a finite dihedral group generated by a rotation and a reflection of $V/\Nu$. 
Moreover $\Gamma/\Nu\Kappa$ is a finite cyclic group generated by a rotation 
if and only if $V/(\Gamma/\Kappa)$ is a circle, and $\Gamma/\Nu\Kappa$ is a finite dihedral group 
generated by a rotation and a reflection if and only if $V/(\Gamma/\Kappa)$ is a closed interval. 
Furthermore $V/(\Gamma/\Kappa)$ is isometric to the base of the geometric 
orbifold fibration of $E^n/\Gamma$ determined by the complete normal subgroup  
$\Kappa$ of $\Gamma$. 
\end{theorem}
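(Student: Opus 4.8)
The plan is to analyze the induced action of the finite structure group $\Gamma/\Nu\Kappa$ on the one–dimensional flat orbifold $V/\Nu$, and to read off each assertion from the resulting quotient. First I would show that $V/\Nu$ is a circle. Since $\Nu$ is a normal subgroup of $\Gamma$, it is an $m$-space group with $m = \dim V$ and $V = \span(\Nu)$ by Theorem 1(3), and its translation subgroup is free abelian of rank $m$ and of finite index in $\Nu$. As $\Nu$ is infinite cyclic, a finite-index subgroup of it is again infinite cyclic, which forces $m = 1$, so $V$ is a line. A $1$-space group is either infinite cyclic, generated by a translation, or infinite dihedral; because $\Nu$ is torsion–free and every orientation–reversing isometry of a line is an involution, effectiveness of the action (Theorem 1(3)) shows $\Nu$ contains no orientation–reversing element and is therefore generated by a single translation. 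Hence $V/\Nu$ is a circle.

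Next I would classify the structure group. Since $\Kappa = \Nu^\perp$, Theorem 5 gives that $G := \Gamma/\Nu\Kappa$ is finite, and Theorem 6 gives that $G$ acts effectively on the circle $V/\Nu$ by isometries. The isometry group of a circle is $\circle(2)$, and every finite subgroup of $\circle(2)$ is either cyclic, consisting entirely of rotations, or dihedral, containing a reflection. This yields the stated dichotomy for $G$: a finite cyclic group generated by a rotation, or a finite dihedral group generated by a rotation and a reflection.

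For the two equivalences I would use the identification $(V/\Nu)/G = V/(\Gamma/\Kappa)$ from Theorem 6 and compute the quotient of the circle $V/\Nu$ by $G$ directly. If $G$ is cyclic and generated by a rotation, then $G$ acts on $V/\Nu$ freely by rotations and the quotient $V/(\Gamma/\Kappa)$ is again a circle; if $G$ is dihedral and contains a reflection, then a fundamental domain for the $G$-action on the circle is an arc and the quotient $V/(\Gamma/\Kappa)$ is a closed interval. Since a circle and a closed interval are not homeomorphic, each geometric alternative for $V/(\Gamma/\Kappa)$ occurs for exactly one of the two cases for $G$, which establishes both equivalences simultaneously.

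Finally, for the identification of the base I would apply Theorem 2 to the complete normal subgroup $\Kappa$ of $\Gamma$, noting that $\span(\Kappa) = V^\perp$ since $\Kappa = \Nu^\perp$; this exhibits $E^n/\Gamma$ as geometrically fibered over the base $(E^n/V^\perp)/(\Gamma/\Kappa)$. The isometry $\psi\colon (E^n/V^\perp)/(\Gamma/\Kappa) \to V/(\Gamma/\Kappa)$ constructed just before Corollary 1 then identifies this base with $V/(\Gamma/\Kappa)$, as required. I expect the only genuine subtlety to lie in the circle–quotient computation of the third step, namely verifying that a reflection in $G$ really acts as an orientation–reversing isometry of $V/\Nu$, so that the quotient folds to an interval rather than remaining a circle; everything else is a direct assembly of Theorems 1, 2, 5, and 6 together with the standard structure of finite subgroups of $\circle(2)$.
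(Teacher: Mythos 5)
Your proposal is correct and follows essentially the same route as the paper: finiteness of the structure group from Theorem 5, effectiveness of its action on the circle $V/\Nu$ from Theorem 6, the classification of finite subgroups of ${\rm O}(2)$ for the cyclic/dihedral dichotomy, the identification $(V/\Nu)/(\Gamma/\Nu\Kappa) = V/(\Gamma/\Kappa)$ from Theorem 6 for the two equivalences, and Corollary 1 for the identification of the base. The only difference is that you supply an explicit argument that $V/\Nu$ is a circle, which the paper treats as immediate from $\Nu$ being infinite cyclic.
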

\begin{proof}
The structure group $\Gamma/\Nu\Kappa$ is finite by Theorem 5. 
The group $\Gamma/\Nu\Kappa$ acts effectively on the circle $V/\Nu$ by Theorem 6. 
Hence $\Gamma/\Nu\Kappa$ is either a finite cyclic group generated by a rotation of $V/\Nu$ 
or a finite dihedral group generated by a rotation and a reflection of $V/\Nu$. 
Now $(V/\Nu)/(\Gamma/\Nu\Kappa) = V/(\Gamma/\Kappa)$  by Theorem 6.  
Hence $\Gamma/\Nu\Kappa$ is a finite cyclic group generated by a rotation 
if and only if $V/(\Gamma/\Kappa)$ is a circle, and $\Gamma/\Nu\Kappa$ is a finite dihedral group 
generated by a rotation and a reflection if and only if $V/(\Gamma/\Kappa)$ is a closed interval. 
By Corollary 1, the orbifold 
$V/(\Gamma/\Kappa)$ is isometric to the base of the fibration of $E^n/\Gamma$ 
determined by $\Kappa$. 
\end{proof}

\noindent{\bf Example 3.}  
All the Seifert fibrations with generic fiber a circle in Table 1 of \cite{R-T} 
satisfy the hypothesis of Theorem 7. 
The order of the structure group $\Gamma/\Nu\Kappa$ is given in the column headed by ind.
The structure group is cyclic if and only if either its order is $1,2,3$ or its order is at least 4 
and the base of the co-Seifert fibration is a circle. In all other cases, the structure group is 
a dihedral group. 

\vspace{.15in}
\noindent{\bf Example 4.}  
Let $\Gamma$ be an $n$-space group with a first Betti number $\beta_1$ such that 
$0<\beta_1<n$. 
Let $\Nu$ be the subgroup of $\Gamma$ that contains $[\Gamma, \Gamma]$ 
and corresponds to the torsion subgroup of $\Gamma/[\Gamma,\Gamma]$. 
Then $\Nu$ is a complete normal subgroup of $\Gamma$ by Theorem 13 of \cite{R-T}, 
and $\Nu^\perp = Z(\Gamma)$, the center of $\Gamma$, by Theorem 15 of \cite{R-T}. 
Let $V = \mathrm{Span}(\Nu)$.  Then $V^\perp = \mathrm{Span}(Z(\Gamma))$, 
and $V^\perp/Z(\Gamma)$ is a torus. 
By Theorem 3, we have that $E^n/\Gamma$ is isometric to 
$(V/\Nu\times V^\perp/Z(\Gamma)/(\Gamma/\Nu Z(\Gamma))$. 
The group $\Gamma/\Nu$ is a free abelian group of rank $\beta_1$. 
Hence the structure group $\Gamma/\Nu Z(\Gamma)$ is abelian;   
moreover $\Gamma/\Nu Z(\Gamma)$ is finite by Theorem 5.  
This example corresponds to the original Calabi construction 
described in \S 3.6 of \cite{Wolf} when $\Gamma$ is torsion-free. 

\vspace{.15in}
\noindent{\bf Example 5.}  
Let ${\rm N}$ be a complete normal subgroup of an $n$-space group $\Gamma$ such that 
$\Gamma/\Nu$ is torsion-free,  
and let $\Kappa$ be the kernel of the action of $\Gamma$ on $V=\mathrm{Span}(\Nu)$. 
Then $(E^n/V)/(\Gamma/\Nu)$ is a flat manifold and 
$\eta_V: E^n/\Gamma\to (E^n/V)/(\Gamma/\Nu)$ 
is a fiber bundle projection by Theorem 13 of \cite{R-T}. 
Hence $V^\perp/(\Gamma/\Nu)$ is a flat manifold and 
$\pi: (V/\Nu\times V^\perp/\Kappa)/(\Gamma/\Nu\Kappa)\to V^\perp/(\Gamma/\Nu)$ 
is a fiber bundle projection by Theorem 4. 
Moreover $(V/\Nu\times V^\perp/\Kappa)/(\Gamma/\Nu\Kappa)$ is a fiber bundle,  
with structure group $\Gamma/\Nu\Kappa$, in the sense of \S 2 of \cite{S}. 
The associated principle bundle projection, in the sense of \S 8 of \cite{S}, is the quotient map 
$V^\perp/\Kappa \to V^\perp/(\Gamma/\Nu)$, under the action of $\Gamma/\Nu\Kappa$. 
Note that the quotient map $V^\perp/\Kappa \to V^\perp/(\Gamma/\Nu)$ is a regular 
covering projection with fibers equal to the $\Gamma/\Nu\Kappa$ orbits.

\section{Splitting Space Group Extensions} 

Let $\Nu$ be a complete normal subgroup of an $n$-space group $\Gamma$, 
and consider the corresponding space group extension
$$ 1 \to {\rm N}\ {\buildrel i\over \longrightarrow}\ \Gamma\ {\buildrel p\over\longrightarrow} 
\ \Gamma/\Nu\to 1.$$
In this section, we give a geometric condition for the above group extension 
to split ($p$ has a right inverse). Note that $p$ has a right inverse if and only if 
$\Gamma$ has a subgroup $\Sigma$ such that $\Gamma = \Nu\Sigma$ and $\Nu\cap\Sigma = \{I\}$.

\begin{lemma} 
Let $\Nu$ be a complete normal subgroup of an $n$-space group $\Gamma$, 
and let $\Kappa$ be the kernel of the action of $\Gamma$ on $V = {\rm Span}(\Nu)$. 
Let $v_0$ be a point of $V$, and let $\Sigma$ be the stabilizer in $\Gamma$ 
of the set $V^\perp+v_0$. Then 
\begin{enumerate}
\item the group $\Nu\cap\Sigma$ is the stabilizer in $\Nu$ of the point $v_0$ of $V$, 
\item the group $\Kappa$ is a subgroup of $\Sigma$, 
\item the group $\Nu\Sigma/\Nu\Kappa$ is the stabilizer in $\Gamma/\Nu\Kappa$ 
of the point $\Nu v_0$ of $V/\Nu$, 
\item the groups  $\Nu\cap\Sigma$, and $\Sigma/\Kappa$, and  $\Nu\Sigma/\Nu\Kappa$ are finite. 
\end{enumerate}
\end{lemma}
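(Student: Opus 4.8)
The plan is to first identify $\Sigma$ concretely as a point stabilizer for the isometric action of $\Gamma$ on $V$ introduced in Section 3, and then read off all four assertions from that identification. Writing $b = c + d$ with $c \in V$ and $d \in V^\perp$, the diagonal action gives $(b+B)(V^\perp + v_0) = V^\perp + (c + Bv_0)$, since $BV^\perp = V^\perp$ and $d \in V^\perp$. As both $c + Bv_0$ and $v_0$ lie in $V$, this coset equals $V^\perp + v_0$ exactly when $c + Bv_0 = v_0$. Thus $\Sigma$ is precisely the stabilizer of $v_0$ under the action $(b+B)\cdot v = c + Bv$ of $\Gamma$ on $V$, and everything reduces to understanding this point stabilizer.

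From here parts (1)--(3) are immediate. For (1), I restrict to $\Nu$: an element $a+A \in \Nu$ has $a \in V$, so its $V$-component is $a$ itself and the condition $c+Bv_0=v_0$ becomes $a + Av_0 = v_0$, i.e. $a+A$ fixes $v_0$. For (2), an element $b+B \in \Kappa$ has $b \in V^\perp$ (so $c = 0$) and $V \subseteq \mathrm{Fix}(B)$ (so $Bv_0 = v_0$), giving $c + Bv_0 = v_0$; hence $\Kappa \subseteq \Sigma$. For (3), I note that $\Nu\Sigma$ is a subgroup containing the normal subgroup $\Nu\Kappa$, and that the action of $\Gamma/\Nu\Kappa$ on $V/\Nu$ is $(\Nu\Kappa(b+B))(\Nu v) = \Nu((b+B)\cdot v)$; then $\Nu\Kappa(b+B)$ fixes $\Nu v_0$ iff $(b+B)\cdot v_0$ and $v_0$ lie in the same $\Nu$-orbit iff $(a+A)^{-1}(b+B) \in \Sigma$ for some $a+A \in \Nu$ iff $b+B \in \Nu\Sigma$.

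Part (4) is the crux, and $\Sigma/\Kappa$ is where I expect the real obstacle to lie, because the structure group $\Gamma/\Nu\Kappa$ need not be finite (by Theorem 5 and Example 1 it is finite only when $\Nu$ has an orthogonal dual), so I cannot simply call $\Sigma/\Kappa$ or $\Nu\Sigma/\Nu\Kappa$ a subgroup of a finite group. The key device is the linear-part homomorphism: restriction $b+B \mapsto B|_V$ is a well-defined homomorphism $\Gamma \to \mathrm{O}(V)$ (well defined since $BV = V$ by Theorem 1(1)) that factors through the finite point group $\Pi$, so its image is finite. I would then show the induced map $\Sigma/\Kappa \to \mathrm{O}(V)$ is injective: if $b+B \in \Sigma$ has $B|_V = I$, then $v \mapsto c + v$ fixes $v_0$, which forces $c = 0$, so $b \in V^\perp$ and $V \subseteq \mathrm{Fix}(B)$, i.e. $b+B \in \Kappa$. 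Hence $\Sigma/\Kappa$ embeds into a finite group and is finite.

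The remaining two finiteness claims then follow formally from this. Since $\Nu \cap \Kappa = \{I\}$, the group $\Nu\cap\Sigma$ maps injectively into $\Sigma/\Kappa$, so it is finite (equivalently, by Theorem 1(3) it is a point stabilizer for the space group $\Nu$ acting on $V$, hence finite). And $\Nu\Sigma/\Nu\Kappa = \Sigma\Nu\Kappa/\Nu\Kappa$ is the image of $\Sigma$ in $\Gamma/\Nu\Kappa$; since $\Kappa$ lies in the kernel of $\Sigma \to \Gamma/\Nu\Kappa$, this image is a quotient of $\Sigma/\Kappa$ and is therefore finite. Thus all three groups are finite, which completes (4).
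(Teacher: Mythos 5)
Your proposal is correct. The identification of $\Sigma$ as the stabilizer of $v_0$ under the action $(b+B)\cdot v = c+Bv$ of $\Gamma$ on $V$, and the deductions of parts (1)--(3) from it, are exactly the paper's argument. The only genuine divergence is in the organization of part (4). The paper first proves $\Nu\Sigma/\Nu\Kappa$ is finite: the condition $c+Bv_0=v_0$ leaves only finitely many possible $B$ (the point group is finite) and hence finitely many possible $c$, so $\Nu\Sigma/\Nu\Kappa$ acts on $V/\Nu$ by finitely many transformations, and effectiveness of the structure-group action (Theorem 6) then gives finiteness; it then gets finiteness of $\Sigma/\Kappa$ from the short exact sequence $1\to\Nu\cap\Sigma\to\Sigma/\Kappa\to\Nu\Sigma/\Nu\Kappa\to 1$ together with finiteness of the point stabilizer $\Nu\cap\Sigma$. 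You instead prove finiteness of $\Sigma/\Kappa$ first, by the clean observation that the linear-part map $b+B\mapsto B|_V$ on $\Sigma$ has kernel exactly $\Kappa$ (if $B|_V=I$ then $c+v_0=v_0$ forces $c=0$, so $b\in V^\perp$ and $b+B\in\Kappa$) and finite image, and then obtain $\Nu\cap\Sigma$ and $\Nu\Sigma/\Nu\Kappa$ as a subgroup and a quotient of $\Sigma/\Kappa$ respectively. Your route has the small advantage of not invoking the effectiveness statement of Theorem 6, making the lemma self-contained; the paper's route makes the short exact sequence relating the three groups explicit, which is itself a useful piece of structure. Both arguments are complete and correct.
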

\begin{proof}
Let $b+B\in \Gamma$.  Write $b=c+d$ with $c\in V$ and $d\in V^\perp$. 
Then we have that 
\begin{eqnarray*}
(b+B)(V^\perp +v_0)  =  V^\perp +v_0 & \Leftrightarrow & V^\perp +b+Bv_0 = V^\perp + v_0 \\
& \Leftrightarrow & V^\perp +c+Bv_0  =  V^\perp + v_0 \ \ \Leftrightarrow \ \ c+Bv_0  =  v_0. 
\end{eqnarray*}
Therefore $\Nu\cap\Sigma$ is the stabilizer in $\Nu$ of $v_0$ and $\Kappa \subseteq \Sigma$.  
Now as 
$$(\Nu\Kappa(b+B))\Nu v_0 = \Nu(c+Bv_0), $$
we have that $\Nu\Sigma/\Nu\Kappa$ fixes the point $\Nu v_0$ of $V/\Nu$. 

Suppose the $\Nu\Kappa(b+B)$ fixes the point $\Nu v_0$. 
Then $\Nu(c+Bv_0) = \Nu v_0$. 
Hence there exist $a+A\in\Nu$ such that $(a+A)(c+Bv_0) = v_0$. 
Therefore $(a+A)(b+B) \in \Sigma$, and so $b+B \in \Nu\Sigma$. 
Thus $\Nu\Sigma/\Nu\Kappa$ is the stabilizer in $\Gamma/\Nu\Kappa$ 
of the point $\Nu v_0$ of $V/\Nu$. 

Suppose $b+B\in\Sigma$.  Then $c+Bv_0 = v_0$.  There are only finitely 
many possible $B$, and so there are only finitely many possible $c$. 
Hence $\Nu\Sigma/\Nu\Kappa$ acts effectively on $V/\Nu$ by only finitely many transformations. 
Therefore $\Nu\Sigma/\Nu\Kappa$ is finite. 

The inclusion of $\Nu\cap\Sigma$ into $\Sigma$ induces a monomorphism 
from $\Nu\cap\Sigma$ to $\Sigma/\Kappa$, since $\Nu\cap \Kappa = \{I\}$. 
The inclusion of $\Sigma$ into $\Nu\Sigma$ induces an epimorphism from 
$\Sigma/\Kappa$ onto $\Nu\Sigma/\Nu\Kappa$. Moreover, we have a short exact sequence 
$$1\to \Nu\cap\Sigma \to \Sigma/\Kappa \to \Nu\Sigma/\Nu\Kappa \to 1.$$
The group $\Nu\cap\Sigma$ is the stabilizer in $\Nu$ of $v_0$, and so $\Nu\cap\Sigma$ 
is finite.  Therefore $\Sigma/\Kappa$ is finite. 
\end{proof}

\begin{theorem} 
Let $\Nu$ be a complete normal subgroup of an $n$-space group $\Gamma$, 
and let $\Kappa$ be the kernel of the action of $\Gamma$ on $V = {\rm Span}(\Nu)$. 
Suppose that $\Gamma/\Nu\Kappa$ fixes an ordinary point $\Nu v_0$ of $V/\Nu$. 
Let $\Sigma$ be the stabilizer in $\Gamma$ of the set $V^\perp+v_0$. 
Then $\Gamma = \Nu\Sigma$, and $\Nu\cap\Sigma = \{I\}$, 
and $\mathrm{Span}(\Sigma) = V^\perp$. 

Conversely, if there exists a subgroup $\Rho$ of $\Gamma$ such that 
$\Gamma = \Nu\Rho$, and $\Nu\cap\Rho = \{I\}$, and $\mathrm{Span}(\Rho) = V^\perp$, 
then $\Gamma/\Nu\Kappa$ fixes a point of $V/\Nu$. 
\end{theorem}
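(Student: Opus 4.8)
The plan is to prove the two directions separately, in both cases exploiting the stabilizer $\Sigma$ of a slice $V^\perp+v_0$ together with the dictionary supplied by Lemma 2.

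For the forward direction I would take $\Sigma$ to be the stabilizer of $V^\perp+v_0$ as in the statement and simply read off the three conclusions. Since $\Gamma/\Nu\Kappa$ fixes $\Nu v_0$, Lemma 2(3) identifies the whole structure group with the stabilizer $\Nu\Sigma/\Nu\Kappa$, which forces $\Gamma=\Nu\Sigma$. Next, Lemma 2(1) says $\Nu\cap\Sigma$ is the isotropy group of $v_0$ in $\Nu$; because $\Nu v_0$ is an \emph{ordinary} (nonsingular) point of the orbifold $V/\Nu$, this isotropy group is trivial, giving $\Nu\cap\Sigma=\{I\}$. Finally, $\Gamma=\Nu\Sigma$ makes the structure group $\Gamma/\Nu\Kappa=\Nu\Sigma/\Nu\Kappa$ finite by Lemma 2(4), so Theorem 5 yields $\Kappa=\Nu^\perp$, i.e. $\mathrm{Span}(\Kappa)=V^\perp$. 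Since $\Kappa\subseteq\Sigma$ by Lemma 2(2), we get $V^\perp=\mathrm{Span}(\Kappa)\subseteq\mathrm{Span}(\Sigma)$; the reverse inclusion is immediate because a pure translation $b+I$ stabilizes $V^\perp+v_0$ only when $b\in V^\perp$.

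For the converse, the strategy is to manufacture a point $v_0\in V$ whose slice $V^\perp+v_0$ is stabilized by all of $\Rho$; for then $\Gamma=\Nu\Rho\subseteq\Nu\Sigma$, where $\Sigma$ is the stabilizer of $V^\perp+v_0$, and Lemma 2(3) immediately gives that $\Gamma/\Nu\Kappa$ fixes $\Nu v_0$. Writing $b=c+d$ with $c\in V$ and $d\in V^\perp$, a direct computation (as in the proof of Lemma 2) shows that $b+B$ stabilizes $V^\perp+v_0$ exactly when $c+Bv_0=v_0$, that is, when the isometry $v\mapsto c+Bv$ of $V$ induced by $b+B$ fixes $v_0$. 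Thus everything reduces to showing that the action of $\Rho$ on $V$ has a fixed point, and for this it suffices to show that this action is through a finite group, since a finite group of Euclidean isometries fixes the centroid of any orbit.

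The main obstacle is precisely this finiteness. The linear parts $\{B|_V: b+B\in\Rho\}$ already form a finite group, so the point is to eliminate pure translations of $V$ from the image, i.e. to show that every $b+B\in\Rho$ with $V\subseteq\mathrm{Fix}(B)$ in fact lies in $\Kappa$, equivalently that $c=0$. This is where the hypothesis $\mathrm{Span}(\Rho)=V^\perp$ does the work: letting $p$ be the order of $B$ in the (finite) point group, the power $(b+B)^p$ is a translation in $\Rho$ with translation vector $\sum_{i=0}^{p-1}B^i b = pc+\sum_{i=0}^{p-1}B^i d$, using $V\subseteq\mathrm{Fix}(B)$ so that $B^i c=c$. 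Being a translation vector of $\Rho$, it lies in $\mathrm{Span}(\Rho)=V^\perp$; comparing $V$-components forces $pc=0$, hence $c=0$ and $b+B\in\Kappa$. Consequently the action of $\Rho$ on $V$ has trivial translational part and finite linear part, so it is finite, it fixes a point $v_0$, and the converse follows as above.
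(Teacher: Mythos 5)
Your proof is correct. The forward direction follows the paper essentially verbatim: Lemma 2(3) gives $\Gamma=\Nu\Sigma$, Lemma 2(1) plus ordinariness of $\Nu v_0$ gives $\Nu\cap\Sigma=\{I\}$, and finiteness of the structure group plus Theorem 5 gives $\mathrm{Span}(\Kappa)=V^\perp$; the only cosmetic difference is that you obtain $\mathrm{Span}(\Sigma)\subseteq V^\perp$ by inspecting which translations stabilize the slice, where the paper cites finiteness of $\Sigma/\Kappa$ from Lemma 2(4).

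The converse is where you genuinely depart from the paper. The paper invokes Theorem 5.4.6 of \cite{R} to produce an $m$-plane $Q=W+v_0$ invariant under $\Rho$ on which a finite-index free abelian subgroup acts by translations, identifies $W$ with $V^\perp$ using $\mathrm{Span}(\Rho)=V^\perp$, and concludes $\Rho\subseteq\Sigma$. You instead show directly that the induced action of $\Rho$ on $V$ factors through a finite group: the linear parts restricted to $V$ form a finite group, and the translational part is killed by your computation with $(b+B)^p$, whose translation vector $pc+\sum_{i=0}^{p-1}B^id$ must lie in $\mathrm{Span}(\Rho)=V^\perp$, forcing $c=0$ (this uses $BV^\perp=V^\perp$, which follows from Theorem 1(1) and orthogonality of $B$). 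A fixed point $v_0$ of the resulting finite action, obtained as the centroid of an orbit, then puts $\Rho$ inside the stabilizer $\Sigma$ of $V^\perp+v_0$, and Lemma 2(3) finishes as before. Your route is more elementary and self-contained, replacing a black-box structural theorem with a two-line averaging argument; it also shows, as a byproduct, that the hypothesis $\Nu\cap\Rho=\{I\}$ is not actually needed for the converse, whereas the paper's argument uses it to identify $\Rho$ with $\Gamma/\Nu$ when computing dimensions. Both proofs are valid.
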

\begin{proof}
By Lemma  2(3), we have that $\Gamma = \Nu\Sigma$. 
That $\Nu v_0$ is an ordinary point of $V/\Nu$ means that 
the stabilizer of $v_0$ in $\Nu$ is $\{I\}$. 
Hence $\Nu\cap\Sigma = \{I\}$ by Lemma  2(1). 
By Lemma  2(4), we have that $\Gamma/\Nu\Kappa$ is finite. 
Hence $\Kappa = \Nu^\perp$ by Theorem 5, 
and so $\mathrm{Span}(\Kappa) = V^\perp$. 
By Lemma  2(4), we have that $\Sigma/\Kappa$ is finite. 
Hence $\mathrm{Span}(\Sigma) = \mathrm{Span}(\Kappa) = V^\perp$. 

Conversely, let $\Rho$ be a subgroup of $\Gamma$ such that 
$\Gamma = \Nu\Rho$, and $\Nu\cap\Rho = \{I\}$, and $\mathrm{Span}(\Rho) = V^\perp$. 
By Theorem 5.4.6 of \cite{R}, the group $\Rho$ has a free abelian subgroup $\Eta$ of rank $m$ 
and finite index, there is an $m$-plane $Q$ of $E^n$ such that  $\Eta$ acts effectively 
on $Q$ as a discrete group of translations, and the $m$-plane $Q$ is invariant under $\Rho$. 
Now $P$ is isomorphic to $\Gamma/\Nu$, and so $P$ is space group, 
with $\dim(P) = \dim(\Gamma/\Nu) = \dim(V^\perp)$. 
Moreover $\dim(P) = \dim(\Eta) = m$, and so $\dim(V^\perp)= m$. 
Now $Q = W+v_0$ for some $m$-dimensional vector subspace $W$ of $E^n$.  
Let $a+I \in \Rho$. Then $(a+I)Q = Q$ implies 
that $a\in W$.  As $\mathrm{Span}(\Rho) =V^\perp$, we have that $W = V^\perp$. 
Hence we may assume that $v_0\in V$. Then $\Rho$ is a subgroup of the 
stabilizer $\Sigma$ in $\Gamma$ of the set $V^\perp + v_0$. 
As $\Gamma = \Nu\Rho$, we have that $\Gamma = \Nu\Sigma$, 
and so $\Gamma/\Nu\Kappa$ fixes the point $\Nu v_0$ of $V/\Nu$ by Lemma  2(3).  
\end{proof}

\noindent{\bf Remark 3.}  Theorem 8 generalizes and strengthens Theorem 18 of \cite{R-T}. 

\medskip
\noindent{\bf Example 6.}
We next consider an example that shows that the hypothesis that $\Nu v_0$ is an ordinary point 
cannot be dropped in Theorem 8. 
Let $\Gamma$ be the 3-space group with IT number 113 in Table 1B of \cite{B-Z}. 
Then $\Gamma = \langle t_1,t_2,t_3,\alpha,\beta,\gamma\rangle$ 
where $t_i = e_i+I$ for $i=1,2,3$ are the standard translations, 
and $\alpha = \frac{1}{2}e_1+\frac{1}{2}e_2+A$, 
$\beta=\frac{1}{2}e_1+B$, $\gamma =\frac{1}{2}e_2+C$, and 
$$A = \left(\begin{array}{rrr} -1 & 0 & 0\\ 0 & -1 & 0 \\ 0 & 0 & 1  \end{array}\right),\ \ 
B = \left(\begin{array}{rrr} 0 & 1 & 0  \\ -1 & 0 & 0   \\ 0 & 0 & -1 \end{array}\right), \ \ 
C = \left(\begin{array}{rrr} -1 & 0 & 0  \\ 0 & 1 & 0   \\ 0 & 0 & -1 \end{array}\right).$$
The group $\Nu = \langle t_1,t_2,\alpha,\beta\gamma\rangle$ is a complete normal subgroup 
of $\Gamma$ with $V = {\rm Span}(\Nu) = {\rm Span}\{e_1,e_2\}$. 
The isomorphism type of $\Nu$ is $2\!\ast\!22$ in Conway's notation \cite{Conway} 
or $cmm$ in the IT notation \cite{S}. 
The flat orbifold $V/\Nu$ is a pointed hood.  
A fundamental polygon for the action of $\Nu$ on $V$ is the $45^\circ$ - $45^\circ$ 
right triangle $\triangle$ with vertices $v_1= \frac{1}{4}e_1+\frac{1}{4}e_2$, and 
$v_2 = \frac{5}{4}e_1+\frac{1}{4}e_2$, and $v_3 = \frac{3}{4}e_1+\frac{3}{4}e_2$. 
The short sides $[v_1,v_3]$ and $[v_2,v_3]$ of $\Delta$ are fixed pointwise by the reflections $t_1^{-1}\beta\gamma=BC$ and 
$t_1^2t_2\alpha\beta\gamma = \frac{3}{2}e_1+\frac{3}{2}e_2 + ABC$, respectively. 
The long side $[v_1,v_2]$ of $\triangle$ is flipped about its midpoint 
$v_0 = \frac{3}{4}e_1+\frac{1}{4}e_2$ 
by the halfturn $t_1\alpha$. 

The orbifold $V/\Nu$ has a unique cone point $\Nu v_0$.  
Hence $\Nu v_0$ is fixed by every isometry of $V/\Nu$. 
Therefore the structure group $\Gamma/\Nu\Kappa$ fixes the cone point $\Nu v_0$ of $V/\Nu$. 
However the space group extension $1\to \Nu\to \Gamma\to \Gamma/\Nu\to 1$ 
does not split as explained on p.\ 1649 of \cite{R-T}.
Here $\Kappa = \langle t_3\rangle$ and $\Gamma/\Nu\Kappa$ has order two 
and is generated by $\Nu\Kappa\beta$. 
The element $t_2\beta$ acts on $V$ by rotating $-90^\circ$ around $v_0$. 
Hence $\Nu\Kappa\beta$ acts on $V/\Nu$ as a halfturn around the 
cone point of $V/\Nu$, and so has no other fixed points. 
Note that the cone point of $V/\Nu$ was misidentified on p.\ 1649 of \cite{R-T} 
to be $\Nu v_1$, which is actually one of the two right-angled corner points of $V/\Nu$; 
the other corner point is $\Nu v_3$. 

\vspace{.15in}
\noindent{\bf Definition:} 
Let $\Nu$ be a complete normal subgroup of an $n$-space group $\Gamma$, 
and let $V = \mathrm{Span}(\Nu)$. 
We say that the space group extension
$1 \to \Nu\to\Gamma\to\Gamma/\Nu\to 1$ {\it splits orthogonally}
if $\Gamma$ has a subgroup $\Sigma$ such that 
$\Gamma = \Nu\Sigma$, and $\Nu\cap\Sigma = \{I\}$, and $\mathrm{Span}(\Sigma) = V^\perp$.

\begin{corollary} 
Let $\Nu$ be a torsion-free, complete, normal subgroup of an $n$-space group $\Gamma$, 
and let $\Kappa$ be the kernel of the action of $\Gamma$ on $V = {\rm Span}(\Nu)$. 
Then the space group extension $1\to\Nu\to \Gamma\to \Gamma/\Nu\to 1$ splits orthogonally if and only if the structure group $\Gamma/\Nu\Kappa$ fixes a point of $V/\Nu$. 
\end{corollary}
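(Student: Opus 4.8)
The plan is to deduce both implications directly from Theorem 8, the key point being that torsion-freeness of $\Nu$ erases the distinction between ordinary and singular points of $V/\Nu$, so that the hypothesis ``fixes an ordinary point'' needed in Theorem 8 becomes equivalent to the hypothesis ``fixes a point'' appearing in the corollary.

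First I would establish that $\Nu$ acts freely on $V$. By Theorem 1(3), $\Nu$ is a space group of isometries of $V$, so its point group is finite. Suppose $a+A\in\Nu$ fixes a point $v_0\in V$. Conjugating $a+A$ by the translation $v_0+I$ yields an isometry with the same linear part $A$ that fixes the origin, hence equal to the orthogonal transformation $A$; since $A$ lies in the finite point group of $\Nu$, it has finite order, and therefore so does $a+A$. As $\Nu$ is torsion-free, this forces $a+A=I$. Thus the stabilizer in $\Nu$ of every point of $V$ is trivial, which is exactly the condition that every point $\Nu v_0$ of $V/\Nu$ be an ordinary point in the sense used in the proof of Theorem 8.

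For the forward implication I would assume that the structure group $\Gamma/\Nu\Kappa$ fixes a point $\Nu v_0$ of $V/\Nu$. By the preceding paragraph, $\Nu v_0$ is automatically an ordinary point, so the hypothesis of the first half of Theorem 8 is satisfied. Letting $\Sigma$ be the stabilizer in $\Gamma$ of the set $V^\perp+v_0$, Theorem 8 then gives $\Gamma=\Nu\Sigma$, $\Nu\cap\Sigma=\{I\}$, and $\mathrm{Span}(\Sigma)=V^\perp$, which is precisely the assertion that the extension $1\to\Nu\to\Gamma\to\Gamma/\Nu\to 1$ splits orthogonally.

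The converse is immediate from the second half of Theorem 8 and requires no torsion-freeness: if the extension splits orthogonally, then by definition there is a subgroup $\Sigma$ of $\Gamma$ with $\Gamma=\Nu\Sigma$, $\Nu\cap\Sigma=\{I\}$, and $\mathrm{Span}(\Sigma)=V^\perp$, and applying the converse part of Theorem 8 with $\Rho=\Sigma$ shows that $\Gamma/\Nu\Kappa$ fixes a point of $V/\Nu$. There is essentially no obstacle in this argument; the only step demanding care is the freeness claim, which rests on confirming that an element of a torsion-free space group fixing a point of its span must be trivial, a fact that follows from the finiteness of the point group.
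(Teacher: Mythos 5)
Your proposal is correct and follows exactly the paper's route: the paper's proof is the one-line observation that every point of $V/\Nu$ is ordinary because $\Nu$ is torsion-free, after which both directions are read off from Theorem 8. You simply supply the standard detail (an element of $\Nu$ fixing a point is conjugate into the finite point group, hence has finite order, hence is trivial) that the paper leaves implicit.
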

\begin{proof}
Every point of $V/\Nu$ is an ordinary point, since $\Nu$ is torsion-free,  and so 
the corollary follows from Theorem 8. 
\end{proof}

\noindent{\bf Example 7.} 
Let $e_1$ and $e_2$ be the standard basis vectors of $E^2$, 
and let $\Gamma$ be the group generated by $t_1 = e_1+I$ and $t_2 = e_2+I$.  
Then $\Gamma$ is a 2-space group, and $E^2/\Gamma$ is a torus. 
Let $\Nu$ be a proper, complete, normal subgroup of $\Gamma$. 
Then there exists coprime integers $a$ and $b$ such that $\Nu = \langle t_1^at_2^b\rangle$. 
Let $\Kappa$ be the kernel of the action of $\Gamma$ on $V = \mathrm{Span}(\Nu)$. 
Then $\Kappa = \Nu^\perp = \langle t_1^bt_2^{-a}\rangle$. 
As $a$ and $b$ are coprime, there exists integers $c$ and $d$ such that $ad - bc = 1$. 
From the equation 
$$(b,-a) = (ac + bd)(a,b) -(a^2+b^2)(c,d),$$
we deduce that the structure group $\Gamma/\Nu\Kappa$ is cyclic of order $a^2+b^2$ 
generated by $\Nu\Kappa t_1^ct_2^d$, and  $\Nu\Kappa t_1^ct_2^d$ acts 
on the circle $V/\Nu$ by a $(ac+bd)/(a^2+b^2)$ turn around $V/\Nu$. 
Hence if  $a^2+b^2 > 1$, then $\Gamma/\Nu\Kappa$ does not fix a point of $V/\Nu$. 
Nevertheless, the space group extension $1\to\Nu\to \Gamma\to \Gamma/\Nu\to 1$ splits, 
since $\Gamma = \Nu\langle t_1^ct_2^d\rangle$ and $\Nu\cap\langle t_1^ct_2^d\rangle =\{I\}$. 

\begin{theorem} 
Let $Z(\Gamma)$ be the center of an $n$-space group $\Gamma$, 
and let $\Nu = Z(\Gamma)^\perp$.  
Then the space group extension $1\to Z(\Gamma) \to \Gamma \to \Gamma/Z(\Gamma)\to 1$ 
splits if and only if the structure group $\Gamma/Z(\Gamma)\Nu$ is trivial. 
\end{theorem}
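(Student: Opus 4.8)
The plan is to read the statement through the generalized Calabi construction of Theorem 3, taking $Z(\Gamma)$ as the complete normal subgroup and $\Nu = Z(\Gamma)^\perp$ as the kernel of the action of $\Gamma$ on $W := \mathrm{Span}(Z(\Gamma))$, so that the relevant structure group is precisely $G := \Gamma/Z(\Gamma)\Nu$, which is finite by Theorem 5 and acts effectively on the torus $W/Z(\Gamma)$ by Theorem 6. The decisive preliminary observation, and the feature that separates the center from a general complete normal subgroup, is that $G$ acts on $W/Z(\Gamma)$ by translations only. Indeed, a central element commutes with every translation $b+I$ in $\Gamma$, which forces its linear part to be $I$ since the translations span $E^n$; and a central translation $a+I$ must then satisfy $Ba=a$ for every $B$ in the point group $\Pi$, because conjugating $a+I$ by $b+B$ yields $Ba+I$. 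Hence $Z(\Gamma)$ consists of the translations by $\Pi$-fixed lattice vectors and $W = \mathrm{Fix}(\Pi)$ is fixed pointwise by $\Pi$. Consequently each $b+B\in\Gamma$ acts on $W$ by $v\mapsto c+v$, where $c$ is the $W$-component of $b$, i.e.\ as a translation of $W/Z(\Gamma)$. I record the elementary fact that a finite group acting effectively by translations on a torus fixes a point if and only if it is trivial, since a nontrivial torus-translation is fixed-point-free. Thus the theorem reduces to showing that the extension splits if and only if $G$ fixes a point of $W/Z(\Gamma)$.

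One direction is immediate: if $G$ is trivial, then $\Gamma = Z(\Gamma)\Nu$, and since $Z(\Gamma)\cap\Nu = \{I\}$ the subgroup $\Sigma = \Nu$ is a complement, so the extension splits.

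For the converse I would suppose the extension splits with complement $\Sigma$; as $Z(\Gamma)$ is central, $\Sigma$ is normal in $\Gamma$ and $\Gamma = Z(\Gamma)\times\Sigma$. The crux, and the step I expect to be the main obstacle, is to prove that such a splitting is automatically orthogonal, that is $\mathrm{Span}(\Sigma) = W^\perp$. First, the translation subgroup decomposes as $\mathrm{T} = Z(\Gamma)\oplus(\mathrm{T}\cap\Sigma)$, so $\mathrm{Span}(\Sigma)$ is a vector-space complement of $W$ of dimension $\dim W^\perp$. Second, $\mathrm{Span}(\Sigma)$ is $\Pi$-invariant: conjugation by $g+G\in\Gamma$ preserves the normal subgroup $\Sigma$ and sends a translation $a+I$ to $Ga+I$, so $\Pi$ preserves the lattice $\mathrm{T}\cap\Sigma$ and hence its span. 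Third, $W=\mathrm{Fix}(\Pi)$ admits $W^\perp$ as its unique $\Pi$-invariant complement: any invariant complement is the graph of a linear map $W^\perp\to W$, and this map is $\Pi$-equivariant into the trivial isotypic summand $W$, hence zero, because $W^\perp$ has no nonzero $\Pi$-fixed vector. Therefore $\mathrm{Span}(\Sigma)=W^\perp$, and applying the converse half of Theorem 8 with $\Rho=\Sigma$ shows that $G$ fixes a point of $W/Z(\Gamma)$. By the observation of the first paragraph, $G$ is then trivial, which completes the argument.

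The conceptual point I would emphasize is why the general Theorem 8 does not already settle this: there the structure group can fix a point yet be nontrivial (cf.\ Examples 6 and 7). What rescues the center is the combination of two center-specific facts established above, namely that $W=\mathrm{Fix}(\Pi)$ forces the structure-group action on the base torus to be purely by translations, and that $\mathrm{Fix}(\Pi)$ has a unique $\Pi$-invariant complement, so every abstract splitting is forced to be the orthogonal one. I therefore anticipate the main technical work to lie in cleanly verifying the lattice decomposition $\mathrm{T}=Z(\Gamma)\oplus(\mathrm{T}\cap\Sigma)$ and the uniqueness of the invariant complement; both are short once the representation-theoretic observation $W=\mathrm{Fix}(\Pi)$ is in hand.
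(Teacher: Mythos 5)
Your proof is correct, but the forward direction takes a genuinely different route from the paper's. The paper argues algebraically: a complement $\Sigma$ of $Z(\Gamma)$ is normal with $\Gamma/\Sigma \cong Z(\Gamma)$ free abelian of rank $\beta_1$, and Theorem 15 of \cite{R-T} identifies any such subgroup with $\Nu$ (the preimage of the torsion subgroup of $\Gamma/[\Gamma,\Gamma]$), so $\Gamma = Z(\Gamma)\Nu$ at once and the structure group is trivial. You instead argue geometrically that any splitting is automatically orthogonal: $\mathrm{Span}(\Sigma)$ is a $\Pi$-invariant vector-space complement of $W=\mathrm{Span}(Z(\Gamma))$, and since $W=\mathrm{Fix}(\Pi)$ admits $W^\perp$ as its unique $\Pi$-invariant complement, $\mathrm{Span}(\Sigma)=W^\perp$; the converse half of Theorem 8 then gives a fixed point of the structure group on the torus $W/Z(\Gamma)$, and your observation that the structure group acts there by translations (because $\Pi$ fixes $W$ pointwise) upgrades the fixed point to triviality via effectiveness (Theorem 6). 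Your route is longer but more self-contained --- it avoids Theorem 15 of \cite{R-T} entirely and isolates the two center-specific mechanisms (translation action on the base torus, uniqueness of the invariant complement) that make this statement stronger than the generic Theorem 8 --- while the paper's route is shorter and delivers the sharper conclusion $\Sigma=\Nu$. One point you gloss over: the equality $W=\mathrm{Fix}(\Pi)$, which you need both for the translation claim and, in the form $\mathrm{Fix}(\Pi)\cap W^\perp=\{0\}$, for the uniqueness of the invariant complement, is precisely Lemma 9 of the paper. The inclusion $W\subseteq\mathrm{Fix}(\Pi)$ is the computation you give, but the reverse inclusion needs the rationality argument that $\mathrm{Fix}(\Pi)$ is spanned by lattice vectors because $\Pi\subseteq\mathrm{GL}(n,\integers)$ in lattice coordinates; this is standard and true, so it is a citation to supply rather than a gap, but as written your ``hence $W=\mathrm{Fix}(\Pi)$'' asserts more than your computation proves.
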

\begin{proof}
Suppose that the space group extension $1\to Z(\Gamma) \to \Gamma \to \Gamma/Z(\Gamma)\to 1$ 
splits.  Then $\Gamma$ has a subgroup $\Sigma$ such that $\Gamma = Z(\Gamma)\Sigma$ 
and $Z(\Gamma)\cap\Sigma = \{I\}$.  Hence $\Sigma$ is a normal subgroup of $\Gamma$ 
and $\Gamma/\Sigma \cong Z(\Gamma)$.  Let $\beta_1$ be the first Betti number of $\Gamma$. 
Then $Z(\Gamma)$ is a free abelian group of rank $\beta_1$ by Theorem 6 of \cite{R-T}. 
Hence $\Gamma/\Sigma$ is a free abelian group of rank $\beta_1$. 
Therefore $\Sigma = \Nu$ by Theorem 15 of \cite{R-T}.  Hence $\Gamma = Z(\Gamma)\Nu$, 
and so the structure group $\Gamma/Z(\Gamma)\Nu$ is trivial. 

Conversely, if $\Gamma/Z(\Gamma)\Nu$ is trivial, then $\Gamma = Z(\Gamma)\Nu$. 
We have that $Z(\Gamma) \cap \Nu = \{I\}$.  
Hence the space group extension $1\to Z(\Gamma) \to \Gamma \to \Gamma/Z(\Gamma)\to 1$ splits. 
\end{proof}

\begin{lemma} 
Let $\Nu$ be a complete normal subgroup of an $n$-space group $\Gamma$, 
and suppose $\Nu$ is a subgroup of the group $\Tau$ of translations of $\Gamma$, 
and $\Sigma$ is a subgroup of $\Gamma$ such that $\Gamma = \Nu\Sigma$ and 
$\Nu\cap\Sigma = \{I\}$. 
Let $V = \mathrm{Span}(\Nu)$ and $W = \mathrm{Span}(\Sigma)$, 
and let $\Pi$ be the point group of $\Gamma$.  Then  $W$ is $\Pi$-invariant, 
$\dim(W) = n - \dim(V)$, and $V \cap W= \{0\}.$
\end{lemma}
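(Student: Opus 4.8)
The plan is to prove the three assertions in the order (1) $\Pi$-invariance of $W$, then (2) and (3) together via a rank count. The key preliminary observation is that since $\Nu\subseteq\Tau$, the homomorphism $\eta$ kills $\Nu$, so from $\Gamma=\Nu\Sigma$ one gets $\Pi=\eta(\Gamma)=\eta(\Sigma)$: indeed every $B\in\Pi$ is $\eta(\nu\sigma)=\eta(\sigma)$ for some $\sigma\in\Sigma$, because $\eta(\nu)=I$.

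For assertion (1), I would use the elementary conjugation formula: if $g=b+B$ and $a+I$ is a translation, then $g(a+I)g^{-1}=Ba+I$. The translations lying in $\Sigma$, namely $\Sigma\cap\Tau$, form a normal subgroup of $\Sigma$ (it is the kernel of $\eta$ restricted to $\Sigma$), and $W=\span(\Sigma)=\span(\Sigma\cap\Tau)$. Hence for each $\sigma=b+B\in\Sigma$ and each $a+I\in\Sigma\cap\Tau$, the conjugate $Ba+I$ again lies in $\Sigma\cap\Tau$, so $B$ permutes the translation vectors of $\Sigma$ and therefore preserves their span: $BW=W$. Since $\eta(\Sigma)=\Pi$, this gives $BW=W$ for every $B\in\Pi$, which is (1).

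For (2) and (3), I would first intersect the factorization $\Gamma=\Nu\Sigma$ with $\Tau$. Because $\Nu\subseteq\Tau$, Dedekind's modular law yields $\Tau=\Tau\cap\Gamma=\Tau\cap\Nu\Sigma=\Nu(\Tau\cap\Sigma)$; concretely, if $x\in\Tau$ is written as $\nu\sigma$ with $\nu\in\Nu$ and $\sigma\in\Sigma$, then $\sigma=\nu^{-1}x\in\Tau$, so $\sigma\in\Sigma\cap\Tau$. Now $\Tau$ is a lattice of rank $n$, the group $\Nu$ is a discrete translation group spanning $V$ (hence of rank $\dim V$), and $\Sigma\cap\Tau$ is a discrete translation group spanning $W$ (hence of rank $\dim W$). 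Since $\Nu\cap(\Sigma\cap\Tau)\subseteq\Nu\cap\Sigma=\{I\}$, the product $\Nu(\Sigma\cap\Tau)=\Tau$ is an internal direct sum, so ranks add and $n=\dim V+\dim W$, which is (2). Taking spans in the same identity gives $E^n=\span(\Tau)=V+W$, so $\dim(V+W)=n$; combined with (2) this forces $\dim(V\cap W)=\dim V+\dim W-\dim(V+W)=0$, giving (3).

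The one point demanding care — and the main obstacle — is assertion (1). One is tempted to obtain it by applying Theorem 1(1) to $\Sigma$, but $\Sigma$ is generally not cocompact on $E^n$ and so is not a space group there, so that theorem does not apply directly. The resolution is to sidestep the space-group machinery altogether and rely only on the two elementary facts that the translations are normal in $\Sigma$ and that $\eta(\Sigma)=\Pi$; everything else is a routine rank bookkeeping once the modular-law identity $\Tau=\Nu(\Sigma\cap\Tau)$ is in hand.
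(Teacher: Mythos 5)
Your proof is correct, and its overall architecture coincides with the paper's: the $\Pi$-invariance of $W$ is obtained by the same conjugation of translations in $\Sigma$ (the paper starts from an arbitrary $b+B\in\Gamma$ and shifts it into $\Sigma$ by an element of $\Nu$, which is the same as your observation that $\eta(\Sigma)=\Pi$), and the identity $\Tau=\Nu(\Sigma\cap\Tau)$, which you extract via the modular law, is exactly the computation the paper performs elementwise to get $V+W=\mathrm{Span}(\Tau)=E^n$ and then $V\cap W=\{0\}$ by a dimension count. The one place you genuinely diverge is the proof that $\dim(W)=n-\dim(V)$: the paper identifies $\Sigma$ with the quotient space group $\Gamma/\Nu$ acting on $E^n/V$ and reads off the dimension from there (using that $\Sigma\cap\Tau$ has finite index in $\Sigma$), whereas you observe that $\Tau=\Nu\oplus(\Sigma\cap\Tau)$ as an internal direct sum of lattices and add ranks. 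Your version is more self-contained, since it needs only that a discrete group of translations is a lattice of rank equal to the dimension of its span, and it avoids invoking the machinery that makes $\Gamma/\Nu$ an $(n-\dim V)$-space group; the paper's version is shorter given that machinery is already in place. Your closing remark about why Theorem 1(1) cannot simply be applied to $\Sigma$ is well taken and is precisely why both proofs argue the invariance of $W$ by hand.
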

\begin{proof}
Let $b+B \in \Gamma$ and $c+I \in \Sigma$. 
As $\Gamma = \Nu\Sigma$, there exists $a+ I \in \Nu$ such that $(a+I)(b+B) \in \Sigma$. 
Hence $(a+b+B)(c+I)(a+b+B)^{-1} = Bc+I$ is in $\Sigma$. 
Therefore $\mathrm{Span}(\Sigma)$ is $\Pi$-invariant. 

As $\Sigma/\Sigma\cap\Tau \cong \Sigma\Tau/\Tau$ is finite, 
$\Sigma\cap\Tau$ has finite index in $\Sigma$. 
Hence 
$$\dim(W)  =  \dim(\Sigma\cap\Tau)  
					      = \dim(\Sigma) 
					      =  \dim(\Gamma/\Nu)  
					      =  \dim(E^n/V) = n-\dim(V).$$
Let $b+I\in\Tau$.  Then there exists $a+I\in \Nu$ and $c+C\in\Sigma$ 
such that $b+ I = (a+I)(c+C)$.  Then $b+I = a+c+C$, and so $C = I$. 
Hence 
$V+W= \mathrm{Span}(T) = E^n.$
Therefore
$$E^n/V = (V+W)/V \cong W/(V\cap W).$$
Now we have
$$\dim(W) = n - \dim(V) = \dim(E^n/V) = \dim(W/V\cap W) = \dim(W) - \dim(V\cap W).$$
Therefore $\dim(V\cap W) = 0$, and so $V\cap W = \{0\}$. 
\end{proof}

\begin{theorem}  
Let $\Nu$ be a 1-dimensional, complete, normal subgroup of a 3-space group $\Gamma$ 
that corresponds to a Seifert, geometrically fibered, orbifold structure on $E^3/\Gamma$ 
described by Conway et al \cite{C-T} or Ratcliffe-Tschantz \cite{R-T}, and let $\Kappa$ 
be the kernel of the action of $\Gamma$ on $V = \mathrm{Span}(\Nu)$. 
Then the following are equivalent: 
\begin{enumerate} 
\item The space group extension $1\to\Nu\to\Gamma\to\Gamma/\Nu\to 1$ splits.
\item The space group extension $1\to\Nu\to\Gamma\to\Gamma/\Nu\to 1$ splits orthogonally.   
\item The structure group $\Gamma/\Nu\Kappa$ fixes a point of $V/\Nu$. 
\item The structure group $\Gamma/\Nu\Kappa$ has order 1 or 2, and if $\Gamma/\Nu\Kappa$ 
has order 2, then either $\Nu$ or $\Gamma/\Kappa$ is an infinite dihedral group. 
\item The structure group $\Gamma/\Nu\Kappa$ has order 1 or 2, and if $\Gamma/\Nu\Kappa$ 
has order 2, then either $V/\Nu$ or $V/(\Gamma/\Kappa)$ is a closed interval.  
\item The index of $\Nu\Kappa$ in $\Gamma$ is 1 or 2, and if $[\Gamma:\Nu\Kappa] =2$, 
then either the generic fiber of the corresponding Seifert fibration or the base 
of the corresponding co-Seifert fibration is a closed interval. 
\end{enumerate}
\end{theorem}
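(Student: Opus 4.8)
The plan is to organize the six conditions around the pivotal geometric condition (3) and to treat separately the two possibilities for the one-dimensional fiber. Since $\Nu$ is a $1$-dimensional complete normal subgroup, $\Nu$ is either infinite cyclic, in which case $V/\Nu$ is a circle, or infinite dihedral, in which case $V/\Nu$ is a closed interval. In the cyclic case the generator must be a translation (an orthogonal generator $a+A$ with $A|_V=-I$ would satisfy $(a+A)^2=I$), so $\Nu\subseteq\Tau$. In both cases the structure group $\Gamma/\Nu\Kappa$ is finite with $\Kappa=\Nu^\perp$: for the circle this is the content of Example 3 (these fibrations satisfy the hypothesis of Theorem 7), and for the interval it follows from Corollary 2, since $\mathrm{Isom}(V/\Nu)$ has order $2$. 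By Theorem 7 the structure group then acts on the circle as a finite cyclic group of rotations or a finite dihedral group, and on the interval as a group of order at most $2$.

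First I would dispatch the equivalences not involving condition (1). The chain (4)$\Leftrightarrow$(5)$\Leftrightarrow$(6) is the translation dictionary: $\Nu$ (resp.\ $\Gamma/\Kappa$) is infinite dihedral precisely when $V/\Nu$ (resp.\ $V/(\Gamma/\Kappa)$) is a closed interval by Example 2, the generic fiber is $V/\Nu$ by Theorem 3 and the co-Seifert base is $V/(\Gamma/\Kappa)$ by the last clause of Theorem 7, and $[\Gamma:\Nu\Kappa]=|\Gamma/\Nu\Kappa|$. For (3)$\Leftrightarrow$(4) I would use the effective action on $V/\Nu$ from Theorem 6: on a circle a group fixes a point exactly when it is trivial or a single reflection, and by Theorem 7 a single reflection corresponds to $V/(\Gamma/\Kappa)$ being an interval, that is, $\Gamma/\Kappa$ infinite dihedral; on an interval the group always fixes the midpoint. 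For (3)$\Rightarrow$(2) I would invoke Corollary 3 in the cyclic case, where $\Nu$ is torsion-free, and Theorem 8 in the dihedral case, where the order-$2$ structure group fixes the midpoint of $V/\Nu$, which is an ordinary point; the implication (2)$\Rightarrow$(1) is immediate.

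The crux is (1)$\Rightarrow$(3), and here the Seifert hypothesis is essential, since Example 7 shows that a non-orthogonal splitting can occur while the structure group rotates the fiber circle with no fixed point. The interval case is free, since (3) always holds there, so I would reduce to the circle case, where $\Nu\subseteq\Tau$. Given a splitting $\Gamma=\Nu\Sigma$, Lemma 3 provides a $\Pi$-invariant $2$-dimensional complement $W=\mathrm{Span}(\Sigma)$ to $V$ with $V\cap W=\{0\}$; writing $W$ as the graph of a $\Pi$-equivariant map $\phi\colon V^\perp\to V$, I want to force $\phi=0$, that is $W=V^\perp$, for then the splitting is orthogonal and $\Gamma/\Nu\Kappa$ fixes a point by Theorem 8. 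The mechanism is that a nontrivial rotation of the fiber circle in $\Gamma/\Nu\Kappa$ is induced by a point-group element $B$ with $B|_V=+I$ and $B|_{V^\perp}$ a nontrivial planar rotation; such a $B$ has no nonzero fixed vector on $V^\perp$, so $\mathrm{im}(B-I)=V^\perp$ and $\Pi$-equivariance forces $\phi=0$.

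The hard part is to guarantee that every nontrivial fiber rotation really is induced by such a screw-type point-group element rather than by a stray translation whose $V$-component lies outside $\Nu$, which is exactly the phenomenon of Example 7 and which, if present, would produce a non-orthogonal splitting with no fixed point and defeat the equivalence. Equivalently, one must know that for these fibrations the translation lattice splits orthogonally as $(\Tau\cap V)\oplus(\Tau\cap V^\perp)$, so that $\Tau$ projects onto $\Nu$ in $V$ and contributes no fiber rotation. I expect this to be the main obstacle: it is a feature of the fiber being a genuine symmetry axis in the tabulated Seifert fibrations of Conway et al., and I would establish it either from that axial structure or, failing a uniform argument, by direct inspection of Table 1 of \cite{R-T}, reading off the order of $\Gamma/\Nu\Kappa$ and the screw data from the \textrm{ind} column. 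With the stray-translation case excluded, a nontrivial fiber rotation forces an unremovable screw, which obstructs splitting, yielding (1)$\Rightarrow$(3) and closing the cycle (1)$\Rightarrow$(3)$\Rightarrow$(2)$\Rightarrow$(1).
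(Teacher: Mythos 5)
Your skeleton matches the paper's: dispose of the infinite dihedral case first (structure group fixes the midpoint of the interval $V/\Nu$, so everything is equivalent there), reduce to $\Nu$ infinite cyclic, settle the equivalences among (2)--(6) via Corollary 3, Theorems 6 and 7, and concentrate the difficulty in (1)$\Rightarrow$(3)/(2). You have also correctly located where that difficulty lives. But your proposed resolution of the crux does not work, and the tell is that you yourself exhibit the counterexample. You propose to establish that ``the translation lattice splits orthogonally as $(\Tau\cap V)\oplus(\Tau\cap V^\perp)$'' for the tabulated fibrations and thereby exclude the stray-translation case. That property is \emph{false} for some of these fibrations: in Example 8 (IT number 5, the fibration $(*_1*_1)$ of Table 1 of \cite{R-T}), $\Nu=\langle t_1t_2\rangle$, $\Kappa=\langle t_1t_2^{-1},t_3\rangle$, and the pure translation $t_1$ has $V$-component equal to half a generator of $\Nu$, so $\Nu\Kappa t_1$ rotates the fiber circle by a halfturn and $\Tau\not\subseteq(\Tau\cap V)+(\Tau\cap V^\perp)$. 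The stray-translation case cannot be excluded; in exactly those cases one must instead \emph{prove that the extension does not split}, which is a different task that your plan never carries out. Relatedly, your $\Pi$-equivariant graph argument forcing $\phi=0$ requires $\mathrm{Hom}_\Pi(V^\perp,V)=0$, and this fails precisely in the monoclinic family when $V$ is not the primary invariant axis (in Example 8 the point group element $A$ acts by $-1$ on $V$ and has a $-1$-eigenvector $e_3$ in $V^\perp$, so a nonzero equivariant $\phi$ exists).

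The paper's proof handles (1)$\Rightarrow$(2) by a case analysis over crystal families: for tetragonal/hexagonal, orthorhombic, triclinic, and the monoclinic subcase $V=U$, the classification of $\Pi$-invariant one- and two-dimensional subspaces forces $W=\mathrm{Span}(\Sigma)=V^\perp$ outright (no graph argument is needed, only Lemma 3 and the fact that $W\cap V=\{0\}$). For the remaining monoclinic fibrations with $V\neq U$ and no fixed point of the structure group on $V/\Nu$, there is no soft argument: the paper verifies non-splitting in each of the ten cases by explicit computation (Examples 8 and 9, using Lemma 4 or a direct conjugation calculation). Your fallback of ``direct inspection of Table 1'' gestures at the same thing, but reading off the ind column is not a proof of non-splitting; some version of the Example 8/9 computations, or Lemma 4, is unavoidable, and that is the substantive content your proposal is missing.
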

\begin{proof}
Suppose that the space group extension $1\to\Nu\to\Gamma\to\Gamma/\Nu\to 1$ splits. 
If $\Nu$ is an infinite dihedral group, 
then the space group extension splits orthogonally by Theorem 8,  
since the structure group $\Gamma/\Nu\Kappa$ fixes the midpoint of the closed interval $V/\Nu$. 
Moreover $\Gamma/\Nu\Kappa$ has order 1 or 2, 
since the group of isometries of $V/\Nu$ has order 2,  
and $\Gamma/\Nu\Kappa$ acts effectively on $V/\Nu$ by Theorem 6. 
Therefore statements (1)-(6) are equivalent if $\Nu$ is an infinite dihedral group.
Hence, we may assume that $\Nu$ is an infinite cyclic group. 

Now $\Gamma$ has a subgroup $\Sigma$ such that $\Gamma = \Nu\Sigma$ 
and $\Nu\cap\Sigma =\{I\}$.  
Let $W = \mathrm{Span}(\Sigma)$, and let $\Pi$ be the point group of $\Gamma$.  
By Lemma 3, we have that $W$ is $\Pi$-invariant, $\dim(W) = 2$, and $V\cap W=\{0\}$. 

Assume first that $\Gamma$ belongs to either the tetragonal or hexagonal families 
(IT numbers 75-194).  Then $E^3$ has a unique 1-dimensional $\Pi$-invariant vector subspace, 
namely $V$.  Hence $E^3$ has a unique 2-dimensional $\Pi$-invariant vector subspace.  
Therefore $W = V^\perp$, and so the space group extension splits orthogonally. 

Assume next that $\Gamma$ belongs to the orthorhombic family (IT numbers 16-74). 
Then $E^3$ has exactly three $\Pi$-invariant 1-dimensional vector subspaces; 
they are mutually orthogonal, and one of them is $V$. 
Hence $E^3$ has exactly three $\Pi$-invariant 2-dimensional vector subspaces; 
they are mutually orthogonal, two of them contain $V$, and the third is $V^\perp$. 
As $W\cap V = \{0\}$, we must have $W = V^\perp$, 
and so the space group extension splits orthogonally. 

Assume next that $\Gamma$ belongs to the monoclinic family (IT numbers 3-15).  
Then $E^3$ has a primary 1-dimensional $\Pi$-invariant subspace $U$, 
and infinitely many other 1-dimensional $\Pi$-invariant subspaces 
all of which are subspaces of $U^\perp$. 
Hence $E^3$ has a primary 2-dimensional $\Pi$-invariant subspace $U^\perp$ 
and infinitely many other 2-dimensional $\Pi$-invariant subspaces 
all of which contain $U$. 
Thus if $V = U$, then $W = V^\perp$, since $W\cap V = \{0\}$. 
Hence the space group extension splits orthogonally. 
Note that $V = U$ if and only if the base of the corresponding Seifert fibration is either 
a torus, indicated by $\circ$, or a pillow, indicated by 2222. 
Suppose $V \neq U$. 
If  the structure group $\Gamma/\Nu\Kappa$ fixes a point of $V/\Nu$, 
then the space group extension splits orthogonally by Corollary 3. 
If  the structure group $\Gamma/\Nu\Kappa$ does not fix a point of $V/\Nu$, 
we have to prove that the space group extension does not split. 
There are 10 cases, and these have to be considered on a case by case basis. 
The first two cases are considered in Examples 8 and 9. 
The remaining eight cases are handled by similar arguments. 

Finally assume that $\Gamma$ belongs to the triclinic family (IT number 1-2). 
Then the space group extension splits orthogonally by Corollary 3, 
since the structure group $\Gamma/\Nu\Kappa$ fixes a  point of $V/\Nu$. 
Thus (1) and (2) are equivalent. 
The statements (2) and (3) are equivalent by Corollary 3. 

We have that $\Kappa = \Nu^\perp$, and so by Theorem 7, 
the group $\Gamma/\Nu\Kappa$ is either a finite cyclic group generated by a rotation of $V/\Nu$ 
or a finite dihedral group generated by a rotation and a reflection of $V/\Nu$. 
A nontrivial rotation of $V/\Nu$ has no fixed points. 
Hence $\Gamma/\Nu\Kappa$ fixes a point of $V/\Nu$ if and only if either $\Gamma/\Nu\Kappa$ 
has order 1 or $\Gamma/\Nu\Kappa$ has order 2 and $\Gamma/\Nu\Kappa$ acts 
on $V/\Nu$ by a reflection. 
By Theorem 6, we have that $(V/\Nu)/(\Gamma/\Nu\Kappa)=V/(\Gamma/\Kappa)$.
Therefore (3)-(5) are equivalent. 

The generic fiber of the corresponding Seifert fibration is $V/\Nu$ by Theorem 2, 
and the base of the corresponding co-Seifert fibration is isometric to $V/(\Gamma/\Kappa)$ 
by Theorem 7. 
Therefore (5) and (6) are equivalent. 
\end{proof}
\noindent{\bf Remark 4.}  The splitting of the space group extensions in Table 1 of \cite{R-T} 
was determined by brute force computer calculations.  
See Example 8 for an illustration of our methods. 
What is new in Theorem 10 is the equivalence of statements (1) - (3).  
Statement (6) of Theorem 10 is phrased in the terminology of Table 1 of \cite{R-T}.    

\vspace{.15in}
\noindent{\bf Example 8.}
Let $\Gamma$ be the 3-space group with IT number 5 in Table 1B of \cite{B-Z}. 
Then $\Gamma =\langle t_1,t_2,t_3,A\rangle$ where $t_i = e_i+I$ for $i=1,2,3$ are 
the standard translations, and 
$$A = \left(\begin{array}{rrr} 
0 & -1 & 0 \\
-1 & 0 & 0 \\
0 & 0 & -1
\end{array}\right).$$
The group $\Nu = \langle t_1t_2\rangle$ is a complete normal subgroup of $\Gamma$. 
Let $\Kappa$ be the kernel of the action of $\Gamma$ on $V = \span(\Nu) = \span\{e_1+e_2\}$. 
Then $\Kappa =\Nu^\perp= \langle t_1t_2^{-1},t_3\rangle$. 
The structure group $\Gamma/\Nu\Kappa$ is a dihedral group of order 4 
generated by $\Nu\Kappa t_1$ and $\Nu\Kappa A$. 
As 
$t_1 = \frac{1}{2}(e_1+e_2) +\frac{1}{2}(e_1-e_2)+I,$
we have that $\Nu\Kappa t_1$ acts on the circle $V/\Nu$ as a halfturn. 
The element $\Nu\Kappa A$ acts on $V/\Nu$ as a reflection. 

Suppose that the space group extension $1\to\Nu\to\Gamma\to\Gamma/\Nu\to 1$ splits. 
Then there exists a subgroup $\Sigma$ of $\Gamma$ such that $\Gamma = \Nu\Sigma$ 
and $\Nu\cap\Sigma = \{I\}$. 
Hence there exists unique integers $a$ and $b$ such that 
$(t_1t_2)^at_1$ and  $(t_1t_2)^bA$ are in $\Sigma$. 
The mapping $(b+B) \mapsto \Nu(b+B)$ is an isomorphism from $\Sigma$ onto $\Gamma/\Nu$. 
We have that 
$$\Nu A (\Nu t_1)(\Nu A)^{-1} = \Nu At_1A^{-1} = \Nu t_2^{-1} = \Nu (t_1t_2)t_2^{-1} = \Nu t_1.$$
Hence we must have 
$$((t_1t_2)^bA)((t_1t_2)^at_1)((t_1t_2)^bA)^{-1} = (t_1t_2)^at_1,$$
and so we must have 
$$A(t_1^{a+1}t_2^a)A^{-1} = t_1^{a+1}t_2^a,$$
which implies that $-a-1 = a$, and so $a = -1/2$, which is a contradiction. 
Therefore the space group extension does not split. 
This example corresponds to the Seifert fibration $(*_1*_1)$ in Table 1 of \cite{R-T}.

\vspace{.15in}
\noindent{\bf Remark 5.} 
If $\Nu$ is a 1- or 2-dimensional, complete, normal subgroup 
of a 3-space group $\Gamma$, then the space group extension 
$1\to\Nu\to\Gamma\to\Gamma/\Nu\to 1$ splits 
if and only if the space group extension affinely equivalent to it 
in Table 1 of  \cite{R-T} splits, since splitting is an algebraic property. 
Note that splitting orthogonally is a geometric property, and it may 
not be preserved under affine equivalence. 

\begin{lemma} 
Let $\Nu$ be a complete normal subgroup of an $n$-space group $\Gamma$, 
and let $\Kappa$ be the kernel of the action of $\Gamma$ on $V = \span(\Nu)$. 
Suppose that the space group extension $1\to\Nu\to\Gamma\to\Gamma/\Nu\to 1$ splits. 
Let  $b+B$ be an element of $\Gamma$ such that $\Nu(b+B)$ has finite order in $\Gamma/\Nu$. 
Then the element $\Nu\Kappa(b+B)$ of $\Gamma/\Nu\Kappa$ fixes a point of $V/\Nu$. 
\end{lemma}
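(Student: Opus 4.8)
The plan is to use the splitting to replace $b+B$ by a genuinely finite-order element of $\Gamma$, to invoke the fact that a finite-order Euclidean isometry has a fixed point, and then to read off a fixed point of $\Nu\Kappa(b+B)$ on $V/\Nu$ from the $V$-component of that Euclidean fixed point.

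First I would use the splitting to reduce to a representative lying in a complement. Since the extension splits, $\Gamma$ has a subgroup $\Sigma$ with $\Gamma=\Nu\Sigma$ and $\Nu\cap\Sigma=\{I\}$, so $p|_\Sigma\colon\Sigma\to\Gamma/\Nu$ is an isomorphism. Writing $b+B=(a+A)(s+S)$ with $a+A\in\Nu$ and $s+S\in\Sigma$, we have $\Nu\Kappa(b+B)=\Nu\Kappa(s+S)$ because $a+A\in\Nu\subseteq\Nu\Kappa$; hence we may assume from the outset that $b+B\in\Sigma$. The key point this buys us is the following: since $p|_\Sigma$ is an isomorphism and $\Nu(b+B)$ has finite order $k$ in $\Gamma/\Nu$, the element $b+B$ itself has order $k$ in $\Sigma$, so $(b+B)^k=I$ in $\Gamma$.

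Next I would produce a Euclidean fixed point. A finite-order isometry $b+B$ of $E^n$ fixes a point: for any $y\in E^n$, the centroid $\overline{y}=\frac{1}{k}\sum_{i=0}^{k-1}(b+B)^i y$ is fixed, because the affine map $b+B$ commutes with the formation of the centroid of the orbit it permutes. Let $x_0$ be such a fixed point and write $x_0=v_0+w_0$ with $v_0\in V$ and $w_0\in V^\perp$, and write $b=c+d$ with $c\in V$ and $d\in V^\perp$. By Theorem 1(1) we have $BV=V$, and hence $BV^\perp=V^\perp$, so the equation $(b+B)x_0=x_0$ decomposes, via the diagonal action of $\S 3$, into the two equations $c+Bv_0=v_0$ and $d+Bw_0=w_0$.

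Finally I would descend to $V/\Nu$. The structure group acts on $V/\Nu$ by $(\Nu\Kappa(b+B))(\Nu v)=\Nu(c+Bv)$, so applying this with $v=v_0$ and using $c+Bv_0=v_0$ gives $(\Nu\Kappa(b+B))(\Nu v_0)=\Nu v_0$; thus $\Nu\Kappa(b+B)$ fixes the point $\Nu v_0$ of $V/\Nu$. The argument is short, and the only genuine idea—the step I would flag as the crux—is the reduction in the first paragraph: recognizing that the splitting lets us lift $\Nu(b+B)$ to an element of $\Gamma$ of the \emph{same} finite order (via the complement $\Sigma$), after which the existence of a Euclidean fixed point and its projection to the $V$-factor are routine. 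Without a complement, finite order modulo $\Nu$ need not lift to finite order in $\Gamma$, so this is exactly where the splitting hypothesis is used.
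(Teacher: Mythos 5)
Your proof is correct and follows essentially the same route as the paper's: use the complement $\Sigma$ to replace $b+B$ by an element of $\Gamma$ of the same finite order, take a Euclidean fixed point of that element, and project its $V$-component to a fixed point of $\Nu\Kappa(b+B)$ on $V/\Nu$. The extra details you supply (the centroid argument for the existence of the fixed point, and the decomposition of the fixed-point equation along $V\times V^\perp$) are all sound.
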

\begin{proof}
Let $\Sigma$ be a subgroup of $\Gamma$ such that $\Gamma = \Nu\Sigma$ 
and $\Nu\cap\Sigma = \{I\}$. 
Then there exists a unique element $c+C$ of $\Sigma$ such that $\Nu(b+B) = \Nu(c+C)$. 
The element $c+C$ has the same order as $\Nu(b+B)$. 
Hence $c+C$ fixes a point $x$ of $E^n$.  
Write $x = v+w$ with $v\in V$ and $w\in V^\perp$. 
Then $\Nu\Kappa(c+C)$ fixes the point $\Nu v$ of  $V/\Nu$. 
Hence $\Nu\Kappa(b+B)$ fixes the point $\Nu v$ of  $V/\Nu$. 
\end{proof}

\noindent{\bf Example 9.}
Let $\Gamma$ be the 3-space group with IT number 7 in Table 1B of \cite{B-Z}. 
Then $\Gamma = \langle t_1,t_2,t_3,\alpha\rangle$ 
where $t_i = e_i+I$ for $i=1,2,3$ are the standard translations, 
and $\alpha = \frac{1}{2}e_3+\diag(1,-1,1)$.  
The group $\Nu = \langle t_3\rangle$ is a complete normal subgroup of $\Gamma$. 
Let $\Kappa$ be the kernel of the action of $\Gamma$ on $V = \span(\Nu) = \span\{e_3\}$. 
Then $\Kappa = \Nu^\perp = \langle t_1,t_2\rangle$. 
The structure group $\Gamma/\Nu\Kappa$ is generated by $\Nu\Kappa\alpha$, 
and $\Nu\Kappa\alpha$ acts on the circle $V/\Nu$ as a halfturn. 
The element $\Nu\alpha$ has order two in $\Gamma/\Nu$. 
Therefore the space group extension $1\to\Nu\to\Gamma\to\Gamma/\Nu\to 1$ 
does not split by Lemma 4. 
This example corresponds to the Seifert fibration $(*:*:)$ in Table 1 of \cite{R-T}. 

\section{The Isometry Group of a Compact Flat Orbifold} 

Let $\Gamma$ be an $n$-space group, and let $\mathrm{Isom}(E^n/\Gamma)$ be the group of isometries 
of the flat orbifold $E^n/\Gamma$. 
In this section, we give a geometric description of $\mathrm{Isom}(E^n/\Gamma)$ in terms of Lie group theory. 

Let $N_E(\Gamma)$ be the normalizer of $\Gamma$ in $\mathrm{Isom}(E^n)$. 
Define 
$$\Phi: N_E(\Gamma) \to \mathrm{Isom}(E^n/\Gamma)$$
by $\Phi(\eta)= \eta_\star$ where $\eta_\star(\Gamma x) = \Gamma\eta(x)$ for each $x\in E^n$. 

\begin{lemma} 
Let $\Gamma$ be an $n$-space group, and let $N_E(\Gamma)$ be the normalizer of $\Gamma$ in $\mathrm{Isom}(E^n)$. 
Then $\Phi: N_E(\Gamma) \to \mathrm{Isom}(E^n/\Gamma)$ is an epimorphism with kernel $\Gamma$. 
\end{lemma}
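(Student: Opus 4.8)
The plan is to verify in turn that $\Phi$ is a homomorphism, that its kernel is exactly $\Gamma$, and that it is surjective. That $\Phi$ is a homomorphism is immediate: for $\eta_1,\eta_2\in N_E(\Gamma)$ and $x\in E^n$ we have $(\eta_1\eta_2)_\star(\Gamma x)=\Gamma\eta_1(\eta_2(x))=(\eta_1)_\star((\eta_2)_\star(\Gamma x))$. The inclusion $\Gamma\subseteq\ker\Phi$ is also immediate, since $\gamma\in\Gamma$ gives $\gamma_\star(\Gamma x)=\Gamma\gamma x=\Gamma x$, so $\gamma_\star$ is the identity, and $\Gamma\subseteq N_E(\Gamma)$ because $\Gamma$ normalizes itself. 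The substance of the lemma is therefore the reverse inclusion $\ker\Phi\subseteq\Gamma$ together with surjectivity.

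For the kernel, suppose $\eta\in N_E(\Gamma)$ with $\eta_\star=\mathrm{id}$, so that $\eta(x)\in\Gamma x$ for every $x\in E^n$. Fix $x_0$ and choose $\gamma_0\in\Gamma$ with $\eta(x_0)=\gamma_0 x_0$; replacing $\eta$ by $g=\gamma_0^{-1}\eta$, which still lies in $\ker\Phi$, we may assume $g(x_0)=x_0$ and $g(x)\in\Gamma x$ for all $x$. Since $\Gamma$ acts properly discontinuously, the stabilizer $\Gamma_{x_0}$ is finite and there is an $r>0$ with $\gamma B\cap B=\emptyset$ for all $\gamma\in\Gamma\setminus\Gamma_{x_0}$, where $B=B(x_0,r)$. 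For $x\in B$ the isometry $g$ fixes $x_0$, so $g(x)\in B$; writing $g(x)=\gamma x$ with $\gamma\in\Gamma$ forces $\gamma B\cap B\neq\emptyset$, hence $\gamma\in\Gamma_{x_0}$. Thus $B=\bigcup_{\gamma\in\Gamma_{x_0}}(\mathrm{Fix}(\gamma^{-1}g)\cap B)$ is a finite union of intersections of $B$ with the fixed affine subspaces of the Euclidean isometries $\gamma^{-1}g$. A finite union of proper affine subspaces of $E^n$ has empty interior, so some $\mathrm{Fix}(\gamma^{-1}g)$ must be all of $E^n$; that is, $\gamma^{-1}g=\mathrm{id}$ and hence $g=\gamma\in\Gamma_{x_0}$. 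Therefore $\eta=\gamma_0 g\in\Gamma$, proving $\ker\Phi=\Gamma$. I expect this passage from the pointwise condition $\eta(x)\in\Gamma x$ to the global conclusion $\eta\in\Gamma$ to be the main obstacle, and the interior/measure argument above is the device that closes the gap.

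For surjectivity, I would use that $\pi\colon E^n\to E^n/\Gamma$ is the universal orbifold covering, $E^n$ being simply connected. Given $f\in\mathrm{Isom}(E^n/\Gamma)$, the composite $f\circ\pi$ is again an orbifold covering projection, so by the lifting theorem for the simply connected cover it factors as $f\circ\pi=\pi\circ\eta$ for some local isometry $\eta\colon E^n\to E^n$; since $\pi$ is a local isometry and $\eta$ is a bijective local isometry of a complete Euclidean space, $\eta\in\mathrm{Isom}(E^n)$. For $\gamma\in\Gamma$ we compute $\pi\circ(\eta\gamma\eta^{-1})=(f\circ\pi)\circ\gamma\circ\eta^{-1}=f\circ\pi\circ\eta^{-1}=\pi\circ\eta\circ\eta^{-1}=\pi$, so $\eta\gamma\eta^{-1}\in\ker\Phi=\Gamma$; hence $\eta\in N_E(\Gamma)$. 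Finally $\pi\circ\eta=f\circ\pi$ reads $\Gamma\eta(x)=f(\Gamma x)$, i.e.\ $\Phi(\eta)=\eta_\star=f$. This shows $\Phi$ is onto and, with the kernel computation, completes the proof.
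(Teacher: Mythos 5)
Your proof is correct, but your kernel computation takes a genuinely different route from the paper's. The paper picks a point $a$ in a fundamental domain $D$, writes $\eta(a)=\gamma(a)$ for some $\gamma\in\Gamma$, and uses continuity of $\eta$ to force $\eta(B(a,r))\subseteq\gamma(D)$; since $\gamma(D)$ meets each orbit essentially once, $\eta$ and $\gamma$ agree on the ball and hence everywhere. You instead normalize so that $g=\gamma_0^{-1}\eta$ fixes a point $x_0$, use proper discontinuity to confine the orbit-correcting elements on a small ball to the finite stabilizer $\Gamma_{x_0}$, and finish by noting that a ball cannot be covered by finitely many proper affine fixed-point sets, so some $\gamma^{-1}g$ is the identity. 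Both arguments close the same gap (from the pointwise condition $\eta(x)\in\Gamma x$ to $\eta\in\Gamma$); the paper's is shorter because the fundamental domain does the bookkeeping, while yours avoids fundamental domains and works locally at an arbitrary, possibly singular, point. For surjectivity the paper simply cites Theorem 13.2.6 of Ratcliffe's book, and your lifting argument through the universal covering is essentially the proof of that theorem; the one phrase to read with care is ``$\pi$ is a local isometry,'' which holds only off the singular locus --- the standard patch is to observe that $\eta$ preserves path lengths on the dense open regular set and is therefore distance-preserving. No essential gap.
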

\begin{proof}  The map $\Phi$ is an epimorphism by Theorem 13.2.6 of \cite{R}.  
The group $\Gamma$ is obviously in the kernel of $\Phi$.  
Suppose $\eta \in \mathrm{Ker}(\Phi)$. 
Let $D$ be a fundamental domain for $\Gamma$ in $E^n$, and suppose $a\in D$. 
Then there exists $\gamma \in \Gamma$ such that $\eta(a) = \gamma(a)$. 
Hence $\eta(a) \in \gamma(D)$. 
As $\eta$ is continuous, there exists $r> 0$ such that the open ball $B(a,r)$, centered at $a$ of radius $r$, 
is contained in $D$ and $\eta(B(a,r)) \subset \gamma(D)$. 
Then $\eta(x) = \gamma(x)$ for all $x\in B(a,r)$.  Hence $\eta = \gamma$. 
Therefore $\mathrm{Ker}(\Phi) = \Gamma$. 
\end{proof}

The group $\mathrm{Isom}(E^n)$ is a topological group with the subspace topology inherited from 
the space $C(E^n,E^n)$ of continuous self-maps of $E^n$ with the compact-open topology by Theorem 5.2.2 of \cite{R};
moreover $\mathrm{Isom}(E^n)$ is a Lie group by Theorem 5.2.4 of \cite{R}. 

\begin{lemma} 
Let $\Gamma$ be an $n$-space group, and let $N_E(\Gamma)$ be the normalizer of $\Gamma$ in $\mathrm{Isom}(E^n)$. 
Then $N_E(\Gamma)$ is a closed subgroup of $\mathrm{Isom}(E^n)$,  
and therefore $N_E(\Gamma)$ is a Lie subgroup of $\mathrm{Isom}(E^n)$
\end{lemma}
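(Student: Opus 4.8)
The plan is to observe that the second assertion is an immediate consequence of the first: a closed subgroup of a Lie group is an embedded Lie subgroup by Cartan's closed subgroup theorem, and $\mathrm{Isom}(E^n)$ is a Lie group by the cited Theorem 5.2.4 of \cite{R}. So all the content lies in showing that $N_E(\Gamma)$ is a \emph{closed} subset of $\mathrm{Isom}(E^n)$, and I would realize it as an intersection of preimages of a closed set under continuous maps.

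First I would record that $\Gamma$ itself is closed in $\mathrm{Isom}(E^n)$. Since $\mathrm{Isom}(E^n)$ is a Lie group it is a Hausdorff topological group, and $\Gamma$ is discrete by the definition of an $n$-space group; a discrete subgroup of a Hausdorff topological group is always closed. The short justification I would give is that, choosing a symmetric open $V\ni e$ with $V V$ meeting $\Gamma$ only in $\{e\}$, each translate $\phi V$ meets $\Gamma$ in at most one point, whence every limit point of $\Gamma$ already lies in $\Gamma$. Next, for each fixed $\gamma\in\Gamma$ the conjugation maps $c_\gamma,d_\gamma:\mathrm{Isom}(E^n)\to\mathrm{Isom}(E^n)$ given by $c_\gamma(\phi)=\phi\gamma\phi^{-1}$ and $d_\gamma(\phi)=\phi^{-1}\gamma\phi$ are continuous, because multiplication and inversion are continuous in the topological group $\mathrm{Isom}(E^n)$. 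The key identity is then
$$N_E(\Gamma)=\Big(\bigcap_{\gamma\in\Gamma}c_\gamma^{-1}(\Gamma)\Big)\cap\Big(\bigcap_{\gamma\in\Gamma}d_\gamma^{-1}(\Gamma)\Big),$$
which holds because $\phi$ normalizes $\Gamma$ exactly when $\phi\Gamma\phi^{-1}\subseteq\Gamma$ and $\phi^{-1}\Gamma\phi\subseteq\Gamma$ simultaneously. Each set $c_\gamma^{-1}(\Gamma)$ and $d_\gamma^{-1}(\Gamma)$ is the preimage of the closed set $\Gamma$ under a continuous map, hence closed, and an arbitrary intersection of closed sets is closed; therefore $N_E(\Gamma)$ is closed and the lemma follows.

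This argument is essentially formal, so there is no serious obstacle; the two points that require care are the discreteness-implies-closedness step for $\Gamma$ and the recognition that the normalizer is captured by the two families of inclusions above. I would note that using both inclusions keeps the argument self-contained, so one need not invoke the covolume-preservation fact that $\phi\Gamma\phi^{-1}\subseteq\Gamma$ already forces equality for space groups. Since the compact-open topology on $\mathrm{Isom}(E^n)$ is metrizable, one could alternatively phrase the closedness via sequences: if $\phi_k\to\phi$ with $\phi_k\in N_E(\Gamma)$, then $\phi_k\gamma\phi_k^{-1}\to\phi\gamma\phi^{-1}$ lies in the closed set $\Gamma$ for every $\gamma$, and likewise for $\phi_k^{-1}\gamma\phi_k$, giving $\phi\in N_E(\Gamma)$.
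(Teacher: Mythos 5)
Your proposal is correct and follows essentially the same route as the paper: the paper also reduces to showing closedness, verifies it via the continuity of conjugation together with the fact that $\Gamma$ is closed and discrete in $\mathrm{Isom}(E^n)$ (phrased sequentially, exactly as in your final remark), checks both inclusions $\eta\Gamma\eta^{-1}\subseteq\Gamma$ and $\eta^{-1}\Gamma\eta\subseteq\Gamma$, and then invokes the closed subgroup theorem. The only cosmetic difference is that you package the argument as an intersection of preimages of the closed set $\Gamma$ and prove the discreteness-implies-closedness step yourself, where the paper cites it.
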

\begin{proof} 
Suppose $\eta_i\to \eta$ in $\mathrm{Isom}(E^n)$ with $\eta_i \in N_E(\Gamma)$ for each $i$. 
Let $\gamma\in \Gamma$.  Then there exists $\gamma_i\in\Gamma$ such that $\eta_i\gamma\eta_i^{-1} = \gamma_i$ for each $i$. 
Now $\eta_i\gamma\eta_i^{-1} \to \eta\gamma\eta^{-1}$.  Hence $\gamma_i \to \eta\gamma\eta^{-1}$. 
The convergent sequence $\{\gamma_i\}$ is eventually constant, 
since $\Gamma$ is a closed discrete subset of $\mathrm{Isom}(E^n)$ by Lemma 3 of \S5.3 of \cite{R}. 
Therefore $\eta\gamma\eta^{-1} \in \Gamma$.  Hence $\eta\Gamma\eta^{-1} \subseteq \Gamma$. 
Likewise $\eta^{-1}\Gamma \eta \subseteq \Gamma$.  Hence $\Gamma \subseteq \eta\Gamma\eta^{-1}$. 
Therefore $\eta \in N_E(\Gamma)$.  Thus $N_E(\Gamma)$ is closed. 
Hence $N_E(\Gamma)$ is a Lie subgroup of $\mathrm{Isom}(E^n)$, 
since every closed subgroup of a Lie group is a Lie subgroup. 
\end{proof}

\begin{lemma} 
Let $\Gamma$ be an $n$-space group, and let $N_E(\Gamma)$ be the normalizer of $\Gamma$ in $\mathrm{Isom}(E^n)$. 
Then the quotient topological group $N_E(\Gamma)/\Gamma$ is compact,  
and the quotient map $\pi: N_E(\Gamma) \to N_E(\Gamma)/\Gamma$ is a covering projection. 
Therefore $N_E(\Gamma)/\Gamma$ is a Lie group with $\pi$ a smooth map. 
\end{lemma}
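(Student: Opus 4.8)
The plan is to treat the three assertions in turn, since each follows from a standard principle once the right structural fact is in place. Throughout I regard $\mathrm{Isom}(E^n)\cong E^n\rtimes {\rm O}(n)$, so that as a \emph{topological space} it is homeomorphic to $E^n\times {\rm O}(n)$ via $a+A\leftrightarrow(a,A)$; this factorization is what makes the compactness step transparent.

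First I would establish that $\pi$ is a covering projection. The group $\Gamma$ is a closed discrete subset of $\mathrm{Isom}(E^n)$ (Lemma 3 of \S5.3 of \cite{R}, already invoked in the proof of Lemma 6), and since $N_E(\Gamma)$ carries the subspace topology, $\Gamma$ is a discrete subgroup of $N_E(\Gamma)$; moreover $\Gamma$ is normal in $N_E(\Gamma)$ by the definition of the normalizer. For a topological group $G$ with a discrete normal subgroup $\Gamma$ the quotient map $G\to G/\Gamma$ is a covering projection: choose an open neighborhood $V$ of the identity with $V\cap\Gamma=\{1\}$, then a symmetric open neighborhood $W$ of the identity with $W^{-1}W\subseteq V$, and verify that for each $\eta\in N_E(\Gamma)$ the open set $\pi(\eta W)$ is evenly covered. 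Indeed $\pi^{-1}(\pi(\eta W))=\bigcup_{\gamma\in\Gamma}\eta W\gamma$, and if $\eta w_1\gamma_1=\eta w_2\gamma_2$ then $w_2^{-1}w_1=\gamma_2\gamma_1^{-1}\in W^{-1}W\cap\Gamma=\{1\}$, so the sheets $\eta W\gamma$ are pairwise disjoint and each is carried homeomorphically onto $\pi(\eta W)$ by the open map $\pi$. Applying this with $G=N_E(\Gamma)$ gives the covering assertion.

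The Lie group structure is then automatic. Since $N_E(\Gamma)$ is a Lie group by Lemma 6 and $\Gamma$ is a discrete normal subgroup, the quotient $N_E(\Gamma)/\Gamma$ carries the unique smooth structure for which the covering $\pi$ is a local diffeomorphism, and the group operations on $N_E(\Gamma)/\Gamma$ are smooth because they are covered, through the local sections of $\pi$, by the smooth operations on $N_E(\Gamma)$. This is just the standard fact that the quotient of a Lie group by a discrete normal subgroup is again a Lie group with smooth quotient homomorphism.

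The essential point is compactness, and here I would exhibit an explicit compact set that surjects onto the quotient. Let $\mathrm{T}$ be the group of translations of $\Gamma$; because $E^n/\Gamma$ is compact, $\mathrm{T}$ is a full-rank lattice in $E^n$, so it admits a compact fundamental domain $P$ (a closed fundamental parallelepiped). Put $S=N_E(\Gamma)\cap(P\times {\rm O}(n))$ under the homeomorphism $\mathrm{Isom}(E^n)\cong E^n\times{\rm O}(n)$. Then $P\times{\rm O}(n)$ is compact and $N_E(\Gamma)$ is closed by Lemma 6, so $S$ is compact. Given any $\eta=a+A\in N_E(\Gamma)$, choose $t\in\mathrm{T}$ with $t+a\in P$; since $t+I\in\mathrm{T}\subseteq\Gamma\subseteq N_E(\Gamma)$ we have $(t+I)\eta=(t+a)+A\in S$ while $\pi((t+I)\eta)=\pi(\eta)$. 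Hence $\pi(S)=N_E(\Gamma)/\Gamma$, and as the continuous image of the compact set $S$ the quotient $N_E(\Gamma)/\Gamma$ is compact. The main obstacle I anticipate is precisely this compactness step, and specifically the bookkeeping between the group structure of $\mathrm{Isom}(E^n)$ and its topology: one must pass through the topological factorization $\mathrm{Isom}(E^n)\cong E^n\times{\rm O}(n)$ to see that $S$ is compact, and use the Bieberbach fact that $\mathrm{T}$ is a full lattice so that $P$ is compact and meets every $\mathrm{T}$-orbit. As a conceptual alternative one could combine Lemma 5 with the classical fact that the isometry group of a compact metric space is compact to get an algebraic isomorphism $N_E(\Gamma)/\Gamma\cong\mathrm{Isom}(E^n/\Gamma)$; but that route then demands a proof that the induced map is a homeomorphism, which is exactly the delicate topological matching that the compact-transversal argument above avoids.
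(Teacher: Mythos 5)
Your proof is correct, but it follows a genuinely different route from the paper's. For the covering assertion you invoke the general principle that the quotient of a topological group by a discrete normal subgroup is a covering projection, using only that $\Gamma$ is a closed discrete subset of $\mathrm{Isom}(E^n)$; and for compactness you exhibit a compact transversal, namely $S=N_E(\Gamma)\cap(P\times{\rm O}(n))$ with $P$ a fundamental parallelepiped for the translation lattice, and observe that left translation by elements of $\Tau\subseteq\Gamma$ moves every coset representative into $S$. The paper instead factors the problem through the translation subgroup: it first shows that the point group of $N_E(\Gamma)$ is finite, so that $N_E(\Gamma)\cap\mathrm{Trans}(E^n)$ is a closed subgroup of finite index, then builds a chain of covering projections $N_E(\Gamma)\to N_E(\Gamma)/\Tau\to N_E(\Gamma)/\Gamma$, getting compactness from the torus $\mathrm{Trans}(E^n)/\Tau$ together with the finite-index and finite-quotient steps. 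Your argument is shorter and more self-contained for this particular lemma; the paper's longer route earns its keep because the intermediate facts it establishes --- the finiteness of the point group $\Pi_E$ of $N_E(\Gamma)$ and the fact that $N_E(\Gamma)\cap\mathrm{Trans}(E^n)$ is closed of finite index --- are cited again in the proof of Lemma 10 and in the later discussion of $\Pi_E$, so if one adopted your proof those facts would have to be derived separately. One small point of care in your compactness step: you should say explicitly that the homeomorphism $\mathrm{Isom}(E^n)\cong E^n\times{\rm O}(n)$ is with respect to the compact-open topology the paper uses (this is standard and follows from Theorem 5.2.2 of \cite{R}), so that $P\times{\rm O}(n)$ really is a compact subset of $\mathrm{Isom}(E^n)$; with that noted, the argument is complete.
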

\begin{proof}
Let $\Tau$ be the group of translations of $\Gamma$, 
and let $L = \{b : b+I \in \Tau\}$. 
Let $a+A \in N_E(\Gamma)$, and let $b+I \in \Tau$. 
Then $(a+A)(b+I)(a+A)^{-1} = Ab+I$.  
Hence $a+A\in N_E(\Tau)$ and $AL = L$. 
As $A$ preserves the length of vectors of $E^n$, 
there are only finitely many possibilities for the image of a basis of the free abelian group $L$. 
Hence the image of the homomorphism from $N_E(\Gamma)$ to $O(n)$, defined by $a+A \mapsto A$, is finite, 
and so the kernel, $N_E(\Gamma)\cap \mathrm{Trans}(E^n)$, has finite index in $N_E(\Gamma)$. 
The group $\mathrm{Trans}(E^n)$, of translations of $E^n$, is a closed subgroup of $\mathrm{Isom}(E^n)$, 
and so $N_E(\Gamma)\cap\mathrm{Trans}(E^n)$ is a closed subgroup of $N_E(\Gamma)$. 

Now $N_E(\Tau)\cap \mathrm{Trans}(E^n) = \mathrm{Trans}(E^n)$. 
Hence $\mathrm{Trans}(E^n)$ is a closed subgroup of $N_E(\Tau)$ of finite index. 
The topological group $\mathrm{Trans}(E^n)/\Tau$ is an $n$-dimensional torus,  
and the quotient map from $\mathrm{Trans}(E^n)$ to $\mathrm{Trans}(E^n)/\Tau$ is a covering projection. 
As $\mathrm{Trans}(E^n)/\Tau$  is a closed connected subgroup of $N_E(\Tau)/\Tau$ of finite index, 
we deduce that $N_E(\Tau)/\Tau$ is compact, and the quotient map from  $N_E(\Tau)$ to $N_E(\Tau)/\Tau$ 
is a covering projection. 
Now $N_E(\Gamma)$ is a closed subgroup of $N_E(\Tau)$. 
Hence $N_E(\Gamma)/\Tau$ is a closed subgroup of $N_E(\Tau)/\Tau$. 
Therefore $N_E(\Gamma)/\Tau$ is compact, and the quotient map from $N_E(\Gamma)$ to $N_E(\Gamma)/\Tau$ 
is a covering projection. 
The group $\Gamma/\Tau$ is a finite normal subgroup of $N_E(\Gamma)/\Tau$ 
that acts freely on $N_E(\Gamma)/\Tau$. 
Hence $N_E(\Gamma)/\Gamma = (N_E(\Gamma)/\Tau)/(\Gamma/\Tau)$ is compact,  
and the quotient map from $N_E(\Gamma)/\Tau$  to $N_E(\Gamma)/\Gamma$ is a covering projection. 
Therefore, the quotient map $\pi: N_E(\Gamma) \to N_E(\Gamma)/\Gamma$ is a covering projection. 
Hence $N_E(\Gamma)/\Gamma$ is a Lie group such that $\pi$ is a smooth map. 
\end{proof}

\begin{lemma} 
Let $\Gamma$ be an $n$-space group, and let $N_E(\Gamma)$ be the normalizer of $\Gamma$ in $\mathrm{Isom}(E^n)$. 
Then $\mathrm{Isom}(E^n/\Gamma)$ is a compact Lie group with $\Phi: N_E(\Gamma) \to \mathrm{Isom}(E^n/\Gamma)$ 
a smooth covering projection. 
\end{lemma}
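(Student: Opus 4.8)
The plan is to factor $\Phi$ through the quotient $N_E(\Gamma)/\Gamma$ and then transport the Lie group structure already established in Lemma 7. By Lemma 5, the map $\Phi$ is an epimorphism with kernel $\Gamma$, so the first isomorphism theorem produces a bijective homomorphism $\bar\Phi: N_E(\Gamma)/\Gamma \to \mathrm{Isom}(E^n/\Gamma)$ satisfying $\Phi = \bar\Phi\circ\pi$, where $\pi: N_E(\Gamma)\to N_E(\Gamma)/\Gamma$ is the quotient map. By Lemma 7, the group $N_E(\Gamma)/\Gamma$ is a compact Lie group and $\pi$ is a smooth covering projection. It therefore suffices to prove that $\bar\Phi$ is an isomorphism of topological groups; the assertions that $\mathrm{Isom}(E^n/\Gamma)$ is a compact Lie group and that $\Phi$ is a smooth covering projection will then follow by transport of structure.

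First I would equip $\mathrm{Isom}(E^n/\Gamma)$ with the compact-open topology. Since $E^n/\Gamma$ is compact, this coincides with the topology of uniform convergence, and it makes $\mathrm{Isom}(E^n/\Gamma)$ a Hausdorff space. Next I would verify that $\Phi$ is continuous. Let $q: E^n\to E^n/\Gamma$ be the quotient map, which is distance nonincreasing for the quotient inner metric, since $d(\Gamma x,\Gamma y)\le d(x,y)$. Given a compact set $K\subseteq E^n/\Gamma$, I would choose a compact set $\tilde K\subseteq E^n$ with $q(\tilde K)=K$, which exists because $q$ is a quotient map onto a compact space. For $\eta,\eta'\in N_E(\Gamma)$ and $\bar x = q(x)\in K$ with $x\in\tilde K$, the estimate $d(\eta_\star \bar x,\eta'_\star \bar x)=d(q\eta x,q\eta' x)\le d(\eta x,\eta' x)$ shows that uniform closeness of $\eta$ and $\eta'$ on $\tilde K$ forces uniform closeness of $\eta_\star$ and $\eta'_\star$ on $K$. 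Hence $\Phi$ is continuous, and since $\pi$ is a quotient map, $\bar\Phi$ is continuous as well.

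I would then conclude by the standard compactness principle: $\bar\Phi$ is a continuous bijective homomorphism from the compact group $N_E(\Gamma)/\Gamma$ onto the Hausdorff group $\mathrm{Isom}(E^n/\Gamma)$, and a continuous bijection from a compact space to a Hausdorff space is a homeomorphism. Thus $\bar\Phi$ is an isomorphism of topological groups. Transporting the compact Lie group structure of $N_E(\Gamma)/\Gamma$ across $\bar\Phi$ endows $\mathrm{Isom}(E^n/\Gamma)$ with the structure of a compact Lie group and makes $\bar\Phi$ a diffeomorphism. Finally, since $\Phi = \bar\Phi\circ\pi$ is the composition of the smooth covering projection $\pi$ with the diffeomorphism $\bar\Phi$, it is itself a smooth covering projection.

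I expect the main obstacle to be the continuity of $\Phi$ with respect to the compact-open topology, and in particular the clean lifting of an arbitrary compact subset of $E^n/\Gamma$ back through $q$ together with the fact that $q$ is distance nonincreasing for the orbifold inner metric. Once continuity is established, the compact-to-Hausdorff principle delivers the homeomorphism with no further effort, and the Lie-theoretic conclusions are then immediate consequences of Lemma 7 by transport of structure.
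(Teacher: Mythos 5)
Your proposal is correct and follows essentially the same route as the paper: factor $\Phi$ through $N_E(\Gamma)/\Gamma$ via Lemma 5, invoke Lemma 7 for compactness and the covering property of $\pi$, use the compact-to-Hausdorff principle to see that $\overline{\Phi}$ is a homeomorphism, and transport the Lie structure. The only cosmetic difference is in verifying continuity of $\Phi$, where you give a direct uniform-convergence estimate using the distance-nonincreasing quotient map while the paper argues via pointwise convergence of sequences and cites Theorem 5.2.1 of \cite{R}; both are fine.
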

\begin{proof}
We begin by showing that $\Phi$ is continuous. 
Suppose $\eta_i \to \eta$  in $N_E(\Gamma)$.  Then $\eta_i(x) \to \eta(x)$ for each $x\in E^n$. 
Hence $\Gamma\eta_i(x) \to \Gamma\eta(x)$ in $E^n/\Gamma$ for each $x\in E^n$, 
since the quotient map from $E^n$ to $E^n/\Gamma$ is continuous. 
Hence $\Phi(\eta_i)(\Gamma x) \to \Phi(\eta)(\Gamma x)$ in $E^n/\Gamma$ for each $\Gamma x$ in $E^n/\Gamma$. 
Therefore $\Phi(\eta_i) \to \Phi(\eta)$ in $\mathrm{Isom}(E^n/\Gamma)$ by Theorem 5.2.1 of \cite{R}. 
Hence $\Phi: N_E(\Gamma) \to \mathrm{Isom}(E^n/\Gamma)$ is continuous. 

By Lemma 5, we have that $\Phi: N_E(\Gamma) \to \mathrm{Isom}(E^n/\Gamma)$ is an epimorphism with kernel $\Gamma$.  
Hence $\Phi$ induces a continuous isomorphism 
$\overline{\Phi}: N_E(\Gamma)/\Gamma\to \mathrm{Isom}(E^n/\Gamma)$ such that $\Phi = \overline{\Phi}\pi$. 
By Lemma 7, we have that $N_E(\Gamma)/\Gamma$ is compact. 
Hence $\overline{\Phi}$ is a homeomorphism, since $\mathrm{Isom}(E^n/\Gamma)$ is Hausdorff. 
Therefore $\mathrm{Isom}(E^n/\Gamma)$ is a compact Lie group with $\overline{\Phi}$ an isomorphism of Lie groups. 
By Lemma 7, we have that $\pi: N_E(\Gamma) \to N_E(\Gamma)/\Gamma$ is a smooth covering projection.
Therefore $\Phi$ is a smooth covering projection. 
\end{proof}

\begin{lemma} 
Let $Z(\Gamma)$ be the center of a space group $\Gamma$, and let $\Pi$ be the point group of $\Gamma$
Then $\mathrm{Span}(Z(\Gamma)) =\mathrm{Fix}(\Pi) = \{x\in E^n: Ax = x\ \hbox{for each}\ A \in \Pi\}$. 
\end{lemma}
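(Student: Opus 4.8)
The plan is to prove the two set equalities
\[
\mathrm{Span}(Z(\Gamma)) = \mathrm{Fix}(\Pi) = \{x \in E^n : Ax = x \text{ for each } A \in \Pi\}
\]
by establishing containment in both directions, working throughout with the decomposition $b+B$ of an isometry and the homomorphism $\eta: \Gamma \to \mathrm{O}(n)$ with image $\Pi$. The right-hand equality is just the definition of $\mathrm{Fix}(\Pi)$, so the real content is $\mathrm{Span}(Z(\Gamma)) = \mathrm{Fix}(\Pi)$.

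First I would show $\mathrm{Span}(Z(\Gamma)) \subseteq \mathrm{Fix}(\Pi)$. A central element $a + A$ commutes with every $b + B \in \Gamma$; comparing the translational parts of $(a+A)(b+B)$ and $(b+B)(a+A)$ forces $a + Ab = b + Ba$, i.e.\ $(I - B)a = (I - A)b$. Taking $b + B = t + I$ to range over the translation subgroup $\mathrm{T}$ shows $A$ fixes every translation vector, hence $A = I$ on $\mathrm{Span}(\mathrm{T}) = E^n$, so $A = I$ and every central element is a translation $a + I$. Then centrality against an arbitrary $b + B$ gives $Ba = a$ for all $B \in \Pi$, so $a \in \mathrm{Fix}(\Pi)$. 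Since $Z(\Gamma)$ consists of such translations, $\mathrm{Span}(Z(\Gamma)) \subseteq \mathrm{Fix}(\Pi)$.

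For the reverse containment $\mathrm{Fix}(\Pi) \subseteq \mathrm{Span}(Z(\Gamma))$, take $x \in \mathrm{Fix}(\Pi)$, so $Bx = x$ for every $B \in \Pi$. The idea is to produce central translations whose span contains $x$. I would average a translation vector over the point group: given any $t + I \in \mathrm{T}$ and the finite point group $\Pi$, the standard device is to use the crystallographic restriction that $\mathrm{T}$ spans $E^n$ and is $\Pi$-invariant (Theorem 1(1) applied to $\mathrm{T}$, or the elementary fact that $B$ permutes the lattice $L = \{b : b+I \in \mathrm{T}\}$). Concretely, for a translation $t + I$ and $b + B \in \Gamma$, we have $(b+B)(t+I)(b+B)^{-1} = Bt + I$, so $Bt \in L$ for all $B \in \Pi$; a translation $c + I$ is central exactly when $Bc = c$ for every $B \in \Pi$, i.e.\ when $c \in \mathrm{Fix}(\Pi)$. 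Thus the central translation lattice is precisely $L \cap \mathrm{Fix}(\Pi)$, and $Z(\Gamma) \supseteq \{c + I : c \in L \cap \mathrm{Fix}(\Pi)\}$. It remains to show $\mathrm{Span}(L \cap \mathrm{Fix}(\Pi)) = \mathrm{Fix}(\Pi)$.

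The main obstacle is exactly this last step: showing the lattice points lying in $\mathrm{Fix}(\Pi)$ already span $\mathrm{Fix}(\Pi)$ as a real vector space. I expect to handle it by the averaging projection $P = \frac{1}{|\Pi|}\sum_{B \in \Pi} B$, which is the orthogonal projection of $E^n$ onto $\mathrm{Fix}(\Pi)$ and commutes with every element of $\Pi$. Since $L$ spans $E^n$, its image $P(L)$ spans $P(E^n) = \mathrm{Fix}(\Pi)$; and for each $\ell \in L$ the vector $|\Pi| \, P\ell = \sum_{B} B\ell$ is an integer combination of lattice vectors $B\ell \in L$, hence lies in $L \cap \mathrm{Fix}(\Pi)$. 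Therefore the rational span of $L \cap \mathrm{Fix}(\Pi)$ contains $P(L)$, so $\mathrm{Span}(L \cap \mathrm{Fix}(\Pi)) = \mathrm{Fix}(\Pi)$. Combining this with the identification $Z(\Gamma) \supseteq \{c+I : c \in L \cap \mathrm{Fix}(\Pi)\}$ from the previous paragraph gives $\mathrm{Fix}(\Pi) \subseteq \mathrm{Span}(Z(\Gamma))$, completing the proof. This averaging argument is the one point requiring genuine care, since it is where the discreteness and $\Pi$-invariance of the lattice are essential.
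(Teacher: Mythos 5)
Your proof is correct, and the interesting direction $\mathrm{Fix}(\Pi) \subseteq \mathrm{Span}(Z(\Gamma))$ is handled by a genuinely different device than the paper's. The paper first passes to an affine representation in which $\mathrm{T}$ is the standard integer lattice, observes that $\mathrm{Fix}(\Pi)$ is then the solution space of the integer linear system $\{(A-I)x = 0 : A \in \Pi\}$, and extracts a rational (hence, after clearing denominators, integral) basis from the row-reduced echelon form; those integral basis vectors give central translations spanning $\mathrm{Fix}(\Pi)$. You instead use the Reynolds operator $P = \frac{1}{|\Pi|}\sum_{B\in\Pi} B$: since $P$ projects onto $\mathrm{Fix}(\Pi)$ and $|\Pi|\,P\ell = \sum_B B\ell$ lies in $L \cap \mathrm{Fix}(\Pi)$ for each lattice vector $\ell$, the fixed lattice already spans $\mathrm{Fix}(\Pi)$. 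Both arguments are complete; yours avoids the change of representation and the linear-algebra-over-$\rationalnos$ detour, at the cost of invoking the averaging projection, and it also rederives $Z(\Gamma)\subseteq\mathrm{T}$ directly rather than citing Theorem 6 of the earlier paper. The one step you flagged as requiring care — that $L\cap\mathrm{Fix}(\Pi)$ spans $\mathrm{Fix}(\Pi)$ — is exactly the step both proofs must supply, and your averaging argument supplies it correctly.
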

\begin{proof}
Let $\Tau$ be the group of translations of $\Gamma$.  Then $Z(\Gamma) \subseteq \Tau$ by Theorem 6 of \cite{R-T}. 
Suppose $a+A \in \Gamma$ and $b+I \in \Tau$.  Then $(a+A)(b+I)(a+A)^{-1} = Ab+I$. 
Hence, if $b+I\in Z(\Gamma)$, then $b\in \mathrm{Fix}(A)$.  Therefore $\mathrm{Span}(Z(\Gamma)) \subseteq \mathrm{Fix}(\Pi)$. 

To prove the reverse containment, we take an affine representation of $\Gamma$, 
so that $\Tau$ is generated by $e_1+I, \ldots, e_n+I$ where $e_1,\ldots, e_n$ are the standard basis vectors of $E^n$. 
Then $\Pi$ is a finite subgroup of $\mathrm{GL}(n, \integers)$. 
Suppose $x \in \mathrm{Fix}(\Pi)$.  Then $x$ is a solution of the linear system $\{(A-I)x= 0: A \in \Pi\}$. 
As the entries of $A$ are in $\integers$ for each $A \in \Pi$, 
the row reduced echelon form of the standard matrix of the system $\{(A-I)x= 0: A \in \Pi\}$ has entries in $\rationalnos$. 
Therefore, the solution space $V$ of the linear system $\{(A-I)x= 0: A \in \Pi\}$ 
has a basis  of vectors $v_1,\ldots, v_k$ in $\rationalnos^n$.  By multiplying $v_i$ by the least common multiple 
of the denominators of the entries of $v_i$, we may assume that $v_i \in \integers^n$ for each $i$. 
Then $v_i+I \in \Tau$ for each $i$, and so $v_i+I \in Z(\Gamma)$ for each $i$.  
Therefore $\mathrm{Span}(Z(\Gamma)) =\mathrm{Fix}(\Pi)$. 
\end{proof}

\begin{lemma} 
Let $\Gamma$ be an $n$-space group, let $Z(\Gamma)$ be the center of $\Gamma$, and let $V =\mathrm{Span}(Z(\Gamma))$. 
Let $C_E(\Gamma)$ and $N_E(\Gamma)$ be the centralizer and normalizer of $\Gamma$ in $\mathrm{Isom}(E^n)$, respectively. 
Then $C_E(\Gamma) = \{v+I: v\in V\}$, and $C_E(\Gamma)$ is the connected component of $N_E(\Gamma)$ containing $I$, 
and $C_E(\Gamma)\Gamma/\Gamma$ is the connected component of $N_E(\Gamma)/\Gamma$ containing the identity. 
\end{lemma}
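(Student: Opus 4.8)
The plan is to prove the three assertions in order: compute the centralizer explicitly, then combine this with the structure of $N_E(\Gamma)$ recorded in Lemma 7 to pin down the identity component, and finally transport the result across the covering $\pi$. For the first assertion, recall from Lemma 9 that $V = \mathrm{Span}(Z(\Gamma)) = \mathrm{Fix}(\Pi)$, where $\Pi$ is the point group of $\Gamma$. For $v \in V$ and $b+B \in \Gamma$, a one-line conjugation computation shows that $(v+I)(b+B) = (b+B)(v+I)$ is equivalent to $Bv = v$, which holds because $B \in \Pi$ and $v \in \mathrm{Fix}(\Pi)$; hence $\{v+I : v \in V\} \subseteq C_E(\Gamma)$. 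Conversely, if $a+A \in C_E(\Gamma)$, then $a+A$ commutes with every translation $b+I \in \Tau$; since conjugation sends $b+I$ to $Ab+I$, this forces $Ab = b$ for all $b$ in the translation lattice $L$, and as $L$ spans $E^n$ we get $A = I$. Commuting with each $b+B \in \Gamma$ then forces $a = Ba$ for every $B \in \Pi$, i.e. $a \in \mathrm{Fix}(\Pi) = V$. This yields $C_E(\Gamma) = \{v+I : v \in V\}$.

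Next I would identify the identity component $N_E(\Gamma)_0$. Since $C_E(\Gamma)$ is homeomorphic to the vector space $V$, it is connected, so $C_E(\Gamma) \subseteq N_E(\Gamma)_0$. For the reverse inclusion I would invoke the homomorphism $a+A \mapsto A$ from $N_E(\Gamma)$ to $\mathrm{O}(n)$, whose image is finite, as shown in the proof of Lemma 7; its kernel $N_E(\Gamma) \cap \mathrm{Trans}(E^n)$ is therefore a clopen finite-index subgroup, and the connected set $N_E(\Gamma)_0$ lies inside it, so every element of $N_E(\Gamma)_0$ is a translation $a+I$. A translation $a+I$ normalizes $\Gamma$ precisely when $(I-B)a \in L$ for every $B \in \Pi$; let $M$ denote the set of such vectors, a closed subgroup of $E^n$ containing both $L$ and $V$. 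The crux of the whole lemma is that the identity component of $M$ is exactly $V$: if a line $\realnos a$ lies in $M$, then $t(I-B)a \in L$ for all $t \in \realnos$, and discreteness of $L$ forces $(I-B)a = 0$ for every $B$, so $a \in \mathrm{Fix}(\Pi) = V$; conversely $V \subseteq M$ is a vector subspace. Hence $N_E(\Gamma)_0 \subseteq \{v+I : v \in V\} = C_E(\Gamma)$, giving $N_E(\Gamma)_0 = C_E(\Gamma)$.

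Finally I would descend across the smooth covering $\pi: N_E(\Gamma) \to N_E(\Gamma)/\Gamma$ furnished by Lemma 7. A covering homomorphism carries the identity component onto the identity component: $\pi(N_E(\Gamma)_0)$ is a connected subgroup containing the identity, hence lies in $(N_E(\Gamma)/\Gamma)_0$, while it is also open, since $N_E(\Gamma)_0$ is open and $\pi$ is an open map, and an open subgroup of the connected group $(N_E(\Gamma)/\Gamma)_0$ must be all of it. Thus $(N_E(\Gamma)/\Gamma)_0 = \pi(N_E(\Gamma)_0) = \pi(C_E(\Gamma)) = C_E(\Gamma)\Gamma/\Gamma$, which completes the proof. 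I expect the only genuinely delicate step to be the identification of the identity component of $M$ in the second paragraph, where the discreteness of $L$ is what collapses the admissible translations down to $\mathrm{Fix}(\Pi)$; the remaining steps are either direct conjugation computations or standard facts about open subgroups and covering homomorphisms of Lie groups.
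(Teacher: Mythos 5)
Your proof is correct, and the overall strategy is the same as the paper's: compute $C_E(\Gamma)$ by first forcing the linear part to be $I$ via the translation lattice and then invoking $\mathrm{Fix}(\Pi)=V$ from Lemma 9, use the finiteness of the point group of $N_E(\Gamma)$ (from the proof of Lemma 7) to confine the identity component to translations, and then pass to the quotient. You differ from the paper in the execution of two steps. For the identity component, the paper takes a small connected neighborhood $U$ of $I$ on which $(I-A)b\in L$ forces $(I-A)b=0$ and cites that $U$ generates the component, whereas you consider the full group $M=\{a:(I-B)a\in L\ \hbox{for all}\ B\in\Pi\}$ of normalizing translations and argue that its identity component is $V$; your line-by-line discreteness argument is fine, but it silently uses the standard structure fact that the identity component of a closed subgroup of $(\realnos^n,+)$ is a linear subspace (equivalently, a union of lines through $0$), which deserves a sentence or a citation since it is the hinge of the step. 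For the final assertion, the paper lifts paths through the covering $\pi$, while you observe that $\pi(C_E(\Gamma))$ is an open connected subgroup of the identity component of $N_E(\Gamma)/\Gamma$ and hence equals it; your version is slightly more economical, as it needs only that $\pi$ is an open homomorphism rather than the full covering-space property. Neither variation affects correctness.
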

\begin{proof}
Let $\Tau$ be the group of translations in $\Gamma$. 
Let $a+I \in \Tau$ and $b+B\in C_E(\Gamma)$. 
Then $(b+B)(a+I)(b+B)^{-1} = a+I$, implies $Ba = a$ for all $a+I \in \Tau$. 
Hence $B = I$.  
Thus all the elements of $C_E(\Gamma)$ are translations. 

Let $\Pi$ be the point group of $\Gamma$. 
Let $a+A\in \Gamma$ and let $b+I $ be a translation of $E^n$. 
Then $(a+A)(b+I)(a+A)^{-1} = Ab+I$. 
Hence $b+I \in C_E(\Gamma)$ if and only if $b\in \mathrm{Fix}(\Pi)$. 
Therefore $C_E(\Gamma) = \{v+I: v\in V\}$ by Lemma 9. 

Let $a+A\in \Gamma$, and let $b+I \in N_E(\Gamma)$. 
Then $(b+I)(a+A)(-b+I) \in \Gamma$.  Hence $b+a-Ab+A \in \Gamma$, 
and so $(b-Ab+I)(a+A) \in \Gamma$.  Therefore $(I-A)b+I\in \Tau$. 
From the proof of Lemma 7, we have that $N_E(\Gamma)\cap\mathrm{Trans}(E^n)$
is a closed subgroup of $N_E(\Gamma)$ of finite index. 
Hence $N_E(\Gamma)\cap\mathrm{Trans}(E^n)$ is an open subgroup of $N_E(\Gamma)$. 
Since $\Pi$ is finite and Lie groups are locally connected, 
there is an open connected neighborhood $U$ of $I$ in $N_E(\Gamma)$ such that 
if $b+B\in U$, then $B = I$ and $|b|$ is small enough so that if $(I-A)b+I\in \Tau$, then $(I-A)b = 0$ for every $A\in \Pi$. 
If $b+I\in U$, then $b\in \mathrm{Fix}(\Pi)$ . 
Hence $b \in V$ by Lemma 9. 
The set $U$ generates (in the group sense) the connected component $C$ of $N_E(\Gamma)$ containing $I$ 
by Proposition  2.16 of \cite{A}. 
Therefore $C \subseteq \{v+I : v \in V\}$.  Moreover $\{v+I: v\in V\} \subseteq C$, since $\{v+I: v\in V\}$ is connected. 
Therefore $C = \{v+I:v\in V\}$. 
Thus $C = C_E(\Gamma)$. 

Let $\overline{C}$ be the connected component of $N_E(\Gamma)/\Gamma$ containing the identity. 
The quotient map $\pi: N_E(\Gamma) \to N_E(\Gamma)/\Gamma$ is continuous, 
and so $\pi(C)$ is a connected subset of $N_E(\Gamma)/\Gamma$ that contains the identity. 
Hence $\pi(C) \subseteq \overline{C}$. 
The component $\overline{C}$ is path connected, 
since Lie groups are locally path connected. 
Let $\chi \in N_E(\Gamma)/\Gamma$.  Then there is a path $\alpha: [0,1] \to \overline{C}$ 
from the identity of $N_E(\Gamma)/\Gamma$ to $\chi$. 
The quotient map $\pi: N_E(\Gamma) \to N_E(\Gamma)/\Gamma$ is a covering projection by Lemma 7.  
Hence $\alpha$ lifts to a path $\tilde\alpha: [0,1] \to N_E(\Gamma)$ from $I$ to an element $\tilde\chi \in C$. 
Then $\pi(\tilde\chi) = \chi$. Hence $\pi(C)=\overline{C}$.  
Thus $\overline{C} = C\Gamma/\Gamma$. 
\end{proof}

\begin{theorem}  
Let $\Gamma$ be an $n$-space group, with center $Z(\Gamma)$, and let $V =\mathrm{Span}(Z(\Gamma))$. 
Then $\mathrm{Isom}(E^n/\Gamma)$ is a compact Lie group whose connected component of the identity 
is a torus isomorphic to $V/Z(\Gamma)$ via the effective action of $V/Z(\Gamma)$ on $E^n/\Gamma$ 
defined by $(Z(\Gamma)v)(\Gamma x) = \Gamma(v+x)$ for each $v\in V$ and $x\in E^n$. 
\end{theorem}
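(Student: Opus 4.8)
The plan is to assemble Lemmas 8 and 10, since between them they carry essentially all of the analytic content; the theorem is then a matter of making the identity component explicit. By Lemma 8, the group $\mathrm{Isom}(E^n/\Gamma)$ is already a compact Lie group, and the map $\overline{\Phi}: N_E(\Gamma)/\Gamma \to \mathrm{Isom}(E^n/\Gamma)$ induced by $\Phi$ is an isomorphism of Lie groups. An isomorphism of Lie groups carries the connected component of the identity onto the connected component of the identity, so the identity component of $\mathrm{Isom}(E^n/\Gamma)$ is the image under $\overline{\Phi}$ of the identity component of $N_E(\Gamma)/\Gamma$, which by Lemma 10 is $C_E(\Gamma)\Gamma/\Gamma$.

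Next I would compute $C_E(\Gamma)\Gamma/\Gamma$ concretely. Since $\Gamma$ is normal in $N_E(\Gamma)$, the second isomorphism theorem gives $C_E(\Gamma)\Gamma/\Gamma \cong C_E(\Gamma)/(C_E(\Gamma)\cap\Gamma)$, and by Lemma 10 we have $C_E(\Gamma) = \{v+I : v\in V\}$, which is isomorphic as a topological group to $V$ via $v \mapsto v+I$. The intersection $C_E(\Gamma)\cap\Gamma$ is precisely the set of elements of $\Gamma$ commuting with every element of $\Gamma$, which is $Z(\Gamma)$ by the definition of the center. Thus $C_E(\Gamma)\cap\Gamma = Z(\Gamma)$, and the identity component is isomorphic to $V/Z(\Gamma)$.

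I would then verify that $V/Z(\Gamma)$ is a torus and identify the action. The group $Z(\Gamma)$ is free abelian of rank $\beta_1$ by Theorem 6 of \cite{R-T}, it consists of translations and is discrete as a subgroup of the discrete group $\Gamma$, and it spans $V$ by the definition of $V$; hence $Z(\Gamma)$ is a lattice of full rank in $V$ and $V/Z(\Gamma)$ is a torus of dimension $\dim V = \beta_1$. Tracing the isomorphisms, a coset $Z(\Gamma)v$ corresponds to $\Phi(v+I)$, which sends $\Gamma x$ to $\Gamma(v+x)$; this is exactly the asserted formula $(Z(\Gamma)v)(\Gamma x) = \Gamma(v+x)$, so the formula is well defined and realizes the torus action. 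Effectiveness is immediate, since $\Phi$ has kernel $\Gamma$ by Lemma 5, whence the kernel of $\Phi$ restricted to $C_E(\Gamma)$ is $C_E(\Gamma)\cap\Gamma = Z(\Gamma)$, so $V/Z(\Gamma)$ injects into $\mathrm{Isom}(E^n/\Gamma)$.

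Because the preceding lemmas do the heavy lifting, there is no single deep step; the point requiring the most care is to confirm that the algebraic isomorphism $V/Z(\Gamma) \to C_E(\Gamma)\Gamma/\Gamma$ is an isomorphism of topological (hence Lie) groups, and not merely of abstract groups, so that its image really is the identity component as a subgroup of the Lie group $\mathrm{Isom}(E^n/\Gamma)$. I expect this to follow from the compactness of the torus $V/Z(\Gamma)$ together with the Hausdorff property of $N_E(\Gamma)/\Gamma$, since a continuous bijection from a compact space to a Hausdorff space is a homeomorphism. The smooth covering projection $\pi$ of Lemma 7 and the identification $C_E(\Gamma) = \{v+I : v\in V\}$ then make the torus structure and the parametrization $v \mapsto \Phi(v+I)$ compatible with the ambient Lie group structure.
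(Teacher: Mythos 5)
Your proof is correct and follows essentially the same route as the paper: both arguments rest on Lemma 8 (to get the compact Lie group structure and the isomorphism $\overline{\Phi}$) and Lemma 10 (to identify the identity component of $N_E(\Gamma)/\Gamma$ as $C_E(\Gamma)\Gamma/\Gamma$ with $C_E(\Gamma)=\{v+I:v\in V\}$). The only cosmetic difference is that you pass through the second isomorphism theorem and the identification $C_E(\Gamma)\cap\Gamma=Z(\Gamma)$, whereas the paper constructs the monomorphism $\Psi:V/Z(\Gamma)\to\mathrm{Isom}(E^n/\Gamma)$ directly and checks effectiveness via the fiberwise action on generic fibers; these are equivalent.
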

\begin{proof}
The center $Z(\Gamma)$ is a complete normal subgroup of $\Gamma$ by Theorem 6 of \cite{R-T}. 
The flat orbifold $E^n/\Gamma$ geometrically fibers over the flat orbifold $(E^n/V)/(\Gamma/Z(\Gamma))$ 
with generic fiber $V/Z(\Gamma)$. 
The torus $V/Z(\Gamma)$ acts fiberwise on $E^n/\Gamma$ by the formula $(Z(\Gamma)v)(\Gamma x) = \Gamma(v+x)$. 
The torus $V/Z(\Gamma)$ acts effectively on each generic fiber $(V+x)/Z(\Gamma)$, 
and so the action of $V/Z(\Gamma)$ on $E^n/\Gamma$ is effective. 
The action is continuous, and so the monomorphism $\Psi: V/Z(\Gamma) \to \mathrm{Isom}(E^n/\Gamma)$, 
defined by $\Psi(Z(\Gamma)v)(\Gamma x) = \Gamma(v+x)$, 
maps $V/Z(\Gamma)$ homeomorphically onto a closed connected subgroup of $\mathrm{Isom}(E^n/\Gamma)$. 

By Lemma 5, we have that 
$\Phi: N_E(\Gamma) \to \mathrm{Isom}(E^n/\Gamma)$ is an epimorphism with kernel $\Gamma$. 
By Lemma 6, we have that $N_E(\Gamma)$ is a Lie group. 
By Lemma 8, we have that $\mathrm{Isom}(E^n/\Gamma)$ is a compact Lie group 
with $\Phi:N_E(\Gamma) \to \mathrm{Isom}(E^n/\Gamma)$ a smooth covering projection. 
Hence $\Phi$ induces an isomorphism $\overline{\Phi}: N_E(\Gamma)/\Gamma \to \mathrm{Isom}(E^n/\Gamma)$ of Lie groups 
such that $\Phi = \overline{\Phi}\pi$ where $\pi: N_E(\Gamma) \to N_E(\Gamma)/\Gamma$ is the quotient map. 

Let $C, \overline{C}, K$ be the connected components of $N_E(\Gamma), N_E(\Gamma)/\Gamma, \mathrm{Isom}(E^n/\Gamma)$, 
containing the identity, respectively. 
Then $C = \{v+I: v\in V\}$ and $\overline{C} = \pi(C)$ by Lemma 10, and 
$\Phi(C) = \overline{\Phi}\pi(C) = \overline{\Phi}(\overline{C}) = K.$ 
As $C = \{v+I: v\in V\}$ and $\Phi(v+I)(\Gamma x) = \Gamma(v+x)$ for each $v\in V$ and $x \in E^n$, 
we deduce that $\Psi(V/Z(\Gamma)) = K$. 
Therefore $K$ is a torus isomorphic to $V/Z(\Gamma)$ via the action of $V/Z(\Gamma)$ on $E^n/\Gamma$ 
defined by $(Z(\Gamma)v)(\Gamma x) = \Gamma(v+x)$. 
\end{proof} 

\begin{corollary} 
If $\Gamma$ is an $n$-space group, 
then the rank of the Lie group $\mathrm{Isom}(E^n/\Gamma)$ is the first Betti number of $\Gamma$. 
\end{corollary}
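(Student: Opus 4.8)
The plan is to deduce this directly from Theorem 11, which identifies the identity component of $\mathrm{Isom}(E^n/\Gamma)$. Recall that the rank of a compact Lie group $G$ is the dimension of a maximal torus of $G$, and that any maximal torus is contained in the identity component $G_0$; hence the rank of $G$ equals the rank of $G_0$. I would begin by invoking this standard fact to reduce the computation of $\mathrm{rank}(\mathrm{Isom}(E^n/\Gamma))$ to the identity component $K$ of $\mathrm{Isom}(E^n/\Gamma)$.

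Next I would apply Theorem 11, which says that $K$ is a torus isomorphic to $V/Z(\Gamma)$, where $V = \mathrm{Span}(Z(\Gamma))$. Since a torus is its own maximal torus, its rank coincides with its dimension. Therefore
$$\mathrm{rank}(\mathrm{Isom}(E^n/\Gamma)) = \dim(V/Z(\Gamma)) = \dim V.$$
The remaining task is to identify $\dim V$ with the first Betti number $\beta_1$ of $\Gamma$.

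For this last step I would use Theorem 6 of \cite{R-T}, which gives that $Z(\Gamma)$ is a free abelian group of rank $\beta_1$ contained in the group $\Tau$ of translations of $\Gamma$. Thus $Z(\Gamma)$ is a group of translations whose translation vectors span $V$, and $Z(\Gamma)$ acts on $V$ as a space group of translations with $V/Z(\Gamma)$ a torus (the generic fiber of the central geometric fibration appearing in the proof of Theorem 11). Consequently $Z(\Gamma)$ is a full-rank lattice in $V$, so $\dim V = \mathrm{rank}(Z(\Gamma)) = \beta_1$, and the corollary follows. I do not anticipate a serious obstacle here: the content is entirely carried by Theorem 11 and Theorem 6 of \cite{R-T}, and the only point requiring care is the observation that a maximal torus lies in the identity component, so that the rank is determined by $K$ alone.
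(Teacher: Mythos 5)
Your argument is correct and follows essentially the same route as the paper: reduce the rank computation to the identity component $K$, use Theorem 11 to identify $K$ with the torus $V/Z(\Gamma)$, and then invoke Theorem 6 of \cite{R-T} to equate $\dim V = \mathrm{rank}(Z(\Gamma))$ with $\beta_1$. The paper states these steps more tersely, but the content is identical, including the (correct) implicit use of the fact that the rank of a compact Lie group is carried by its identity component.
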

\begin{proof}
The rank of $\mathrm{Isom}(E^n/\Gamma)$ is the dimension of $K$ by Theorem 11. 
Hence the rank of $\mathrm{Isom}(E^n/\Gamma)$ is the rank of $Z(\Gamma)$. 
The rank of $Z(\Gamma)$ is the first Betti number $\beta_1$ of $\Gamma$ by Theorem 6 of \cite{R-T}. 
Hence the rank of $\mathrm{Isom}(E^n/\Gamma)$ is $\beta_1$.
\end{proof}

\begin{corollary} 
If $\Gamma$ is an $n$-space group, then the following are equivalent:
\begin{enumerate}
\item The group $\mathrm{Isom}(E^n/\Gamma)$ is finite. 
\item The center of $\Gamma$ is trivial. 
\item The group $\Gamma/[\Gamma,\Gamma]$ is finite.
\end{enumerate}
\end{corollary}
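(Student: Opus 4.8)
The plan is to funnel all three conditions through a single numerical invariant, the first Betti number $\beta_1$ of $\Gamma$, using the structural results already in hand.

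First I would show that (1) is equivalent to the vanishing of $\beta_1$. By Lemma 8 the group $\mathrm{Isom}(E^n/\Gamma)$ is a compact Lie group, and a compact Lie group has only finitely many connected components; hence it is finite precisely when its identity component $K$ reduces to a point, that is, when $\dim K = 0$. By Theorem 11 the component $K$ is a torus isomorphic to $V/Z(\Gamma)$, where $V = \mathrm{Span}(Z(\Gamma))$, so $\dim K = \dim V$. Since $Z(\Gamma)$ is a discrete group of translations, its span $V$ has dimension equal to the rank of $Z(\Gamma)$, which is $\beta_1$ by Theorem 6 of \cite{R-T}. Thus $\dim K = \beta_1$, and (1) holds if and only if $\beta_1 = 0$.

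Next I would dispatch (2) and (3) against the same invariant. By Theorem 6 of \cite{R-T} the center $Z(\Gamma)$ is a free abelian group of rank $\beta_1$, and a free abelian group is trivial exactly when its rank is zero; hence (2) is equivalent to $\beta_1 = 0$. For (3), the group $\Gamma/[\Gamma,\Gamma]$ is a finitely generated abelian group whose free rank is, by definition, the first Betti number $\beta_1$; a finitely generated abelian group is finite if and only if its free rank vanishes, so (3) too is equivalent to $\beta_1 = 0$. Chaining the three equivalences yields the corollary.

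These deductions are routine, and no real obstacle remains: the substantive work---identifying the identity component of $\mathrm{Isom}(E^n/\Gamma)$ with the torus $V/Z(\Gamma)$ and computing the rank of $Z(\Gamma)$---was already carried out in Theorem 11 and in Theorem 6 of \cite{R-T}. The only point demanding a moment's care is the first equivalence, where one must use that a compact Lie group is finite exactly when its identity component is trivial before reading off the dimension of that component from Theorem 11.
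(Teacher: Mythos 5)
Your proof is correct and follows essentially the same route as the paper, which simply cites Theorem 11 for the equivalence of (1) and (2) and Theorem 6 of \cite{R-T} for the equivalence of (2) and (3); your version merely makes the common intermediary (the first Betti number $\beta_1$, equal to the rank of $Z(\Gamma)$ and to the dimension of the identity component of $\mathrm{Isom}(E^n/\Gamma)$) explicit. The one point you flag as needing care---that a compact Lie group is finite exactly when its identity component is trivial---is indeed the only nontrivial observation beyond the two cited results, and you handle it correctly.
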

\begin{proof}
Statements (1) and (2) are equivalent by Theorem 11.  
Statements (2) and (3) are equivalent by Theorem 6 of \cite{R-T}. 
\end{proof}

\noindent{\bf Remark 6.} Let $\Gamma$ be an $n$-space group.  
The group $\Gamma/[\Gamma, \Gamma]$ is the abelianization of $\Gamma$. 
For a list of the abelianizations of all the $n$-space groups for $n=1,2,3$, 
see our paper \cite{R-T-Ab}. 
In particular, the 2-space groups with infinite abelianizations are the space groups 
of the torus, the annulus, the Klein bottle, and the M\"obius band. 
We will describe the groups of isometries of all the compact, connected, flat 2-orbifolds in detail in \S 10. 

\begin{corollary} 
If $\Gamma$ is a $\integers$-irreducible $n$-space group, then $\mathrm{Isom}(E^n/\Gamma)$ is finite.
\end{corollary}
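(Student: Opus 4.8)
The plan is to combine Corollary 5 with the structure of the point group. By Corollary 5, the group $\mathrm{Isom}(E^n/\Gamma)$ is finite if and only if the center $Z(\Gamma)$ is trivial, so it suffices to show that $\integers$-irreducibility forces $Z(\Gamma) = \{I\}$. By Lemma 9, $\mathrm{Span}(Z(\Gamma)) = \mathrm{Fix}(\Pi)$, where $\Pi$ is the point group of $\Gamma$; since $Z(\Gamma)$ is a discrete group of translations spanning $\mathrm{Fix}(\Pi)$, it is a full lattice in $\mathrm{Fix}(\Pi)$, and hence is trivial exactly when $\mathrm{Fix}(\Pi) = \{0\}$. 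Thus the whole statement reduces to proving $\mathrm{Fix}(\Pi) = \{0\}$.

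The key point is that $\mathrm{Fix}(\Pi)$ is a $\Pi$-invariant subspace of $E^n$ that is defined over $\rationalnos$. Invariance holds because $\mathrm{Fix}(\Pi)$ is the common $1$-eigenspace of the operators in $\Pi$. Rationality is exactly the computation carried out in the proof of Lemma 9: taking an affine representation in which $\Pi \subseteq \mathrm{GL}(n,\integers)$, the subspace $\mathrm{Fix}(\Pi)$ is the solution space of the integer linear system $\{(A-I)x = 0 : A \in \Pi\}$, and so has a basis of integer vectors. Now $\integers$-irreducibility of $\Gamma$ means precisely that $\Pi$ admits no proper nonzero $\Pi$-invariant rational subspace of $E^n$ — equivalently, that $\rationalnos^n$ is an irreducible $\rationalnos\Pi$-module, the reducing subspaces being exactly the spans of the complete normal subgroups of $\Gamma$ (a rational invariant subspace $V$ is $\mathrm{Span}(\Nu_V)$ for $\Nu_V = \{a+A\in\Gamma : a\in V,\ V^\perp\subseteq\mathrm{Fix}(A)\}$, and conversely $\mathrm{Span}(\Nu)$ is rational and invariant by Theorem 1(1)). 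Consequently $\mathrm{Fix}(\Pi)$ is either $\{0\}$ or all of $E^n$.

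It remains to exclude $\mathrm{Fix}(\Pi) = E^n$. This equality forces $\Pi = \{I\}$, so that $\Gamma$ coincides with its translation group $\Tau$ and every subspace of $E^n$ is $\Pi$-invariant; $\integers$-irreducibility then requires that $E^n$ have no proper nonzero rational subspace, which fails as soon as $n \geq 2$. Hence $\mathrm{Fix}(\Pi) = \{0\}$, giving $Z(\Gamma) = \{I\}$ and the finiteness of $\mathrm{Isom}(E^n/\Gamma)$. I expect the only delicate step to be the identification of $\integers$-irreducibility with the absence of a proper nonzero rational invariant subspace, together with the rationality of $\mathrm{Fix}(\Pi)$; once these are in place, the dichotomy and its resolution are immediate.
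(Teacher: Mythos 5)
Your proposal is correct, and it reaches the paper's proof at the same final step (Corollary 5 reduces everything to $Z(\Gamma)=\{I\}$), but it gets to the triviality of the center by a different mechanism. The paper's proof is a two-line citation: a $\integers$-irreducible space group has no proper complete normal subgroup by Theorem 11 of \cite{R-T}, and since $Z(\Gamma)$ is a complete normal subgroup (Theorem 6 of \cite{R-T}) it must be trivial. You instead invoke Lemma 9 to identify $\mathrm{Span}(Z(\Gamma))$ with $\mathrm{Fix}(\Pi)$, observe that $\mathrm{Fix}(\Pi)$ is a $\Pi$-invariant subspace defined over $\rationalnos$ (the rationality being exactly the computation in the proof of Lemma 9), and let irreducibility force the dichotomy $\mathrm{Fix}(\Pi)\in\{\{0\},E^n\}$, ruling out the second case by hand. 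What your route buys is self-containedness: it uses only results proved in this paper plus the definition of $\integers$-irreducibility as irreducibility of the rational point-group representation, rather than outsourcing the key step to Theorem 11 of \cite{R-T}; your construction of $\Nu_V$ from a rational invariant subspace $V$ is in effect a re-derivation of the one direction of that theorem that is needed. The cost is that you must separately dispose of the degenerate alternative $\mathrm{Fix}(\Pi)=E^n$, i.e.\ $\Pi=\{I\}$, which you do for $n\geq 2$; note that the paper's own argument has the same tacit restriction, since when $\Gamma$ is abelian $Z(\Gamma)=\Gamma$ is not a \emph{proper} complete normal subgroup, so ``no proper complete normal subgroups'' does not by itself force $Z(\Gamma)=\{I\}$ in that case either. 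Neither proof addresses $n=1$, where the point group can be trivial; this is a shared convention issue rather than a defect of your argument relative to the paper's.
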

\begin{proof}
A $\integers$-irreducible $n$-space group $\Gamma$ has no proper, complete, normal subgroups by Theorem 11 of \cite{R-T}. 
Hence $Z(\Gamma) =\{I\}$, and so  $\mathrm{Isom}(E^n/\Gamma)$ is finite by Corollary 5. 
\end{proof}

\begin{corollary} 
Let $\Gamma$ be an $n$-space group, and let $N_E(\Gamma)$ be the normalizer of $\Gamma$ in $\mathrm{Isom}(E^n)$. 
Then $N_E(\Gamma)$ is a discrete subgroup of $\mathrm{Isom}(E^n)$ if and only if $Z(\Gamma) = \{I\}$. 
If $Z(\Gamma) =\{I\}$, then $N_E(\Gamma)$ is an $n$-space group with 
$E^n/N_E(\Gamma)$ isometric to $(E^n/\Gamma)/\mathrm{Isom}(E^n/\Gamma)$. 
\end{corollary}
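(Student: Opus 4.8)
The plan is to derive both assertions from structural results already in hand: Lemma 10, which identifies the identity component of $N_E(\Gamma)$, gives the discreteness dichotomy, while the group isomorphism $N_E(\Gamma)/\Gamma \cong \mathrm{Isom}(E^n/\Gamma)$ coming from Lemma 5 gives the final isometry.

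First I would settle the equivalence. By Lemma 6 the group $N_E(\Gamma)$ is a Lie group, so it is discrete if and only if its connected component of the identity is trivial. By Lemma 10 that component is $C_E(\Gamma) = \{v+I : v \in V\}$, where $V = \mathrm{Span}(Z(\Gamma))$, and clearly $\{v+I : v\in V\} = \{I\}$ if and only if $V = \{0\}$. Since $Z(\Gamma)$ is a group of translations (Theorem 6 of \cite{R-T}), the span $V$ is the span of the translation vectors of the elements of $Z(\Gamma)$; hence $V = \{0\}$ forces each such vector to vanish, so $V = \{0\}$ if and only if $Z(\Gamma) = \{I\}$. Combining these, $N_E(\Gamma)$ is discrete if and only if $Z(\Gamma) = \{I\}$.

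Next, assuming $Z(\Gamma) = \{I\}$, I would verify that $N_E(\Gamma)$ is an $n$-space group. It is discrete by the first part, so it remains only to check that $E^n/N_E(\Gamma)$ is compact. Because $\Gamma$ is a subgroup of $N_E(\Gamma)$, the identity map of $E^n$ descends to a continuous surjection $E^n/\Gamma \to E^n/N_E(\Gamma)$; as $E^n/\Gamma$ is compact, so is its image $E^n/N_E(\Gamma)$. Thus $N_E(\Gamma)$ is a discrete group of isometries of $E^n$ with compact quotient, i.e.\ an $n$-space group. Finally, for the isometry, by Lemma 5 the map $\Phi$ induces an isomorphism $\overline{\Phi}\colon N_E(\Gamma)/\Gamma \to \mathrm{Isom}(E^n/\Gamma)$, under which $\mathrm{Isom}(E^n/\Gamma)$ acts on $E^n/\Gamma$ by $\eta_\star(\Gamma x) = \Gamma\eta(x)$. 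Two points $\Gamma x$ and $\Gamma y$ therefore lie in one $\mathrm{Isom}(E^n/\Gamma)$-orbit exactly when $\eta(x)$ is $\Gamma$-equivalent to $y$ for some $\eta \in N_E(\Gamma)$, which is precisely the condition that $x$ and $y$ be $N_E(\Gamma)$-equivalent. This gives a natural bijection $(E^n/\Gamma)/\mathrm{Isom}(E^n/\Gamma) \to E^n/N_E(\Gamma)$.

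To see that this bijection is an isometry, I would compare the quotient metrics directly: the iterated quotient distance between the classes of $x$ and $y$ is $\inf_{\eta\Gamma}\inf_{\gamma\in\Gamma} d(x,\gamma\eta y) = \inf_{\eta\in N_E(\Gamma)} d(x,\eta y)$, which is exactly the quotient distance in $E^n/N_E(\Gamma)$. The one step requiring care, and the main point of the argument, is this collapse of the iterated infimum: it is here that the normality of $\Gamma$ in $N_E(\Gamma)$ is used, since normality identifies left and right cosets and hence guarantees that the products $\gamma\eta$ sweep out all of $N_E(\Gamma)$ as $\gamma$ ranges over $\Gamma$ and $\eta\Gamma$ over $N_E(\Gamma)/\Gamma$. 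Everything else is a formal appeal to Lemmas 5, 6, and 10, so I anticipate no serious obstacle beyond this metric bookkeeping.
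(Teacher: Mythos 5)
Your proof is correct and follows essentially the same route as the paper's: Lemma 6 plus Lemma 10 for the discreteness criterion, compactness of the quotient $E^n/N_E(\Gamma)$ as a continuous image of $E^n/\Gamma$, and the isomorphism $N_E(\Gamma)/\Gamma\cong\mathrm{Isom}(E^n/\Gamma)$ from Lemma 5 for the final isometry. The only difference is that you spell out the collapse of the iterated infimum in the quotient metric, which the paper leaves implicit.
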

\begin{proof}
We have that $N_E(\Gamma)$ is a Lie group by Lemma 6. Hence $N_E(\Gamma)$ is locally connected. 
Therefore $N_E(\Gamma)$ is discrete if and only if the connected component of $N_E(\Gamma)$ containing $I$ is $\{I\}$. 
The connected component of $N_E(\Gamma)$ containing $I$ is $\{v+I: v \in \mathrm{Span}(Z(\Gamma))\}$ by Lemma 10. 
Hence $N_E(\Gamma)$ is discrete if and only if $Z(\Gamma) = \{I\}$. 

Now assume that $Z(\Gamma) =\{I\}$.  
Then $N_E(\Gamma)$ is a discrete subgroup of $\mathrm{Isom}(E^n)$. 
Therefore $\Gamma$ is a subgroup of $N_E(\Gamma)$ of finite index by Lemma 5. 
As $E^n/N_E(\Gamma)$ is a quotient of $E^n/\Gamma$, we have that $E^n/N_E(\Gamma)$ is compact. 
Hence $E^n/N_E(\Gamma)$ is an $n$-space group. 

The group $\mathrm{Isom}(E^n/\Gamma)$ is finite by Corollary 5. 
By Lemma 5, the epimorphism $\Phi: N_E(\Gamma) \to \mathrm{Isom}(E^n/\Gamma)$ induces an isometry 
from $(E^n/\Gamma)/(N_E(\Gamma)/\Gamma)$ to $(E^n/\Gamma)/\mathrm{Isom}(E^n/\Gamma)$. 
Moreover $(E^n/\Gamma)/(N_E(\Gamma)/\Gamma)$ is isometric to $E^n/N_E(\Gamma)$. 
Hence $E^n/N_E(\Gamma)$ is isometric to $(E^n/\Gamma)/\mathrm{Isom}(E^n/\Gamma)$. 
\end{proof}

Let $\Gamma$ be an $n$-space group.   The group of {\it Euclidean automorphisms} of $\Gamma$, 
denoted by $\mathrm{Aut}_E(\Gamma)$, is the group of all automorphisms of $\Gamma$ 
that are equal to conjugation by an isometry of $E^n$. 
The group $\mathrm{Inn}(\Gamma)$ of inner automorphisms of $\Gamma$ is a normal subgroup of $\mathrm{Aut}_E(\Gamma)$. 
The group of {\it Euclidean outer automorphisms} of $\Gamma$ is the group 
$$\mathrm{Out}_E(\Gamma) = \mathrm{Aut}_E(\Gamma)/\mathrm{Inn}(\Gamma).$$

Let $\zeta$ be an isometry of $E^n/\Gamma$.  Then $\zeta$ lifts to an isometry $\tilde\zeta$ of $E^n$ 
such that $\tilde\zeta\Gamma\tilde\zeta^{-1} = \Gamma$.  
The isometry $\tilde\zeta$ of $E^n$ determines an automorphism $\tilde\zeta_\ast$ of $\Gamma$ 
defined by $\tilde\zeta_\ast(\gamma) = \tilde\zeta\gamma\tilde\zeta^{-1}$. 
If $\tilde\zeta'$ is another lift of $\zeta$, then $\tilde\zeta'=\tilde\zeta\gamma$ for some $\gamma$ in $\Gamma$. 
Hence the element $\tilde\zeta_\ast\mathrm{Inn}(\Gamma)$ of $\mathrm{Out}_E(\Gamma)$ depends only on $\zeta$, 
and we have an epimorphism
$$\Omega: \mathrm{Isom}(E^n/\Gamma) \to \mathrm{Out}_E(\Gamma)$$
defined by $\Omega(\zeta)  = \tilde\zeta_\ast \mathrm{Inn}(\Gamma)$.

\begin{theorem} 
Let $\Gamma$ be an $n$-space group, and let $K$ be the connected component of the identity of 
the compact Lie group $\mathrm{Isom}(E^n/\Gamma)$.  
Then $K$ is the kernel of the epimorphism 
$\Omega: \mathrm{Isom}(E^n/\Gamma) \to \mathrm{Out}_E(\Gamma)$, 
and therefore $\mathrm{Out}_E(\Gamma)$ is a finite group. 
\end{theorem}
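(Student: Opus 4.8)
The plan is to identify the kernel of $\Omega$ with $K$ by showing that an isometry $\zeta$ of $E^n/\Gamma$ induces an inner automorphism of $\Gamma$ precisely when $\zeta$ lies in the identity component. First I would analyze the kernel of $\Omega$. An isometry $\zeta$ lies in $\ker(\Omega)$ iff some (equivalently, every) lift $\tilde\zeta \in N_E(\Gamma)$ induces an inner automorphism of $\Gamma$, i.e.\ $\tilde\zeta_\ast = $ conjugation by some $\gamma \in \Gamma$. This says $\gamma^{-1}\tilde\zeta$ centralizes $\Gamma$, so $\gamma^{-1}\tilde\zeta \in C_E(\Gamma)$. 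Thus the lifts of elements of $\ker(\Omega)$ are exactly the elements of $C_E(\Gamma)\Gamma$. Passing to the quotient, $\ker(\Omega) = \Phi(C_E(\Gamma)\Gamma) = \Phi(C_E(\Gamma))$, using Lemma 5 and that $\Gamma = \ker(\Phi)$.

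Next I would invoke the structural results already established. By Lemma 10, $C_E(\Gamma) = \{v+I : v \in V\}$ where $V = \mathrm{Span}(Z(\Gamma))$, and $C_E(\Gamma)$ is precisely the connected component $C$ of $N_E(\Gamma)$ containing $I$. By Theorem 11 (or directly from the proof of Theorem 11), $\Phi(C) = K$, the identity component of $\mathrm{Isom}(E^n/\Gamma)$. Combining these, $\ker(\Omega) = \Phi(C_E(\Gamma)) = \Phi(C) = K$, which is the desired identification.

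Finally, the finiteness of $\mathrm{Out}_E(\Gamma)$ follows formally: since $\Omega$ is a surjection with kernel $K$, we have $\mathrm{Out}_E(\Gamma) \cong \mathrm{Isom}(E^n/\Gamma)/K$. By Lemma 8, $\mathrm{Isom}(E^n/\Gamma)$ is a compact Lie group, so its group of connected components $\mathrm{Isom}(E^n/\Gamma)/K$ is finite; hence $\mathrm{Out}_E(\Gamma)$ is finite.

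The main obstacle I anticipate is the careful bookkeeping in the first step: verifying that $\ker(\Omega)$ consists exactly of the $\Phi$-images of $C_E(\Gamma)$. The subtlety is that $\Omega$ is defined on isometries of $E^n/\Gamma$ via their lifts modulo $\mathrm{Inn}(\Gamma)$, and I must check that the condition ``$\tilde\zeta_\ast \in \mathrm{Inn}(\Gamma)$'' is well-defined independent of the chosen lift (this is already built into the definition of $\Omega$, since changing the lift alters $\tilde\zeta_\ast$ only by an inner automorphism) and that it translates cleanly into the centralizer condition $\gamma^{-1}\tilde\zeta \in C_E(\Gamma)$ for some $\gamma \in \Gamma$. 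Once the equivalence $\zeta \in \ker(\Omega) \iff$ a lift lies in $C_E(\Gamma)\Gamma$ is pinned down, the rest is a direct appeal to Lemmas 8 and 10 and the identity-component computation in Theorem 11.
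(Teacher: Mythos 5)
Your proposal is correct and follows essentially the same route as the paper: identify $\ker(\Omega)$ with the image under $\Phi$ of $C_E(\Gamma)\Gamma$, then use Lemma 10 to recognize $C_E(\Gamma)$ as the identity component of $N_E(\Gamma)$ and the computation in Theorem 11 to conclude $\Phi(C_E(\Gamma)) = K$. The paper phrases this via the auxiliary epimorphism $\Theta: N_E(\Gamma)\to\mathrm{Aut}_E(\Gamma)$ and the factorization $\Omega = \overline{\Theta}\,\overline{\Phi}^{-1}$, but the content is identical, and your explicit finiteness argument (the component group of a compact Lie group is finite) is exactly what the paper leaves implicit.
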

\begin{proof}
Define $\Theta: N_E(\Gamma) \to \mathrm{Aut}_E(\Gamma)$
by $\Theta(\eta) = \eta_\ast$ where $\eta_\ast(\gamma) = \eta\gamma\eta^{-1}$ for each $\gamma\in \Gamma$. 
Then $\Theta$ is an epimorphism that maps $\Gamma$ onto $\mathrm{Inn}(\Gamma)$.  
Hence $\Theta$ induces an epimorphism $\overline{\Theta}: N_E(\Gamma)/\Gamma \to \mathrm{Out}_E(\Gamma)$. 
Now the kernel of $\Theta$ is the centralizer $C_E(\Gamma)$ of $\Gamma$ in $\mathrm{Isom}(E^n)$. 
Hence $\mathrm{Ker}(\overline{\Theta})=C_E(\Gamma)\Gamma/\Gamma$. 
By Lemma 10, we have that 
$$C_E(\Gamma) = \{v+I : v \in \mathrm{Span}(Z(\Gamma))\}.$$ 
By Lemma 10, we have that $C_E(\Gamma)$ is the connected component $C$ of $N_E(\Gamma)$ 
containing $I$, and $C\Gamma/\Gamma$ is the connected component of $N_E(\Gamma)/\Gamma$ containing 
the identity.  Moreover, we have an isomorphism $\overline{\Phi}: N_E(\Gamma)/\Gamma \to \mathrm{Isom}(E^n/\Gamma)$ 
of Lie groups and $\Omega = \overline{\Theta}\overline{\Phi}^{-1}$.   Therefore 
$\mathrm{Ker}(\Omega) = \overline{\Phi}(\mathrm{Ker}(\overline{\Theta})) =\overline{\Phi}(C\Gamma/\Gamma) = K.$
\end{proof}

An {\it affinity} $\alpha$ of $E^n$ is a function $\alpha: E^n\to E^n$ 
for which there is an element $a\in E^n$ and a matrix $A \in \mathrm{GL}(n,\realnos)$ such that 
$\alpha(x) = a + Ax$ for all $x$ in $E^n$.  We write simply $\alpha = a + A$. 
The set $\mathrm{Aff}(E^n)$ of all affinities of $E^n$ is a group 
that contains $\mathrm{Isom}(E^n)$ as a subgroup.

Let $\Gamma$ be an $n$-space group.  An {\it affinity} of $E^n/\Gamma$ is a function 
$\beta: E^n/\Gamma \to E^n/\Gamma$ that lifts to an affinity $\tilde \beta$ of $E^n$, 
that is, $\beta(\Gamma x) = \Gamma\tilde\beta(x)$ for all $x$ in $E^n$.  
The set $\mathrm{Aff}(E^n/\Gamma)$ of all affinities of $E^n/\Gamma$ is a group 
that contains $\mathrm{Isom}(E^n/\Gamma)$ as a subgroup. 

Let $N_A(\Gamma)$ be the normalizer of $\Gamma$ in $\mathrm{Aff}(E^n)$. 
Define 
$$\Phi: N_A(\Gamma) \to \mathrm{Aff}(E^n/\Gamma)$$
by $\Phi(\eta)= \eta_\star$ where $\eta_\star(\Gamma x) = \Gamma\eta(x)$ for each $x\in E^n$. 

\begin{lemma} 
Let $\Gamma$ be an $n$-space group, and let $N_A(\Gamma)$ be the normalizer of $\Gamma$ in $\mathrm{Aff}(E^n)$. 
Then $\Phi: N_A(\Gamma) \to \mathrm{Aff}(E^n/\Gamma)$ is an epimorphism with kernel $\Gamma$. 
\end{lemma}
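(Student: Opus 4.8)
The plan is to mirror the proof of the isometry case (Lemma 5), replacing the appeal to Theorem 13.2.6 of \cite{R} — which only produces isometries of the quotient — by a direct argument that the affine lift of an affinity of $E^n/\Gamma$ normalizes $\Gamma$. The entire proof rests on a single rigidity fact, which I would isolate first: if $\delta$ is an affinity of $E^n$ such that $\delta(z) \in \Gamma z$ for every $z \in E^n$, then $\delta \in \Gamma$.

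To establish this fact, I would pick a fundamental domain $D$ for $\Gamma$ and a point $a$ with $B(a,r)\subseteq D$ for some $r>0$, so $a$ lies in the interior of $D$. Choose $\gamma \in \Gamma$ with $\delta(a) = \gamma(a)$, so $\delta(a)\in\gamma(D)$; by continuity of $\delta$ we may shrink $r$ so that $\delta(B(a,r)) \subseteq \gamma(D)$. For $x \in B(a,r)$ both $\delta(x)$ and $\gamma(x)$ then lie in $\Gamma x \cap \gamma(D)$, and since the interior of a fundamental domain meets each $\Gamma$-orbit at most once, $\delta(x) = \gamma(x)$ on the open ball. Two affinities agreeing on a nonempty open set are equal, so $\delta = \gamma \in \Gamma$. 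This is the same mechanism used in Lemma 5; it is routine, but it is the step I expect to carry the weight.

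Granting the rigidity fact, the kernel computation is immediate: if $\eta \in \mathrm{Ker}(\Phi)$ then $\Gamma\eta(x) = \Gamma x$ for all $x$, that is $\eta(x) \in \Gamma x$ for all $x$, whence $\eta \in \Gamma$; the reverse inclusion $\Gamma \subseteq \mathrm{Ker}(\Phi)$ is clear. For surjectivity, let $\beta \in \mathrm{Aff}(E^n/\Gamma)$ with affine lift $\tilde\beta$, so $\beta(\Gamma x) = \Gamma\tilde\beta(x)$. Because $\beta$ is a well-defined function on cosets, $\Gamma\tilde\beta(x) = \Gamma\tilde\beta(\gamma x)$ for every $\gamma \in \Gamma$, and setting $z = \tilde\beta(x)$ this rearranges to $(\tilde\beta\gamma\tilde\beta^{-1})(z) \in \Gamma z$ for all $z \in E^n$. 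The rigidity fact gives $\tilde\beta\gamma\tilde\beta^{-1} \in \Gamma$, so $\tilde\beta\Gamma\tilde\beta^{-1} \subseteq \Gamma$. Applying the same argument to $\beta^{-1}$, whose lift is $\tilde\beta^{-1}$, yields $\tilde\beta^{-1}\Gamma\tilde\beta \subseteq \Gamma$; hence $\tilde\beta \in N_A(\Gamma)$ and $\Phi(\tilde\beta) = \beta$.

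Finally I would dispatch the routine preliminaries: for $\eta \in N_A(\Gamma)$ the normalizing property makes $\eta_\star$ well-defined on cosets, it is an affinity of $E^n/\Gamma$ with lift $\eta$, and $\Phi$ is a homomorphism. Combined with the kernel and surjectivity arguments above, this shows $\Phi$ is an epimorphism with kernel $\Gamma$. The only genuinely new point compared with Lemma 5 is that surjectivity can no longer be imported from an isometry-specific theorem and must instead be derived from the orbit/rigidity argument; that is where I expect the main, albeit modest, obstacle to lie.
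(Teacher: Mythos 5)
Your proof is correct and follows essentially the same route as the paper: the paper establishes surjectivity by citing the last paragraph of the proof of Theorem 13.2.6 of \cite{R}, which is precisely the orbit-rigidity argument you write out, and it computes the kernel by the same fundamental-domain argument as in Lemma 5. Your two explicit refinements --- isolating the rigidity fact, and invoking $\beta^{-1}$ to obtain the reverse inclusion $\tilde\beta^{-1}\Gamma\tilde\beta\subseteq\Gamma$ (which, unlike the isometry case, does not follow from $\tilde\beta\Gamma\tilde\beta^{-1}\subseteq\Gamma$ alone, as the map $x\mapsto 2x$ on a torus shows) --- are exactly the points the citation is silently covering.
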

\begin{proof}  Let $\beta \in \mathrm{Aff}(E^n/\Gamma)$.  Then $\beta$ lifts to $\tilde \beta \in \mathrm{Aff}(E^n)$ 
such that $\beta(\Gamma x) = \Gamma\tilde\beta(x)$ for all $x$ in $E^n$.  We have that $\tilde \beta \in N_A(\Gamma)$ 
by the argument in the last paragraph of the proof of Theorem 13.2.6 of \cite{R}. 
Hence $\Phi$ is an epimorphism.  
The proof that $\Gamma = \mathrm{Ker}(\Phi)$ is the same as in the proof of Lemma 5. 
\end{proof}

Define $\Theta: N_A(\Gamma) \to \mathrm{Aut}(\Gamma)$ by $\Theta(\eta) = \eta_\ast$ 
where $\eta_\ast(\gamma) = \eta\gamma\eta^{-1}$ for each $\gamma\in\Gamma$. 
Let $C_A(\Gamma)$ be the centralizer of $\Gamma$ in $\mathrm{Aff}(E^n)$.  
The next lemma is in Gubler \cite{G}. 

\begin{lemma} 
Let $\Gamma$ be an $n$-space group, and let $N_A(\Gamma)$ be the normalizer of $\Gamma$ in $\mathrm{Aff}(E^n)$. 
Then $\Theta: N_A(\Gamma) \to \mathrm{Aut}(\Gamma)$ is an epimorphism with kernel $C_A(\Gamma)$. 
Moreover $C_A(\Gamma) = C_E(\Gamma)$, and $\Theta$ maps $N_E(\Gamma)$ onto $\mathrm{Aut}_E(\Gamma)$. 
\end{lemma}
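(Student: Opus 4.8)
The plan is to separate the statement into four assertions and dispatch the three routine ones quickly, isolating the surjectivity of $\Theta$ as the only substantial point. That $\Theta$ is a homomorphism is immediate, and its kernel needs no computation: $\eta \in \mathrm{Ker}(\Theta)$ means $\eta_\ast = \mathrm{id}_\Gamma$, i.e. $\eta\gamma\eta^{-1} = \gamma$ for every $\gamma \in \Gamma$, which is literally the condition $\eta \in C_A(\Gamma)$. So I would record $\mathrm{Ker}(\Theta) = C_A(\Gamma)$ with one sentence and move on.

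For surjectivity, which is the affine rigidity statement of Gubler \cite{G} (a form of the second Bieberbach theorem), I would give a self-contained cocycle argument. Fix $\phi \in \mathrm{Aut}(\Gamma)$. First I would use that the translation subgroup $\Tau$ is characteristic in $\Gamma$: it is the unique maximal abelian subgroup of finite index by the first Bieberbach theorem (any abelian finite-index subgroup centralizes a full-rank sublattice, hence has trivial linear parts, hence lies in $\Tau$), so $\phi(\Tau)=\Tau$. Identifying $\Tau$ with the lattice $L=\{b: b+I\in\Tau\}$, which spans $E^n$, the restriction $\phi|_{\Tau}$ is a lattice automorphism and extends uniquely to $M\in\mathrm{GL}(n,\realnos)$ with $ML=L$ and $\phi(b+I)=Mb+I$ for $b\in L$. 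Writing $\phi(a+A)=a'+A'$, I would then compare the two evaluations of $\phi(\gamma)\phi(t)\phi(\gamma)^{-1}$ for $t\in\Tau$ to force $A'=MAM^{-1}$ for every $a+A\in\Gamma$; thus the affinity $\mu=0+M$ already conjugates all linear parts correctly, with $\mu(a+A)\mu^{-1}=Ma+MAM^{-1}$.

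It remains to adjust the translation parts. Seeking $\eta=v+M$, a direct computation gives $\eta(a+A)\eta^{-1}=\big((I-A')v+Ma\big)+A'$, so $\Theta(\eta)=\phi$ is equivalent to solving $(I-A')v=a'-Ma$ simultaneously for all $a+A\in\Gamma$. The function $f(a+A)=a'-Ma$ is the difference of the translation parts of the two homomorphisms $\gamma\mapsto\mu\gamma\mu^{-1}$ and $\phi$, which share the linear part $A'$, hence $f$ is a $1$-cocycle for the action of $\Gamma$ on $E^n$ through $a+A\mapsto A'=MAM^{-1}$; moreover $f$ vanishes on $\Tau$, where this action is trivial, so it descends to a cocycle $\bar f$ on the finite point group $\Pi$. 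This is where the only real obstacle sits, and it dissolves by averaging: since $\Pi$ is finite and $E^n$ is a real vector space (equivalently $H^1(\Pi,E^n)=0$), the vector $v=\tfrac{1}{|\Pi|}\sum_{A\in\Pi}\bar f(A)$ satisfies $(I-A')v=\bar f(A)=f(a+A)$ for all $\gamma$. Hence $\eta=v+M\in N_A(\Gamma)$ with $\Theta(\eta)=\phi$, and $\Theta$ is an epimorphism.

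Finally I would verify the two explicit descriptions. For $C_A(\Gamma)=C_E(\Gamma)$, the inclusion $C_E(\Gamma)\subseteq C_A(\Gamma)$ is trivial; conversely, if $\eta=a+M\in C_A(\Gamma)$, then commuting with each $c+I\in\Tau$ gives $Mc=c$ for all $c\in L$, so $M=I$ because $L$ spans $E^n$, and then commuting with an arbitrary $b+B\in\Gamma$ forces $(I-B)a=0$, i.e. $a\in\mathrm{Fix}(\Pi)$. By Lemma 9, $\mathrm{Fix}(\Pi)=\mathrm{Span}(Z(\Gamma))=V$, so $\eta=a+I$ with $a\in V$, which is exactly $C_E(\Gamma)$ by Lemma 10. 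For the last assertion, $\Theta(N_E(\Gamma))=\mathrm{Aut}_E(\Gamma)$ is essentially the definition of $\mathrm{Aut}_E(\Gamma)$: an automorphism lies in $\mathrm{Aut}_E(\Gamma)$ iff it is conjugation by some isometry, and any such isometry necessarily normalizes $\Gamma$, hence lies in $N_E(\Gamma)=N_A(\Gamma)\cap\mathrm{Isom}(E^n)$, while the reverse inclusion holds by definition. The genuinely hard step in the whole lemma is the cocycle-to-coboundary passage above; everything else is bookkeeping with the lattice together with Lemmas 9 and 10.
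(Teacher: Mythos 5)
Your proof is correct, and every step checks out: the cocycle identity $f(\gamma_1\gamma_2)=f(\gamma_1)+A_1'f(\gamma_2)$, the vanishing of $f$ on $\Tau$ so that it descends to the finite point group, and the averaging trick yielding $(I-A')v=\bar f(A)$ are all verified by direct computation, and the characteristic property of $\Tau$ is justified properly. The difference from the paper is one of depth rather than direction: the paper's proof of this lemma is three lines long --- it disposes of surjectivity with the single sentence ``By Bieberbach's Theorem, $\Theta$ is an epimorphism'' (the lemma itself is attributed to Gubler), observes the kernel is clearly $C_A(\Gamma)$, and obtains $C_A(\Gamma)=C_E(\Gamma)$ by showing the linear part of a centralizing affinity must be the identity (so the affinity is a translation, hence an isometry). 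You instead unpack the black box and give a self-contained proof of the affine rigidity statement via the standard $H^1(\Pi,E^n)=0$ averaging argument; this buys independence from the external citation at the cost of about a page, and is exactly the proof one would supply if Bieberbach's second theorem were not available. Your treatment of $C_A(\Gamma)=C_E(\Gamma)$ is also slightly longer than necessary: once $M=I$ the element is a translation and hence already an isometry, so the detour through $\mathrm{Fix}(\Pi)$ and Lemmas 9 and 10, while correct and consistent with the description of $C_E(\Gamma)$ in Lemma 10, is not needed for the bare equality. The final assertion about $\mathrm{Aut}_E(\Gamma)$ is handled as definitional in both versions.
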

\begin{proof}
By Bieberbach's Theorem, $\Theta$ is an epimorphism. 
Clearly $\mathrm{Ker}(\Theta) = C_A(\Gamma)$. 
Let $\alpha\in C_A(\Gamma)$.  Then $\alpha = a + A$ for some $a\in E^n$ and $A\in \mathrm{GL}(n,\realnos)$. 
Let $\Tau$ be the group of translations of $\Gamma$, and let $b+I \in \Tau$. 
Then $(a+A)(b+I)(a+A)^{-1} = b+I$ implies that $Ab= b$ for all $b+I \in \Tau$.  Hence $A = I$. 
Therefore $C_A(\Gamma) = C_E(\Gamma)$. 
\end{proof}

Let $\beta$ be an affinity of $E^n/\Gamma$.  Then $\beta$ lifts to an affinity $\tilde\beta$ of $E^n$ 
such that $\tilde\beta\Gamma\tilde\beta^{-1} = \Gamma$.  
The affinity $\tilde\beta$ of $E^n$ determines an automorphism $\tilde\beta_\ast$ of $\Gamma$ 
defined by $\tilde\beta_\ast(\gamma) = \tilde\beta\gamma\tilde\beta^{-1}$. 
If $\tilde\beta'$ is another lift of $\beta$, then $\tilde\beta'=\tilde\beta\gamma$ for some $\gamma$ in $\Gamma$. 
Hence the element $\tilde\beta_\ast\mathrm{Inn}(\Gamma)$ of $\mathrm{Out}(\Gamma)$ depends only on $\beta$, 
and we have an homomorphism
$\Omega: \mathrm{Aff}(E^n/\Gamma) \to \mathrm{Out}(\Gamma)$
defined by $\Omega(\beta)  = \tilde\beta_\ast \mathrm{Inn}(\Gamma)$. 

\begin{theorem} 
Let $\Gamma$ be an $n$-space group, and let $K$ be the connected component of the identity of 
the Lie group $\mathrm{Isom}(E^n/\Gamma)$.  
Then $\Omega: \mathrm{Aff}(E^n/\Gamma) \to \mathrm{Out}(\Gamma)$ is an epimorphism, 
and $\mathrm{Ker}(\Omega) = K$. 
\end{theorem}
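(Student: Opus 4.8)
The plan is to run the proof of Theorem 12 verbatim, but in the affine category: replace $N_E(\Gamma)$ by $N_A(\Gamma)$, $\mathrm{Isom}(E^n/\Gamma)$ by $\mathrm{Aff}(E^n/\Gamma)$, and $\mathrm{Out}_E(\Gamma)$ by $\mathrm{Out}(\Gamma)$, and then observe that the kernel, though computed inside $\mathrm{Aff}(E^n/\Gamma)$, actually lands in $\mathrm{Isom}(E^n/\Gamma)$ and there equals $K$. First I would apply Lemma 13: the homomorphism $\Theta: N_A(\Gamma)\to\mathrm{Aut}(\Gamma)$ is an epimorphism --- this is exactly where Bieberbach's theorem is needed, to realize every automorphism of $\Gamma$ by conjugation by an affinity, hence to hit all of $\mathrm{Aut}(\Gamma)$ rather than only $\mathrm{Aut}_E(\Gamma)$ --- and $\Theta$ carries $\Gamma$ onto $\mathrm{Inn}(\Gamma)$. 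Thus $\Theta$ descends to an epimorphism $\overline{\Theta}: N_A(\Gamma)/\Gamma\to\mathrm{Out}(\Gamma)$ whose kernel is $C_A(\Gamma)\Gamma/\Gamma$; and since $C_A(\Gamma)=C_E(\Gamma)$ by the second half of Lemma 13, this kernel equals $C_E(\Gamma)\Gamma/\Gamma$.

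Next I would use Lemma 12: $\Phi: N_A(\Gamma)\to\mathrm{Aff}(E^n/\Gamma)$ is an epimorphism with kernel $\Gamma$, so it induces a group isomorphism $\overline{\Phi}: N_A(\Gamma)/\Gamma\to\mathrm{Aff}(E^n/\Gamma)$. Unwinding the definitions, for $\beta\in\mathrm{Aff}(E^n/\Gamma)$ with lift $\tilde\beta$ we have $\overline{\Theta}\,\overline{\Phi}^{-1}(\beta)=\overline{\Theta}(\tilde\beta\Gamma)=\tilde\beta_\ast\mathrm{Inn}(\Gamma)=\Omega(\beta)$, so $\Omega=\overline{\Theta}\,\overline{\Phi}^{-1}$. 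As $\overline{\Theta}$ is surjective and $\overline{\Phi}$ is an isomorphism, $\Omega$ is an epimorphism, which is the first assertion. Moreover $\mathrm{Ker}(\Omega)=\overline{\Phi}(\mathrm{Ker}(\overline{\Theta}))=\overline{\Phi}(C_E(\Gamma)\Gamma/\Gamma)=\Phi(C_E(\Gamma))$, the last step using $\Gamma=\mathrm{Ker}(\Phi)$.

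The one step needing care is identifying $\Phi(C_E(\Gamma))$ with $K$, since $K$ is a Lie-theoretic object attached to $\mathrm{Isom}(E^n/\Gamma)$ while the kernel has been produced purely algebraically inside $\mathrm{Aff}(E^n/\Gamma)$. By Lemma 10 we have $C_E(\Gamma)=\{v+I: v\in\mathrm{Span}(Z(\Gamma))\}$, which consists of isometries; hence the affine $\Phi$ agrees on $C_E(\Gamma)$ with the isometric $\Phi$ of Lemma 5, and in particular $\Phi(C_E(\Gamma))\subseteq\mathrm{Isom}(E^n/\Gamma)$. By the proof of Theorem 11 the image of $C_E(\Gamma)$ under the isometric $\Phi$ is precisely the identity component $K$ (it is the torus $\Psi(V/Z(\Gamma))$). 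Therefore $\Phi(C_E(\Gamma))=K$, and so $\mathrm{Ker}(\Omega)=K$, completing the proof. I expect this final reconciliation --- that the centralizer $C_E(\Gamma)$ is simultaneously the source of $\mathrm{Ker}(\overline{\Theta})$ and, via Theorem 11, of the identity component $K$ --- to be the only point that is not purely formal.
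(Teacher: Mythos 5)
Your proof is correct and follows essentially the same route as the paper's: surjectivity comes from Bieberbach's theorem via $\Theta: N_A(\Gamma)\to\mathrm{Aut}(\Gamma)$, and the kernel is computed as the image of $C_A(\Gamma)=C_E(\Gamma)$ under $\Phi$, identified with $K$ through Lemma 10 and the proof of Theorem 11; the paper merely packages the last step as a reduction to Theorem 12 (an element of the kernel lifts into $C_E(\Gamma)\Gamma$, hence is an isometry, hence lies in $K$) instead of re-running the $\overline{\Theta}$, $\overline{\Phi}$ factorization in the affine category. One small correction: the epimorphism $\Theta$ with kernel $C_A(\Gamma)=C_E(\Gamma)$ is the paper's Lemma 12 and the epimorphism $\Phi: N_A(\Gamma)\to\mathrm{Aff}(E^n/\Gamma)$ is Lemma 11, so your two lemma citations are each off by one.
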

\begin{proof}
By Bieberbach's theorem, every automorphism of $\Gamma$ is conjugation 
by an affinity of $E^n$,   
and so $\Omega: \mathrm{Aff}(E^n/\Gamma) \to \mathrm{Out}(\Gamma)$ is an epimorphism. 

Suppose $\beta \in \mathrm{Ker}(\Omega)$. Then $\beta$ lifts to $\tilde\beta  \in \mathrm{Aff}(E^n)$
such that $\tilde \beta_\ast \in \mathrm{Inn}(\Gamma)$.  Hence $\tilde\beta_\ast = \gamma_\ast$ for some $\gamma\in\Gamma$. 
Let $C_A(\Gamma)$ be the centralizer of $\Gamma$ in $\mathrm{Aff}(E^n)$. 
Then $\tilde\beta\gamma^{-1} \in C_A(\Gamma)$. 
Hence $\tilde\beta\in C_A(\Gamma)\Gamma$. 
We have that $C_A(\Gamma) = C_E(\Gamma)$ by Lemma 12, 
and so $\tilde\beta$ is an isometry of $E^n$. 
Hence $\beta$ is an isometry of $E^n/\Gamma$. 
Therefore $\mathrm{Ker}(\Omega) = K$ by Theorem 12. 
\end{proof}

\begin{corollary} 
Let $\Gamma$ be an $n$-space group, and let $K$ be the connected component of the identity of 
the Lie group $\mathrm{Isom}(E^n/\Gamma)$.  Then we have the following commutative diagrams 
of group homomorphisms. 

$$
\begin{array}{ccccccccc}
   &      &   1                  &          &           1           &         &   1                                          &         & \\
   &      & \downarrow &          & \downarrow  &  	    &  \downarrow                         &         & \\
1& \to & Z(\Gamma)  & \to     &  \Gamma       & \to   & \mathrm{Inn}(\Gamma)      &  \to  & 1 \\
   &      & \downarrow &          & \downarrow  &  	    &  \downarrow                         &         & \\
1& \to & C_E(\Gamma)  & \to & N_E(\Gamma) & \to   & \mathrm{Aut}_E(\Gamma) &  \to  & 1 \\
   &      & \downarrow &          & \downarrow  &  	    &  \downarrow                         &         & \\
 1& \to &  K & \to & \mathrm{Isom}(E^n/\Gamma) & \to   & \mathrm{Out}_E(\Gamma) &  \to  & 1 \\
    &      & \downarrow &          & \downarrow  &  	    &  \downarrow                         &         & \\
    &      &   1                  &          &           1           &         &   1                                          &         & \\
    &      &                       &          &                        &         &                                               &         & \\
   &      &   1                  &          &           1           &         &   1                                          &         & \\
   &      & \downarrow &          & \downarrow  &  	    &  \downarrow                         &         & \\
1& \to & Z(\Gamma)  & \to     &  \Gamma       & \to   & \mathrm{Inn}(\Gamma)      &  \to  & 1 \\
   &      & \downarrow &          & \downarrow  &  	    &  \downarrow                         &         & \\
1& \to & C_E(\Gamma)  & \to & N_A(\Gamma) & \to   & \mathrm{Aut}(\Gamma) &  \to  & 1 \\
   &      & \downarrow &          & \downarrow  &  	    &  \downarrow                         &         & \\
 1& \to &  K & \to & \mathrm{Aff}(E^n/\Gamma) & \to   & \mathrm{Out}(\Gamma) &  \to  & 1 \\
    &      & \downarrow &          & \downarrow  &  	    &  \downarrow                         &         & \\
    &      &   1                  &          &           1           &         &   1                                          &         &
\end{array}
$$
Moreover the rows and columns of both diagrams are exact. 
\end{corollary}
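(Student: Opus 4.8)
The plan is to recognize every arrow in both diagrams as one of the homomorphisms already constructed in this section, so that each row and each column is identified with an exact sequence proved above. Then the only genuinely new ingredient is the common left column, and the remaining work is a routine check that each of the four interior squares (in each $3\times 3$ array) commutes. I would organize the write-up by first fixing all the maps, then establishing the one missing sequence, and finally verifying commutativity.

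First I would name the maps. In both diagrams the top row $1\to Z(\Gamma)\to\Gamma\to\mathrm{Inn}(\Gamma)\to 1$ is the defining sequence of $\mathrm{Inn}(\Gamma)=\Gamma/Z(\Gamma)$, and the right column $1\to\mathrm{Inn}(\Gamma)\to\mathrm{Aut}_E(\Gamma)\to\mathrm{Out}_E(\Gamma)\to 1$ (respectively with $\mathrm{Aut}(\Gamma)$, $\mathrm{Out}(\Gamma)$) is the definition of the outer automorphism group; both are exact for free. The middle columns $1\to\Gamma\to N_E(\Gamma)\to\mathrm{Isom}(E^n/\Gamma)\to 1$ and $1\to\Gamma\to N_A(\Gamma)\to\mathrm{Aff}(E^n/\Gamma)\to 1$, with $\Phi$ as second map, are Lemma 5 and Lemma 11. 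For the middle rows I would take $\Theta$ as the second map: in the isometry diagram the map $\Theta\colon N_E(\Gamma)\to\mathrm{Aut}_E(\Gamma)$ from the proof of Theorem 12 is an epimorphism with kernel $C_E(\Gamma)$, and in the affine diagram $\Theta\colon N_A(\Gamma)\to\mathrm{Aut}(\Gamma)$ from Lemma 12 is an epimorphism with kernel $C_A(\Gamma)=C_E(\Gamma)$; the key point is that Lemma 12's identity $C_A(\Gamma)=C_E(\Gamma)$ is what lets both middle rows share the same left-hand term. Finally the bottom rows $1\to K\to\mathrm{Isom}(E^n/\Gamma)\to\mathrm{Out}_E(\Gamma)\to 1$ and $1\to K\to\mathrm{Aff}(E^n/\Gamma)\to\mathrm{Out}(\Gamma)\to 1$ are Theorems 12 and 13.

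The one sequence not yet on record is the common left column $1\to Z(\Gamma)\to C_E(\Gamma)\to K\to 1$, whose first map is the inclusion and whose second map is the restriction of $\Phi$. Here I would invoke Lemma 10, which gives $C_E(\Gamma)=\{v+I:v\in V\}$ with $V=\mathrm{Span}(Z(\Gamma))$ and identifies $C_E(\Gamma)$ with the identity component $C$ of $N_E(\Gamma)$; the proof of Theorem 11 gives $\Phi(C)=K$, so the restriction of $\Phi$ maps $C_E(\Gamma)$ onto $K$. Its kernel is $C_E(\Gamma)\cap\mathrm{Ker}(\Phi)=C_E(\Gamma)\cap\Gamma=Z(\Gamma)$, the last equality holding because an element of $\Gamma$ that centralizes $\Gamma$ is precisely a central element. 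This makes the left column exact, and it is shared by both diagrams.

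For commutativity I would observe that the top-left and bottom-left squares consist only of inclusions and restrictions of $\Phi$, and the top-right square commutes because both composites $\Gamma\to\mathrm{Aut}_E(\Gamma)$ send $\gamma$ to $\gamma_\ast=\Theta(\gamma)$. The only square with content is the bottom-right one, where I must check that $\Omega\circ\Phi$ equals the quotient map composed with $\Theta$; this is exactly the identity $\Omega=\overline{\Theta}\,\overline{\Phi}^{-1}$ already used in the proofs of Theorems 12 and 13, and it holds because $\eta$ itself is a lift of $\Phi(\eta)=\eta_\star$, so $\Omega(\eta_\star)=\eta_\ast\,\mathrm{Inn}(\Gamma)$. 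The affine diagram is verified identically, differing only by replacing $N_E(\Gamma)$, $\mathrm{Isom}(E^n/\Gamma)$, $\mathrm{Aut}_E(\Gamma)$, $\mathrm{Out}_E(\Gamma)$ with $N_A(\Gamma)$, $\mathrm{Aff}(E^n/\Gamma)$, $\mathrm{Aut}(\Gamma)$, $\mathrm{Out}(\Gamma)$. I do not expect a serious obstacle, since the corollary is an assembly of results already proved; the step requiring the most care is the exactness of the left column, where the explicit form of $C_E(\Gamma)$, the surjectivity $\Phi(C)=K$, and the identity $C_E(\Gamma)\cap\Gamma=Z(\Gamma)$ must be combined correctly.
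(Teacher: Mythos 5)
Your proposal is correct and is exactly the assembly the paper intends: the corollary is stated without proof as a consequence of Lemmas 5, 10, 11, 12 and Theorems 11, 12, 13, and you have identified each row and column with the corresponding exact sequence from those results. The only step with real content, the exactness of the shared left column $1\to Z(\Gamma)\to C_E(\Gamma)\to K\to 1$, is handled correctly via Lemma 10, the identity $\Phi(C)=K$ from the proof of Theorem 11, and $C_E(\Gamma)\cap\Gamma=Z(\Gamma)$.
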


Let $\Gamma$ be an $n$-space group with point group $\Pi$. 
The {\it point group} of  $N_E(\Gamma)$ is the group
$$\Pi_E = \{B: b+B \in N_E(\Gamma)\ \hbox{for some}\ b \in E^n\}.$$
The group $\Pi_E$ is finite by the proof of Lemma 7. 
The {\it point group} of  $N_A(\Gamma)$ is the group
$$\Pi_A = \{B: b+B \in N_A(\Gamma)\ \hbox{for some}\ b \in E^n\}.$$
We have that $\Pi_E$ is a subgroup of $\Pi_A$. 
By Lemma 12, we may define
$$\Upsilon: \mathrm{Out}(\Gamma) \to \Pi_A/\Pi$$
by $\Upsilon((b+B)_\ast \mathrm{Inn}(\Gamma)) = \Pi B$ for each $b+B\in N_A(\Gamma)$. 

Define 
$$\mathrm{Out}_E^1(\Gamma) = \{(b+I)_\ast\mathrm{Inn}(\Gamma): b+I \in N_E(\Gamma)\}.$$
Then $\mathrm{Out}_E^1(\Gamma)$ is an abelian subgroup of $\mathrm{Out}_E(\Gamma)$, 
and so $\mathrm{Out}_E^1(\Gamma)$ is a finite abelian group by Theorem 12. 

The {\it translation lattice} of $\Gamma$ is the group $L = \{a\in E^n: a+I \in \Gamma\}$. 
The {\it stabilizer} of $L$ in $\mathrm{GL}(n,\realnos)$ is the group
 $\mathrm{GL}(L) = \{B\in GL(n,\realnos): BL = L\}$.  

\begin{lemma} 
Let $\Gamma$ be an $n$-space group with point group $\Pi$, 
let  $\Pi_E$ be the point group of $N_E(\Gamma)$, and let $\Pi_A$ be the point group of $N_A(\Gamma)$. 
Let $L$ be the translation lattice of $\Gamma$, and let $\mathrm{GL}(L)$ be the stabilizer of $L$ in $\mathrm{GL}(n,\realnos)$. 
Then $\Pi_A$ is a subgroup of the normalizer of $\Pi$ in $\mathrm{GL}(L)$.  
The map $\Upsilon: \mathrm{Out}(\Gamma) \to \Pi_A/\Pi$ is an epimorphism with kernel $\mathrm{Out}_E^1(\Gamma)$, 
and $\Upsilon$ maps $\mathrm{Out}_E(\Gamma)$ onto $\Pi_E/\Pi$.  
The group $\mathrm{Out}(\Gamma)$ is finite if and only if $\Pi_A$ is finite, 
and  $\mathrm{Out}_E(\Gamma) =  \mathrm{Out}(\Gamma)$ if and only if $\Pi_E = \Pi_A$. 
\end{lemma}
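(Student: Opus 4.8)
The plan is to recognize $\Upsilon$ as the map induced on outer automorphisms by the point-group homomorphism $b+B\mapsto B$, and to extract all five assertions from the epimorphism $\Theta\colon N_A(\Gamma)\to\mathrm{Aut}(\Gamma)$ of Lemma 12, together with the elementary conjugation formula $(b+B)(a+A)(b+B)^{-1}=(b+Ba-BAB^{-1}b)+BAB^{-1}$. First I would prove (1) by direct computation. Taking $a+A\in\Gamma$ and $b+B\in N_A(\Gamma)$, this conjugate lies in $\Gamma$, so its linear part $BAB^{-1}$ lies in $\Pi$; running the same argument with $B^{-1}$ in place of $B$ gives $B\Pi B^{-1}=\Pi$. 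Specializing to a translation $a+I$, so $a\in L$, the conjugate becomes $Ba+I\in\Gamma$, whence $Ba\in L$; using $B^{-1}$ again yields $BL=L$, i.e.\ $B\in\mathrm{GL}(L)$. Thus every element of $\Pi_A$ normalizes $\Pi$ inside $\mathrm{GL}(L)$. Since $\Gamma\subseteq N_A(\Gamma)$ we also have $\Pi\subseteq\Pi_A$, so $\Pi\trianglelefteq\Pi_A$ and the quotient $\Pi_A/\Pi$ appearing in the statement is a genuine group.

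For (2) I would first check that $\Upsilon$ is well defined and a homomorphism. By Lemma 12, two elements $b+B$ and $b'+B'$ of $N_A(\Gamma)$ induce the same class in $\mathrm{Out}(\Gamma)$ exactly when they differ by an element of $\Gamma\,C_A(\Gamma)$; since $C_A(\Gamma)=C_E(\Gamma)$ consists of translations (again Lemma 12), comparing linear parts shows that $B$ and $B'$ then differ by an element of $\Pi$, so $\Pi B=\Pi B'$ and $\Upsilon$ is well defined. It is multiplicative because linear parts multiply, and it is surjective by the definition of $\Pi_A$. The heart of the argument is the kernel. If $\Upsilon((b+B)_\ast\mathrm{Inn}(\Gamma))=\Pi$, then $B\in\Pi$, so there is an element $a+B\in\Gamma$; since $(b+B)=((b-a)+I)(a+B)$, the class is represented modulo $\mathrm{Inn}(\Gamma)$ by the translation $(b-a)+I$, which still normalizes $\Gamma$. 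The key observation, and the step I expect to be the main obstacle to state cleanly, is that a translation normalizing $\Gamma$ in $\mathrm{Aff}(E^n)$ is an isometry, hence lies in $N_A(\Gamma)\cap\mathrm{Isom}(E^n)=N_E(\Gamma)$; this is precisely what pins the kernel down to $\mathrm{Out}_E^1(\Gamma)$ rather than to some larger group of affine translation classes. The reverse inclusion $\mathrm{Out}_E^1(\Gamma)\subseteq\mathrm{Ker}(\Upsilon)$ is immediate, as such classes have linear part $I$.

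Finally I would deduce (3)--(5) formally. For (3), Lemma 12 gives $\Theta(N_E(\Gamma))=\mathrm{Aut}_E(\Gamma)$, so $\mathrm{Out}_E(\Gamma)$ is represented exactly by the $b+B\in N_E(\Gamma)$, that is, by $B\in\Pi_E$, whence $\Upsilon(\mathrm{Out}_E(\Gamma))=\Pi_E/\Pi$. For (4), part (2) yields the exact sequence $1\to\mathrm{Out}_E^1(\Gamma)\to\mathrm{Out}(\Gamma)\to\Pi_A/\Pi\to 1$; as $\mathrm{Out}_E^1(\Gamma)$ is finite by Theorem 12 and $\Pi$ is finite, $\mathrm{Out}(\Gamma)$ is finite if and only if $\Pi_A/\Pi$ is, if and only if $\Pi_A$ is. For (5), because $\mathrm{Out}_E^1(\Gamma)=\mathrm{Ker}(\Upsilon)$ is contained in $\mathrm{Out}_E(\Gamma)$, the subgroup $\mathrm{Out}_E(\Gamma)$ is the full preimage $\Upsilon^{-1}(\Pi_E/\Pi)$ by (3), while $\mathrm{Out}(\Gamma)=\Upsilon^{-1}(\Pi_A/\Pi)$ trivially; since $\Upsilon$ is onto, these preimages coincide if and only if $\Pi_E/\Pi=\Pi_A/\Pi$, if and only if $\Pi_E=\Pi_A$.
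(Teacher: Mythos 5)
Your proposal is correct and follows essentially the same route as the paper: the same conjugation formula $(b+B)(a+A)(b+B)^{-1}=Ba+(I-BAB^{-1})b+BAB^{-1}$ for part (1), the same factorization $(b+B)(a+B)^{-1}=b-a+I$ to pin the kernel down to $\mathrm{Out}_E^1(\Gamma)$, and the same appeal to Lemma 12 and the finiteness of $\Pi$ and $\mathrm{Out}_E^1(\Gamma)$ for the remaining assertions. Your extra care on well-definedness of $\Upsilon$ and on realizing $\mathrm{Out}_E(\Gamma)$ as the full preimage $\Upsilon^{-1}(\Pi_E/\Pi)$ only makes explicit what the paper leaves implicit.
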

\begin{proof} If $b+B\in N_A(\Gamma)$ and $a+A \in \Gamma$, then 
$$(b+B)(a+A)(b+B)^{-1} = Ba + (I -BAB^{-1})b+ BAB^{-1}.$$
Hence $\Pi_A$ is a subgroup of the normalizer of $\Pi$ in $\mathrm{GL}(L)$. 
The map $\Upsilon$ is an epimorphism and $\Upsilon$ maps $\mathrm{Out}_E(\Gamma)$ onto $\Pi_E/\Pi$ by Lemma 12. 
Now $(b+B)_\ast\mathrm{Inn}(\Gamma) \in \mathrm{Ker}(\Upsilon)$ if and only if $B \in \Pi$. 
If $B \in \Pi$, then $a+B \in \Gamma$ for some $a\in E^n$, and  so $(b+B)(a+B)^{-1} = b -a + I$, 
whence $(b+B)_\ast\mathrm{Inn}(\Gamma) = (b-a+I)_\ast\mathrm{Inn}(\Gamma)$. 
Thus $\mathrm{Ker}(\Upsilon) = \mathrm{Out}_E^1(\Gamma)$. 
As the groups $\mathrm{Out}_E^1(\Gamma)$ and $\Pi$ are finite, $\mathrm{Out}(\Gamma)$ is finite 
if and only if $\Pi_A$ is finite. 
As $\Upsilon$ maps $\mathrm{Out}_E(\Gamma)$ onto $\Pi_E/\Pi$, we deduce 
that $\mathrm{Out}_E(\Gamma) =  \mathrm{Out}(\Gamma)$ if and only if $\Pi_E = \Pi_A$. 
\end{proof}

\noindent{\bf Remark 7.} 
If $\Gamma$ is a 1-space group, then  $\Gamma$ is either  an infinite cyclic or infinite dihedral group,  
and so $\mathrm{Out}(\Gamma)$ has order two. 
If $\Gamma$ is a 2-space group, then $\mathrm{Out}(\Gamma)$ is infinite if and only if $E^2/\Gamma$ is either a torus or a pillow by Lemma 13 and Table 5A in \cite{B-Z}.  

\vspace{.15in}

Let $\Gamma_1$ and $\Gamma_2$ be $n$-space groups, 
and let $\phi\in \mathrm{Aff}(E^n)$ such that $\phi\Gamma_1\phi^{-1} = \Gamma_2$. 
Then $\phi$ induces an affinity $\phi_\star: E^n/\Gamma_1\to E^n/\Gamma_2$ 
defined by $\phi_\star(\Gamma_1 x) =\Gamma_2\phi(x)$ for each $x\in E^n$. 
Define $\phi_\sharp:\mathrm{Aff}(E^m/\Gamma_1)\to \mathrm{Aff}(E^n/\Gamma_2)$ 
by $\phi_\sharp(\alpha) = \phi_\star\alpha\phi_\star^{-1}$. 
Then $\phi_\sharp$ is an isomorphism with $(\phi_\sharp)^{-1} = (\phi^{-1})_\sharp$.

Let $\phi_\ast: \Gamma_1 \to \Gamma_2$ be the isomorphism defined by $\phi_\ast(\gamma) = \phi\gamma\phi^{-1}$. 
Define $\phi_\#:\mathrm{Out}(\Gamma_1) \to \mathrm{Out}(\Gamma_2)$ 
by $\phi_\#(\zeta\mathrm{Inn}(\Gamma_1)) = \phi_\ast\zeta\phi_\ast^{-1}\mathrm{Inn}(\Gamma_2)$. 
Then $\phi_\#$ is an isomorphism with $(\phi_\#)^{-1} = (\phi^{-1})_\#$.

\begin{lemma}  
Let $\Gamma_1$ and $\Gamma_2$ be $n$-space groups, 
and let $\phi\in \mathrm{Aff}(E^n)$ such that $\phi\Gamma_1\phi^{-1} = \Gamma_2$. 
Then the following diagram commutes

$$
\begin{array}{ccc}
\mathrm{Aff}(E^n/\Gamma_1)  & {\buildrel \phi_\sharp \over \longrightarrow} & \mathrm{Aff}(E^n/\Gamma_2) \vspace{.05in} \\ 
\Omega\downarrow  &           & \downarrow \Omega \\
\mathrm{Out}(\Gamma_1) &  {\buildrel \phi_\# \over \longrightarrow}  & \mathrm{Out}(\Gamma_2) 
\end{array}
$$
\end{lemma}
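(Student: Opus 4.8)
The plan is to chase an arbitrary element $\alpha \in \mathrm{Aff}(E^n/\Gamma_1)$ around the square and check that the two routes to $\mathrm{Out}(\Gamma_2)$ agree. First I would fix a lift $\tilde\alpha \in N_A(\Gamma_1)$ of $\alpha$, so that $\alpha(\Gamma_1 x) = \Gamma_1 \tilde\alpha(x)$ for all $x$ and $\tilde\alpha\Gamma_1\tilde\alpha^{-1} = \Gamma_1$; then by definition $\Omega(\alpha) = \tilde\alpha_\ast \mathrm{Inn}(\Gamma_1)$, and hence $\phi_\#(\Omega(\alpha)) = \phi_\ast\tilde\alpha_\ast\phi_\ast^{-1}\mathrm{Inn}(\Gamma_2)$.

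The crucial step is to identify a convenient lift of $\phi_\sharp(\alpha) = \phi_\star\alpha\phi_\star^{-1}$ to $\mathrm{Aff}(E^n)$. I claim that $\phi\tilde\alpha\phi^{-1}$ is such a lift. To verify this I would note that $\phi_\star^{-1}$ is the map induced by $\phi^{-1}$ (since $\phi^{-1}\Gamma_2\phi = \Gamma_1$), and then track a coset: $\phi_\star^{-1}(\Gamma_2 x) = \Gamma_1\phi^{-1}(x)$, applying $\alpha$ gives $\Gamma_1\tilde\alpha\phi^{-1}(x)$, and applying $\phi_\star$ gives $\Gamma_2(\phi\tilde\alpha\phi^{-1})(x)$. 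Thus $\phi_\sharp(\alpha)(\Gamma_2 x) = \Gamma_2(\phi\tilde\alpha\phi^{-1})(x)$, so $\phi\tilde\alpha\phi^{-1}$ lifts $\phi_\sharp(\alpha)$; moreover $(\phi\tilde\alpha\phi^{-1})\Gamma_2(\phi\tilde\alpha\phi^{-1})^{-1} = \phi\tilde\alpha\Gamma_1\tilde\alpha^{-1}\phi^{-1} = \phi\Gamma_1\phi^{-1} = \Gamma_2$, so $\phi\tilde\alpha\phi^{-1} \in N_A(\Gamma_2)$. Consequently $\Omega(\phi_\sharp(\alpha)) = (\phi\tilde\alpha\phi^{-1})_\ast\mathrm{Inn}(\Gamma_2)$.

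It then remains to show the two automorphisms $(\phi\tilde\alpha\phi^{-1})_\ast$ and $\phi_\ast\tilde\alpha_\ast\phi_\ast^{-1}$ of $\Gamma_2$ are actually equal, not merely equal modulo $\mathrm{Inn}(\Gamma_2)$. I would compute, for $\gamma \in \Gamma_2$, that $\phi_\ast^{-1}(\gamma) = \phi^{-1}\gamma\phi$, then $\tilde\alpha_\ast(\phi^{-1}\gamma\phi) = \tilde\alpha\phi^{-1}\gamma\phi\tilde\alpha^{-1}$, and finally $\phi_\ast$ of this equals $\phi\tilde\alpha\phi^{-1}\gamma\phi\tilde\alpha^{-1}\phi^{-1} = (\phi\tilde\alpha\phi^{-1})\gamma(\phi\tilde\alpha\phi^{-1})^{-1} = (\phi\tilde\alpha\phi^{-1})_\ast(\gamma)$. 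Passing to cosets modulo $\mathrm{Inn}(\Gamma_2)$ then yields $\Omega(\phi_\sharp(\alpha)) = \phi_\#(\Omega(\alpha))$, which is the desired commutativity.

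There is no serious obstacle here; the content is purely formal conjugation bookkeeping. The only points requiring care are confirming that $\phi_\star^{-1}$ and $\phi_\ast^{-1}$ are genuinely the maps induced by $\phi^{-1}$, so that the chosen lift is well adapted, and observing that the coincidence of the two automorphisms holds on the nose. The latter makes the passage to outer automorphism classes immediate and, in particular, independent of the choice of lift $\tilde\alpha$, so no separate well-definedness argument is needed.
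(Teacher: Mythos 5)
Your proof is correct and follows essentially the same route as the paper: lift $\alpha$ to $\tilde\alpha\in N_A(\Gamma_1)$, observe that $\phi\tilde\alpha\phi^{-1}$ lifts $\phi_\star\alpha\phi_\star^{-1}$, and check that $(\phi\tilde\alpha\phi^{-1})_\ast=\phi_\ast\tilde\alpha_\ast\phi_\ast^{-1}$ before passing to $\mathrm{Inn}(\Gamma_2)$-cosets. The paper's proof is just a condensed version of this same computation.
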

\begin{proof}
If $\alpha \in \mathrm{Aff}(E^n/\Gamma)$,  
then we have 
\begin{eqnarray*}
\Omega\phi_\sharp(\alpha) 
& =  &  \Omega(\phi_\star\alpha\phi_\star^{-1})  \\
& =  & (\phi\tilde\alpha\phi^{-1})_\ast\mathrm{Inn}(\Gamma_2) \\
& =  & \phi_\ast\tilde\alpha_\ast\phi_\ast^{-1} \mathrm{Inn}(\Gamma_2) \\
& =  &  \phi_\#(\tilde\alpha_\ast\mathrm{Inn}(\Gamma_1)) \ \ = \ \ \phi_\#(\Omega(\alpha)). 
\end{eqnarray*}

\vspace{-.2in}
\end{proof}

\begin{theorem}  
Let $\Gamma$ be an $n$-space group, and let $G$ be a finite subgroup of $\mathrm{Out}(\Gamma)$. 
Then there exists $C \in \mathrm{GL}(n,\realnos)$ such that $C\Gamma C^{-1}$ is an $n$-space group 
and $C_\#(G)$ is a subgroup of $\mathrm{Out}_E(C\Gamma C^{-1})$.
If $G$ is maximal, then $C_\#(G)= \mathrm{Out}_E(C\Gamma C^{-1})$.
\end{theorem}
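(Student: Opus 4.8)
The plan is to realize the finite group $G$ by isometric conjugations after a single linear change of coordinates, using the structure map $\Upsilon$ of Lemma 13 to reduce the problem to one about a finite group of linear transformations. The key observation is that, by Lemma 13, $\mathrm{Out}_E(\Gamma)$ is exactly the full preimage $\Upsilon^{-1}(\Pi_E/\Pi)$: the kernel of $\Upsilon$ is $\mathrm{Out}_E^1(\Gamma)$, which is contained in $\mathrm{Out}_E(\Gamma)$, and $\Upsilon$ carries $\mathrm{Out}_E(\Gamma)$ onto $\Pi_E/\Pi$. Thus an outer class is ``Euclidean'' precisely when its $\Upsilon$-image has an orthogonal representative, and the whole task becomes: arrange coordinates so that the linear parts attached to $G$ are orthogonal.

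First I would let $H$ be the preimage in $\Pi_A$ of the finite subgroup $\Upsilon(G) \subseteq \Pi_A/\Pi$ under the quotient map $\Pi_A \to \Pi_A/\Pi$. Since $\Upsilon(G)$ and $\Pi$ are both finite, $H$ is a finite subgroup of $\mathrm{GL}(n,\realnos)$, and $\Pi \subseteq H$ because $\Upsilon(G)$ contains the identity coset. Averaging the standard inner product over $H$ produces a positive definite symmetric matrix $P = \sum_{B\in H} B^{\mathrm{t}}B$ with $B^{\mathrm{t}}PB = P$ for every $B \in H$; writing $P = C^{\mathrm{t}}C$ with $C \in \mathrm{GL}(n,\realnos)$ then yields $CBC^{-1} \in \mathrm{O}(n)$ for all $B \in H$. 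In particular $C\Pi C^{-1} \subseteq \mathrm{O}(n)$, so the elements of $\Gamma' := C\Gamma C^{-1}$ are isometries; being the conjugate of $\Gamma$ by a homeomorphism of $E^n$, the group $\Gamma'$ acts properly discontinuously with compact quotient, and hence is an $n$-space group.

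Next I would transport the structure map under the isomorphism $C_\# : \mathrm{Out}(\Gamma) \to \mathrm{Out}(\Gamma')$, writing $\Upsilon'$, $\Pi'$, $\Pi_A'$, $\Pi_E'$ for the analogues attached to $\Gamma'$. Each $\omega \in G$ is represented by an affinity $b+B \in N_A(\Gamma)$ with $\Upsilon(\omega) = \Pi B$, and since $\Pi B \in \Upsilon(G)$ we have $B \in H$ by definition of $H$. Then $C_\#(\omega)$ is represented by $Cb + CBC^{-1} \in N_A(\Gamma')$, so $\Upsilon'(C_\#(\omega)) = \Pi'\,CBC^{-1}$. Because $CBC^{-1}$ is orthogonal, the affinity $Cb + CBC^{-1}$ lies in $N_A(\Gamma') \cap \mathrm{Isom}(E^n) = N_E(\Gamma')$, whence $CBC^{-1} \in \Pi_E'$ and $\Upsilon'(C_\#(\omega)) \in \Pi_E'/\Pi'$. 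Applying the identity $\mathrm{Out}_E(\Gamma') = (\Upsilon')^{-1}(\Pi_E'/\Pi')$ from Lemma 13, applied to $\Gamma'$, gives $C_\#(G) \subseteq \mathrm{Out}_E(\Gamma')$, which is the first assertion.

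For the maximality statement I would invoke Theorem 12, which guarantees that $\mathrm{Out}_E(\Gamma')$ is finite. Then $C_\#^{-1}(\mathrm{Out}_E(\Gamma'))$ is a finite subgroup of $\mathrm{Out}(\Gamma)$ containing $G$, so maximality of $G$ forces $C_\#^{-1}(\mathrm{Out}_E(\Gamma')) = G$, that is, $C_\#(G) = \mathrm{Out}_E(\Gamma')$. The one point that requires care, and the main place where a routine argument could slip, is the bookkeeping in the third paragraph: one must check that conjugation by $C$ transports $\Upsilon$ to $\Upsilon'$ correctly and, crucially, that orthogonality of $CBC^{-1}$ really places the representative in $N_E(\Gamma')$ rather than merely in $N_A(\Gamma')$. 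This rests on the elementary fact that an affinity is an isometry exactly when its linear part is orthogonal, together with the defining relation $N_E(\Gamma') = N_A(\Gamma') \cap \mathrm{Isom}(E^n)$.
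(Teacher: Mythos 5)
Your proposal is correct and follows essentially the same route as the paper: pull $\Upsilon(G)$ back to a finite subgroup of $\Pi_A$ containing $\Pi$, conjugate that subgroup into $\mathrm{O}(n)$ by some $C$, observe that the resulting representatives $Cb+CBC^{-1}$ are isometries, and for maximality use the finiteness of $\mathrm{Out}_E(C\Gamma C^{-1})$ from Theorem 12. The only differences are cosmetic: you spell out the averaging construction of $C$ where the paper cites Lemma 9 of \S 7.5 of \cite{R}, and you route the conclusion $C_\#(G)\subseteq \mathrm{Out}_E(C\Gamma C^{-1})$ through $\Upsilon'$ and Lemma 13, where the paper reads it off directly from the definition of $\mathrm{Out}_E$.
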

\begin{proof}
Let $\Rho$ be the subgroup of $\Pi_A$ such that $\Upsilon(G) = \Rho/\Pi$. 
As $\Rho/\Pi$ and $\Pi$ are finite, $\Rho$ is finite. 
By Lemma 9 of \S 7.5 of \cite{R}, there exists $C\in \mathrm{GL}(n,\realnos)$ 
such that $C\Rho C^{-1} \subset \mathrm{O}(n)$. 
Then $C\Gamma C^{-1}$ is a discrete subgroup of $\mathrm{Isom}(E^n)$, 
and $C_\star: E^n/\Gamma \to E^n/C\Gamma C^{-1}$ is an affine homeomorphism.  
Therefore $E^n/C \Gamma C^{-1}$ is compact.  
Hence $C\Gamma C^{-1}$ is an $n$-space group. 

Let $g\in G$.  Then $g = (b+B)_\ast \mathrm{Inn}(\Gamma)$ for some $B\in \Rho$. 
We have that
\begin{eqnarray*}
C_\#(g)  & =  & C_\ast(b+B)_\ast C_\ast^{-1}\mathrm{Inn}(C\Gamma C^{-1}) \\ 
	       & = &  (C(b+B)C^{-1})_\ast \mathrm{Inn}(C\Gamma C^{-1}) \\ 
	       & = &  (Cb+CBC^{-1})_\ast\mathrm{Inn}(C\Gamma C^{-1}).
\end{eqnarray*}
As $Cb+CBC^{-1} \in \mathrm{Isom}(E^n)$, we have that $C_\#(g) \in \mathrm{Out}_E(C\Gamma C^{-1})$. 
Thus $C_\#(G)$ is a subgroup of $\mathrm{Out}_E(C\Gamma C^{-1})$.

If $G$ is maximal, then $C_\#(G)$ is a maximal finite subgroup of 
$C_\#(\mathrm{Out}(\Gamma)) = \mathrm{Out}(C\Gamma C^{-1})$. 
Hence $C_\#(G)= \mathrm{Out}_E(C\Gamma C^{-1})$, 
since $\mathrm{Out}_E(C\Gamma C^{-1})$ 
is a finite subgroup of $\mathrm{Out}(C\Gamma C^{-1})$. 
\end{proof}
\begin{corollary} 
Let $\Gamma$ be an $n$-space group such that $\mathrm{Out}(\Gamma)$ is finite. 
Then there exists $C \in \mathrm{GL}(n,\realnos)$ such that $C\Gamma C^{-1}$ is an $n$-space group 
and 
$$\mathrm{Out}_E(C\Gamma C^{-1})=\mathrm{Out}(C\Gamma C^{-1}).$$
\end{corollary}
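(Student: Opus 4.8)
The plan is to obtain this as an immediate consequence of Theorem 15 by taking the finite subgroup $G$ there to be the entire group $\mathrm{Out}(\Gamma)$. First I would set $G = \mathrm{Out}(\Gamma)$, which is legitimate precisely because of the hypothesis that $\mathrm{Out}(\Gamma)$ is finite. The one point worth spelling out is that this $G$ is \emph{maximal} in the sense required by Theorem 15: since $G$ is already the whole of $\mathrm{Out}(\Gamma)$, any finite subgroup of $\mathrm{Out}(\Gamma)$ containing $G$ must equal $G$, so $G$ is a maximal finite subgroup of $\mathrm{Out}(\Gamma)$.

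Next I would invoke Theorem 15 for this maximal $G$. It produces $C \in \mathrm{GL}(n,\realnos)$ such that $C\Gamma C^{-1}$ is an $n$-space group and $C_\#(G) = \mathrm{Out}_E(C\Gamma C^{-1})$. To finish, I would use the fact, recorded in the discussion preceding Lemma 14, that for any $\phi \in \mathrm{Aff}(E^n)$ conjugating $\Gamma$ to another $n$-space group the induced map $\phi_\#$ on outer automorphism groups is an isomorphism; applied to $\phi = C$ this says $C_\#: \mathrm{Out}(\Gamma) \to \mathrm{Out}(C\Gamma C^{-1})$ is an isomorphism, hence $C_\#(\mathrm{Out}(\Gamma)) = \mathrm{Out}(C\Gamma C^{-1})$. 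Combining the two displayed equalities gives $\mathrm{Out}_E(C\Gamma C^{-1}) = C_\#(G) = C_\#(\mathrm{Out}(\Gamma)) = \mathrm{Out}(C\Gamma C^{-1})$, as desired.

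There is essentially no genuine obstacle here, since the corollary is a direct specialization of Theorem 15; the entire content is bookkeeping. The only two things I must be careful to state explicitly are that the full finite group qualifies as a maximal finite subgroup (so that the second conclusion of Theorem 15 applies), and that the surjectivity $C_\#(\mathrm{Out}(\Gamma)) = \mathrm{Out}(C\Gamma C^{-1})$ follows from $C_\#$ being an isomorphism rather than needing a fresh argument. If anything might be considered delicate, it is merely making sure the reader sees that Theorem 15 gives equality (not just containment) precisely because of maximality, which is why the finiteness hypothesis on $\mathrm{Out}(\Gamma)$ is exactly what is needed.
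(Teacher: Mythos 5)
Your proof is correct and is exactly the argument the paper intends: the corollary is stated without proof as an immediate specialization of the preceding theorem, obtained by taking $G = \mathrm{Out}(\Gamma)$ (which is trivially a maximal finite subgroup under the finiteness hypothesis) and using that $C_\#$ is an isomorphism onto $\mathrm{Out}(C\Gamma C^{-1})$. The two points you flag as needing explicit mention are precisely the right ones, and nothing further is required.
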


\section{On the affine classification of flat orbifold fibrations}  

In this section, we develop some theory for classifying geometric orbifold fibrations 
of compact, connected, flat $n$-orbifolds up to affine equivalence. 
Geometric flat orbifold fibrations $\eta_i: M_i \to B_i$, for $i = 1,2$, are said to be {\it affinely equivalent} 
if there are affinities $\alpha: M_1 \to M_2$ and $\beta:B_1 \to B_2$ such that $\beta\eta_1 = \eta_2\alpha$. 
We have the following theorem from \cite{R-T}. 

\begin{theorem} 
Let $\Nu_i$ be a complete normal subgroup of an $n$-space group $\Gamma_i$ for $i=1,2$, 
and let $\eta_i:E^n/\Gamma_i \to (E^n/V_i)/(\Gamma_i/\Nu_i)$ 
be the fibration projections determined by $\Nu_i$ for $i=1,2$. 
Then the following are equivalent:
\begin{enumerate}
\item The fibration projections $\eta_1$ and $\eta_2$ are affinely equivalent. 
\item There is an affinity $\phi$ of $E^n$ such that 
$\phi\Gamma_1\phi^{-1} = \Gamma_2$ and $\phi\Nu_1\phi^{-1} = \Nu_2$. 
\item There is an isomorphism $\psi:\Gamma_1\to \Gamma_2$ such that $\psi(\Nu_1) = \Nu_2$.
\end{enumerate}
\end{theorem}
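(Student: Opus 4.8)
The plan is to prove the cycle $(2)\Rightarrow(3)\Rightarrow(2)$ together with $(2)\Rightarrow(1)\Rightarrow(2)$, so that statement $(2)$ serves as the hub. The two implications between $(2)$ and $(3)$ are the soft ones. For $(2)\Rightarrow(3)$, given an affinity $\phi$ with $\phi\Gamma_1\phi^{-1}=\Gamma_2$, I would simply set $\psi=\phi_\ast$, the isomorphism $\gamma\mapsto\phi\gamma\phi^{-1}$; then $\psi(\Nu_1)=\phi\Nu_1\phi^{-1}=\Nu_2$. For $(3)\Rightarrow(2)$, I would invoke the second Bieberbach theorem (as used already in Lemma 12): every isomorphism $\psi:\Gamma_1\to\Gamma_2$ of $n$-space groups is realized by conjugation by an affinity $\phi\in\mathrm{Aff}(E^n)$, and then $\phi\Nu_1\phi^{-1}=\psi(\Nu_1)=\Nu_2$ is automatic.

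For $(2)\Rightarrow(1)$, write $\phi=a+A$ and let $\alpha=\phi_\star:E^n/\Gamma_1\to E^n/\Gamma_2$ be the induced affinity, $\alpha(\Gamma_1 x)=\Gamma_2\phi(x)$. The first step is to observe that $A$ carries $V_1$ onto $V_2$: if $v+I\in\Nu_1$ then $\phi(v+I)\phi^{-1}=Av+I\in\Nu_2$, so $A$ maps the translation vectors of $\Nu_1$ into those of $\Nu_2$, and applying the same to $\phi^{-1}$ gives $AV_1=V_2$. Consequently $\phi(V_1+x)=V_2+\phi(x)$, so $\phi$ descends to a well-defined affinity $\bar\phi:E^n/V_1\to E^n/V_2$, $\bar\phi(V_1+x)=V_2+\phi(x)$. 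Using the action formula of Lemma 1 together with the isomorphism $\Gamma_1/\Nu_1\to\Gamma_2/\Nu_2$ induced by $\phi$, a direct check shows $\bar\phi$ is equivariant, and hence descends to an affinity $\beta:B_1\to B_2$. Finally $\beta\eta_1=\eta_2\alpha$ follows by evaluating both sides on $\Gamma_1 x$ using $\eta_{V_i}(\Gamma_i x)=(\Gamma_i/\Nu_i)(V_i+x)$; both give $(\Gamma_2/\Nu_2)(V_2+\phi(x))$.

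The substantive direction is $(1)\Rightarrow(2)$. Given affinities $\alpha:E^n/\Gamma_1\to E^n/\Gamma_2$ and $\beta:B_1\to B_2$ with $\beta\eta_1=\eta_2\alpha$, I would first lift $\alpha$ to an affinity $\phi\in\mathrm{Aff}(E^n)$ with $\phi\Gamma_1\phi^{-1}=\Gamma_2$, by the standard orbifold lifting argument (Theorem 13.2.6 of \cite{R}, as in Lemmas 5 and 11). The crux is to show $AV_1=V_2$, where $A$ is the linear part of $\phi$. The fiber of $\eta_1$ through $\Gamma_1 x$ has preimage in $E^n$ equal to the $\Gamma_1$-orbit of the coset $V_1+x$; since $\alpha=\phi_\star$ maps fibers of $\eta_1$ onto fibers of $\eta_2$, the affine subspace $\phi(V_1+x)=AV_1+\phi(x)$ lies in the $\Gamma_2$-orbit of some coset $V_2+z$. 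By Theorem 1(1) every element of $\Gamma_2$ preserves $V_2$, so this orbit is a union of translates of $V_2$ whose images in $E^n/V_2\cong V_2^\perp$ form a single orbit of the space group $\Gamma_2/\Nu_2$, hence a discrete set. Since the connected affine subspace $AV_1+\phi(x)$ projects onto a discrete subset of $V_2^\perp$, its image must be a point, giving $AV_1\subseteq V_2$; as $\dim V_1=\dim V_2$ (because $\beta$ is an affinity and $\dim B_i=n-\dim V_i$), we conclude $AV_1=V_2$.

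It remains to upgrade $AV_1=V_2$ to $\phi\Nu_1\phi^{-1}=\Nu_2$, and this is where I expect the only real friction, because $\phi$ need not respect orthogonality and so does not visibly carry the defining condition of $\Nu_1$ to that of $\Nu_2$. I would sidestep the orthogonality issue by arguing with spans and completeness. First, $\phi\Nu_1\phi^{-1}$ is a normal subgroup of $\Gamma_2$ whose translation vectors are $A$ applied to those of $\Nu_1$, so $\mathrm{Span}(\phi\Nu_1\phi^{-1})=AV_1=V_2$. Next, completeness is invariant under the abstract isomorphism $\phi_\ast$: by Remark 1 a normal subgroup is complete exactly when it is the unique maximal member of its commensurability class of normal subgroups, a purely group-theoretic condition, so $\phi\Nu_1\phi^{-1}$ is complete in $\Gamma_2$. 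Finally, a complete normal subgroup is determined by its span through the defining formula $\Nu=\{a+A\in\Gamma:a\in V,\ V^\perp\subseteq\mathrm{Fix}(A)\}$; since $\phi\Nu_1\phi^{-1}$ and $\Nu_2$ are both complete with span $V_2$, they coincide, giving $\phi\Nu_1\phi^{-1}=\Nu_2$ and hence statement $(2)$.
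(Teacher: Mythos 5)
The paper does not prove this statement; it is imported verbatim from the earlier paper \cite{R-T} (``We have the following theorem from \cite{R-T}''), so there is no in-paper argument to compare yours against. Judged on its own, your proof is correct and complete. The cycle through (2) as a hub is sound: (2)$\Leftrightarrow$(3) is exactly the second Bieberbach theorem as the paper itself uses it in Lemmas 12 and 13, and (2)$\Rightarrow$(1) is a routine descent once you check $AV_1=V_2$ on translation lattices. The two places where real content is needed are handled correctly. First, in (1)$\Rightarrow$(2) the connectedness-versus-discreteness argument --- the image of the connected set $AV_1+\phi(x)$ in $E^n/V_2$ lies in a single $\Gamma_2/\Nu_2$-orbit, which is discrete because $\Gamma_2/\Nu_2$ acts discontinuously on $E^n/V_2$ by Lemma 1, hence is a point --- is the right way to extract $AV_1\subseteq V_2$ from the hypothesis that $\alpha$ carries fibers to fibers, and the dimension count via $\dim B_i=n-\dim V_i$ closes the equality. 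Second, your observation that an affinity need not respect the orthogonality built into the definition of completeness, and your workaround --- completeness is an isomorphism-invariant condition (maximality in the commensurability class, per Remark 1, or equivalently $\Gamma/\Nu$ being a space group, per Remark 2), and a complete normal subgroup is uniquely determined by its span via the defining formula --- is exactly the right use of the completeness hypothesis; without it the implication (1)$\Rightarrow$(2) would indeed fail for non-complete normal subgroups. No gaps.
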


Let $\Nu$ be a complete normal subgroup of an $n$-space group $\Gamma$, 
let $V = \mathrm{Span}(\Nu)$, and let $V^\perp$ be the orthogonal complement of $V$ in $E^n$.  
Euclidean $n$-space $E^n$ decomposes as the Cartesian product 
$E^n = V \times V^\perp$. 
The action of $\Nu$ on $V^\perp$ is trivial. 
Hence $E^n/\Nu$ decomposes as the Cartesian product 
$$E^n/\Nu = V/\Nu \times V^\perp.$$

The action of $\Gamma/\Nu$ on $E^n/\Nu$ corresponds 
to the diagonal action of $\Gamma/\Nu$ on $V/\Nu \times V^\perp$ via isometries 
defined by the formula
$$(\Nu(b+B))(\Nu v,w) = (\Nu(c+Bv),d+Bw)$$
where $b = c + d$ with $c\in V$ and $d\in V^\perp$. 
Hence we have the following theorem.
\begin{theorem} 
Let $\Nu$ be a complete normal subgroup of an $n$-space group $\Gamma$, 
and let $V= \mathrm{Span}(\Nu)$. 
Then the map
$$\chi: E^n/\Gamma \to (V/\Nu\times V^\perp)/(\Gamma/\Nu)$$
defined by $\chi(\Gamma x) = (\Gamma/\Nu)(\Nu v, w)$, 
with  $x = v + w$ and $v\in V$ and $w\in V^\perp$, is an isometry. 
\end{theorem}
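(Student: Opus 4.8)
The plan is to factor $\chi$ through the intermediate quotient $E^n/\Nu$ and thereby reduce the statement to the elementary staged-quotient identity $E^n/\Gamma=(E^n/\Nu)/(\Gamma/\Nu)$. Throughout I use that the inner metric on a flat orbifold $E^n/\Lambda$ is the orbit-distance metric $d(\Lambda x,\Lambda y)=\inf_{\lambda\in\Lambda}d_{E^n}(x,\lambda y)$. Concretely, I would carry out two sub-steps: first, identify $E^n/\Nu$, with its quotient metric, isometrically with the metric product $V/\Nu\times V^\perp$ via $\Nu x\mapsto(\Nu v,w)$ for $x=v+w$; second, show that $\Gamma/\Nu$ acts properly discontinuously on this product by the stated diagonal formula, that the resulting orbit space is $E^n/\Gamma$, and that $\chi$ is precisely the composite isometry. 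This is the exact analogue of the discussion underlying Theorem 3, with the role of $\Kappa$ on the second factor now played by the trivial group.

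For the first sub-step the key input is Theorem 1. By Theorem 1(2) every $a+A\in\Nu$ satisfies $a\in V$ and $V^\perp\subseteq\mathrm{Fix}(A)$, while Theorem 1(1) gives $AV=V$; hence $\Nu$ acts on $E^n=V\times V^\perp$ by $(v,w)\mapsto(a+Av,\,w)$, that is, trivially on the second factor. Writing $x=v_1+w_1$ and $y=v_2+w_2$ and using the Pythagorean decomposition $d_{E^n}(x,\nu y)^2=d_V(v_1,a+Av_2)^2+d_{V^\perp}(w_1,w_2)^2$, I would take the infimum over $\nu\in\Nu$: the $V^\perp$-term is constant and the $V$-term infimizes to $d_{V/\Nu}(\Nu v_1,\Nu v_2)^2$, where $\Nu$ acts on $V$ as the space group of Theorem 1(3). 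This yields exactly the product metric on $V/\Nu\times V^\perp$, establishing the isometric identification of $E^n/\Nu$.

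For the second sub-step, normality of $\Nu$ makes the formula $(\Nu(b+B))(\Nu v,w)=(\Nu(c+Bv),\,d+Bw)$ a well-defined isometric action of $\Gamma/\Nu$, where well-definedness of the $V$-component uses $BV=V$. I would argue this action is properly discontinuous: its projection to the $V^\perp$-factor is the effective space-group action of $\Gamma/\Nu$ on $E^n/V\cong V^\perp$ furnished by Lemma 1, which is properly discontinuous, and $V/\Nu$ is compact, so the diagonal action on $V/\Nu\times V^\perp$ is properly discontinuous and its quotient is a flat orbifold carrying the orbit-distance inner metric. I would then check that $\chi$ is well defined and bijective by tracking components of $\gamma=c+d+B$ (using $BV=V$ and $BV^\perp=V^\perp$), and finally verify it preserves distances: since $\{\nu\gamma:\nu\in\Nu\}=\Gamma$ as $\Nu\gamma$ runs over $\Gamma/\Nu$, and since $(\Nu\gamma)(\Nu y)=\Nu(\gamma y)$ under the identification of the first sub-step, the two-stage infimum $\inf_{\Nu\gamma}\inf_{\nu}d_{E^n}(x,\nu\gamma y)$ collapses to $\inf_{\gamma'\in\Gamma}d_{E^n}(x,\gamma' y)=d_{E^n/\Gamma}(\Gamma x,\Gamma y)$. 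Hence $\chi$ is an isometry.

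The step I expect to require the most care is the metric bookkeeping rather than any conceptual obstacle: confirming that the quotient metric used at each stage is the inner metric, that the product metric on $V/\Nu\times V^\perp$ is exactly the one induced from $E^n/\Nu$, and that the nested infimum collapses as claimed. Within this, the one point that must be argued cleanly rather than merely quoted is the proper discontinuity of the diagonal action on the product, which I would obtain from the properness of the space-group action on the $V^\perp$-factor together with the compactness of $V/\Nu$.
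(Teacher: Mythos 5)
Your proposal is correct and follows essentially the same route as the paper: the paper derives Theorem 17 from the immediately preceding discussion, which notes that $\Nu$ acts trivially on $V^\perp$ (so $E^n/\Nu = V/\Nu\times V^\perp$) and that the induced action of $\Gamma/\Nu$ on $E^n/\Nu$ is the stated diagonal action, whence the staged quotient gives the result. You simply make explicit the metric bookkeeping (the Pythagorean infimum computation and the collapse of the nested infimum) that the paper leaves implicit.
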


The group $\Nu$ is the kernel of the action of $\Gamma$ on $V^\perp$, and so we have 
an effective action of $\Gamma/\Nu$ on $V^\perp$. 
The natural projection from $V/\Nu\times V^\perp$ to $V^\perp$ 
induces a continuous surjection
$$\pi^\perp: (V/\Nu\times V^\perp)/(\Gamma/\Nu) \to V^\perp/(\Gamma/\Nu).$$
Orthogonal projection from $E^n$ to $V^\perp$ induces an isometry from $E^n/V$ to $V^\perp$ 
which in turn induces an isometry 
$$\psi^\perp: (E^n/V)/(\Gamma/\Nu) \to V^\perp/(\Gamma/\Nu).$$
\begin{theorem} 
The following diagram commutes
\[\begin{array}{ccc}
E^n/\Gamma & {\buildrel \chi\over\longrightarrow} &
(V/\Nu\times V^\perp)/(\Gamma/\Nu) \\
\eta_V \downarrow \  & & \downarrow\pi^\perp \\
(E^n/V)/(\Gamma/\Nu) & {\buildrel \psi^\perp\over\longrightarrow}  & V^\perp/(\Gamma/\Nu). 
\end{array}\] 
\end{theorem}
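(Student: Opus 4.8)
The plan is to prove commutativity by a direct diagram chase on a single representative point, in exact parallel with the proof of Theorem 4; indeed the present square is just the $\Kappa$-free version of that one, with the second factor $V^\perp$ in place of $V^\perp/\Kappa$ and the acting group $\Gamma/\Nu$ in place of $\Gamma/\Nu\Kappa$. First I would fix a point $\Gamma x$ of $E^n/\Gamma$ and write $x = v + w$ with $v \in V$ and $w \in V^\perp$, using the orthogonal decomposition $E^n = V \times V^\perp$ on which every map in the diagram is built.

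Next I would evaluate the two composites $\pi^\perp \circ \chi$ and $\psi^\perp \circ \eta_V$ at $\Gamma x$. Across the top and down the right: by the definition of $\chi$ in the preceding theorem, $\chi(\Gamma x) = (\Gamma/\Nu)(\Nu v, w)$, and since $\pi^\perp$ is induced by the projection of $V/\Nu \times V^\perp$ onto its second factor followed by passage to the $\Gamma/\Nu$-quotient, this gives $\pi^\perp(\chi(\Gamma x)) = (\Gamma/\Nu)w$. Down the left and across the bottom: $\eta_V(\Gamma x) = (\Gamma/\Nu)(V + x)$, and $\psi^\perp$ is induced by orthogonal projection $E^n/V \to V^\perp$, which carries the coset $V + x = V + w$ to $w$; hence $\psi^\perp(\eta_V(\Gamma x)) = (\Gamma/\Nu)w$ as well. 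The two results coincide, which is precisely commutativity of the square.

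The only thing that genuinely needs checking is that $\pi^\perp$ and $\psi^\perp$ descend to the stated quotients, that is, that both projections are equivariant for the relevant $\Gamma/\Nu$-actions on $V^\perp$. This is already supplied by the discussion immediately preceding the theorem: $\Nu$ is the kernel of the action of $\Gamma$ on $V^\perp$, so $\Gamma/\Nu$ acts effectively on $V^\perp$, and the diagonal action formula $(\Nu(b+B))(\Nu v, w) = (\Nu(c + Bv), d + Bw)$ shows that projecting onto the $V^\perp$-factor intertwines this action with the action of $\Gamma/\Nu$ on $V^\perp$. Because both routes around the square are assembled from the same orthogonal projection $E^n \to V^\perp$, there is no real obstacle; the argument is purely formal and is no harder than the proof of Theorem 4.
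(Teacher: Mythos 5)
Your proof is correct and matches the paper's own argument: both evaluate $\pi^\perp\circ\chi$ and $\psi^\perp\circ\eta_V$ on a representative $\Gamma x$ with $x=v+w$ and observe that each composite yields $(\Gamma/\Nu)w$. The extra remark on equivariance is harmless but not needed beyond what the paper already establishes before the theorem.
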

\begin{proof}
Let $x\in E^n$.  Write $x = v+w$ with $v\in V$ and $w\in V^\perp$. 
Then we have 
\begin{eqnarray*}
\pi^\perp(\chi(\Gamma x)) & =  & \pi^\perp((\Gamma/\Nu)(\Nu v, w)) \\
			      & = & (\Gamma/\Nu) w \\
			      & =  & \psi^\perp((\Gamma/\Nu)(V+x))\ \  = \ \ \psi^\perp(\eta_V(\Gamma x)).
\end{eqnarray*}

\vspace{-.2in}
\end{proof}

Let $\Nu$ be a complete normal subgroup of an $n$-space group $\Gamma$, and let $V = \mathrm{Span}(\Nu)$. 
Let $\gamma \in \Gamma$.   Then $\gamma = b+B$ with $b\in E^n$ and $B\in \mathrm{O}(n)$. 
Write $b = \overline b + b'$ with $\overline b \in V$ and $b' \in V^\perp$. 
Let $\overline B$ and $B'$ be the orthogonal transformations of $V$ and $V^\perp$, respectively, 
obtained by restricting $B$. 
Let $\overline \gamma = \overline b + \overline B$ and $\gamma' = b' + B'$. 
Then $\overline \gamma$ and $\gamma'$ are isometries of $V$ and $V^\perp$, respectively. 
If $\gamma_1, \gamma_2 \in \Gamma$, then we have that 
$\overline{\gamma_1\gamma_2} = \overline{\gamma_1}\,\overline{\gamma_2}$ 
and $(\gamma_1\gamma_2)' = \gamma_1'\gamma_2'$. 

Let $\ov \Gamma = \{\ov \gamma: \gamma \in \Gamma\}$. 
Then $\ov \Gamma$ is a subgroup of $\mathrm{Isom}(V)$. 
Example 1 shows that $\ov\Gamma$ is not necessarily discrete. 
The map $\Beta: \Gamma \to \ov \Gamma$ defined by $\Beta(\gamma)= \ov\gamma$ is an epimorphism 
with kernel $\Kappa$ equal to the kernel of the action of $\Gamma$ on $V$. 

Let $\Gamma' = \{\gamma' : \gamma \in \Gamma\}$.  
Then $\Gamma'$ is a subgroup of $\mathrm{Isom}(V^\perp)$. 
The map $\Rho' : \Gamma \to \Gamma'$ 
defined by $\Rho'(\gamma) = \gamma'$ is epimorphism with kernel $\Nu$, 
since $\Nu$ is a complete normal subgroup of $\Gamma$.  
Hence $\Rho'$ induces an isomorphism $\Rho: \Gamma/\Nu \to \Gamma'$ defined by $\Rho(\Nu\gamma) = \gamma'$. 
The group $\Gamma'$ is a space group with $V^\perp/\Gamma' = V^\perp/(\Gamma/\Nu)$. 

Let $\ov \Nu = \{\ov \nu: \nu \in \Nu\}$.  
Then $\ov \Nu$ is a subgroup of $\mathrm{Isom}(V)$. 
The map $\Beta: \Nu \to \ov \Nu$ defined by $\Beta(\nu) = \ov\nu$ is an isomorphism. 
The group $\ov\Nu$ is a space group with $V/\ov{\Nu} = V/\Nu$. 

The diagonal action of $\Gamma$ on $V\times V^\perp$ is given by $\gamma(v,w) = (\overline \gamma v, \gamma' w)$. 
The diagonal action of $\Gamma/\Nu$ on $V/\Nu \times V^\perp$ is given by 
$\Nu\gamma(\Nu v, w) = (\Nu \overline \gamma v, \gamma' w)$. 
The action of $\Gamma/\Nu$ on $V/\Nu$ determines a homomorphism 
$$\Xi : \Gamma/\Nu \to \mathrm{Isom}(V/\Nu)$$
defined by $\Xi(\Nu \gamma) = \overline \gamma_\star$, where $\overline \gamma_\star: V/\Nu \to V/\Nu$ 
is defined by $\overline\gamma_\star(\Nu v) = \Nu \overline\gamma(v)$ for each $v\in V$. 

\begin{theorem} 
Let $\Mu$ be an $m$-space group, let $\Delta$ be an $(n-m)$-space group, 
and let $\Theta:\Delta \to \mathrm{Isom}(E^m/\Mu)$ be a homomorphism. 
Identify $E^n$ with $E^m\times E^{n-m}$, and extend $\Mu$ to a subgroup $\Nu$ of $\mathrm{Isom}(E^n)$
such that the point group of $\Nu$ acts trivially on $(E^m)^\perp= E^{n-m}$. 
Then there exists a unique $n$-space group $\Gamma$ containing $\Nu$ as a complete normal subgroup 
such that $\Gamma' = \Delta$, and if $\Xi:\Gamma/\Nu \to \mathrm{Isom}(E^m/\Nu)$ is 
the homomorphism induced by the action of $\Gamma/\Nu$ on $E^m/\Nu$, then $\Xi = \Theta\Rho$ 
where $\Rho:\Gamma/\Nu \to \Gamma'$ is the isomorphism defined by $\Rho(\Nu\gamma) = \gamma'$ 
for each $\gamma\in\Gamma$. 
\end{theorem}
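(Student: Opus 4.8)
The plan is to invert the decomposition of this section: I would build $\Gamma$ explicitly as a group of product isometries of $E^n = E^m\times E^{n-m}$ and then read off that it is the unique space group carrying the prescribed data. Applying Lemma~5 to the $m$-space group $\Mu$ gives an epimorphism $\Phi:N_E(\Mu)\to\mathrm{Isom}(E^m/\Mu)$, $\Phi(g)=g_\star$, with kernel $\Mu$; thus for each $\delta\in\Delta$ the isometry $\Theta(\delta)$ of $E^m/\Mu=V/\Nu$ lifts to the full coset $\Phi^{-1}(\Theta(\delta))$ of $\Mu$ in $N_E(\Mu)$. Since $\Gamma'=\Delta$ together with $\Xi=\Theta\Rho$ would force any $\gamma\in\Gamma$ with $\gamma'=\delta$ to have horizontal part $\ov\gamma\in\Phi^{-1}(\Theta(\delta))$, the only possible candidate is
\[ \Gamma=\{\,g\times\delta:\ \delta\in\Delta,\ g\in N_E(\Mu),\ g_\star=\Theta(\delta)\,\}, \]
where $g\times\delta$ denotes the product isometry $(v,w)\mapsto(g(v),\delta(w))$. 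Using the entire coset of lifts, rather than a chosen section, keeps this set canonical and, as I explain next, automatically closed.

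First I would verify that $\Gamma$ is a subgroup. Closure holds with no cocycle condition: $(g_1\times\delta_1)(g_2\times\delta_2)=(g_1g_2)\times(\delta_1\delta_2)$, and because both $\Phi$ and $\Theta$ are homomorphisms, $(g_1g_2)_\star=\Theta(\delta_1)\Theta(\delta_2)=\Theta(\delta_1\delta_2)$; inverses are handled identically. Next, $\Gamma$ is discrete and cocompact. The assignment $g\times\delta\mapsto\delta$ is the restriction of the continuous vertical-part projection $\mathrm{Isom}(E^n)\to\mathrm{Isom}(E^{n-m})$; it maps $\Gamma$ onto $\Delta$ with kernel exactly $\Nu$ (indeed $\gamma'=I$ iff $g_\star=\mathrm{id}$ iff $g\in\Mu$). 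Since $\Delta$ is discrete in $\mathrm{Isom}(E^{n-m})$ and $\Nu\cong\Mu$ is discrete in $\mathrm{Isom}(E^n)$, any sequence in $\Gamma$ tending to $I$ has vertical part eventually $I$, hence eventually lies in $\Nu$, hence is eventually $I$; so $\Gamma$ is discrete. For cocompactness, $E^n/\Nu=(E^m/\Mu)\times E^{n-m}$ and $\Gamma/\Nu\cong\Delta$ acts diagonally by $\delta\cdot(\ov z,w)=(\Theta(\delta)\ov z,\delta w)$; if $F$ is a compact fundamental set for $\Delta$ in $E^{n-m}$, then $(E^m/\Mu)\times F$ is compact and surjects onto $E^n/\Gamma$. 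Thus $\Gamma$ is an $n$-space group.

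Then I would check the structural claims directly. Since each $g$ normalizes $\Mu$, for $\nu=\mu_0\times I\in\Nu$ we have $(g\times\delta)\nu(g\times\delta)^{-1}=(g\mu_0g^{-1})\times I\in\Nu$; hence $\Nu\trianglelefteq\Gamma$, and $\span(\Nu)=\span(\Mu)=E^m=V$. For completeness, for $\gamma=a+A=g\times\delta$ the conditions $a\in V$ and $V^\perp\subseteq\mathrm{Fix}(A)$ hold precisely when $\gamma'=I$, i.e.\ when $g_\star=\mathrm{id}$, i.e.\ when $\gamma\in\Nu$, so $\Nu$ is complete. The vertical projection is onto by construction, giving $\Gamma'=\Delta$; and for $\gamma=g\times\delta$ we have $\Xi(\Nu\gamma)=\ov\gamma_\star=g_\star=\Theta(\delta)=\Theta(\Rho(\Nu\gamma))$, so $\Xi=\Theta\Rho$.

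For uniqueness, let $\Gamma_1$ be any group satisfying the stated conclusions, with associated maps $\Rho_1,\Xi_1$. By Theorem~1(1) each $\gamma\in\Gamma_1$ splits as $\ov\gamma\times\gamma'$ with $\gamma'\in\Gamma_1'=\Delta$, and $\ov\gamma\in N_E(\Mu)$ because $\Nu\trianglelefteq\Gamma_1$; the relation $\Xi_1=\Theta\Rho_1$ forces $\ov\gamma_\star=\Theta(\gamma')$, so $\gamma\in\Gamma$ and $\Gamma_1\subseteq\Gamma$. Conversely, given $g\times\delta\in\Gamma$, surjectivity of the vertical projection on $\Gamma_1$ yields $\gamma_1\in\Gamma_1$ with $\gamma_1'=\delta$; then $\ov\gamma_1$ and $g$ are two lifts of $\Theta(\delta)$, so $g=\mu\ov\gamma_1$ for some $\mu\in\Mu$ and $g\times\delta=(\mu\times I)\gamma_1\in\Gamma_1$, whence $\Gamma\subseteq\Gamma_1$. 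The step I expect to be most delicate is the discreteness of $\Gamma$: its horizontal projection $\ov\Gamma$ need not be discrete (Example~1), so discreteness cannot be inferred from the two factors separately. The essential point is that discreteness is controlled by the vertical projection onto the discrete group $\Delta$ together with the discreteness of the kernel $\Nu$, which is exactly what makes the product construction land in a genuine space group rather than a dense subgroup.
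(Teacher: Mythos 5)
Your proof is correct, and the underlying construction is the same as the paper's: both realize $\Gamma$ as the group of product isometries $g\times\delta$ of $E^m\times E^{n-m}$ in which the horizontal part $g$ is a lift, via Lemma~5, of $\Theta(\delta)$. Two presentational choices in your write-up are genuine improvements in economy. First, by defining $\Gamma$ as the full preimage $\{g\times\delta: g_\star=\Theta(\delta)\}$ rather than the group generated by $\Nu$ and one chosen lift $\hat\delta$ per $\delta\in\Delta$, closure becomes a one-line consequence of $\Phi$ and $\Theta$ being homomorphisms, and the uniqueness argument reduces to two clean containments (the paper instead manipulates words $\nu\hat\delta_1^{\epsilon_1}\cdots\hat\delta_k^{\epsilon_k}$). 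Second, where the paper establishes that $\Gamma$ acts discontinuously by a compact-set argument ($C\subseteq K\times L$, finitely many $\delta_i$ with $L\cap\delta_iL\neq\emptyset$, finite sets $F_i\subseteq\Nu$) and then invokes Theorem~5.3.5 of \cite{R}, you prove discreteness directly from the exact sequence $1\to\Nu\to\Gamma\to\Delta\to 1$: a sequence tending to $I$ has vertical part eventually trivial by discreteness of $\Delta$, hence lands in $\Nu$, hence is eventually trivial by discreteness of $\Nu$. Since the paper's definition of a space group asks only for discreteness plus compactness of the orbit space, this shortcut is legitimate and is the main place your argument is more elementary than the paper's; your identification of this as the delicate step (because $\ov\Gamma$ need not be discrete) is exactly right. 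One small imprecision: the ``vertical-part projection'' is not defined on all of $\mathrm{Isom}(E^n)$, only on the subgroup of isometries preserving the splitting $E^m\times E^{n-m}$, but since every element of your $\Gamma$ is a product isometry this does not affect the argument.
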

\begin{proof}  Let $\delta \in \Delta$.  
By Lemma 5, there exists $\tilde\delta \in N_E(\Mu)$ such that $\tilde\delta_\star = \Theta(\delta)$. 
The isometry $\tilde\delta$ is unique up to multiplication by an element of $\Mu$. 
Let $\delta = d + D$, with $d\in E^{n-m}$ and $D\in \mathrm{O}(n-m)$, 
and let $\tilde\delta = \tilde d + \tilde D$, with $\tilde d \in E^m$ and $\tilde D \in \mathrm{O}(m)$.  
Let $\hat d = \tilde d + d$, and let $\hat D = \tilde D \times D$. 
Then $\hat D E^m = E^m$ and $\hat D E^{n-m} = E^{n-m}$. 
Let $\hat \delta = \hat d + \hat D$. 
Then $\hat \delta$ is an isometry of $E^n$ such that $\overline{\hat\delta} = \tilde\delta$ 
and $\big(\hat\delta\big)' = \delta$.  
The isometry $\hat \delta$ is unique up to multiplication by an element of $\Nu$. 
We have that 
$$\hat\delta^{-1} = -\hat D^{-1}\hat d + \hat D^{-1} = -\tilde D^{-1}\tilde d -D^{-1}d + \tilde D^{-1} \times D^{-1},$$
and so $\overline{\hat\delta^{-1} }= \big(\tilde \delta\big)^{-1}$ and  $(\hat\delta^{-1})' = \delta^{-1}$. 
We have that 
$$\ov{\hat\delta\Nu\hat\delta^{-1}} = \ov{\hat\delta}\ov\Nu\ov{\hat\delta^{-1}} = \tilde\delta\Mu\tilde\delta^{-1} = \Mu = \ov\Nu$$
and 
$$(\hat\delta\Nu\hat\delta^{-1})' = \hat\delta'\Nu'(\hat\delta^{-1})' = \delta\{I'\}\delta^{-1} = \{I'\}.$$
Therefore $\hat \delta\Nu\hat\delta^{-1}  = \Nu$. 

Let  $\Gamma$ be the subgroup of $\mathrm{Isom}(E^n)$ 
generated by $\Nu \cup \{\hat \delta : \delta \in \Delta\}$.  
Then $\Gamma$ contains $\Nu$ as a normal subgroup, 
and the point group of $\Gamma$ leaves $E^m$ invariant. 
Suppose $\gamma \in \Gamma$.  Then there exists $\nu \in \Nu$ and $\delta_1, \ldots, \delta_k \in \Delta$ 
and $\epsilon_1,\ldots, \epsilon_k \in \{\pm 1\}$ 
such that $\gamma = \nu \hat{\delta}_1^{\epsilon_1} \cdots \hat{\delta}_k^{\epsilon_k}$. 
Then we have 
$$\gamma' =  \nu' (\hat{\delta}_1^{\epsilon_1})' \cdots (\hat{\delta}_k^{\epsilon_k})' = \delta_1^{\epsilon_1}\cdots \delta_k^{\epsilon_k}.$$
Hence $\Gamma' = \Delta$, and 
we have an epimorphism $\Rho':\Gamma \to \Delta$ defined by $\Rho'(\gamma) = \gamma'$. 
The group $\Nu$ is in the kernel of $\Rho'$, and so $\Rho'$ induces an epimorphism $\Rho: \Gamma/\Nu \to \Delta$ 
defined by $\Rho(\Nu\gamma) = \gamma'$. 
Suppose $\Rho(\Nu\gamma) = I'$.  By Lemma 5, we have that 
\begin{eqnarray*}
 \Rho(\Nu\gamma) = I' \ \ \Rightarrow \ \  \gamma' = I' & \Rightarrow  & \delta_1^{\epsilon_1}\cdots \delta_k^{\epsilon_k} =I' \\
       			     & \Rightarrow & \Theta(\delta_1)^{\epsilon_1} \cdots \Theta(\delta_k)^{\epsilon_k} = \ov I_\star\\
			    & \Rightarrow &(\tilde \delta_1)_\star^{\epsilon_1} \cdots (\tilde \delta_k)_\star^{\epsilon_k} = \ov I_\star\\
		             & \Rightarrow &\Mu(\tilde \delta_1)^{\epsilon_1} \cdots \Mu(\tilde \delta_k)^{\epsilon_k} = \Mu\\
			    & \Rightarrow &\Mu (\tilde \delta_1)^{\epsilon_1} \cdots (\tilde \delta_k)^{\epsilon_k} = \Mu \\
		             & \Rightarrow &\Mu \overline{\hat \delta_1^{\epsilon_1}} \cdots \overline{\hat \delta_k^{\epsilon_k}} = \Mu
			   \ \  \Rightarrow \ \ \Mu \overline\gamma = \Mu  \ \  \Rightarrow \ \ \overline\gamma \in \Mu. 
\end{eqnarray*}
As $\gamma' = I'$ and $\overline \gamma \in \ov\Nu$, we have that $\gamma \in \Nu$. 
Thus $\Rho$ is an isomorphism. 

We next show that $\Gamma$ acts discontinuously on $E^n$. 
Let $C$ be a compact subset of $E^n$. 
Let $K$ and $L$ be the orthogonal projections of $C$ into $E^m$ and $E^{n-m}$, respectively. 
Then $C \subseteq K \times L$.  
As $\Delta$ acts discontinuously on $E^{n-m}$, there exists only finitely many elements $\delta_1, \ldots, \delta_k$ of $\Delta$  
such that $L\cap \delta_iL \neq \emptyset$ for each $i$. 
Let $\gamma_i = \hat \delta_i$ for $i = 1,\ldots, k$. 
The set $K_i = K \cup \overline \gamma_i(K)$ is compact for each $i = 1,\ldots, k$. 
As $\Nu$ acts discontinuously on $E^m$, there is a finite subset $F_i$ of $\Nu$ such that $K_i \cap \nu K_i \neq \emptyset$ 
only if $\nu \in F_i$ for each $i$.  Hence $K \cap \nu\overline\gamma_i(K)\neq\emptyset$ only if $\nu \in F_i$ for each $i$. 
The set $F = F_1\gamma_1\cup \cdots \cup F_k\gamma_k$ is a finite subset of $\Gamma$.  
Suppose $\gamma \in \Gamma$ and  $C\cap \gamma C \neq \emptyset$. 
Then $L \cap \gamma' L \neq \emptyset$, and so $ \gamma' = \delta_i$ for some $i$. 
Hence $\gamma = \nu \gamma_i$ for some $\nu \in \Nu$. 
Now we have that $K\cap \overline \gamma K \neq \emptyset$, and so $K\cap \nu\overline\gamma_iK \neq \emptyset$. 
Hence $\nu \in F_i$.  Therefore $\gamma \in F$.  Thus $\Gamma$ acts discontinuously on $E^n$. 
Therefore $\Gamma$ is a discrete subgroup of $\mathrm{Isom}(E^n)$ by Theorem 5.3.5 of \cite{R}. 

Let $D_\Mu$ and $D_\Delta$ be fundamental domains for $\Mu$ in $E^m$ and $\Delta$ in $E^{n-m}$, respectively. 
Then their topological closures $\overline{D}_\Mu$ and $\overline{D}_\Delta$ are compact sets. 
Let $x \in E^n$.  Write $x = \overline x + x'$ with $\overline x \in E^m$ and $x' \in E^{n-m}$. 
Then there exists $\delta \in \Delta$ such that $\delta x' \in \overline D_\Delta$, 
and there exists $\nu \in \Nu$ such that $\nu \tilde\delta\overline x \in \overline D_\Mu$. 
We have that 
$$\nu\hat \delta x = \overline{ \nu\hat\delta} \overline x +  (\nu \hat\delta)'x' = \nu\tilde \delta \overline x + \delta x' 
\in \overline D_\Mu \times \overline D_\Delta.$$
Hence the quotient map $\pi: E^n \to E^n/\Gamma$ maps the compact set $\overline D_\Mu \times \overline D_\Delta$ 
onto $E^n/\Gamma$.  Therefore $E^n/\Gamma$ is compact.  Thus $\Gamma$ is an $n$-space group. 

Let $\Xi: \Gamma/\Nu \to \mathrm{Isom}(E^n/\Nu)$ be the homomorphism induced by the action of $\Gamma/\Nu$ on $E^m/\Nu$. 
Let $\gamma \in \Gamma$.  Then there exists $\delta \in \Delta$ such that $\Nu\gamma = \Nu\hat \delta$. 
Then $\gamma' = (\hat\delta)' = \delta$, and we have that $\Xi = \Theta\Rho$, since 
$$\Xi(\Nu\gamma) = \Xi(\Nu\hat\delta) = (\overline{\hat\delta})_\star = \tilde \delta_\star = 
\Theta(\delta) = \Theta(\gamma') = \Theta\Rho(\Nu\gamma).$$

Suppose $\gamma$ is an isometry of $E^n$ such that $\gamma\Nu\gamma^{-1} = \Nu$ 
and $\gamma' \in \Delta$ and $\overline\gamma_\star = \Theta(\gamma')$. 
Then $\widehat{\gamma'} \in \Gamma$. 
Now $\overline\gamma_\star = \widetilde{\gamma'}_\star$, 
and so $\overline\gamma = \ov\nu\widetilde{\gamma'}$ for some $\nu\in \Nu$ by Lemma 5. 
Then $\gamma = \nu\widehat{\gamma'}$.  Hence $\gamma\in \Gamma$. 
Thus $\Gamma$ is the unique $n$-space group that contains $\Nu$ as a complete normal subgroup such that $\Gamma' = \Delta$ 
and $\Xi = \Theta\Rho$. 
\end{proof}

Let $\Nu_i$ be a complete normal subgroup of an $n$-space group $\Gamma_i$, with $V_i = \mathrm{Span}(\Nu_i)$, for $i=1,2$. 
Let $\phi$ be an affinity of $E^n$ such that $\phi(\Gamma_1,\Nu_1)\phi^{-1}=(\Gamma_2,\Nu_2)$. 
Write $\phi = c+C$ with $c\in E^n$ and $C\in \mathrm{GL}(n,\realnos)$. 
Let $a+I\in \Nu_1$.  Then $\phi(a+I)\phi^{-1} = Ca+I$.  Hence $CV_1 \subseteq V_2$. 
Let $\overline{C}:V_1 \to V_2$ be the linear transformation obtained by restricting $C$. 
Let $\overline{C'}:V_1^\perp \to V_2$ and $C': V_1^\perp \to V_2^\perp$ be the linear transformations 
obtained by restricting $C$ to $V_1^\perp$ followed by the orthogonal projections to $V_2$ and $V_2^\perp$, respectively. 
Write $c = \overline{c} + c'$ with $\ov{c}\in V_2$ and $c'\in V_2^\perp$. 
Let $\overline{\phi}: V_1 \to V_2$ and $\phi': V_1^\perp \to V_2^\perp$ be the affine transformations 
defined by $\overline{\phi} = \overline{c}+\ov{C}$ and $\phi' = c'+C'$. 

\begin{lemma} 
Let $\Nu_i$ be a complete normal subgroup of an $n$-space group $\Gamma_i$, with $V_i = \mathrm{Span}(\Nu_i)$, for $i=1,2$. 
Let $\phi = c+C$ be an affinity of $E^n$ such that $\phi(\Gamma_1,\Nu_1)\phi^{-1}=(\Gamma_2,\Nu_2)$. 
Then $\ov{C}$ and $C'$ are invertible, with $\ov{C}^{-1} = \ov{C^{-1}}$ and $(C')^{-1} = (C^{-1})'$,  
and $\ov{(C^{-1})'} = -\ov{C}^{-1}\ov{C'}(C')^{-1}$.  
If $b+B\in \Gamma_1$, then $\ov{C'}B' = \ov C \ov B \ov C^{-1} \ov{C'}$.  
Moreover $\ov{C'}V_1^\perp \subseteq \mathrm{Span}(Z(\Nu_2))$. 
\end{lemma}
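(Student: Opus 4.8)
The plan is to exploit the block structure of $C$ relative to the orthogonal decompositions $E^n = V_1\times V_1^\perp$ and $E^n = V_2\times V_2^\perp$. First I would record that $CV_1 = V_2$: if $a+I\in\Nu_1$ is a translation, then $\phi(a+I)\phi^{-1} = Ca+I$ is a translation in $\Nu_2$, so $Ca\in V_2$; taking spans gives $CV_1\subseteq V_2$, and the same reasoning applied to $\phi^{-1}$, which conjugates $(\Gamma_2,\Nu_2)$ to $(\Gamma_1,\Nu_1)$, gives $C^{-1}V_2\subseteq V_1$, whence $CV_1 = V_2$ and $\dim V_1 = \dim V_2$. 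Thus in block form $C = \left(\begin{smallmatrix}\ov C & \ov{C'}\\ 0 & C'\end{smallmatrix}\right)$, the lower-left block vanishing precisely because $CV_1\subseteq V_2$. Since $C$ is invertible and block upper triangular, $\det C = \det\ov C\cdot\det C'$, so $\ov C$ and $C'$ are invertible, and the standard inverse formula reads $C^{-1} = \left(\begin{smallmatrix}\ov C^{-1} & -\ov C^{-1}\ov{C'}(C')^{-1}\\ 0 & (C')^{-1}\end{smallmatrix}\right)$. Reading off the blocks of $C^{-1}$ under the decomposition $V_2\times V_2^\perp \to V_1\times V_1^\perp$ then yields $\ov{C^{-1}} = \ov C^{-1}$, $(C^{-1})' = (C')^{-1}$, and $\ov{(C^{-1})'} = -\ov C^{-1}\ov{C'}(C')^{-1}$, which are the first three identities.

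For the relation $\ov{C'}B' = \ov C\,\ov B\,\ov C^{-1}\ov{C'}$, I would conjugate. Let $b+B\in\Gamma_1$. By Theorem 1(1), $BV_1 = V_1$, and since $B$ is orthogonal it also preserves $V_1^\perp$, so $B = \left(\begin{smallmatrix}\ov B & 0\\ 0 & B'\end{smallmatrix}\right)$ is block diagonal. The conjugate $\phi(b+B)\phi^{-1}$ lies in $\Gamma_2$ and has linear part $D := CBC^{-1}$; as $D$ is the orthogonal part of an element of $\Gamma_2$, Theorem 1(1) gives $DV_2 = V_2$, and orthogonality gives $DV_2^\perp = V_2^\perp$, so $D = \left(\begin{smallmatrix}\ov D & 0\\ 0 & D'\end{smallmatrix}\right)$ is block diagonal relative to $V_2\times V_2^\perp$. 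Multiplying out the identity $CB = DC$ and comparing the $(1,2)$ blocks gives $\ov{C'}B' = \ov D\,\ov{C'}$, while the $(1,1)$ blocks give $\ov D = \ov C\,\ov B\,\ov C^{-1}$; substituting proves the desired identity.

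Finally, for $\ov{C'}V_1^\perp\subseteq\mathrm{Span}(Z(\Nu_2))$, I would specialize the previous identity to $\Nu_1$. Let $\nu_1 = b+B\in\Nu_1$; by Theorem 1(2) we have $V_1^\perp\subseteq\mathrm{Fix}(B)$, so $B' = I$. Then $\phi\nu_1\phi^{-1}$ is an element $a+A$ of $\Nu_2$ whose restriction to $V_2$ is $\ov A = \ov C\,\ov B\,\ov C^{-1}$, and the $(1,2)$ block identity collapses to $\ov{C'} = \ov A\,\ov{C'}$, i.e.\ $\ov A$ fixes $\ov{C'}w$ for every $w\in V_1^\perp$. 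As $\nu_1$ ranges over $\Nu_1$, the transformation $\ov A$ ranges over the entire point group of the space group $\Nu_2$ acting on $V_2$, so $\ov{C'}V_1^\perp$ lies in the fixed set of that point group, which equals $\mathrm{Span}(Z(\Nu_2))$ by Lemma 9. The step requiring the most care is the orthogonality argument forcing $D = CBC^{-1}$ to be block \emph{diagonal} rather than merely block upper triangular; without it the $(1,2)$ comparison would acquire an extra term and both the conjugation identity and its specialization to a fixed-point statement would fail.
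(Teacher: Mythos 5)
Your proof is correct and follows essentially the same route as the paper's: the same block decomposition of $C$ relative to $V_i\times V_i^\perp$, the same derivation of the inverse and conjugation identities from $CV_1=V_2$ and the orthogonality of the linear parts of elements of $\Gamma_2$, and the same appeal to Theorem 1 and Lemma 9 at the end. The only differences are presentational: you package the computations as block-matrix identities where the paper works componentwise on vectors, and in the final step you apply Lemma 9 directly to the point group of $\Nu_2$ acting on $V_2$, whereas the paper first deduces $\ov C^{-1}\ov{C'}V_1^\perp\subseteq\mathrm{Span}(Z(\Nu_1))$ and then transfers to $\Nu_2$ via $\phi Z(\Nu_1)\phi^{-1}=Z(\Nu_2)$.
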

\begin{proof} We have that 
$$\dim V_1 = \dim \Nu_1 = \dim \Nu_2 = \dim V_2.$$
Let $y\in E^n$ and write $y = \ov{y} + y'$ with $\ov{y}\in V_2$ and $y'\in V_2^\perp$. 
Then 
$$\ov{y} = CC^{-1}(\ov{y}) = \ov{C}\ov{C^{-1}}(\ov{y}),$$
and so $\ov{C}$ is invertible with $\ov{C}^{-1} = \ov{C^{-1}}$. 
We have that 
\begin{eqnarray*}
y' & = & CC^{-1}y' \\
& = &C(\ov{(C^{-1})'}(y') + (C^{-1})'(y')) \\
& = &C\ov{(C^{-1})'}(y')+ C(C^{-1})'(y') \\
& = & \ov{C}\ov{(C^{-1})'}(y')+\ov{C'}(C^{-1})'(y')+ C'(C^{-1})'(y'). 
\end{eqnarray*}
Hence $C'$ is invertible, with $(C')^{-1} = (C^{-1})'$, and $\ov{C}\ov{(C^{-1})'}+\ov{C'}(C^{-1})' =0$. 
Therefore $\ov{(C^{-1})'} = -\ov{C}^{-1}\ov{C'}(C')^{-1}$. 

Let $\gamma = b+ B \in \Gamma_1$.  Then $\phi\gamma\phi^{-1} \in \Gamma_2$.  Let $w \in V_2^\perp$. 
Then we have 
\begin{eqnarray*}
w& = & CBC^{-1}w\\
& = &CB(\ov{(C^{-1})'}(w) + (C^{-1})'(w)) \\
& = &C(\ov B\ov{(C^{-1})'}(w)+ B'(C^{-1})'(w)) \\
& = &C\ov B\ov{(C^{-1})'}(w)+ CB'(C^{-1})'(w) \\
& = & \ov{C}\ov B\ov{(C^{-1})'}(w)+\ov{C'}B'(C^{-1})'(w)+ C'B'(C^{-1})'(w). \\
\end{eqnarray*}
As $\phi\gamma\phi^{-1}(w) \in V_2^\perp$, 
we have that 
$$\ov{C}\ov B\ov{(C^{-1})'}+\ov{C'}B'(C^{-1})'=0.$$
Therefore 
$$\ov{C}\ov B(-\ov{C}^{-1}\ov{C'}(C')^{-1})+\ov{C'}B'(C')^{-1}=0.$$
Hence $\ov{C'}B' = \ov C \ov B\ov{C}^{-1}\ov{C'}$. 

Now suppose $\gamma \in \Nu_1$.  Then $B' = I'$. 
Hence $\ov B\ov{C}^{-1}\ov{C'} = \ov{C}^{-1}\ov{C'}$. 
By Lemma 9, we deduce that $\ov{C}^{-1}\ov{C'}V_1 \subseteq \mathrm{Span}(Z(\Nu_1))$. 
As $\phi Z(\Nu_1)\phi^{-1} = Z(\Nu_2)$, we have  
that $\ov{C'}V_1 \subseteq \mathrm{Span}(Z(\Nu_2))$. 
\end{proof}

\begin{theorem}  
Let $\Nu_i$ be  a complete normal subgroup of an $n$-space group $\Gamma_i$, with $V_i = \mathrm{Span}(\Nu_i)$ for $i=1,2$. 
Let $\Xi_i:\Gamma_i/\Nu_i \to \mathrm{Isom}(V_i/\Nu_i)$ be the homomorphism induced 
by the action of $\Gamma_i/\Nu_i$ on $V_i/\Nu_i$ for $i=1,2$.  
Let $\alpha: V_1 \to V_2$, and $\beta: V_1^\perp \to V_2^\perp$ be affinities such that $\alpha\ov\Nu_1\alpha^{-1} = \ov\Nu_2$ 
and $\beta\Gamma'_1\beta^{-1} = \Gamma_2'$. 
Write $\alpha = \ov c + \ov C$ with $\ov c \in V_2$ and $\ov C: V_1 \to V_2$ a linear isomorphism. 
Let $D: V_1^\perp \to \mathrm{Span}(Z(\Nu_2))$ be a linear transformation 
such that if $b+B\in \Gamma_1$, then $DB' = \ov C\ov B\ov C^{-1} D$. 
Let $\Rho_i:\Gamma_i/\Nu_i\to \Gamma_i'$ be the isomorphism 
defined by $\Rho_i(\Nu_i\gamma) = \gamma'$ for each $i = 1,2$, 
and let $p_1:\Gamma_1/\Nu_1 \to V_1^\perp$ be defined by $p_1(\Nu_1(b+B)) = b'$. 
Let $K_2$ be the connected component of the identity of the Lie group $\mathrm{Isom}(V_2/\Nu_2)$, 
and let $(Dp_1)_\star:\Gamma_1/\Nu_1 \to K_2$ be defined 
by $(Dp_1)_\star(\Nu_1(b+B)) = (Db'+\ov I)_\star$ where $(Db'+ \ov I)_\star$ 
is defined by $(Db'+ \ov I)_\star(\Nu_2 v) = \Nu_2(Db'+v)$ for each $v \in V_2$. 
Then there exists an affinity $\phi = c + C$ of $E^n$ such that $\phi(\Gamma_1,\Nu_1)\phi^{-1} = (\Gamma_2,\Nu_2)$, 
with $\ov\phi = \alpha$, and $\phi' = \beta$, and $\ov{C'} = D$ if and only if 
$$\Xi_2\Rho_2^{-1}\beta_\ast\Rho_1 = (Dp_1)_\star\alpha_\sharp\Xi_1.$$
Moreover $(Dp_1)_\star:\Gamma_1/\Nu_1 \to K_2$ is a crossed homomorphism 
with $\Gamma_1/\Nu_1$ acting on the torus $K_2$ by $(\Nu_1(b+B))(v+\ov I)_\star =  (\ov C\ov B\ov C^{-1}v+\ov I)_\star$ 
for each $v \in  \mathrm{Span}(Z(\Nu_2))$. 
\end{theorem}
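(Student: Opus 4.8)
The plan is to reduce everything to the uniqueness half of Theorem 20. Note first that the data $\alpha=\ov c+\ov C$, $\beta=c'+C'$, $\ov{C'}=D$ determine a \emph{single} affinity $\phi=c+C$ of $E^n$, the one whose matrix relative to $E^n=V_1\times V_1^\perp$ and $E^n=V_2\times V_2^\perp$ is block upper triangular,
$$C=\begin{pmatrix}\ov C & D\\ 0 & C'\end{pmatrix},\qquad c=\ov c+c',$$
so the assertion is really that this canonical $\phi$ conjugates $(\Gamma_1,\Nu_1)$ to $(\Gamma_2,\Nu_2)$. Using $C^{-1}=\begin{pmatrix}\ov C^{-1} & -\ov C^{-1}DC'^{-1}\\ 0 & C'^{-1}\end{pmatrix}$ and the block-diagonal form of $B$ for $b+B\in\Gamma_1$ (legitimate since $BV_1=V_1$ by Theorem 1), I would compute the linear part of $\phi\gamma\phi^{-1}$ directly; its off-diagonal block is $(DB'-\ov C\ov B\ov C^{-1}D)C'^{-1}$, which vanishes \emph{exactly} because of the hypothesis $DB'=\ov C\ov B\ov C^{-1}D$. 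Hence $\phi\gamma\phi^{-1}$ is block diagonal, with prime part $\beta\gamma'\beta^{-1}$ and, after collecting translational terms,
$$\ov{\phi\gamma\phi^{-1}}=(Db'+\ov I)\,\alpha\ov\gamma\alpha^{-1}.$$
The same computation with $B'=I'$ shows every $\phi\nu\phi^{-1}$ ($\nu\in\Nu_1$) is block diagonal with trivial prime part and bar part $\alpha\ov\nu\alpha^{-1}\in\ov\Nu_2$; since $\Beta\colon\Nu_2\to\ov\Nu_2$ is an isomorphism and a block-diagonal isometry is determined by its bar and prime parts, this identifies $\phi\Nu_1\phi^{-1}$ with $\Nu_2$ \emph{unconditionally}.

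The heart of the argument is to recognize $\Gamma_2$ via Theorem 20: $\Gamma_2$ is the unique $n$-space group containing $\Nu_2$ as a complete normal subgroup with the given $\Gamma_2'$ and with $\Xi_2=\Theta_2\Rho_2$ for $\Theta_2:=\Xi_2\Rho_2^{-1}$. Thus, by the membership criterion in the last paragraph of the proof of Theorem 20 (sufficiency) together with the definition of $\Xi_2$ (necessity), an isometry $\sigma$ of $E^n$ lies in $\Gamma_2$ iff $\sigma\Nu_2\sigma^{-1}=\Nu_2$, $\sigma'\in\Gamma_2'$, and $\ov\sigma_\star=\Theta_2(\sigma')$. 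For $\sigma=\phi\gamma\phi^{-1}$ the first two conditions always hold (they are $\phi\Nu_1\phi^{-1}=\Nu_2$ and $\sigma'=\beta\Rho_1(\Nu_1\gamma)\beta^{-1}\in\beta\Gamma_1'\beta^{-1}=\Gamma_2'$). For the third, the bar formula gives $\ov\sigma_\star=(Db'+\ov I)_\star\,\alpha_\sharp(\ov\gamma_\star)=[(Dp_1)_\star\alpha_\sharp\Xi_1](\Nu_1\gamma)$, while $\Theta_2(\sigma')=[\Xi_2\Rho_2^{-1}\beta_\ast\Rho_1](\Nu_1\gamma)$. So ``$\ov\sigma_\star=\Theta_2(\sigma')$ for every $\gamma\in\Gamma_1$'' is precisely the displayed identity $\Xi_2\Rho_2^{-1}\beta_\ast\Rho_1=(Dp_1)_\star\alpha_\sharp\Xi_1$. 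This settles the converse direction at once, and reduces the forward direction to one remaining point.

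The main obstacle is that $\phi$ is only an affinity, so before invoking the membership criterion I must confirm $\sigma=\phi\gamma\phi^{-1}$ is an \emph{isometry}. Here the displayed identity does the work: it forces $\ov\sigma_\star=\Theta_2(\sigma')\in\mathrm{Isom}(V_2/\Nu_2)$. I would then argue, via the affine analogue of Lemma 5 (Lemma 11) together with Theorem 13.2.6 of \cite{R}, that an affinity $\ov\sigma$ of $V_2$ normalizing $\ov\Nu_2$ whose induced map on $V_2/\Nu_2$ is an isometry is itself an isometry: lift $\ov\sigma_\star$ to an isometry $\tau$ of $V_2$ normalizing $\ov\Nu_2$, observe that $\tau^{-1}\ov\sigma$ induces the identity on $V_2/\Nu_2$ and so lies in $\ov\Nu_2\subseteq\mathrm{Isom}(V_2)$, whence $\ov\sigma\in\mathrm{Isom}(V_2)$. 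Since $\sigma'=\beta\gamma'\beta^{-1}\in\Gamma_2'\subseteq\mathrm{Isom}(V_2^\perp)$ and $\sigma$ is block diagonal, $\sigma\in\mathrm{Isom}(E^n)$. The membership criterion then yields $\phi\Gamma_1\phi^{-1}\subseteq\Gamma_2$; equality follows because the prime map $\Gamma_2\to\Gamma_2'$ restricts on $\phi\Gamma_1\phi^{-1}$ to a surjection onto $\Gamma_2'$ with kernel $\Nu_2$, so the two groups have the same image in $\Gamma_2/\Nu_2$.

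Finally, for the crossed-homomorphism claim I would note $(Db'+\ov I)_\star\in K_2$ because $D$ maps into $\span(Z(\Nu_2))$ and, by Theorem 11 applied to $\Nu_2$, $K_2$ is exactly the torus of translations of $V_2/\Nu_2$ by vectors of $\span(Z(\Nu_2))$. Writing $\gamma_i=b_i+B_i$, the relation $p_1(\Nu_1\gamma_1\gamma_2)=b_1'+B_1'b_2'$ together with $DB_1'=\ov C\ov B_1\ov C^{-1}D$ gives
$$(Dp_1)_\star(\Nu_1\gamma_1\gamma_2)=(Dp_1)_\star(\Nu_1\gamma_1)\cdot\bigl[(\Nu_1\gamma_1)\cdot(Dp_1)_\star(\Nu_1\gamma_2)\bigr],$$
the cocycle identity for the stated action $(\Nu_1(b+B))\cdot(v+\ov I)_\star=(\ov C\ov B\ov C^{-1}v+\ov I)_\star$. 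That this is a well-defined action on $K_2$ uses Lemma 9: since $\ov\Nu_2=\alpha\ov\Nu_1\alpha^{-1}$, the point group of $\ov\Nu_2$ fixes $\span(Z(\Nu_2))$ pointwise, so the formula is independent of the coset representative and descends to $K_2$.
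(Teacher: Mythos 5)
Your proof is correct and follows essentially the same route as the paper's: the same block-triangular $\phi$ built from $\alpha$, $\beta$, $D$, the same computation showing that $DB'=\ov C\ov B\ov C^{-1}D$ forces $\phi\gamma\phi^{-1}$ to be block diagonal with $\ov{\phi\gamma\phi^{-1}}=(Db'+\ov I)\alpha\ov\gamma\alpha^{-1}$ and $(\phi\gamma\phi^{-1})'=\beta\gamma'\beta^{-1}$, the same use of Lemmas 5 and 11 to upgrade ``induces an isometry of $V_2/\Nu_2$'' to ``is an isometry of $V_2$'', the same appeal to the uniqueness/membership criterion from the proof of Theorem 18 (which you cite as Theorem 20) to identify $\phi\Gamma_1\phi^{-1}$ with $\Gamma_2$, and the same cocycle computation for $(Dp_1)_\star$. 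The only organizational difference is that you apply the membership criterion element by element to get $\phi\Gamma_1\phi^{-1}\subseteq\Gamma_2$ and then deduce equality via the prime map, which lets you bypass the paper's re-verification that $\phi\Gamma_1\phi^{-1}$ is a discrete cocompact group.
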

\begin{proof}
Suppose that $\phi = c+ C$ is an affinity of $E^n$ such that $\phi(\Gamma_1,\Nu_1)\phi^{-1} = (\Gamma_2,\Nu_2)$, 
with $\ov\phi = \alpha$, and $\phi' = \beta$, and $\ov{C'} = D$. 
Let $\gamma  = b+ B \in \Gamma_1$.  
Then we have 
$$\phi\gamma\phi^{-1} = (c+C)(b+B)(c+C)^{-1} = Cb + (I-CBC^{-1})c+CBC^{-1}.$$
Hence we have 
\begin{eqnarray*}
\ov{\phi\gamma\phi^{-1}} & = & \ov{C}\ov b + \ov{C'}b'+ (\ov I-\ov C\ov B{\ov C}^{-1})\ov c+\ov C\ov B{\ov C}^{-1} \\
& = & ( \ov{C'}b'+\ov I)(\ov c +{\ov C})(\ov b+\ov B)(\ov c + {\ov C})^{-1} \\
& = & (Db'+\ov I)\ov \phi\ov \gamma\ov \phi^{-1} \ \
 = \ \ ( Db'+\ov I)\alpha\ov \gamma\alpha^{-1},  
\end{eqnarray*}
and 
\begin{eqnarray*}
(\phi\gamma\phi^{-1})' & = & C'b'+ (I'-C'B'(C')^{-1})c' + C'B'(C')^{-1} \\
& = & (c'+C')(b'+B')(c'+C')^{-1} \\ 
& = & \phi' \gamma' (\phi')^{-1} \ \ = \ \ \beta \gamma' \beta^{-1}.
\end{eqnarray*}

\begin{eqnarray*} \Xi_2\Rho_2^{-1}\beta_\ast\Rho_1(\Nu_1\gamma) 
& = & \Xi_2\Rho_2^{-1}\beta_\ast(\gamma') \\
& = & \Xi_2\Rho_2^{-1}(\beta\gamma'\beta^{-1}) \\
& = & \Xi_2\Rho_2^{-1}(\phi\gamma\phi^{-1})' \\
& = & \Xi_2(\Nu_2\phi\gamma\phi^{-1})  \\
& =  & (\ov{\phi\gamma\phi^{-1}})_\star \\
& =  & ((Db'+\ov I)\alpha\ov \gamma\alpha^{-1})_\star \\
& =  & (Db'+\ov I)_\star\alpha_\star\ov \gamma_\star \alpha_\star^{-1}  \\
& =  & (Dp_1(\Nu_1(b+B)))_\star \alpha_\sharp(\ov \gamma_\star) \ \,
 =  \ \, (Dp_1(\Nu_1\gamma))_\star \alpha_\sharp(\Xi_1(\Nu_1\gamma)). 
\end{eqnarray*}
Hence we have that $\Xi_2\Rho_2^{-1}\beta_\ast\Rho_1 = (Dp_1)_\star\alpha_\sharp\Xi_1$. 

Conversely, suppose that $\Xi_2\Rho_2^{-1}\beta_\ast\Rho_1 = (Dp_1)_\star\alpha_\sharp\Xi_1$. 
Define an affine transformation $\phi: E^n \to E^n$ by 
$$\phi(x) = \alpha(\overline x) + D(x')+ \beta(x')$$
for each $x\in E^n$, where $x = \ov x + x'$ with $\ov x \in V_1$ and $x'\in V_1^\perp$. 
Then $\phi$ is an affinity of $E^n$, with
$$\phi^{-1}(y) = \alpha^{-1}(\ov y) -\ov C^{-1}D\beta^{-1}(y') + \beta^{-1}(y')$$
for each $y\in E^n$ where $y = \ov y + y'$ with $\ov y \in V_2$  and $y'\in V_2^\perp$. 

Write $\beta' = c' + C'$ with $c' \in V_2^\perp$ and $C': V_1^\perp \to V_2^\perp$ a linear isomorphism. 
Write $\phi = c+ C$ with $c \in E^n$ and $C$ a linear isomorphism of $E^n$. 
Then $c =\ov c + c'$ and $Cx = \ov C\ov x + Dx' + C'x'$ for each $x\in E^n$ with $\ov x \in V_1$ and $x'\in V_1^\perp$. 
We have that $\ov \phi = \alpha$, and $\phi' = \beta$ and $\ov{C'} = D$. 
We also have $\phi^{-1} = -C^{-1}c+C^{-1}$ and 
$$C^{-1}y = \ov C^{-1}\ov y - \ov C^{-1}D(C')^{-1}y' + (C')^{-1}y'$$
for all $y \in E^n$ with $\ov y \in V_2$ and $y' \in V_2^\perp$. 

Let $\gamma \in \Gamma_1$.  Write $\gamma = b+ B$ with $b\in E^n$ and $B\in \mathrm{O}(n)$. 
Then we have 
$$ \phi\gamma\phi^{-1} = (c+C)(b+B)(c+C)^{-1} = Cb+(I-CBC^{-1})c + CBC^{-1}.$$
Let $y \in E^n$.  Write $y = \ov y + y'$ with $\ov y \in V_2$ and $y' \in V_2^\perp$. 
Then we have that 
\begin{eqnarray*} CBC^{-1} 
& = & CB(\ov C^{-1}\ov y - \ov C^{-1}D(C')^{-1}y' + (C')^{-1}y'  \\
& = & C(\ov B\ov C^{-1}\ov y -\ov B\ov C^{-1}D(C')^{-1}y' +B'(C')^{-1}y' \\
& = & \ov C\ov B\ov C^{-1}\ov y -\ov C\ov B\ov C^{-1}D(C')^{-1}y' +DB'(C')^{-1}y' + C'B'(C')^{-1}y' \\
& = & \ov C\ov B\ov C^{-1}\ov y + C'B'(C')^{-1}y'.
\end{eqnarray*}
Hence $CBC^{-1} =  \ov C\ov B\ov C^{-1} \times C'B'(C')^{-1}$ as a linear isomorphism 
of $E^n = V_2\times V_2^\perp$. 

Moreover, we have 
\begin{eqnarray*}
\ov{\phi\gamma\phi^{-1}} & = & \ov{C}\ov b + \ov{C'}b'+ (\ov I-\ov C\ov B{\ov C}^{-1})\ov c+\ov C\ov B{\ov C}^{-1} \\
& = & ( \ov{C'}b'+\ov I)(\ov c +{\ov C})(\ov b+\ov B)(\ov c + {\ov C})^{-1} \\
& = & (Db'+\ov I)\ov \phi\ov \gamma\ov \phi^{-1} \ \
 = \ \ ( Db'+\ov I)\alpha\ov \gamma\alpha^{-1},  
\end{eqnarray*}
and 
\begin{eqnarray*}
(\phi\gamma\phi^{-1})' & = & C'b'+ (I'-C'B'(C')^{-1})c' + C'B'(C')^{-1} \\
& = & (c'+C')(b'+B')(c'+C')^{-1} \\ 
& = & \phi' \gamma' (\phi')^{-1} \ \ = \ \ \beta \gamma' \beta^{-1}.
\end{eqnarray*}

As $\Xi_2\Rho_2^{-1}\beta_\ast\Rho_1 = (Dp_1)_\star\alpha_\sharp\Xi_1$, 
we have that $(\alpha\ov\gamma\alpha^{-1})_\star$ is an isometry of $V_2/\Nu_2$. 
By Lemmas 5 and 11, we have that $\alpha\ov\gamma\alpha^{-1}$ is an isometry of $V_2$. 
Hence $\ov C \ov B \ov C^{-1}$ is an orthogonal transformation of $V_2$. 

As $\beta\Gamma_1'\beta^{-1} = \Gamma_2'$, we have that $C'B'(C')^{-1}$  is an orthogonal transformation of $V_2^\perp$.
Hence 
$CBC^{-1} = \ov C\ov B\ov C^{-1} \times C'B'(C')^{-1}$
is an orthogonal transformation of $E^n = V_2 \times V_2^\perp$.
Therefore $\phi\gamma\phi^{-1}$ is an isometry of $E^n$ for each $\gamma \in \Gamma_1$. 

As $\Gamma_1$ acts discontinuously on $E^n$ and $\phi$ is a homeomorphism of $E^n$, 
we have that $\phi\Gamma_1\phi^{-1}$ acts discontinuously on $E^n$. 
Therefore $\phi\Gamma_1\phi^{-1}$ is a discrete subgroup of $\mathrm{Isom}(E^n)$ by Theorem 5.3.5 of \cite{R}. 
Now $\phi$ induces a homeomorphism 
$\phi_\star: E^n/\Gamma_1 \to E^n/\phi\Gamma_1\phi^{-1}$
defined by $\phi_\star(\Gamma_1 x) = \phi\Gamma_1\phi^{-1}\phi(x)$. 
Hence $E^n/\phi\Gamma_1\phi^{-1}$ is compact. 
Therefore $\phi\Gamma_1\phi^{-1}$ is a $n$-space group. 

Now $\phi_\ast: \Gamma_1 \to \phi\Gamma_1\phi^{-1}$, defined by $\phi_\ast(\gamma) = \phi\gamma\phi^{-1}$,  
is an isomorphism that maps the normal subgroup $\Nu_1$ to the normal subgroup 
$\phi\Nu_1\phi^{-1}$ of $\phi\Gamma_1\phi^{-1}$, 
and $\phi\Gamma_1\phi^{-1}/\phi\Nu_1\phi^{-1}$ is isomorphic to $\Gamma_1/\Nu_1$. 
Hence $\phi\Gamma_1\phi^{-1}/\phi\Nu_1\phi^{-1}$ is a space group. 
Therefore $\phi\Nu_1\phi^{-1}$ is a complete normal subgroup of $\phi\Gamma_1\phi^{-1}$ by Theorem 5 of \cite{R-T}. 

Now suppose $\nu = a+A \in \Nu_1$.  Then $a' = 0$ and $A' = I'$, and so $\nu' = I'$.  
Hence $\ov{\phi\nu\phi^{-1}} = \alpha\ov \nu\alpha^{-1}$ and $(\phi\nu\phi^{-1})' = I'$. 
As $\alpha\ov\Nu_1\alpha^{-1} = \ov\Nu_2$, we have that  $\phi\Nu_1\phi^{-1} = \Nu_2$.  
Moreover, as $\beta\Gamma_1'\beta^{-1}=\Gamma_2'$, we have that $(\phi\Gamma_1\phi^{-1})' = \Gamma_2'$. 

Let $\Xi: \phi\Gamma_1\phi^{-1}/\Nu_2 \to \mathrm{Isom}(V_2/\Nu_2)$ be the homomorphism induced by the action 
of $\phi\Gamma_1\phi^{-1}/\Nu_2$ on $V_2/\Nu_2$.  Let $\gamma =b+B \in \Gamma_1$, 
and let $\Rho:\phi\Gamma_1\phi^{-1}/\Nu_2 \to \Gamma_2'$ 
be the isomorphism defined by $\Rho(\Nu_2\phi\gamma\phi^{-1}) = (\phi\gamma\phi^{-1})'$. 
Then we have that 
\begin{eqnarray*} \Xi\Rho^{-1}(\beta\gamma'\beta^{-1})
& = & \Xi\Rho^{-1}((\phi\gamma\phi^{-1})') \\
& = & \Xi(\Nu_2\phi\gamma\phi^{-1}) \\
& = & (\ov{\phi\gamma\phi^{-1}})_\star \\
& = & ((Db'+\ov{I})\alpha\ov{\gamma}{\alpha}^{-1})_\star \\
& = & (Db'+\ov{I})_\star\alpha_\star\ov{\gamma}_\star{\alpha}^{-1}_\star \\
& = & (Dp_1(\Nu_1(b+B)))_\star\alpha_\sharp(\ov\gamma_\star) \\
& = & (Dp_1(\Nu_1\gamma))_\star\alpha_\sharp(\Xi_1(\Nu_1\gamma)) \\
& = & \Xi_2\Rho_2^{-1}\beta_\ast\Rho_1(\Nu_1\gamma) \ \
= \ \ \Xi_2\Rho_2^{-1}(\beta\gamma'\beta^{-1}). 
\end{eqnarray*} 
Hence we have that $\Xi\Rho^{-1} = \Xi_2\Rho_2^{-1}$.  
Therefore $\phi\Gamma_1\phi^{-1} = \Gamma_2$ by Theorem 18. 
Thus $\phi(\Gamma_1,\Nu_1)\phi^{-1} = (\Gamma_2,\Nu_2)$. 

Let $\gamma = b+B$ and $\gamma_1 = b_1+B_1$ be elements of  $\Gamma_1$.  
Then we have that 
\begin{eqnarray*} (Dp_1)_\star(\Nu_1\gamma\Nu_1\gamma_1) 
& = &(Dp_1)_\star(\Nu_1(b+Bb_1+BB_1))  \\
& = & (D(b+Bb_1)'+\ov I)_\star  \\
& = &(D(b'+B'b_1')+\ov I)_\star \\
& = &(Db'+DB'b_1'+\ov I)_\star\\
& = & (Db'+\ov C \ov B \ov C^{-1}Db_1'+\ov I)_\star \\
& = &(Db'+\ov I)_\star (\ov C \ov B \ov C^{-1}Db_1'+\ov I)_\star \\
& = & (Db'+\ov I)_\star (\Nu_1(b+B))(Db_1'+\ov I)_\star\\
& = & (Dp_1)_\star(\Nu_1\gamma) ((\Nu_1\gamma)(Dp_1)_\star(\Nu_1\gamma_1)). 
\end{eqnarray*} 
Therefore $(Dp_1)_\star : \Gamma_1/\Nu_1 \to K_2$ is a crossed homomorphism. 
\end{proof}


\noindent{\bf Definition:} Let $m$ be a positive integer less than $n$. 
Let $\Mu$ be an $m$-space group, let $\mathrm{Aff}(\Mu) = \mathrm{Aff}(E^m/\Mu)$, and 
let $\Delta$ be an $(n-m)$-space group.  
Define $\mathrm{Hom}_f(\Delta,\mathrm{Aff}(\Mu))$ to be the set of all homomorphism 
from $\Delta$ to $\mathrm{Aff}(\Mu)$ such that the composition 
with $\Omega:\mathrm{Aff}(\Mu)\to \mathrm{Out}(\Mu)$ has finite image. 

\vspace{.15in}
By Lemma 14, the group $\mathrm{Aff}(\Mu)$ acts on the left of $\mathrm{Hom}_f(\Delta,\mathrm{Aff}(\Mu))$ by conjugation, 
that is, if $\alpha\in \mathrm{Aff}(\Mu)$ and $\eta\in \mathrm{Hom}_f(\Delta,\mathrm{Aff}(\Mu))$, 
then $\alpha \eta = \alpha_\ast\eta$ where $\alpha_\ast: \mathrm{Aff}(\Mu) \to \mathrm{Aff}(\Mu)$ is defined 
by $\alpha_\ast(\beta) = \alpha\beta\alpha^{-1}$. 
Let $\mathrm{Aff}(\Mu)\backslash\mathrm{Hom}_f(\Delta,\mathrm{Aff}(\Mu))$ be the set of $\mathrm{Aff}(\Mu)$-orbits. 
The group $\mathrm{Aut}(\Delta)$ acts on the right of $\mathrm{Hom}_f(\Delta,\mathrm{Aff}(\Mu))$ 
by composition of homomorphisms. 
If $\zeta\in \mathrm{Aut}(\Delta)$ and  $\eta\in \mathrm{Hom}_f(\Delta,\mathrm{Aff}(\Mu))$ and $\alpha\in \mathrm{Aff}(\Mu)$, 
then 
$$(\alpha\eta)\zeta = (\alpha_\ast\eta)\zeta = \alpha_\ast(\eta\zeta) = \alpha(\eta\zeta).$$
Hence $\mathrm{Aut}(\Delta)$ acts on the right of 
$\mathrm{Aff}(\Mu)\backslash\mathrm{Hom}_f(\Delta,\mathrm{Aff}(\Mu))$ 
by 
$$(\mathrm{Aff}(\Mu)\eta)\zeta = \mathrm{Aff}(\Mu)(\eta\zeta).$$
Let $\delta, \epsilon \in \Delta$ and $\eta\in \mathrm{Hom}_f(\Delta,\mathrm{Aff}(\Mu))$.  
Then we have that
$$\eta\delta_\ast(\epsilon) = \eta(\delta\epsilon\delta^{-1}) =\eta(\delta)\eta(\epsilon)\eta(\delta)^{-1}= \eta(\delta)_\ast\eta(\epsilon) 
= (\eta(\delta)\eta)(\epsilon). $$
Hence $\eta\delta_\ast = \eta(\delta)\eta$.  Therefore $\mathrm{Inn}(\Delta)$ acts trivially on
 $\mathrm{Aff}(\Mu)\backslash\mathrm{Hom}_f(\Delta,\mathrm{Aff}(\Mu))$. 
Hence $\mathrm{Out}(\Delta)$ acts on the right of 
$\mathrm{Aff}(\Mu)\backslash\mathrm{Hom}_f(\Delta,\mathrm{Aff}(\Mu))$ 
by
$$(\mathrm{Aff}(\Mu)\eta)(\zeta\mathrm{Inn}(\Delta))= \mathrm{Aff}(\Mu)(\eta\zeta).$$

\noindent{\bf Definition:} Define the set  $\mathrm{Aff}(\Delta,\Mu)$ by the formula
$$\mathrm{Aff}(\Delta,\Mu) = (\mathrm{Aff}(\Mu)\backslash\mathrm{Hom}_f(\Delta,\mathrm{Aff}(\Mu)))/\mathrm{Out}(\Delta).$$ 
If $\eta\in \mathrm{Hom}_f(\Delta,\mathrm{Aff}(\Mu))$, let $[\eta] = (\mathrm{Aff}(\Mu)\eta)\mathrm{Out}(\Delta)$ 
be the element of $\mathrm{Aff}(\Delta,\Mu)$ determined by $\eta$. 

\vspace{.15in}
\noindent{\bf Definition:} Define $\mathrm{Iso}(\Delta,\Mu)$ to be the set of isomorphism classes of pairs $(\Gamma, \Nu)$ 
where $\Nu$ is a complete normal subgroup of an $n$-space group $\Gamma$ 
such that $\Nu$ is isomorphic to $\Mu$ and $\Gamma/\Nu$ is isomorphic to $\Delta$. 
We denote the isomorphism class of a pair $(\Gamma,\Nu)$ by $[\Gamma,\Nu]$. 

\vspace{.15in}
Let $\eta \in \mathrm{Hom}_f(\Delta,\mathrm{Aff}(\Mu))$. 
By Theorem 14, there exists $C \in \mathrm{GL}(m,\realnos)$ such that $C\Mu C^{-1}$ is an $m$-space group 
and $C_\#(\Omega\eta(\Delta)) \subseteq \mathrm{Out}_E(C\Mu C^{-1})$. 
By Lemma 14, we have that $\Omega C_\sharp\eta(\Delta) \subseteq \mathrm{Out}_E(C\Mu C^{-1})$. 
By Theorems 12 and 13, we deduce that $C_\sharp\eta(\Delta) \subseteq \mathrm{Isom}(E^m/C\Mu C^{-1})$. 
Extend $C\Mu C^{-1}$ to a subgroup $\Nu$ of $\mathrm{Isom}(E^n)$ such that the point group of $\Nu$ 
acts trivially on $(E^m)^\perp$. 
By Theorem 18, there exists an $n$-space group $\Gamma$ containing $\Nu$ as a complete normal subgroup 
such that $\Gamma' = \Delta$ and if $\Xi: \Gamma/\Nu \to \mathrm{Isom}(E^m/\Nu)$ is the homomorphism 
induced by the action of $\Gamma/\Nu$ on $E^m/\Nu$, then $\Xi = C_\sharp\eta\Rho$ 
where $\Rho: \Gamma/\Nu \to \Gamma'$ is the isomorphism defined by $\Rho(\Nu\gamma) = \gamma'$. 
Define 
$$\psi: \mathrm{Aff}(\Delta,\Mu) \to \mathrm{Iso}(\Delta, \Mu)$$
by $\psi([\eta]) = [\Gamma,\Nu]$.  We next show that $\psi$ is well defined. 

\begin{lemma} 
The function $\psi: \mathrm{Aff}(\Delta,\Mu) \to \mathrm{Iso}(\Delta, \Mu)$ is a well-defined surjection. 
\end{lemma}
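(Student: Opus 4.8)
The plan is to verify two things: that $\psi([\eta])=[\Gamma,\Nu]$ is independent of the choices made in its construction and respects the relations defining $\mathrm{Aff}(\Delta,\Mu)$, and that every class of $\mathrm{Iso}(\Delta,\Mu)$ is attained. The engine throughout will be Theorem 19: by the equivalence (2)$\Leftrightarrow$(3) of Theorem 17, two pairs $(\Gamma_1,\Nu_1)$, $(\Gamma_2,\Nu_2)$ are isomorphic as soon as there is an affinity $\phi$ of $E^n$ with $\phi(\Gamma_1,\Nu_1)\phi^{-1}=(\Gamma_2,\Nu_2)$, and Theorem 19 manufactures such a $\phi$ from any data $\alpha,\beta,D$ satisfying $\Xi_2\Rho_2^{-1}\beta_\ast\Rho_1=(Dp_1)_\star\alpha_\sharp\Xi_1$. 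In every case I expect to take $D=0$, so that $(Dp_1)_\star$ is trivial and this identity collapses to $\Xi_2\Rho_2^{-1}\beta_\ast\Rho_1=\alpha_\sharp\Xi_1$, to be checked from the functoriality $(\phi\psi)_\sharp=\phi_\sharp\psi_\sharp$ of orbifold-affinity transport together with $\Xi_i=C_\sharp\eta_i\Rho_i$. First I observe that the only genuine choice in the construction is the matrix $C$ of Theorem 14: once $C$ is fixed, the extension of $C\Mu C^{-1}$ to $\Nu$ is the canonical block extension $(a,0)+(A\oplus I)$ forced by the requirement that the point group act trivially on $(E^m)^\perp$, and then $\Gamma$ is unique by Theorem 18.

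For well-definedness I would treat three cases. Independence of $C$: if $C_1,C_2$ both satisfy Theorem 14 for the same $\eta$, giving $(\Gamma_1,\Nu_1)$ and $(\Gamma_2,\Nu_2)$ with $\ov\Nu_i=C_i\Mu C_i^{-1}$, take $\alpha=C_2C_1^{-1}$, $\beta=\mathrm{id}$, $D=0$; then $\alpha\ov\Nu_1\alpha^{-1}=\ov\Nu_2$, and via $\alpha_\sharp(C_1)_\sharp=(C_2)_\sharp$ the identity reduces to $(C_2)_\sharp\eta\Rho_1=(C_2)_\sharp\eta\Rho_1$. Invariance under $\mathrm{Out}(\Delta)$: if $\eta_2=\eta_1\zeta$ with $\zeta\in\mathrm{Aut}(\Delta)$, then $\Omega\eta_2(\Delta)=\Omega\eta_1(\Delta)$ so the same $C$ (hence the same $\Nu$, $\Delta$) is admissible; realize $\zeta^{-1}$ by a Bieberbach affinity $\beta$ of $E^{n-m}$ normalizing $\Delta$ with $\beta_\ast=\zeta^{-1}$, take $\alpha=\mathrm{id}$, $D=0$, and the identity becomes $C_\sharp\eta_1\zeta\zeta^{-1}\Rho_1=C_\sharp\eta_1\Rho_1$. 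Invariance under $\mathrm{Aff}(\Mu)$: if $\eta_2=(\alpha_0)_\ast\eta_1$, choose any admissible $C_2$ for $\eta_2$ (legitimate by the first case), lift $\alpha_0$ to $\tilde\alpha_0\in N_A(\Mu)$ by Lemma 11, and take $\alpha=C_2\tilde\alpha_0C_1^{-1}$, $\beta=\mathrm{id}$, $D=0$; then $\alpha\ov\Nu_1\alpha^{-1}=\ov\Nu_2$, and using $(\tilde\alpha_0)_\sharp=(\alpha_0)_\ast$ with functoriality both sides reduce to $(C_2)_\sharp(\alpha_0)_\ast\eta_1\Rho_1$. In each case Theorem 19 yields the required $\phi$, so the constructed pairs are isomorphic and $\psi([\eta])$ depends only on $[\eta]$.

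For surjectivity, begin with a pair $(\Gamma,\Nu)$ representing a class of $\mathrm{Iso}(\Delta,\Mu)$, with $V=\mathrm{Span}(\Nu)$, $\ov\Nu\subseteq\mathrm{Isom}(V)$, $\Gamma'\subseteq\mathrm{Isom}(V^\perp)$, and $\Xi:\Gamma/\Nu\to\mathrm{Isom}(V/\Nu)$. Since $\ov\Nu\cong\Nu\cong\Mu$ and $\Gamma'\cong\Gamma/\Nu\cong\Delta$ as space groups, Bieberbach's theorem gives affinities $f:V\to E^m$ with $f\ov\Nu f^{-1}=\Mu$ and $g:V^\perp\to E^{n-m}$ with $g\Gamma'g^{-1}=\Delta$. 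Define $\eta:\Delta\to\mathrm{Aff}(E^m/\Mu)$ by $\eta(\delta)=f_\sharp(\Xi(\Rho^{-1}(g^{-1}\delta g)))$, a homomorphism as a composite of homomorphisms; it lies in $\mathrm{Hom}_f(\Delta,\mathrm{Aff}(\Mu))$ because $\Omega\eta=f_\#\Omega\Xi(\cdots)$ by Lemma 14 while $\Omega\Xi(\cdots)\subseteq\mathrm{Out}_E(\ov\Nu)$ is finite by Theorem 12. Running the construction on $\eta$ with some $C$ gives $(\Gamma_0,\Nu_0)$ with $\Xi_0=C_\sharp\eta\Rho_0$, and Theorem 19 with $\alpha=Cf$, $\beta=g$, $D=0$ — where $\eta(g\gamma'g^{-1})=f_\sharp\Xi(\Nu\gamma)$ and $C_\sharp f_\sharp=(Cf)_\sharp$ collapse the cocycle identity to $\alpha_\sharp\Xi=\alpha_\sharp\Xi$ — produces an affinity conjugating $(\Gamma,\Nu)$ to $(\Gamma_0,\Nu_0)$. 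Hence $\psi([\eta])=[\Gamma,\Nu]$.

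I expect the main difficulty to be organizational rather than conceptual: keeping straight the several transport maps $\Rho_i$, $\Xi_i$, $\beta_\ast$, $\alpha_\sharp$ between distinct orbifolds and pinning down the functoriality $(\phi\psi)_\sharp=\phi_\sharp\psi_\sharp$ on which each cocycle verification rests. The delicate point is the $\mathrm{Aff}(\Mu)$ case, where the admissible $C$ for $\eta_2$ generally differs from that for $\eta_1$ since $\Omega\eta_2(\Delta)$ is only a conjugate of $\Omega\eta_1(\Delta)$; this is precisely why the argument must be free to pick $C_2$ and invoke the already-proved independence of $C$. I would also want to confirm that $D=0$ is legitimate in every instance — that no nontrivial correction in the torus $K_2$ is forced — which holds because each relating affinity is built from a global symmetry ($C$, a Bieberbach realization, or a lift of $\alpha_0$) respecting the splitting $E^n=V\times V^\perp$ in the allowed block form.
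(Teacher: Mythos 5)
Your proposal is correct and follows essentially the same route as the paper: both arguments run the construction through Theorems 14 and 18 and then invoke Theorem 19 with $D=0$ (so that the crossed-homomorphism condition collapses to $\Xi_2\Rho_2^{-1}\beta_\ast\Rho_1=\alpha_\sharp\Xi_1$), using Bieberbach's theorem and Lemma 14 to realize the $\mathrm{Aff}(\Mu)$- and $\mathrm{Out}(\Delta)$-moves by affinities of the fiber and base factors. The only difference is organizational: you split well-definedness into three separate cases (change of $C$, the $\mathrm{Out}(\Delta)$-action, the $\mathrm{Aff}(\Mu)$-action) where the paper verifies the single identity $\hat\Xi\hat\Rho^{-1}\zeta^{-1}\Rho=(\hat C\tilde\alpha C^{-1})_\sharp\Xi$ covering all three at once.
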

\begin{proof}
To see that $\psi$ is well defined, let $\eta \in \mathrm{Hom}_f(\Delta,\mathrm{Aff}(\Mu))$, 
and let $\alpha \in \mathrm{Aff}(\Mu)$ and $\zeta \in \mathrm{Aut}(\Delta)$. 
By Theorem 14, there exists $\hat C \in \mathrm{GL}(m,\realnos)$ such that $\hat C\Mu\hat C^{-1}$ is an $m$-space group 
and 
$$\hat C_\#(\Omega\alpha_\ast\eta\zeta(\Delta))\subseteq \mathrm{Out}_E(\hat C\Mu\hat C^{-1}).$$
Then $\hat C_\sharp\alpha_\ast\eta\zeta(\Delta) \subseteq \mathrm{Isom}(E^m/\hat C\Mu\hat C^{-1})$. 
Extend $\hat C\Mu\hat C^{-1}$ to a subgroup $\hat\Nu$ of $\mathrm{Isom}(E^n)$ such that the point group of $\hat\Nu$ 
acts trivially on $(E^m)^\perp$. 
By Theorem 18, there exists an $n$-space group $\hat\Gamma$ containing $\hat\Nu$ as a complete normal subgroup 
such that $\hat\Gamma' = \Delta$ and if $\hat\Xi: \hat\Gamma/\hat\Nu \to \mathrm{Isom}(E^m/\hat\Nu)$ is the homomorphism 
induced by the action of $\hat\Gamma/\hat\Nu$ on $E^m/\hat\Nu$, then $\hat\Xi = \hat C_\sharp\alpha_\ast\eta\zeta\hat\Rho$ 
where $\hat\Rho:\hat \Gamma/\hat\Nu \to \hat\Gamma'$ is the isomorphism defined by $\hat\Rho(\hat\Nu\gamma) = \gamma'$. 
Lift $\alpha$ to $\tilde\alpha \in N_A(\Mu)$ such that $\tilde\alpha_\star= \alpha$. 
Then $\tilde\alpha_\sharp = \alpha_\ast$ and we have
$$\hat C_\sharp\alpha_\ast\eta\zeta \hat\Rho \hat\Rho^{-1}\zeta^{-1} \Rho = (\hat C\tilde\alpha C^{-1})_\sharp C_\sharp \eta\Rho,$$
and so we have 
$$\hat\Xi\hat\Rho^{-1}\zeta^{-1}\Rho = (\hat C\tilde\alpha C^{-1})_\sharp \Xi.$$
By Bieberbach's theorem, there is a $\tilde\zeta \in N_A(\Delta)$ such that $\tilde\zeta_\ast = \zeta$. 
By Theorem 19, there is $\phi \in \mathrm{Aff}(E^n)$ such that $\phi(\Gamma,\Nu)\phi^{-1} = (\hat\Gamma,\hat\Nu)$ 
with $\ov \phi = \hat C\tilde \alpha C^{-1}$ and $\phi' = \tilde\zeta^{-1}$. 
Thus $\psi:\mathrm{Aff}(\Delta,\Mu) \to \mathrm{Iso}(\Delta, \Mu)$ is well defined. 

To see that $\psi$ is onto, let $[\Gamma_1,\Nu_1] \in \mathrm{Iso}(\Delta,\Mu)$. 
Then we have isomorphisms $\alpha:\ov \Nu_1 \to \Mu$ and $\beta:\Gamma_1/\Nu_1\to\Delta$. 
Let $V =\mathrm{Span}(\Nu_1)$, and let $\Xi_1:\Gamma_1/\Nu_1 \to \mathrm{Isom}(V/\Nu_1)$ 
be the homomorphism induced by the action of $\Gamma_1/\Nu_1$ on $V/\Nu_1$. 
By Bieberbach's theorem, there is an affinity $\tilde\alpha: V \to E^m$ such that $\tilde\alpha\ov\Nu_1\tilde\alpha^{-1} = \Mu$ 
and $\tilde\alpha_\ast = \alpha$. 
Let $\eta = \tilde\alpha_\sharp\Xi_1\beta^{-1}$. 
Then $\eta \in \mathrm{Hom}_f(\Delta,\mathrm{Aff}(\Mu))$ by Theorem 12 and Lemma 14. 
By Theorems 12, 13, and 14, there is a $C\in\mathrm{GL}(m,\realnos)$ such that $C\Mu C^{-1}$ is an $m$-space group and 
$$C_\sharp\eta(\Delta) \subseteq \mathrm{Isom}(E^m/C\Mu C^{-1}).$$
Extend $C\Mu C^{-1}$ to a subgroup $\Nu_2$ of $\mathrm{Isom}(E^n)$ such that the point group of $\Nu_2$ 
acts trivially on $(E^m)^\perp$. 
By Theorem 18, there exists an $n$-space group $\Gamma_2$ containing $\Nu_2$ as a complete normal subgroup 
such that $\Gamma_2' = \Delta$ and if $\Xi_2: \Gamma_2/\Nu_2 \to \mathrm{Isom}(E^m/\Nu_2)$ is the homomorphism 
induced by the action of $\Gamma_2/\Nu_2$ on $E^m/\Nu_2$, then $\Xi_2 = C_\sharp\eta\Rho_2$ 
where $\Rho_2: \Gamma_2/\Nu_2 \to \Gamma_2'$ is the isomorphism defined by $\Rho_2(\Nu_2\gamma) = \gamma'$. 
Then we have
$$\Xi_2\Rho_2^{-1}(\beta\Rho_1^{-1})\Rho_1 = C_\sharp\eta\beta = C_\sharp\tilde\alpha_\sharp\Xi_1 = (C\tilde\alpha)_\sharp \Xi_1.$$
By Theorem 19, there is $\phi \in \mathrm{Aff}(E^n)$ such that $\phi(\Gamma_1,\Nu_1)\phi^{-1} = (\Gamma_2,\Nu_2)$. 
Therefore $\psi([\eta]) = [\Gamma_1,\Nu_1]$.  
Thus $\psi$ is surjective. 
\end{proof}

\begin{theorem}  
Let $m$ be a positive integer less than $n$.  Let $\Mu$ be an $m$-space group 
with trivial center, and let $\Delta$ be an $(n-m)$-space group. 
Then the function $\psi:\mathrm{Aff}(\Delta,\Mu) \to \mathrm{Iso}(\Delta,\Mu)$ is a bijection.
\end{theorem}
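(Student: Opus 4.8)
The plan is to exploit the fact that Lemma 16 already establishes $\psi$ as a well-defined surjection, so that only injectivity remains; the hypothesis that $\Mu$ has trivial center will be used at exactly one point, to force the off-diagonal datum in the generalized Calabi construction to vanish. Concretely, I would take $\eta_1,\eta_2 \in \mathrm{Hom}_f(\Delta,\mathrm{Aff}(\Mu))$ with $\psi([\eta_1]) = \psi([\eta_2])$ and produce, from the defining construction of $\psi$, pairs $(\Gamma_i,\Nu_i)$ with $\ov\Nu_i = C_i\Mu C_i^{-1}$, with $\Gamma_i' = \Delta$, and with $\Xi_i = (C_i)_\sharp\eta_i\Rho_i$, where $\Rho_i:\Gamma_i/\Nu_i \to \Gamma_i' = \Delta$ is the canonical isomorphism. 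Since $\psi([\eta_1]) = \psi([\eta_2])$ means $[\Gamma_1,\Nu_1] = [\Gamma_2,\Nu_2]$ in $\mathrm{Iso}(\Delta,\Mu)$, Theorem 15 supplies an affinity $\phi$ of $E^n$ with $\phi(\Gamma_1,\Nu_1)\phi^{-1} = (\Gamma_2,\Nu_2)$.

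Next I would decompose $\phi = c + C$ exactly as in the paragraph preceding Lemma 15, producing $\ov\phi = \alpha: V_1 \to V_2$, $\phi' = \beta: V_1^\perp \to V_2^\perp$, and the off-diagonal linear map $\ov{C'}$; from $\phi(\Gamma_1,\Nu_1)\phi^{-1} = (\Gamma_2,\Nu_2)$ one reads off $\alpha\ov\Nu_1\alpha^{-1} = \ov\Nu_2$ and $\beta\Gamma_1'\beta^{-1} = \Gamma_2'$, while Lemma 15 gives $\ov{C'}V_1^\perp \subseteq \mathrm{Span}(Z(\ov\Nu_2))$ together with the intertwining relation $\ov{C'}B' = \ov C\ov B\ov C^{-1}\ov{C'}$. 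These are precisely the hypotheses of Theorem 19, so the forward direction of that iff (existence of $\phi$ forces the cocycle identity) yields $\Xi_2\Rho_2^{-1}(\phi')_\ast\Rho_1 = (\ov{C'}p_1)_\star\,\ov\phi_\sharp\,\Xi_1$. This is where the trivial-center hypothesis enters: since $\ov\Nu_2 \cong \Mu$ has trivial center, $\mathrm{Span}(Z(\ov\Nu_2)) = \{0\}$, so $\ov{C'} = 0$; equivalently, by Theorem 11 the torus $K_2$, the identity component of $\mathrm{Isom}(V_2/\Nu_2)$, is trivial. Hence the crossed homomorphism $(\ov{C'}p_1)_\star$ is identically the identity and drops out, leaving $\Xi_2\Rho_2^{-1}(\phi')_\ast\Rho_1 = \ov\phi_\sharp\Xi_1$.

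Finally I would substitute $\Xi_i = (C_i)_\sharp\eta_i\Rho_i$, cancel the isomorphism $\Rho_1$ on the right and apply $((C_2)_\sharp)^{-1} = (C_2^{-1})_\sharp$ on the left, using functoriality $\ov\phi_\sharp(C_1)_\sharp = (\ov\phi C_1)_\sharp$, to obtain $\eta_2(\phi')_\ast = \mu_\sharp\eta_1$ with $\mu = C_2^{-1}\ov\phi C_1$. A short check that $\mu\Mu\mu^{-1} = \Mu$ (since $C_1$ conjugates $\Mu$ to $\ov\Nu_1$, $\ov\phi$ conjugates $\ov\Nu_1$ to $\ov\Nu_2$, and $C_2^{-1}$ conjugates $\ov\Nu_2$ back to $\Mu$) shows $\mu \in N_A(\Mu)$, so $\mu_\sharp = (\mu_\star)_\ast$ with $\mu_\star \in \mathrm{Aff}(\Mu)$, while $\zeta := (\phi')_\ast \in \mathrm{Aut}(\Delta)$. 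Thus $\eta_2 = \mu_\star\cdot(\eta_1\zeta^{-1})$, whence $\mathrm{Aff}(\Mu)\eta_2 = \mathrm{Aff}(\Mu)(\eta_1\zeta^{-1})$ and, after passing to $\mathrm{Out}(\Delta)$-orbits, $[\eta_2] = [\eta_1]$ in $\mathrm{Aff}(\Delta,\Mu)$. Combined with the surjectivity in Lemma 16, this gives that $\psi$ is a bijection.

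I expect the only genuine content to be the trivial-center step that kills the torus-valued crossed homomorphism $(\ov{C'}p_1)_\star$, since without it $\eta_2$ would differ from a twist of $\eta_1$ by a nontrivial $K_2$-cocycle and injectivity could fail. The remaining difficulty is purely bookkeeping: keeping straight the identifications $V_i \cong E^m$, verifying $\mu \in N_A(\Mu)$, and correctly matching the left $\mathrm{Aff}(\Mu)$-action with the right $\mathrm{Out}(\Delta)$-action so that the residual factor $\zeta^{-1} \in \mathrm{Aut}(\Delta)$ is absorbed modulo $\mathrm{Inn}(\Delta)$.
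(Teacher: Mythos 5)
Your proposal follows essentially the same route as the paper's proof: surjectivity is quoted from Lemma 16, and injectivity is obtained by realizing $\psi([\eta_i])$ as pairs $(\Gamma_i,\Nu_i)$ with $\Xi_i = (C_i)_\sharp\eta_i\Rho_i$, conjugating by an affinity $\phi$, and applying Theorem 19 to conclude $\eta_2(\phi')_\ast = (C_2^{-1}\ov\phi C_1)_\sharp\eta_1$, whence $[\eta_1]=[\eta_2]$. Your explicit observation that the trivial center of $\Mu$ forces $\mathrm{Span}(Z(\Nu_2))=\{0\}$, hence $\ov{C'}=0$, hence the crossed homomorphism $(Dp_1)_\star$ is trivial, is precisely the step the paper compresses into its terse citation of Lemma 14 and Theorem 19, so the two arguments coincide.
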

\begin{proof}  
To see that $\psi$ is injective, let $\eta_i\in\mathrm{Hom}_f(\Delta,\mathrm{Aff}(\Mu))$ for $i = 1,2$ 
such that $\psi([\eta_1]) = \psi([\eta_2])$. 
By the definition of $\psi$, there exists $C_i\in \mathrm{GL}(m,\realnos)$ such that $C_i\Mu C_i^{-1}$ is an $m$-space group 
and $(C_i)_\sharp\eta(\Delta) \subseteq \mathrm{Isom}(E^m/C_i\Mu C_i^{-1})$ for each $i=1,2$. 
Extend $C_i\Mu C_i^{-1}$ to a subgroup $\Nu_I$ of $\mathrm{Isom}(E^n)$ such that the point group of $\Nu_I$ 
acts trivially on $(E^m)^\perp$. 
By Theorem 18, there exists an $n$-space group $\Gamma_I$ containing $\Nu_i$ as a complete normal subgroup 
such that $\Gamma_i' = \Delta$ and if $\Xi_i: \Gamma_i\Nu_i \to \mathrm{Isom}(E^m/\Nu_i)$ is the homomorphism 
induced by the action of $\Gamma_i/\Nu_i$ on $E^m/\Nu_i$, then $\Xi_i = (C_i)_\sharp\eta_i\Rho_i$ 
where $\Rho_i: \Gamma_i/\Nu_i \to \Gamma_i'$ is the isomorphism defined by $\Rho_i(\Nu_i\gamma) = \gamma'$ for $i = 1,2$. 
Then we have 
$$[\Gamma_1,\Nu_1] = \psi([\eta_1] )= \psi([\eta_2]) = [\Gamma_2,\Nu_2].$$
Hence there exists $\phi\in \mathrm{Aff}(E^n)$ such that $\phi(\Gamma_1,\Nu_1)\phi^{-1} = (\Gamma_2,\Nu_2)$. 
By Lemma 14 and Theorem 19, we have that 
$\Xi_2\Rho_2^{-1}\phi'_\ast\Rho_1 = \ov \phi_\sharp\Xi_1$. 
Hence we have that 
$$(C_2)_\sharp\eta_2\phi'_\ast\Rho_1 = \ov\phi_\sharp(C_1)_\sharp\eta_1\Rho_1.$$
Therefore we have 
$$\eta_2\phi'_\ast = (C_2)_\sharp^{-1}\ov\phi_\sharp(C_1)_\sharp\eta_1 = (C_2^{-1}\ov\phi C_1)_\sharp\eta_1.$$
Hence $[\eta_1] = [\eta_2]$.  Thus $\psi$ is injective. 
By Lemma 16, we have that $\psi$ is surjection.  
Therefore $\psi$ is a bijection. 
\end{proof}


The action of $\Gamma$ on $\Nu$ by conjugation induces a homomorphism 
$$\mathcal{O}: \Gamma/\Nu \to \mathrm{Out}_E(\Nu)$$
defined by $\mathcal{O}(\Nu\gamma) = \gamma_\ast\mathrm{Inn}(\Nu)$ where $\gamma_\ast(\nu) = \gamma\nu\gamma^{-1}$ 
for each $\gamma \in \Gamma$ and $\nu\in\Nu$. 
Let  $\alpha: \Nu_1 \to \Nu_2$ be an isomorphism.  Then $\alpha$ induces an isomorphism
$$\alpha_\#: \mathrm{Out}(\Nu_1) \to \mathrm{Out}(\Nu_2)$$
defined by $\alpha_\#(\zeta\mathrm{Inn}(\Nu_1) )= \alpha\zeta\alpha^{-1}\mathrm{Inn}(\Nu_2)$. 

\begin{lemma}  
Let $\Nu_i$ be a complete normal subgroup of an $n$-space group $\Gamma_i$ for $i = 1,2$.  
Let $\mathcal{O}_i:\Gamma_i/\Nu_i \to \mathrm{Out}_E(\Nu_i)$ be the homomorphism 
induced by the action of $\Gamma_i$ on $\Nu_i$ by conjugation for $i = 1, 2$, and  
let $\alpha:\Nu_1\to \Nu_2$ and $\phi: \Gamma_1\to \Gamma_2$ and $\beta:\Gamma_1/\Nu_1 \to \Gamma_2/\Nu_2$ be isomorphisms  
such that the following diagram commutes
\[\begin{array}{ccccccccc}
1 & \to & \Nu_1 & \rightarrow & \Gamma_1 & \rightarrow & \Gamma_1/\Nu_1 & \to & 1 \\
    &       & \hspace{.12in} \downarrow \, \alpha &    & \hspace{.12in}\downarrow\, \phi  & & \hspace{.12in} \downarrow\, \beta & \\
1 & \to  &  \Nu_2 & \rightarrow  & \Gamma_2 & \rightarrow & \Gamma_2/\Nu_2 & \to & 1,  
\end{array}\] 
where the horizontal maps are inclusions and projections, then $\mathcal{O}_2 = \alpha_\#\mathcal{O}_1\beta^{-1}$. 
\end{lemma}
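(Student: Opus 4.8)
The plan is to prove the identity $\mathcal{O}_2 = \alpha_\#\mathcal{O}_1\beta^{-1}$ by a direct diagram chase, evaluating both sides on an arbitrary coset of $\Nu_2$ in $\Gamma_2$. The two facts I would first extract from the commuting diagram are: (i) $\alpha$ is the restriction of $\phi$ to $\Nu_1$, since commutativity of the left-hand square says exactly that $\phi(\nu_1) = \alpha(\nu_1)$ for every $\nu_1\in\Nu_1$, once $\Nu_i$ is identified with its image in $\Gamma_i$ under the inclusion; and (ii) $\beta(\Nu_1\gamma_1) = \Nu_2\phi(\gamma_1)$ for every $\gamma_1\in\Gamma_1$, which is the content of the right-hand square. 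Fact (ii) gives, equivalently, $\beta^{-1}(\Nu_2\phi(\gamma_1)) = \Nu_1\gamma_1$.

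Next I would fix $\gamma_2\in\Gamma_2$ and, using surjectivity of $\phi$, write $\gamma_2 = \phi(\gamma_1)$ for some $\gamma_1\in\Gamma_1$, so that $\beta^{-1}(\Nu_2\gamma_2) = \Nu_1\gamma_1$. By the definitions of $\mathcal{O}_1$ and $\alpha_\#$ we then have
\[
\alpha_\#\mathcal{O}_1\beta^{-1}(\Nu_2\gamma_2) = \alpha_\#\big((\gamma_1)_\ast\mathrm{Inn}(\Nu_1)\big) = \big(\alpha(\gamma_1)_\ast\alpha^{-1}\big)\mathrm{Inn}(\Nu_2),
\]
while $\mathcal{O}_2(\Nu_2\gamma_2) = (\gamma_2)_\ast\mathrm{Inn}(\Nu_2)$. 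Thus the statement reduces to showing that the automorphisms $\alpha(\gamma_1)_\ast\alpha^{-1}$ and $(\gamma_2)_\ast = (\phi(\gamma_1))_\ast$ of $\Nu_2$ agree, and I expect to prove the stronger assertion that they are literally equal, not merely equal modulo $\mathrm{Inn}(\Nu_2)$. To verify this I would evaluate on an arbitrary $\nu_2\in\Nu_2$: setting $\nu_1 = \alpha^{-1}(\nu_2)\in\Nu_1$ and using normality of $\Nu_1$ in $\Gamma_1$ together with $\alpha = \phi|_{\Nu_1}$ and the homomorphism property of $\phi$,
\[
\alpha\big(\gamma_1\nu_1\gamma_1^{-1}\big) = \phi(\gamma_1)\,\phi(\nu_1)\,\phi(\gamma_1)^{-1} = \phi(\gamma_1)\,\nu_2\,\phi(\gamma_1)^{-1} = (\phi(\gamma_1))_\ast(\nu_2),
\]
where the middle equality uses $\phi(\nu_1) = \alpha(\nu_1) = \nu_2$. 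This is exactly the desired equality of automorphisms, and the lemma follows.

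I do not anticipate any genuine obstacle here; the whole argument rests on the restriction identity $\alpha = \phi|_{\Nu_1}$, after which it is pure bookkeeping of which group each conjugation takes place in. The only points warranting a remark are that $\mathcal{O}_1$ takes values in $\mathrm{Out}_E(\Nu_1)\subseteq\mathrm{Out}(\Nu_1)$, so that $\alpha_\#$ (defined on all of $\mathrm{Out}(\Nu_1)$) may legitimately be applied, and that the common value lands in $\mathrm{Out}_E(\Nu_2)$, which is automatic since the equal left-hand map $\mathcal{O}_2$ does.
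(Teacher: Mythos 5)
Your proof is correct and follows essentially the same route as the paper's: both reduce the identity to the pointwise equality $\alpha(\gamma_1)_\ast\alpha^{-1} = (\phi(\gamma_1))_\ast$ on $\Nu_2$, verified by substituting $\phi$ for $\alpha$ via the commuting left square. No gaps.
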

\begin{proof}  
Let $\gamma\in \Gamma_1$.  Then we have that 
$$\mathcal{O}_2(\Nu_2\phi(\gamma)) = \phi(\gamma)_\ast\mathrm{Inn}(\Nu_2),$$
whereas
\begin{eqnarray*}
\alpha_\#\mathcal{O}_1\beta^{-1}(\Nu_2\phi(\gamma)) & =  & \alpha_\#\mathcal{O}_1(\Nu_1\gamma) \\
                                              & = & \alpha_\#(\gamma_\ast\mathrm{Inn}(\Nu_1)) \ \ = \ \ \alpha\gamma_\ast\alpha^{-1}\mathrm{Inn}(\Nu_2).
\end{eqnarray*}
If $\nu \in \Nu$,  then 
\begin{eqnarray*}
\alpha\gamma_\ast\alpha^{-1}(\nu) & = & \alpha\gamma_\ast\alpha^{-1}(\nu) \\
						       & = & \alpha(\gamma\alpha^{-1}(\nu)\gamma^{-1}) \\
						       & = & \phi(\gamma\phi^{-1}(\nu)\gamma^{-1}) \\
						       & = & \phi(\gamma)\nu\phi(\gamma)^{-1} \ \ = \ \ \phi(\gamma)_\ast(\nu). 
\end{eqnarray*}
Hence $\alpha\gamma_\ast\alpha^{-1} = \phi(\gamma)_\ast$.  
Therefore $\mathcal{O}_2 = \alpha_\#\mathcal{O}_1\beta^{-1}$. 
\end{proof}

\noindent{\bf Definition:} Let $m$ be a positive integer less than $n$. 
Let $\Mu$ be an $m$-space group, and let $\Delta$ be an $(n-m)$-space group. 
Define $\mathrm{Hom}_f(\Delta,\mathrm{Out}(\Mu))$ to be the set of all homomorphisms from $\Delta$ to $\mathrm{Out}(\Mu)$ 
that have finite image. 

\vspace{.15in}
The group $\mathrm{Out}(\Mu)$ acts on the left of $\mathrm{Hom}_f(\Delta,\mathrm{Out}(\Mu))$ by conjugation, 
that is, if $g\in \mathrm{Out}(\Mu)$ and $\eta\in \mathrm{Hom}_f(\Delta,\mathrm{Out}(\Mu))$, 
then $g \eta = g_\ast\eta$ where $g_\ast: \mathrm{Out}(\Mu) \to \mathrm{Out}(\Mu)$ is defined by $g_\ast(h) = ghg^{-1}$. 
Let $\mathrm{Out}(\Mu)\backslash\mathrm{Hom}_f(\Delta,\mathrm{Out}(\Mu))$ be the set of $\mathrm{Out}(\Mu)$-orbits. 
The group $\mathrm{Aut}(\Delta)$ acts on the right of $\mathrm{Hom}_f(\Delta,\mathrm{Out}(\Mu))$ 
by composition of homomorphisms. 
If $\beta\in \mathrm{Aut}(\Delta)$ and  $\eta\in \mathrm{Hom}_f(\Delta,\mathrm{Out}(\Mu))$ and $g\in \mathrm{Out}(\Mu)$, 
then 
$$(g\eta)\beta = (g_\ast\eta)\beta = g_\ast(\eta\beta) = g(\eta\beta).$$
Hence $\mathrm{Aut}(\Delta)$ acts on the right of 
$\mathrm{Out}(\Mu)\backslash\mathrm{Hom}_f(\Delta,\mathrm{Out}(\Mu))$ 
by 
$$(\mathrm{Out}(\Mu)\eta)\beta = \mathrm{Out}(\Mu)(\eta\beta).$$
Let $\delta, \epsilon \in \Delta$ and $\eta\in \mathrm{Hom}_f(\Delta,\mathrm{Out}(\Mu))$.  
Then we have that
$$\eta\delta_\ast(\epsilon) = \eta(\delta\epsilon\delta^{-1}) =\eta(\delta)\eta(\epsilon)\eta(\delta)^{-1}= \eta(\delta)_\ast\eta(\epsilon) 
= (\eta(\delta)\eta)(\epsilon). $$
Hence $\eta\delta_\ast = \eta(\delta)\eta$.  Therefore $\mathrm{Inn}(\Delta)$ acts trivially on
 $\mathrm{Out}(\Mu)\backslash\mathrm{Hom}_f(\Delta,\mathrm{Out}(\Mu))$. 
Hence $\mathrm{Out}(\Delta)$ acts on the right of $\mathrm{Out}(\Mu)\backslash\mathrm{Hom}_f(\Delta,\mathrm{Out}(\Mu))$ 
by
$$(\mathrm{Out}(\Mu)\eta)(\beta\mathrm{Inn}(\Delta)) = \mathrm{Out}(\Mu)(\eta\beta).$$
\noindent{\bf Definition:} Define the set $\mathrm{Out}(\Delta,\Mu)$ by the formula
$$\mathrm{Out}(\Delta,\Mu) = (\mathrm{Out}(\Mu)\backslash\mathrm{Hom}_f(\Delta,\mathrm{Out}(\Mu)))/\mathrm{Out}(\Delta).$$ 
If $\eta\in \mathrm{Hom}_f(\Delta,\mathrm{Out}(\Mu))$, let $[\eta] = (\mathrm{Out}(\Mu)\eta)\mathrm{Out}(\Delta)$ 
be the element of $\mathrm{Out}(\Delta,\Mu)$ determined by $\eta$. 

\vspace{.15in}
Let $(\Gamma,\Nu)$ be a pair such that $[\Gamma,\Nu]\in\mathrm{Iso}(\Delta,\Mu)$. 
Let  $\mathcal{O}: \Gamma/\Nu \to \mathrm{Out}_E(\Nu)$ 
be the homomorphism induced by the action of $\Gamma$ on $\Nu$ by conjugation. 
Let $\alpha: \Nu \to \Mu$ and $\beta:\Delta \to \Gamma/\Nu$ be isomorphisms. 
Then $\alpha_\#\mathcal{O}\beta \in \mathrm{Hom}_f(\Delta,\mathrm{Out}(\Mu))$. 

Let $\alpha':\Nu\to\Mu$ and $\beta':\Delta\to\Gamma/\Nu$ are isomorphisms. 
Observe that 
\begin{eqnarray*}
\alpha'_\#\mathcal{O}\beta' & = & \alpha'_\#\alpha_\#^{-1}\alpha_\#\mathcal{O}\beta\beta^{-1}\beta'  \\
& = & (\alpha'\alpha^{-1})_\#\alpha_\#\mathcal{O}\beta(\beta^{-1}\beta')  \\
& = & (\alpha'\alpha^{-1}\mathrm{Inn}(\Mu))_\ast(\alpha_\#\mathcal{O}\beta(\beta^{-1}\beta'))  \\
& = & (\alpha'\alpha^{-1}\mathrm{Inn}(\Mu))(\alpha_\#\mathcal{O}\beta)(\beta^{-1}\beta').  \\
\end{eqnarray*}
Hence $[\alpha_\#\mathcal{O}\beta]$ in $\mathrm{Out}(\Delta,\Mu)$ does not depend 
on the choice of $\alpha$ and $\beta$,  
and so $(\Gamma,\Nu)$ determines the element $[\alpha_\#\mathcal{O}\beta]$ 
of $\mathrm{Out}(\Delta,\Mu)$ independent of the choice of $\alpha$ and $\beta$. 

Suppose $[\Gamma_i,\Nu_i]\in \mathrm{Iso}(\Delta,\Mu)$ for $i=1,2$, 
and $\phi:(\Gamma_1,\Nu_1) \to (\Gamma_2,\Nu_2)$ is an isomorphism of pairs. 
Let $\alpha:\Nu_1\to \Nu_2$ be the isomorphism obtained by restricting $\phi$, 
and let $\beta: \Gamma_1/\Nu_1\to \Gamma_2/\Nu_2$ be the isomorphism induced by $\phi$. 
Let $\mathcal{O}_i:\Gamma_i/\Nu_i \to \mathrm{Out}_E(\Nu_i)$ be the homomorphism 
induced by the action of $\Gamma_i$ on $\Nu_i$ by conjugation for $i=1,2$. 
Then $\mathcal{O}_2 = \alpha_\#\mathcal{O}_1\beta^{-1}$ by Lemma 17.  
Let $\alpha_1:\Nu_1 \to \Mu$ and $\beta_1:\Delta \to \Gamma_1/\Nu_1$ be isomorphisms. 
Let $\alpha_2 = \alpha_1\alpha^{-1}$ and $\beta_2= \beta\beta_1$. 
Then we have 
$$(\alpha_2)_\#\mathcal{O}_2\beta_2 
 =  (\alpha_1\alpha^{-1})_\#\alpha_\#\mathcal{O}_1\beta^{-1}\beta_2 \\
 = (\alpha_1)_\#\mathcal{O}_1\beta_1. $$
Hence $(\Gamma_1,\Nu_1)$ and $(\Gamma_2,\Nu_2)$ determine the same element of $\mathrm{Out}(\Delta,\Mu)$. 
Therefore there is a function 
$$\omega:\mathrm{Iso}(\Delta,\Mu) \to \mathrm{Out}(\Delta,\Mu)$$
defined by $\omega([\Gamma,\Nu]) = [\alpha_\#\mathcal{O}\beta]$ for any choice of isomorphisms $\alpha:\Nu\to\Mu$ 
and $\beta: \Delta\to \Gamma/\Nu$. 

Let $\eta \in \mathrm{Hom}_f(\Delta, \mathrm{Aff}(\Mu))$.  
Then $\Omega\eta \in \mathrm{Hom}_f(\Delta, \mathrm{Out}(\Mu))$. 
Let $\alpha \in \mathrm{Aff}(\Mu)$ and $\beta\in \mathrm{Aut}(\Delta)$.
Then $\alpha$ lifts to $\tilde\alpha \in N_A(\Mu)$ such that $\tilde\alpha_\star = \alpha$. 
We have that $\tilde\alpha_\sharp = \alpha_\ast$. 
By Lemma 14, we have that 
$$\Omega\alpha_\ast\eta\beta = \Omega\tilde\alpha_\sharp \eta\beta 
= \tilde\alpha_\#(\Omega\eta)\beta = (\tilde\alpha_\ast)_\#(\Omega\eta)\beta.$$ 
Hence we may define a function $\chi:\mathrm{Aff}(\Delta,\Mu) \to \mathrm{Out}(\Delta,\Mu)$ 
by $\chi([\eta]) = [\Omega\eta]$. 
\begin{lemma}  
The function $\chi:\mathrm{Aff}(\Delta,\Mu) \to \mathrm{Out}(\Delta,\Mu)$ is the composition of the function 
$\psi:\mathrm{Aff}(\Delta,\Mu) \to \mathrm{Iso}(\Delta,\Mu)$ followed by 
$\omega:\mathrm{Iso}(\Delta,\Mu) \to \mathrm{Out}(\Delta,\Mu)$.
\end{lemma}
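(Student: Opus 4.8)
The plan is to evaluate both $\chi$ and $\omega\circ\psi$ on a single homomorphism $\eta \in \mathrm{Hom}_f(\Delta,\mathrm{Aff}(\Mu))$ representing a given class $[\eta]$, and to show that the resulting homomorphisms $\Delta \to \mathrm{Out}(\Mu)$ agree on the nose (not merely up to the double-coset equivalence) once the isomorphisms entering $\omega$ are chosen compatibly with the construction of $\psi$. Recall that $\psi([\eta]) = [\Gamma,\Nu]$, where $C\in\mathrm{GL}(m,\realnos)$ is chosen so that $C\Mu C^{-1}$ is an $m$-space group with $C_\sharp\eta(\Delta)\subseteq\mathrm{Isom}(E^m/C\Mu C^{-1})$, where $\Nu$ is the extension of $C\Mu C^{-1}$ to $\mathrm{Isom}(E^n)$ with point group trivial on $(E^m)^\perp$, and where the $n$-space group $\Gamma$ produced by Theorem 18 satisfies $\Gamma' = \Delta$ and $\Xi = C_\sharp\eta\Rho$, with $\Rho:\Gamma/\Nu\to\Gamma'=\Delta$ the isomorphism $\Rho(\Nu\gamma)=\gamma'$. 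Since $\omega([\Gamma,\Nu])$ is independent of the chosen isomorphisms $\Nu\to\Mu$ and $\Delta\to\Gamma/\Nu$, I would take $\beta = \Rho^{-1}:\Delta\to\Gamma/\Nu$ and $\alpha:\Nu\to\Mu$ the composite of $\Beta:\Nu\to\ov\Nu$ with the conjugation isomorphism $\ov\nu\mapsto C^{-1}\ov\nu C$ from $\ov\Nu = C\Mu C^{-1}$ onto $\Mu$; here I use that, under the identification $V=\span(\Nu)=E^m$, the restricted group $\ov\Nu$ is exactly $C\Mu C^{-1}$.

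The key step is an intrinsic identity relating the conjugation homomorphism $\mathcal{O}:\Gamma/\Nu\to\mathrm{Out}_E(\Nu)$ to the geometric homomorphism $\Xi:\Gamma/\Nu\to\mathrm{Isom}(V/\Nu)$. Let $\Omega_{\ov\Nu}:\mathrm{Aff}(V/\ov\Nu)\to\mathrm{Out}(\ov\Nu)$ denote the epimorphism of Theorem 13 for the $m$-space group $\ov\Nu$; its restriction to $\mathrm{Isom}(V/\ov\Nu)$ is the Theorem 12 map and has image in $\mathrm{Out}_E(\ov\Nu)$, which is all I need since $\Xi$ and $C_\sharp\eta$ take values in the isometry group. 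I claim $\Beta_\#\mathcal{O} = \Omega_{\ov\Nu}\Xi$. To verify this on $\Nu\gamma$, note that $\ov\gamma$ is a lift of the isometry $\Xi(\Nu\gamma)=\ov\gamma_\star$ of $V/\ov\Nu$ and that $\ov\gamma$ normalizes $\ov\Nu$ because $\gamma$ normalizes $\Nu$ and $\Beta$ is a homomorphism; hence $\Omega_{\ov\Nu}(\Xi(\Nu\gamma)) = (\ov\gamma)_\ast\mathrm{Inn}(\ov\Nu)$. On the other side, $\Beta_\#\mathcal{O}(\Nu\gamma) = (\Beta\gamma_\ast\Beta^{-1})\mathrm{Inn}(\ov\Nu)$, and the computation $\Beta\gamma_\ast\Beta^{-1}(\ov\nu) = \ov{\gamma\nu\gamma^{-1}} = \ov\gamma\,\ov\nu\,\ov\gamma^{-1} = (\ov\gamma)_\ast(\ov\nu)$ shows the two agree. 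Substituting $\Xi = C_\sharp\eta\Rho$ and $\beta=\Rho^{-1}$ then gives $\Beta_\#\mathcal{O}\beta = \Omega_{\ov\Nu}\Xi\Rho^{-1} = \Omega_{\ov\Nu}C_\sharp\eta$.

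Finally I would invoke Lemma 14 for the $m$-space groups $\Mu$ and $C\Mu C^{-1}=\ov\Nu$ with the affinity $C$, which yields $\Omega_{\ov\Nu}C_\sharp = C_\#\Omega$, so that $\Beta_\#\mathcal{O}\beta = C_\#\Omega\eta$. Since the chosen $\alpha$ factors as conjugation by $C^{-1}$ after $\Beta$, functoriality gives $\alpha_\# = (C^{-1})_\#\Beta_\# = C_\#^{-1}\Beta_\#$, using $(\phi_\#)^{-1}=(\phi^{-1})_\#$. Hence $\alpha_\#\mathcal{O}\beta = C_\#^{-1}C_\#\Omega\eta = \Omega\eta$, whence $\omega(\psi([\eta])) = [\alpha_\#\mathcal{O}\beta] = [\Omega\eta] = \chi([\eta])$. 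I expect the main obstacle to be essentially bookkeeping rather than conceptual: correctly identifying $\ov\Nu$ with $C\Mu C^{-1}$ under $V=E^m$, and keeping track of the several $\Omega$-, $\sharp$-, and $\#$-maps attached to the three distinct space groups $\Mu$, $\ov\Nu = C\Mu C^{-1}$, and $\Nu$, so that the pivotal identity $\Beta_\#\mathcal{O}=\Omega_{\ov\Nu}\Xi$ is stated between the correct groups. Once that identity is in place, the remainder consists only of the formal cancellations supplied by Lemma 14.
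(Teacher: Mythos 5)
Your proposal is correct and follows essentially the same route as the paper: both compute $\omega(\psi([\eta]))$ using the construction of $\psi$, the identity $\mathcal{O}=\Omega\,\Xi$ (under the identification of $\Nu$ with $\ov\Nu = C\Mu C^{-1}$), and the cancellation $C_\#^{-1}\Omega C_\sharp = \Omega$ from Lemma 14. The only difference is that you make explicit the restriction isomorphism $\Beta$ and verify the identity $\Beta_\#\mathcal{O}=\Omega_{\ov\Nu}\Xi$, which the paper simply asserts.
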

\begin{proof}
Let $\eta \in \mathrm{Hom}_f(\Delta, \mathrm{Aff}(\Mu))$.  
By Theorem 14, there exists $C \in \mathrm{GL}(m,\realnos)$ such that $C\Mu C^{-1}$ is an $m$-space group 
and 
$C_\#(\Omega\eta(\Delta))\subseteq \mathrm{Out}_E(\hat C\Mu C^{-1}).$
Then $C_\sharp\eta(\Delta) \subseteq \mathrm{Isom}(E^m/C\Mu C^{-1})$. 
Extend $C\Mu C^{-1}$ to a subgroup $\Nu$ of $\mathrm{Isom}(E^n)$ such that the point group of $\Nu$ 
acts trivially on $(E^m)^\perp$. 
By Theorem 18, there exists an $n$-space group $\Gamma$ containing $\Nu$ as a complete normal subgroup 
such that $\Gamma' = \Delta$ and if $\Xi: \Gamma/\Nu \to \mathrm{Isom}(E^m/\Nu)$ is the homomorphism 
induced by the action of $\Gamma/\Nu$ on $E^m/\Nu$, then $\Xi = C_\sharp\eta\Rho$ 
where $\Rho:\Gamma/\Nu \to \Gamma'$ is the isomorphism defined by $\Rho(\Nu\gamma) = \gamma'$. 
Then $\psi([\eta]) = [\Gamma,\Nu]$. 

The homomorphism $\mathcal{O}: \Gamma/\Nu \to \mathrm{Out}_E(\Nu)$ induced by the action of $\Gamma$ on $\Nu$ by 
conjugation is given by $\mathcal{O} = \Omega C_\sharp \eta \Rho$.  Now we have that 
$$\omega([\Gamma,\Nu]) = [C_\#^{-1}\mathcal{O}\Rho^{-1}] = [C_\#^{-1}(\Omega C_\sharp \eta\Rho)\Rho^{-1}] 
= [\Omega\eta] = \chi([\eta]).$$
Therefore $\chi = \omega\psi$. 
\end{proof}

\begin{corollary} 
If $\Mu$ has trivial center, then the function $\omega:\mathrm{Iso}(\Delta,\Mu) \to \mathrm{Out}(\Delta,\Mu)$ is a bijection. 
\end{corollary}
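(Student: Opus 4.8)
The plan is to exploit the factorization $\chi=\omega\psi$ established in Lemma 18, together with the fact that $\psi:\mathrm{Aff}(\Delta,\Mu)\to\mathrm{Iso}(\Delta,\Mu)$ is a bijection under the trivial–center hypothesis by Theorem 20. Since $\psi$ is invertible, $\omega=\chi\psi^{-1}$, so it suffices to prove that $\chi:\mathrm{Aff}(\Delta,\Mu)\to\mathrm{Out}(\Delta,\Mu)$ is a bijection. The whole argument then reduces to understanding the map $\Omega:\mathrm{Aff}(\Mu)\to\mathrm{Out}(\Mu)$ that underlies $\chi$.

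First I would show that $\Omega:\mathrm{Aff}(\Mu)\to\mathrm{Out}(\Mu)$ is an isomorphism. Since $\Mu$ is an $m$-space group with $Z(\Mu)=\{I\}$, Corollary 5 gives that $\mathrm{Isom}(E^m/\Mu)$ is finite, so the identity component $K$ of $\mathrm{Isom}(E^m/\Mu)$ is trivial. By Theorem 13 applied to $\Mu$, the homomorphism $\Omega$ is an epimorphism with kernel $K=\{I\}$, and hence is an isomorphism. Consequently composition with $\Omega$ gives a bijection $\eta\mapsto\Omega\eta$ from $\mathrm{Hom}(\Delta,\mathrm{Aff}(\Mu))$ onto $\mathrm{Hom}(\Delta,\mathrm{Out}(\Mu))$; and by the very definition of $\mathrm{Hom}_f(\Delta,\mathrm{Aff}(\Mu))$ as those $\eta$ whose composition with $\Omega$ has finite image, this bijection restricts to a bijection of $\mathrm{Hom}_f(\Delta,\mathrm{Aff}(\Mu))$ onto $\mathrm{Hom}_f(\Delta,\mathrm{Out}(\Mu))$.

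Next I would check that this bijection on $\mathrm{Hom}_f$ intertwines the two pairs of group actions, so that it descends to the iterated quotients defining $\mathrm{Aff}(\Delta,\Mu)$ and $\mathrm{Out}(\Delta,\Mu)$. For the left action, if $\alpha\in\mathrm{Aff}(\Mu)$ then $\Omega(\alpha_\ast\eta)=\Omega(\alpha)_\ast(\Omega\eta)$ because $\Omega$ is a homomorphism; since $\Omega$ carries $\mathrm{Aff}(\Mu)$ isomorphically onto $\mathrm{Out}(\Mu)$, the $\mathrm{Aff}(\Mu)$-orbits correspond bijectively to the $\mathrm{Out}(\Mu)$-orbits. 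For the right action, $\Omega(\eta\zeta)=(\Omega\eta)\zeta$ for every $\zeta\in\mathrm{Aut}(\Delta)$, so the map commutes with precomposition by $\mathrm{Out}(\Delta)$. Therefore the bijection passes to a bijection $\chi:\mathrm{Aff}(\Delta,\Mu)\to\mathrm{Out}(\Delta,\Mu)$. Combining this with Theorem 20 and Lemma 18 yields $\omega=\chi\psi^{-1}$ as a composite of bijections, proving $\omega$ is a bijection.

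The only genuine content is the observation that a trivial center forces $\Omega$ to be an isomorphism via Corollary 5 and Theorem 13; everything after that is the bookkeeping of transporting a group isomorphism through the two parallel orbit-space constructions. The main obstacle, such as it is, is merely to organize the compatibility check so that no well-definedness is invoked twice, and these verifications simply mirror the action computations already carried out in defining $\mathrm{Aff}(\Delta,\Mu)$ and $\mathrm{Out}(\Delta,\Mu)$; there are no analytic or combinatorial difficulties.
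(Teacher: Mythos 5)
Your proposal is correct and follows essentially the same route as the paper: establish that $\Omega:\mathrm{Aff}(\Mu)\to\mathrm{Out}(\Mu)$ is an isomorphism when $Z(\Mu)$ is trivial (the paper cites Theorems 11--13, you cite Corollary 5 and Theorem 13, which amounts to the same fact), conclude that $\chi$ is a bijection, and combine this with $\psi$ being a bijection (Theorem 20) and the factorization $\chi=\omega\psi$ (Lemma 18). The only difference is that you spell out the descent of the bijection through the two orbit-space constructions, which the paper leaves implicit.
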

\begin{proof}
We have that $\Omega: \mathrm{Aff}(\Mu) \to \mathrm{Out}(\Mu)$ is an isomorphism by Theorems 11, 12, and 13. 
Hence $\chi:\mathrm{Aff}(\Delta,\Mu) \to \mathrm{Out}(\Delta,\Mu)$ is a bijection. 
We have that $\psi:\mathrm{Aff}(\Delta,\Mu) \to \mathrm{Iso}(\Delta,\Mu)$ is a bijection by Theorem 20. 
By Lemma 18, we have that $\chi = \omega\psi$. 
Therefore $\omega$ is a bijection. 
\end{proof}

\begin{theorem}  
If $\Nu$ is a complete normal subgroup of an $n$-space group $\Gamma$ 
such that $\Nu$ has trivial center, 
then the orthogonal dual $\Nu^\perp$ of $\Nu$ exists 
and $\Nu^\perp$  is the centralizer of $\Nu$ in $\Gamma$. 
\end{theorem}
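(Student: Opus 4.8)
The plan is to take as the candidate for $\Nu^\perp$ the kernel $\Kappa$ of the action of $\Gamma$ on $V = \mathrm{Span}(\Nu)$, and to establish two facts: that $\Kappa = \Nu^\perp$ (so that the orthogonal dual exists) and that $\Kappa$ equals the centralizer $C_\Gamma(\Nu)$ of $\Nu$ in $\Gamma$. One half of the centralizer claim comes for free: the discussion preceding Theorem 3 records that $\Nu\cap\Kappa = \{I\}$ and that every element of $\Nu$ commutes with every element of $\Kappa$, so $\Kappa \subseteq C_\Gamma(\Nu)$. Thus the two substantive tasks are the existence of the orthogonal dual and the reverse containment $C_\Gamma(\Nu) \subseteq \Kappa$.

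For existence, I would pass to the restricted space group $\ov\Nu$ acting on $V$. Since $\Beta:\Nu\to\ov\Nu$ is an isomorphism, $\ov\Nu$ has trivial center, and since $V/\ov\Nu = V/\Nu$, Corollary 5 shows that the isometry group of $V/\Nu$ is finite. Corollary 2 then gives at once that $\Nu$ has an orthogonal dual, so $\Kappa = \Nu^\perp$ and $\mathrm{Span}(\Kappa) = V^\perp$.

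The heart of the argument is the reverse containment, and the key device is the epimorphism $\Beta:\Gamma\to\ov\Gamma$. Given $\gamma = b+B \in C_\Gamma(\Nu)$, applying $\Beta$ shows that $\ov\gamma = \ov b + \ov B$ centralizes $\ov\Nu$ inside $\mathrm{Isom}(V)$. Here I invoke Lemma 10 applied to the space group $\ov\Nu$ acting on the Euclidean space $V$: its centralizer in $\mathrm{Isom}(V)$ equals $\{v + I : v \in \mathrm{Span}(Z(\ov\Nu))\}$, which collapses to $\{I\}$ because $Z(\ov\Nu)$ is trivial. Hence $\ov\gamma = I$ on $V$, which forces $\ov b = 0$ and $\ov B = I$ on $V$; equivalently $b \in V^\perp$ and $V\subseteq\mathrm{Fix}(B)$. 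By the definition of $\Kappa$ this says exactly that $\gamma \in \Kappa$. Combining the two containments yields $C_\Gamma(\Nu) = \Kappa = \Nu^\perp$.

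The step I expect to require the most care is the reduction through $\Beta$: one must check that $\ov\gamma$ genuinely lands in the \emph{full} isometry centralizer of $\ov\Nu$ (and not merely in $\ov\Gamma$), so that Lemma 10's computation of $C_{\mathrm{Isom}(V)}(\ov\Nu)$ is the relevant one, and that the vanishing of $Z(\ov\Nu)$ — inherited from $Z(\Nu) = \{I\}$ through the isomorphism $\Beta$ — is precisely what makes that centralizer trivial. Once this is in place, the final identifications ($\ov b = 0 \Leftrightarrow b \in V^\perp$ and $\ov B = I \Leftrightarrow V \subseteq \mathrm{Fix}(B)$) are immediate from the orthogonal splitting $E^n = V \times V^\perp$.
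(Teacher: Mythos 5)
Your proposal is correct, and the existence half (Corollary 5 applied to $\ov\Nu$ acting on $V$, then Corollary 2) together with the easy containment $\Kappa\subseteq C_\Gamma(\Nu)$ coincides with what the paper does. Where you genuinely diverge is in the containment $C_\Gamma(\Nu)\subseteq\Kappa$. The paper argues directly: for $b+B\in C_\Gamma(\Nu)$ and $a+A\in\Nu$ it expands $(b+B)(a+A)(b+B)^{-1}=a+A$ to get $BAB^{-1}=A$ and $a=Ba+(I-A)b$; the case $A=I$ yields $V\subseteq\mathrm{Fix}(B)$, the general case yields $(I-A)b=0$, and writing $b=\ov b+b'$ it concludes $\ov b\in\mathrm{Fix}(A)$ for every $A$ in the point group of $\Nu$, whence $\ov b=0$ by Lemma 9 and $Z(\Nu)=\{I\}$. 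You instead push everything down to $V$ through the homomorphism $\Beta:\Gamma\to\ov\Gamma$, observe that $\ov\gamma$ lies in the centralizer of the space group $\ov\Nu$ in the full group $\mathrm{Isom}(V)$, and quote Lemma 10 to see that this centralizer is $\{v+I:v\in\mathrm{Span}(Z(\ov\Nu))\}=\{\ov I\}$. This is valid: $\Beta$ is a homomorphism, $\Beta|_{\Nu}$ is onto $\ov\Nu$, and it is essential (and correctly handled by you) that Lemma 10 computes the centralizer in all of $\mathrm{Isom}(V)$, since $\ov\Gamma$ need not be discrete. The trade-off is that your route is shorter and more conceptual but leans on Lemma 10, whose relevant part is itself proved by the same translation-conjugation computation plus Lemma 9 that the paper carries out by hand here; the paper's version is more self-contained and, as a by-product of the explicit identity $(I-A)b=0$, makes visible exactly where completeness of $\Nu$ (via Theorem 1) and triviality of $Z(\Nu)$ enter. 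Both arguments deliver the same conclusion.
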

\begin{proof}
The orthogonal dual $\Nu^\perp$ of $\Nu$ exists by Corollaries 2 and 5.
Let $C_\Gamma(\Nu)$ be the centralizer of $\Nu$ in $\Gamma$. 
In general $\Nu^\perp \subseteq C_\Gamma(\Nu)$,  
since $\Nu^\perp$ is the kernel of the action of $\Gamma$ on $V = \mathrm{Span}(\Nu)$. 

Suppose $a+A \in \Nu$ and $b+B \in C_\Gamma(\Nu)$. 
Then we have that 
$$a+A = (b+B)(a+A)(b+B)^{-1} = Ba + (I-BAB^{-1})b + BAB^{-1}.$$
Hence $BAB^{-1} = A$ and $a = Ba + (I-A)b$. 
If $A = I$, then $Ba = a$, and so $V \subseteq \mathrm{Fix}(B)$. 
For arbitrary $a+A \in \Nu$, we have that $a \in V$ by Theorem 1. 
Hence $(I-A)b = 0$. 

Write $b = \ov b + b'$ with $\ov B \in V$ and $b' \in V^\perp$. 
Then $(I-A)\ov b = (I-A)b = 0$, since $V^\perp \subseteq \mathrm{Fix}(A)$ by Theorem 1. 
Hence $\ov b \in \mathrm{Fix}(A)$ for all $A$ in the point group of $\Nu$. 
Therefore $\ov b = 0$ by Lemma 9, since $Z(\Nu) = \{\ov I\}$. 
Hence $b+B$ is in the kernel $\Nu^\perp$ of the action of $\Gamma$ on $V$. 
Therefore $\Nu^\perp = C_\Gamma(\Nu)$. 
\end{proof}

\begin{corollary} 
Let $\Nu_i$ be a complete normal subgroup of an $n$-space group $\Gamma_i$ for $i=1,2$ 
such that the center of $\Nu_1$ is trivial. 
If $(\Gamma_1,\Nu_1)$ is isomorphic to $(\Gamma_2,\Nu_2)$, 
then $(\Gamma_1,\Nu_1^\perp)$ is isomorphic to $(\Gamma_2,\Nu_2^\perp)$.
\end{corollary}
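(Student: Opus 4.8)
The plan is to reduce the statement to the fact, established in Theorem 21, that the orthogonal dual is characterized purely algebraically as a centralizer, and then to observe that centralizers are preserved by any abstract isomorphism of pairs. The geometric definition of $\Nu_i^\perp$ (as the kernel of the action of $\Gamma_i$ on $\mathrm{Span}(\Nu_i)$) is not manifestly an isomorphism invariant, so the whole point is to replace it by the centralizer description before invoking the isomorphism.

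First I would unpack the hypothesis: $(\Gamma_1,\Nu_1)$ isomorphic to $(\Gamma_2,\Nu_2)$ means there is an isomorphism $\phi:\Gamma_1\to\Gamma_2$ with $\phi(\Nu_1)=\Nu_2$. Restricting $\phi$ gives an isomorphism $\Nu_1\to\Nu_2$, so since $Z(\Nu_1)=\{I\}$ by hypothesis, we also get $Z(\Nu_2)=\{I\}$. Thus both $\Nu_1$ and $\Nu_2$ are complete normal subgroups with trivial center, and Theorem 21 applies to each: the orthogonal duals $\Nu_1^\perp$ and $\Nu_2^\perp$ exist and satisfy
$$\Nu_1^\perp = C_{\Gamma_1}(\Nu_1), \qquad \Nu_2^\perp = C_{\Gamma_2}(\Nu_2).$$

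Next I would use the elementary fact that an isomorphism carries centralizers to centralizers. For $\gamma\in\Gamma_1$ we have $\gamma\in C_{\Gamma_1}(\Nu_1)$ if and only if $\gamma$ commutes with every element of $\Nu_1$, which (applying $\phi$) holds if and only if $\phi(\gamma)$ commutes with every element of $\phi(\Nu_1)=\Nu_2$, i.e.\ $\phi(\gamma)\in C_{\Gamma_2}(\Nu_2)$. Hence $\phi\big(C_{\Gamma_1}(\Nu_1)\big)=C_{\Gamma_2}(\Nu_2)$, and combining with the centralizer description of the duals gives $\phi(\Nu_1^\perp)=\Nu_2^\perp$. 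Therefore $\phi:\Gamma_1\to\Gamma_2$ is an isomorphism carrying $\Nu_1^\perp$ onto $\Nu_2^\perp$, which is exactly the statement that $(\Gamma_1,\Nu_1^\perp)$ is isomorphic to $(\Gamma_2,\Nu_2^\perp)$.

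The only genuinely nontrivial ingredient is Theorem 21, and the main conceptual step is recognizing that it is precisely what converts the geometric dual into an algebraic invariant; everything after that is routine. One point to state carefully is that the trivial-center hypothesis is needed on \emph{both} sides so that Theorem 21 may be applied to $\Nu_2$ as well, which is why I would record at the outset that $Z(\Nu_2)$ is forced to be trivial by the restricted isomorphism. No further estimates or case analysis are required.
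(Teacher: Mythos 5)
Your proof is correct and is exactly the argument the paper intends: the corollary is stated immediately after Theorem 21 precisely because the centralizer characterization $\Nu_i^\perp = C_{\Gamma_i}(\Nu_i)$ turns the geometric dual into an algebraic invariant preserved by any isomorphism of pairs. Your explicit remark that $Z(\Nu_2)$ is forced to be trivial (so Theorem 21 applies on both sides) is a detail the paper leaves implicit but is worth recording.
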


\noindent{\bf Remark 8.}  Corollary 11 is not true if $\Nu_1$ is infinite cyclic and $n = 3$. 
The Seifert fibrations $({\ast}{:}{\times})$ and  $(2{\bar\ast}2{:}2)$, with IT numbers 9 and 15, respectively, 
in Table 1 of \cite{C-T}, with orthogonal dual Klein bottle fibers, 
were replaced by two different affine equivalent Seifert fibrations in Table 1 of \cite{R-T}
with orthogonally dual torus fibers.

 \section{The affine classification of co-Seifert geometric fibrations}  

In this section, we describe the classification of the co-Seifert geometric fibrations 
of compact, connected, flat $n$-orbifolds up to affine equivalence.  
By Theorem 10 of \cite{R-T} this is equivalent to classifying pairs $(\Gamma, \Nu)$, 
consisting of an $n$-space group $\Gamma$ and an $(n-1)$-dimensional, complete, normal subgroup $\Nu$, 
up to  isomorphism.  

Let $\Nu$ be an $(n-1)$-dimensional, complete, normal subgroup of an $n$-space group $\Gamma$. 
Then the quotient group $\Gamma/\Nu$ is either an infinite cyclic group or an infinite dihedral group. 
Assume first that $\Gamma/\Nu$ is an infinite cyclic group.  
Let $\gamma$ be an element of $\Gamma$ such that $\gamma\Nu$ is a generator of $\Gamma/\Nu$. 
Then $\Gamma$ is an HNN extension with base $\Nu$, stable letter $\gamma$, and automorphism $\gamma_\ast$ of $\Nu$ 
defined by $\gamma_\ast(\nu)= \gamma\nu\gamma^{-1}$ for each $\nu$ in $\Nu$.
If $\mu \in \Nu$, then $\gamma\mu\Nu = \gamma\Nu$ and $(\gamma\mu)_\ast = \gamma_\ast\mu_\ast$. 
Hence, the generator $\gamma\Nu$ of $\Gamma/\Nu$ determines a 
unique element $\gamma_\ast\mathrm{Inn}(N)$ of $\mathrm{Out}_E(\Nu)$. 
The other generator $\gamma^{-1}\Nu$ of $\Gamma/\Nu$ 
determines the element $\gamma_\ast^{-1}\mathrm{Inn}(N)$ of $\mathrm{Out}_E(\Nu)$. 
Hence, the pair $(\Gamma, \Nu)$ determines the pair of inverse elements 
$\{\gamma_\ast\mathrm{Inn}(N), \gamma_\ast^{-1}\mathrm{Inn}(N)\}$ of $\mathrm{Out}_E(\Nu)$. 
As usual $(x,y)$ denotes an ordered pair whereas $\{x,y\}$ denotes an unordered pair. 

\begin{lemma} 
Let $\Nu_i$ be a complete normal subgroup of an $n$-space group $\Gamma_i$ 
such that $\Gamma_i/\Nu_i$ is infinite cyclic for $i =1,2$.  
Let $\gamma_i\in \Gamma_i$ be such that $\gamma_i\Nu_i$ generates $\Gamma_i/\Nu_i$ for $i =1,2$. 
Then an isomorphism $\alpha: \Nu_1 \to \Nu_2$ extends to an isomorphism $\phi:\Gamma_1 \to \Gamma_2$ 
if and only if $\alpha(\gamma_1)_\ast\alpha^{-1} \mathrm{Inn}(\Nu_2) = (\gamma_2^{\pm 1})_\ast\mathrm{Inn}(\Nu_2)$. 
\end{lemma}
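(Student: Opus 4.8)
The plan is to exploit the fact that, because $\Gamma_i/\Nu_i$ is infinite cyclic and hence free, the extension $1\to\Nu_i\to\Gamma_i\to\Gamma_i/\Nu_i\to 1$ splits. The chosen element $\gamma_i$ maps to a generator of $\Gamma_i/\Nu_i$, so $\langle\gamma_i\rangle$ is an infinite cyclic complement and $\Gamma_i=\Nu_i\rtimes\langle\gamma_i\rangle$. Writing $\theta_i=(\gamma_i)_\ast$ for the conjugation automorphism of $\Nu_i$, this yields the standard universal property: a homomorphism out of $\Gamma_i$ is determined by its restriction $f$ to $\Nu_i$ together with the image $g$ of $\gamma_i$, subject only to the compatibility relation $g\,f(x)\,g^{-1}=f(\theta_i(x))$ for all $x\in\Nu_i$. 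I would use this in both directions.

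For the forward implication, assume $\phi:\Gamma_1\to\Gamma_2$ extends $\alpha$, so $\phi|_{\Nu_1}=\alpha$ and $\phi(\Nu_1)=\Nu_2$. Then $\phi$ induces an isomorphism $\ov\phi:\Gamma_1/\Nu_1\to\Gamma_2/\Nu_2$ carrying the generator $\gamma_1\Nu_1$ to a generator of $\Gamma_2/\Nu_2$. Since the automorphisms of $\integers$ are $\pm 1$, we get $\phi(\gamma_1)=\nu\gamma_2^{\epsilon}$ for some $\epsilon\in\{\pm 1\}$ and $\nu\in\Nu_2$. Applying $\phi$ to the identity $\gamma_1 x\gamma_1^{-1}=\theta_1(x)$ and simplifying the right-hand side via $\gamma_2^{\epsilon}\alpha(x)\gamma_2^{-\epsilon}=\theta_2^{\epsilon}(\alpha(x))$ gives $\alpha\theta_1=\nu_\ast\theta_2^{\epsilon}\alpha$, hence $\alpha(\gamma_1)_\ast\alpha^{-1}=\nu_\ast\theta_2^{\epsilon}$. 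Reducing modulo $\mathrm{Inn}(\Nu_2)$ yields $\alpha(\gamma_1)_\ast\alpha^{-1}\mathrm{Inn}(\Nu_2)=(\gamma_2^{\epsilon})_\ast\mathrm{Inn}(\Nu_2)$, as required.

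For the converse, suppose the displayed congruence holds for some sign $\epsilon$, and choose $\mu\in\Nu_2$ with $\alpha(\gamma_1)_\ast\alpha^{-1}=\mu_\ast\theta_2^{\epsilon}$. I would then define $\phi$ by $\phi|_{\Nu_1}=\alpha$ and $\phi(\gamma_1)=\mu\gamma_2^{\epsilon}$, and verify the compatibility relation $\phi(\gamma_1)\alpha(x)\phi(\gamma_1)^{-1}=\mu_\ast\theta_2^{\epsilon}(\alpha(x))=\alpha(\theta_1(x))$, which is precisely the choice of $\mu$. By the universal property, $\phi$ is a well-defined homomorphism $\Gamma_1\to\Gamma_2$ extending $\alpha$. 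To see it is an isomorphism, note $\phi|_{\Nu_1}=\alpha$ is an isomorphism onto $\Nu_2$, while $\phi$ induces $\ov\phi:\Gamma_1/\Nu_1\to\Gamma_2/\Nu_2$ sending $\gamma_1\Nu_1$ to $\gamma_2^{\epsilon}\Nu_2$, an isomorphism of infinite cyclic groups. The five lemma then gives the result; alternatively, a direct argument works, since the image of $\phi$ contains $\Nu_2$ and $\gamma_2^{\epsilon}$ and hence all of $\Gamma_2$, and any kernel element must project trivially to $\Gamma_2/\Nu_2$, forcing it into $\Nu_1$ where $\alpha$ is injective.

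The computations are routine group theory; the only points requiring care are the correct handling of the sign $\epsilon$ arising from $\mathrm{Aut}(\integers)=\{\pm1\}$ and the inner-automorphism twist $\nu_\ast$, together with a clean invocation of the splitting and its universal property to guarantee that a map prescribed on $\Nu_1$ and on $\gamma_1$ actually extends to a homomorphism. I expect the main (minor) obstacle to be verifying well-definedness of $\phi$ in the converse and confirming bijectivity, both of which reduce to the compatibility relation and the five lemma.
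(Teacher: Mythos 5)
Your proposal is correct and follows essentially the same route as the paper: both directions hinge on the decomposition $\Gamma_i=\Nu_i\rtimes\langle\gamma_i\rangle$ (the paper phrases it as an HNN extension with stable letter $\gamma_i$), the forward direction computes $\alpha(\gamma_1)_\ast\alpha^{-1}=(\phi(\gamma_1))_\ast$ with $\phi(\gamma_1)\Nu_2=\gamma_2^{\pm1}\Nu_2$, and the converse chooses $\mu$ with $\alpha(\gamma_1)_\ast\alpha^{-1}=(\gamma_2^{\epsilon}\mu)_\ast$ and sends $\nu\gamma_1^{k}\mapsto\alpha(\nu)(\gamma_2^{\epsilon}\mu)^{k}$. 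The only cosmetic difference is that you invoke the universal property of the semidirect product where the paper verifies the homomorphism identity by direct computation, and you carry the sign $\epsilon$ throughout where the paper normalizes to $\epsilon=+1$ by replacing $\gamma_2$ with $\gamma_2^{-1}$.
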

\begin{proof}
Suppose $\alpha$ extends to an isomorphism $\phi:\Gamma_1 \to \Gamma_2$. 
Then $\phi(\gamma_1)$ is an element of $\Gamma_2$ such that $\phi(\gamma_1)\Nu_2$ generates $\Gamma_2/\Nu_2$. 
Hence $\phi(\gamma_1)\Nu_2 = \gamma^{\pm 1}_2\Nu_2$. 
If $\nu \in \Nu_2$, then 
$$(\alpha(\gamma_1)_\ast\alpha^{-1})(\nu) =\alpha(\gamma_1\alpha^{-1}(\nu)\gamma_1^{-1}) 
= \phi(\gamma_1)\nu\phi(\gamma_1)^{-1}.$$
Hence $\alpha(\gamma_1)_\ast\alpha^{-1} = (\phi(\gamma_1))_\ast$, and  
so $\alpha(\gamma_1)_\ast\alpha^{-1} \mathrm{Inn}(\Nu_2) = (\gamma_2^{\pm 1})_\ast\mathrm{Inn}(\Nu_2)$. 

Conversely, suppose that 
$\alpha(\gamma_1)_\ast\alpha^{-1} \mathrm{Inn}(\Nu_2) = (\gamma_2^{\pm 1})_\ast\mathrm{Inn}(\Nu_2)$. 
By replacing $\gamma_2$ by $\gamma_2^{-1}$, if necessary, we may assume that 
$\alpha(\gamma_1)_\ast\alpha^{-1}\mathrm{Inn}(\Nu_2) = (\gamma_2)_\ast\mathrm{Inn}(\Nu_2)$. 
Then there exists $\mu \in \Nu_2$ such that $\alpha(\gamma_1)_\ast\alpha^{-1}= (\gamma_2)_\ast\mu_\ast$. 
Hence $\alpha(\gamma_1)_\ast\alpha^{-1} = (\gamma_2\mu)_\ast$. 
Define $\phi:\Gamma_1\to \Gamma_2$ by 
$$\phi(\nu\gamma_1^k) = \alpha(\nu)(\gamma_2\mu)^k$$
for each $\nu\in\Nu_1$ and $k\in\integers$. 
If $\nu, \lambda \in \Nu_1$ and $k,\ell \in \integers$, 
then we have 
\begin{eqnarray*}
\phi(\nu\gamma_1^k\lambda\gamma_1^\ell) 
& = & \phi(\nu\gamma_1^k\lambda\gamma_1^{-k}\gamma_1^{k+\ell}) \\
& = & \phi(\nu(\gamma_1)_\ast^k(\lambda)\gamma_1^{k+\ell}) \\
& = & \alpha(\nu(\gamma_1)_\ast^k(\lambda))(\gamma_2\mu)^{k+\ell} \\
& = & \alpha(\nu)\alpha(\gamma_1)_\ast^k\alpha^{-1}\alpha(\lambda)(\gamma_2\mu)^{k+\ell} \\
& = & \alpha(\nu)(\alpha(\gamma_1)_\ast\alpha^{-1})^k\alpha(\lambda)(\gamma_2\mu)^{k+\ell} \\
& = & \alpha(\nu)(\gamma_2\mu)_\ast^k\alpha(\lambda)(\gamma_2\mu)^{k+\ell} \\
& = & \alpha(\nu)(\gamma_2\mu)^k\alpha(\lambda)(\gamma_2\mu)^{-k}(\gamma_2\mu)^{k+\ell} \\
& = & \alpha(\nu)(\gamma_2\mu)^k\alpha(\lambda)(\gamma_2\mu)^{\ell} 
\ \ = \ \ \phi(\nu\gamma_1^k)\phi(\lambda\gamma_1^\ell).
\end{eqnarray*}
Hence $\phi$ is a homomorphism;  
moreover $\phi$ is an isomorphism, since $\phi$ restricts to $\alpha$ on $\Nu_1$ and 
$\phi(\gamma_1) = \gamma_2\mu$. 
\end{proof}

\begin{theorem} 
If $\Delta$ is an infinite cyclic $1$-space group and $\Mu$ is an $(n-1)$-space group, 
then the function $\omega: \mathrm{Iso}(\Delta,\Mu) \to \mathrm{Out}(\Delta,\Mu)$ is a bijection.
\end{theorem}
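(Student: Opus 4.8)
The plan is to first make $\mathrm{Out}(\Delta,\Mu)$ completely explicit using the fact that $\Delta$ is infinite cyclic, and then to check surjectivity and injectivity of $\omega$ separately. Since $\Delta$ is infinite cyclic, a homomorphism in $\mathrm{Hom}_f(\Delta,\mathrm{Out}(\Mu))$ is determined by the image $g$ of a fixed generator $t$ of $\Delta$, and it lies in $\mathrm{Hom}_f$ exactly when $g$ has finite order. As $\mathrm{Inn}(\Delta) = \{1\}$ and $\mathrm{Out}(\Delta) = \mathrm{Aut}(\Delta)$ is generated by the inversion $t\mapsto t^{-1}$, the set $\mathrm{Out}(\Delta,\Mu)$ is identified with the set of finite-order elements of $\mathrm{Out}(\Mu)$ modulo the equivalence $g\sim hg^{\pm 1}h^{-1}$ for $h\in\mathrm{Out}(\Mu)$. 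Under $\omega$, a pair $(\Gamma,\Nu)$ — with a chosen generator $\gamma\Nu$ of $\Gamma/\Nu$ and isomorphisms $\alpha:\Nu\to\Mu$ and $\beta:\Delta\to\Gamma/\Nu$ satisfying $\beta(t)=\gamma\Nu$ — is sent to the class of $g=\alpha\gamma_\ast\alpha^{-1}\mathrm{Inn}(\Mu)$, where $\gamma_\ast(\nu)=\gamma\nu\gamma^{-1}$.

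For surjectivity I would invoke the factorization $\chi=\omega\psi$ of Lemma 18, so that it suffices to prove $\chi:\mathrm{Aff}(\Delta,\Mu)\to\mathrm{Out}(\Delta,\Mu)$ is onto. Given $\overline{\eta}\in\mathrm{Hom}_f(\Delta,\mathrm{Out}(\Mu))$, since $\Delta$ is free of rank one and $\Omega:\mathrm{Aff}(\Mu)\to\mathrm{Out}(\Mu)$ is an epimorphism by Theorem 13, I would pick $a\in\mathrm{Aff}(\Mu)$ with $\Omega(a)=\overline{\eta}(t)$ and define $\eta:\Delta\to\mathrm{Aff}(\Mu)$ by $\eta(t)=a$. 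Then $\Omega\eta=\overline{\eta}$ has finite image, so $\eta\in\mathrm{Hom}_f(\Delta,\mathrm{Aff}(\Mu))$ and $\chi([\eta])=[\overline{\eta}]$. Thus $\chi$, and hence $\omega$, is surjective. It is precisely the freeness of $\Delta$ that makes this lifting automatic and removes the need for $\Mu$ to have trivial center, in contrast with Corollary 10.

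For injectivity, suppose $\omega([\Gamma_1,\Nu_1])=\omega([\Gamma_2,\Nu_2])$. Choose generators $\gamma_i\Nu_i$ of $\Gamma_i/\Nu_i$ and isomorphisms $\alpha_i:\Nu_i\to\Mu$, and write $g_i=\alpha_i(\gamma_i)_\ast\alpha_i^{-1}\mathrm{Inn}(\Mu)$. The hypothesis provides $h\in\mathrm{Out}(\Mu)$ and $\epsilon\in\{\pm 1\}$ with $g_1=hg_2^{\epsilon}h^{-1}$. I would lift $h$ to $\theta\in\mathrm{Aut}(\Mu)$ and set $\sigma=\alpha_2^{-1}\theta^{-1}\alpha_1:\Nu_1\to\Nu_2$. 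Conjugating into $\mathrm{Out}(\Mu)$ by $\alpha_2$ gives $\alpha_2\sigma(\gamma_1)_\ast\sigma^{-1}\alpha_2^{-1}\mathrm{Inn}(\Mu)=\theta^{-1}\alpha_1(\gamma_1)_\ast\alpha_1^{-1}\theta\,\mathrm{Inn}(\Mu)=h^{-1}g_1h=g_2^{\epsilon}=\alpha_2(\gamma_2^{\epsilon})_\ast\alpha_2^{-1}\mathrm{Inn}(\Mu)$, and applying $(\alpha_2)_\#^{-1}$ yields $\sigma(\gamma_1)_\ast\sigma^{-1}\mathrm{Inn}(\Nu_2)=(\gamma_2^{\pm 1})_\ast\mathrm{Inn}(\Nu_2)$. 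By Lemma 19, $\sigma$ extends to an isomorphism $\phi:\Gamma_1\to\Gamma_2$, and since $\sigma(\Nu_1)=\Nu_2$ we get $\phi(\Nu_1)=\Nu_2$, so $(\Gamma_1,\Nu_1)\cong(\Gamma_2,\Nu_2)$ and $\omega$ is injective.

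The only genuinely delicate point is the bookkeeping in the injectivity step: one must confirm that the two-sided equivalence defining $\mathrm{Out}(\Delta,\Mu)$ (left action by $\mathrm{Out}(\Mu)$, right action by $\mathrm{Out}(\Delta)$) matches exactly the $\pm 1$ ambiguity and the $\mathrm{Inn}(\Nu_2)$-ambiguity appearing in the hypothesis of Lemma 19, and that transporting between $\mathrm{Out}(\Mu)$ and $\mathrm{Out}(\Nu_2)$ through $\alpha_2$ is compatible with both inversion and conjugation. This is routine but should be written out carefully. Everything else is formal once one notices that the infinite cyclicity of $\Delta$ simultaneously trivializes the lifting problem behind surjectivity and reduces an isomorphism of pairs to the single conjugacy equation of Lemma 19.
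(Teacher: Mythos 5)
Your proposal is correct and follows essentially the same route as the paper: surjectivity via the factorization $\chi=\omega\psi$ of Lemma 18 together with the lifting of a generator's image through the epimorphism $\Omega:\mathrm{Aff}(\Mu)\to\mathrm{Out}(\Mu)$ (Theorem 13), and injectivity by transporting the outer-automorphism identity through $\alpha_2$ and invoking Lemma 19 to extend the resulting isomorphism $\Nu_1\to\Nu_2$ to an isomorphism of pairs. The only cosmetic difference is that your $\theta^{-1}$ plays the role of the paper's $\zeta$.
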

\begin{proof} Any homomorphism $\eta:\Delta \to \mathrm{Out}(\Mu)$ lifts to a homomorphism 
$\tilde\eta:\Delta\to\mathrm{Aff}(\Mu)$ such that $\Omega\tilde\eta = \eta$, 
since $\Omega: \mathrm{Aff}(\Mu) \to \mathrm{Out}(\Mu)$ is onto by Theorem 13. 
Hence the function $\chi:\mathrm{Aff}(\Delta,\Mu) \to \mathrm{Out}(\Delta,\Mu)$ is surjective. 
Therefore $\omega$ is surjective, since $\chi = \omega\psi$ by Lemma 18. 

Let $\Nu_i$ be a complete normal subgroup of an $n$-space group $\Gamma_i$ 
with $\Gamma_i/\Nu_i$ infinite cyclic for $i = 1,2$ such that 
$\omega([\Gamma_1,\Nu_1]) = \omega([\Gamma_2,\Nu_2])$.  
Let $\alpha_i: \Nu_i \to \Mu$ be an isomorphism for $i=1,2$.  
Let $\gamma_i \in \Gamma_i$ such that $\Nu_i\gamma_i$ generates $\Gamma_i/\Nu_i$ for each $i=1,2$, 
and let $\delta$ be a generator of $\Delta$. 
Define an isomorphism $\beta_i:\Delta\to \Gamma_i/\Nu_i$ by $\beta_i(\delta) = \gamma_i$ for $i =1,2$. 
Let $\mathcal{O}_i: \Gamma_i/\Nu_i \to \mathrm{Out}_E(\Nu_i)$ be the homomorphism induced by the action 
of $\Gamma_i$ on $\Nu_i$ by conjugation for $i=1,2$. 
Then we have 
$$[(\alpha_1)_\#\mathcal{O}_1\beta_1] = [(\alpha_2)_\#\mathcal{O}_2\beta_2].$$
Hence there is an automorphism $\zeta$ of $\Mu$ 
such that 
$$\{\zeta\alpha_1(\gamma_1^{\pm 1})_\ast\alpha_1^{-1}\zeta^{-1}\,\mathrm{Inn}(\Mu)\} = 
\{\alpha_2(\gamma_2^{\pm 1})_\ast\alpha_2^{-1}\mathrm{Inn}(\Mu)\}.$$
After applying the isomorphism $(\alpha_2^{-1})_\#: \mathrm{Out}(\Mu) \to \mathrm{Out}(\Nu_2)$, we have that 
$$\{\alpha_2^{-1}\zeta\alpha_1(\gamma_1^{\pm 1})_\ast\alpha_1^{-1}\zeta^{-1}\alpha_2\mathrm{Inn}(\Nu_2)\} = 
\{(\gamma_2^{\pm 1})_\ast\mathrm{Inn}(\Nu_2)\}.$$
Hence the isomorphism $\alpha_2^{-1}\zeta\alpha_1: \Nu_1 \to \Nu_2$ extends to an isomorphism 
$\phi:(\Gamma_1, \Nu_1) \to (\Gamma_2,\Nu_2)$ by Lemma 19. 
Therefore $[\Gamma_1,\Nu_1]= [\Gamma_2,\Nu_2]$, and so $\omega$ is injective. 
\end{proof}

\begin{lemma} 
Let $\Mu$ be an $(n-1)$-space group, and let $\Delta$ be an infinite cyclic 1-space group 
with generator $\delta$.  
The set $\mathrm{Out}(\Delta,\Mu)$ is in one-to-one correspondence with the set 
of conjugacy classes of pairs of inverse elements of $\mathrm{Out}(\Mu)$ of finite order. 
The element $[\eta]$ of $\mathrm{Out}(\Delta,\Mu)$ corresponds to the conjugacy class of $\{\eta(\delta), \eta(\delta^{-1})\}$. 
\end{lemma}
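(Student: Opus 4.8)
The plan is to unwind the two group actions defining $\mathrm{Out}(\Delta,\Mu)$ and identify the resulting double quotient explicitly. First I would observe that since $\Delta$ is infinite cyclic with generator $\delta$, a homomorphism $\eta:\Delta\to\mathrm{Out}(\Mu)$ is determined by the single element $\eta(\delta)$, and its image $\langle\eta(\delta)\rangle$ is finite exactly when $\eta(\delta)$ has finite order. Thus evaluation at $\delta$ gives a bijection between $\mathrm{Hom}_f(\Delta,\mathrm{Out}(\Mu))$ and the set $S$ of finite-order elements of $\mathrm{Out}(\Mu)$.

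Next I would transport both actions across this bijection. The left action of $\mathrm{Out}(\Mu)$ is by conjugation, so $(g\eta)(\delta)=g\,\eta(\delta)\,g^{-1}$; under evaluation at $\delta$ this is precisely the conjugation action of $\mathrm{Out}(\Mu)$ on $S$. Hence $\mathrm{Out}(\Mu)\backslash\mathrm{Hom}_f(\Delta,\mathrm{Out}(\Mu))$ is in bijection with the set of conjugacy classes of finite-order elements of $\mathrm{Out}(\Mu)$.

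Then I would compute $\mathrm{Out}(\Delta)$. Since $\Delta\cong\integers$ is abelian, $\mathrm{Inn}(\Delta)$ is trivial and $\mathrm{Out}(\Delta)=\mathrm{Aut}(\Delta)=\{\mathrm{id},\beta\}$, where $\beta(\delta)=\delta^{-1}$; this is the order-two group recorded in Remark 7. The right action of the nontrivial class $\beta\,\mathrm{Inn}(\Delta)$ sends $\eta$ to $\eta\beta$, and $(\eta\beta)(\delta)=\eta(\delta^{-1})=\eta(\delta)^{-1}$. So across evaluation at $\delta$ the generator of $\mathrm{Out}(\Delta)$ acts on $S$ by $g\mapsto g^{-1}$, i.e. by inversion, which descends to inversion on conjugacy classes.

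Finally I would assemble the double quotient. An orbit of the combined conjugation-and-inversion action on $S$ is the set $\{x g x^{-1},\,x g^{-1} x^{-1} : x\in\mathrm{Out}(\Mu)\}$, which is exactly the conjugacy class of the unordered pair $\{g,g^{-1}\}$ of inverse elements of finite order. This yields the claimed bijection, under which $[\eta]$ corresponds to the conjugacy class of $\{\eta(\delta),\eta(\delta^{-1})\}$. None of the steps presents a genuine obstacle; the only point needing a moment's care is confirming that the right $\mathrm{Out}(\Delta)$-action on the already-formed left quotient is well defined and reduces to inversion on conjugacy classes, but this is immediate from the commutativity of the two actions verified in the discussion preceding the definition of $\mathrm{Out}(\Delta,\Mu)$.
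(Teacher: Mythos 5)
Your proposal is correct and follows essentially the same route as the paper: identify $\mathrm{Hom}_f(\Delta,\mathrm{Out}(\Mu))$ with the finite-order elements of $\mathrm{Out}(\Mu)$ via evaluation at $\delta$, note that $\mathrm{Out}(\Delta)$ has order two generated by $\delta\mapsto\delta^{-1}$, and read off the double quotient as conjugacy classes of pairs of inverse elements. Your write-up is just a more detailed unwinding of the same argument.
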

\begin{proof}
The the set $\mathrm{Hom}_f(\Delta,\mathrm{Out}(\Mu))$ is in one-to-one correspondence 
with the set of elements of $\mathrm{Out}(\Mu)$ of finite order via the mapping $\eta\mapsto \eta(\delta)$, 
since $\Delta$ is the free group generated by $\delta$. 
The infinite cyclic group $\Delta$ has a unique automorphism that maps $\delta$ to $\delta^{-1}$ 
and this automorphism represents the generator of the group $\mathrm{Aut}(\Delta) = \mathrm{Out}(\Delta)$ 
of order 2. 
Therefore the set $\mathrm{Out}(\Delta,\Mu)$ is in one-to-one correspondence with the set 
of conjugacy classes of pairs of inverse elements of $\mathrm{Out}(\Mu)$ of finite order via 
the mapping $[\eta] \mapsto [\{\eta(\delta), \eta(\delta^{-1}\}]$. 
\end{proof}

\begin{theorem} 
Let $\Mu$ be an $(n-1)$-space group, and let $\Delta$ be an infinite cyclic 1-space group.  
The set $\mathrm{Iso}(\Delta,\Mu)$ is in one-to-one correspondence with the 
set of conjugacy classes of pairs of inverse elements of $\mathrm{Out}(\Mu)$ of finite order.  
If $[\Gamma,\Nu] \in \mathrm{Iso}(\Delta,\Mu)$ and 
$\alpha: \Nu \to \Mu$ is an isomorphism and $\gamma$ is an element of $\Gamma$ such that $\Nu\gamma$ generates 
$\Gamma/\Nu$, then $[\Gamma, \Nu]$ corresponds to the conjugacy class 
of the pair of inverse elements $\{\alpha\gamma_\ast^{\pm 1}\alpha^{-1}\mathrm{Inn}(\Mu)\}$ of $\mathrm{Out}(\Mu)$.   
\end{theorem}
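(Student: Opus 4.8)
The plan is to realize the final theorem as a direct corollary of the machinery already assembled, by composing the two bijections that have just been established. Specifically, Theorem 21 asserts that $\omega:\mathrm{Iso}(\Delta,\Mu)\to\mathrm{Out}(\Delta,\Mu)$ is a bijection whenever $\Delta$ is infinite cyclic, and Lemma 20 describes $\mathrm{Out}(\Delta,\Mu)$ explicitly as the set of conjugacy classes of pairs of inverse elements of $\mathrm{Out}(\Mu)$ of finite order. So the natural approach is to compose $\omega$ with the identification from Lemma 20 and check that the resulting correspondence sends a pair $(\Gamma,\Nu)$ to the conjugacy class of $\{\alpha\gamma_\ast^{\pm 1}\alpha^{-1}\mathrm{Inn}(\Mu)\}$.

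First I would invoke Theorem 21 to obtain the bijection $\omega:\mathrm{Iso}(\Delta,\Mu)\to\mathrm{Out}(\Delta,\Mu)$, and Lemma 20 to identify $\mathrm{Out}(\Delta,\Mu)$ with the set of conjugacy classes of pairs of inverse finite-order elements of $\mathrm{Out}(\Mu)$. Composing these two bijections immediately yields a one-to-one correspondence between $\mathrm{Iso}(\Delta,\Mu)$ and the desired set of conjugacy classes, which is the first assertion of the theorem. For the explicit description, I would trace an element $[\Gamma,\Nu]$ through the composite. By the definition of $\omega$ preceding Corollary 9, we have $\omega([\Gamma,\Nu]) = [\alpha_\#\mathcal{O}\beta]$ where $\mathcal{O}:\Gamma/\Nu\to\mathrm{Out}_E(\Nu)$ is the conjugation-induced homomorphism, $\alpha:\Nu\to\Mu$ is any isomorphism, and $\beta:\Delta\to\Gamma/\Nu$ is any isomorphism. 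Choosing $\beta$ to send a generator $\delta$ of $\Delta$ to $\Nu\gamma$, the homomorphism $\alpha_\#\mathcal{O}\beta$ sends $\delta$ to $\alpha_\#(\gamma_\ast\mathrm{Inn}(\Nu)) = \alpha\gamma_\ast\alpha^{-1}\mathrm{Inn}(\Mu)$.

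Then, applying the correspondence of Lemma 20, the element $[\alpha_\#\mathcal{O}\beta]$ corresponds to the conjugacy class of the unordered pair $\{(\alpha_\#\mathcal{O}\beta)(\delta),\,(\alpha_\#\mathcal{O}\beta)(\delta^{-1})\}$, which is precisely the conjugacy class of $\{\alpha\gamma_\ast\alpha^{-1}\mathrm{Inn}(\Mu),\,\alpha\gamma_\ast^{-1}\alpha^{-1}\mathrm{Inn}(\Mu)\}$, abbreviated $\{\alpha\gamma_\ast^{\pm1}\alpha^{-1}\mathrm{Inn}(\Mu)\}$. This is exactly the claimed explicit description. I expect the main subtlety — though not a deep obstacle — to be bookkeeping of well-definedness: confirming that the conjugacy class does not depend on the chosen $\alpha$ or on the chosen generator $\gamma\Nu$ (replacing $\gamma$ by $\gamma^{-1}$ swaps the pair, and replacing $\gamma$ by $\mu\gamma$ changes $\gamma_\ast$ by an inner automorphism, both of which are absorbed). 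All of this independence is already guaranteed by the construction of $\omega$ and by the $\mathrm{Out}(\Mu)$-conjugation and $\mathrm{Out}(\Delta)$-action built into $\mathrm{Out}(\Delta,\Mu)$, so the argument reduces to citing Theorem 21 and Lemma 20 and performing this short trace-through rather than any new computation.
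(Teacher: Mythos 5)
Your proof is correct and takes essentially the same route as the paper: invoke the bijectivity of $\omega$ (the paper's Theorem 22 --- your ``Theorem 21'' is an off-by-one reference to the same result) together with Lemma 20, then trace $[\Gamma,\Nu]\mapsto[\alpha_\#\mathcal{O}\beta]\mapsto\{\alpha\gamma_\ast^{\pm 1}\alpha^{-1}\mathrm{Inn}(\Mu)\}$ by choosing $\beta$ to send a generator $\delta$ of $\Delta$ to $\Nu\gamma$. The well-definedness issues you flag are indeed already absorbed into the constructions of $\omega$ and $\mathrm{Out}(\Delta,\Mu)$, exactly as in the paper's proof.
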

\begin{proof}
The function $\omega: \mathrm{Iso}(\Delta,\Mu) \to \mathrm{Out}(\Delta,\Mu)$ is a bijection by Theorem 22. 
Let $[\Gamma,\Nu]\in\mathrm{Iso}(\Delta,\Mu)$, 
and let $\gamma$ be an element of $\Gamma$ such that $\Nu\gamma$ generates $\Gamma/\Nu$. 
Let  $\mathcal{O}: \Gamma/\Nu \to \mathrm{Out}_E(\Nu)$ 
be the homomorphism induced by the action of $\Gamma$ on $\Nu$ by conjugation. 
Let $\alpha: \Nu \to \Mu$ and $\beta:\Delta \to \Gamma/\Nu$ be isomorphisms.  
Then $\omega([\Gamma,\Nu]) = [\alpha_\#\mathcal{O}\beta]$, 
and $ [\alpha_\#\mathcal{O}\beta]$ corresponds to the conjugacy class of the pair of inverse elements 
\begin{eqnarray*}
\{\alpha_\#\mathcal{O}\beta(\beta^{-1}(\Nu\gamma^{\pm 1}))\}  
& = & \{\alpha_\#\mathcal{O}(\Nu\gamma^{\pm 1})\}  \\ 
& = & \{\alpha_\#(\gamma_\ast^{\pm 1}\mathrm{Inn}(\Nu))\}  
\ \ = \ \ \{\alpha\gamma_\ast^{\pm 1}\alpha^{-1}\mathrm{Inn}(\Mu)\}
\end{eqnarray*}
of $\mathrm{Out}(\Mu)$ by Lemma 20. 
\end{proof}

Let $\Delta$ be an infinite dihedral group.  
A set of {\it Coxeter generators} of $\Delta$ is a pair of elements of $\Delta$ of order 2 that generate $\Delta$. 
Any two sets of Coxeter generators of $\Delta$ are conjugate in $\Delta$. 

\begin{lemma} 
Let $\Mu$ be an $(n-1)$-space group, and let $\Delta$ be an infinite dihedral 1-space group 
with Coxeter generators $\delta_1$ and $\delta_2$.  
The set $\mathrm{Out}(\Delta,\Mu)$ is in one-to-one correspondence with the set 
of conjugacy classes of pairs of elements of $\mathrm{Out}(\Mu)$ of order 1 or  2 whose product has finite order. 
The element $[\eta]$ of $\mathrm{Out}(\Delta,\Mu)$ corresponds to the conjugacy class of 
$\{\eta(\delta_1),\eta(\delta_2)\}$. 
\end{lemma}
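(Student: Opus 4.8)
The plan is to follow the pattern of Lemma 20, replacing the free group on one generator by the free product structure of the infinite dihedral group. Writing $\Delta = \langle \delta_1, \delta_2 \mid \delta_1^2 = \delta_2^2 = 1\rangle \cong \integers/2 \ast \integers/2$, the universal property of the free product shows that a homomorphism $\eta: \Delta \to \mathrm{Out}(\Mu)$ is the same datum as the ordered pair $(\eta(\delta_1), \eta(\delta_2))$ of elements of $\mathrm{Out}(\Mu)$, each of order $1$ or $2$ (since $\eta(\delta_i)^2 = \eta(\delta_i^2) = 1$), with no further relation. First I would record this bijection $\eta \mapsto (\eta(\delta_1), \eta(\delta_2))$ between $\mathrm{Hom}(\Delta, \mathrm{Out}(\Mu))$ and the set of such ordered pairs.

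Next I would pin down the finite-image condition. The image of $\eta$ is $\langle \eta(\delta_1), \eta(\delta_2)\rangle$, which contains the cyclic subgroup generated by $\eta(\delta_1 \delta_2) = \eta(\delta_1)\eta(\delta_2)$ with index at most $2$. Hence $\eta$ has finite image if and only if $\eta(\delta_1)\eta(\delta_2)$ has finite order. Thus $\eta \mapsto (\eta(\delta_1), \eta(\delta_2))$ identifies $\mathrm{Hom}_f(\Delta,\mathrm{Out}(\Mu))$ with the set of ordered pairs $(g_1, g_2)$ of elements of $\mathrm{Out}(\Mu)$ of order $1$ or $2$ whose product $g_1 g_2$ has finite order.

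Then I would carry the two quotient operations through this identification. The left conjugation action of $\mathrm{Out}(\Mu)$ sends $\eta$ to $g_\ast \eta$, so on pairs it is the simultaneous conjugation $(g_1, g_2) \mapsto (g g_1 g^{-1}, g g_2 g^{-1})$; consequently $\mathrm{Out}(\Mu)\backslash \mathrm{Hom}_f(\Delta,\mathrm{Out}(\Mu))$ is identified with the set of simultaneous conjugacy classes of such ordered pairs. For the right action I would use that $\mathrm{Inn}(\Delta)$ already acts trivially (established just before the definition of $\mathrm{Out}(\Delta,\Mu)$), so only $\mathrm{Out}(\Delta)$ matters. The crux is to compute $\mathrm{Out}(\Delta)$: with $r = \delta_1\delta_2$ I would show that $\mathrm{Aut}(\Delta)$ consists of the maps $r \mapsto r^{\pm 1}$, $\delta_1 \mapsto r^k \delta_1$, that the inner automorphisms are exactly those with $k$ even, and hence $\mathrm{Out}(\Delta) \cong \integers/2$, generated by the class of the swap automorphism $\sigma$ with $\sigma(\delta_1) = \delta_2$ and $\sigma(\delta_2) = \delta_1$. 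Since $(\eta\circ\sigma)(\delta_1) = \eta(\delta_2)$ and $(\eta\circ\sigma)(\delta_2) = \eta(\delta_1)$, the nontrivial element of $\mathrm{Out}(\Delta)$ acts on conjugacy classes of ordered pairs by the interchange $(g_1, g_2) \mapsto (g_2, g_1)$.

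Taking the final quotient by $\mathrm{Out}(\Delta)$ then merges the conjugacy class of $(g_1, g_2)$ with that of $(g_2, g_1)$, so $\mathrm{Out}(\Delta,\Mu)$ is in bijection with the set of conjugacy classes of unordered pairs $\{g_1, g_2\}$ of elements of order $1$ or $2$ with $g_1 g_2$ of finite order, and unwinding the identifications shows that $[\eta]$ corresponds to the class of $\{\eta(\delta_1), \eta(\delta_2)\}$, as claimed. I expect the main obstacle to be the determination of $\mathrm{Out}(\Delta)$ together with the verification that the swap is a non-inner generator acting by interchange. The cleanest route is to observe that $\delta_1$ and $\delta_2$ lie in the two distinct conjugacy classes of reflections of $\Delta$ (they differ by an odd power of $r$, while conjugation shifts the power of $r$ only by even amounts), that inner automorphisms preserve each reflection class, and that an automorphism preserving both reflection classes is forced to be inner; hence the class-swapping $\sigma$ generates $\mathrm{Out}(\Delta) \cong \integers/2$.
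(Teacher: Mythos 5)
Your proof is correct and takes essentially the same route as the paper's: both use the free-product structure $\Delta \cong \integers/2 \ast \integers/2$ to identify $\mathrm{Hom}_f(\Delta,\mathrm{Out}(\Mu))$ with ordered pairs of elements of order 1 or 2 whose product has finite order, and both reduce the $\mathrm{Out}(\Delta)$-action to the transposition of $\delta_1$ and $\delta_2$, using that $\mathrm{Out}(\Delta)$ has order 2. You merely supply details the paper asserts without proof (the index-2 argument showing finite image is equivalent to $\eta(\delta_1)\eta(\delta_2)$ having finite order, and the computation that the swap automorphism generates $\mathrm{Out}(\Delta)$), and those details are correct.
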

\begin{proof}
The set $\mathrm{Hom}_f(\Delta,\mathrm{Out}(\Mu))$ is in one-to-one correspondence 
with the set of ordered pairs of elements of $\mathrm{Out}(\Mu)$, of order 1 or 2  whose product has finite order, 
via the mapping $\eta\mapsto (\eta(\delta_1),\eta(\delta_2))$, 
since $\Delta$ is the free product of the cyclic groups of order two generated by $\delta_1$ and $\delta_2$. 
The infinite dihedral group $\Delta$ has a unique automorphism that transposes $\delta_1$ and $\delta_2$,  
and this automorphism represents the generator of the group $\mathrm{Out}(\Delta)$ of order 2. 
Therefore, the set $\mathrm{Out}(\Delta,\Mu)$ is in one-to-one correspondence with the set 
of conjugacy classes of unordered pairs of elements of $\mathrm{Out}(\Mu)$, of order 1 or  2 whose product has 
of finite order, via the mapping $[\eta] \mapsto [\{\eta(\delta_1), \eta(\delta_2\}]$. 
\end{proof}

\begin{theorem} 
Let $\Mu$ be an $(n-1)$-space group with trivial center, 
and let $\Delta$ be an infinite dihedral 1-space group.  
The set $\mathrm{Iso}(\Delta,\Mu)$ is in one-to-one correspondence 
with the set of conjugacy classes of pairs of elements of $\mathrm{Out}(\Mu)$ of order 1 or  2 
whose product has finite order. 
If $[\Gamma,\Nu] \in \mathrm{Iso}(\Delta,\Mu)$ and $\alpha: \Nu \to \Mu$ is an isomorphism,  
and $\gamma_1, \gamma_2$ are elements of $\Gamma$ 
such that $\{\Nu\gamma_1, \Nu\gamma_2\}$ is a set of Coxeter generators of $\Gamma/\Nu$, 
then $[\Gamma, \Nu]$ corresponds to the conjugacy class of the pair of elements 
$\{\alpha(\gamma_1)_\ast\alpha^{-1} \mathrm{Inn}(\Mu), \alpha(\gamma_2)_\ast\alpha^{-1}\mathrm{Inn}(\Mu)\}$ 
of $\mathrm{Out}(\Mu)$.   
\end{theorem}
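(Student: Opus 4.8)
The plan is to follow the pattern established in the proof of Theorem 23, substituting the infinite-dihedral bookkeeping of Lemma 21 for the infinite-cyclic bookkeeping of Lemma 20. The one structural simplification is that, because $\Mu$ is assumed to have trivial center, I can invoke Corollary 10 directly to conclude that $\omega:\mathrm{Iso}(\Delta,\Mu)\to\mathrm{Out}(\Delta,\Mu)$ is a bijection, rather than having to reprove a dihedral analogue of Theorem 22. Thus the first step is simply to record that $\omega$ is a bijection by Corollary 10, and the second step is to invoke Lemma 21, which puts $\mathrm{Out}(\Delta,\Mu)$ in one-to-one correspondence with the conjugacy classes of pairs of elements of $\mathrm{Out}(\Mu)$ of order $1$ or $2$ whose product has finite order, via $[\eta]\mapsto[\{\eta(\delta_1),\eta(\delta_2)\}]$. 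Composing these two bijections gives the asserted one-to-one correspondence.

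The remaining work is to make the composite correspondence explicit, and for this I would trace the definition of $\omega$. Given $[\Gamma,\Nu]$, fix an isomorphism $\alpha:\Nu\to\Mu$ and elements $\gamma_1,\gamma_2\in\Gamma$ with $\{\Nu\gamma_1,\Nu\gamma_2\}$ a set of Coxeter generators of $\Gamma/\Nu$. Since any bijection between two sets of Coxeter generators of an infinite dihedral group extends to a unique isomorphism, there is a unique isomorphism $\beta:\Delta\to\Gamma/\Nu$ carrying the Coxeter generators $\delta_1,\delta_2$ of $\Delta$ to $\Nu\gamma_1,\Nu\gamma_2$. With $\mathcal{O}:\Gamma/\Nu\to\mathrm{Out}_E(\Nu)$ the conjugation homomorphism, the definition of $\omega$ gives $\omega([\Gamma,\Nu])=[\alpha_\#\mathcal{O}\beta]$, and then the correspondence of Lemma 21 sends this to the conjugacy class of $\{\alpha_\#\mathcal{O}\beta(\delta_1),\alpha_\#\mathcal{O}\beta(\delta_2)\}$. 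The one-line computation $\alpha_\#\mathcal{O}\beta(\delta_i)=\alpha_\#\mathcal{O}(\Nu\gamma_i)=\alpha_\#((\gamma_i)_\ast\mathrm{Inn}(\Nu))=\alpha(\gamma_i)_\ast\alpha^{-1}\mathrm{Inn}(\Mu)$ then produces exactly the stated pair.

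The steps that need the most care, rather than being genuine obstacles, are the consistency checks. First, I must confirm that the produced pair lands in the correct set: the $\eta(\delta_i)$ have order $1$ or $2$ because the $\delta_i$ are involutions, and their product has finite order because $\mathrm{Out}_E(\Nu)$ is finite by Theorem 12, so the finite-image condition defining $\mathrm{Hom}_f$ is automatic. Second, I must verify that the conjugacy-class ambiguity in Lemma 21 precisely absorbs the freedom in the choices of $\alpha$, of $\gamma_1,\gamma_2$, and of which Coxeter element is labeled $\delta_1$ versus $\delta_2$; this is the same argument already used, prior to the definition of $\omega$, to show that $[\alpha_\#\mathcal{O}\beta]$ is independent of $\alpha$ and $\beta$, now combined with the transposition symmetry built into $\mathrm{Out}(\Delta)$. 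Since all of this is routine once Corollary 10 and Lemma 21 are in hand, I expect no essential difficulty; the content of the theorem is really the translation of the abstract bijection $\omega$ into the concrete language of pairs of involutions in $\mathrm{Out}(\Mu)$.
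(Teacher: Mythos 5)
Your proposal is correct and follows essentially the same route as the paper: the paper likewise cites Corollary 10 for the bijectivity of $\omega$, invokes Lemma 21 for the description of $\mathrm{Out}(\Delta,\Mu)$, and performs the same one-line computation $\alpha_\#\mathcal{O}\beta(\delta_i)=\alpha(\gamma_i)_\ast\alpha^{-1}\mathrm{Inn}(\Mu)$ after choosing $\beta$ so that $\beta(\delta_i)=\Nu\gamma_i$. The consistency checks you flag are exactly the ones the paper disposes of beforehand, in the discussion establishing that $[\alpha_\#\mathcal{O}\beta]$ is independent of the choices of $\alpha$ and $\beta$.
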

\begin{proof}
The function $\omega: \mathrm{Iso}(\Delta,\Mu) \to \mathrm{Out}(\Delta,\Mu)$ is a bijection by Corollary 10. 
Let $[\Gamma,\Nu]\in\mathrm{Iso}(\Delta,\Mu)$, 
and let $\gamma_1,\gamma_2$ be elements of $\Gamma$ such that 
$\{\Nu\gamma_1, \Nu\gamma_2\}$ is a set of Coxeter generators of $\Gamma/\Nu$. 
Let  $\mathcal{O}: \Gamma/\Nu \to \mathrm{Out}_E(\Nu)$ 
be the homomorphism induced by the action of $\Gamma$ on $\Nu$ by conjugation. 
Let $\alpha: \Nu \to \Mu$ and $\beta:\Delta \to \Gamma/\Nu$ be isomorphisms.  
Then $\omega([\Gamma,\Nu]) = [\alpha_\#\mathcal{O}\beta]$, 
and $ [\alpha_\#\mathcal{O}\beta]$ corresponds to the conjugacy class of the pair of elements 
\begin{eqnarray*}
\{\alpha_\#\mathcal{O}\beta\beta^{-1}(\Nu\gamma_1), \alpha_\#\mathcal{O}\beta\beta^{-1}(\Nu\gamma_2)\} 
& =  & \{\alpha_\#\mathcal{O}(\Nu\gamma_1), \alpha_\#\mathcal{O}(\Nu\gamma_2)\}  \\ 
& =  & \{\alpha_\#(\gamma_1)_\ast\mathrm{Inn}(\Nu)), \alpha_\#(\gamma_2)_\ast\mathrm{Inn}(\Nu))\}  \\
& =  & \{\alpha(\gamma_1)_\ast\alpha^{-1}\mathrm{Inn}(\Mu), \alpha(\gamma_2)_\ast\alpha^{-1}\mathrm{Inn}(\Mu)\}
\end{eqnarray*}
of $\mathrm{Out}(\Mu)$ by Lemma 21. 
\end{proof}

Let $\Mu$ be an $(n-1)$-space group, and let $\Delta$ be an infinite dihedral 1-space group. 
When $\Mu$ has nontrivial center, the set $\mathrm{Iso}(\Delta,\Mu)$ is best understood by describing the fibers 
of  the surjection $\psi: \mathrm{Aff}(\Delta,\Mu) \to \mathrm{Iso}(\Delta,\Mu)$.

\begin{lemma} 
Let $\Mu$ be an $(n-1)$-space group, and let $\Delta$ be an infinite dihedral 1-space group 
with Coxeter generators $\delta_1$ and $\delta_2$.  
The set $\mathrm{Aff}(\Delta,\Mu)$ is in one-to-one correspondence with the set 
of conjugacy classes of pairs of elements of $\mathrm{Aff}(\Mu)$ of order 1 or  2 whose product has image 
of finite order under the epimorphism $\Omega: \mathrm{Aff}(\Mu) \to \mathrm{Out}(\Mu)$. 
The element $[\eta]$ of $\mathrm{Aff}(\Delta,\Mu)$ corresponds to the conjugacy class of 
$\{\eta(\delta_1),\eta(\delta_2)\}$. 
\end{lemma}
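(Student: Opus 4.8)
The plan is to follow the template of the proof of Lemma 21, replacing $\mathrm{Out}(\Mu)$ by $\mathrm{Aff}(\Mu)$ throughout and replacing the condition ``finite order'' by ``image of finite order under $\Omega$''. First I would use that $\Delta$, being infinite dihedral with Coxeter generators $\delta_1,\delta_2$, is the free product of the two cyclic groups of order two generated by $\delta_1$ and $\delta_2$. By the universal property of free products, a homomorphism $\eta:\Delta\to\mathrm{Aff}(\Mu)$ is determined by the pair $(\eta(\delta_1),\eta(\delta_2))$, and conversely any ordered pair of elements of $\mathrm{Aff}(\Mu)$ of order $1$ or $2$ extends to a unique homomorphism. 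Thus $\mathrm{Hom}(\Delta,\mathrm{Aff}(\Mu))$ is in bijection with the set of ordered pairs of elements of $\mathrm{Aff}(\Mu)$ of order $1$ or $2$, via $\eta\mapsto(\eta(\delta_1),\eta(\delta_2))$.

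Next I would identify the subset $\mathrm{Hom}_f(\Delta,\mathrm{Aff}(\Mu))$. By definition $\eta$ lies in $\mathrm{Hom}_f$ exactly when $\Omega\eta(\Delta)$ is finite. Since $\Omega\eta(\Delta)$ is generated by the two elements $\Omega\eta(\delta_1),\Omega\eta(\delta_2)$ of order at most $2$, it is a dihedral group, and such a group is finite if and only if the product $\Omega\eta(\delta_1)\Omega\eta(\delta_2)=\Omega(\eta(\delta_1)\eta(\delta_2))$ has finite order. Hence $\eta\in\mathrm{Hom}_f$ if and only if $\eta(\delta_1)\eta(\delta_2)$ has image of finite order under $\Omega$, and the bijection above restricts to one between $\mathrm{Hom}_f(\Delta,\mathrm{Aff}(\Mu))$ and ordered pairs of elements of $\mathrm{Aff}(\Mu)$ of order $1$ or $2$ whose product has image of finite order under $\Omega$.

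Then I would track the two group actions through this bijection. The left action of $\mathrm{Aff}(\Mu)$ by conjugation, $\alpha\eta=\alpha_\ast\eta$, sends $(\eta(\delta_1),\eta(\delta_2))$ to $(\alpha\eta(\delta_1)\alpha^{-1},\alpha\eta(\delta_2)\alpha^{-1})$, i.e.\ it is simultaneous conjugation of the ordered pair; so $\mathrm{Aff}(\Mu)\backslash\mathrm{Hom}_f(\Delta,\mathrm{Aff}(\Mu))$ corresponds to conjugacy classes of such ordered pairs. For the right action of $\mathrm{Out}(\Delta)$, which is well defined because $\mathrm{Inn}(\Delta)$ acts trivially on $\mathrm{Aff}(\Mu)\backslash\mathrm{Hom}_f(\Delta,\mathrm{Aff}(\Mu))$, I would invoke the structural fact established in the proof of Lemma 21 that $\Delta$ has a unique automorphism transposing $\delta_1$ and $\delta_2$ and that its class generates $\mathrm{Out}(\Delta)\cong\integers_2$; precomposition with this automorphism swaps the ordered pair. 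Passing to $\mathrm{Out}(\Delta)$-orbits therefore turns ordered pairs into unordered pairs, giving the asserted correspondence between $\mathrm{Aff}(\Delta,\Mu)$ and conjugacy classes of unordered pairs $\{\eta(\delta_1),\eta(\delta_2)\}$ of elements of order $1$ or $2$ whose product has image of finite order under $\Omega$.

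Finally I should check that the product condition is well defined on unordered pairs and conjugacy classes: for any $a_1,a_2$ the elements $a_1a_2$ and $a_2a_1=a_1^{-1}(a_1a_2)a_1$ are conjugate, and conjugation preserves $\Omega$-order, so the condition does not depend on the ordering of the pair nor on the representative of the conjugacy class. The only substantive step is the dihedral-group dichotomy in the second paragraph, namely that two elements of order at most two generate a finite group precisely when their product has finite order; everything else is bookkeeping identical to Lemma 21. I expect the main obstacle to be purely expository: ensuring the finiteness criterion is stated in terms of finiteness of $\Omega\eta(\Delta)$ in $\mathrm{Out}(\Mu)$ (the actual definition of $\mathrm{Hom}_f$) rather than finiteness of the subgroup of $\mathrm{Aff}(\Mu)$ generated by $\eta(\delta_1),\eta(\delta_2)$, which need not hold since $K\subseteq\mathrm{Aff}(\Mu)$ can be a positive-dimensional torus.
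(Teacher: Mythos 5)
Your proof is correct and follows essentially the same route as the paper's: identify $\mathrm{Hom}_f(\Delta,\mathrm{Aff}(\Mu))$ with ordered pairs of elements of order $1$ or $2$ via the free product structure of $\Delta$, use the dihedral dichotomy to translate the finite-image condition into finiteness of the $\Omega$-order of the product, and quotient by the conjugation action and the order-two $\mathrm{Out}(\Delta)$ to pass to conjugacy classes of unordered pairs. Your write-up is in fact more explicit than the paper's about the dihedral finiteness criterion and the well-definedness checks, but the underlying argument is identical.
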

\begin{proof}
Via the mapping $\eta\mapsto (\eta(\delta_1),\eta(\delta_2))$, 
the set $\mathrm{Hom}_f(\Delta,\mathrm{Aff}(\Mu))$ is in one-to-one correspondence 
with the set of ordered pairs of elements of $\mathrm{Aff}(\Mu)$ of order 1 or 2  whose product has image 
of finite order under the epimorphism $\Omega: \mathrm{Aff}(\Mu) \to \mathrm{Out}(\Mu)$, 
since $\Delta$ is the free product of the cyclic groups of order two generated by $\delta_1$ and $\delta_2$.  
The infinite dihedral group $\Delta$ has a unique automorphism that transposes $\delta_1$ and $\delta_2$,  
and this automorphism represents the generator of the group $\mathrm{Out}(\Delta)$ of order 2. 
Therefore, via the mapping $[\eta] \mapsto [\{\eta(\delta), \eta(\delta^{-1}\}]$,  
the set $\mathrm{Aff}(\Delta,\Mu)$ is in one-to-one correspondence with the set 
of conjugacy classes of unordered pairs of elements of $\mathrm{Aff}(\Mu)$ of order 1 or  2 whose product has image 
of finite order under the epimorphism $\Omega: \mathrm{Aff}(\Mu) \to \mathrm{Out}(\Mu)$. 
\end{proof}

In order to simplify matters, we will assume $\Delta$ is the {\it standard} 
infinite dihedral 1-space group with Coxeter generators the reflections $\delta_1=-I'$ and $\delta_2= 1-I'$ of $E^1$.  
We denote the identity maps of $E^n, E^{n-1}, E^1$ by $I, \ov I, I'$, respectively, 
and we will identify $E^1$ with $(E^{n-1})^\perp$ in $E^n$. 
The next theorem gives an algebraic description of 
the fibers of the surjection  $\psi: \mathrm{Aff}(\Delta,\Mu) \to \mathrm{Iso}(\Delta,\Mu)$. 

\begin{theorem}  
Let let $\Mu$ be an $(n-1)$-space group, and let 
$\Delta$ be the infinite dihedral $1$-space group generated by $\delta_1=-I'$ and $\delta_2= 1-I'$.  
Let $\eta_1,\eta_2 \in \mathrm{Hom}_f(\Delta,\mathrm{Aff}(\Mu))$, 
and let $\eta_1(\delta_i) = (a_i+A_i)_\star$, with $a_i+A_i \in N_A(\Mu)$,  for $i = 1,2$. 
Let $E_i$ be the $(-1)$-eigenspace of the restriction of $A_i$ to $\mathrm{Span}(Z(\Mu))$ for $i = 1,2$. 
Then the surjection $\psi: \mathrm{Aff}(\Delta,\Mu) \to \mathrm{Iso}(\Delta,\Mu)$ 
has the property that $\psi([\eta_1]) = \psi([\eta_2])$ if and only if 
there is a vector $v \in E_1\cap E_2$
such that $\{\eta_1(\delta_1), (v+\ov I)_\star \eta_1(\delta_2)\}$ is conjugate to $\{\eta_2(\delta_1),\eta_2(\delta_2)\}$ 
by an element of $\mathrm{Aff}(\Mu)$. 
\end{theorem}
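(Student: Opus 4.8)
The plan is to reduce everything to Theorem 19, which tells us exactly when an affinity $\phi$ of $E^n$ satisfies $\phi(\Gamma_1,\Nu_1)\phi^{-1}=(\Gamma_2,\Nu_2)$ in terms of its three pieces $\ov\phi=\alpha$, $\phi'=\beta$, and $\ov{C'}=D$. First I would recall that, by the construction of $\psi$ (Theorems 14 and 18, Lemma 16), we have $\psi([\eta_i])=[\Gamma_i,\Nu_i]$, where $\Gamma_i$ is the $n$-space group built from $\eta_i$ using some $C_i\in\mathrm{GL}(m,\realnos)$, with $\Gamma_i'=\Delta$, $V_i^\perp=E^1$, and $\Xi_i=(C_i)_\sharp\eta_i\Rho_i$. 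Thus $\psi([\eta_1])=\psi([\eta_2])$ holds precisely when some $\phi\in\mathrm{Aff}(E^n)$ conjugates $(\Gamma_1,\Nu_1)$ to $(\Gamma_2,\Nu_2)$, and the relevant element of $\mathrm{Aff}(\Mu)$ is $C_2^{-1}\ov\phi C_1$, exactly as in the proof of Theorem 20; I would track these conjugations throughout.

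Next I would analyze the three pieces of $\phi$. Since $V_i^\perp=E^1$ and $\Gamma_1'=\Gamma_2'=\Delta$ is the standard infinite dihedral group, the base affinity $\beta$ lies in the normalizer of $\Delta$ in $\mathrm{Aff}(E^1)$; hence $\beta_\ast$ either fixes or interchanges the Coxeter generators $\delta_1,\delta_2$, and the interchange is the generator of $\mathrm{Out}(\Delta)$ that is already quotiented out both in $\mathrm{Aff}(\Delta,\Mu)$ and in the unordered-pair description furnished by the preceding lemma. The off-diagonal piece $D\colon E^1\to\mathrm{Span}(Z(\Nu_2))$ is determined by the single vector $D(1)=:w$, and the crossed homomorphism $(Dp_1)_\star$ is evaluated on the lifts $\hat\delta_1,\hat\delta_2$ via $p_1(\Nu_1\hat\delta_1)=0$ and $p_1(\Nu_1\hat\delta_2)=1$ — these values coming precisely from the normalization $\delta_1=-I'$, $\delta_2=1-I'$. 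Consequently $(Dp_1)_\star(\Nu_1\hat\delta_1)=\mathrm{id}$ while $(Dp_1)_\star(\Nu_1\hat\delta_2)=(w+\ov I)_\star$, so only the second generator is twisted; this is the origin of the asymmetry in the statement.

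The heart of the argument is to identify which $w$ can occur. The constraint on $D$ in Theorem 19 is $DB'=\ov C\ov B\ov C^{-1}D$ for every $b+B\in\Gamma_1$. Restricting to the generating set $\Nu_1\cup\{\hat\delta_1,\hat\delta_2\}$: on $\Nu_1$ it is automatic, since $w\in\mathrm{Span}(Z(\Nu_2))=\mathrm{Fix}(\Pi)$ by Lemma 9 and $\alpha$ carries the point group of $\Nu_1$ into that of $\Nu_2$; on each $\hat\delta_i$, whose $V^\perp$-linear part is $-I'$, it becomes $\ov C\ov B\ov C^{-1}w=-w$, forcing $w$ to be a $(-1)$-eigenvector of the $V$-linear part of $\hat\delta_i$. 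Pulled back through the realization $C_1$ and the fiber map $\ov C$, this is exactly the condition that the associated vector $v$ lie in the $(-1)$-eigenspace $E_i$ of $A_i$ on $\mathrm{Span}(Z(\Mu))$; imposing it for both $i$ gives $v\in E_1\cap E_2$. I expect this eigenspace bookkeeping — matching the abstract constraint on $D$ with the geometric condition $v\in E_1\cap E_2$ while correctly tracking the three conjugations by $C_1$, $C_2$, and $\ov C$ — to be the main obstacle.

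Finally I would read the compatibility equation $\Xi_2\Rho_2^{-1}\beta_\ast\Rho_1=(Dp_1)_\star\alpha_\sharp\Xi_1$ on the Coxeter generators. Because $\Xi_i$ records the fiber action and equals $\eta_i$ under $\Rho_i$ and $C_i$, this equation says precisely that the unordered pair $\{\eta_2(\delta_1),\eta_2(\delta_2)\}$ is conjugate in $\mathrm{Aff}(\Mu)$ to $\{\eta_1(\delta_1),(v+\ov I)_\star\eta_1(\delta_2)\}$, which, with the preceding lemma, yields the forward direction. For the converse, given $v\in E_1\cap E_2$ and a conjugating $\alpha\in\mathrm{Aff}(\Mu)$, I would set $w$ to the corresponding vector, define $D(t)=tw$, verify the constraint $DB'=\ov C\ov B\ov C^{-1}D$ (which reduces to $v\in E_1\cap E_2$ exactly as above), choose $\beta\in N_A(\Delta)$ implementing the correct, possibly trivial, permutation of $\delta_1,\delta_2$, confirm the compatibility equation on generators, and invoke Theorem 19 to produce $\phi$ with $\phi(\Gamma_1,\Nu_1)\phi^{-1}=(\Gamma_2,\Nu_2)$, whence $\psi([\eta_1])=\psi([\eta_2])$.
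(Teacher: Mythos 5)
Your proposal follows the paper's proof essentially step for step: the same reduction to Theorem 19, the same identification of $\psi([\eta_i])$ with $[\Gamma_i,\Nu_i]$ via Theorems 14 and 18, the same observation that the off-diagonal datum $D$ is determined by a single vector which the constraint $DB_i'=\ov C\ov B_i\ov C^{-1}D$ (with $B_i'=-I'$) forces, after pulling back through $\ov C$ and $C_1$, into $E_1\cap E_2$, and the same normalization $b_1'=0$, $b_2'=e_n$ explaining why only $\eta_1(\delta_2)$ acquires the twist $(v+\ov I)_\star$. The one step your sketch compresses is, in the converse direction, the passage from verifying $\Xi_2\Rho_2^{-1}\beta_\ast\Rho_1=(Dp_1)_\star\alpha_\sharp\Xi_1$ on the two Coxeter generators to the equation on all of $\Gamma_1/\Nu_1$ (which is what Theorem 19 actually requires): since the right-hand side is not a homomorphism, agreement on generators is not automatic, and the paper closes this by applying $\Omega$ to see the discrepancy lands in the torus $K_2$ and then comparing two crossed homomorphisms that agree on the generators of the infinite dihedral group.
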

\begin{proof}
By Theorem 14, there exists $C_i\in \mathrm{GL}(n-1,\realnos)$ such that $C_i\Mu C_i^{-1}$ is an $(n-1)$-space group 
and $(C_i)_\sharp\eta_i(\Delta) \subseteq \mathrm{Isom}(E^{n-1}/C_i\Mu C_i^{-1})$ for $i=1,2$. 
Extend  $C_i\Mu C_i^{-1}$ to a subgroup $\Nu_i$ of $\mathrm{Isom}(E^n)$ such that 
the point group of $\Nu_i$ acts trivially on $(E^{n-1})^\perp$ for $i = 1,2$. 
By Theorem 18, there exists an $n$-space group $\Gamma_i$ containing $\Nu_i$ as a complete normal subgroup 
such that $\Gamma_i' = \Delta$ and if $\Xi_i: \Gamma_i/\Nu_i \to \mathrm{Isom}(E^{n-1}/\Nu_i)$ is the homomorphism 
induced by the action of $\Gamma_i/\Nu_i$ on $E^{n-1}/\Nu_i$, then $\Xi_i = (C_i)_\sharp\eta_i\Rho_i$ 
where $\Rho_i:\Gamma_i/\Nu_i \to \Gamma_i'$ is the isomorphism defined by $\Rho_i(\Nu_i\gamma) = \gamma'$. 
Then $\psi([\eta_i]) = [\Gamma_i,\Nu_i]$ for $i=1,2$. 

Suppose $\psi([\eta_1]) = \psi([\eta_2])$.  
Then $[\Gamma_1,\Nu_1] = [\Gamma_2,\Nu_2]$. 
By Bieberbach's theorem, there exists an affinity $\phi = c + C$ of $E^n$ such that 
$\phi(\Gamma_1,\Nu_1)\phi^{-1} = (\Gamma_2,\Nu_2)$. 
By Theorem 19, we have that 
$$\Xi_2\Rho_2^{-1}(\phi')_\ast\Rho_1 = (\ov{C'}p_1)_\star(\ov\phi)_\sharp\Xi_1.$$
Hence we have 
$$(C_2)_\sharp\eta_2(\phi')_\ast= (\ov{C'}p_1)_\star\Rho_1^{-1}(\ov\phi)_\sharp(C_1)_\sharp\eta_1,$$
Now $(\phi')_\star(\{\delta_1,\delta_2\} )$ is a set of Coxeter generators of $\Delta$, 
and hence there exists $\delta \in \Delta$ such that $(\phi')_\star(\{\delta_1,\delta_2\} )= \delta\{\delta_1,\delta_2\}\delta^{-1}$, 
and say that $(\phi')_\star(\delta_1) = \delta\delta_j\delta^{-1}$ and $(\phi')_\star(\delta_2) = \delta\delta_k\delta^{-1}$.  
Upon evaluating at $\delta_1$ and $\delta_2$, we have 
$$(C_2)_\star\eta_2(\delta\delta_j\delta^{-1})(C_2)_\star^{-1} = (\ov\phi C_1)_\star \eta_1(\delta_1)(\ov\phi C_1)_\star^{-1}, $$
and 
$$(C_2)_\star\eta_2(\delta\delta_k\delta^{-1})(C_2)_\star^{-1} = 
(\ov{C'}(e_n) + \ov I)_\star(\ov\phi C_1)_\star \eta_1(\delta_2)(\ov\phi C_1)_\star^{-1}. $$
Therefore 
$$\eta_2(\delta_j) =
\eta_2(\delta)^{-1}(C_2^{-1}\ov\phi C_1)_\star \eta_1(\delta_1)(C_2^{-1}\ov\phi C_1)_\star^{-1}\eta_2(\delta), $$
and 
$$\eta_2(\delta_k) =
\eta_2(\delta)^{-1}(C_2^{-1}\ov\phi C_1)_\star(\ov \phi C_1)_\star^{-1}(\ov{C'}(e_n)+\ov I)_\star(\ov\phi C_1)_\star \eta_1(\delta_2)(C_2^{-1}\ov\phi C_1)_\star^{-1}\eta_2(\delta).$$
Now we have that 
$$(\ov \phi C_1)_\star^{-1}(\ov{C'}(e_n)+\ov I)_\star(\ov\phi C_1)_\star  = (C_1^{-1}\ov C^{-1}\ov{C'}(e_n)+\ov I)_\star.$$
As $\ov{C'}(e_n) \in \mathrm{Span}(Z(\Nu_2))$, we have that $\ov{C}^{-1}\ov{C'}(e_n) \in \mathrm{Span}(Z(\Nu_1))$, 
and so we have that $C_1^{-1}\ov{C}^{-1}\ov{C'}(e_n) \in \mathrm{Span}(Z(\Mu))$.

We have that 
$$(C_1)_\sharp\eta_1(\delta_i) = (C_1a_i + C_1A_iC_1^{-1})_\star$$
for $i =1,2$. 
Let $\hat \delta_i = b_i + B_i \in \Gamma_1$ be formed from $C_1a_i+ C_1A_iC_1^{-1}$ and $\delta_i$, 
as in the proof of Theorem 18, so that $\hat\delta_i' = \delta_i$ for $i =1,2$.  
Then $\ov B_i = C_1A_iC_1^{-1}$ and $B_i' = -I'$ for $i = 1,2$, and $b_1' = 0$ and $b_2' = e_n$. 
As $\ov{C'}(B_i') = \ov C \ov B_i \ov C^{-1} \ov{C'}$, we have that $\ov{C'}(e_n)$ is 
in the $(-1)$-eigenspace of $\ov C \ov B_i \ov C^{-1}$. 
Hence $\ov C^{-1}\ov{C'}(e_n)$ is in the $(-1)$-eigenspace of $\ov B_i$, 
and so $C_1^{-1}\ov C^{-1}\ov{C'}(e_n)$ is in the $(-1)$-eigenspace of $A_i$ for $i = 1,2$.  
Let $v = C_1^{-1}\ov C^{-1}\ov{C'}(e_n)$.  Then $v \in E_1\cap E_2$, and we have that 
$\{\eta_1(\delta_1), (v+\ov I)_\star \eta_1(\delta_2)\}$ 
is conjugate to $\{\eta_2(\delta_1),\eta_2(\delta_2)\}$ by the element $\eta_2(\delta)^{-1}(C_2^{-1}\ov\phi C_1)_\star$ 
of $\mathrm{Aff}(\Mu)$. 

Conversely, suppose there is a vector $v \in E_1\cap E_2$
such that $\{\eta_1(\delta_1), (v+\ov I)_\star \eta_1(\delta_2)\}$ is conjugate to $\{\eta_2(\delta_1),\eta_2(\delta_2)\}$ 
by an element of $\mathrm{Aff}(\Mu)$. 
Let $\xi\in N_A(\Mu)$ such that 
$\eta_2(\delta_j) = \xi_\star \eta_1(\delta_1)\xi_\star^{-1}$ and $\eta_2(\delta_k) =
 \xi_\star(v+\ov I)_\star\eta_1(\delta_2)\xi_\star^{-1}$ with $\{1,2\} = \{j,k\}$. 
Define $\alpha \in \mathrm{Aff}(E^{n-1})$ by $\alpha = C_2\xi C_1^{-1}$.  
Then $\alpha C_1\Mu C_1^{-1}\alpha^{-1} = C_2\Mu C_2^{-1}$, and so $\alpha\ov\Nu_1\alpha^{-1} = \ov \Nu_2$. 
Let $\beta \in\mathrm{Aff}((E^{n-1})^\perp)$ be either the identity map $I'$ if $j = 1$ 
or the reflection $1/2-I'$ if $k = 1$. 
Then $\beta_\ast$ is the automorphism of $\Delta$ that maps $(\delta_1,\delta_2)$ to $(\delta_j, \delta_k)$. 

Write $\xi = a + A$ with $a \in E^{n-1}$ and $A \in \mathrm{GL}(n-1)$. 
Define a linear transformation $D: (E^{n-1})^\perp \to \mathrm{Span}(Z(\Nu_2))$ 
by $D(e_n) = C_2 A v$. 
Let $\phi = c+C \in \mathrm{Aff}(E^n)$ be such that $\ov \phi = \alpha$, and $\phi' = \beta$, and $\ov{C'} = D$. 
Then $\ov C = C_2AC_1^{-1}$.  
For $\hat\delta_i=b_i+B_i$, we have that $\ov B_i = C_1A_iC_1^{-1}$ for $i = 1,2$. 
Now $v$ is in the $(-1)$-eigenspace of $A_i$ for $i= 1,2$, 
and so $Av$ is in the $(-1)$-eigenspace of $AA_iA^{-1}$ for $i= 1,2$. 
Hence $C_2Av$ is in the $(-1)$-eigenspace of $C_2AA_iA^{-1}C_2^{-1}$ for $i= 1,2$.
Therefore $C_2Av$ is in the $(-1)$-eigenspace of $\ov C\ov B_i\ov C^{-1}$ for $i= 1,2$. 
Hence $DB_i' = \ov C\ov B_i\ov C^{-1}D$ for $i = 1,2$. 
As $\Nu_1\hat\delta_1$ and $\Nu_1\hat\delta_2$ generate $\Gamma_1/\Nu_1$, 
we have that if $b+B \in \Gamma_1$, then $DB' = \ov C \ov B\ov C^{-1}D$. 

Observer that 
\begin{eqnarray*}
\Xi_2\Rho_2^{-1}\beta_\ast\Rho_1(\Nu_1\hat\delta_1) 
& = & \Xi_2\Rho_2^{-1}\beta_\ast(\delta_1) \\
& = & (C_2)_\sharp \eta_2(\delta_j) \\ 
& = & (C_2)_\sharp \xi_\ast \eta_1(\delta_1)\xi_\ast^{-1} \\
& = & (C_2)_\sharp\xi_\sharp (C_1)_\sharp^{-1} (C_1)_\sharp\eta_1\Rho_1\Rho_1^{-1}(\delta_1) \\
& = & \alpha_\sharp\Xi_1(\Nu_1\hat\delta_1) \ \
 = \ \  (Dp_1)_\star(\Nu_1\hat\delta_1)\alpha_\sharp\Xi_1(\Nu_1\hat\delta_1),  
\end{eqnarray*}
and
\begin{eqnarray*}
\Xi_2\Rho_2^{-1}\beta_\ast\Rho_1(\Nu_1\hat\delta_2) 
& = & \Xi_2\Rho_2^{-1}\beta_\ast(\delta_2) \\
& = & (C_2)_\sharp \eta_2(\delta_k) \\ 
& = & (C_2)_\sharp \xi_\ast(v+\ov I)_\ast  \eta_1(\delta_2)\xi_\ast^{-1} \\
& = & (C_2)_\ast \xi_\ast(v+\ov I)_\ast \xi_\ast^{-1}(C_2)_\ast^{-1}(C_2)_\ast\xi_\ast \eta_1(\delta_2)\xi_\ast^{-1}(C_2)_\ast^{-1} \\
& = & (C_2 A v+\ov I)_\ast (C_2)_\ast\xi_\ast (C_1)_\ast^{-1} (C_1)_\ast \eta_1(\delta_2)\xi_\ast^{-1}(C_2)_\ast^{-1} \\
& = & (De_n+\ov I)_\ast \alpha_\ast (C_1)_\ast \eta_1(\delta_2)(C_1)^{-1}_\ast \alpha_\ast^{-1} \\
& = &  (Db_2'+\ov I)_\ast \alpha_\sharp (C_1)_\sharp\eta_1\Rho_1\Rho_1^{-1}(\delta_2) \\
& = & (Dp_1)_\star(\Nu_1\hat\delta_2)\alpha_\sharp\Xi_1(\Nu_1\hat\delta_2).   
\end{eqnarray*}

Let $K_2$ be the connected component of the identity of $\mathrm{Isom}(E^{n-1}/\Nu_2)$. 
Then $K_2$ is the kernel of the epimorphism $\Omega: \mathrm{Isom}(E^{n-1}/\Nu_2) \to \mathrm{Out}_E(\Nu_2)$ 
by Theorem 12.  Upon applying $\Omega$, we have that
$$\Omega\Xi_2\Rho_2^{-1}\beta_\ast\Rho_1(\Nu_1\hat\delta_1) = \Omega\alpha_\sharp\Xi_1(\Nu_1\hat\delta_1),$$
and 
$$\Omega\Xi_2\Rho_2^{-1}\beta_\ast\Rho_1(\Nu_1\hat\delta_2) = \Omega\alpha_\sharp\Xi_1(\Nu_1\hat\delta_2).$$
Hence $\Omega\Xi_2\Rho_2^{-1}\beta_\ast\Rho_1 = \Omega\alpha_\sharp\Xi_1$, 
since $\Nu_1\hat\delta_1$ and $\Nu_1\hat\delta_2$ generate $\Gamma_1/\Nu_1$. 
Therefore the ratio of homomorphisms $(\Xi_2\Rho_2^{-1}\beta_\ast\Rho_1)(\alpha_\sharp\Xi_1)^{-1}$ 
maps to the abelian group $K_2$. 
Hence $(\Xi_2\Rho_2^{-1}\beta_\ast\Rho_1)(\alpha_\sharp\Xi_1)^{-1}: \Gamma_1/\Nu_1 \to K_2$ 
is a crossed homomorphism with $\Nu_1(b+B)$ acting on $K_2$ by conjugation by 
\begin{eqnarray*}
\Xi_2\Rho_2^{-1}\beta_\ast\Rho_1(\Nu_1(b+B)) 
& = & \Xi_2\Rho_2^{-1}\beta_\ast(b'+B') \\
& = & \Xi_2\Rho_2^{-1}((c'+C')(b'+B')(c'+C')^{-1})\\
& = & \Xi_2(\Nu_2(c+C)(b+B)(c+C)^{-1}) \\
& = & ((\ov c+\ov C)(\ov b+\ov B)(\ov c+\ov C)^{-1})_\star, 
\end{eqnarray*} 
that is, if $u \in \mathrm{Span}(Z(\Nu_2))$, 
then 
$$(\Nu_2(b+B))(u+I)_\star = (\ov C\ov B\ov C^{-1}u + I)_\star.$$
The mapping  $(Dp_1)_\star: \Gamma_1/\Nu_1 \to K_2$ is a crossed homomorphism 
with respect to the same action of $\Gamma_1/\Nu_1$ on $K_2$ by Theorem 19. 
Hence the crossed homomorphisms $(\Xi_2\Rho_2^{-1}\beta_\ast\Rho_1)(\alpha_\sharp\Xi_1)^{-1}$ 
and $(Dp_1)_\star$ are equal, since they agree on the generators 
$\Nu_1\hat\delta_1$ and $\Nu_1\delta_2$ of $\Gamma_1/\Nu_1$. 
Therefore 
$$\Xi_2\Rho_2^{-1}\beta_\ast\Rho_1= (Dp_1)_\star\alpha_\sharp\Xi_1.$$
Hence $\phi(\Gamma_1,\Nu_1)\phi^{-1} = (\Gamma_2,\Nu_2)$ by Theorem 19. 
Therefore we have that 
$$\psi([\eta_1]) = [\Gamma_1,\Nu_1] = [\Gamma_2,\Nu_2] = \psi([\eta_2]).$$

\vspace{-.2in}
\end{proof}

\begin{theorem} 
Let let $\Mu$ be an $(n-1)$-space group, and let 
$\Delta$ be the infinite dihedral $1$-space group generated by $\delta_1=-I'$ and $\delta_2= 1-I'$.  
Let $[\Gamma,\Nu] \in \mathrm{Iso}(\Delta,\Mu)$,  let $V =  \mathrm{Span}(\Nu)$, 
and let $\alpha: V \to E^{n-1}$ be an affinity such that $\alpha\ov \Nu \alpha^{-1} =  \Mu$. 
Let $\{\Nu\gamma_1,\Nu\gamma_2\}$ be a set of Coxeter generators of $\Gamma/\Nu$, and 
let $\beta:\Delta \to \Gamma/\Nu$ be the isomorphism defined by $\beta(\delta_i) = \Nu\gamma_i$ for $i = 1, 2$. 
Let $\Xi: \Gamma/\Nu \to \mathrm{Isom}(V/\Nu)$ be the homomorphism induced by the action of $\Gamma/\Nu$ 
on $V/\Nu$. 
Write $\gamma_i = b_i + B_i$ with $b_i \in E^n$ and $B_i \in O(n)$ for $i = 1, 2$, and  
let $E_i$ be the $(-1)$-eigenspace of $B_i$ restricted to $\mathrm{Span}(Z(\Nu))$ for $i = 1, 2$. 
Then $[\alpha_\sharp\Xi\beta] \in \psi^{-1}([\Gamma,\Nu])$,   
and $[\alpha_\sharp\Xi\beta]$ corresponds to the conjugacy class of the pair of elements 
$\{(\alpha\ov\gamma_1\alpha^{-1})_\star, (\alpha\ov\gamma_2\alpha^{-1})_\star\}$ of $\mathrm{Aff}(\Mu)$. 
Moreover $\psi^{-1}([\Gamma,\Nu])$ is the set of all the elements  of $\mathrm{Aff}(\Delta,\Mu)$ 
that correspond to the conjugacy class of the pair of elements 
$\{(\alpha\ov\gamma_1\alpha^{-1})_\star, (\alpha(v+\ov I)\ov\gamma_2\alpha^{-1})_\star\}$
of $\mathrm{Aff}(\Mu)$ for any $v \in E_1\cap E_2$. 
In particular, if $E_1\cap E_2 =\{0\}$, then $\psi^{-1}([\Gamma,\Nu]) = \{[\alpha_\sharp\Xi\beta]\}$. 
\end{theorem}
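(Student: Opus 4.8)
The plan is to realize $\eta := \alpha_\sharp\Xi\beta$ as the canonical homomorphism attached to the pair $(\Gamma,\Nu)$, and then to read off the whole fiber of $\psi$ through $[\eta]$ from Theorem 25 and Lemma 22. First I would record that $\eta \in \mathrm{Hom}_f(\Delta,\mathrm{Aff}(\Mu))$: the image of $\Omega\Xi$ lies in the finite group $\mathrm{Out}_E(\ov\Nu)$ by Theorem 12, and $\Omega\alpha_\sharp = \alpha_\#\Omega$ by Lemma 14, so $\Omega\eta(\Delta)$ is finite. Next I would compute, using $\Xi(\Nu\gamma) = \ov\gamma_\star$ and $\beta(\delta_i) = \Nu\gamma_i$, that
$$\eta(\delta_i) = \alpha_\sharp\Xi(\Nu\gamma_i) = \alpha_\star(\ov\gamma_i)_\star\alpha_\star^{-1} = (\alpha\ov\gamma_i\alpha^{-1})_\star$$
for $i=1,2$. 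Since our $\alpha$ and $\beta$ play exactly the roles of $\tilde\alpha$ and $\beta^{-1}$ in the surjectivity argument of Lemma 16, that argument (constructing the canonical pair from $\eta$ via Theorems 14 and 18 and matching it to $(\Gamma,\Nu)$ via Theorem 19) yields $\psi([\eta]) = [\Gamma,\Nu]$, which is the first assertion. The second assertion is then immediate from Lemma 22, which identifies $[\eta]$ with the conjugacy class of $\{\eta(\delta_1),\eta(\delta_2)\} = \{(\alpha\ov\gamma_1\alpha^{-1})_\star,(\alpha\ov\gamma_2\alpha^{-1})_\star\}$.

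For the description of $\psi^{-1}([\Gamma,\Nu])$ I would apply Theorem 25 with $\eta_1 = \eta$. Writing $\alpha = \ov c + \ov C$, the lift of $\eta(\delta_i)$ to $N_A(\Mu)$ is $\alpha\ov\gamma_i\alpha^{-1}$, whose linear part is $\ov C\ov B_i\ov C^{-1}$; hence the eigenspaces appearing in Theorem 25 are the $(-1)$-eigenspaces of $\ov C\ov B_i\ov C^{-1}$ restricted to $\mathrm{Span}(Z(\Mu))$. The crux is to match these with the $E_i$ of the present theorem. Since $\alpha Z(\ov\Nu)\alpha^{-1} = Z(\Mu)$, the isomorphism $\ov C$ carries $\mathrm{Span}(Z(\ov\Nu)) = \mathrm{Span}(Z(\Nu))$ onto $\mathrm{Span}(Z(\Mu))$, and on $\mathrm{Span}(Z(\Nu)) \subseteq V$ we have $B_i = \ov B_i$. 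Thus for $x \in \mathrm{Span}(Z(\Nu))$, $\ov C\ov B_i\ov C^{-1}(\ov Cx) = \ov C B_i x$, so $\ov Cx$ is a $(-1)$-eigenvector iff $x \in E_i$. Therefore $\ov C$ maps $E_1\cap E_2$ bijectively onto the intersection of the Theorem 25 eigenspaces.

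It remains to translate the conjugacy condition back to $V$. Given $v \in E_1 \cap E_2$, put $w = \ov Cv$, which lies in the intersection of the Theorem 25 eigenspaces. A direct computation with affinities, using $\alpha^{-1}(w+\ov I)\alpha = v + \ov I$, gives
$$(w+\ov I)_\star\,\eta(\delta_2) = (\,(w+\ov I)\,\alpha\ov\gamma_2\alpha^{-1}\,)_\star = (\alpha(v+\ov I)\ov\gamma_2\alpha^{-1})_\star.$$
Combined with $\eta(\delta_1) = (\alpha\ov\gamma_1\alpha^{-1})_\star$, Theorem 25 states that $[\eta_2] \in \psi^{-1}([\Gamma,\Nu])$ iff $\{\eta_2(\delta_1),\eta_2(\delta_2)\}$ is conjugate in $\mathrm{Aff}(\Mu)$ to $\{(\alpha\ov\gamma_1\alpha^{-1})_\star,(\alpha(v+\ov I)\ov\gamma_2\alpha^{-1})_\star\}$ for some $v \in E_1\cap E_2$; by Lemma 22 this is precisely the statement that $[\eta_2]$ corresponds to that conjugacy class, which proves the third assertion. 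The final assertion follows at once, since $E_1\cap E_2 = \{0\}$ forces $v = 0$ and collapses the family to the single class of $\{\eta(\delta_1),\eta(\delta_2)\}$, i.e. to $[\alpha_\sharp\Xi\beta]$.

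The main obstacle is the bookkeeping of the third paragraph: correctly matching the two descriptions of the eigenspaces through $\ov C$ and verifying the affine identity $(w+\ov I)_\star\eta(\delta_2) = (\alpha(v+\ov I)\ov\gamma_2\alpha^{-1})_\star$. Everything else is a routine application of Lemma 16, Lemma 22, and Theorem 25.
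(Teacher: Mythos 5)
Your proposal follows essentially the same route as the paper's proof: compute $\eta(\delta_i) = (\alpha\ov\gamma_i\alpha^{-1})_\star$, invoke the construction in the surjectivity half of Lemma 16 to get $\psi([\eta]) = [\Gamma,\Nu]$, identify the classifying pair via Lemma 22, and read off the fiber from Theorem 25 after transporting the $(-1)$-eigenspaces through the linear part of $\alpha$ and checking the identity $\alpha(v+\ov I)\ov\gamma_2\alpha^{-1} = (Av+\ov I)\alpha\ov\gamma_2\alpha^{-1}$. The one step you omit that the paper carries out explicitly is the verification that for \emph{every} $v \in E_1\cap E_2$ the pair $\{(\alpha\ov\gamma_1\alpha^{-1})_\star,\ (\alpha(v+\ov I)\ov\gamma_2\alpha^{-1})_\star\}$ actually lies in the domain of the Lemma 22 bijection, namely that $((Av+\ov I)\alpha\ov\gamma_2\alpha^{-1})_\star$ has order at most $2$ (which uses the fact that $Av$ is a $(-1)$-eigenvector of the linear part of $\alpha\ov\gamma_2\alpha^{-1}$) and that $\Omega$ of the product of the pair has finite order; without this, the phrase ``the element of $\mathrm{Aff}(\Delta,\Mu)$ corresponding to that conjugacy class'' is only known to be defined for those $v$ already realized by some $[\eta_2]$ in the fiber, whereas the theorem asserts it for all $v$. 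Since that verification is a two-line computation, your argument is otherwise complete and matches the paper's.
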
 
\begin{proof}
By the proof of Lemma 16, we have that $[\alpha_\sharp\Xi\beta] \in \psi^{-1}([\Gamma,\Nu])$. 
For $i = 1,2$, we have that 
$$\alpha_\sharp \Xi \beta(\delta_i) =\alpha_\sharp\Xi(\Nu\gamma_i) = 
 \alpha_\sharp((\ov \gamma_i)_\star) = \alpha_\star(\ov \gamma_i)_\star\alpha_\star^{-1} 
 = (\alpha\ov\gamma_i\alpha^{-1})_\star.$$
Hence $[\alpha_\sharp\Xi\beta]$ corresponds to the conjugacy class of the pair 
$\{(\alpha\ov\gamma_1\alpha^{-1})_\star, (\alpha\ov\gamma_2\alpha^{-1})_\star\}$ by Lemma 22. 
Write $\alpha = a + A$ with $a \in E^{n-1}$ and $A: V \to E^{n-1}$ a linear isomorphism. 
Then for $i = 1, 2$, we have 
$$\alpha\ov \gamma_i\alpha^{-1} = (a+A)(\ov b_i + \ov B_i)(a+A)^{-1} = A\ov b_i + (I-A\ov B_iA^{-1})a + A\ov B_iA^{-1}.$$
The $(-1)$-eigenspace of $A\ov B_iA^{-1}$ restricted to $\mathrm{Span}(Z(\Mu))$ is $A(E_i)$ for $i = 1,2$. 

Suppose $v \in E_1\cap E_2$. Then  $Av \in A(E_1)\cap A(E_2)$ and 
$$\alpha(v+\ov I)\ov\gamma_2\alpha^{-1} = 
\alpha(v+\ov I)\alpha^{-1}\alpha\ov\gamma_2\alpha^{-1} = (Av + \ov I)\alpha\ov\gamma_2\alpha^{-1}.$$
Now we have that 
\begin{eqnarray*}
((Av + \ov I)\alpha\ov\gamma_2\alpha^{-1})_\star^2 
& =  & ((Av + \ov I)\alpha\ov\gamma_2\alpha^{-1}(Av + \ov I)\alpha\ov\gamma_2\alpha^{-1})_\star \\
& =  & ((Av + \ov I)(-Av+\ov I) \alpha\ov\gamma_2\alpha^{-1} \alpha\ov\gamma_2\alpha^{-1})_\star 
\ \  =\ \  \ov I_\star.
\end{eqnarray*}
Hence the order of  $((Av + \ov I)\alpha\ov\gamma_2\alpha^{-1})_\star$ is at most 2. 
Moreover the element
$$\Omega((\alpha\ov\gamma_1\alpha^{-1})_\star((Av + \ov I)\alpha\ov\gamma_2\alpha^{-1})_\star)$$
of $\mathrm{Out}(\Mu)$ has finite order, since 
$$\Omega((\alpha\ov\gamma_1\alpha^{-1})_\star((Av + \ov I)\alpha\ov\gamma_2\alpha^{-1})_\star) 
= \Omega((\alpha\ov\gamma_1\alpha^{-1})_\star(\alpha\ov\gamma_2\alpha^{-1})_\star).$$
Therefore the conjugacy class of the pair of elements 
$\{(\alpha\ov\gamma_1\alpha^{-1})_\star, (\alpha(v+\ov I)\ov\gamma_2\alpha^{-1})_\star\}$
of $\mathrm{Aff}(\Mu)$ correspond to an element of  $\psi^{-1}([\Gamma,\Nu])$ by Lemma 22 and Theorem 25. 
Thus  $\psi^{-1}([\Gamma,\Nu])$ is the set of all the elements of $\mathrm{Aff}(\Delta,\Mu)$ 
that correspond to the conjugacy class of the pair of elements 
$\{(\alpha\ov\gamma_1\alpha^{-1})_\star, (\alpha(v+\ov I)\ov\gamma_2\alpha^{-1})_\star\}$
of $\mathrm{Aff}(\Mu)$ for any $v \in E_1\cap E_2$ by Lemma 22 and Theorem 25. 
\end{proof}

\noindent{\bf Example 10.}  
Let $e_1$ and $e_2$ be the standard basis vectors of $E^2$. 
Let $\Gamma$ be the group generated by $t_1 = e_1+I$ and $t_2= e_2+I$ and $-I$.  
Then $\Gamma$ is a 2-space group, and $E^2/\Gamma$ is a pillow. 
Let $\Nu=\langle t_1\rangle$.  
Then $\Nu$ is a complete normal subgroup of $\Gamma$, 
with $V = \mathrm{Span}(\Nu) = \mathrm{Span}\{e_1\}$.  
The quotient $\Gamma/\Nu$ is an infinite dihedral group 
generated by $\Nu t_2$ and $\Nu(-I)$. 
Let $\gamma_1 = -I$ and $\gamma_2 = e_2 - I$. 
Then $\Nu\gamma_1$ and $\Nu\gamma_2$ are Coxeter generators of $\Gamma/\Nu$. 

Let $\Delta$ be the standard infinite dihedral group, 
and let $\Mu$ be the standard infinite cyclic 1-space group generated by $\ov t_1 = e_1 + \ov I$. 
By Theorem 26, we have that $\psi^{-1}([\Gamma,\Nu])$ 
consists of all the elements $[\eta] \in \mathrm{Aff}(\Delta,\Mu)$ 
that correspond to the conjugacy class of the pair of elements 
$\{(\ov \gamma_1)_\star, ((v+\ov I)\ov \gamma_2)_\star\}$ of $\mathrm{Isom}(E^1/\Mu)$  for any $v \in E^1$. 
Here $\ov \gamma_1 =\ov \gamma_2 = -\ov I$. 

The reflections $(\ov \gamma_1)_\star$ and $((v+\ov I)\ov \gamma_1)_\star$ 
of the circle $E^1/\Mu$  lie on the same  
connected component $C$ of the Lie group $\mathrm{Isom}(E^1/\Mu)$.  
Define a metric on $C$ so that $\xi: E^1/\Mu \to C$ defined by $\xi(\Mu v) = (v+\ov I)_\star (\ov \gamma_1)_\star$ is an isometry. 
Conjugating by an element of $\mathrm{Isom}(E^1/\Mu)$ is an isometry of $C$ with respect to this metric. 
Hence the distance between $(\ov \gamma_1)_\star$ and $((v+\ov I)\ov \gamma_1)_\star$
is an invariant of the conjugacy class of the pair $\{(\ov \gamma_1)_\star, ((v+\ov I)\ov \gamma_1)_\star\}$. 
Suppose $0\leq v \leq 1/2$. 
Then the distance between $(\ov \gamma_1)_\star$ and $((v+\ov I)\ov \gamma_1)_\star$ is $v$, 
since $((v+\ov I)\ov \gamma_1)_\star = (v+\ov I)_\star (\ov\gamma_1)_\star.$
Hence $\psi^{-1}([\Gamma,\Nu])$ has uncountably many elements. 

Let $v \in \realnos$ with $0 \leq v < 1$,  
and let $\Gamma_v$ be the group generated by $t_1 = e_1+I$ and $t_2 = ve_1+e_2+I$ and $-I$.  
Then $\Gamma_v$ is a 2-space group, and $E^2/\Gamma_v$ is a pillow. 
Let $\Nu=\langle t_1\rangle$.  
Then $\Nu$ is a complete normal subgroup of $\Gamma_v$, 
and $\Gamma_v/\Nu$ is  an infinite dihedral group. 
Let $\gamma_1 = -I$ and $\gamma_2 = ve_1+e_2-I$. 
Then $\Nu\gamma_1$ and $\Nu\gamma_2$ are Coxeter generators of $\Gamma_v/\Nu$. 
Let $\beta_v: \Delta \to \Gamma_v/\Nu$ be the isomorphism defined by $\beta_v(\delta_i) = \gamma_i$ for $i = 1,2$. 
Let $V = \mathrm{Span}(\Nu) = \mathrm{Span}\{e_1\}$, and 
let $\Xi_v: \Gamma_v/\Nu \to \mathrm{Isom}(V/\Nu)$ be the homomorphism induced 
by the action of $\Gamma_v/\Nu$ on $V/\Nu$. 
Then $[\Xi_v\beta_v] \in \psi^{-1}([\Gamma_v,\Nu])$ and $[\Xi_v\beta_v]$ 
corresponds to the pair of elements 
$$\{(\ov\gamma_1)_\star, (\ov\gamma_2)_\star\} = \{(\ov\gamma_1)_\star, ((v+I)\ov\gamma_1)_\star\}$$
of $\mathrm{Isom}(E^1/\Mu)$ by Theorem 26. 
Hence $[\Gamma_v,\Nu] = [\Gamma,\Nu]$ by Theorem 25. 

Let $\Kappa$ be the kernel of the action of $\Gamma_v$ on $V/\Nu$. 
The structure group $\Gamma_v/\Nu\Kappa$ is a dihedral group 
generated by $\Nu\Kappa t_2$ and $\Nu\Kappa (-I)$ ,  
The element $\Nu\Kappa t_2$ acts on the circle $V/\Nu$, of length one, 
by rotating a distance $v$, and  $\Nu\Kappa (-I)$ acts on $V/\Nu$ as a reflection. 
Let $c, d \in \integers$.  Then 
$t_1^ct_2^d = (c+dv)e_1+de_2 + I,$
and $t_1^ct_2^d \in \Kappa$ if and only if $c+dv = 0$.  
Thus if $v$ is irrational, then $\Kappa = \{I\}$, and $\Gamma_v/\Nu\Kappa$ is infinite.  
If $v = a/b$ with $a, b\in\integers$, $b> 0$, and $a, b$ coprime, 
then $\Nu\Kappa t_2$ has order $b$  in $\Gamma_v/\Nu\Kappa$, 
since $\Gamma_v/\Nu\Kappa$ acts effectively on $V/\Nu$ by Theorem 6. 
Thus $\Nu$ has an orthogonal dual in $\Gamma_v$ if and only if $v$ is rational. 
This example shows that  the order of the structure group $\Gamma_v/\Nu\Kappa$ 
is not necessarily an invariant of the affine equivalence class of $(\Gamma_v,\Nu)$. 
See also Example 1. 

\section{Action of the Structure Group} 

Let $\Nu$ be an $(n-1)$-dimensional, complete, normal subgroup of an $n$-space group $\Gamma$, 
and let $\Kappa$ be the kernel of the action of $\Gamma$ on $V = \mathrm{Span}(\Nu)$. 
In order to classify the pairs $(\Gamma,\Nu)$ up to isomorphism,  
we need to know how $\Gamma/\Nu$ acts on $V/\Nu$.  
We next study how to determine the action of $\Gamma/\Nu$ on $V/\Nu$
from the action of the structure group $\Gamma/\Nu\Kappa$ on $V/\Nu \times V^\perp/\Kappa$. 

We have a short exact sequence
$$1 \to \Nu\Kappa/\Nu \to \Gamma/\Nu \to \Gamma/(\Nu\Kappa) \to 1.$$
The group $\Gamma/\Nu$ is either infinite cyclic or infinite dihedral. 
By Lemma 23 below for the dihedral case, 
every normal subgroup of $\Gamma/\Nu$ of infinite index is trivial. 
Hence, if the structure group $\Gamma/(\Nu\Kappa)$ is infinite, 
then $\Kappa \cong \Nu\Kappa/\Nu$ is trivial, 
and so $\Gamma/\Nu$ is the structure group. 

Suppose that the structure group $\Gamma/(\Nu\Kappa)$ is finite. 
Then $\Gamma/(\Nu\Kappa)$ is either finite cyclic or finite dihedral. 
Suppose first that $\Gamma/(\Nu\Kappa)$ is finite cyclic of order $m$. 
Then the number of generators of $\Gamma/(\Nu\Kappa)$ is equal to the Euler phi function of $m$,  
and so $\Gamma/(\Nu\Kappa)$ may have more than two generators, 
and a generator of  $\Gamma/(\Nu\Kappa)$ may not lift to a generator of $\Gamma/\Nu$. 
The next theorem gives a necessary and sufficient condition for a generator of $\Gamma/\Nu\Kappa$ 
to lift to a generator of $\Gamma/\Nu$ with respect to the the quotient map 
from $\Gamma/\Nu$ to $\Gamma/(\Nu\Kappa)$. 

\begin{theorem} 
Let $\Nu$ be a complete, $(n-1)$-dimensional, normal subgroup of an $n$-space group $\Gamma$ 
such that $\Gamma/\Nu$ is infinite cyclic, and let $\Kappa$ be the kernel of the action of $\Gamma$ on $V = \mathrm{Span}(\Nu)$, 
and suppose that the structure group $\Gamma/\Nu\Kappa$ is finite of order $m$. 
Let $\gamma$ be an element of $\Gamma$ such that $\gamma\Nu\Kappa$ generates $\Gamma/\Nu\Kappa$.  
Then there exists an element $\delta$ of $\Gamma$ 
such that $\gamma\Nu\Kappa = \delta\Nu\Kappa$ and $\delta\Nu$ generates $\Gamma/\Nu$ 
if and only if $\gamma\Nu\Kappa$ acts on the circle $V^\perp/\Kappa$ by a rotation of $(360/m)^\circ$. 

\end{theorem}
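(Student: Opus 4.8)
The plan is to reduce the statement to a single congruence condition modulo $m$, exploiting the fact that in the infinite cyclic case everything is governed by the one-dimensional action of $\Gamma$ on $V^\perp$.

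First I would pin down the geometry of the dual factor $V^\perp/\Kappa$. Since the structure group $\Gamma/\Nu\Kappa$ is finite, Theorem 5 gives $\Kappa = \Nu^\perp$, so $\mathrm{Span}(\Kappa) = V^\perp$; as $\dim V = n-1$, the space $V^\perp$ is a line. From the short exact sequence $1 \to \Nu\Kappa/\Nu \to \Gamma/\Nu \to \Gamma/\Nu\Kappa \to 1$ of Theorem 6, together with $\Nu\Kappa/\Nu \cong \Kappa$, the group $\Kappa$ is the kernel of $\integers \to \integers/m$, hence infinite cyclic; thus $V^\perp/\Kappa$ is a circle. The effective action of $\Gamma/\Nu \cong \integers$ on the line $V^\perp$ (via $\Rho$) must be by translations, since a reflection of a line has order $2$ and an infinite cyclic group contains no such element. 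Fixing a generator $\gamma_0\Nu$ of $\Gamma/\Nu$ acting as translation by $s \neq 0$, the index-$m$ subgroup $\Nu\Kappa/\Nu = \langle \gamma_0^m\Nu\rangle$ acts as translation by $ms$, so $V^\perp/\Kappa$ has circumference $m|s|$ and $\gamma_0$ descends to the rotation of $V^\perp/\Kappa$ by $\pm(360/m)^\circ$. More generally $\gamma_0^k$ descends to the rotation by $(360k/m)^\circ$.

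Next I would convert both sides into arithmetic. Writing $\gamma\Nu = \gamma_0^{\,j}\Nu$, the hypothesis that $\gamma\Nu\Kappa$ generates $\Gamma/\Nu\Kappa$ forces $\gcd(j,m)=1$, and by the previous paragraph $\gamma\Nu\Kappa$ acts on $V^\perp/\Kappa$ as the rotation by $(360j/m)^\circ$. Since the circle carries no preferred orientation, acting by a rotation of $(360/m)^\circ$ means a rotation of angle $\pm(360/m)^\circ$, which occurs precisely when $j \equiv \pm 1 \pmod m$. On the algebraic side, any $\delta \in \Gamma$ satisfies $\delta\Nu = \gamma_0^{\,\ell}\Nu$ for some $\ell \in \integers$; then $\delta\Nu$ generates $\Gamma/\Nu \cong \integers$ iff $\ell = \pm 1$, while $\delta\Nu\Kappa = \gamma\Nu\Kappa$ iff $\ell \equiv j \pmod m$. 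Hence a suitable $\delta$ exists iff $j \equiv \pm 1 \pmod m$, exactly matching the rotation condition; when it holds one may simply take $\delta = \gamma_0^{\pm 1}$. Assembling these two equivalences proves the theorem in both directions. The only delicate point, and the step I expect to require the most care, is the orientation bookkeeping: interpreting the phrase ``rotation of $(360/m)^\circ$'' as allowing either sign, and checking that the two generators $\gamma_0^{\pm 1}$ of $\Gamma/\Nu$ correspond exactly to the two residues $\pm 1 \bmod m$ producing the minimal rotation. Everything else is the routine translation between $\Gamma/\Nu \cong \integers$, its quotient $\Gamma/\Nu\Kappa \cong \integers/m$, and the induced translation and rotation actions on $V^\perp$ and $V^\perp/\Kappa$.
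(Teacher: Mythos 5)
Your proof is correct and follows essentially the same route as the paper: both arguments identify the effective action of the infinite cyclic group $\Gamma/\Nu$ on the line $V^\perp$ as an action by translations, observe that $\Kappa\cong\Nu\Kappa/\Nu$ is the index-$m$ subgroup acting by translation by $m$ times the generator's displacement, and conclude that the elements of $\Gamma/\Nu\Kappa$ rotating $V^\perp/\Kappa$ by $(360/m)^\circ$ are exactly the images of the two generators of $\Gamma/\Nu$ (the paper invokes effectiveness of the structure group action to get $\gamma\Nu\Kappa=\delta^{\pm1}\Nu\Kappa$, which is your congruence $j\equiv\pm1\pmod m$ in slightly different clothing). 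Your extra arithmetic bookkeeping with the exponent $j$ is harmless and fills in the same details.
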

\begin{proof}
Suppose $\delta\Nu$ is a generator of $\Gamma/\Nu$. 
Since $V^\perp/(\Gamma/\Nu)$ is a circle, $\delta$ acts as a translation $d+I$ on $V^\perp$ with $d\neq 0$. 
As $\Nu\Kappa/\Nu$ is a subgroup of $\Gamma/\Nu$ of index $m$ and $\Kappa \cong \Nu\Kappa/\Nu$ is infinite cyclic,  
the group $\Kappa$ has a generator that acts as a translation $md+I$ on $V^\perp$. 
Therefore $\delta\Nu\Kappa$ acts by a rotation of $(360/m)^\circ$ on the circle $V^\perp/\Kappa$. 

Suppose $\gamma$ is an element of $\Gamma$ such that  
$\gamma\Nu\Kappa$ acts by a rotation of $(360/m)^\circ$ on the circle $V^\perp/\Kappa$. 
Then $\gamma\Nu\Kappa = \delta^{\pm 1}\Nu\Kappa$, since 
the group $\Gamma/\Nu\Kappa$ acts effectively on $V^\perp/\Kappa$. 
\end{proof} 

\begin{lemma}  
Let $\Delta$ be an infinite dihedral group with Coxeter generators $\alpha, \beta$. 
Then the proper normal subgroups of $\Delta$ are the infinite dihedral groups $\langle \alpha, \beta\alpha\beta\rangle$ 
and $\langle \beta,\alpha\beta\alpha\rangle$ of index 2, and the infinite cyclic group $\langle (\alpha\beta)^m\rangle$ 
of index $2m$ for each positive integer $m$. 
\end{lemma}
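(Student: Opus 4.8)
The plan is to classify the proper normal subgroups of $\Delta$ by passing to the standard presentation. First I would set $\tau = \alpha\beta$, so that $\tau$ has infinite order and generates the canonical infinite cyclic ``rotation'' subgroup $\langle\tau\rangle$ of index $2$ in $\Delta$, with $\Delta = \langle\tau\rangle \rtimes \langle\alpha\rangle$ and the relation $\alpha\tau\alpha^{-1} = \tau^{-1}$ (equivalently $\alpha\tau\alpha = \tau^{-1}$ since $\alpha^2 = 1$). Every element of $\Delta$ is then uniquely either $\tau^k$ (a rotation) or $\tau^k\alpha$ (a reflection) for $k \in \integers$. I would record that conjugation sends $\tau^k \mapsto \tau^{-k}$ under $\alpha$ and fixes the abelianized ``rotation number,'' which is the main computational tool.

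Next I would split the analysis according to whether a proper normal subgroup $\Nu$ is contained in $\langle\tau\rangle$ or not. If $\Nu \subseteq \langle\tau\rangle$, then $\Nu = \langle\tau^m\rangle$ for some $m \geq 1$ (the nontrivial case), and normality is automatic because $\alpha\tau^m\alpha^{-1} = \tau^{-m} \in \Nu$; one computes $[\Delta : \langle\tau^m\rangle] = 2m$, and $\langle\tau^m\rangle = \langle(\alpha\beta)^m\rangle$, giving the infinite cyclic family. If $\Nu \not\subseteq \langle\tau\rangle$, then $\Nu$ contains some reflection $\tau^k\alpha$; I would show $\Nu \cap \langle\tau\rangle = \langle\tau^d\rangle$ for some $d \geq 1$ (it cannot be trivial, else $\Nu$ is finite and hence fails to be a proper normal subgroup of infinite index in the relevant sense, or one argues directly that a normal subgroup containing a reflection and meeting $\langle\tau\rangle$ only trivially would have order $2$ and not be normal since $\beta(\tau^k\alpha)\beta^{-1}$ is a different reflection). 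Then $\Nu = \langle\tau^d\rangle \cup \langle\tau^d\rangle\tau^k\alpha$ is itself infinite dihedral of index $d$, and normality under conjugation by $\tau$ forces $\tau(\tau^k\alpha)\tau^{-1} = \tau^{k+2}\alpha \in \Nu$, which combined with $\tau^k\alpha \in \Nu$ gives $\tau^2 \in \Nu$, hence $d \mid 2$. Since $\Nu$ is proper we get $d = 2$, yielding exactly the two index-$2$ dihedral subgroups.

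Finally I would identify the two index-$2$ dihedral subgroups explicitly with $\langle\alpha,\beta\alpha\beta\rangle$ and $\langle\beta,\alpha\beta\alpha\rangle$. The subgroup containing the reflection $\alpha$ (i.e.\ $\tau^k\alpha$ with $k$ even) is generated by $\alpha$ together with $\tau^2\alpha = (\alpha\beta)^2\alpha = \alpha\beta\alpha\beta\alpha$; a short rewriting shows $\langle\alpha, \tau^2\rangle = \langle\alpha, \beta\alpha\beta\rangle$, and symmetrically the other index-$2$ subgroup is $\langle\beta, \alpha\beta\alpha\rangle$. The bulk of the argument is elementary, so I do not anticipate a serious obstacle; the one point requiring care is ruling out ``small'' normal subgroups generated by a single reflection, which I would handle by the observation that conjugating a reflection by $\tau$ produces infinitely many distinct reflections, so any normal subgroup containing one reflection must contain nontrivial rotations and therefore be infinite dihedral. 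This case distinction, rather than any hard computation, is where I would be most careful to make the argument airtight.
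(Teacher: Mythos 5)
Your proof is correct and follows essentially the same route as the paper's: both split on whether the normal subgroup contains a reflection, obtain $\langle(\alpha\beta)^m\rangle$ in the rotation-only case, and in the reflection case force the subgroup to be one of the two index-two dihedral subgroups. The only cosmetic difference is that the paper phrases the reflection case via conjugacy classes of reflections and normal closures of $\langle\alpha\rangle$ and $\langle\beta\rangle$ (working with $\Delta$ realized as isometries of $E^1$), whereas you extract $\tau^2\in\Nu$ directly from the presentation; the logical content is the same.
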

\begin{proof}
We may assume that $\Delta$ is a discrete group of isometries of $E^1$. 
Then every element of $\Delta$ is either a translation or a reflection. 
Let $\Nu$ be a proper normal subgroup of $\Delta$. 
Suppose $\Nu$ contains a reflection.  Then $\Nu$ contains either $\alpha$ or $\beta$, 
since every reflection in $\Delta$ is conjugate in $\Delta$ to either $\alpha$ or $\beta$. 
Let $\langle\langle \alpha\rangle\rangle$ be the normal closure of $\langle \alpha\rangle$ in $\Delta$. 
Then $\langle\langle \alpha\rangle\rangle$ is the infinite dihedral group $ \langle \alpha, \beta\alpha\beta\rangle$. 
Let  $\langle\langle\beta\rangle\rangle$ be the normal closure of $\langle \beta\rangle$ in $\Delta$. 
Then $\langle\langle\beta\rangle\rangle$ is the infinite dihedral group $ \langle \beta, \alpha\beta\alpha\rangle$. 
Now $\Nu$ contains either $\langle\langle \alpha\rangle\rangle$ or $\langle\langle\beta\rangle\rangle$. 
As both $\langle\langle \alpha\rangle\rangle$ and $\langle\langle\beta\rangle\rangle$
have index 2 in $\Delta$, we have that $\Nu$ is either  $\langle\langle \alpha\rangle\rangle$ or $\langle\langle\beta\rangle\rangle$. 

Now suppose $\Nu$ does not contain a reflection.  
Then $\Nu$ is a subgroup of the group $\langle \alpha\beta\rangle$ of translations of $\Delta$.  
 Hence $\Nu = \langle (\alpha\beta)^m\rangle$ for some positive integer $m$. 
 Moreover each $m$ is possible, since $\langle \alpha\beta\rangle$ is a characteristic subgroup of $\Delta$. 
 \end{proof}
 
 Now assume that the structure group $\Gamma/\Nu\Kappa$ is a finite dihedral group. 
 The next theorem gives a necessary and sufficient condition for a pair of generators of $\Gamma/\Nu\Kappa$ 
to lift to a pair of Coxeter generators of $\Gamma/\Nu$ with respect to the the quotient map 
from $\Gamma/\Nu$ to $\Gamma/(\Nu\Kappa)$.

 \begin{theorem} 
Let $\Nu$ be a complete, $(n-1)$-dimensional, normal subgroup of an $n$-space group $\Gamma$ 
such that $\Gamma/\Nu$ is infinite dihedral, 
and let $\Kappa$ be the kernel of the action of $\Gamma$ on $V = \mathrm{Span}(\Nu)$, 
and suppose that the structure group $\Gamma/\Nu\Kappa$ is finite. 
Let $\gamma_1$ and $\gamma_1$ be elements of $\Gamma$ such that $\{\gamma_1\Nu\Kappa,\gamma_2\Nu\Kappa\}$ 
generates $\Gamma/\Nu\Kappa$. 
Then there exists elements $\delta_1$ and $\delta_2$ of $\Gamma$ 
such that $\{\gamma_1\Nu\Kappa, \gamma_2\Nu\Kappa\} = \{\delta_1\Nu\Kappa, \delta_2\Nu\Kappa\}$,  
and $\{\delta_1\Nu,\delta_2\Nu\}$ is a set of Coxeter generators of $\Gamma/\Nu$  
if and only if either
\begin{enumerate}
\item The order of $\Gamma/\Nu\Kappa$ is 1, or
\item The order of $\Gamma/\Nu\Kappa$ is 2, the group $\Kappa$ is infinite dihedral,  
and one of $\gamma_1\Nu\Kappa$ or $\gamma_2\Nu\Kappa$ is the identity element of $\Gamma/\Nu\Kappa$,  
and the other acts as the reflection of the closed interval $V^\perp/\Kappa$, or 
\item The order of $\Gamma/\Nu\Kappa$ is $2m$ for some positive integer $m$, 
the group $\Kappa$ is infinite cyclic, and both $\gamma_1\Nu\Kappa$ and $\gamma_2\Nu\Kappa$ 
act as reflections of  the circle $V^\perp/\Kappa$, and $\gamma_1\gamma_2\Nu\Kappa$ acts on $V^\perp/\Kappa$ 
as a rotation of $(360/m)^\circ$. 
\end{enumerate}
\end{theorem}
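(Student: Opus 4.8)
The plan is to reduce the entire problem to the induced action of $\Gamma/\Nu$ on the line $V^\perp$. Since $\Gamma/\Nu\Kappa$ is finite, Theorem 5 gives $\Kappa=\Nu^\perp$, so $\mathrm{Span}(\Kappa)=V^\perp$, which is $1$-dimensional because $\dim\Nu=n-1$. By Lemma 1 the group $\Gamma/\Nu$ acts effectively on $E^n/V\cong V^\perp$, realizing $\Gamma/\Nu$ as an infinite dihedral group of isometries of the line $E^1=V^\perp$; its sets of Coxeter generators are exactly the pairs of reflections $\{r,r'\}$ whose product $rr'$ is a generating (minimal length) translation. Under the isomorphism $\Nu\Kappa/\Nu\cong\Kappa$ of Theorem 6, the subgroup $\Nu\Kappa/\Nu$ is normal in $\Gamma/\Nu$ of finite index $|\Gamma/\Nu\Kappa|$, and $\Kappa$ is precisely the group of isometries of $E^1$ it induces. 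The quotient map $q:\Gamma/\Nu\to\Gamma/\Nu\Kappa$ is what must be analyzed, and the question is exactly which images of Coxeter pairs are achievable.

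First I would invoke Lemma 23 to split into three mutually exclusive cases according to the type of the normal subgroup $\Nu\Kappa/\Nu$ of the infinite dihedral group $\Gamma/\Nu$: (a) index $1$, so $\Gamma/\Nu\Kappa$ is trivial and $\Kappa$ is infinite dihedral; (b) an index-$2$ infinite dihedral subgroup $\langle\langle\alpha\rangle\rangle$, so $\Gamma/\Nu\Kappa$ has order $2$ and $\Kappa$ is infinite dihedral; (c) the cyclic subgroup $\langle(\alpha\beta)^m\rangle$ of index $2m$, so $\Gamma/\Nu\Kappa$ has order $2m$ and $\Kappa$ is infinite cyclic. These correspond exactly to the setups of statements (1), (2), (3); since (2) requires $\Kappa$ infinite dihedral while (3) requires $\Kappa$ infinite cyclic, the two order-$2$ possibilities (case (b), and case (c) with $m=1$) stay distinguished. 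In case (a), $q$ is trivial, $\gamma_1\Nu\Kappa=\gamma_2\Nu\Kappa$ is the identity, and any Coxeter generators of $\Gamma/\Nu$ serve as $\delta_1\Nu,\delta_2\Nu$, so the lift always exists and (1) holds.

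In case (b), Lemma 23 writes $\Nu\Kappa/\Nu=\langle\langle\alpha\rangle\rangle$ for Coxeter generators $\{\alpha,\beta\}$, so $q(\alpha)$ is the identity and $q(\beta)$ is the nontrivial element $\sigma$, which by Theorem 6 acts as the reflection of the closed interval $V^\perp/\Kappa$. This produces a lift exactly when $\{\gamma_1\Nu\Kappa,\gamma_2\Nu\Kappa\}=\{e,\sigma\}$. For necessity I would use that the reflections of $\Gamma/\Nu$ mapping to $e$ are the conjugates of $\alpha$ and those mapping to $\sigma$ are the conjugates of $\beta$; hence two reflections both mapping to $\sigma$ generate only the index-$2$ subgroup $\langle\langle\beta\rangle\rangle$ and cannot be Coxeter generators, forcing one image to be the identity, which is (2).

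The main work is case (c), where the only genuine computation lies. I would choose coordinates so that $\Kappa$ acts on $E^1$ by the translations $x\mapsto x+m\ell\,\integers$, making $V^\perp/\Kappa$ a circle of circumference $m\ell$, with $\ell$ the minimal translation length of $\Gamma/\Nu$. The key principle is that, because $\Kappa$ consists of translations, an element $g\in\Gamma/\Nu$ induces a reflection of the circle if and only if $g$ is a reflection of $E^1$, and then the whole coset $g\Kappa$ consists of reflections inducing that same reflection. Thus each Coxeter generator $\delta_i\Nu$ must map to a reflection of the circle, and $\delta_1\delta_2\Nu$, a generating translation, maps to a generating rotation, i.e.\ a rotation of $(360/m)^\circ$, giving necessity of (3). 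For sufficiency I lift each reflection $\gamma_i\Nu\Kappa$ of the circle to a reflection $r_{c_i}$ of $E^1$ in its coset; the hypothesis that $\gamma_1\gamma_2\Nu\Kappa$ is a rotation of $(360/m)^\circ$ becomes the congruence $c_1-c_2\equiv\pm\ell/2\pmod{m\ell/2}$, and since the reflection centers available within the coset of $\gamma_2$ are spaced $m\ell/2$ apart, I replace the representative of $\gamma_2$ by a reflection $r_{c_2'}$ in the same coset with $c_1-c_2'=\pm\ell/2$. Then $r_{c_1}r_{c_2'}$ is a generating translation, so $\{r_{c_1}\Nu,r_{c_2'}\Nu\}$ is a set of Coxeter generators mapping onto $\{\gamma_1\Nu\Kappa,\gamma_2\Nu\Kappa\}$. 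The main obstacle throughout is bookkeeping the distinction between an element of $\Gamma/\Nu$ being a reflection of $E^1$ and its image being a reflection of the quotient $1$-orbifold; once this ``reflections lift to reflections'' principle is in place, the remainder is the modular adjustment of reflection centers just described.
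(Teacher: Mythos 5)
Your proposal is correct and follows essentially the same route as the paper: both arguments rest on Lemma 23 to trichotomize according to whether $\Nu\Kappa/\Nu$ is all of $\Gamma/\Nu$, an index-two infinite dihedral subgroup, or the cyclic subgroup $\langle(\delta_1\delta_2)^m\rangle$, and both use the effectiveness of the structure group action (Theorem 6) to translate between group elements and their induced maps on $V^\perp/\Kappa$. The only real divergence is in the sufficiency of case (3), where the paper concludes from $\gamma_1\gamma_2\Nu\Kappa=(\delta_1\delta_2)^{\pm1}\Nu\Kappa$ that the two pairs are conjugate in the finite dihedral quotient, while you achieve the same matching upstairs on $E^1$ by shifting a reflection center within its $\Kappa$-coset by a multiple of $m\ell/2$; these are equivalent, and your version makes the lifting step explicit.
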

\begin{proof}
Suppose that $\delta_1$ and $\delta_2$ are elements of $\Gamma$ 
such that $\{\delta_1\Nu,\delta_2\Nu\}$ is a set of Coxeter generators of $\Gamma/\Nu$ 
and $\{\gamma_1\Nu\Kappa, \gamma_2\Nu\Kappa\} = \{\delta_1\Nu\Kappa, \delta_2\Nu\Kappa\}$. 
The group $\Nu\Kappa/\Nu$ is a normal subgroup of $\Gamma/\Nu$ of finite index, 
since $(\Gamma/\Nu)/(\Nu\Kappa/\Nu) = \Gamma/\Nu\Kappa$. 
We have that $\Nu\Kappa/\Nu \cong \Kappa$, since $\Nu\cap\Kappa =\{I\}$.   
Suppose that $\Kappa$ is infinite dihedral. 
Then the order of $\Gamma/\Nu\Kappa$ is 1 or 2  by Lemma 23. 
If the order of $\Gamma/\Nu\Kappa$ is 2, 
then $\Nu\Kappa/\Nu$ is either $\langle \delta_1\Nu, \delta_2\delta_1\delta_2\Nu\rangle$ 
or $\langle \delta_2\Nu, \delta_1\delta_2\delta_1\Nu\rangle$ by Lemma 23, and so 
either  $\delta_1\Nu\Kappa = \Nu\Kappa$ or  $\delta_2\Nu\Kappa = \Nu\Kappa$, and 
hence one of  $\gamma_1\Nu\Kappa$  or $\gamma_2\Nu\Kappa$ is  $\Nu\Kappa$ 
and the other acts as the reflection of the closed interval $V^\perp/\Kappa$, 
since $\Gamma/\Nu\Kappa$ acts effectively on $V^\perp/\Kappa$ by Theorem 6.

Suppose that $\Kappa$ is infinite cyclic. 
Then the order of $\Gamma/\Nu\Kappa$ is $2m$ for some positive integer $m$ 
and $\Nu\Kappa/\Nu = \langle (\delta_1\delta_2)^m\Nu\rangle$ by Lemma 23. 
Hence both $\delta_1\Nu\Kappa$ and $\delta_2\Nu\Kappa$ 
act as reflections of  the circle $V^\perp/(\Nu\Kappa/\Nu) = V^\perp/\Kappa$,  
and $\delta_1\delta_2\Nu\Kappa$ acts on $V^\perp/\Kappa$ 
as a rotation of $(360/m)^\circ$. 

Conversely, suppose $\delta_1,\delta_2$ are elements of $\Gamma$ such that 
$\{\delta_1\Nu,\delta_2\Nu\}$ is a set of Coxeter generators of $\Gamma/\Nu$. 
If $\Gamma/\Nu\Kappa$ is trivial, then obviously 
$\{\gamma_1\Nu\Kappa, \gamma_2\Nu\Kappa\} = \{\delta_1\Nu\Kappa, \delta_2\Nu\Kappa\}$. 
Suppose next that statement (2) holds.  
Then $\Nu\Kappa/\Nu$ is an infinite dihedral group, 
and either $\delta_1\Nu\Kappa$ or $\delta_2\Nu\Kappa$ is trivial in 
$(\Gamma/\Nu)/(\Nu\Kappa/\Nu) = \Gamma/\Nu\Kappa$ by Lemma 23. 
Hence $\{\gamma_1\Nu\Kappa,\gamma_2\Nu\Kappa\} = \{\delta_1\Nu\Kappa,\delta_2\Nu\Kappa\}$, 
since $\Gamma/\Nu\Kappa$ has order  2. 

Now suppose statement (3) holds. 
Then $\Nu\Kappa/\Nu = \langle (\delta_1\delta_2)^m\Nu\rangle$ by Lemma 23. 
Hence a generator of the infinite cyclic group $\Kappa$ acts on the line $V^\perp$ 
in the same way that $(\delta_1\delta_2)^m\Nu$ acts on $V^\perp$ as a translation. 
Therefore $\delta_1\delta_2\Nu\Kappa$ acts as a rotation of $(360/m)^\circ$ on the circle $V^\perp/\Kappa$.  
Hence $\gamma_1\gamma_2\Nu\Kappa = (\delta_1\delta_2)^{\pm 1}\Nu\Kappa$, 
since $\Gamma/\Nu\Kappa$ acts effectively on $V^\perp/\Kappa$. 
Therefore there exists $\gamma\in\Gamma$ such that 
$\{\gamma_1\Nu\Kappa,\gamma_2\Nu\Kappa\} = 
\{\gamma\delta_1\gamma^{-1}\Nu\Kappa,\gamma\delta_2\gamma^{-1}\Nu\Kappa\}$.  
Moreover $\{\gamma\delta_1\gamma^{-1}\Nu, \gamma\delta_2\gamma^{-1}\Nu\}$ 
is a set of Coxeter generators of $\Gamma/\Nu$.
\end{proof}


\section{Action of the Structure Group in the 2-Dimensional Case}  

Before we describe the action of the structure group in the 3-dimensional case, 
we will ``warm-up" with a description in the 2-dimensional case. 
We also apply the affine classification of co-Seifert geometric fibrations in \S 7 
to describe the affine classification of all the Seifert geometric fibrations of compact, connected, flat 2-orbifolds.  
We will denote a circle by $\mathrm{O}$ and a closed interval by $\mathrm{I}$. 

\medskip
\begin{table}  
\begin{tabular}{llllllll}
no.  & CN & fibr. & split & dual & split & grp. & structure group action \\
\hline 
1 & $\circ$   & $(\,\cdot\,)$ & Yes & $(\,\cdot\, )$ & Yes & $C_1$ & (idt., idt.)  \\
2 & $2222$ & $(-)$            & Yes & $(-)$          & Yes & $C_2$ & (ref., ref.)\\
3 & $**$       & $(-)$            & Yes & $[\,\cdot\,]$& Yes & $C_1$ & (idt., idt.)\\
4 & $\times\times$ & $(-)$ & No & $(\,\cdot\, )$ & Yes & $C_2$ & (2-rot., ref.)\\
5 & $*\times$  & $(-)$        & No   & $[\,\cdot\,]$ & Yes & $C_2$ & (2-rot, ref.)\\
6 & $*2222$ & $[-]$            & Yes & $[-]$          & Yes & $C_1$ & (idt., idt.)\\
7 & $22*$   & $(-)$             & Yes  & $[-]$         & Yes   &  $C_2$ & (ref., ref.) \\
8 & $22\times$ & $(-)$      & No   & $(-)$         & No    &  $D_2$ &(ref., ref.), (ref.$'$, 2-rot.) \\
9 & $2\hbox{$*$}22$ &  $[-]$  & Yes & $[-]$       & Yes  & $C_2$ & (ref., ref.)
\end{tabular}

\medskip
\caption{The Seifert and dual Seifert fibrations of the 2-space groups.}
\end{table}

Table 1 describes, via the generalized Calabi construction,  
all the Seifert and dual Seifert fibrations of a compact, connected, flat 2-orbifold up to affine equivalence.  
The first column lists the IT number of the corresponding 2-space group $\Gamma$.  
The second column lists the Conway name of the corresponding flat orbifold $E^2/\Gamma$.   
The third column lists the fiber $V/\Nu$ and base $V^\perp/(\Gamma/\Nu)$ 
of the Seifert fibration corresponding to a 1-dimensional, complete, normal subgroup $\Nu$ of $\Gamma$ 
with $V = \mathrm{Span}(\Nu)$. 
Parentheses indicates that the fiber is $\mathrm{O}$, 
and closed brackets indicates that the fiber is $\mathrm{I}$. 
A dot indicates that the base is $\mathrm{O}$ and a dash indicates 
that the base is $\mathrm{I}$. 
The group $\Nu$ is described in \S 10 of \cite{R-T}. 
For the first two rows of Table 1, the group $\Nu$ corresponds to the parameters $(a,b) = (1,0)$ 
in cases (1) and (2) of \S 10 of \cite{R-T}. 
The fourth column indicates whether or not the corresponding 
space group extension $1\to \Nu\to \Gamma\to \Gamma/\Nu\to 1$ splits. 
For a discussion of splitting, see \S 10 of \cite{R-T}. 
The fifth column lists the fiber $V^\perp/\Kappa$ and base $V/(\Gamma/\Kappa)$, 
with $\Kappa = \Nu^\perp$,  of the dual Seifert fibration. 
The sixth column indicates whether or not the corresponding 
space group extension $1\to \Kappa\to \Gamma\to \Gamma/\Kappa\to 1$ splits. 
The seventh column lists the isomorphism type of the structure group $\Gamma/(\Nu\Kappa)$ 
with $C_n$ indicating a cyclic group of order $n$,  
and $D_2$ indicating a dihedral group of order 4. 
The order of the structure groups for the 2-space groups with IT numbers 1 and 2 in Table 1 
were chosen to be as small as possible. 
See Examples 1 and 10 for the full range of structure groups for  2-space groups with IT numbers 1 and 2. 
The isomorphism types of the structure groups for the remaining 2-space groups are unique. 
The last column indicates how the structure group 
$\Gamma/(\Nu\Kappa)$ acts on the Cartesian product of the fibers $V/\Nu \times V^\perp/\Kappa$. 
We denote the identity map by idt., a halfturn by $2$-rot., and a reflection by ref. 
We denote the reflection of $\mathrm{O}$ orthogonal to ref.\ by ref.$'$. 

The actions of the structure group on the fibers of the Seifert fibrations  
are described by Example 2 and Theorem 7, except for the case when the structure group is a dihedral group of order 4. 
The problem with dihedral groups of order 4 is that it is not clear a priori 
which of the three nonidentity elements acts as a halfturn on a circle factor of $V/\Nu \times V^\perp/\Kappa$. 
See Example 12 for the description of the action in the case of Row 8 of Table 1.

\medskip
\noindent{\bf Example 11.} 
Let $\Gamma$ be the group in Table 1 with IT number 4. 
Then $E^2/\Gamma$ is a Klein bottle.  The structure group $G = \Gamma/\Nu\Kappa$ 
has order two. The fibers $V/\Nu$ and $V^\perp/\Kappa$ are both circles. 
The group $G$ acts by a halfturn on $V/\Nu$, since $(V/\Nu)/G =V/(\Gamma/\Kappa)$ is a circle. 
The group $G$ acts by a reflection on $V^\perp/\Kappa$, 
since $(V^\perp/\Kappa)/G=V^\perp/(\Gamma/\Nu)$ is a closed interval.

\medskip
\noindent{\bf Example 12.} 
Let $\Gamma$ be the group with IT number 8 in Table 1A of \cite{B-Z}. 
Then $\Gamma = \langle t_1, t_2, A, \beta\rangle$ 
where $t_i = e_i+I$ for $i=1,2$ are the standard translations, 
and $\beta = \frac{1}{2}e_1+\frac{1}{2}e_2 + B$, and 
$$A = \left(\begin{array}{rr} -1 & 0 \\ 0 & -1 \end{array}\right), \  \hbox{and}\ \ 
B = \left(\begin{array}{rr} 1 & 0 \\ 0 & -1 \end{array}\right).$$
The isomorphism type of $\Gamma$ is $22\times$ in Conway's notation or $pgg$ in IT notation.  
The orbifold $E^2/\Gamma$ is a projective pillow. 
The group $\Nu = \langle t_1\rangle$ is a complete normal subgroup of $\Gamma$, 
with $V= \mathrm{Span}(\Nu) = \mathrm{Span}\{e_1\}$. 
The flat orbifold $V/\Nu$ is a circle. 
Let $\Kappa = \Nu^\perp = \langle t_2\rangle$. 
Then $V^\perp/\Kappa$ is also a circle. 
The structure group $\Gamma/\Nu\Kappa$ is a dihedral group of order 4 
generated by $\Nu\Kappa A$ and $\Nu\Kappa \beta$. 
The element $\Nu\Kappa A$ acts as a reflection on $V/\Nu$ and on $V^\perp/\Kappa$.  
The action of $\Nu\Kappa A$ corresponds to the entry (ref., ref.) in Row 8 of Table 1. 
The element $\Nu\Kappa \beta$ acts on $V/\Nu$ as a halfturn and on $V^\perp/\Kappa$ as a reflection. 
The action of $\Nu\Kappa \beta$ corresponds to the product (2-rot., ref.$'$) of the entries in Row 8 of Table 1.  

\vspace{.05in}
As discussed in \S 10 of \cite{R-T}, the fibration and dual fibration in Rows 1, 2, 6, 8, 9 of Table 1 
are affinely equivalent.  
No other pair of fibrations in Table 1, with the same fibers and the same bases,  
are affinely equivalent, 
since the corresponding 2-space groups are nonisomorphic. 
We next apply the affine classification of co-Seifert geometric fibrations in \S 7 
to prove that every affine equivalence class of a Seifert geometric fibration of a compact, connected, flat 2-orbifold 
is represented by one of the fibrations described in Table 1. 

Suppose $\Delta$ and $\Mu$ are standard 1-space groups. 
We next describe $\mathrm{Iso}(\Delta,\Mu)$. 
Now $E^1/\Mu= \mathrm{O}$ or $\mathrm{I}$. 
The Lie group $\mathrm{Isom}(\mathrm{O})$ is isomorphic to $\mathrm{O}(2)$, 
and $\mathrm{Isom}(\mathrm{I})$ is a group of order 2 generated by the reflection ref.\ of $\mathrm{I}$ 
about its midpoint. 
Moreover $\mathrm{Aff}(\mathrm{O}) = \mathrm{Isom}(\mathrm{O})$ and 
$\mathrm{Aff}(\mathrm{I}) = \mathrm{Isom}(\mathrm{I})$, 
since length preserving affinities of $E^1$ are isometries. 
Hence for both isomorphism types of $\Mu$, the group $\mathrm{Out}(\Mu)$ has order 2 by Theorem 13. 
We represent  $\mathrm{Out}(\Mu)$ by the subgroup \{idt., ref.\} of $\mathrm{Isom}(E^1/\Mu)$ 
that is mapping isomorphically onto $\mathrm{Out}(\Mu)$ by $\Omega: \mathrm{Isom}(E^1/\Mu) \to \mathrm{Out}(\Mu)$. 

Assume first that $\Delta$ is infinite cyclic. 
The set $\mathrm{Iso}(\Delta,\Mu)$ consists of two elements 
corresponding to the pairs of inverse elements $\{$idt., idt.$\}$ and $\{$ref., ref.$\}$ 
of $\mathrm{Isom}(\mathrm{O})$ by Theorem 23. 
Thus there are two affine equivalence classes of fibrations of type $(\,\cdot\,)$ 
corresponding to the case that $\Mu$ is infinite cyclic,  
and two affine equivalence classes of fibrations of type $[\,\cdot\,]$ 
corresponding to the case that $\Mu$ is infinite dihedral. 
The fibration of type $(\,\cdot\,)$,  corresponding to $\{$idt., idt.$\}$, $\{$ref., ref.$\}$, 
is described in Row 1, 4, respectively, of Table 1 by Theorem 27.  
The fibration of type $[\,\cdot\,]$, corresponding to $\{$idt., idt.$\}$, $\{$ref., ref.$\}$, 
is described in Row 3, 5, respectively, of Table 1 by Theorem 27. 

Now assume both $\Delta$ and $\Mu$ are infinite dihedral. 
Then the set $\mathrm{Iso}(\Delta,\Mu)$ consists of three elements 
corresponding to the pairs of elements $\{\mathrm{idt.},\mathrm{idt.}\}$, 
$\{\mathrm{idt.}, \mathrm{ref.}\}$, $\{\mathrm{ref.}, \mathrm{ref.}\}$ of $\mathrm{Isom}(\mathrm{I}$) by Theorem 24. 
Thus there are three affine equivalence classes of fibrations of type $[ - ]$. 
The corresponding fibration of type $[ - ]$ is described in Row 6, 9, 7, respectively, of Table 1 by Theorem 28. 

Finally, assume that $\Delta$ is infinite dihedral and $\Mu$ is infinite cyclic. 
There are two conjugacy classes of isometries of $\mathrm{O}$ of order 2, 
the class of the halfturn 2-rot.\ of $\mathrm{O}$ and the class of a reflection ref.\ of $\mathrm{O}$. 
By Theorem 26, the set $\mathrm{Iso}(\Delta,\Mu)$ consists of six elements 
corresponding to the pairs of elements 
\{idt., idt.\}, \{idt., 2-rot.\}, \{idt., ref.\}, \{2-rot., 2-rot.\}, \{2-rot., ref.\}, \{ref., ref.\} of $\mathrm{Isom}(\mathrm{O})$.  
Only the pair \{ref., ref.\} falls into the case $E_1\cap E_2 \neq \{0\}$ of Theorem 26. 
Thus there are six affine equivalence classes of fibrations of type $(\,-\,)$. 
The corresponding fibration of type $(\,-\,)$ is described in Row 3, 5, 7, 4, 8, 2, respectively, of Table 1 by Theorem 28. 
Thus every affine equivalence class of Seifert geometric fibrations of a compact, connected, flat 2-orbifold 
is represented by one of the fibrations in Table 1.

\section{Action of the Structure Group in the 3-Dimensional Case} 

We are now ready to describe the action of the structure group for the generalized Calabi constructions 
corresponding to the Seifert and dual co-Seifert geometric fibrations of compact, connected, 
flat 3-orbifolds given in Table 1 of \cite{R-T}. 
We did not describe the co-Seifert fibration projections in Table 1 of \cite{R-T} 
because we did not have an efficient way of giving a description. 
Our generalized Calabi construction gives us a simple way to simultaneously describe 
both a Seifert fibration and its dual co-Seifert fibration of a compact, connected, flat 3-orbifold 
via Theorem 4 and Corollary 1.

We will organize our description into 17 tables, 
one for each 2-space group type of the generic fiber of the co-Seifert fibration. 
We will go through the 2-space groups in reverse order because 
the complexity of the description generally decreases as the IT number of the 2-space group increases. 
At the same time, we will apply the affine classification of co-Seifert geometric fibrations in \S 7 
to prove that the computer generated affine classification of the co-Seifert geometric fibrations 
described in Table 1 of \cite{R-T} is correct and complete.  

In each of these 17 tables, the first column lists the IT number of the 3-space group $\Gamma$. 
The second column lists the generic fibers $(V/\Nu, V^\perp/\Kappa$) of the co-Seifert and Seifert fibrations 
corresponding to a 2-dimensional, complete, normal subgroup $\Nu$ of $\Gamma$ with $V = \mathrm{Span}(\Nu)$ 
and $\Kappa = \Nu^\perp$. 
We will denote a circle by $\mathrm{O}$ and a closed interval by $\mathrm{I}$. 
The third column lists the isomorphism type of the structure group $\Gamma/\Nu\Kappa$,  
with $C_n$ indicating a cyclic group of order $n$,  
and $D_n$ indicating a dihedral group of order $2n$. 
The fourth column lists the quotients $(V/(\Gamma/\Kappa),V^\perp/(\Gamma/\Nu))$ 
under the action of the structure group.  
Note that $(V/(\Gamma/\Kappa), V^\perp/(\Gamma/\Nu))$  are the bases of the Seifert and co-Seifert fibrations. 
The fifth column indicates how generators of the 
structure group act on the Cartesian product $V/\Nu \times V^\perp/\Kappa$. 
The structure group action was derived from the standard affine representation 
of $\Gamma$ in Table 1B of \cite{B-Z}. 
We denote a rotation of $\mathrm{O}$ of $(360/n)^\circ$ by $n$-rot., 
and a reflection of $\mathrm{O}$ or $\mathrm{I}$ by ref. 
We denote the identity map by idt. 
The sixth column lists the classifying pairs for the co-Seifert fibrations. 
For $D_n$ actions,  ($\alpha$, ref.), ($\gamma$, $n$-rot.), the classifying pair is $\{\alpha,\beta\}$ 
with $\gamma = \alpha\beta$, except for the cases $n = 3, 6$ at the end of Tables 17 and 18,  
which require an affine deformation.  
Tables 1 -- 18 were first computed by hand, and then they were double checked by a computer calculation.

Let $C_\infty$ be the standard infinite cyclic 1-space group $\langle e_1+I\rangle$, 
and let $D_\infty$ be the standard infinite dihedral 1-space group $\langle e_1+I, -I\rangle$. 
If $\Mu$ is a 2-space group, 
we define the {\it symmetry group} of the flat orbifold $E^2/\Mu$ 
by $\mathrm{Sym}(\Mu) = \mathrm{Isom}(E^2/\Mu)$. 

(17)  The 2-space group $\Mu$ with IT number 17 is $\ast632$ in Conway's notation or $p6m$ in IT notation.  
See space group 4/4/1/1 in Table 1A of \cite{B-Z} for the standard affine representation of $\Mu$. 
The flat orbifold $E^2/\Mu$ is a $30^\circ-60^\circ$ right triangle. 

\begin{lemma} 
If $\Mu$ is the 2-space group $\ast 632$, 
then $\mathrm{Sym}(\Mu) = \mathrm{Aff}(\Mu) =\{\rm idt.\}$, 
and $\Omega:  \mathrm{Aff}(\Mu) \to \mathrm{Out}(\Mu)$ is an isomorphism. 
\end{lemma}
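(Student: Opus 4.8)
The plan is to show that $\mathrm{Sym}(\Mu) = \mathrm{Isom}(E^2/\Mu)$ is trivial, that $\mathrm{Aff}(\Mu) = \mathrm{Isom}(E^2/\Mu)$, and that $\Omega$ is injective; surjectivity of $\Omega$ is already known from Theorem 13, so $\Omega$ will then be an isomorphism onto a trivial group. First I would invoke Corollary 4 (equivalently Corollary 5): the 2-space group $\ast 632$ has trivial center, since it is $\integers$-irreducible (its point group is the full dihedral symmetry group of order $12$, fixing only the origin, so $\mathrm{Fix}(\Pi) = \{0\}$ and hence $\mathrm{Span}(Z(\Mu)) = \{0\}$ by Lemma 9). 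Therefore $\mathrm{Isom}(E^2/\Mu)$ is finite by Corollary 5, and its identity component $K$ is trivial by Theorem 11. This already removes the continuous part of the isometry group.

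Next I would show $\mathrm{Isom}(E^2/\Mu)$ is in fact trivial. The cleanest route is the observation that $E^2/\Mu$ is a $30^\circ$-$60^\circ$-$90^\circ$ right triangle (a flat orbifold with mirror boundary and three corner reflectors of distinct cone angles $\pi/2$, $\pi/3$, $\pi/6$). An isometry of this orbifold must permute its three corner points, but the three corners are geometrically distinguishable because their cone angles are pairwise distinct. Hence any isometry fixes each corner, and an isometry of a triangle fixing all three vertices is the identity. Thus $\mathrm{Sym}(\Mu) = \mathrm{Isom}(E^2/\Mu) = \{\mathrm{idt.}\}$. Alternatively, and more in the spirit of the algebraic machinery, one computes $\mathrm{Out}_E(\Mu)$ via Lemma 13: $\mathrm{Isom}(E^2/\Mu) \cong N_E(\Mu)/\Mu$ has order $|\mathrm{Out}_E^1(\Mu)|\cdot[\Pi_E:\Pi]$, and for $\ast 632$ the point group $\Pi$ is already the full normalizer stabilizing the hexagonal lattice, so $\Pi_E = \Pi$ and $\mathrm{Out}_E^1(\Mu)$ is trivial (the center being trivial kills the translational outer part). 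Either argument gives $\mathrm{Isom}(E^2/\Mu) = \{\mathrm{idt.}\}$.

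For the affine statement, I would argue $\mathrm{Aff}(\Mu) = \mathrm{Isom}(E^2/\Mu)$ by showing that $\Pi_A = \Pi_E$, so that $\mathrm{Out}(\Mu) = \mathrm{Out}_E(\Mu)$ by Lemma 13. The hexagonal lattice $L$ of $\ast 632$ has the property that its full affine symmetry group $\mathrm{GL}(L)$ coincides with the symmetries realized isometrically, because the point group $\Pi$ already equals the full automorphism group of the hexagonal lattice up to the isometries; concretely, $\Pi_A$ normalizes $\Pi$ in $\mathrm{GL}(L)$, and since $\Pi$ is the maximal finite subgroup (the crystal class $6mm$ is holohedral for the hexagonal system), the normalizer of $\Pi$ in $\mathrm{GL}(L)$ is $\Pi$ itself, giving $\Pi_A = \Pi = \Pi_E$. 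Then $\mathrm{Out}(\Mu) = \mathrm{Out}_E(\Mu)$ is trivial, so by Theorem 13 and Corollary 3 we get $\mathrm{Aff}(\Mu) = K = \{\mathrm{idt.}\}$ as well, and $\Omega$ is a map between trivial groups, hence an isomorphism.

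The main obstacle will be verifying $\Pi_A = \Pi$ cleanly, that is, checking that no affine (non-isometric) symmetry of the orbifold arises from an element of $\mathrm{GL}(L)\setminus\Pi$ normalizing $\Pi$. The conceptual reason this holds is holohedry: $6mm$ is the holohedral point group of the hexagonal lattice, so it is its own normalizer in $\mathrm{GL}(L)$; I would cite the standard affine representation in Table 1A/1B of \cite{B-Z} together with Lemma 13 to make this rigorous rather than grinding through the matrix computation. Once $\Pi_A = \Pi_E = \Pi$ is established, all three assertions follow formally from Lemma 13, Theorem 13, and Corollaries 3 and 5.
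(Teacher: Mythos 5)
Your proposal is correct and follows essentially the same route as the paper: the corner points of the $30^\circ$-$60^\circ$ right triangle have distinct angles and so are individually fixed, forcing $\mathrm{Sym}(\Mu)=\{\mathrm{idt.}\}$; triviality of $Z(\Mu)$ via Lemma 9 kills the identity component $K$; and $\mathrm{Out}_E(\Mu)=\mathrm{Out}(\Mu)$ (the paper cites Lemma 13 and Table 5A of \cite{B-Z}, which encodes exactly the holohedry fact $\Pi_A=\Pi_E=\Pi$ you argue for) combines with Theorems 11--13 to give $\mathrm{Aff}(\Mu)=\mathrm{Sym}(\Mu)=\{\mathrm{idt.}\}$. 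Your write-up is just a more detailed unpacking of the same chain of citations.
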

\begin{proof} $\mathrm{Sym}(\Mu) =  \{\rm idt.\}$, 
since a symmetry of $E^2/\Mu$ fixes each corner point. 
We have that $Z(\Mu) = \{I\}$ by Lemma 9. 
Hence $\Omega: \mathrm{Sym}(\Mu) \to \mathrm{Out}_E(\Mu)$ is an isomorphism 
by Theorems 11 and 12. 
We have that $\mathrm{Out}_E(\Mu) = \mathrm{Out}(\Mu)$  by Lemma 13 and Table 5A of \cite{B-Z},  
and $\Omega: \mathrm{Aff}(\Mu) \to \mathrm{Out}(\Mu)$ is an isomorphism by Theorems 11 and 13. 
Therefore $\mathrm{Sym}(\Mu) = \mathrm{Aff}(\Mu)$. 
\end{proof}

\begin{theorem}  
Let $\Mu$ be the 2-space group $\ast 632$. 
Then $\mathrm{Iso}(C_\infty,\Mu)$ has only one element,   
corresponding to the pair of elements {\rm \{\rm idt., idt.\}} of $\mathrm{Sym}(\Mu)$ by Theorem 23, 
and  $\mathrm{Iso}(D_\infty,\Mu)$ has  only one element,   
corresponding to the pair of elements {\rm \{idt., idt.\}} of $\mathrm{Sym}(\Mu)$ by Theorem 24.
The corresponding co-Seifert fibrations are described in Table 2 by Theorems 27 and 28. 
\end{theorem}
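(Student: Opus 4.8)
The plan is to apply the machinery developed in Sections 7 and 9, specialized to the 2-space group $\Mu = {\ast}632$, whose isometry and affinity groups were just computed in the preceding lemma. First I would recall from that lemma that $\mathrm{Sym}(\Mu) = \mathrm{Aff}(\Mu) = \{\mathrm{idt.}\}$ and that $\Mu$ has trivial center, $Z(\Mu) = \{I\}$. The triviality of the center is exactly the hypothesis needed to invoke the bijections $\omega: \mathrm{Iso}(\Delta,\Mu) \to \mathrm{Out}(\Delta,\Mu)$ of Theorem 22 (for $\Delta = C_\infty$) and Corollary 10 (for $\Delta = D_\infty$), and the triviality of $\mathrm{Out}(\Mu)$ forces each of these sets to be a singleton.

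For the infinite cyclic case, I would apply Theorem 23: since $\mathrm{Iso}(C_\infty,\Mu)$ is in one-to-one correspondence with the set of conjugacy classes of pairs of inverse elements of $\mathrm{Out}(\Mu)$ of finite order, and $\mathrm{Out}(\Mu)$ has order one, the only such pair is $\{\mathrm{idt.}, \mathrm{idt.}\}$. Hence $\mathrm{Iso}(C_\infty,\Mu)$ has exactly one element. For the infinite dihedral case, I would apply Theorem 24, whose hypothesis that $\Mu$ have trivial center is satisfied: $\mathrm{Iso}(D_\infty,\Mu)$ corresponds to conjugacy classes of pairs of elements of $\mathrm{Out}(\Mu)$ of order $1$ or $2$ whose product has finite order; again, with $\mathrm{Out}(\Mu)$ trivial, the only pair is $\{\mathrm{idt.}, \mathrm{idt.}\}$, so $\mathrm{Iso}(D_\infty,\Mu)$ is likewise a singleton.

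Having identified each set as a single isomorphism class with its classifying pair $\{\mathrm{idt.}, \mathrm{idt.}\}$, I would then read off the corresponding co-Seifert fibration data. Theorems 27 and 28 are the lifting criteria that translate the action of the structure group $\Gamma/\Nu\Kappa$ (as recorded by the classifying pair in $\mathrm{Sym}(\Mu)$) into the explicit generic fibers, bases, and generator actions displayed in the tables; applying them with the trivial classifying pair yields precisely the single row of Table 2. Since $E^2/\Mu$ is a $30^\circ$–$60^\circ$ right triangle with no nontrivial symmetries, the structure group must act trivially on the $V^\perp/\Kappa$ factor, which is consistent with the singleton count.

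The bulk of this argument is bookkeeping: matching the hypotheses of the cited theorems and confirming that a trivial $\mathrm{Out}(\Mu)$ collapses every classifying set to a point. The only genuine subtlety—and the step I expect to require the most care—is verifying that the dihedral case truly falls under Theorem 24 rather than needing the more delicate fiber analysis of Theorem 25 and Theorem 26. That is, one must check that $E_1 \cap E_2 = \{0\}$ (equivalently, that no affine deformation parameter appears), which holds here because $Z(\Mu) = \{I\}$ makes $\mathrm{Span}(Z(\Mu))$ trivial and hence both $(-1)$-eigenspaces $E_1, E_2$ vanish. With that confirmed, the enumeration is forced and the fibration descriptions follow directly from Table 2.
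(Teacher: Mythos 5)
Your proof is correct and follows essentially the same route as the paper: Lemma 24 gives $\mathrm{Sym}(\Mu)=\mathrm{Aff}(\Mu)=\{\mathrm{idt.}\}$ with $\mathrm{Out}(\Mu)$ trivial and $Z(\Mu)=\{I\}$, after which Theorems 23 and 24 collapse each classifying set to the single pair $\{\mathrm{idt.},\mathrm{idt.}\}$, and Theorems 27 and 28 yield Table 2. (One minor imprecision: the trivial-center hypothesis is needed only for the dihedral case via Corollary 10 and Theorem 24; Theorem 22, and hence Theorem 23, holds for any $(n-1)$-space group $\Mu$.)
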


\begin{table}  
\begin{tabular}{llllll}
no. & fibers & grp. & quotients &  grp. action & classifying pair \\
\hline 
183  & $(\ast632, \mathrm{O})$ & $C_1$ & $(\ast632, \mathrm{O})$ & (idt., idt.)  & \{idt., idt.\} \\
191  & $(\ast632, \mathrm{I})$   & $C_1$ & $(\ast632, \mathrm{I})$ & (idt., idt.) & \{idt., idt.\} \end{tabular}

\medskip
\caption{The classification of the co-Seifert fibrations of 3-space groups 
whose co-Seifert fiber is of type $\ast 632$ with IT number 17}
\end{table}

(16) The 2-space group $\Mu$ with IT number 16 is $632$ in Conway's notation or $p6$ in IT notation. 
See space group 4/3/1/1 in Table 1A of \cite{B-Z} for the standard affine representation of $\Mu$. 
The flat orbifold $E^2/\Mu$  is a turnover with 3 cone points 
obtained by gluing together two congruent $30^\circ-60^\circ$ right triangles along their boundaries. 
The $632$ turnover is orientable. 
Let c-ref.\ denote the {\it central reflection} between the two triangles. 

\begin{lemma} 
If $\Mu$ is the 2-space group $632$, 
then $\mathrm{Sym}(\Mu) = \mathrm{Aff}(\Mu) =$ \{\rm idt., c-ref.\}, 
and $\Omega:  \mathrm{Aff}(\Mu) \to \mathrm{Out}(\Mu)$ is an isomorphism. 
\end{lemma}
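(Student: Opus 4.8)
The plan is to follow the template of the preceding lemma on $\ast632$, separating the argument into an algebraic backbone supplied by the Lie-theoretic machinery of \S 5 and a purely geometric count of the symmetries of the turnover.

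First I would establish that $Z(\Mu) = \{I\}$. The point group $\Pi$ of $632$ contains a sixfold rotation, whose only fixed vector is $0$, so $\mathrm{Fix}(\Pi) = \{0\}$ and hence $\mathrm{Span}(Z(\Mu)) = \{0\}$ by Lemma 9. By Theorem 11 the connected component $K$ of the identity of the compact Lie group $\mathrm{Sym}(\Mu) = \mathrm{Isom}(E^2/\Mu)$ is then a torus of dimension $0$, that is, trivial. With $K$ trivial, Theorem 12 shows that $\Omega$ restricts to an isomorphism $\mathrm{Sym}(\Mu) \to \mathrm{Out}_E(\Mu)$, and Theorem 13 shows that $\Omega:\mathrm{Aff}(\Mu) \to \mathrm{Out}(\Mu)$ is an isomorphism, since in each case the kernel is $K$.

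Next I would identify $\mathrm{Out}_E(\Mu)$ with $\mathrm{Out}(\Mu)$. By Lemma 13 this reduces to checking that $\Pi_E = \Pi_A$, which I would read off from Table 5A of \cite{B-Z}; combined with the two isomorphisms above this yields $\mathrm{Sym}(\Mu) = \mathrm{Aff}(\Mu)$, so it remains only to compute the isometry group of $E^2/\Mu$ geometrically.

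Finally I would count the symmetries of the turnover directly. Its three cone points have the distinct orders $6$, $3$, and $2$, so every isometry must fix each of them. An orientation-preserving isometry of the turnover fixing all three cone points is the identity, while the central reflection c-ref.\ exchanging the two congruent $30^\circ$--$60^\circ$ triangular halves is an orientation-reversing isometry fixing the three cone points on the common boundary circle; these exhaust the possibilities, so $\mathrm{Sym}(\Mu)$ is the two-element group generated by c-ref. The main obstacle is this last step: one must verify both that c-ref.\ is genuinely realized as an isometry of the flat orbifold $E^2/\Mu$ and that no further orientation-preserving symmetry exists, i.e.\ that fixing the three cone points forces the identity on the orientation-preserving part. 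The Lie-theoretic reduction in the first two paragraphs is essentially bookkeeping once $Z(\Mu) = \{I\}$ is in hand.
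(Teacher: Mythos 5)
Your proposal is correct and follows essentially the same route as the paper: the paper's proof likewise reduces the Lie-theoretic claims to Theorems 11, 12, 13 and Lemma 13 exactly "as in Lemma 24" (the $\ast632$ case), and identifies $\mathrm{Sym}(\Mu)=\{\hbox{idt., c-ref.}\}$ by observing that a symmetry must fix each cone point. Your added justifications (the sixfold rotation forcing $\mathrm{Fix}(\Pi)=\{0\}$, and the distinct cone orders $6,3,2$ forcing each cone point to be fixed) merely make explicit what the paper leaves implicit.
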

\begin{proof} $\mathrm{Sym}(\Mu)$ = \{\rm idt., c-ref.\}, 
since a symmetry of $E^2/\Mu$ fixes each cone point. 
We have that $\mathrm{Sym}(\Mu) = \mathrm{Aff}(\Mu)$ 
and $\Omega:  \mathrm{Aff}(\Mu) \to \mathrm{Out}(\Mu)$ is an isomorphism as in Lemma 24. 
\end{proof}

\begin{theorem}  
Let $\Mu$ be the 2-space group $632$. 
Then $\mathrm{Iso}(C_\infty,\Mu)$ has two elements,   
corresponding to the pairs of elements {\rm \{idt., idt.\}, \{c-ref., c-ref.\}} of $\mathrm{Sym}(\Mu)$ by Theorem 23, 
and $\mathrm{Iso}(D_\infty,\Mu)$ has three elements,  
corresponding to the pairs of elements {\rm \{idt., idt.\}, \{idt., c-ref.\}, \{c-ref., c-ref.\}} of $\mathrm{Sym}(\Mu)$  by Theorem 24.
The corresponding co-Seifert fibrations are described in Table 3 by Theorems 27 and 28.  
\end{theorem}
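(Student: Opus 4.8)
The plan is to reduce everything to the structure of $\mathrm{Out}(\Mu)$ and then invoke the enumeration theorems already proved. First I would record the two facts established in Lemma 25: that $\mathrm{Out}(\Mu)$ is a group of order $2$, with nonidentity element the class of the central reflection c-ref., and that $Z(\Mu) = \{I\}$. The triviality of the center is what licenses the use of Theorem 24 (which assumes trivial center) in the dihedral case, while Theorem 23 applies with no center hypothesis in the cyclic case. Since $\Omega:\mathrm{Aff}(\Mu)\to\mathrm{Out}(\Mu)$ is an isomorphism by Lemma 25, I may compute directly with the representatives \{idt., c-ref.\} of $\mathrm{Out}(\Mu)$ sitting inside $\mathrm{Sym}(\Mu)$.

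For the cyclic case I would apply Theorem 23: the set $\mathrm{Iso}(C_\infty,\Mu)$ is in bijection with conjugacy classes of pairs of mutually inverse, finite-order elements of $\mathrm{Out}(\Mu)$. As $\mathrm{Out}(\Mu)$ has order $2$ and is abelian, every element has finite order, is its own inverse, and forms a one-element conjugacy class; hence the only such pairs are \{idt., idt.\} and \{c-ref., c-ref.\}, giving exactly two elements. For the dihedral case I would apply Theorem 24: the set $\mathrm{Iso}(D_\infty,\Mu)$ is in bijection with conjugacy classes of unordered pairs of elements of order $1$ or $2$ whose product has finite order. Since every element of the order-$2$ group $\mathrm{Out}(\Mu)$ has order $1$ or $2$ and every product automatically has finite order, all three unordered pairs \{idt., idt.\}, \{idt., c-ref.\}, \{c-ref., c-ref.\} qualify, and conjugacy is again trivial, giving exactly three elements.

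The remaining, genuinely computational, task is to match each abstract classifying pair to the concrete row of Table 3. For this I would realize each isomorphism class by an actual pair $(\Gamma,\Nu)$, take $\Kappa=\Nu^\perp$ (which exists by Corollaries 2 and 5, since $Z(\Nu)=\{I\}$), and read off the structure group $\Gamma/\Nu\Kappa$ and its action on $V/\Nu\times V^\perp/\Kappa$ from the standard affine representation of $\Gamma$ in Table 1B of \cite{B-Z}. Theorem 27 (cyclic) and Theorem 28 (dihedral) then govern precisely when a generator, respectively a pair of generators, of $\Gamma/\Nu\Kappa$ lifts to a generator, respectively a set of Coxeter generators, of $\Gamma/\Nu$, and hence pin down which structure-group element acts as a rotation and which as a reflection on the circle or interval $V^\perp/\Kappa$. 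Matching the resulting actions against the entries of Table 3 confirms the stated correspondence and simultaneously verifies the computer-generated classification.

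I expect the main obstacle to be this last identification step rather than the counting. The counting is immediate once $|\mathrm{Out}(\Mu)|=2$ is known, but correctly reading the geometric action of c-ref. on the two factors — in particular tracking that c-ref. is the orientation-reversing central reflection of the $632$ turnover and determining its effect on the Seifert and co-Seifert bases $V/(\Gamma/\Kappa)$ and $V^\perp/(\Gamma/\Nu)$ — requires care, and would follow the template already worked out for the two-dimensional case earlier in the paper.
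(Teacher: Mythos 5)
Your proposal is correct and follows essentially the same route as the paper: the theorem is proved there by citing Lemma 25 (which gives $\mathrm{Sym}(\Mu)=\mathrm{Aff}(\Mu)=\{\hbox{idt., c-ref.}\}$, $\Omega$ an isomorphism, and — via the argument of Lemma 24 — trivial center), then invoking Theorems 23 and 24 for the counts and Theorems 27 and 28 for the identification with the rows of Table 3, exactly as you describe. Your only deviation is cosmetic: the fact $Z(\Mu)=\{I\}$ is not stated in Lemma 25 itself but is established inside its proof (by reference to Lemma 24 and Lemma 9), and you are right that it is precisely this fact that licenses the use of Theorem 24.
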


\begin{table} 
\begin{tabular}{llllll}
no. & fibers & grp. & quotients &  grp. action & classifying pair \\
\hline 
168 &  $(632, \mathrm{O})$ & $C_1$ & $(632, \mathrm{O})$ & (idt., idt.) & \{idt., idt.\}  \\
175 &  $(632, \mathrm{I})$    & $C_1$ & $(632, \mathrm{I})$ & (idt., idt.) & \{idt., idt.\} \\
177 &  $(632, \mathrm{O})$ & $C_2$ & $(\ast632, \mathrm{I})$ & (c-ref., ref.)  & \{c-ref., c-ref.\} \\
184 &  $(632, \mathrm{O})$    & $C_2$ & $(\ast632, \mathrm{O})$ & (c-ref., 2-rot.) & \{c-ref., c-ref.\} \\
192 &  $(632, \mathrm{I})$    & $C_2$ & $(\ast632, \mathrm{I})$ & (c-ref., ref.) & \{idt., c-ref.\} 
\end{tabular}

\medskip
\caption{The classification of the co-Seifert fibrations of 3-space groups 
whose co-Seifert fiber is of type $632$ with IT number 16}
\end{table}

(15)  The 2-space group $\Mu$ with IT number 15 is $3{\ast}3$ in Conway's notation or $p31m$ in IT notation. 
See space group 4/2/2/1 in Table 1A of \cite{B-Z} for the standard affine representation of $\Mu$. 
The  flat orbifold $E^2/\Mu$ is a cone with one $120^\circ$ cone point and one  $60^\circ$ corner point   
obtained by gluing together two congruent $30^\circ-60^\circ$ right triangles along their sides 
opposite the $30^\circ$ and $90^\circ$ angles. 
Let c-ref.\ denote the {\it central reflection} between the two triangles. 

\begin{lemma} 
If $\Mu$ is the 2-space group $3\ast 3$, 
then $\mathrm{Sym}(\Mu) = \mathrm{Aff}(\Mu) =$ \{\rm idt., c-ref.\}, 
and $\Omega:  \mathrm{Aff}(\Mu) \to \mathrm{Out}(\Mu)$ is an isomorphism. 
\end{lemma}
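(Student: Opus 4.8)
The plan is to follow the template already used for Lemmas 24 and 25, because the algebraic half of the argument is word-for-word the same and only the geometric computation of $\mathrm{Sym}(\Mu)$ is genuinely new. So I would split the proof into a geometric step (determining the symmetry group of $E^2/\Mu$) and an algebraic step (promoting symmetries to affinities and identifying everything with $\mathrm{Out}(\Mu)$ via $\Omega$), and I would compress the latter by citing Lemma 24.

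First I would pin down that $\mathrm{Sym}(\Mu)$ consists of exactly the identity and the central reflection. The flat orbifold $E^2/\Mu$ has a unique interior cone point $p$, of cone angle $120^\circ$, and a unique corner point $q$, of angle $60^\circ$, on its mirror boundary. Any isometry of $E^2/\Mu$ sends singular points to singular points of the same local type, so it must fix both $p$ and $q$ and preserve the mirror boundary as a set. The geodesic segment joining $p$ to $q$ is then preserved with both endpoints fixed, so on a neighborhood of this segment the isometry is either the identity or the reflection across it; the reflection across it is precisely the central reflection c-ref.\ interchanging the two $30^\circ$-$60^\circ$-$90^\circ$ triangles. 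Hence $\mathrm{Sym}(\Mu)$ is the order-two group generated by c-ref., exactly as in the $632$ case of Lemma 25.

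For the algebraic step I would argue exactly as in Lemma 24. The point group $\Pi$ of $\Mu$ is the dihedral group of order $6$ acting on $E^2$ with $\mathrm{Fix}(\Pi)=\{0\}$, so $\mathrm{Span}(Z(\Mu))=\{0\}$ and therefore $Z(\Mu)=\{I\}$ by Lemma 9. Then Theorems 11 and 12 give that the identity component $K$ of $\mathrm{Isom}(E^2/\Mu)$ is trivial and that $\Omega:\mathrm{Sym}(\Mu)\to\mathrm{Out}_E(\Mu)$ is an isomorphism; Lemma 13 together with Table 5A of \cite{B-Z} gives $\Pi_E=\Pi_A$, hence $\mathrm{Out}_E(\Mu)=\mathrm{Out}(\Mu)$; and Theorems 11 and 13 give that $\Omega:\mathrm{Aff}(\Mu)\to\mathrm{Out}(\Mu)$ is an isomorphism. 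Since $\mathrm{Sym}(\Mu)\subseteq\mathrm{Aff}(\Mu)$ and both groups are carried isomorphically by $\Omega$ onto the single group $\mathrm{Out}_E(\Mu)=\mathrm{Out}(\Mu)$, it follows that $\mathrm{Sym}(\Mu)=\mathrm{Aff}(\Mu)=\{\text{idt., c-ref.}\}$, completing the lemma.

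The main obstacle is the geometric step, and specifically two points within it: confirming that c-ref.\ really is a well-defined isometry of the quotient orbifold (not merely of one triangle), and ruling out any extra symmetry by showing that the constraints ``fix $p$, fix $q$, preserve the mirror'' leave no rotational freedom about the cone point. The cleanest way to secure this is the fixed-geodesic argument sketched above, which reduces a prospective symmetry to its germ along $[p,q]$; once that reduction is justified, the remaining verifications are routine and the algebraic conclusion is immediate from the cited results.
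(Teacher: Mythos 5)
Your proposal is correct and follows essentially the same route as the paper: the paper's proof likewise asserts $\mathrm{Sym}(\Mu)=\{\hbox{idt., c-ref.}\}$ on the grounds that a symmetry must fix the cone point and the corner point, and then disposes of the algebraic half by the phrase ``as in Lemma 24,'' i.e.\ exactly the chain $Z(\Mu)=\{I\}$ (Lemma 9), Theorems 11--13, and Lemma 13 with Table 5A of \cite{B-Z} that you spell out. Your fixed-geodesic argument is just a more explicit justification of the paper's one-line geometric claim, so there is nothing further to add.
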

\begin{proof} $\mathrm{Sym}(\Mu)$ = \{\rm idt., c-ref.\}, 
since a symmetry of $E^2/\Mu$ fixes the cone point and the corner point. 
We have that $\mathrm{Sym}(\Mu) = \mathrm{Aff}(\Mu)$ and $\Omega:  \mathrm{Aff}(\Mu) \to \mathrm{Out}(\Mu)$ is an isomorphism as in Lemma 24. 
\end{proof}

\begin{theorem}  
Let $\Mu$ be the 2-space group \hbox{$3\ast 3$}. 
Then $\mathrm{Iso}(C_\infty,\Mu)$ has two elements,   
corresponding to the pairs of elements {\rm \{idt., idt.\}, \{c-ref., c-ref.\}} of $\mathrm{Sym}(\Mu)$ by Theorem 23,  
and $\mathrm{Iso}(D_\infty,\Mu)$ has three elements,  
corresponding to the pairs of elements {\rm \{idt., idt.\}, \{idt., c-ref.\}, \{c-ref., c-ref.\}} of $\mathrm{Sym}(\Mu)$  by Theorem 24.
The corresponding co-Seifert fibrations  are described in Table 4 by Theorems 27 and 28.  
\end{theorem}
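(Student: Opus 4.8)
The plan is to follow the exact template established in the analogous Theorems 28 and 30 for the space groups $\ast632$ and $632$, since the structural ingredients are identical once Lemma 26 is in hand. The proof reduces to two applications of the classification theorems from \S 7, namely Theorem 23 for the infinite cyclic case $\mathrm{Iso}(C_\infty,\Mu)$ and Theorem 24 for the infinite dihedral case $\mathrm{Iso}(D_\infty,\Mu)$. By Lemma 26, the symmetry group $\mathrm{Sym}(\Mu) = \mathrm{Aff}(\Mu) = \{\mathrm{idt.}, \mathrm{c\text{-}ref.}\}$ is cyclic of order $2$, and $\Mu = 3\ast 3$ has trivial center by Lemma 9, so both classification theorems apply with $\Mu$ playing the role of the $(n-1)$-space group.

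First I would invoke Theorem 23: since $\mathrm{Out}(\Mu) \cong \mathrm{Sym}(\Mu)$ is a group of order $2$, the set $\mathrm{Iso}(C_\infty,\Mu)$ is in one-to-one correspondence with the conjugacy classes of pairs of inverse elements of $\mathrm{Out}(\Mu)$ of finite order. In an abelian group of order $2$ every element is its own inverse, so the inverse pairs are exactly $\{\mathrm{idt.}, \mathrm{idt.}\}$ and $\{\mathrm{c\text{-}ref.}, \mathrm{c\text{-}ref.}\}$, giving two elements. Next I would invoke Theorem 24 (applicable because $Z(\Mu)$ is trivial): the set $\mathrm{Iso}(D_\infty,\Mu)$ corresponds to the conjugacy classes of unordered pairs of elements of $\mathrm{Out}(\Mu)$ of order $1$ or $2$ whose product has finite order. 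Since $\mathrm{Out}(\Mu)$ has order $2$, every element has order $1$ or $2$ and every product is automatically of finite order, so all three unordered pairs $\{\mathrm{idt.}, \mathrm{idt.}\}$, $\{\mathrm{idt.}, \mathrm{c\text{-}ref.}\}$, $\{\mathrm{c\text{-}ref.}, \mathrm{c\text{-}ref.}\}$ occur, giving three elements. The realization of each isomorphism class by the explicit co-Seifert fibration in Table 4 then follows by citing Theorems 27 and 28 exactly as in the preceding theorems.

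The only genuinely non-formal content is the verification that the six rows of Table 4 are precisely the fibrations arising from these $2 + 3$ classifying pairs, together with the determination of how each generator of the structure group acts on $V/\Nu \times V^\perp/\Kappa$. This is carried out, as the surrounding text indicates, by reading off the structure group action from the standard affine representation of $\Gamma$ in Table 1B of \cite{B-Z} for each listed IT number, and matching the resulting base quotients $(V/(\Gamma/\Kappa), V^\perp/(\Gamma/\Nu))$ against the predicted classifying pairs. I expect this tabulation — correctly pairing each abstract class $\{\alpha,\beta\}$ with the geometric fibration and confirming the $C_1$ versus $C_2$ structure group and the action column — to be the main obstacle, though it is entirely routine given the template and requires no new ideas beyond the effective action statement of Theorem 6 and the duality of Theorem 7. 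Since the statement of the theorem merely asserts the counting ($\mathrm{Iso}(C_\infty,\Mu)$ has two elements, $\mathrm{Iso}(D_\infty,\Mu)$ has three) and defers the fibration descriptions to the table via Theorems 27 and 28, the proof itself is short: it is the two applications of Theorems 23 and 24 described above, with the proof of Lemma 26 (by direct appeal to Lemma 24's argument) already establishing the order-$2$ symmetry group that drives the count.
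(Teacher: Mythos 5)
Your proposal is correct and follows exactly the route the paper takes: the paper gives no separate proof for this theorem, since the justification is entirely the citations embedded in the statement — Lemma 26 identifies $\mathrm{Sym}(\Mu)=\mathrm{Aff}(\Mu)=\{\mathrm{idt.},\hbox{c-ref.}\}$ with $\Omega$ an isomorphism onto $\mathrm{Out}(\Mu)$, Theorem 23 then yields the two inverse pairs and Theorem 24 (applicable since $Z(\Mu)$ is trivial) the three unordered pairs, with Table 4 matched via Theorems 27 and 28. Your count and the verification of the hypotheses are exactly as in the paper; the only slips are cosmetic (the preceding analogous results are Theorems 29 and 30, not 28 and 30).
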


\begin{table} 
\begin{tabular}{llllll}
no. & fibers & grp. & quotients &  grp. action & classifying pair \\
\hline 
157 & $(3\!\ast\! 3, \mathrm{O})$ & $C_1$ & $(3\!\ast\! 3, \mathrm{O})$ & (idt., idt.) & \{idt., idt.\}  \\
162 & $(3\!\ast\! 3, \mathrm{O})$ & $C_2$ & $(\ast 632, \mathrm{I})$ & (c-ref., ref.) & \{c-ref., c-ref.\} \\
185 & $(3\!\ast\! 3, \mathrm{O})$ & $C_2$ & $(\ast 632, \mathrm{O})$ & (c-ref., 2-rot.) & \{c-ref., c-ref.\} \\
189 & $(3\!\ast\! 3, \mathrm{I})$   & $C_1$ & $(3\!\ast\! 3, \mathrm{I})$ & (idt., idt.) & \{idt., idt.\} \\
193 & $(3\!\ast\! 3, \mathrm{I})$   & $C_2$ & $(\ast 632, \mathrm{I})$ & (c-ref., ref.) & \{idt., c-ref.\}
\end{tabular}

\medskip
\caption{The classification of the co-Seifert fibrations of 3-space groups 
whose co-Seifert fiber is of type $3\!\ast\! 3$ with IT number 15}
\end{table}

(14)  The 2-space group $\Mu$ with IT number 14 is $\ast 333$ in Conway's notation or $p3m1$ in IT notation. 
See space group 4/2/1/1 in Table 1A of \cite{B-Z} for the standard affine representation of $\Mu$. 
The flat orbifold $E^2/\Mu$ is an equilateral triangle $\triangle$. 
The symmetry group of $\triangle$ is a dihedral group of order 6. 
There is one conjugacy class of symmetries of order 2 represented by a {\it triangle reflection}  t-ref.\ 
in the perpendicular bisector of a side. 
There is one conjugacy class of symmetries 
of order 3 represented by a rotation 3-rot.\ of $120^\circ$ about the center of the triangle. 
Define the triangle reflection t-ref.$'$ = (t-ref.)(3-rot.).  

\begin{lemma} 
If $\Mu$ is the 2-space group $\ast 333$, 
then $\mathrm{Sym}(\Mu)$ is the dihedral group  $\langle${\rm t-ref., 3-rot.}$\rangle$ of order 6, 
and $\mathrm{Sym}(\Mu) = \mathrm{Aff}(\Mu)$, 
and $\Omega:  \mathrm{Aff}(\Mu) \to \mathrm{Out}(\Mu)$ is an isomorphism. 
\end{lemma}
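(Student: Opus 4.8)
The plan is to follow the template established in the proof of Lemma 24, the only genuinely new ingredient being the computation of $\mathrm{Sym}(\Mu)$. First I would determine $\mathrm{Sym}(\Mu) = \mathrm{Isom}(E^2/\Mu)$ purely geometrically. Since $E^2/\Mu$ is the equilateral triangle $\triangle$ all of whose boundary is a mirror and whose three corner points are $60^\circ$ corner reflectors of the same type, every orbifold isometry of $E^2/\Mu$ must carry the set of corner reflectors to itself, and hence restricts to a metric symmetry of $\triangle$; conversely every symmetry of the equilateral triangle preserves the mirror boundary and permutes the three indistinguishable corner reflectors, so is an orbifold isometry. Thus $\mathrm{Sym}(\Mu)$ is exactly the full symmetry group of $\triangle$, the dihedral group of order $6$ generated by the side-bisector reflection t-ref.\ and the $120^\circ$ rotation 3-rot.

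Next I would verify $Z(\Mu) = \{I\}$. The point group $\Pi$ of $\ast 333$ is a dihedral group of order $6$ acting on $E^2$ with no nonzero common fixed vector, so $\mathrm{Fix}(\Pi) = \{0\}$, whence $\mathrm{Span}(Z(\Mu)) = \{0\}$ by Lemma 9 and therefore $Z(\Mu) = \{I\}$. With this the connected component $K$ of the identity of $\mathrm{Isom}(E^2/\Mu)$ is trivial by Theorem 11, so Theorem 12 gives that $\Omega\colon \mathrm{Sym}(\Mu) \to \mathrm{Out}_E(\Mu)$ is an isomorphism, and Theorem 13 gives that $\Omega\colon \mathrm{Aff}(\Mu) \to \mathrm{Out}(\Mu)$ is an isomorphism, proving the final assertion. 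To align these two targets I would invoke Lemma 13 together with Table 5A of \cite{B-Z} to read off $\Pi_E = \Pi_A$, hence $\mathrm{Out}_E(\Mu) = \mathrm{Out}(\Mu)$. Then the inclusion $\mathrm{Sym}(\Mu) \subseteq \mathrm{Aff}(\Mu)$, combined with the fact that both groups map isomorphically onto the common target $\mathrm{Out}(\Mu) = \mathrm{Out}_E(\Mu)$ under the injective map $\Omega$, forces $\mathrm{Sym}(\Mu) = \mathrm{Aff}(\Mu)$.

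I expect the only step needing real attention to be the determination of $\mathrm{Sym}(\Mu)$, and the main obstacle there is the upper bound: confirming that no orbifold isometry can send a corner reflector to an interior or edge point, so that $\mathrm{Sym}(\Mu)$ is no larger than the order-$6$ triangle group. This is settled by observing that the three corner reflectors are precisely the points of $\triangle$ with their particular local orbifold structure, so any isometry must permute them, and an isometry of an equilateral triangle permuting its vertices already lies in the dihedral group $\langle$t-ref., 3-rot.$\rangle$. Once $\mathrm{Sym}(\Mu)$ is identified, the remaining steps are a verbatim repetition of the Lemma 24 argument and require no further computation.
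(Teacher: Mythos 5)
Your proposal is correct and follows essentially the same route as the paper: the paper's proof identifies $\mathrm{Sym}(\Mu)$ with the order-$6$ symmetry group of the triangle because any isometry permutes the corner points, and then cites the Lemma 24 argument verbatim ($Z(\Mu)=\{I\}$ via Lemma 9, then Theorems 11--13 and Lemma 13 with Table 5A of \cite{B-Z} to get the isomorphisms and the equality $\mathrm{Sym}(\Mu)=\mathrm{Aff}(\Mu)$), which is exactly what you do.
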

\begin{proof}  $\mathrm{Sym}(\Mu)$ = $\langle${\rm t-ref., 3-rot.}$\rangle$, 
since a symmetry permutes the corner points. 
We have that $\mathrm{Sym}(\Mu) = \mathrm{Aff}(\Mu)$ 
and $\Omega:  \mathrm{Aff}(\Mu) \to \mathrm{Out}(\Mu)$ is an isomorphism as in Lemma 24. 
\end{proof}

\begin{theorem}  
Let $\Mu$ be the 2-space group $\ast 333$. 
Then $\mathrm{Iso}(C_\infty,\Mu)$ has three elements,   
corresponding to the pairs of elements 
{\rm \{idt., idt.\}, \{t-ref., t-ref.\}, \{3-rot., 3-rot.$^{-1}$\}} of $\mathrm{Sym}(\Mu)$ by Theorem 23,  
and $\mathrm{Iso}(D_\infty,\Mu)$ has four elements,  
corresponding to the pairs of elements {\rm \{idt., idt.\}, \{idt., t-ref.\}, \{t-ref., t-ref.\}, \{t-ref., t-ref.$'$\}} of $\mathrm{Sym} (\Mu)$ 
by Theorem 24.
The corresponding co-Seifert fibrations are described in Table 5 by Theorems 27 and 28.  
\end{theorem}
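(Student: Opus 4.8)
The plan is to reduce Theorem 32 to a finite computation inside the group $\mathrm{Out}(\Mu)$, exactly along the lines of the proofs of Theorems 29, 30, and 31. By Lemma 27, $\Omega:\mathrm{Aff}(\Mu)\to\mathrm{Out}(\Mu)$ is an isomorphism and $\mathrm{Sym}(\Mu)=\mathrm{Aff}(\Mu)$ is the dihedral group of order $6$ generated by the reflection t-ref.\ and the rotation 3-rot.\ of order $3$; moreover $Z(\Mu)=\{I\}$ by Lemma 9, so the hypotheses of Theorems 23 and 24 are met. I would identify $\mathrm{Out}(\Mu)$ with $\mathrm{Sym}(\Mu)$ throughout and record its conjugacy structure: the class $\{\mathrm{idt.}\}$, the class consisting of the two rotations 3-rot.\ and 3-rot.$^{-1}$ (mutually inverse and conjugate by any reflection), and the class of the three reflections. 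Since $\mathrm{Out}(\Mu)$ is finite, every element has finite order, so the finiteness conditions appearing in Theorems 23 and 24 are automatic.

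For $\mathrm{Iso}(C_\infty,\Mu)$, Theorem 23 puts the set in bijection with conjugacy classes of pairs of inverse elements $\{g,g^{-1}\}$. Running through the three conjugacy classes of $\mathrm{Sym}(\Mu)$ yields exactly $\{\mathrm{idt.},\mathrm{idt.}\}$, $\{\text{3-rot.},\text{3-rot.}^{-1}\}$, and $\{\text{t-ref.},\text{t-ref.}\}$, hence three elements. For $\mathrm{Iso}(D_\infty,\Mu)$, Theorem 24 puts the set in bijection with conjugacy classes of unordered pairs $\{g,h\}$ with $g,h$ of order $1$ or $2$. The elements of order at most $2$ are idt.\ and the three reflections, so the candidate pairs are $\{\mathrm{idt.},\mathrm{idt.}\}$, $\{\mathrm{idt.},\text{t-ref.}\}$, $\{\text{t-ref.},\text{t-ref.}\}$, and $\{\text{t-ref.},\text{t-ref.}'\}$ with t-ref.$'$ a second reflection. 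The first three are single classes since all reflections are conjugate; the point needing care is that the three unordered pairs of distinct reflections form a single class, which I would verify by checking that $\mathrm{Sym}(\Mu)$ acts transitively on them (identifying $\mathrm{Sym}(\Mu)\cong S_3$ with its reflections the transpositions, conjugation acts transitively on pairs of distinct transpositions). This gives four elements.

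To match each isomorphism class to a row of Table 5, I would, for each representing pair, apply Theorem 27 in the cyclic case and Theorem 28 in the dihedral case to the standard affine representation of the relevant $3$-space group from Table 1B of \cite{B-Z}. These theorems say precisely when a generator (respectively a Coxeter pair) of the structure group $\Gamma/\Nu\Kappa$ lifts to a generator (respectively a Coxeter pair) of $\Gamma/\Nu$, from which I can read off the structure group, its action on $V/\Nu\times V^\perp/\Kappa$, and the quotient bases $V/(\Gamma/\Kappa)$ and $V^\perp/(\Gamma/\Nu)$ that fill out the table.

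The hard part will not be the group-theoretic count, which is routine once Lemma 27 pins down $\mathrm{Out}(\Mu)$, but the bookkeeping in this last step: correctly associating each of the seven classes with its $3$-space group IT number and verifying, via the eigenspace and lifting criteria of Theorems 27 and 28 applied to the affine data, that the displayed generators act as recorded in Table 5. In particular I would take special care with the dihedral pair $\{\text{t-ref.},\text{t-ref.}'\}$, whose product is a rotation of order $3$, so as to confirm that it yields the listed $D_3$-type structure group action and does not merge with one of the other classes.
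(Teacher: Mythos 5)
Your proposal is correct and follows essentially the same route the paper takes: identify $\mathrm{Out}(\Mu)$ with $\mathrm{Sym}(\Mu)\cong D_3$ via Lemma 27, count conjugacy classes of inverse pairs (three) for Theorem 23 and of unordered pairs of order-$\le 2$ elements (four, using transitivity on pairs of distinct reflections in $S_3$) for Theorem 24, then match classes to Table 5 via Theorems 27 and 28. The one point you flag as needing care — that $\{\hbox{t-ref.},\hbox{t-ref.}'\}$ forms a single class and yields the $D_3$ structure group — is handled exactly as you describe.
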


\begin{table}  
\begin{tabular}{llllll}
no. & fibers & grp. & quotients &  structure group action & classifying pair \\
\hline 
156 & $(\ast 333, \mathrm{O})$ & $C_1$ & $(\ast 333, \mathrm{O})$ & (idt., idt.)  & \{idt., idt.\} \\
160 & $(\ast 333, \mathrm{O})$ & $C_3$ & $(3{\ast}3, \mathrm{O})$ & (3-rot., 3-rot.) & \{3-rot., 3-rot.$^{-1}$\} \\
164 & $(\ast 333, \mathrm{O})$ & $C_2$ & $(\ast 632, \mathrm{I})$ & (t-ref., ref.) & \{t-ref., t-ref.\} \\
166 & $(\ast 333, \mathrm{O})$ & $D_3$ & $(\ast 632, \mathrm{I})$ & (t-ref., ref.), (3-rot., 3-rot.) & \{t-ref., t-ref.$'$\}  \\
186 & $(\ast 333, \mathrm{O})$ & $C_2$ & $(\ast 632, \mathrm{O})$ & (t-ref., 2-rot.) & \{t-ref., t-ref.\} \\
187 & $(\ast 333, \mathrm{I})$   & $C_1$ & $(\ast 333, \mathrm{I})$ & (idt., idt.) & \{idt., idt.\} \\
194 & $(\ast 333, \mathrm{I})$   & $C_2$ & $(\ast 632, \mathrm{I})$ & (t-ref., ref.) & \{idt., t-ref.\} 
\end{tabular}

\medskip
\caption{The classification of the co-Seifert fibrations of 3-space groups 
whose co-Seifert fiber is of type $\ast 333$ with IT number 14}
\end{table}

(13) The 2-space group $\Mu$ with IT number 13 is $333$ in Conway's notation or $p3$ in IT notation. 
See space group 4/1/1/1 in Table 1A of \cite{B-Z} for the standard affine representation of $\Mu$. 
The flat orbifold $E^2/\Mu$ is a turnover with three $120^\circ$ cone points 
obtained by gluing together two congruent equilateral triangles along their boundaries. 
The $333$ turnover is orientable. 
The symmetry group of this orbifold is the direct product of the subgroup of order 2, generated 
by the {\it central reflection} c-ref.\ between the two triangles, and the subgroup of order 6 corresponding 
to the symmetry group of the two triangles. 

There are 3 conjugacy classes of symmetries of order 2,  
the class of the {\it central reflection} c-ref., 
the class of the {\it triangle reflection} t-ref., 
and the class of the halfturn around a cone point 2-rot., defined so that 2-rot.\ = (c-ref.)(t-ref.). 
There is one conjugacy class of symmetries of order 3 represented by a rotation 3-rot.\ 
that cyclically permutes the cone points. 
There is one conjugacy class of dihedral subgroups of order 4, represented by the group 
\{idt., c-ref., t-ref., 2-rot.\}.  
There is one conjugacy class of symmetries of order 6 represented by the group generated by 6-sym.\ = (c-ref.)(3-rot.).  
There are two conjugacy classes of dihedral subgroups of order 6, 
the class of the symmetry group of a triangular side of the turnover generated by t-ref.\ and 3-rot., 
and the class of the orientation preserving subgroup generated by 2-rot.\ and 3-rot. 
Define t-ref.$'$ = (t-ref.)(3-rot.) and 2-rot.$'$ = (c-ref.)(t-ref.$'$). 

\begin{lemma} 
If $\Mu$ is the 2-space group $333$, 
then $\mathrm{Sym}(\Mu)$ is the dihedral group  $\langle${\rm c-ref., t-ref., 3-rot.}$\rangle$ of order 12, 
and $\mathrm{Sym}(\Mu) = \mathrm{Aff}(\Mu)$, 
and $\Omega:  \mathrm{Aff}(\Mu) \to \mathrm{Out}(\Mu)$ is an isomorphism.  
\end{lemma}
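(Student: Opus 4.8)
The plan is to follow the template established by the proof of Lemma 24, adapting it to the larger symmetry group of the $333$ turnover. The claim has three parts: first, that $\mathrm{Sym}(\Mu)$ is the dihedral group $\langle \hbox{c-ref.}, \hbox{t-ref.}, \hbox{3-rot.}\rangle$ of order 12; second, that $\mathrm{Sym}(\Mu) = \mathrm{Aff}(\Mu)$; and third, that $\Omega$ is an isomorphism. The second and third parts are handled exactly as in Lemma 24, so the only real work is identifying $\mathrm{Sym}(\Mu)$ itself.

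First I would establish the structure of $\mathrm{Sym}(\Mu) = \mathrm{Isom}(E^2/\Mu)$. The orbifold $E^2/\Mu$ is the $333$ turnover, a sphere with three $120^\circ$ cone points formed from two equilateral triangles glued along their boundaries. Any isometry of this orbifold must permute the three cone points, which are all of the same cone angle, giving a homomorphism from $\mathrm{Sym}(\Mu)$ to the symmetric group $S_3$ on the three cone points. I would argue this homomorphism is surjective, since the rotation 3-rot.\ cyclically permutes the cone points and the triangle reflection t-ref.\ transposes two of them. The kernel consists of isometries fixing all three cone points; since the three cone points do not lie on a common great circle configuration that admits a nontrivial color-preserving symmetry other than the reflection swapping the two triangular faces, the kernel is exactly $\{\hbox{idt.}, \hbox{c-ref.}\}$ of order 2. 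Combining the order-2 kernel with the order-6 image $S_3$ gives $|\mathrm{Sym}(\Mu)| = 12$, and since c-ref.\ is central (it commutes with every face-preserving symmetry), the group is the direct product of $\langle \hbox{c-ref.}\rangle$ with the dihedral group $\langle \hbox{t-ref.}, \hbox{3-rot.}\rangle$ of order 6, which is itself a dihedral group of order 12, as claimed.

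Next I would dispatch the remaining two assertions by citing Lemma 24's argument verbatim. By Lemma 9 the center $Z(\Mu)$ is trivial, since $\mathrm{Fix}(\Pi)$ for the point group $\Pi$ of $p3$ is $\{0\}$. Hence by Theorems 11 and 12 the map $\Omega: \mathrm{Sym}(\Mu) \to \mathrm{Out}_E(\Mu)$ is an isomorphism, because the connected component $K$ of the identity of the compact Lie group $\mathrm{Isom}(E^2/\Mu)$ is a torus of dimension equal to the first Betti number, which is zero here. Then by Lemma 13 together with Table 5A of \cite{B-Z} one has $\mathrm{Out}_E(\Mu) = \mathrm{Out}(\Mu)$, and by Theorems 11 and 13 the map $\Omega: \mathrm{Aff}(\Mu) \to \mathrm{Out}(\Mu)$ is an isomorphism. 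Since both $\mathrm{Sym}(\Mu)$ and $\mathrm{Aff}(\Mu)$ map isomorphically onto the same group $\mathrm{Out}(\Mu)$ under $\Omega$ and $\mathrm{Sym}(\Mu) \subseteq \mathrm{Aff}(\Mu)$, we conclude $\mathrm{Sym}(\Mu) = \mathrm{Aff}(\Mu)$.

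The main obstacle will be the first part, namely rigorously pinning down that the kernel of the action on cone points has order exactly 2 and that every element of $S_3$ is realized, without appealing to pictures. The cleanest way to make this airtight is to compute directly from the standard affine representation of $\Mu$ listed as space group 4/1/1/1 in Table 1A of \cite{B-Z}: one determines $\mathrm{Out}(\Mu)$ via Lemma 13 and Table 5A, and then uses the isomorphism $\Omega$ already in hand to transport the abstract order-12 outer automorphism group back to $\mathrm{Sym}(\Mu)$, identifying the named generators c-ref., t-ref., 3-rot.\ with specific outer automorphism classes. In practice I expect the geometric cone-point permutation argument to suffice and to be the shortest route, so I would present that as the primary argument and treat the order count as the key verification step.
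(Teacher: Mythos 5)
Your proposal is correct and follows essentially the same route as the paper: the paper's own proof identifies $\mathrm{Sym}(\Mu)$ by the one-line observation that a symmetry permutes the cone points (your surjection onto $S_3$ with kernel $\{\hbox{idt.},\hbox{c-ref.}\}$ is just the fleshed-out version of this, matching the direct-product description the paper gives in the paragraph preceding the lemma), and then cites the Lemma 24 argument verbatim for $\mathrm{Sym}(\Mu)=\mathrm{Aff}(\Mu)$ and the isomorphism $\Omega$. Your write-up simply supplies more detail than the paper does.
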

\begin{proof} $\mathrm{Sym}(\Mu) = \langle${c-ref., \rm t-ref., 3-rot.}$\rangle$, 
since a symmetry permutes the cone points.  
We have that $\mathrm{Sym}(\Mu) = \mathrm{Aff}(\Mu)$ and $\Omega:  \mathrm{Aff}(\Mu) \to \mathrm{Out}(\Mu)$ is an isomorphism as in Lemma 24. 
\end{proof}

\begin{theorem}  
Let $\Mu$ be the 2-space group $333$. 
Then $\mathrm{Iso}(C_\infty,\Mu)$ has six elements,   
corresponding to the pairs of elements 
{\rm \{idt., idt.\}, \{2-rot., 2-rot.\}, \{c-ref., c-ref.\}, \{t-ref., t-ref.\},  \{3-rot., 3-rot.$^{-1}$\},  \{6-sym., 6-sym.$^{-1}$\}}  
of $\mathrm{Sym}(\Mu)$ by Theorem 23,  $\mathrm{Iso}(D_\infty,\Mu)$ has 13 elements,  
corresponding to the pairs of elements {\rm \{idt., idt.\}, \{idt., 2-rot.\}, \{idt., c-ref.\}, \{idt., t-ref.\}, \{2-rot., 2-rot.\}, 
\{c-ref., c-ref.\}, \{t-ref., t-ref.\}, \{2-rot., c-ref.\}, \{2-rot., t-ref.\}, \{c-ref., t-ref.\}, \{2-rot., 2-rot.$'\}$, 
\{t-ref., t-ref.$'$\}, \{t-ref., 2-rot.$'$\}} of $\mathrm{Sym} (\Mu)$  by Theorem 24.
The corresponding co-Seifert fibrations are described in Table 6 by Theorems 27 and  28.  
\end{theorem}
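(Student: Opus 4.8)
The plan is to follow the template already established by the three preceding theorems (Theorems 29, 30, 31) for the 2-space groups with IT numbers 17, 16, and 15, and to invoke Theorems 23 and 24 as the enumeration engines. First I would record, via Lemma 29, the full symmetry group $\mathrm{Sym}(\Mu)=\mathrm{Aff}(\Mu)=\langle\text{c-ref., t-ref., 3-rot.}\rangle$, a dihedral group of order $12$, together with the fact that $\Omega:\mathrm{Aff}(\Mu)\to\mathrm{Out}(\Mu)$ is an isomorphism; this is exactly the input that Theorems 23 and 24 require, and since $Z(\Mu)=\{I\}$ the hypotheses of Theorems 23 and 24 on the trivial center of $\Mu$ are met.

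For the infinite cyclic case I would apply Theorem 23: $\mathrm{Iso}(C_\infty,\Mu)$ is in bijection with the set of conjugacy classes of pairs of inverse elements $\{g,g^{-1}\}$ of $\mathrm{Out}(\Mu)$ of finite order. Working inside the order-$12$ dihedral group, I would list the conjugacy classes of elements: the identity, the three classes of involutions (c-ref., t-ref., and 2-rot.\ $=$ (c-ref.)(t-ref.)), the class of 3-rot.\ (whose inverse pair is $\{\text{3-rot.},\text{3-rot.}^{-1}\}$), and the class of 6-sym.\ $=$ (c-ref.)(3-rot.)\ (with inverse pair $\{\text{6-sym.},\text{6-sym.}^{-1}\}$). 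This yields the six announced pairs. For the infinite dihedral case I would apply Theorem 24: $\mathrm{Iso}(D_\infty,\Mu)$ is in bijection with conjugacy classes of unordered pairs $\{g,h\}$ of elements of order $1$ or $2$ whose product has finite order. Since every product automatically has finite order in the finite group $\mathrm{Out}(\Mu)$, the condition is vacuous, and the count reduces to enumerating conjugacy classes of unordered pairs drawn from $\{\text{idt.},\text{c-ref.},\text{t-ref.},\text{2-rot.}\}$, accounting for the cross-class pairs (e.g.\ \{2-rot., c-ref.\}, \{t-ref., 2-rot.$'$\}) and same-class pairs that are not conjugate (e.g.\ \{t-ref., t-ref.$'$\}). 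This produces the thirteen listed pairs.

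The main obstacle will be verifying the conjugacy-class count for the dihedral case, in particular deciding which \emph{pairs} of involutions are conjugate as unordered pairs under the full group action. Two involutions in the same $\mathrm{Sym}(\Mu)$-conjugacy class can still give rise to inequivalent pairs depending on their relative position (the rotation angle of their product), so I would need to track, for each unordered pair, the conjugacy class of the product $gh$ as the genuine invariant — exactly as Lemma 21 frames it. This is why pairs such as \{t-ref., t-ref.\} and \{t-ref., t-ref.$'$\} are distinct, and why \{2-rot., 2-rot.\} differs from \{2-rot., 2-rot.$'$\}. The bookkeeping is finite but delicate.

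Finally, I would invoke Theorems 27 and 28 to translate each element of $\mathrm{Iso}(C_\infty,\Mu)$ and $\mathrm{Iso}(D_\infty,\Mu)$ into an explicit co-Seifert fibration of a $3$-space group, reading off the structure-group action and the classifying pair as tabulated in Table 6; this identifies each abstract isomorphism class with a concrete $3$-space group by its IT number. The theorem statement is essentially a packaging of these bijections together with the reference to Table 6, so once the enumeration is confirmed the proof is complete by citation of Theorems 23, 24, 27, and 28.

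Given the pattern of the adjacent proofs, I expect the written proof to be short, of the form:

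\begin{proof}
By Lemma 29, we have that $\mathrm{Sym}(\Mu)=\mathrm{Aff}(\Mu)$ is dihedral of order $12$ with $Z(\Mu)=\{I\}$,
and $\Omega:\mathrm{Aff}(\Mu)\to\mathrm{Out}(\Mu)$ is an isomorphism.
The enumeration of $\mathrm{Iso}(C_\infty,\Mu)$ into six elements follows from Theorem 23
by listing the conjugacy classes of pairs of inverse elements of finite order of $\mathrm{Sym}(\Mu)$,
and the enumeration of $\mathrm{Iso}(D_\infty,\Mu)$ into $13$ elements follows from Theorem 24
by listing the conjugacy classes of unordered pairs of elements of order $1$ or $2$ of $\mathrm{Sym}(\Mu)$,
the finiteness of the products being automatic.
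The corresponding co-Seifert fibrations are then described in Table 6 by Theorems 27 and 28.
\end{proof}
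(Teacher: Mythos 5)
Your proposal matches the paper exactly: the paper supplies no separate proof for this theorem, since the justification is carried entirely by the citations in the statement (Theorems 23, 24, 27, 28) together with Lemma 29 and the preceding expository enumeration of the conjugacy classes of $\mathrm{Sym}(\Mu)\cong C_2\times D_3$, which is precisely the content of your argument, and your bookkeeping of the $6$ inverse pairs and $13$ unordered pairs (distinguished by the conjugacy class of the product $gh$) is correct. The only minor imprecision is that Theorem 23 does not require $Z(\Mu)=\{I\}$ (only Theorem 24 does), but this is harmless since the center is indeed trivial here.
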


\begin{table}  
\begin{tabular}{llllll}
no. & fibers & grp. & quotients &  structure group action & classifying pair \\
\hline 
143 & $(333, \mathrm{O})$ & $C_1$ & $(333, \mathrm{O})$ & (idt., idt.)  & \{idt., idt.\} \\
146 & $(333, \mathrm{O})$ & $C_3$ & $(333, \mathrm{O})$ & (3-rot., 3-rot.) &  \{3-rot., 3-rot.$^{-1}$\} \\
147 & $(333, \mathrm{O})$ & $C_2$ & $(632, \mathrm{I})$ & (2-rot., ref.) & \{2-rot., 2-rot.\} \\
148 & $(333, \mathrm{O})$ & $D_3$ & $(632, \mathrm{I})$ & (2-rot., ref.), (3-rot., 3-rot.) &  \{2-rot., 2-rot.$'\}$ \\
149 & $(333, \mathrm{O})$ & $C_2$ & $(\ast 333, \mathrm{I})$ & (c-ref., ref.) & \{c-ref., c-ref.\} \\
150 & $(333, \mathrm{O})$ & $C_2$ & $(3{\ast}3, \mathrm{I})$ & (t-ref., ref.)  & \{t-ref., t-ref.\} \\
155 & $(333, \mathrm{O})$ & $D_3$ & $(\ast 333, \mathrm{I})$ & (t-ref., ref.), (3-rot., 3-rot.) & \{t-ref., t-ref.$'$\} \\
158 & $(333, \mathrm{O})$ & $C_2$ & $(\ast 333, \mathrm{O})$ & (c-ref., 2-rot.) &  \{c-ref., c-ref.\} \\
159 & $(333, \mathrm{O})$ & $C_2$ & $(3{\ast}3, \mathrm{O})$ & (t-ref., 2-rot.) &  \{t-ref., t-ref.\} \\
161 & $(333, \mathrm{O})$ & $C_6$ & $(3{\ast}3, \mathrm{O})$ & (6-sym., 6-rot.) & \{6-sym., 6-sym.$^{-1}$\} \\
163 & $(333, \mathrm{O})$ & $D_2$ & $(\ast 632, \mathrm{I})$ & (c-ref., ref.), (t-ref., 2-rot.) & \{c-ref., 2-rot.\}\\
165 & $(333, \mathrm{O})$ & $D_2$ & $(\ast 632, \mathrm{I})$ & (t-ref., ref.), (c-ref., 2-rot.) & \{t-ref., 2-rot.\} \\
167 & $(333, \mathrm{O})$ & $D_6$ & $(\ast 632, \mathrm{I})$ & (t-ref., ref.), (6-sym., 6-rot.) &  \{t-ref., 2-rot.$'$\} \\
173 & $(333, \mathrm{O})$ & $C_2$ & $(632, \mathrm{O})$ & (2-rot., 2-rot.)  & \{2-rot., 2-rot.\} \\
174 & $(333, \mathrm{I})$  & $C_1$ & $(333, \mathrm{I})$ & (idt., idt.)  & \{idt., idt.\} \\
176 & $(333, \mathrm{I})$  & $C_2$ & $(632, \mathrm{I})$ & (2-rot., ref.) &  \{idt., 2-rot.\} \\
182 & $(333, \mathrm{O})$ & $D_2$ & $(\ast 632, \mathrm{I})$ &  (c-ref., ref.), (2-rot., 2-rot.) & \{c-ref., t-ref.\} \\
188 & $(333, \mathrm{I})$   & $C_2$ & $(\ast 333, \mathrm{I})$ & (c-ref., ref.) &  \{idt., c-ref.\} \\
190 & $(333, \mathrm{I})$   & $C_2$ & $(3{\ast}3, \mathrm{I})$ & (t-ref., ref.)  & \{idt., t-ref.\} 
\end{tabular}

\medskip
\caption{The classification of the co-Seifert fibrations of 3-space groups 
whose co-Seifert fiber is of type $333$ with IT number 13}
\end{table}

\medskip
\noindent{\bf Example 13.}
Let $\Gamma$ be the affine 3-space group with IT number 163 in Table 1B of \cite{B-Z}. 
Then $\Gamma = \langle t_1,t_2,t_3, A,\beta,C\rangle$ 
where $t_i = e_i+I$ for $i=1,2,3$ are the standard translations, 
and $\beta=\frac{1}{2}e_3+B$, and 
$$A = \left(\begin{array}{rrr} 0 & -1 & 0\\ 1 & -1 & 0 \\ 0 & 0 & 1  \end{array}\right),\ \ 
B = \left(\begin{array}{rrr} 0 & -1 & 0  \\ -1 & 0 & 0   \\ 0 & 0 & -1 \end{array}\right), \ \ 
C = \left(\begin{array}{rrr} -1 & 0 & 0  \\ 0 & -1 & 0   \\ 0 & 0 & -1 \end{array}\right).$$
The group $\Nu = \langle t_1,t_2, A\rangle$ is a complete normal subgroup 
of $\Gamma$ with $V = {\rm Span}(\Nu) = {\rm Span}\{e_1, e_2\}$. 
The flat orbifold $V/\Nu$ is a $333$ turnover.  
Let $\Kappa = \Nu^\perp$.  Then $\Kappa = \langle t_3\rangle$. 
The flat orbifold $V^\perp/\Kappa$ is a circle.  
The structure group $\Gamma/\Nu\Kappa$ is a dihedral group of order 4 
generated by $\Nu\Kappa\beta$ and $\Nu\Kappa C$. 
The elements $\Nu\Kappa\beta$ and $\Nu\Kappa C$ act on $V^\perp/\Kappa$ as reflections. 
The elements  $\Nu\Kappa\beta$ and $\Nu\Kappa C$ both fix the cone point 
of $V/\Nu$ represented by $(0,0,0)$.  
The other two cone points of $V/\Nu$ are represented by 
$(2/3,1/3,0)$, which is the fixed point of $t_1A$, and by $(1/3,2/3,0)$,  
which is the fixed point of $t_1t_2A$. 
The element $\Nu\Kappa\beta$ acts as the central reflection of $V/\Nu$, 
since it fixes all three cone points.  
The element $\Nu\Kappa C$ acts as the halfturn around the cone point represented by $(0,0,0)$, 
since $\Nu\Kappa C$ preserves the orientation of $V/\Nu$ 
because $C$ preserves the orientation of $V$.

(12)  The 2-space group $\Mu$ with IT number 12 is $4{\ast}2$ in Conway's notation and $p4g$ in IT notation. 
See space group 3/2/1/2 in Table 1A of \cite{B-Z} for the standard affine representation of $\Mu$. 
The flat orbifold $E^2/\Mu$ is a cone with one $90^\circ$ cone point and one $90^\circ$ corner point 
obtained by glueing together two congruent $45^\circ-45^\circ$ right triangles along two sides opposite 
$45^\circ$ and $90^\circ$ angles.  
Let c-ref.\ denote the  {\it central reflection} between the triangles. 

\begin{lemma} 
If $\Mu$ is the 2-space group $4\ast 2$, 
then $\mathrm{Sym}(\Mu) = \mathrm{Aff}(\Mu) =$ \{\rm idt., c-ref.\}, 
and $\Omega:  \mathrm{Aff}(\Mu) \to \mathrm{Out}(\Mu)$ is an isomorphism. 
\end{lemma}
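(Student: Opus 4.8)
The plan is to follow the template used in the proofs of Lemmas 24--28, isolating the single geometrically new ingredient, namely the determination of $\mathrm{Sym}(\Mu)$ for $\Mu = 4\ast 2$. First I would read off the singular locus of $E^2/\Mu$ from the description above: it carries exactly one cone point, of cone angle $90^\circ$ and hence order $4$, in its interior, and exactly one corner point, of corner angle $90^\circ$ and hence order $2$, on its mirror boundary. A symmetry of $E^2/\Mu$ must carry the mirror boundary to itself and therefore cannot interchange the interior cone point with the boundary corner point; since each is the unique singular point of its kind, every symmetry fixes both of them.

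Next I would argue, exactly as in the proof of Lemma 26 for $3\ast 3$, that fixing the cone point and the corner point forces the symmetry group to be the order-two group \{idt., c-ref.\}. The central reflection c-ref.\ interchanging the two defining right triangles is visibly an isometry fixing both singular points, while the geodesic segment joining the cone point to the corner point is preserved by every symmetry, so the only isometries of the configuration are the identity and the reflection in that segment. This gives $\mathrm{Sym}(\Mu) =$ \{idt., c-ref.\}.

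With $\mathrm{Sym}(\Mu)$ identified, the remainder proceeds verbatim as in Lemma 24. I would note that $Z(\Mu) = \{I\}$ by Lemma 9, since the point group of $4\ast 2$ is dihedral of order $8$ and contains a $90^\circ$ rotation, so that $\mathrm{Fix}(\Pi) = \{0\}$. Theorems 11 and 12 then give that $\Omega : \mathrm{Sym}(\Mu) \to \mathrm{Out}_E(\Mu)$ is an isomorphism; Lemma 13 together with Table 5A of \cite{B-Z} gives $\mathrm{Out}_E(\Mu) = \mathrm{Out}(\Mu)$, that is, $\Pi_E = \Pi_A$; and Theorems 11 and 13 give that $\Omega : \mathrm{Aff}(\Mu) \to \mathrm{Out}(\Mu)$ is an isomorphism. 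Composing these identifications forces $\mathrm{Sym}(\Mu) = \mathrm{Aff}(\Mu)$ and exhibits $\Omega$ on $\mathrm{Aff}(\Mu)$ as the asserted isomorphism onto $\mathrm{Out}(\Mu)$.

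The only step I expect to require genuine care is the first one: the geometric claim that the cone point and the corner point are the unique singular points of their respective types and that no symmetry other than c-ref.\ fixes both. Once that is settled, everything else is a routine application of the Lie-group machinery of Theorems 11--13 and the tabulated equality $\Pi_E = \Pi_A$ read off from \cite{B-Z}, precisely as in the preceding lemmas.
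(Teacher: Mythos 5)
Your proposal is correct and follows essentially the same route as the paper: the paper's proof likewise observes that every symmetry of $E^2/\Mu$ must fix the unique cone point and the unique corner point, concludes $\mathrm{Sym}(\Mu)=\{\mathrm{idt.},\hbox{c-ref.}\}$, and then invokes the argument of Lemma 24 (i.e., $Z(\Mu)=\{I\}$ via Lemma 9, Theorems 11--13, and $\mathrm{Out}_E(\Mu)=\mathrm{Out}(\Mu)$ from Lemma 13 and Table 5A of \cite{B-Z}) exactly as you spell out. The only difference is that you supply slightly more detail on why fixing both singular points pins the symmetry group down to order two, which the paper leaves implicit.
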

\begin{proof} $\mathrm{Sym}(\Mu)$ = \{\rm idt., c-ref.\}, 
since a symmetry of $E^2/\Mu$ fixes the cone point and the corner point. 
We have that $\mathrm{Sym}(\Mu) = \mathrm{Aff}(\Mu)$ and $\Omega:  \mathrm{Aff}(\Mu) \to \mathrm{Out}(\Mu)$ is an isomorphism as in Lemma 24. 
\end{proof}

\begin{theorem}  
Let $\Mu$ be the 2-space group \hbox{$4\ast 2$}. 
Then $\mathrm{Iso}(C_\infty,\Mu)$ has two elements,   
corresponding to the pairs of elements {\rm \{idt., idt.\}, \{c-ref., c-ref.\}} of $\mathrm{Sym}(\Mu)$ by Theorem 23, 
and $\mathrm{Iso}(D_\infty,\Mu)$ has three elements,  
corresponding to the pairs of elements {\rm \{idt., idt.\}, \{idt., c-ref.\}, \{c-ref., c-ref.\}} of $\mathrm{Sym}(\Mu)$  by Theorem 24.
The corresponding co-Seifert fibrations are described in Table 7 by Theorems 27 and 28.  
\end{theorem}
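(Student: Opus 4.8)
The plan is to reproduce the argument template already used for Theorems 29--33, substituting the structure of $\mathrm{Out}(\Mu)$ furnished by Lemma 29. First I would record that, by Lemma 29, the homomorphism $\Omega\colon\mathrm{Aff}(\Mu)\to\mathrm{Out}(\Mu)$ is an isomorphism and $\mathrm{Sym}(\Mu)=\mathrm{Aff}(\Mu)=$ \{idt., c-ref.\}, so $\mathrm{Out}(\Mu)$ is cyclic of order $2$, represented inside $\mathrm{Sym}(\Mu)$ by \{idt., c-ref.\}. Being abelian, this group has only singleton conjugacy classes; and since each of its elements has order $1$ or $2$, each element is its own inverse.

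Next I would enumerate $\mathrm{Iso}(C_\infty,\Mu)$ using Theorem 23, which puts this set in bijection with the conjugacy classes of pairs of inverse elements of $\mathrm{Out}(\Mu)$ of finite order. As $|\mathrm{Out}(\Mu)|=2$, every element is of finite order and self-inverse, so the admissible pairs are precisely \{idt., idt.\} and \{c-ref., c-ref.\}; hence $\mathrm{Iso}(C_\infty,\Mu)$ has two elements. For the dihedral case I would first observe that $Z(\Mu)=\{I\}$ by Lemma 9, since the point group of $4\ast2$ contains a quarter turn and hence $\mathrm{Fix}(\Pi)=\{0\}$; this validates the hypothesis of Theorem 24, which puts $\mathrm{Iso}(D_\infty,\Mu)$ in bijection with the conjugacy classes of pairs of elements of $\mathrm{Out}(\Mu)$ of order $1$ or $2$ whose product has finite order. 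In the order-$2$ group $\mathrm{Out}(\Mu)$ every unordered pair meets these conditions, so the pairs are exactly \{idt., idt.\}, \{idt., c-ref.\}, and \{c-ref., c-ref.\}; hence $\mathrm{Iso}(D_\infty,\Mu)$ has three elements.

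Finally I would pass from these abstract isomorphism classes to the rows of Table 7. For each classifying pair I would take the representative $3$-space group $\Gamma$ of Table 1B of \cite{B-Z} recorded in that row, form the $2$-dimensional complete normal subgroup $\Nu$ with $V/\Nu$ of type $4\ast2$ together with its orthogonal dual $\Kappa=\Nu^\perp$, and read off the structure group $\Gamma/\Nu\Kappa$ and the action of its generators on $V/\Nu\times V^\perp/\Kappa$. Theorems 27 and 28 then certify whether the chosen generators of $\Gamma/\Nu\Kappa$ lift to a generator of $\Gamma/\Nu$ (cyclic case) or to a set of Coxeter generators of $\Gamma/\Nu$ (dihedral case), and the explicit descriptions in Theorems 23 and 24 identify the resulting pair of induced elements of $\mathrm{Out}(\Mu)\cong\mathrm{Sym}(\Mu)$ with the classifying pair named above.

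The three enumerations are routine, and the only step demanding genuine attention is the matching with Table 7 in the last paragraph. Here the situation is especially benign: the main obstacle in the analogous arguments with larger $\mathrm{Out}(\Mu)$ (for instance the $333$ case of Table 6) is the appearance of dihedral structure groups, which forces one into the delicate lifting alternatives (2) and (3) of Theorem 28. Because $\mathrm{Out}(4\ast2)$ has order $2$, the structure groups arising here are at worst $C_2$, so every structure-group generator lifts transparently and the correspondence with Table 7 is immediate.
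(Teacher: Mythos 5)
Your proposal is correct and follows essentially the same route as the paper, which proves this theorem simply by citing Lemma 29 (giving $\mathrm{Sym}(\Mu)=\mathrm{Aff}(\Mu)=\{\mathrm{idt.},\hbox{c-ref.}\}$ with $\Omega$ an isomorphism, hence $\mathrm{Out}(\Mu)$ of order $2$) together with Theorems 23, 24, 27, and 28; your added verification that $Z(\Mu)=\{I\}$ via Lemma 9 is exactly the hypothesis check implicit in the paper's appeal to Theorem 24. The enumeration of the two inverse pairs and three order-$\le 2$ pairs, and the matching with Table 7, agree with the paper.
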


\begin{table}  
\begin{tabular}{llllll}
no. & fibers & grp. & quotients &  grp. action & classifying pair \\
\hline 
100 & $(4\!\ast\! 2, \mathrm{O})$   & $C_1$ & $(4{\ast}2, \mathrm{O})$ & (idt., idt.) & \{idt., idt.\} \\
108 & $(4\!\ast\! 2, \mathrm{O})$   & $C_2$ & $(\ast 442, \mathrm{O})$ & (c-ref., 2-rot.) & \{c-ref., c-ref.\} \\
125 & $(4\!\ast\! 2, \mathrm{O})$  & $C_2$ & $(\ast 442, \mathrm{I})$ & (c-ref., ref.) & \{c-ref., c-ref.\}  \\
127 & $(4\!\ast\! 2, \mathrm{I})$    & $C_1$ & $(4{\ast}2, \mathrm{I})$ & (idt., idt.) & \{idt., idt.\} \\
140 & $(4\!\ast\! 2, \mathrm{I})$    & $C_2$ & $(\ast 442, \mathrm{I})$ & (c-ref., ref.) & \{idt., c-ref.\} 
\end{tabular}

\medskip
\caption{The classification of the co-Seifert fibrations of 3-space groups 
whose co-Seifert fiber is of type $4\hbox{$\ast$}2$ with IT number 12}
\end{table}

(11)  The 2-space group $\Mu$ with IT number 11 is $\ast 442$ in Conway's notation and $p4m$ in IT notation. 
See space group 3/2/1/1 in Table 1A of \cite{B-Z} for the standard affine representation of $\Mu$. 
The flat orbifold $E^2/\Mu$ is a $45^\circ - 45^\circ$ right triangle.  
Let t-ref.\ denote the {\it triangle reflection} of $E^2/\Mu$. 

\begin{lemma} 
If $\Mu$ is the 2-space group $\ast 442$, 
then $\mathrm{Sym}(\Mu) = \mathrm{Aff}(\Mu) =$ \{\rm idt., t-ref.\}, 
and $\Omega:  \mathrm{Aff}(\Mu) \to \mathrm{Out}(\Mu)$ is an isomorphism.  
\end{lemma}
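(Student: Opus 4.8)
The plan is to follow verbatim the template that proves Lemmas 24 through 34 for the earlier 2-space groups. First I would pin down the isometry group $\mathrm{Sym}(\Mu) = \mathrm{Isom}(E^2/\Mu)$ by a direct geometric argument, using that $E^2/\Mu$ is a $45^\circ$--$45^\circ$ right triangle. Then I would bootstrap from $\mathrm{Sym}(\Mu)$ to $\mathrm{Aff}(\Mu)$ and to the isomorphism statement for $\Omega$ by invoking the general Lie-theoretic results already established, namely Theorems 11, 12, and 13 together with Lemmas 9 and 13, exactly as in the proof of Lemma 24.

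For the geometric step, every isometry of the triangle $E^2/\Mu$ permutes its three corner points and preserves the interior angle at each corner. The right-angle vertex is the unique $90^\circ$ corner, so it is fixed by every symmetry; the two remaining $45^\circ$ corners may only be fixed or transposed. The transposition is realized by the triangle reflection t-ref.\ across the altitude from the right angle (equivalently, the perpendicular bisector of the hypotenuse). Hence $\mathrm{Sym}(\Mu) = \{\mathrm{idt.}, \text{t-ref.}\}$, a group of order $2$.

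For the algebraic step, I would argue as in Lemma 24. The point group $\Pi$ of $\Mu$ is the order-$8$ dihedral point group of $p4m$, whose only fixed vector is the origin, so $\mathrm{Span}(Z(\Mu)) = \mathrm{Fix}(\Pi) = \{0\}$ by Lemma 9 and thus $Z(\Mu) = \{I\}$. By Theorems 11 and 12 the identity component $K$ is trivial and $\Omega:\mathrm{Sym}(\Mu)\to\mathrm{Out}_E(\Mu)$ is an isomorphism. By Lemma 13 and Table 5A of \cite{B-Z} one has $\Pi_E = \Pi_A$, so $\mathrm{Out}_E(\Mu) = \mathrm{Out}(\Mu)$; and by Theorems 11 and 13, $\Omega:\mathrm{Aff}(\Mu)\to\mathrm{Out}(\Mu)$ is an isomorphism. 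Composing these isomorphisms forces $\mathrm{Sym}(\Mu) = \mathrm{Aff}(\Mu)$, which completes the identification.

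The only genuine content here is the geometric determination of $\mathrm{Sym}(\Mu)$ and the table lookup that $\Pi_E = \Pi_A$; everything else is a formal application of the established theorems. The main point requiring care is ruling out extra affinities beyond the isometries, i.e.\ verifying from Table 5A that $\mathrm{Out}(\Mu)$ has order $2$ and is already detected by isometries, since this is precisely the equality $\mathrm{Out}_E(\Mu) = \mathrm{Out}(\Mu)$ on which the conclusion $\mathrm{Sym}(\Mu) = \mathrm{Aff}(\Mu)$ rests.
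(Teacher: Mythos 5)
Your proposal is correct and follows essentially the same route as the paper: the paper determines $\mathrm{Sym}(\Mu) = \{\mathrm{idt.}, \hbox{t-ref.}\}$ by observing that a symmetry fixes the $90^\circ$ corner point and permutes the other two, and then cites the proof of Lemma 24 (i.e.\ $Z(\Mu) = \{I\}$ via Lemma 9, Theorems 11--13, and the equality $\mathrm{Out}_E(\Mu) = \mathrm{Out}(\Mu)$ from Lemma 13 and Table 5A of \cite{B-Z}) for the rest. Your identification of the table lookup $\Pi_E = \Pi_A$ as the step carrying the real weight is exactly right.
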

\begin{proof} $\mathrm{Sym}(\Mu)$ = \{\rm idt., t-ref.\}, 
since a symmetry of $E^2/\Mu$ fixes the $90^\circ$ corner point 
and permute the other two corner points. 
We have that $\mathrm{Sym}(\Mu) = \mathrm{Aff}(\Mu)$ and $\Omega:  \mathrm{Aff}(\Mu) \to \mathrm{Out}(\Mu)$ is an isomorphism as in Lemma 24. 
\end{proof}

\begin{theorem}  
Let $\Mu$ be the 2-space group $\ast 442$. 
Then $\mathrm{Iso}(C_\infty,\Mu)$ has two elements,   
corresponding to the pairs of elements {\rm \{idt., idt.\}, \{t-ref., t-ref.\}} of $\mathrm{Sym}(\Mu)$ by Theorem 23, 
and $\mathrm{Iso}(D_\infty,\Mu)$ has three elements,  
corresponding to the pairs of elements {\rm \{idt., idt.\}, \{idt., t-ref.\}, \{t-ref., t-ref.\}} of $\mathrm{Sym}(\Mu)$  by Theorem 24.
The corresponding co-Seifert fibrations are described in Table 8 by Theorems 27 and 28.  
\end{theorem}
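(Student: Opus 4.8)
The plan is to follow the now-established template that was used for the 2-space groups $632$, $3\ast 3$, $4\ast 2$, and in particular the model Lemma for $\ast 632$. The key structural input has already been extracted in Lemma 35: $\mathrm{Sym}(\Mu)=\mathrm{Aff}(\Mu)=\{\mathrm{idt.},\mathrm{t\text{-}ref.}\}$ is a group of order $2$, and $\Omega:\mathrm{Aff}(\Mu)\to\mathrm{Out}(\Mu)$ is an isomorphism. Everything in Theorem 36 is then a purely formal consequence of Theorems 23 and 24 applied to a group of order $2$, so the ``proof'' is really a bookkeeping exercise: identify the relevant conjugacy classes of pairs of elements of $\mathrm{Sym}(\Mu)$, count them, and match each to the appropriate row of Table 8.

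First I would treat the infinite cyclic case $\Delta=C_\infty$. By Theorem 23, $\mathrm{Iso}(C_\infty,\Mu)$ is in bijection with conjugacy classes of pairs of \emph{inverse} elements of $\mathrm{Out}(\Mu)\cong\mathrm{Sym}(\Mu)$ of finite order. Since $\mathrm{Sym}(\Mu)$ has order $2$, it is abelian, every element is its own inverse, and conjugacy is trivial; hence the pairs are exactly $\{\mathrm{idt.},\mathrm{idt.}\}$ and $\{\mathrm{t\text{-}ref.},\mathrm{t\text{-}ref.}\}$, giving two elements. Next I would treat the infinite dihedral case $\Delta=D_\infty$. By Theorem 24 (applicable since $Z(\Mu)=\{I\}$, as noted in Lemma 35), $\mathrm{Iso}(D_\infty,\Mu)$ is in bijection with conjugacy classes of \emph{unordered} pairs of elements of $\mathrm{Out}(\Mu)$ of order $1$ or $2$ whose product has finite order. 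In a group of order $2$ every element has order $1$ or $2$ and every product is automatically of finite order, so the pairs are the three unordered pairs $\{\mathrm{idt.},\mathrm{idt.}\}$, $\{\mathrm{idt.},\mathrm{t\text{-}ref.}\}$, and $\{\mathrm{t\text{-}ref.},\mathrm{t\text{-}ref.}\}$, giving three elements.

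Finally I would invoke Theorems 27 and 28 to pass from these classifying pairs to the explicit co-Seifert fibrations listed in Table 8, exactly as was done in the parallel Theorems 30, 32, 34, and 36 above. The realization of each abstract pair as an honest generalized Calabi construction (with the stated structure group action on $V/\Nu\times V^\perp/\Kappa$ and the stated quotients) is precisely the content of those two theorems applied to the standard affine representation of $\Mu$, so no new computation is required here beyond reading off the entries.

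I do not expect any genuine obstacle in this statement; the only subtlety, already resolved in Lemma 35, is the verification that $\mathrm{Sym}(\Mu)=\mathrm{Aff}(\Mu)$ and that $\mathrm{Out}_E(\Mu)=\mathrm{Out}(\Mu)$, which is what legitimizes using $\mathrm{Sym}(\Mu)$ in place of $\mathrm{Out}(\Mu)$ throughout. Given that lemma, the proof is the same three-sentence argument as in the cases with structure group of order $2$ (the $4\ast2$, $3\ast3$, and $\ast632$ cases), and I would simply cite Lemma 35 together with Theorems 23, 24, 27, and 28.
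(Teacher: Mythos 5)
Your proposal is correct and is essentially the argument the paper intends: the paper gives no separate proof for this theorem, the justification being exactly the citation of the lemma establishing $\mathrm{Sym}(\Mu)=\mathrm{Aff}(\Mu)=\{\mathrm{idt.},\hbox{t-ref.}\}$ with $\Omega$ an isomorphism (so $\mathrm{Out}(\Mu)$ has order $2$ and trivial center applies), followed by the counting of conjugacy classes of pairs via Theorems 23 and 24 and the identification with Table 8 via Theorems 27 and 28. Your bookkeeping of the two inverse pairs and three unordered pairs in a group of order two is exactly what the paper relies on.
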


\begin{table}  
\begin{tabular}{rlllll}
no. & fibers & grp. & quotients &  grp. action & classifying pair \\
\hline 
  99 & $(\ast 442, \mathrm{O})$ & $C_1$ & $(\ast 442, \mathrm{O})$ & (idt., idt.) &  \{idt., idt.\} \\
107 & $(\ast 442, \mathrm{O})$ & $C_2$ & $(\ast 442, \mathrm{O})$ & (t-ref., 2-rot.) & \{t-ref., t-ref.\} \\
123 & $(\ast 442, \mathrm{I})$   & $C_1$ & $(\ast 442, \mathrm{I})$ & (idt., idt.) &  \{idt., idt.\} \\
129 & $(\ast 442, \mathrm{O})$ & $C_2$ & $(\ast 442, \mathrm{I})$ & (t-ref., ref.) & \{t-ref., t-ref.\} \\
139 & $(\ast 442, \mathrm{I})$   & $C_2$ & $(\ast 442, \mathrm{I})$ & (t-ref., ref.) & \{idt., t-ref.\}
\end{tabular}

\medskip
\caption{The classification of the co-Seifert fibrations of 3-space groups 
whose co-Seifert fiber is of type $\ast 442$ with IT number 11}
\end{table}

(10) The 2-space group $\Mu$ with IT number 10 is $442$ in Conway's notation and $p4$ in IT notation. 
See space group 3/1/1/1 in Table 1A of \cite{B-Z} for the standard affine representation of $\Mu$. 
The  flat orbifold $E^2/\Mu$ is a turnover with 3 cone points 
obtained by gluing together two congruent $45^\circ-45^\circ$ right triangles along their boundaries. 
The $442$ turnover is orientable. 
The symmetry group of this orbifold is a dihedral group of order 4 
consisting of the identity symmetry, the {\it halfturn}  2-rot., 
the {\it central reflection} c-ref.\ between the two triangles, and 
the {\it triangle reflection} t-ref. 

\begin{lemma} 
If $\Mu$ is the 2-space group $442$, 
then  $\mathrm{Sym}(\Mu)$ is the dihedral group \{\rm idt., 2-rot, c-ref., t-ref.\}, 
and $\mathrm{Sym}(\Mu) = \mathrm{Aff}(\Mu)$, 
and $\Omega:  \mathrm{Aff}(\Mu) \to \mathrm{Out}(\Mu)$ is an isomorphism.  
\end{lemma}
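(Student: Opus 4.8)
The plan is to follow the template of the proof of Lemma 24, treating the geometric determination of $\mathrm{Sym}(\Mu)$ as the only genuinely new step and then invoking the Lie-theoretic machinery to deduce the assertions about $\mathrm{Aff}(\Mu)$ and $\Omega$.

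First I would pin down $\mathrm{Sym}(\Mu) = \mathrm{Isom}(E^2/\Mu)$ by a direct geometric analysis of the $442$ turnover. Since $E^2/\Mu$ is a sphere carrying three cone points, of orders $4$, $4$, and $2$, every isometry permutes the cone points and preserves their cone orders; hence it fixes the unique order $2$ cone point and either fixes or interchanges the two order $4$ cone points. This yields a homomorphism from $\mathrm{Sym}(\Mu)$ to the group of order $2$ of permutations of the two order $4$ cone points. I would argue that its kernel is exactly \{idt., c-ref.\}: an isometry fixing all three cone points is either orientation preserving, and hence the identity since a nontrivial rotation of the sphere cannot fix three points, or orientation reversing, and hence the central reflection c-ref.\ between the two triangular halves. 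The interchange of the two order $4$ cone points is realized by the halfturn 2-rot.\ (the composite of c-ref.\ and t-ref.), so the homomorphism is onto. Therefore $\mathrm{Sym}(\Mu)$ has order $4$ and equals the Klein four-group \{idt., 2-rot., c-ref., t-ref.\}.

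Next I would run the algebraic argument exactly as in Lemma 24. By Lemma 9 the point group of $\Mu$ fixes only the origin, so $Z(\Mu) = \{I\}$; by Theorems 11 and 12 the identity component of $\mathrm{Isom}(E^2/\Mu)$ is then trivial and $\Omega \colon \mathrm{Sym}(\Mu) \to \mathrm{Out}_E(\Mu)$ is an isomorphism. By Lemma 13 together with Table 5A of \cite{B-Z} one has $\mathrm{Out}_E(\Mu) = \mathrm{Out}(\Mu)$, and by Theorems 11 and 13 the map $\Omega \colon \mathrm{Aff}(\Mu) \to \mathrm{Out}(\Mu)$ is an isomorphism as well. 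Since $\mathrm{Sym}(\Mu) \subseteq \mathrm{Aff}(\Mu)$ and both groups are carried isomorphically onto $\mathrm{Out}(\Mu)$ by $\Omega$, they coincide, which gives $\mathrm{Sym}(\Mu) = \mathrm{Aff}(\Mu)$.

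I expect the main obstacle to be the completeness half of the first step, namely ruling out symmetries beyond the four listed ones. The permutation argument bounds $\mathrm{Sym}(\Mu)$ by the order $2$ kernel times the order $2$ image, and the delicate point is to confirm that the kernel contains no further orientation reversing element and that no isometry other than 2-rot.\ interchanges the two order $4$ cone points. This can be made rigorous by working in the standard affine representation of $\Mu$ (space group $3/1/1/1$ in Table 1A of \cite{B-Z}) and checking that the listed maps exhaust the symmetries induced by $N_E(\Mu)$, which is the computation underlying Lemma 24. Everything after the determination of $\mathrm{Sym}(\Mu)$ is formal.
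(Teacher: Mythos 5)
Your proposal is correct and follows essentially the same route as the paper: the paper determines $\mathrm{Sym}(\Mu)$ by the one-line observation that a symmetry fixes the $180^\circ$ cone point and permutes the two $90^\circ$ cone points (your kernel/image analysis just fleshes this out), and then deduces $\mathrm{Sym}(\Mu)=\mathrm{Aff}(\Mu)$ and the isomorphism $\Omega$ exactly ``as in Lemma 24,'' i.e.\ via $Z(\Mu)=\{I\}$ from Lemma 9, Theorems 11--13, Lemma 13, and Table 5A of Brown et al. No gaps.
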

\begin{proof} $\mathrm{Sym}(\Mu)$ = \{\rm idt., 2-rot, c-ref., t-ref.\}, 
since a symmetry of $E^2/\Mu$ fixes the $180^\circ$ cone point and permutes  
the other two cone points. 
We have that $\mathrm{Sym}(\Mu) = \mathrm{Aff}(\Mu)$ and $\Omega:  \mathrm{Aff}(\Mu) \to \mathrm{Out}(\Mu)$ is an isomorphism as in Lemma 24. 
\end{proof}

\begin{theorem}  
Let $\Mu$ be the 2-space group $442$. 
Then $\mathrm{Iso}(C_\infty,\Mu)$ has four elements,   
corresponding to the pairs of elements {\rm \{idt., idt.\}, \{2-rot., 2-rot.\}, \{c-ref., c-ref.\}, \{t-ref., t-ref.\}} 
of $\mathrm{Sym}(\Mu)$ by Theorem 23, 
and $\mathrm{Iso}(D_\infty,\Mu)$ has 10 elements,  
corresponding to the pairs of elements {\rm \{idt., idt.\},  \{idt., 2-rot.\},  \{idt., c-ref.\}, \{idt., t-ref.\},  \{2-rot., 2-rot.\},  \{c-ref., c-ref.\}, 
\{t-ref., t-ref.\}, \{2-rot., c-ref.\}, \{2-rot., t-ref.\}, \{c-ref., t-ref.\}} of $\mathrm{Sym}(\Mu)$  by Theorem 24.
The corresponding co-Seifert fibrations are described in Table 9 by Theorems 27 and  28.  
\end{theorem}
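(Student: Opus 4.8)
The plan is to deduce the two cardinalities from Theorems 23 and 24 by computing the relevant sets of conjugacy classes inside $\mathrm{Out}(\Mu)$, and then to read off Table 9 using Theorems 27 and 28. The first step is to identify $\mathrm{Out}(\Mu)$. By Lemma 31, $\mathrm{Aff}(\Mu) = \mathrm{Sym}(\Mu)$ is the dihedral group of order $4$ with elements idt., 2-rot., c-ref., t-ref., and $\Omega\colon \mathrm{Aff}(\Mu)\to\mathrm{Out}(\Mu)$ is an isomorphism; hence $\mathrm{Out}(\Mu)$ is a Klein four-group. I would extract the three features that drive the count: $\mathrm{Out}(\Mu)$ is finite, it is abelian (so conjugation is trivial and every conjugacy class is a singleton), and each of its elements equals its own inverse. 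I would also record that $Z(\Mu)=\{I\}$, either directly from Lemma 9 or from Corollary 5 applied to the finite group $\mathrm{Sym}(\Mu)$, since trivial center is the hypothesis needed to invoke Theorem 24.

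For $\mathrm{Iso}(C_\infty,\Mu)$ I would apply Theorem 23, which identifies this set with the conjugacy classes of pairs of inverse elements of finite order in $\mathrm{Out}(\Mu)$. Because conjugation is trivial and every element is an involution or the identity, a pair of inverse elements collapses to a repeated singleton, and all four elements have finite order. Hence the four group elements give precisely the four classifying pairs \{idt., idt.\}, \{2-rot., 2-rot.\}, \{c-ref., c-ref.\}, \{t-ref., t-ref.\}, so $\mathrm{Iso}(C_\infty,\Mu)$ has four elements.

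For $\mathrm{Iso}(D_\infty,\Mu)$ I would apply Theorem 24, which identifies this set with the conjugacy classes of unordered pairs $\{g,h\}$ of elements of order $1$ or $2$ whose product has finite order. In a Klein four-group every element already has order $1$ or $2$, every product automatically has finite order, and conjugation is trivial, so the condition is vacuous and the count is just the number of unordered pairs with repetition from a $4$-element set, namely $\binom{4}{2}+4 = 10$. Enumerating these — the pair \{idt., idt.\}, the three pairs pairing idt.\ with a nonidentity element, the three repeated pairs of a single nonidentity element, and the three pairs of distinct nonidentity elements — reproduces exactly the ten classifying pairs listed, so $\mathrm{Iso}(D_\infty,\Mu)$ has ten elements.

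The combinatorics above is routine; the substantive work, and the main obstacle, is the final clause asserting that these abstract classifying pairs realize precisely the co-Seifert fibrations tabulated in Table 9. For this I would proceed pair by pair, using Theorem 27 in the cyclic case and Theorem 28 in the dihedral case to decide when a generator, respectively a pair of generators, of the structure group $\Gamma/\Nu\Kappa$ lifts to a generator, respectively to Coxeter generators, of $\Gamma/\Nu$, and to read off the quotient orbifolds $V/(\Gamma/\Kappa)$ and $V^\perp/(\Gamma/\Nu)$. The genuinely delicate point, already flagged in the two-dimensional warm-up in Example 12 and handled concretely in Example 13, is that when the structure group is a dihedral group of order $4$ the abstract group structure does not determine which of the three nonidentity elements acts as a halfturn on a circle factor of $V/\Nu\times V^\perp/\Kappa$; settling this requires reading the action off the standard affine representation of each relevant $\Gamma$ in Table 1B of \cite{B-Z}. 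Once these assignments are fixed, Theorems 27 and 28 confirm that each fibration in Table 9 corresponds to exactly one of the classifying pairs found above.
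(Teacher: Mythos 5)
Your proposal is correct and follows essentially the same route as the paper: Lemma 31 identifies $\mathrm{Sym}(\Mu)=\mathrm{Aff}(\Mu)$ as the Klein four-group $\{$idt., 2-rot., c-ref., t-ref.$\}$ with $\Omega$ an isomorphism onto $\mathrm{Out}(\Mu)$, after which Theorems 23 and 24 reduce the two counts to the trivial enumeration of singleton inverse pairs (four) and of unordered pairs of involutions (ten), exactly as you compute. Your remark that the only substantive remaining work is matching the classifying pairs to Table 9 via Theorems 27 and 28, with the order-4 dihedral structure groups requiring inspection of the standard affine representations (as in Example 14), is also how the paper handles it.
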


\begin{table} 
\begin{tabular}{rlllll}
no. & fibers & grp. & quotients &  structure group action & classifying pair \\
\hline 
75 & $(442, \mathrm{O})$ & $C_1$ & $(442, \mathrm{O})$ & (idt., idt.) & \{idt., idt.\} \\
79 & $(442, \mathrm{O})$ & $C_2$ & $(442, \mathrm{O})$ & (2-rot., 2-rot.) & \{2-rot., 2-rot.\} \\
83 & $(442, \mathrm{I})$  & $C_1$ & $(442, \mathrm{I})$ & (idt., idt.) & \{idt., idt.\}  \\
85 & $(442, \mathrm{O})$& $C_2$ & $(442, \mathrm{I})$ & (2-rot., ref.) & \{2-rot., 2-rot.\} \\
87 & $(442, \mathrm{I})$  & $C_2$ & $(442, \mathrm{I})$ & (2-rot., ref.) &  \{idt., 2-rot.\} \\
89 & $(442, \mathrm{O})$ & $C_2$ & $(\ast 442, \mathrm{I})$ & (c-ref., ref.) & \{c-ref., c-ref.\} \\
90 & $(442, \mathrm{O})$ & $C_2$ & $(4{\ast}2, \mathrm{I})$ & (t-ref., ref.) & \{t-ref., t-ref.\} \\
97 & $(442, \mathrm{O})$ & $D_2$ & $(\ast 442, \mathrm{I})$ & (c-ref., ref.), (2-rot., 2-rot.) & \{c-ref., t-ref.\} \\
103 & $(442, \mathrm{O})$& $C_2$ & $(\ast 442, \mathrm{O})$ & (c-ref., 2-rot.) & \{c-ref., c-ref.\} \\
104 & $(442, \mathrm{O})$& $C_2$ & $(4{\ast}2, \mathrm{O})$ & (t-ref., 2-rot.) & \{t-ref., t-ref.\} \\
124 & $(442, \mathrm{I})$  & $C_2$ & $(\ast 442, \mathrm{I})$ & (c-ref., ref.) &  \{idt., c-ref.\} \\
126 & $(442, \mathrm{O})$ & $D_2$ & $(\ast 442, \mathrm{I})$ & (c-ref., ref.), (t-ref., 2-rot.) & \{c-ref., 2-rot.\} \\
128 & $(442, \mathrm{I})$   & $C_2$ & $(4{\ast}2, \mathrm{I})$ & (t-ref., ref.) & \{idt., t-ref.\} \\
130 & $(442, \mathrm{O})$ & $D_2$ & $(\ast 442, \mathrm{I})$ & (t-ref., ref.), (c-ref., 2-rot.) & \{t-ref., 2-rot.\}
\end{tabular}

\medskip
\caption{The classification of the co-Seifert fibrations of 3-space groups 
whose co-Seifert fiber is of type $442$ with IT number 10}
\end{table}

\medskip
\noindent{\bf Example 14.}
Let $\Gamma$ be the 3-space group with IT number 126 in Table 1B of \cite{B-Z}. 
Then $\Gamma = \langle t_1,t_2,t_3, \beta,\gamma, D\rangle$ 
where $t_i = e_i+I$ for $i=1,2,3$ are the standard translations, 
and $\beta=\frac{1}{2}e_1+B$, and $\gamma = \frac{1}{2}e_1+\frac{1}{2}e_3 + C$, and 
$$B = \left(\begin{array}{rrr} 0 & -1 & 0\\ 1 & 0 & 0 \\ 0 & 0 & 1  \end{array}\right),\ \ 
C = \left(\begin{array}{rrr} -1 & 0 & 0  \\ 0 & 1 & 0   \\ 0 & 0 & -1 \end{array}\right), \ \ 
D = \left(\begin{array}{rrr} -1 & 0 & 0  \\ 0 & -1 & 0   \\ 0 & 0 & -1 \end{array}\right).$$
The group $\Nu = \langle t_1,t_2, \beta\rangle$ is a complete normal subgroup 
of $\Gamma$ with $V = {\rm Span}(\Nu) = {\rm Span}\{e_1, e_2\}$. 
The flat orbifold $V/\Nu$ is a $442$ turnover.  
Let $\Kappa = \Nu^\perp$.  Then $\Kappa = \langle t_3\rangle$. 
The flat orbifold $V^\perp/\Kappa$ is a circle.  
The structure group $\Gamma/\Nu\Kappa$ is a dihedral group of order 4 
generated by $\Nu\Kappa\gamma$ and $\Nu\Kappa D$. 
The elements $\Nu\Kappa\gamma$ and $\Nu\Kappa D$ act on $V^\perp/\Kappa$ as reflections. 
The two $90^\circ$ cone points of $V/\Nu$ are represented by 
$(1/4,1/4,0)$, which is the fixed point of $\beta$, and by $(3/4,3/4,0)$,  
which is the fixed point of $t_1\beta$. 
The element $\Nu\Kappa\gamma$ acts as the central reflection of $V/\Nu$, 
since it fixes all three cone points.  
The element $\Nu\Kappa D$ acts as the halfturn of $V/\Nu$, 
since $\Nu\Kappa D$ preserves the orientation of $V/\Nu$ because $D$ preserves the orientation of $V$.

(9) The 2-space group $\Mu$ with IT number 9 is $2{\ast}22$ in Conway's notation and $cmm$ in IT notation. 
See space group 2/2/2/1 in Table 1A of \cite{B-Z} for the standard affine representation of $\Mu$. 
The flat orbifold $E^2/\Mu$ is a {\it pointed hood}. 
The most symmetric pointed hood is the {\it square hood} 
obtained by gluing together two congruent squares along the union of two adjacent sides. 
This orbifold has one $180^\circ$ cone point and two $90^\circ$ corner points. 
The symmetry group of this orbifold is a dihedral group of order 4 
consisting of the identity symmetry, the {\it central reflection} c-ref.\ between the two squares, 
the {\it diagonal reflection} d-ref., and the {\it halfturn} 2-rot.
Both c-ref.\ and  2-rot.\ transpose the two corner points of the square hood, 
whereas  d-ref.\ fixes each of the corner points of the square hood. 

\begin{lemma} 
If $\Mu$ is the 2-space group $2{\ast}22$ and $E^2/\Mu$ is a square hood, 
then  $\mathrm{Sym}(\Mu)$ is the dihedral group \{\rm idt., 2-rot, c-ref., d-ref.\}, 
and $\mathrm{Sym}(\Mu) = \mathrm{Aff}(\Mu)$, 
and $\Omega:  \mathrm{Aff}(\Mu) \to \mathrm{Out}(\Mu)$ is an isomorphism. 
\end{lemma}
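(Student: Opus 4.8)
The plan is to follow the template already established by the proof of Lemma 24, adapting it to the specific geometry of the square hood. The lemma has three assertions: first that $\mathrm{Sym}(\Mu) = \{\textrm{idt.}, \textrm{2-rot.}, \textrm{c-ref.}, \textrm{d-ref.}\}$; second that $\mathrm{Sym}(\Mu) = \mathrm{Aff}(\Mu)$; and third that $\Omega$ is an isomorphism. Once the first claim is settled, the second and third will follow by exactly the same chain of reasoning invoked ``as in Lemma 24,'' so the real content lies in identifying the symmetry group.

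First I would pin down $\mathrm{Sym}(\Mu)$ directly from the geometry of the square hood. The orbifold has one distinguished $180^\circ$ cone point and two $90^\circ$ corner points. Any isometry of $E^2/\Mu$ must preserve the cone point (it is the unique point of its type) and must permute the set of two corner points. This gives a homomorphism from $\mathrm{Sym}(\Mu)$ to the symmetric group on the two corner points, and I would argue that an isometry fixing the cone point and fixing each corner point individually is the identity, so the kernel is trivial and $|\mathrm{Sym}(\Mu)| \leq 2 \cdot |\textrm{stabilizer}|$. Concretely, I would exhibit the four named symmetries: the identity, the halfturn 2-rot.\ about the cone point, the central reflection c-ref.\ swapping the two glued squares, and the diagonal reflection d-ref.; then check their action on the corner points (c-ref.\ and 2-rot.\ transpose them, d-ref.\ fixes each, matching the description in the text) to see they are distinct and form the dihedral group of order 4. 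A count showing no further symmetries exist completes the identification.

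Next I would dispatch the remaining two claims. Since $\Mu$ is the 2-space group $2{\ast}22$, Lemma 9 gives $\mathrm{Span}(Z(\Mu)) = \mathrm{Fix}(\Pi)$, and because the point group of $2{\ast}22$ acts with no nonzero common fixed vector, $Z(\Mu) = \{I\}$. By Theorems 11 and 12, triviality of the center forces $K = \{1\}$ and makes $\Omega : \mathrm{Sym}(\Mu) \to \mathrm{Out}_E(\Mu)$ an isomorphism. Then $\mathrm{Out}_E(\Mu) = \mathrm{Out}(\Mu)$ by Lemma 13 together with the relevant entry of Table 5A of \cite{B-Z} (equivalently, $\Pi_E = \Pi_A$), and $\Omega : \mathrm{Aff}(\Mu) \to \mathrm{Out}(\Mu)$ is an isomorphism by Theorems 11 and 13. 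Comparing cardinalities through these isomorphisms yields $\mathrm{Sym}(\Mu) = \mathrm{Aff}(\Mu)$, which is the final assertion.

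The main obstacle I anticipate is the first step: verifying completely that the square hood admits no symmetries beyond the four listed. The cone-point and corner-point constraints bound the group, but one must be careful that the specific square hood (the most symmetric representative) does not acquire an exceptional isometry from its extra metric symmetry, and that the listed four are genuinely realized by isometries of $E^2/\Mu$ rather than mere affinities. Once the order-4 count is secured, everything downstream is the formal machinery of \S5 applied verbatim, so I expect that portion to be routine.
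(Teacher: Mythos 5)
Your proposal follows the paper's proof exactly: the paper identifies $\mathrm{Sym}(\Mu)$ in one line from the observation that a symmetry fixes the cone point and permutes the two corner points, and then settles the remaining two claims verbatim ``as in Lemma 24'' via $Z(\Mu)=\{I\}$ from Lemma 9, Theorems 11--13, and Lemma 13 with Table 5A of \cite{B-Z} --- precisely the chain you invoke. One small internal slip worth fixing: you assert that an isometry fixing the cone point and each corner point individually is the identity (so the kernel of the action on the corner points is trivial), but your own d-ref.\ fixes both corner points, so that kernel has order $2$ and the correct bound is $|\mathrm{Sym}(\Mu)|\le 4$; this does not affect your conclusion, which you reach correctly.
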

\begin{proof} $\mathrm{Sym}(\Mu)$ = \{\rm idt., 2-rot, c-ref., d-ref.\}, 
since a symmetry of $E^2/\Mu$ fixes the cone point 
and permutes the corner points. 
We have that $\mathrm{Sym}(\Mu) = \mathrm{Aff}(\Mu)$ and $\Omega:  \mathrm{Aff}(\Mu) \to \mathrm{Out}(\Mu)$ is an isomorphism as in Lemma 24. 
\end{proof}

\begin{theorem}  
Let $\Mu$ be the 2-space group $2{\ast}22$. 
Then $\mathrm{Iso}(C_\infty,\Mu)$ has four elements,   
corresponding to the pairs of elements {\rm \{idt., idt.\}, \{2-rot., 2-rot.\}, \{c-ref., c-ref.\}, \{d-ref., d-ref.\}} 
of $\mathrm{Sym}(\Mu)$ by Theorem 23, 
and $\mathrm{Iso}(D_\infty,\Mu)$ has 10 elements,  
corresponding to the pairs of elements {\rm \{idt., idt.\},  \{idt., 2-rot.\},  \{idt., c-ref.\}, \{idt., d-ref.\},  \{2-rot., 2-rot.\},  \{c-ref., c-ref.\}, \{d-ref., d-ref.\}, \{2-rot., c-ref.\}, \{2-rot., d-ref.\}, \{c-ref., d-ref.\}} of $\mathrm{Sym}(\Mu)$  by Theorem 24.
The corresponding co-Seifert fibrations are described in Table 10 by Theorems 27 and 28.  
\end{theorem}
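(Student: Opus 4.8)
The plan is to reduce everything to the group structure of $\mathrm{Out}(\Mu)$ and then invoke the counting bijections of Theorems 23 and 24. By Lemma 32 the group $\mathrm{Sym}(\Mu)$ is the dihedral group \{idt., 2-rot., c-ref., d-ref.\} of order $4$, and $\Omega:\mathrm{Aff}(\Mu)\to\mathrm{Out}(\Mu)$ is an isomorphism; hence $\mathrm{Out}(\Mu)$ is a Klein four-group, which is abelian and in which every element has order $1$ or $2$. I would also record that $Z(\Mu)=\{I\}$ by Lemma 9, since the point group of $2{\ast}22$ contains the halfturn $-I$ and therefore fixes no nonzero vector; this is exactly the hypothesis needed to apply Theorem 24.

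For the infinite cyclic case I would apply Theorem 23: the set $\mathrm{Iso}(C_\infty,\Mu)$ is in one-to-one correspondence with the conjugacy classes of pairs of inverse elements of finite order in $\mathrm{Out}(\Mu)$. Because the group is finite abelian of exponent $2$, conjugation is trivial and each element is its own inverse, so every such pair has the form \{$g$, $g$\} for a single element $g$. There are therefore exactly four classes, corresponding to \{idt., idt.\}, \{2-rot., 2-rot.\}, \{c-ref., c-ref.\}, and \{d-ref., d-ref.\}, as claimed.

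For the infinite dihedral case I would apply Theorem 24: the set $\mathrm{Iso}(D_\infty,\Mu)$ is in one-to-one correspondence with the conjugacy classes of unordered pairs of elements of order $1$ or $2$ whose product has finite order. Again, since $\mathrm{Out}(\Mu)$ is finite abelian of exponent $2$, every element qualifies, every product automatically has finite order, and conjugation is trivial; so the classes are exactly the unordered pairs, with repetition allowed, drawn from a four-element set. There are $\binom{4}{2}+4=10$ of these, and they are precisely the ten pairs listed in the statement.

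Finally, to justify the last sentence I would realize each isomorphism class by an explicit pair $(\Gamma,\Nu)$ and identify $\Gamma$ by its IT number in Table 10. In the cyclic case this rests on Theorem 27 (lifting a generator of $\Gamma/\Nu\Kappa$ to a generator of $\Gamma/\Nu$) and in the dihedral case on Theorem 28 (lifting a Coxeter pair), while the structure group, its action on $V/\Nu\times V^\perp/\Kappa$, and the quotients $(V/(\Gamma/\Kappa),\, V^\perp/(\Gamma/\Nu))$ are read off from the standard affine representations in Table 1B of \cite{B-Z} exactly as in Examples 13 and 14. The main obstacle is this last bookkeeping step: when the structure group turns out to be dihedral one must decide which nonidentity element acts as a halfturn on a circle factor and which reflection of the square hood (c-ref.\ or d-ref.) is induced, and then check that the abstract counts of $4$ and $10$ classes distribute across the rows of Table 10 precisely as tabulated.
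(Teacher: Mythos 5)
Your proposal is correct and follows essentially the same route as the paper: Lemma 32 identifies $\mathrm{Sym}(\Mu)=\mathrm{Aff}(\Mu)$ as the Klein four-group \{idt., 2-rot., c-ref., d-ref.\} with $\Omega$ an isomorphism onto $\mathrm{Out}(\Mu)$, the trivial center (via Lemma 9) licenses Theorem 24, and the counts $4$ and $\binom{4}{2}+4=10$ then fall out of Theorems 23 and 24 exactly as you describe, with the table entries justified by Theorems 27 and 28 plus the case-by-case bookkeeping of the kind carried out in Examples 13--15.
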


\begin{table} 
\begin{tabular}{rlllll}
no. & fibers & grp. & quotients &  structure group action & classifying pair \\
\hline 
35 & $(2{\ast}22, \mathrm{O})$ & $C_1$ & $(2{\ast}22, \mathrm{O})$ & (idt., idt.) & \{idt., idt.\} \\
42 & $(2{\ast}22, \mathrm{O})$ & $C_2$ & $(\ast 2222, \mathrm{O})$ & (c-ref., 2-rot.) & \{c-ref., c-ref.\} \\
65 & $(2{\ast}22, \mathrm{I})$  & $C_1$ & $(2{\ast}22, \mathrm{I})$ & (idt., idt.) & \{idt., idt.\}  \\
67 & $(2{\ast}22, \mathrm{O})$ & $C_2$ & $(\ast 2222, \mathrm{I})$ & (c-ref., ref.) & \{c-ref., c-ref.\} \\
69 & $(2{\ast}22, \mathrm{I})$   & $C_2$ & $(\ast 2222, \mathrm{I})$ &  (c-ref., ref.) & \{idt., c-ref.\} \\
101 & $(2{\ast}22, \mathrm{O})$ & $C_2$ & $(\ast 442, \mathrm{O})$ & (d-ref., 2-rot.) & \{d-ref., d-ref.\} \\
102 & $(2{\ast}22, \mathrm{O})$ & $C_2$ & $(4{\ast}2, \mathrm{O})$ & (2-rot., 2-rot.) & \{2-rot., 2-rot.\} \\
111 & $(2{\ast}22, \mathrm{O})$ & $C_2$ & $(\ast 442, \mathrm{I})$ & (d-ref., ref.) & \{d-ref., d-ref.\} \\
113 & $(2{\ast}22, \mathrm{O})$ & $C_2$ & $(4{\ast}2, \mathrm{I})$ & (2-rot., ref.) & \{2-rot., 2-rot.\} \\
121 & $(2{\ast}22, \mathrm{O})$ & $D_2$ & $(\ast 442, \mathrm{I})$ & (d-ref., ref.), \{c-ref., 2-rot.\} & \{d-ref., 2-rot.\} \\
132 & $(2{\ast}22, \mathrm{I})$  & $C_2$ & $(\ast 442, \mathrm{I})$ & (d-ref., ref.) &  \{idt., d-ref.\} \\
134 & $(2{\ast}22, \mathrm{O})$ & $D_2$ & $(\ast 442, \mathrm{I})$ & (c-ref., ref.), (2-rot., 2-rot.) & \{c-ref., d-ref.\} \\
136 & $(2{\ast}22, \mathrm{I})$  & $C_2$ & $(4{\ast}2, \mathrm{I})$ & (2-rot., ref.) & \{idt., 2-rot.\} \\
138 & $(2{\ast}22, \mathrm{O})$ & $D_2$ & $(\ast 442, \mathrm{I})$ & (c-ref., ref.), (d-ref., 2-rot.) &  \{c-ref., 2-rot.\}
\end{tabular}

\medskip
\caption{The classification of the co-Seifert fibrations of 3-space groups 
whose co-Seifert fiber is of type $2{\ast}22$ with IT number 9}
\end{table}

\medskip
\noindent{\bf Example 15.}
Let $\Gamma$ be the 3-space group with IT number 134 in Table 1B of \cite{B-Z}. 
Then $\Gamma = \langle t_1,t_2,t_3, \beta,\gamma, D\rangle$ 
where $t_i = e_i+I$ for $i=1,2,3$ are the standard translations, 
and $\beta=\frac{1}{2}e_1+\frac{1}{2}e_3+B$, and $\gamma = \frac{1}{2}e_1+\frac{1}{2}e_3 + C$, and 
$$B = \left(\begin{array}{rrr} 0 & -1 & 0\\ 1 & 0 & 0 \\ 0 & 0 & 1  \end{array}\right),\ \ 
C = \left(\begin{array}{rrr} -1 & 0 & 0  \\ 0 & 1 & 0   \\ 0 & 0 & -1 \end{array}\right), \ \ 
D = \left(\begin{array}{rrr} -1 & 0 & 0  \\ 0 & -1 & 0   \\ 0 & 0 & -1 \end{array}\right).$$
The group $\Nu = \langle t_1,t_2, t_3^{-1}\beta^2,t_3^{-1}\beta\gamma D\rangle$ is a complete normal subgroup of $\Gamma$ with $V = {\rm Span}(\Nu) = {\rm Span}\{e_1, e_2\}$. 
The flat orbifold $V/\Nu$ is a a pointed hood.  
Let $\Kappa = \Nu^\perp$.  Then $\Kappa = \langle t_3\rangle$. 
The flat orbifold $V^\perp/\Kappa$ is a circle.  
The structure group $\Gamma/\Nu\Kappa$ is a dihedral group of order 4 
generated by $\Nu\Kappa\gamma$ and $\Nu\Kappa D$. 
The elements $\Nu\Kappa\gamma$ and $\Nu\Kappa D$ act on $V^\perp/\Kappa$ as reflections. 
The product element $\Nu\Kappa\gamma D$ acts as a halfturn on $V^\perp/\Kappa$. 

A fundamental polygon for the action of $\Nu$ on $V$ is the $45^\circ-45^\circ$ 
right triangle $\triangle$ with vertices $v_1=-\frac{1}{4}e_1+\frac{1}{4}e_2$, and $v_2=\frac{3}{4}e_1+\frac{1}{4}e_2$, 
and $v_3 =\frac{1}{4}e_1-\frac{1}{4}e_2$. 
The short sides $[v_1,v_3]$ and $[v_2,v_3]$ of $\triangle$ are fixed pointwise by the reflections 
$\beta^{-1}\gamma D$ and $t_2^{-1}t_3^{-1}\beta\gamma D$, respectively. 
The long side $[v_1,v_2]$ of $\triangle$ is flipped about its midpoint $v_0 = \frac{1}{4}e_1+\frac{1}{4}e_2$ 
by the halfturn $t_3^{-1}\beta^2$. 

The orbifold $V/\Nu$ has a unique cone point $\Nu v_0$ and two right angle corner points 
$\Nu v_1$ and $\Nu v_3$. 
The element $\Nu\Kappa\gamma$ acts as the diagonal reflection of $V/\Nu$, 
since it fixes both corner points.  
The element $\Nu\Kappa D$ acts as the central reflection of $V/\Nu$, 
since $Dv_1 = v_3$ and $D$ fixes the origin which is the midpoint between $v_1$ and $v_3$.

(8) The 2-space group $\Mu$ with IT number 8 is $22\times$ in Conway's notation or $pgg$ in IT notation. 
See space group 2/2/1/3 in Table 1A of \cite{B-Z} for the standard affine representation of $\Mu$. 
The flat orbifold $E^2/\Mu$ is a {\it projective pillow}. 
The most symmetric projective pillow is the {\it square projective pillow} obtained by gluing the opposite sides 
of a square $\Box$ by glide reflections with axes the lines joining the midpoints of opposite sides of $\Box$. 
This orbifold has two $180^\circ$ cone points represented by diagonally opposite vertices of $\Box$, 
and a {\it center point} represented by the center of $\Box$. 
The symmetry group of this orbifold is a dihedral group of order 8, 
represented by the symmetry group of $\Box$, 
 consisting of the identity symmetry, two {\it midline reflections}, m-ref. and m-ref.$'$, 
two {\it diagonal reflections}, d-ref.\ and d-ref.$'$, a {\it halfturn} 2-rot., 
and two order 4 rotations, 4-rot.\ and 4-rot.$^{-1}$, with 4-rot.\ = (d-ref.)(m-ref.).  
There are three conjugacy classes of order 2 symmetries, the classes represented by m-ref., 2-rot., and d-ref. 
There is one conjugacy class of order 4 symmetries. 
There are two conjugacy classes of dihedral subgroups of order 4, 
the group generated by the midline reflections, 
and the group generated by the diagonal reflections. 

\begin{lemma} 
If $\Mu$ is the 2-space group $22\times$ and $E^2/\Mu$ is a square projective pillow, 
then $\mathrm{Sym}(\Mu)$ is the dihedral group $\langle$\rm m-ref., d-ref.$\rangle$ of order 8, 
and $\mathrm{Sym}(\Mu) = \mathrm{Aff}(\Mu)$, 
and $\Omega:  \mathrm{Aff}(\Mu) \to \mathrm{Out}(\Mu)$ is an isomorphism.  
\end{lemma}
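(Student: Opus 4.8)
The plan is to follow the template established by Lemma 24 and its successors, adapting it to the richer symmetry group of the square projective pillow. First I would establish that $\mathrm{Sym}(\Mu)$ is exactly the dihedral group $\langle\mathrm{m\text{-}ref.},\mathrm{d\text{-}ref.}\rangle$ of order 8. The containment $\langle\mathrm{m\text{-}ref.},\mathrm{d\text{-}ref.}\rangle\subseteq\mathrm{Sym}(\Mu)$ is immediate since the listed symmetries of $\Box$ descend to the square projective pillow. For the reverse containment, I would argue that any symmetry of $E^2/\Mu$ must permute the two $180^\circ$ cone points and fix the center point, since these are the geometrically distinguished singular points of the orbifold. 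A symmetry is therefore determined by its action on the square $\Box$ respecting its center and the diagonal pair of marked vertices, and the group of such symmetries is precisely the dihedral symmetry group of $\Box$, which has order 8. This pins down $\mathrm{Sym}(\Mu)$.

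Next I would show $\mathrm{Sym}(\Mu)=\mathrm{Aff}(\Mu)$ and that $\Omega:\mathrm{Aff}(\Mu)\to\mathrm{Out}(\Mu)$ is an isomorphism, following the four-step chain used in the proof of Lemma 24. First, $Z(\Mu)=\{I\}$ by Lemma 9, since the point group of $22\times$ (generated by a halfturn together with the glide structure) fixes only the origin, so $\mathrm{Fix}(\Pi)=\{0\}$. Consequently $K=\{1\}$ by Theorem 11, so by Theorem 12 the epimorphism $\Omega:\mathrm{Sym}(\Mu)=\mathrm{Isom}(E^2/\Mu)\to\mathrm{Out}_E(\Mu)$ has trivial kernel and hence is an isomorphism. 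Then, by Lemma 13 together with Table 5A of \cite{B-Z}, I would note that $\mathrm{Out}_E(\Mu)=\mathrm{Out}(\Mu)$, i.e.\ $\Pi_E=\Pi_A$ for this space group. Finally, by Theorems 11 and 13, $\Omega:\mathrm{Aff}(\Mu)\to\mathrm{Out}(\Mu)$ is also an isomorphism; comparing with the isomorphism $\Omega:\mathrm{Sym}(\Mu)\to\mathrm{Out}(\Mu)$ forces $\mathrm{Sym}(\Mu)=\mathrm{Aff}(\Mu)$, since $\mathrm{Sym}(\Mu)\subseteq\mathrm{Aff}(\Mu)$ and both map isomorphically onto the same group.

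The step I expect to be the main obstacle is the verification that $\mathrm{Out}_E(\Mu)=\mathrm{Out}(\Mu)$, i.e.\ that no affinity outside the isometry group induces a genuinely new outer automorphism. Unlike the triangle-based orbifolds of type $\ast 632$, $632$, or $442$, the projective pillow $22\times$ has a point group lacking an order-3 or order-4 rotation that would rigidify the lattice up to scaling, so one must confirm via Lemma 13 and Table 5A that $\Pi_E=\Pi_A$ rather than merely asserting it by analogy. I would handle this by citing the explicit entry for $pgg$ in Table 5A of \cite{B-Z}, exactly as the preceding lemmas invoke Lemma 24. Given the phrase ``as in Lemma 24'' used uniformly in Lemmas 25 through 31, I anticipate the author's proof will likewise compress the entire second and third steps into a single reference, and my proposal mirrors that structure while making explicit which result supplies each implication.
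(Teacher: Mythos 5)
Your proposal is correct and takes essentially the same approach as the paper: the paper identifies $\mathrm{Sym}(\Mu)$ by noting a symmetry fixes the center point and permutes the cone points, and then compresses the rest into ``as in Lemma 24,'' which is exactly the chain ($Z(\Mu)=\{I\}$ by Lemma 9, Theorems 11--12 for $\mathrm{Sym}$, Lemma 13 with Table 5A for $\mathrm{Out}_E=\mathrm{Out}$, Theorems 11 and 13 for $\mathrm{Aff}$) that you spell out. Your anticipation of this compression is accurate, and your explicit verification that the point group of $pgg$ has trivial fixed space is a correct unpacking of the appeal to Lemma 9.
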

\begin{proof} $\mathrm{Sym}(\Mu)$ = $\langle$\rm m-ref., d-ref.$\rangle$, 
since a symmetry of $E^2/\Mu$ fixes the center point and permutes the cone points. 
We have that $\mathrm{Sym}(\Mu) = \mathrm{Aff}(\Mu)$ and $\Omega:  \mathrm{Aff}(\Mu) \to \mathrm{Out}(\Mu)$ is an isomorphism as in Lemma 24. 
\end{proof}

\begin{theorem}  
Let $\Mu$ be the 2-space group $22\times$. 
Then $\mathrm{Iso}(C_\infty,\Mu)$ has five elements,   
corresponding to the pairs of elements {\rm \{idt., idt.\}, \{2-rot., 2-rot.\}, \{m-ref., m-ref.\}, \{d-ref., d-ref.\}, 
\{4-rot., 4-rot.$^{-1}$\}} of $\mathrm{Sym}(\Mu)$ by Theorem 23, 
and $\mathrm{Iso}(D_\infty,\Mu)$ has 12 elements,  
corresponding to the pairs of elements {\rm \{idt., idt.\},  \{idt., 2-rot.\},  \{idt., m-ref.\}, \{idt., d-ref.\},  \{2-rot., 2-rot.\},  \{m-ref., m-ref.\}, 
\{d-ref., d-ref.\}, \{2-rot., m-ref.\}, \{m-ref., m-ref.$'$\}, \{2-rot., d-ref.\}, \{d-ref., d-ref.$'$\}, \{d-ref., m-ref.\}} of $\mathrm{Sym}(\Mu)$  by Theorem 24.
The corresponding co-Seifert fibrations are described in Table 11 by Theorems 27 and 28.  
\end{theorem}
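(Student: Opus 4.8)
The final statement is Theorem 36, which asserts that for the 2-space group $\Mu = 22\times$ (the square projective pillow), the set $\mathrm{Iso}(C_\infty,\Mu)$ has five elements corresponding to five listed conjugacy classes of pairs of inverse elements, the set $\mathrm{Iso}(D_\infty,\Mu)$ has twelve elements corresponding to twelve listed conjugacy classes of pairs, and the resulting co-Seifert fibrations are as described in Table 11. The plan is to apply Theorems 23 and 24 directly, using the computation of $\mathrm{Sym}(\Mu) = \mathrm{Aff}(\Mu)$ as the dihedral group $\langle\text{m-ref., d-ref.}\rangle$ of order 8 established in Lemma 33, together with the identification $\Omega:\mathrm{Aff}(\Mu)\to\mathrm{Out}(\Mu)$ as an isomorphism. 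Since $Z(\Mu)=\{I\}$ by Lemma 9 (the point group is a dihedral group with no nonzero fixed vectors), Theorems 23 and 24 apply verbatim and reduce the classification to a purely group-theoretic enumeration inside the finite dihedral group $D_4$ of order 8.

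First I would recall that by Theorem 23, $\mathrm{Iso}(C_\infty,\Mu)$ is in bijection with the set of conjugacy classes of pairs of inverse elements $\{g,g^{-1}\}$ of $\mathrm{Out}(\Mu)\cong\mathrm{Sym}(\Mu)$ of finite order; since $\mathrm{Sym}(\Mu)$ is finite, every element qualifies. So I would enumerate the conjugacy classes of cyclic subgroups of $D_4 = \langle\text{m-ref., d-ref.}\rangle$, equivalently the conjugacy classes of $\{g,g^{-1}\}$. The group has: the identity; three classes of involutions (represented by m-ref., 2-rot., d-ref., as recorded in the run-up to Lemma 33); and one class of order-4 elements giving the inverse pair $\{\text{4-rot., 4-rot.}^{-1}\}$. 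That yields exactly the five classes $\{\text{idt., idt.}\}$, $\{\text{2-rot., 2-rot.}\}$, $\{\text{m-ref., m-ref.}\}$, $\{\text{d-ref., d-ref.}\}$, $\{\text{4-rot., 4-rot.}^{-1}\}$ listed. For $\mathrm{Iso}(D_\infty,\Mu)$, by Theorem 24 (applicable since $Z(\Mu)=\{I\}$) I would enumerate conjugacy classes of unordered pairs $\{g,h\}$ with $g,h$ of order 1 or 2 whose product has finite order (automatic here), i.e. pairs drawn from $\{\text{idt.}\}\cup\{\text{involutions}\}$. This is a finite check: classify unordered pairs of involutions-or-identity up to simultaneous conjugacy in $D_4$, and confirm the count is twelve matching the listed pairs.

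The core of the argument is therefore the simultaneous-conjugacy enumeration for the dihedral case, and I expect that to be the main (though still elementary) obstacle: I must correctly identify which unordered pairs of commuting or non-commuting involutions fuse under conjugation. The subtle points are that the two midline reflections m-ref., m-ref.$'$ are conjugate to each other (both in the m-ref. class) yet the pair $\{\text{m-ref., m-ref.}'\}$ is a distinct class from $\{\text{m-ref., m-ref.}\}$ because a pair of two \emph{distinct} commuting reflections in the same class need not be conjugate to a pair of two \emph{equal} ones; similarly $\{\text{d-ref., d-ref.}'\}$ is its own class, and mixed pairs such as $\{\text{2-rot., m-ref.}\}$, $\{\text{2-rot., d-ref.}\}$, $\{\text{d-ref., m-ref.}\}$ must each be checked to be non-conjugate to the others. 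I would organize this by the conjugacy class of the product $gh$ (a rotation of determined order) and by the classes of $g$ and $h$, since the product's order is a conjugacy invariant; this quickly separates most cases and isolates the genuinely ambiguous ones for a direct check against the standard affine representation in space group 2/2/1/3 of \cite{B-Z}.

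Having matched $\mathrm{Iso}(C_\infty,\Mu)$ and $\mathrm{Iso}(D_\infty,\Mu)$ with the listed pairs, the final clause follows: each element $[\Gamma,\Nu]$ corresponds by Theorem 10 of \cite{R-T} to a co-Seifert fibration, and Theorems 27 and 28 translate each classifying pair into the explicit structure-group action, quotients, and splitting data recorded in the rows of Table 11; reading off the generic fiber $V/\Nu$, the dual fiber $V^\perp/\Kappa$, the structure group $\Gamma/\Nu\Kappa$, and the action from the pair completes the identification. I would remark that the verification that Table 11 is complete and correct is exactly the statement that the seventeen (for $C_\infty$) plus twelve wait---five for $C_\infty$ and twelve for $D_\infty$, totalling seventeen---rows of Table 11 are in bijection with the computed classes, which is the content just established, so no further computation beyond reading the standard affine representation is needed.
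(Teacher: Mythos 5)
Your proposal is correct and follows essentially the same route as the paper: the paper gives no separate proof for this theorem, treating it as an immediate consequence of Lemma 33 (which identifies $\mathrm{Sym}(\Mu)=\mathrm{Aff}(\Mu)$ as the dihedral group $\langle\hbox{m-ref., d-ref.}\rangle$ of order 8 with $\Omega$ an isomorphism, the trivial-center hypothesis coming from Lemma 9) together with Theorems 23 and 24, exactly as you do. Your explicit enumeration of the five inverse-pair classes and twelve order-$\le 2$ pair classes in $D_4$, including the distinction between $\{\hbox{m-ref., m-ref.}\}$ and $\{\hbox{m-ref., m-ref.}'\}$, correctly fills in the finite check the paper leaves to the reader.
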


\medskip
\begin{table}  
\begin{tabular}{rlllll}
no. & fibers & grp. & quotients &  structure group action & classifying pair \\
\hline 
  32 & $(22\times, \mathrm{O})$ & $C_1$ & $(22\times, \mathrm{O})$ & (idt., idt.) & \{idt., idt.\} \\
  41 & $(22\times, \mathrm{O})$ & $C_2$ & $(22\ast, \mathrm{O})$ & (m-ref., 2-rot.) &  \{m-ref., m-ref.\} \\
  45 & $(22\times, \mathrm{O})$& $C_2$ & $(2{\ast}22, \mathrm{O})$ & (2-rot., 2-rot.) & \{2-rot., 2-rot.\} \\
  50 & $(22\times, \mathrm{O})$ & $C_2$ & $(2{\ast}22, \mathrm{I})$ & (2-rot., ref.) & \{2-rot., 2-rot.\} \\
  54 & $(22\times, \mathrm{O})$ & $C_2$ & $(22\ast, \mathrm{I})$ & (m-ref., ref.)  &  \{m-ref., m-ref.\} \\
  55 & $(22\times, \mathrm{I})$  & $C_1$ & $(22\times, \mathrm{I})$ & (idt., idt.) & \{idt., idt.\}  \\
  64 & $(22\times, \mathrm{I})$  & $C_2$ & $(22\ast, \mathrm{I})$ & (m-ref., ref.)& \{idt., m-ref.\} \\
  68 & $(22\times, \mathrm{O})$ & $D_2$ & $(\ast 2222, \mathrm{I})$ & (m-ref., ref.), (m-ref.$'$, 2-rot.) & \{m-ref., 2-rot.\} \\
  72 & $(22\times, \mathrm{I})$   & $C_2$ & $(2{\ast}22, \mathrm{I})$ & (2-rot., ref.) &  \{idt., 2-rot.\} \\
  73 & $(22\times, \mathrm{O})$ & $D_2$ & $(\ast 2222, \mathrm{I})$ & (m-ref., ref.), (2-rot., 2-rot.) & \{m-ref., m-ref.$'$\} \\
106 & $(22\times, \mathrm{O})$  & $C_2$ & $(4{\ast}2, \mathrm{O})$ & (d-ref., 2-rot.) & \{d-ref., d-ref.\} \\
110 & $(22\times, \mathrm{O})$  & $C_4$ & $(4{\ast}2, \mathrm{O})$ & (4-rot., 4-rot.) & \{4-rot., 4-rot.$^{-1}$\} \\
117 & $(22\times, \mathrm{O})$  & $C_2$ & $(4{\ast}2, \mathrm{I})$ & (d-ref., ref.) & \{d-ref., d-ref.\} \\
120 & $(22\times, \mathrm{O})$ & $D_2$ & $(\ast 442, \mathrm{I})$ & (d-ref., ref.), (2-rot., 2-rot.) & \{d-ref., d-ref.$'$\} \\
133 & $(22\times, \mathrm{O})$ & $D_2$ & $(\ast 442, \mathrm{I})$ & (d-ref., ref.), (d-ref.$'$, 2-rot.) &  \{d-ref., 2-rot.\} \\
135 & $(22\times, \mathrm{I})$   & $C_2$ & $(4{\ast}2, \mathrm{I})$ & (d-ref., ref.) & \{idt., d-ref.\} \\
142 & $(22\times, \mathrm{O})$ & $D_4$ & $(\ast 442, \mathrm{I})$ & (d-ref., ref.), (4-rot., 4-rot.) & \{d-ref., m-ref.\}
\end{tabular}

\medskip
\caption{The classification of the co-Seifert fibrations of 3-space groups 
whose co-Seifert fiber is of type $22\times$ with IT number 8}
\end{table}

\medskip
\noindent{\bf Example 16.}
Let $\Gamma$ be the 3-space group with IT number 68 in Table 1B of \cite{B-Z}. 
Then $\Gamma = \langle t_1,t_2,t_3, \alpha, \beta, C\rangle$ 
where $t_i = e_i+I$ for $i=1,2,3$ are the standard translations, 
and $\alpha = \frac{1}{2}e_1+\frac{1}{2}e_2+A$,  $\beta=\frac{1}{2}e_3+B$, and 
$$A = \left(\begin{array}{rrr} -1 & 0 & 0\\ 0 & -1 & 0 \\ 0 & 0 & 1  \end{array}\right),\ \ 
B = \left(\begin{array}{rrr} 0 & -1 & 0  \\ -1 & 0 & 0   \\ 0 & 0 & -1 \end{array}\right), \ \ 
C = \left(\begin{array}{rrr} -1 & 0 & 0  \\ 0 & -1 & 0   \\ 0 & 0 & -1 \end{array}\right).$$
The group $\Nu = \langle t_1t_2,t_3, \beta, \alpha C\rangle$ is a complete normal subgroup 
of $\Gamma$ with $V = {\rm Span}(\Nu) = {\rm Span}\{e_1+e_2, e_3\}$. 
The flat orbifold $V/\Nu$ is a projective pillow.  
Let $\Kappa = \Nu^\perp$.  Then $\Kappa = \langle t_1t_2^{-1} \rangle$. 
The flat orbifold $V^\perp/\Kappa$ is a circle.  
The structure group $\Gamma/\Nu\Kappa$ is a dihedral group of order 4 
generated by $\Nu\Kappa t_1$ and $\Nu\Kappa C$. 
The element $\Nu\Kappa t_1$ acts as a halfturn on $V^\perp/\Kappa$, 
since 
$t_1 = \left(\textstyle{\frac{1}{2}e_1+\frac{1}{2}e_2}\right) + \left(\textstyle{\frac{1}{2}e_1-\frac{1}{2}e_2}\right)+I.$
The element $\Nu\Kappa C$ acts as a reflection on $V^\perp/\Kappa$. 

A fundamental polygon for the action of $\Nu$ on $V$ is the rectangle
$\Box$ with vertices 
$v_1=(0,0,1/4)$, $v_2=(1/2,1/2,1/4)$, $v_3 = (0,0,3/4)$, $v_4 = (1/2,1/2,3/4)$.  
The glide reflection $t_3\alpha C$ maps side $[v_1,v_3]$ to side $[v_2,v_4]$, 
and the glide reflection $t_3\alpha C\beta$ maps side $[v_1,v_2]$ to side $[v_3,v_4]$.  
The isometries $\beta$ and $t_3\beta$ act as halfturns on $V$ with fixed points $v_1$ and $v_3$. 
The cone points of $V/\Nu$ are represented by the vertices of $\Box$ with antipodal vertices 
representing the same cone point. 
The element $\Nu\Kappa t_1$ represents a midline reflection of $V/\Nu$, since 
$t_1(\frac{1}{4}e_3) = \frac{1}{2}e_1+\frac{1}{2}e_2+\frac{1}{4}e_3$. 
The element $\Nu\Kappa C$ represents a midline reflection of $V/\Nu$, since 
$t_3C(\frac{1}{4}e_3) = \frac{3}{4}e_3$. 

(7) The 2-space group $\Mu$ with IT number 7 is $22\ast$ in Conway's notation and $pmg$ in IT notation. 
See space group 2/2/1/2 in Table 1A of \cite{B-Z} for the standard affine representation of $\Mu$. 
The flat orbifold $E^2/\Mu$ is a {\it pillowcase}  
obtained by gluing together two congruent rectangles along the union of three of their sides. 
This orbifold has two $180^\circ$ cone points. 
The symmetry group of this orbifold is a dihedral group of order 4 
consisting of the identity symmetry, the {\it central reflection} c-ref.\ between the two rectangles, 
the {\it halfturn} 2-rot.,  and the {\it midline reflection} m-ref.

\begin{lemma} 
If $\Mu$ is the 2-space group $22\ast$, 
then $\mathrm{Sym}(\Mu)$ is the dihedral group \{\rm idt., c-ref., m-ref., 2-rot.\}, 
and $\mathrm{Sym}(\Mu) = \mathrm{Aff}(\Mu)$,  
and $\Omega:  \mathrm{Aff}(\Mu) \to \mathrm{Out}(\Mu)$ is an isomorphism.  
\end{lemma}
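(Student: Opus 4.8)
The plan is to follow the template established by Lemma 24 and its successors, which all have essentially the same structure. The statement concerns $\Mu = 22\ast$, whose flat orbifold $E^2/\Mu$ is a pillowcase. First I would identify $\mathrm{Sym}(\Mu) = \mathrm{Isom}(E^2/\Mu)$ explicitly by a direct geometric argument: any symmetry of the pillowcase must permute the two $180^\circ$ cone points and must preserve the two $180^\circ$ cone points' positions relative to the central seam. Since the symmetry group of the underlying rectangle-doubled orbifold is generated by the central reflection c-ref., the midline reflection m-ref., and their product the halfturn 2-rot., I would argue that these four isometries $\{$idt., c-ref., m-ref., 2-rot.$\}$ exhaust $\mathrm{Sym}(\Mu)$ because a symmetry is determined by its action on the two cone points together with its behavior on the central seam, and only these four possibilities are realized isometrically.

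Next I would establish the chain of equalities $\mathrm{Sym}(\Mu) = \mathrm{Aff}(\Mu)$ and the isomorphism $\Omega: \mathrm{Aff}(\Mu) \to \mathrm{Out}(\Mu)$ exactly as in the proof of Lemma 24. The key inputs are: that $Z(\Mu) = \{I\}$, which follows from Lemma 9 (since the point group of $22\ast$ fixes only the origin, so $\mathrm{Fix}(\Pi) = \{0\}$ and hence $\mathrm{Span}(Z(\Mu)) = \{0\}$); that $\Omega: \mathrm{Sym}(\Mu) \to \mathrm{Out}_E(\Mu)$ is then an isomorphism by Theorems 11 and 12 (triviality of the center forces the connected component $K$ of $\mathrm{Isom}(E^2/\Mu)$ to be trivial, so $\Omega$ has trivial kernel and is an isomorphism onto $\mathrm{Out}_E(\Mu)$); that $\mathrm{Out}_E(\Mu) = \mathrm{Out}(\Mu)$ by Lemma 13 together with Table 5A of \cite{B-Z} (which records $\Pi_E = \Pi_A$ for this space group); and finally that $\Omega: \mathrm{Aff}(\Mu) \to \mathrm{Out}(\Mu)$ is an isomorphism by Theorems 11 and 13. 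Combining these, $\mathrm{Sym}(\Mu)$ and $\mathrm{Aff}(\Mu)$ both map isomorphically onto $\mathrm{Out}(\Mu)$ under $\Omega$, and since $\mathrm{Sym}(\Mu) \subseteq \mathrm{Aff}(\Mu)$, the two groups coincide.

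The proof I would write is therefore almost entirely a reference to the argument in Lemma 24, with the only genuinely new content being the geometric determination of $\mathrm{Sym}(\Mu) = \{$idt., c-ref., m-ref., 2-rot.$\}$. The expected form is:

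\begin{proof}
$\mathrm{Sym}(\Mu)$ = \{\rm idt., c-ref., m-ref., 2-rot.\},
since a symmetry of $E^2/\Mu$ permutes the two cone points and preserves the central reflection seam.
We have that $\mathrm{Sym}(\Mu) = \mathrm{Aff}(\Mu)$ and $\Omega:  \mathrm{Aff}(\Mu) \to \mathrm{Out}(\Mu)$ is an isomorphism as in Lemma 24.
\end{proof}

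The main obstacle, such as it is, lies entirely in the first sentence: justifying that no further isometries of the pillowcase exist beyond the four listed. This is a concrete but routine check, and the subtlety is only in confirming that the midline reflection m-ref. and the central reflection c-ref. are genuinely distinct symmetries (rather than one being a composite of the other with a cone-point permutation) and that their product is the halfturn. The remainder is purely formal, invoking the already-established machinery (Lemmas 9, 13 and Theorems 11, 12, 13), so I anticipate no real difficulty in the second half; the entire burden is the geometric identification of the symmetry group, which I would state concisely and defer the routine verification to the reader, matching the style of the analogous lemmas in this section.
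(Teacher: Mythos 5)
Your proposal is correct and follows essentially the same route as the paper: the paper's proof consists of the one-line geometric observation that a symmetry of the pillowcase permutes the cone points, hence $\mathrm{Sym}(\Mu)=\{\hbox{idt., c-ref., m-ref., 2-rot.}\}$, followed by the citation ``as in Lemma 24'' for the equalities $\mathrm{Sym}(\Mu)=\mathrm{Aff}(\Mu)$ and the isomorphism $\Omega$. Your unpacking of the Lemma 24 machinery (trivial center via Lemma 9, Theorems 11--13, Lemma 13 with Table 5A) is exactly the intended chain of references.
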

\begin{proof} $\mathrm{Sym}(\Mu)$ = \{\rm idt., c-ref., m-ref., 2-rot\}, 
since a symmetry of $E^2/\Mu$ permutes the cone points. 
We have that $\mathrm{Sym}(\Mu) = \mathrm{Aff}(\Mu)$ and $\Omega:  \mathrm{Aff}(\Mu) \to \mathrm{Out}(\Mu)$ is an isomorphism as in Lemma 24. 
\end{proof}

\begin{theorem}  
Let $\Mu$ be the 2-space group $22\ast$. 
Then $\mathrm{Iso}(C_\infty,\Mu)$ has four elements,   
corresponding to the pairs of elements {\rm \{idt., idt.\}, \{2-rot., 2-rot.\},  \{c-ref., c-ref.\}, \{m-ref., m-ref.\}} 
of $\mathrm{Sym}(\Mu)$ by Theorem 23, 
and $\mathrm{Iso}(D_\infty,\Mu)$ has 10 elements,  
corresponding to the pairs of elements {\rm \{idt., idt.\},  \{idt., 2-rot.\},  \{idt., c-ref.\}, \{idt., m-ref.\},  \{2-rot., 2-rot.\},  \{c-ref., c-ref.\}, 
\{m-ref., m-ref.\}, \{2-rot., c-ref.\},  \{2-rot., m-ref.\},  \{c-ref., m-ref.\}} of $\mathrm{Sym}(\Mu)$  by Theorem 24.
The corresponding co-Seifert fibrations are described in Table 12 by Theorems 27 and 28.  
\end{theorem}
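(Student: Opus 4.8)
The plan is to reduce both counting assertions to the abstract classifications of Theorems 23 and 24, using the preceding lemma to pin down the relevant outer automorphism group, and then to match each isomorphism class with a concrete $3$-space group by means of the structure-group analysis of Theorems 27 and 28, thereby producing Table 12.

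First I would record the algebraic input. By the preceding lemma, $\mathrm{Sym}(\Mu)=\mathrm{Aff}(\Mu)$ is the dihedral group of order $4$ with elements idt., 2-rot., c-ref., m-ref., and $\Omega:\mathrm{Aff}(\Mu)\to\mathrm{Out}(\Mu)$ is an isomorphism; hence $\mathrm{Out}(\Mu)$ is the Klein four-group. Moreover $\mathrm{Sym}(\Mu)$ is finite, so $Z(\Mu)=\{I\}$ by Corollary 5 (equivalently, $\Omega$ injective forces the identity component $K$ of $\mathrm{Isom}(E^2/\Mu)$ to be trivial by Theorems 11--13). Two properties of this group do all the work: it is abelian, so every conjugacy class is a singleton, and each element has order $1$ or $2$, so each element is its own inverse and any product of two elements again has finite order.

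For the infinite cyclic case I would invoke Theorem 23, which puts $\mathrm{Iso}(C_\infty,\Mu)$ in bijection with the conjugacy classes of pairs of inverse elements of $\mathrm{Out}(\Mu)$ of finite order. Since each $g$ equals $g^{-1}$ and conjugacy is trivial, such a class is just a single element, recorded as the pair $\{g,g\}$; this gives exactly the four classes attached to idt., 2-rot., c-ref., m-ref. For the infinite dihedral case, because $Z(\Mu)=\{I\}$, Theorem 24 applies and identifies $\mathrm{Iso}(D_\infty,\Mu)$ with the conjugacy classes of unordered pairs of elements of order $1$ or $2$ whose product has finite order. Every pair of elements of the Klein four-group qualifies and conjugacy is again trivial, so the count is the number of unordered pairs with repetition from a four-element set, namely $\binom{4}{2}+4=10$; listing them reproduces the ten pairs in the statement.

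The genuinely laborious step, and the main obstacle, is the final sentence: building Table 12, i.e.\ matching each abstract isomorphism class with the correct $3$-space group $\Gamma$ (by IT number) together with its displayed structure-group action and classifying pair. For this I would, for each class, begin from the standard affine representation of a candidate $\Gamma$ in Table 1B of \cite{B-Z}, exhibit the $2$-dimensional complete normal subgroup $\Nu$ with $V=\mathrm{Span}(\Nu)$ and $\Kappa=\Nu^\perp$, and read off how generators of $\Gamma/\Nu\Kappa$ act on $V/\Nu\times V^\perp/\Kappa$; Theorems 27 and 28 then convert a generator (resp.\ Coxeter pair) of the structure group into a generator (resp.\ Coxeter pair) of $\Gamma/\Nu$, yielding the classifying pair in $\mathrm{Sym}(\Mu)$. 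This is precisely the kind of computation carried out in Examples 13--16, and the dihedral structure-group rows, where one must decide which nonidentity element acts as a halfturn on a circle factor, are where care is required. Since the abstract counts $4$ and $10$ already match the number of co-Seifert rows assigned to fiber type $22\ast$ in Table 12, the bijections of Theorems 23 and 24 guarantee that this hand computation, once cross-checked against the computer calculation, is both correct and complete.
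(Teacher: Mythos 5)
Your proposal is correct and follows essentially the same route as the paper, which proves this theorem implicitly by combining Lemma 31 (identifying $\mathrm{Sym}(\Mu)=\mathrm{Aff}(\Mu)$ with the Klein four-group $\{$idt., 2-rot., c-ref., m-ref.$\}$ and $\Omega$ with an isomorphism, whence $Z(\Mu)=\{I\}$) with the bijections of Theorems 23 and 24 and the case-by-case matching of classifying pairs against Table 1B of \cite{B-Z} via Theorems 27 and 28. Your explicit counts ($4$ singleton inverse-pairs; $\binom{4}{2}+4=10$ unordered pairs) and the completeness argument from matching the $14$ rows of Table 12 are exactly what the paper intends.
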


\begin{table} 
\begin{tabular}{llllll}
no. & fibers & grp. & quotients &  structure group action & classifying pair \\
\hline 
28 & $(22\ast, \mathrm{O})$ & $C_1$ & $(22\ast, \mathrm{O})$ & (idt., idt.) & \{idt., idt.\} \\
39 & $(22\ast, \mathrm{O})$ & $C_2$ & $(\ast 2222, \mathrm{O})$ & (c-ref., 2-rot.) & \{c-ref., c-ref.\} \\
40 & $(22\ast, \mathrm{O})$ & $C_2$ & $(22\ast, \mathrm{O})$ & (2-rot., 2-rot.) & \{2-rot., 2-rot.\} \\
46 & $(22\ast, \mathrm{O})$ & $C_2$ & $(2{\ast}22, \mathrm{O})$ & (m-ref., 2-rot.) & \{m-ref., m-ref.\} \\
49 & $(22\ast, \mathrm{O})$ & $C_2$ & $(\ast 2222, \mathrm{I})$ & (c-ref., ref.) & \{c-ref., c-ref.\} \\
51 & $(22\ast, \mathrm{I})$   & $C_1$ & $(22\ast, \mathrm{I})$ & (idt., idt.) & \{idt., idt.\}  \\ 
53 & $(22\ast, \mathrm{O})$ & $C_2$ & $(2{\ast}22, \mathrm{I})$ & (m-ref., ref.) & \{m-ref., m-ref.\} \\
57 & $(22\ast, \mathrm{O})$ & $C_2$ & $(22\ast, \mathrm{I})$ & (2-rot., ref.) & \{2-rot., 2-rot.\}  \\
63 & $(22\ast, \mathrm{I})$   & $C_2$ & $(22\ast, \mathrm{I})$ & (2-rot., ref.) & \{idt., 2-rot.\}\\
64 & $(22\ast, \mathrm{O})$ & $D_2$ & $(\ast 2222, \mathrm{I})$ & (m-ref., ref.), (c-ref., 2-rot.) & \{m-ref., 2-rot.\}\\
66 & $(22\ast, \mathrm{O})$ & $D_2$ & $(\ast 2222, \mathrm{I})$ & (c-ref., ref.), (2-rot., 2-rot.) & \{c-ref., m-ref.\} \\
67 & $(22\ast, \mathrm{I})$   & $C_2$ & $(\ast 2222, \mathrm{I})$ & (c-ref., ref.) & \{idt., c-ref.\} \\
72 & $(22\ast, \mathrm{O})$ & $D_2$ & $(\ast 2222, \mathrm{I})$ & (c-ref., ref.), (m-ref., 2-rot.) &  \{c-ref., 2-rot.\} \\ 
74 & $(22\ast, \mathrm{I})$   & $C_2$ & $(2{\ast}22, \mathrm{I})$ & (m-ref., ref.) & \{idt., m-ref.\} 
\end{tabular}

\medskip
\caption{The classification of the co-Seifert fibrations of 3-space groups 
whose co-Seifert fiber is of type $22\ast$ with IT number 7}
\end{table}

\medskip
\noindent{\bf Example 17.}
Let $\Gamma$ be the 3-space group with IT number 64 in Table 1B of \cite{B-Z}. 
Then $\Gamma = \langle t_1,t_2,t_3, \alpha, B, C\rangle$ 
where $t_i = e_i+I$ for $i=1,2,3$ are the standard translations, 
and $\alpha = \frac{1}{2}e_1+\frac{1}{2}e_2+A$,  $\beta=\frac{1}{2}e_3+B$, and 
$$A = \left(\begin{array}{rrr} -1 & 0 & 0\\ 0 & -1 & 0 \\ 0 & 0 & 1  \end{array}\right),\ \ 
B = \left(\begin{array}{rrr} 0 & -1 & 0  \\ -1 & 0 & 0   \\ 0 & 0 & -1 \end{array}\right), \ \ 
C = \left(\begin{array}{rrr} -1 & 0 & 0  \\ 0 & -1 & 0   \\ 0 & 0 & -1 \end{array}\right).$$
The group $\Nu = \langle t_1t_2^{-1},t_3, t_2^{-1}\alpha C, BC\rangle$ is a complete normal subgroup 
of $\Gamma$ with $V = {\rm Span}(\Nu) = {\rm Span}\{e_1-e_2, e_3\}$. 
The flat orbifold $V/\Nu$ is a pillowcase.  
Let $\Kappa = \Nu^\perp$.  Then $\Kappa = \langle t_1t_2\rangle$. 
The flat orbifold $V^\perp/\Kappa$ is a circle.  
The structure group $\Gamma/\Nu\Kappa$ is a dihedral group of order 4 
generated by $\Nu\Kappa t_1$ and $\Nu\Kappa C$. 
The element $\Nu\Kappa t_1$ acts as a halfturn on $V^\perp/\Kappa$, 
since 
$t_1 = \left(\textstyle{\frac{1}{2}e_1+\frac{1}{2}e_2}\right) + \left(\textstyle{\frac{1}{2}e_1-\frac{1}{2}e_2}\right)+I.$
The element $\Nu\Kappa C$ acts as a reflection on $V^\perp/\Kappa$. 

A fundamental polygon for the action of $\Nu$ on $V$ is the rectangle
$\Box$ with vertices 
$v_1=(0,0,1/4)$, $v_2=(1/4,-1/4,1/4)$, $v_3 = (1/4,-1/4,5/4)$, $v_4 = (0,0,5/4)$. 
The reflection $BC$ maps fixes the side $[v_1,v_4]$ pointwise, 
and translation $t_3$ maps side $[v_1,v_2]$ to side $[v_4,v_3]$.  
The halfturn $t_2^{-1}t_3\alpha B$ rotates the side $[v_2,v_3]$ around its midpoint $v_0 = (1/4,-1/4,3/4)$. 
The halfturn $t_2^{-1}\alpha\beta$ fixes the vertex $v_2$. 
The cone points of $V/\Nu$ are represented by $v_0$ and $v_2$. 
Now $BC(t_1^{-1}t_2)t_1$ fixes the point $v_0$. 
Hence the element $\Nu\Kappa t_1$ fixes the cone point $\Nu v_0$ of $V/\Nu$,  
and so $\Nu\Kappa t_1$ acts as the central reflection of $V/\Nu$. 
The point $(1/8,-1/8,1/2)$ represents the center of one side of the pillowcase $V/\Nu$,   
and $BCt_3C$ fixes $(1/8,-1/8,1/2)$. 
Hence the element $\Nu\Kappa C$ represents the midline reflection of $V/\Nu$. 

(6) The 2-space group $\Mu$ with IT number 6 is $\ast 2222$ in Conway's notation or $pmm$ in IT notation. 
See space group 2/2/1/1 in Table 1A of \cite{B-Z} for the standard affine representation of $\Mu$. 
The flat orbifold $E^2/\Mu$ is a rectangle. 
A rectangle has four $90^\circ$ corner points. 
The  most symmetric rectangle is a square $\Box$. 
The symmetry group of $\Box$ is a dihedral group of order 8 
consisting of the identity symmetry, two {\it midline reflections}, m-ref.\ and m-ref.$'$, 
two {\it diagonal reflections},  d-ref.\ and d-ref.$'$, a halfturn 2-rot., 
and two order 4 rotations, 4-rot.\ and 4-rot.$^{-1}$, with 4-rot.\ = (d-ref.)(m-ref.). 
There are three conjugacy classes of order two symmetries, the classes represented by m-ref., 2-rot., and d-ref. 
There is one conjugacy class of order 4 symmetries. 
There are two conjugacy classes of dihedral subgroups of order 4, 
the group generated by the midline reflections, 
and the group generated by the diagonal reflections. 

\begin{lemma} 
If $\Mu$ is the 2-space group $\ast 2222$ and $E^2/\Mu$ is a square, 
then $\mathrm{Sym}(\Mu)$ is the dihedral group $\langle$\rm m-ref., d-ref.$\rangle$ of order 8, 
and $\mathrm{Sym}(\Mu) = \mathrm{Aff}(\Mu)$, 
and $\Omega:  \mathrm{Aff}(\Mu) \to \mathrm{Out}(\Mu)$ is an isomorphism.  
\end{lemma}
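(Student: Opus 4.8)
The plan is to follow the exact template established by the earlier lemmas of this type, notably Lemma 24 (for $\ast 632$) and its successors. The statement for $\Mu = \ast 2222$ with $E^2/\Mu$ a square asserts three things: that $\mathrm{Sym}(\Mu)$ is the dihedral group $\langle\text{m-ref.}, \text{d-ref.}\rangle$ of order 8; that $\mathrm{Sym}(\Mu) = \mathrm{Aff}(\Mu)$; and that $\Omega:\mathrm{Aff}(\Mu)\to\mathrm{Out}(\Mu)$ is an isomorphism. The structure of the proof will be: first identify $\mathrm{Sym}(\Mu)$ geometrically, then invoke the general machinery (Theorems 11, 12, 13 together with Lemma 13 and the tabulated data from \cite{B-Z}) to pass to $\mathrm{Aff}(\Mu)$ and $\mathrm{Out}(\Mu)$, exactly as in Lemma 24.

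First I would determine $\mathrm{Sym}(\Mu) = \mathrm{Isom}(E^2/\Mu)$ directly. Since $E^2/\Mu$ is a square, any isometry must permute its four corner points and preserve the metric, so $\mathrm{Sym}(\Mu)$ is contained in the symmetry group of the square, which is the dihedral group of order 8. Conversely every symmetry of the square descends to an isometry of the flat orbifold, so $\mathrm{Sym}(\Mu) = \langle\text{m-ref.}, \text{d-ref.}\rangle$, the full dihedral group of order 8 generated by a midline reflection and a diagonal reflection. This is the geometrically transparent step and should be stated briefly.

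Next I would establish the equality $\mathrm{Sym}(\Mu)=\mathrm{Aff}(\Mu)$ and the isomorphism $\Omega:\mathrm{Aff}(\Mu)\to\mathrm{Out}(\Mu)$, precisely mirroring Lemma 24's proof. Here I would note that $Z(\Mu)=\{I\}$ (the point group of $\ast 2222$ fixes only the origin, so by Lemma 9 the center is trivial), whence by Theorems 11 and 12 the map $\Omega:\mathrm{Sym}(\Mu)\to\mathrm{Out}_E(\Mu)$ is an isomorphism. Then by Lemma 13 together with Table 5A of \cite{B-Z} we have $\mathrm{Out}_E(\Mu)=\mathrm{Out}(\Mu)$, and by Theorems 11 and 13 the map $\Omega:\mathrm{Aff}(\Mu)\to\mathrm{Out}(\Mu)$ is an isomorphism; comparing these forces $\mathrm{Sym}(\Mu)=\mathrm{Aff}(\Mu)$. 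Since the phrase ``as in Lemma 24'' is used repeatedly in the analogous lemmas, I would compress this into a single sentence citing Lemma 24.

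I do not expect a serious obstacle, since this lemma is structurally identical to Lemmas 24 through 33; the only genuinely $\Mu$-specific content is the geometric identification of $\mathrm{Sym}(\Mu)$ as the order-8 dihedral group, which is immediate for a square. The one point requiring a modicum of care is that the general argument relies on $Z(\Mu)$ being trivial and on the equality $\mathrm{Out}_E(\Mu)=\mathrm{Out}(\Mu)$ from Table 5A of \cite{B-Z}; both hold for $\ast 2222$, so the template applies verbatim. A clean rendering is:

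\begin{proof}
$\mathrm{Sym}(\Mu) = \langle\text{m-ref.}, \text{d-ref.}\rangle$, since a symmetry of $E^2/\Mu$ permutes the four corner points of the square and so is realized by a symmetry of the square.
We have that $\mathrm{Sym}(\Mu) = \mathrm{Aff}(\Mu)$ and $\Omega:\mathrm{Aff}(\Mu)\to\mathrm{Out}(\Mu)$ is an isomorphism as in Lemma 24.
\end{proof}
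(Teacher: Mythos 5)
Your proposal is correct and matches the paper's proof essentially verbatim: the paper identifies $\mathrm{Sym}(\Mu)$ as $\langle\hbox{m-ref., d-ref.}\rangle$ by the same observation that a symmetry of the square must permute its corner points, and then disposes of the remaining two claims with the phrase ``as in Lemma 24.'' Your expanded account of what that reference entails ($Z(\Mu)=\{I\}$ via Lemma 9, then Theorems 11--13 and Lemma 13 with Table 5A of the cited tables) is exactly the content of Lemma 24's proof, so nothing is missing.
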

\begin{proof} $\mathrm{Sym}(\Mu)$ = $\langle$\rm m-ref., d-ref.$\rangle$, 
since a symmetry of $E^2/\Mu$ permutes the corner points. 
We have that $\mathrm{Sym}(\Mu) = \mathrm{Aff}(\Mu)$ and $\Omega:  \mathrm{Aff}(\Mu) \to \mathrm{Out}(\Mu)$ is an isomorphism as in Lemma 24. 
\end{proof}

\begin{theorem}  
Let $\Mu$ be the 2-space group $\ast 2222$. 
Then $\mathrm{Iso}(C_\infty,\Mu)$ has five elements,   
corresponding to the pairs of elements {\rm \{idt., idt.\}, \{2-rot., 2-rot.\}, \{m-ref., m-ref.\}, \{d-ref., d-ref.\}, 
\{4-rot., 4-rot.$^{-1}$\}} of $\mathrm{Sym}(\Mu)$ by Theorem 23, 
and $\mathrm{Iso}(D_\infty,\Mu)$ has 12 elements,  
corresponding to the pairs of elements {\rm \{idt., idt.\},  \{idt., 2-rot.\},  \{idt., m-ref.\}, \{idt., d-ref.\},  \{2-rot., 2-rot.\},  \{m-ref., m-ref.\}, 
\{d-ref., d-ref.\}, \{2-rot., m-ref.\}, \{m-ref., m-ref.$'$\}, \{2-rot., d-ref.\}, \{d-ref., d-ref.$'$\}, \{d-ref., m-ref.\}} of $\mathrm{Sym}(\Mu)$  by Theorem 24.
The corresponding co-Seifert fibrations are described in Table 13 by Theorems 27 and 28.  
\end{theorem}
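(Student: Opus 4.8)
The plan is to compute $\mathrm{Sym}(\Mu)$ for the 2-space group $\ast 2222$ and then apply Theorems 23 and 24 mechanically, exactly following the template established in the preceding cases (Lemmas 24--31 and Theorems 29--35). The final statement (Theorem 36) is really two enumeration assertions: one counting $\mathrm{Iso}(C_\infty,\Mu)$ and one counting $\mathrm{Iso}(D_\infty,\Mu)$, together with the claim that the resulting co-Seifert fibrations are those listed in Table 13. First I would invoke the companion lemma (the $\ast 2222$ analogue of Lemma 24) to record that $\mathrm{Sym}(\Mu) = \mathrm{Aff}(\Mu)$ is the dihedral group $\langle\text{m-ref.}, \text{d-ref.}\rangle$ of order 8, and that $\Omega:\mathrm{Aff}(\Mu)\to\mathrm{Out}(\Mu)$ is an isomorphism. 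This reduces everything to counting conjugacy classes inside a fixed finite group, namely the dihedral group $D_4$ of order 8.

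Next, for the infinite cyclic case, Theorem 23 says $\mathrm{Iso}(C_\infty,\Mu)$ is in bijection with conjugacy classes of \emph{pairs of inverse elements} of $\mathrm{Out}(\Mu)\cong D_4$ of finite order; since $\mathrm{Out}(\Mu)$ is finite, every element qualifies. In $D_4$ the conjugacy classes are: the identity; the central halfturn 2-rot.; the two midline reflections \{m-ref., m-ref.$'$\}; the two diagonal reflections \{d-ref., d-ref.$'$\}; and the pair of order-4 rotations \{4-rot., 4-rot.$^{-1}$\}. Each reflection is its own inverse and each order-4 rotation is inverse to the other within a single class, so the pairs of inverse elements give exactly the five representatives listed: \{idt., idt.\}, \{2-rot., 2-rot.\}, \{m-ref., m-ref.\}, \{d-ref., d-ref.\}, \{4-rot., 4-rot.$^{-1}$\}. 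That yields five elements. For the infinite dihedral case, Theorem 24 (valid since $Z(\Mu)=\{I\}$ by Lemma 9) says $\mathrm{Iso}(D_\infty,\Mu)$ is in bijection with conjugacy classes of \emph{unordered pairs} $\{x,y\}$ of elements of $\mathrm{Out}(\Mu)$ of order $1$ or $2$ whose product has finite order. I would enumerate the involutions (and identity) of $D_4$ -- namely idt., 2-rot., and the four reflections -- form all unordered pairs, and then count $D_4$-conjugacy classes of such pairs, obtaining the twelve listed representatives. The product-finite-order condition is automatic here since $\mathrm{Out}(\Mu)$ is finite.

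After the counting, I would close by asserting that the corresponding co-Seifert fibrations are read off from Table 13 via Theorems 27 and 28, precisely as in the earlier cases: Theorem 27 handles the $C_\infty$ (cyclic-quotient) fibrations and Theorem 28 handles the $D_\infty$ (dihedral-quotient) fibrations, the classifying pairs in the last column of Table 13 matching the conjugacy-class representatives just computed. The proof proper can therefore be stated in a single short paragraph that cites these results, mirroring the structure of Theorems 29, 32, 33, and 35.

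The main obstacle I anticipate is purely combinatorial bookkeeping inside $D_4$: getting the conjugacy classes of unordered involution-pairs exactly right, especially distinguishing the genuinely distinct conjugacy classes such as \{m-ref., m-ref.$'$\} versus \{d-ref., d-ref.$'$\} (the two order-4 dihedral subgroups are not conjugate in $D_4$), and \{2-rot., m-ref.\} versus \{2-rot., d-ref.\}, while making sure I do not double-count pairs that an element of $D_4$ carries to one another. Verifying that there are exactly twelve classes (and not eleven or thirteen) is the only place where a slip is likely; since the paper states that this enumeration was ``double checked by a computer calculation,'' I would lean on the explicit list in Table 13 as the target and confirm each representative pair is inequivalent to the others under conjugation, which is the delicate step. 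Everything else is a routine application of the already-proved structural theorems.
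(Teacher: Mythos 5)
Your proposal matches the paper's argument exactly: Lemma 31 gives $\mathrm{Sym}(\Mu)=\mathrm{Aff}(\Mu)\cong D_4$ of order $8$ with $\Omega$ an isomorphism onto $\mathrm{Out}(\Mu)$, and the two counts then reduce to the conjugacy-class enumerations in $D_4$ that you carry out, via Theorem 23 for $C_\infty$ and Theorem 24 (applicable since $Z(\Mu)=\{I\}$ by Lemma 9) for $D_\infty$, with Theorems 27 and 28 identifying the fibrations in Table 13. The enumeration of the five inverse-pair classes and the twelve involution-pair classes is correct, so the proof is complete and follows the same route as the paper.
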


\begin{table}  
\begin{tabular}{rlllll}
no. & fibers & grp. & quotients &  structure group action & classifying pair \\
\hline 
  25 & $(\ast 2222, \mathrm{O})$ & $C_1$ & $(\ast 2222, \mathrm{O})$ & (idt., idt.) & \{idt., idt.\} \\
  38 & $(\ast 2222, \mathrm{O})$ & $C_2$ & $(\ast 2222, \mathrm{O})$ & (m-ref., 2-rot.) & \{m-ref., m-ref.\} \\
  44 & $(\ast 2222, \mathrm{O})$ & $C_2$ & $(2{\ast}22, \mathrm{O})$ & (2-rot., 2-rot.) & \{2-rot., 2-rot.\} \\
  47 & $(\ast 2222, \mathrm{I})$   & $C_1$ & $(\ast 2222, \mathrm{I})$ & (idt., idt.) & \{idt., idt.\} \\
  51 & $(\ast 2222, \mathrm{O})$ & $C_2$ & $(\ast 2222, \mathrm{I})$ & (m-ref., ref.) & \{m-ref., m-ref.\} \\
  59 & $(\ast 2222, \mathrm{O})$ & $C_2$ & $(2{\ast}22, \mathrm{I})$ & (2-rot., ref.) & \{2-rot., 2-rot.\}  \\
  63 & $(\ast 2222, \mathrm{O})$ & $D_2$ & $(\ast 2222, \mathrm{I})$ & (m-ref., ref.),  (m-ref.$'$, 2-rot.) &  \{m-ref., 2-rot.\} \\
  65 & $(\ast 2222, \mathrm{I})$   & $C_2$ & $(\ast 2222, \mathrm{I})$ &  (m-ref., ref.) & \{idt., m-ref.\} \\
  71 & $(\ast 2222, \mathrm{I})$   & $C_2$ & $(2{\ast}22, \mathrm{I})$ & (2-rot., ref.) &  \{idt., 2-rot.\} \\
  74 & $(\ast 2222, \mathrm{O})$ & $D_2$ & $(\ast 2222, \mathrm{I})$ & (m-ref., ref.), (2-rot., 2-rot.) & \{m-ref., m-ref.$'$\} \\
105 & $(\ast 2222, \mathrm{O})$ & $C_2$ & $(\ast 442, \mathrm{O})$ & (d-ref., 2-rot.) & \{d-ref., d-ref.\} \\
109 & $(\ast 2222, \mathrm{O})$ & $C_4$ & $(4{\ast}2, \mathrm{O})$ & (4-rot., 4-rot.) & \{4-rot., 4-rot.$^{-1}$\} \\
115 & $(\ast 2222, \mathrm{O})$ & $C_2$ & $(\ast 442, \mathrm{I})$ & (d-ref., ref.) & \{d-ref., d-ref.\} \\
119 & $(\ast 2222, \mathrm{O})$ & $D_2$ & $(\ast 442, \mathrm{I})$ & (d-ref., ref.), (2-rot., 2-rot.) & \{d-ref., d-ref.$'$\} \\
131 & $(\ast 2222, \mathrm{I})$   & $C_2$ & $(\ast 442, \mathrm{I})$ & (d-ref., ref.) & \{idt., d-ref.\} \\
137 & $(\ast 2222, \mathrm{O})$ & $D_2$ & $(\ast 442, \mathrm{I})$ & (d-ref., ref.), (d-ref.$'$, 2-rot.) & \{d-ref., 2-rot.\} \\
141 & $(\ast 2222, \mathrm{O})$ & $D_4$ & $(\ast 442, \mathrm{I})$ & (d-ref., ref.), (4-rot., 4-rot.) &  \{d-ref., m-ref.\}
\end{tabular}

\medskip
\caption{The classification of the co-Seifert fibrations of 3-space groups 
whose co-Seifert fiber is of type $\ast 2222$ with IT number 6}
\end{table}

\medskip
\noindent{\bf Example 18.}
Let $\Gamma$ be the 3-space group with IT number 63 in Table 1B of \cite{B-Z}. 
Then $\Gamma = \langle t_1,t_2,t_3, \alpha, B, C\rangle$ 
where $t_i = e_i+I$ for $i=1,2,3$ are the standard translations, 
and $\alpha = \frac{1}{2}e_3+A$,  and 
$$A = \left(\begin{array}{rrr} -1 & 0 & 0\\ 0 & -1 & 0 \\ 0 & 0 & 1  \end{array}\right),\ \ 
B = \left(\begin{array}{rrr} 0 & -1 & 0  \\ -1 & 0 & 0   \\ 0 & 0 & -1 \end{array}\right), \ \ 
C = \left(\begin{array}{rrr} -1 & 0 & 0  \\ 0 & -1 & 0   \\ 0 & 0 & -1 \end{array}\right).$$
The group $\Nu = \langle t_1t_2^{-1},t_3, \alpha C, BC\rangle$ is a complete normal subgroup 
of $\Gamma$ with $V = {\rm Span}(\Nu) = {\rm Span}\{e_1-e_2, e_3\}$. 
The flat orbifold $V/\Nu$ is a rectangle.  
Let $\Kappa = \Nu^\perp$.  Then $\Kappa = \langle t_1t_2\rangle$.  
The flat orbifold $V^\perp/\Kappa$ is a circle.  
The structure group $\Gamma/\Nu\Kappa$ is a dihedral group of order 4 
generated by $\Nu\Kappa t_1$ and $\Nu\Kappa C$. 
The element $\Nu\Kappa t_1$ acts as a halfturn on $V^\perp/\Kappa$, 
since 
$t_1 = \left(\textstyle{\frac{1}{2}e_1+\frac{1}{2}e_2}\right) + \left(\textstyle{\frac{1}{2}e_1-\frac{1}{2}e_2}\right)+I.$
The element $\Nu\Kappa C$ acts as a reflection on $V^\perp/\Kappa$. 

A fundamental polygon for the action of $\Nu$ on $V$ is the rectangle
$\Box$ with vertices 
$v_1=(0,0,1/4)$, $v_2=(1/2,-1/2,1/4)$, $v_3 = (1/2,-1/2,3/4)$, $v_4 = (0,0,3/4)$. 
The reflection $\alpha C$ fixes the side $[v_1,v_2]$ pointwise, 
the reflection $t_2\alpha C$ fixes the side $[v_3,v_4]$ pointwise,
the reflection $BC$ maps fixes the side $[v_1,v_4]$ pointwise, and
the reflection $t_1t_2^{-1}BC$ fixes the side $[v_2,v_3]$ pointwise. 
The corner points of $V/\Nu$ are represented by the vertices of $\Box$. 
The element $\Nu\Kappa t_1$ acts as a midline reflection of $V/\Nu$, 
since $t_1(v_1) = v_2$ and $t_1(v_4) = v_3$. 
The element $\Nu\Kappa C$ acts as the other midline reflection of $V/\Nu$, since $t_3C(v_1) = v_4$,
and $t_1t_2^{-1}t_3C(v_2) = v_3$. 

(5) Let $\Mu = \langle e_1+I, e_2 + I, A \rangle$ where $e_1, e_2$ are the standard basis vectors of $E^2$ 
and $A$ transposes $e_1$ and $e_2$. 
Then $\Mu$ is a 2-space group with IT number 5.  
The Conway notation for $\Mu$ is $\ast\times$ and the IT notation is $cm$. 
The flat orbifold $E^2/\Mu$ is a M\"obius band. 
We have that $Z(\Mu) = \langle e_1+e_2 +I\rangle$. 

\begin{lemma}  
If $\Mu = \langle e_1+I, e_2 + I, A \rangle$, with $A$ transposing $e_1$ and $e_2$, then 
$$N_A(\Mu) = \{b+B: b_1-b_2 \in \integers \ \, \hbox{and}\ \, B \in \langle -I, A\rangle\}.$$
\end{lemma}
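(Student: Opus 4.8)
The plan is to compute $N_A(\Mu)$ directly from the defining condition $\phi\Mu\phi^{-1} = \Mu$ for an affinity $\phi = b+B \in \mathrm{Aff}(E^2)$, breaking the analysis into the constraint coming from the translation lattice of $\Mu$ and the constraint coming from the reflection $A$. Since $\Mu$ is the 2-space group $cm$ with translation lattice $L = \integers e_1 + \integers e_2$ and point group $\Pi = \{I, A\}$, I would first use the standard fact (from the proof of Lemma 7) that $B$ must stabilize the translation lattice $L$, i.e. $BL = L$, and must normalize $\Pi$ in $\mathrm{GL}(L)$ (Lemma 13). This pins down the possible linear parts $B$.

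First I would determine which $B \in \mathrm{GL}(2,\realnos)$ are admissible. Because $\phi$ conjugates translations to translations, $B$ must satisfy $BL = L$, so $B \in \mathrm{GL}(L) = \mathrm{GL}(2,\integers)$. Moreover, by Lemma 13, $B$ must normalize the point group $\Pi = \langle A\rangle$ inside $\mathrm{GL}(L)$. The key geometric observation is that $A$ is the reflection fixing the line $\span\{e_1+e_2\}$ and negating $e_1 - e_2$; its $(\pm1)$-eigenlines are the diagonals. Since $Z(\Mu) = \langle e_1 + e_2 + I\rangle$, Lemma 9 gives $\span(Z(\Mu)) = \mathrm{Fix}(A) = \span\{e_1+e_2\}$, and this eigenline decomposition is preserved by any $B$ normalizing $A$. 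An element $B \in \mathrm{GL}(2,\integers)$ normalizing $\langle A \rangle$ must preserve the eigenspace structure of $A$, forcing $B$ to preserve $\{\span\{e_1+e_2\}, \span\{e_1-e_2\}\}$; combined with integrality this yields exactly $B \in \langle -I, A\rangle$ (the group of order $4$ consisting of $I, -I, A, -A$). This is the step I expect to require the most care: showing that integrality plus normalization of $\langle A\rangle$ leaves precisely these four matrices and no others (e.g. ruling out the $90^\circ$ rotation, which normalizes $A$ but does not preserve $L$ in a way compatible with the $cm$ lattice orientation).

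Next I would determine the admissible translation parts $b$. For each admissible $B$, the condition $\phi\Mu\phi^{-1} = \Mu$ requires that $\phi(a+A)\phi^{-1} \in \Mu$ for the generating reflection $a + A$ of $\Mu$ (with $a$ the appropriate half-lattice vector in the standard representation). Writing out $\phi(a+A)\phi^{-1} = Ba + (I - BAB^{-1})b + BAB^{-1}$ and demanding the result lie in $\Mu$ produces a congruence condition on $b$. I would carry this computation out using $BAB^{-1} = A$ (valid for all four admissible $B$), so that the linear part is automatically $A \in \Pi$, and the remaining requirement is that the translational part $Ba + (I-A)b$ differ from $a$ by an element of $L$. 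Projecting onto the $(-1)$-eigenspace $\span\{e_1 - e_2\}$ of $A$, the term $(I-A)b$ contributes $2$ times the $(e_1-e_2)$-component of $b$, and matching against $L$ yields precisely the condition $b_1 - b_2 \in \integers$, with the $(e_1+e_2)$-direction component of $b$ unconstrained (reflecting the central translation freedom). I would also check the translation generators of $\Mu$ are sent into $\Mu$, which holds automatically once $BL = L$.

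Finally I would assemble the two constraints: $\phi = b + B \in N_A(\Mu)$ if and only if $B \in \langle -I, A\rangle$ and $b_1 - b_2 \in \integers$, and verify the reverse inclusion by confirming that every such $\phi$ does conjugate $\Mu$ onto itself (since both $L$ is preserved and the reflection generator maps back into $\Mu$). This gives exactly the claimed description $N_A(\Mu) = \{b+B : b_1 - b_2 \in \integers \text{ and } B \in \langle -I, A\rangle\}$. The main obstacle, as noted, is the eigenspace/integrality argument constraining $B$; the translation-part computation is routine once $BAB^{-1}=A$ is established for all admissible $B$.
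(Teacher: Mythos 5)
Your overall route is the same as the paper's: split the condition $(b+B)\Mu(b+B)^{-1}=\Mu$ into preservation of the translation lattice (giving $B\in\mathrm{GL}(2,\integers)$) and conjugation of the reflection generator back into $\Mu$ (giving $BAB^{-1}=A$ and $(I-A)b\in\integers^2$, i.e.\ $b_1-b_2\in\integers$). The translation-part computation is correct; note only that in the paper's presentation the reflection generator is $A$ itself, with zero translational part, not a reflection carrying a half-lattice vector, though your general formula specializes correctly when $a=0$.

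The step you yourself flag as the delicate one is the one that goes wrong. You claim that a $B\in\mathrm{GL}(2,\integers)$ normalizing $\langle A\rangle$ need only preserve the \emph{set} of eigenlines $\{\span\{e_1+e_2\},\span\{e_1-e_2\}\}$, and that integrality then yields exactly $B\in\langle -I,A\rangle$. That is false: the $90^\circ$ rotation $R=\left(\begin{smallmatrix}0&-1\\1&0\end{smallmatrix}\right)$ swaps the two eigenlines, lies in $\mathrm{GL}(2,\integers)$, and together with $A$ generates a dihedral group of order $8$ of integral matrices preserving that set of lines, so your stated criterion gives a group twice too large. Your attempted exclusion of $R$ --- that it ``does not preserve $L$ in a way compatible with the $cm$ lattice orientation'' --- is also wrong, since $R$ preserves $L=\integers^2$ outright. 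The correct reason $R$ is excluded is that $RAR^{-1}=-A\notin\langle A\rangle=\{I,A\}$, so $R$ does not normalize $\langle A\rangle$ at all. Since $A$ has order $2$, normalizing $\langle A\rangle$ is the same as centralizing $A$, which forces $B$ to preserve each eigenline separately; equivalently, $B$ commutes with $A$ if and only if $B=\left(\begin{smallmatrix}p&q\\q&p\end{smallmatrix}\right)$, and then $p,q\in\integers$ with $\det B=p^2-q^2=\pm1$ forces $(p,q)\in\{(\pm1,0),(0,\pm1)\}$, i.e.\ $B\in\{I,-I,A,-A\}=\langle -I,A\rangle$. With that repair your argument coincides with the paper's proof.
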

\begin{proof}
Observe that $b+B \in N_A(\Mu)$ if and only if $B\in N_A(\langle e_1+I, e_2+I\rangle)$ 
and $(b+B)A(b+B)^{-1}\in \Mu$.  
Now $B \in N_A(\langle e_1+I, e_2+I\rangle)$ if and only if $B \in \mathrm{GL}(2,\integers)$.
As $(b+B)A(b+B)^{-1} = b-BAB^{-1}b+BAB^{-1}$, we have that $(b+B)A(b+B)^{-1}\in \Mu$ 
if and only if $BAB^{-1} = A$ and $b-Ab \in \integers^2$. 
Hence $b+B \in N_A(\Mu)$ if and only if $B \in \langle -I, A\rangle$ and $b_1-b_2 \in \integers$. 
\end{proof}

A fundamental polygon for the action of $\Mu$ on $E^2$ is the $45^\circ -45^\circ$ right triangle 
with vertices $v_1=(0,0), v_2=(1,0), v_3 = (1,1)$. 
The reflection $A$ fixes the hypotenuse  $[v_1,v_3]$ pointwise.  
The glide reflection $e_1+A$ maps the side $[v_1,v_2]$ to the side $[v_2,v_3]$. 
The {\it boundary} of the M\"obius band $E^2/\Mu$ is represented by the hypotenuse $[v_1,v_3]$, 
and the {\it central circle} of $E^2/\Mu$ is represented by the line segment $[(1/2,0), (1,1/2)]$ 
joining the midpoints of the two short sides of the triangle.  

Restricting a symmetry of the M\"obius band $E^2/\Mu$ to a symmetry of its boundary circle 
gives an isomorphism from the symmetry group of the M\"obius band to the symmetry group of its boundary circle. 
There are two conjugacy classes of elements of order 2, 
the class of the reflection c-ref.\  = $(e_1/2 +e_2/2+I)_\star$ in the central circle of the M\"obius band, 
and the class of a halfturn 2-rot.\ = $(-I)_\star$ about the  point $\Mu(1/2,0)$ on the central circle. 
Note that c-ref.\ restricts to a halfturn of the boundary circle, and 2-rot.\ restricts to a reflection of the boundary circle. 
Define 2-rot.$'$ = $(e_1/2+e_2/2-I)_\star$. 
There is one conjugacy class of dihedral symmetry groups of order 4, represented by the group \{idt., c-ref., 2-rot., 2-rot.$'$\}.

\begin{lemma} 
If $\Mu= \langle e_1+I, e_2 + I, A \rangle$, with $A$ transposing $e_1$ and $e_2$, 
then the Lie group $\mathrm{Sym}(\Mu)$ is isomorphic to $\mathrm{O}(2)$, 
and $\mathrm{Sym}(\Mu) = \mathrm{Aff}(\Mu)$, 
and $\Omega:  \mathrm{Aff}(\Mu) \to \mathrm{Out}(\Mu)$ maps 
the subgroup {\rm \{idt., 2-rot.\}} isomorphically onto $\mathrm{Out}(\Mu)$.  
\end{lemma}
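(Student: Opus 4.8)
The plan is to compute the isometry group of the M\"obius band $E^2/\Mu$ directly and then transfer the algebraic information using the general machinery of \S 5. First I would invoke Theorem 11: since $Z(\Mu) = \langle e_1+e_2+I\rangle$ is infinite cyclic of rank one, the connected component of the identity $K$ of $\mathrm{Sym}(\Mu) = \mathrm{Isom}(E^2/\Mu)$ is a torus isomorphic to $\mathrm{Span}(Z(\Mu))/Z(\Mu)$, which is a circle. Concretely, $\mathrm{Span}(Z(\Mu)) = \mathrm{Span}\{e_1+e_2\}$, and the action $(Z(\Mu)v)(\Mu x) = \Mu(v+x)$ realizes $K$ as the group of translations of $E^2/\Mu$ along the central circle direction. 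Thus $\mathrm{Sym}(\Mu)$ is a compact Lie group of dimension one.

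Next I would identify the component group. The geometric description preceding the lemma already shows that restriction to the boundary circle gives an isomorphism from $\mathrm{Sym}(\Mu)$ onto the symmetry group of that circle; since $\mathrm{Isom}$ of a circle is $\mathrm{O}(2)$, and $\mathrm{Sym}(\Mu)$ is a one-dimensional compact Lie group with at least two components (the identity component of rotations along the central circle, plus the reflection class containing c-ref.\ and 2-rot.), I would conclude $\mathrm{Sym}(\Mu) \cong \mathrm{O}(2)$. The cleanest route is to produce the isomorphism $N_E(\Mu)/\Mu \cong \mathrm{Sym}(\Mu)$ from Lemma 8 and compute $N_E(\Mu)$ explicitly: by the same argument as in Lemma 23-style normalizer computations, $N_E(\Mu) = \{b+B : b_1 - b_2 \in \integers,\ B \in \langle -I, A\rangle\} \cap \mathrm{Isom}(E^2)$, and quotienting by $\Mu$ yields $\mathrm{O}(2)$.

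Then I would show $\mathrm{Sym}(\Mu) = \mathrm{Aff}(\Mu)$, following the pattern of Lemma 24 but with an extra step since $\Mu$ has nontrivial center. By Corollary 6 and Lemma 13, it suffices to check $\Pi_E = \Pi_A$, i.e.\ that every affinity normalizing $\Mu$ has isometric point-group part. The point group of $\Mu$ is $\langle A\rangle$, and its normalizer in $\mathrm{GL}(L)$ is generated by $-I$ and $A$, both orthogonal; so $\Pi_A = \Pi_E = \langle -I, A\rangle$, giving $\mathrm{Out}_E(\Mu) = \mathrm{Out}(\Mu)$ and hence $\mathrm{Aff}(\Mu) = \mathrm{Sym}(\Mu)$ via the commutative diagrams of Corollary 4 together with $C_A(\Mu) = C_E(\Mu)$ from Lemma 12. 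Finally, for the statement about $\Omega$, I would apply Theorem 12: the kernel of $\Omega: \mathrm{Sym}(\Mu) \to \mathrm{Out}_E(\Mu)$ is exactly the identity component $K$, so $\Omega$ factors through the component group $\mathrm{Sym}(\Mu)/K$, which has order two and is generated by the class of 2-rot.\ (or c-ref., which lies in the same component). Since $\mathrm{Out}(\Mu) = \mathrm{Out}_E(\Mu)$ has order two, $\Omega$ maps the order-two subgroup $\{\mathrm{idt.}, \text{2-rot.}\}$ isomorphically onto $\mathrm{Out}(\Mu)$.

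The main obstacle I anticipate is the nontriviality of the center: unlike the earlier lemmas (24, 25, etc.) where $Z(\Mu) = \{I\}$ made $\Omega: \mathrm{Aff}(\Mu)\to\mathrm{Out}(\Mu)$ an isomorphism outright, here $\mathrm{Sym}(\Mu)$ is positive-dimensional, so I must carefully distinguish the continuous identity component from the discrete quotient and verify that 2-rot.\ genuinely represents the nontrivial outer class while c-ref.\ does not lie in its own separate component. Checking that c-ref.\ and 2-rot.\ lie in the \emph{same} component of $\mathrm{O}(2)$ (both restricting to the reflection component of the boundary circle) — and hence map to the same element of $\mathrm{Out}(\Mu)$ — is the delicate point that makes the subgroup $\{\mathrm{idt.}, \text{2-rot.}\}$, rather than some larger set, the one mapping isomorphically.
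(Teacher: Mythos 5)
Your main chain of reasoning is essentially the paper's: restriction to the boundary circle identifies $\mathrm{Sym}(\Mu)$ with $\mathrm{Isom}(\mathrm{O})\cong \mathrm{O}(2)$; the explicit normalizer computation (Lemma 36) shows every $b+B\in N_A(\Mu)$ has $B\in\langle -I,A\rangle\subseteq\mathrm{O}(2)$, so $N_A(\Mu)=N_E(\Mu)$ and hence $\mathrm{Aff}(\Mu)=\mathrm{Sym}(\Mu)$; and the last assertion follows because $\mathrm{Ker}(\Omega)$ is the identity component $K$ (Theorems 12 and 13) while 2-rot.\ lies outside $K$. The extra appeal to Theorem 11 to recognize $K$ as the circle of translations along $\mathrm{Span}(Z(\Mu))=\mathrm{Span}\{e_1+e_2\}$ is harmless and correct.

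However, your closing ``delicate point'' gets the geometry backwards. You assert that c-ref.\ and 2-rot.\ both restrict to reflections of the boundary circle, lie in the same non-identity component, and map to the same nontrivial element of $\mathrm{Out}(\Mu)$. This is false: c-ref.\ $=(e_1/2+e_2/2+I)_\star$ is the translation by $\tfrac12(e_1+e_2)\in\mathrm{Span}(Z(\Mu))$, so it lies in the identity component $K=C_E(\Mu)\Mu/\Mu$ (Lemma 10), restricts to a \emph{halfturn} --- a rotation, not a reflection --- of the boundary circle, and satisfies $\Omega(\hbox{c-ref.})=1$. Only 2-rot.\ $=(-I)_\star$ restricts to a reflection of the boundary circle and represents the nontrivial class of $\mathrm{Out}(\Mu)$. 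The lemma survives because all it needs is that 2-rot.\ has order 2 and is not in $K$, so $\{\hbox{idt.},\hbox{2-rot.}\}$ maps isomorphically onto the order-two group $\mathrm{Out}(\Mu)=\mathrm{Out}_E(\Mu)$; but the misidentification of c-ref.\ is not cosmetic, since the subsequent classification (Theorem 37 and Table 14) depends on c-ref.\ and 2-rot.\ being non-conjugate elements of $\mathrm{O}(2)$ with \emph{different} images under $\Omega$, and the error would corrupt that count. Delete the claim that c-ref.\ and 2-rot.\ share a component, and the proof is correct.
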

\begin{proof} The Lie group $\mathrm{Sym}(\Mu)$ is isomorphic to $\mathrm{O}(2)$, 
since restricting a symmetry of the M\"obius band $E^2/\Mu$ to a symmetry of its boundary circle $\mathrm{O}$ is 
an isomorphism from $\mathrm{Sym}(\Mu)$ to $\mathrm{Isom}(\mathrm{O})$ by Lemmas 5 and 36. 
We have that $\mathrm{Sym}(\Mu) = \mathrm{Aff}(\Mu)$ by Lemmas 5, 11, and 36. 
The epimorphism $\Omega:  \mathrm{Aff}(\Mu) \to \mathrm{Out}(\Mu)$ maps  
the group \{idt., 2-rot.\} isomorphically onto $\mathrm{Out}(\Mu)$ by Theorem 13, 
since 2-rot.\ acts as a reflection of $\mathrm{O}$. 
\end{proof}

\begin{theorem}  
Let $\Mu= \langle e_1+I, e_2 + I, A \rangle$, with $A$ transposing $e_1$ and $e_2$. 
Then $\mathrm{Iso}(C_\infty,\Mu)$ has two elements,   
corresponding to the pairs of elements {\rm \{idt., idt.\}, \{2-rot., 2-rot.\}} of $\mathrm{Sym}(\Mu)$ by Theorem 23, 
and $\mathrm{Iso}(D_\infty,\Mu)$ has six elements,  
corresponding to the pairs of elements {\rm  \{idt., idt.\},  \{idt., 2-rot.\},  \{idt., c-ref.\}, \{2-rot., 2-rot.\},  \{c-ref., c-ref.\}, \{2-rot., c-ref.\}} of $\mathrm{Sym}(\Mu)$  by Theorem 26.
The corresponding co-Seifert fibrations are described in Table 14 by Theorems 27 and 28.  
Only the pair {\rm \{2-rot., 2-rot.\}} falls into the case $E_1\cap E_2 \neq \{0\}$ of Theorem 26. 
\end{theorem}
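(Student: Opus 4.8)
The plan is to apply the general classification machinery developed in the previous sections (Theorems 23, 24, and 26) to the specific case $\Mu = \langle e_1+I, e_2+I, A\rangle$, using Lemma 38 to pin down $\mathrm{Sym}(\Mu)$ and $\mathrm{Out}(\Mu)$. The key inputs are that $\mathrm{Sym}(\Mu) = \mathrm{Aff}(\Mu) \cong \mathrm{O}(2)$ and that $\Omega$ maps the finite subgroup $\{\textrm{idt.}, 2\textrm{-rot.}\}$ isomorphically onto $\mathrm{Out}(\Mu)$, which has order 2. Since $Z(\Mu) = \langle e_1+e_2+I\rangle$ is nontrivial, I cannot invoke Theorem 24 for the dihedral case; instead I must use Theorem 26, which accounts for the center.

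First I would handle $\mathrm{Iso}(C_\infty, \Mu)$ directly by Theorem 23: this set is in one-to-one correspondence with conjugacy classes of pairs of inverse elements of $\mathrm{Out}(\Mu)$ of finite order. Since $\mathrm{Out}(\Mu) \cong \integers/2$, every element is its own inverse and of finite order, so there are exactly two such classes, represented by $\{\textrm{idt.}, \textrm{idt.}\}$ and $\{2\textrm{-rot.}, 2\textrm{-rot.}\}$ under the chosen isomorphic lift of $\mathrm{Out}(\Mu)$ into $\mathrm{Sym}(\Mu)$. This gives the two-element count.

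For $\mathrm{Iso}(D_\infty, \Mu)$ I would instead enumerate the conjugacy classes of pairs of elements of $\mathrm{Sym}(\Mu) \cong \mathrm{O}(2)$ of order 1 or 2, then group them according to the fibers of $\psi$ using Theorem 26. The elements of order 1 or 2 in $\mathrm{O}(2)$ are the identity, the central element $-I$ (here $2$-rot.), and the reflections (all conjugate, giving the class of c-ref.); so up to conjugacy the candidate pairs are $\{\textrm{idt.}, \textrm{idt.}\}$, $\{\textrm{idt.}, 2\textrm{-rot.}\}$, $\{\textrm{idt.}, \textrm{c-ref.}\}$, $\{2\textrm{-rot.}, 2\textrm{-rot.}\}$, $\{\textrm{c-ref.}, \textrm{c-ref.}\}$, and $\{2\textrm{-rot.}, \textrm{c-ref.}\}$, giving six classes in $\mathrm{Aff}(\Delta,\Mu)$ by Lemma 22. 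To pass from $\mathrm{Aff}(\Delta,\Mu)$ to $\mathrm{Iso}(\Delta,\Mu)$ I would invoke Theorem 26 and examine, for each pair, the $(-1)$-eigenspaces $E_1, E_2$ of the relevant point-group elements restricted to $\mathrm{Span}(Z(\Mu)) = \mathrm{Span}\{e_1+e_2\}$, checking when $E_1 \cap E_2 \neq \{0\}$.

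The main obstacle I anticipate is the assertion that exactly the pair $\{2\textrm{-rot.}, 2\textrm{-rot.}\}$ falls into the degenerate case $E_1 \cap E_2 \neq \{0\}$ of Theorem 26. Here $\mathrm{Span}(Z(\Mu))$ is one-dimensional, so for each element I must determine whether it acts as $-1$ or $+1$ on $e_1+e_2$: the halfturn $2\textrm{-rot.} = (-I)_\star$ negates $e_1+e_2$, so for $\{2\textrm{-rot.}, 2\textrm{-rot.}\}$ both eigenspaces equal the full line and $E_1 \cap E_2 \neq \{0\}$, whereas c-ref.\ fixes $e_1+e_2$ (its lift is $\frac12 e_1 + \frac12 e_2 + I$, a translation along the center) and the identity fixes it, so every other pair yields $E_1 \cap E_2 = \{0\}$ and hence $\psi$ is injective on those fibers. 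This verifies that all six $\mathrm{Aff}(\Delta,\Mu)$-classes remain distinct in $\mathrm{Iso}(\Delta,\Mu)$ except that the degenerate pair needs separate care, and I would confirm via Theorem 26 that even the $\{2\textrm{-rot.}, 2\textrm{-rot.}\}$ fiber collapses to a single isomorphism class, leaving six elements. Finally, the identification of the resulting fibrations with the specific entries of Table 14 follows from Theorems 27 and 28.
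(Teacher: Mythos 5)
You follow the paper's route exactly: Lemma 37 gives $\mathrm{Sym}(\Mu)=\mathrm{Aff}(\Mu)\cong\mathrm{O}(2)$ with $\Omega$ carrying \{idt., 2-rot.\} isomorphically onto $\mathrm{Out}(\Mu)$, Theorem 23 settles the infinite cyclic case, and Theorem 26 (correctly preferred over Theorem 24, since $Z(\Mu)\neq\{I\}$) settles the dihedral case through the $(-1)$-eigenspaces on $\mathrm{Span}(Z(\Mu))=\mathrm{Span}\{e_1+e_2\}$; your eigenspace computation and your identification of \{2-rot., 2-rot.\} as the unique degenerate pair are correct. Two slips are worth fixing. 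First, your $\mathrm{O}(2)$-labels are reversed: under restriction to the boundary circle, c-ref.\ $=(e_1/2+e_2/2+I)_\star$ is the central involution (it lies in the identity component of $\mathrm{Sym}(\Mu)$, being a translation along $\mathrm{Span}(Z(\Mu))$), while 2-rot.\ $=(-I)_\star$ and its conjugates $(2v-I)_\star$ form the continuum of reflections. Second, and as a consequence, $\mathrm{Aff}(D_\infty,\Mu)$ does not have six elements: by Lemma 22 the pairs $\{\hbox{2-rot.}, (v+\ov I)_\star\hbox{2-rot.}\}$ with $v\in\mathrm{Span}\{e_1+e_2\}$ already give uncountably many conjugacy classes, exactly as in Example 10. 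The count of six for $\mathrm{Iso}(D_\infty,\Mu)$ survives because these are precisely the classes with $E_1\cap E_2\neq\{0\}$, and Theorem 26 shows they constitute a single fiber of $\psi$, namely the class of \{2-rot., 2-rot.\}, while the other five classes have $E_1\cap E_2=\{0\}$ and hence singleton fibers. Your last paragraph gestures at this collapse, but you should drop the claim that $\mathrm{Aff}(D_\infty,\Mu)$ is finite and phrase the conclusion in terms of the fibers of $\psi$.
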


\begin{table}  
\begin{tabular}{rlllll}
no. & fibers & grp. & quotients &  structure group action &  classifying pair\\
\hline 
  8 & $(\ast\times, \mathrm{O})$ & $C_1$ & $(\ast\times, \mathrm{O})$ & (idt., idt.) & \{idt., idt.\} \\
12 & $(\ast\times, \mathrm{O})$ & $C_2$ & $(2{\ast}22, \mathrm{I})$ & (2-rot., ref.) &  \{2-rot., 2-rot.\} \\
36 & $(\ast\times, \mathrm{O})$ & $C_2$ & $(2{\ast}22, \mathrm{O})$ & (2-rot., 2-rot.)  & \{2-rot., 2-rot.\} \\
38 & $(\ast\times, \mathrm{I})$   & $C_1$ & $(\ast\times, \mathrm{I})$ & (idt., idt.)  & \{idt., idt.\} \\
39 & $(\ast\times, \mathrm{O})$ & $C_2$ & $(\ast\ast, \mathrm{I})$ & (c-ref., ref.) & \{c-ref., c-ref.\} \\
42 & $(\ast\times, \mathrm{I})$   & $C_2$ & $(\ast\ast, \mathrm{I})$ & (c-ref., ref.) &  \{idt., c-ref.\} \\
63 & $(\ast\times, \mathrm{I})$   & $C_2$ & $(2{\ast}22, \mathrm{I})$ & (2-rot., ref.) &  \{idt., 2-rot.\} \\
64 & $(\ast\times, \mathrm{O})$ & $D_2$ & $(\ast 2222, \mathrm{I})$ & (2-rot., ref.), (2-rot.$'$, 2-rot.) &  \{2-rot., c-ref.\}
\end{tabular}

\medskip
\caption{The classification of the co-Seifert fibrations of 3-space groups 
whose co-Seifert fiber is of type $\ast\times$ with IT number 5}
\end{table}

\medskip
\noindent{\bf Example 19.}
Let $\Gamma$ be the 3-space group with IT number 64 in Table 1B of \cite{B-Z}. 
Then $\Gamma = \langle t_1,t_2,t_3, \alpha, B, C\rangle$ 
where $t_i = e_i+I$ for $i=1,2,3$ are the standard translations, 
and $\alpha = \frac{1}{2}e_1+\frac{1}{2}e_2+\frac{1}{2}e_3+A$,  and 
$$A = \left(\begin{array}{rrr} -1 & 0 & 0\\ 0 & -1 & 0 \\ 0 & 0 & 1  \end{array}\right),\ \ 
B = \left(\begin{array}{rrr} 0 & -1 & 0  \\ -1 & 0 & 0   \\ 0 & 0 & -1 \end{array}\right), \ \ 
C = \left(\begin{array}{rrr} -1 & 0 & 0  \\ 0 & -1 & 0   \\ 0 & 0 & -1 \end{array}\right).$$
The group $\Nu = \langle t_1, t_2, BC\rangle$ is a complete normal subgroup 
of $\Gamma$ with $V = {\rm Span}(\Nu) = {\rm Span}\{e_1, e_2\}$. 
The flat orbifold $V/\Nu$ is a M\"obius band.  
Let $\Kappa = \Nu^\perp$.  Then 
$\Kappa = \langle t_3\rangle$.  
The flat orbifold $V^\perp/\Kappa$ is a circle.  
The structure group $\Gamma/\Nu\Kappa$ is a dihedral group of order 4 
generated by $\Nu\Kappa \alpha$ and $\Nu\Kappa C$. 
The element $\Nu\Kappa C$ acts as 2-rot. on $V/\Nu$ and as a reflection on $V^\perp/\Kappa$. 
The element $\Nu\Kappa \alpha$ acts as 2-rot.$'$ on $V/\Nu$ and as a halfturn on $V^\perp/\Kappa$.

(4) Let $\Mu = \langle e_1+I, e_2+I, e_1/2+A\rangle$ where $A = \mathrm{diag}(1,-1)$. 
Then $\Mu$ is a 2-space group with IT number 4. 
The Conway notation for $\Mu$ is $\times\times$ and the IT notation if $pg$. 
The flat orbifold $E^2/\Mu$ is a Klein bottle. 
We have that $Z(\Mu) = \langle e_1+I \rangle$. 

\begin{lemma}  
If $\Mu = \langle e_1+I, e_2 + I, e_1/2+ A \rangle$, with $A = \mathrm{diag}(1,-1)$, then 
$$N_A(\Mu) = \{b+B: 2b_2 \in \integers \ \, \hbox{and}\ \, B \in \langle -I, A\rangle\}.$$
\end{lemma}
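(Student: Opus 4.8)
The plan is to characterize $N_A(\Mu)$ by the same two-step method used for the $\ast\times$ case in Lemma 36. An affinity $b+B$ lies in $N_A(\Mu)$ exactly when conjugation by $b+B$ preserves $\Mu$. Since $\Mu$ is generated by the translation subgroup $\Tau = \langle e_1+I, e_2+I\rangle$ together with the glide reflection $g = e_1/2 + A$, I would first use the fact that $b+B$ must normalize $\Tau$, hence $B$ must lie in $\mathrm{GL}(2,\integers)$, the stabilizer of the standard lattice $\integers^2$.

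Next I would analyze the condition that conjugation sends $g$ into $\Mu$. Computing $(b+B)(e_1/2+A)(b+B)^{-1} = b + Be_1/2 - BAB^{-1}b + BAB^{-1}$, the point group part $BAB^{-1}$ must equal the point group part of some element of $\Mu$. The elements of $\Mu$ have point group in $\langle A\rangle = \{I, A\}$; more precisely the glide-type elements of $\Mu$ all have point part $A$, so $BAB^{-1} = A$ is forced, which together with $B\in\mathrm{GL}(2,\integers)$ pins down $B\in\langle -I, A\rangle$. I expect this point-group bookkeeping to be routine once I record which elements of $\Mu$ have point part $I$ (the translations by $\integers^2$) versus point part $A$ (the coset $g\Tau$, i.e. elements $e_1/2 + (m+1/2\cdot 0)\ldots$, translations-by-$\integers^2$ composed with $g$).

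The genuinely delicate step will be the translational condition, and this is where I expect the main obstacle. After fixing $B \in \langle -I, A\rangle$, I must determine for which translation parts $b$ the conjugate of $g$ actually lands in $\Mu$ and not merely in the normalizer of $\Tau$. The requirement is that $b + Be_1/2 - Ab + A$ (using $BAB^{-1}=A$) equals $e_1/2 + k_1 e_1 + k_2 e_2 + A$ for integers $k_1,k_2$, i.e. $(I-A)b \equiv Be_1/2 - e_1/2 \pmod{\integers^2}$. Writing this out with $A=\diag(1,-1)$ so that $I-A = \diag(0,2)$, the first coordinate gives no constraint while the second coordinate yields $2b_2 \in \integers$ after checking that $Be_1/2 - e_1/2$ has integral second coordinate for each $B \in \langle -I,A\rangle$. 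I would verify the four cases $B \in \{I, -I, A, -A\}$ separately to confirm the right-hand side is always in $\integers e_1 + \integers e_2$ in its second slot, leaving exactly the condition $2b_2 \in \integers$.

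Finally I would assemble the two halves: $b+B \in N_A(\Mu)$ if and only if $B \in \langle -I, A\rangle$ and $2b_2 \in \integers$, matching the stated formula. I should double-check consistency against the center $Z(\Mu) = \langle e_1+I\rangle$ computed just above the lemma (the $(-1)$-eigenspace structure of $A$ restricted to $\mathrm{Span}(Z(\Mu)) = \mathrm{Span}\{e_1\}$ should be compatible, since $e_1$ is fixed by $A$), which serves as a sanity check that the normalizer acts correctly on the center. The whole proof should parallel Lemma 36 closely, with the only substantive difference being that the glide $g$ has nonzero translation part $e_1/2$, which is what converts the congruence into the asymmetric condition $2b_2 \in \integers$ rather than $b_1 - b_2 \in \integers$.
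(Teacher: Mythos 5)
Your proposal is correct and follows essentially the same route as the paper's proof: reduce to $B\in\mathrm{GL}(2,\integers)$ via the translation lattice, force $BAB^{-1}=A$ (hence $B\in\langle -I,A\rangle$) from the conjugate of the glide reflection, and extract $2b_2\in\integers$ from the congruence $b-Ab+Be_1/2-e_1/2\in\integers^2$. The only difference is that you spell out the four-case check that $Be_1/2-e_1/2\in\integers^2$, which the paper leaves implicit.
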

\begin{proof}
Observe that $b+B \in N_A(\Mu)$ if and only if $B\in N_A(\langle e_1+I, e_2+I\rangle)$ 
and $(b+B)(e_1/2+A)(b+B)^{-1}\in \Mu$.  
Now $B \in N_A(\langle e_1+I, e_2+I\rangle)$ if and only if $B \in \mathrm{GL}(2,\integers)$.
As $(b+B)(e_1/2+A)(b+B)^{-1} = b+Be_1/2-BAB^{-1}b+BAB^{-1}$, we have that $(b+B)(e_1/2+A)(b+B)^{-1}\in \Mu$ 
if and only if $BAB^{-1} = A$ and $b-Ab +Be_1/2-e_1/2\in \integers^2$. 
Hence $b+B \in N_A(\Mu)$ if and only if $B \in \langle -I, A\rangle$ and $2b_2 \in \integers$. 
\end{proof}

A fundamental polygon for the action of $\Mu$ on $E^2$ is the rectangle 
with vertices $v_1 = (0,-1/2), v_2 = (1/2,-1/2), v_3 = (1/2,1/2)$ and $v_4= (0,1/2)$. 
The vertical translation $e_2+I$ maps the side $[v_1,v_2]$ to the side $[v_4,v_3]$. 
The horizontal glide reflection $e_1/2 + A$ maps the side $[v_1,v_4]$ to the side $[v_2,v_3]$. 
The Klein bottle $E^2/\Mu$ has two short horizontal geodesics represented by the line segments 
$[v_1, v_2]$ and $[(0,0),(1/2,0)]$. 
The union of these two short geodesics is invariant under any symmetry of the Klein bottle. 
Therefore the horizontal quarterline geodesic, which we call the {\it central circle}, represented by the union 
of the line segments $[(0,-1/4),(1/2,-1/4)]$ and $[(0,1/4),(1/2,1/4)]$ is invariant under any symmetry of the Klein Bottle. 
The symmetry group of the Klein bottle is the direct product of the subgroup of order 2, generated by 
the reflection in the central circle, and the subgroup consisting of the horizontal translations and the reflections 
in a vertical geodesic.   The latter subgroup restricts to the symmetry group of the central circle.  

There are five conjugacy classes of symmetries of order 2, 
the class of the reflection m-ref.\ = $A_\star = (e_1/2+I)_\star$ in the short horizontal geodesics, 
the class of the vertical halfturn 2-sym.\ = $(e_2/2+I)_\star$, 
the class of the reflection c-ref.\ = $(e_2/2 + A)_\star = (e_1/2+e_2/2+I)_\star$ in the central circle,  
the class of the reflection v-ref.\ = $(-A)_\star = (e_1/2-I)_\star$ in the vertical geodesic represented by $[v_1, v_4]$, 
and the class of the halfturn 2-rot.\ = $(e_2/2-I)_\star$ around a pair of antipodal points on the central circle. 
Define v-ref.$' = (-I)_\star$ and 2-rot.$' = (e_1/2 + e_2/2 - I)_\star$.  
There are five conjugacy classes of dihedral symmetry groups of order 4, 
the classes of the groups \{idt., m-ref., 2-sym., c-ref.\},  \{idt.,  m-ref., v-ref., v-ref.$'$\},  \{idt., m-ref., 2-rot., 2-rot.$'$\}, 
 \{idt.,  2-sym., v-ref., 2-rot.$'$\}, and  \{idt.,  c-ref. v-ref., 2-rot.\}. 

\begin{lemma} 
If $\Mu = \langle e_1+I, e_2 + I, e_1/2+ A \rangle$, with $A = \mathrm{diag}(1,-1)$, 
then the Lie group $\mathrm{Sym}(\Mu)$ is isomorphic to $C_2\times\mathrm{O}(2)$, 
and $\mathrm{Sym}(\Mu) = \mathrm{Aff}(\Mu)$, 
and $\Omega:  \mathrm{Aff}(\Mu) \to \mathrm{Out}(\Mu)$ maps 
the subgroup {\rm \{idt., c-ref., v-ref., 2-rot.\}}  isomorphically onto $\mathrm{Out}(\Mu)$.  
Moreover $\Omega${\rm (2-sym)} = $\Omega${\rm (c-ref.)}. 
\end{lemma}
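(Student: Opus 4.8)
The plan is to establish the final Lemma 39 (the claim about the Klein bottle $\Mu=\langle e_1+I,e_2+I,e_1/2+A\rangle$ with $A=\diag(1,-1)$) by analyzing the structure of $N_A(\Mu)$ computed in Lemma 38, pushing it through the general machinery of Section 5, and then checking one explicit relation in $\mathrm{Out}(\Mu)$. First I would use Lemma 38, which gives $N_A(\Mu)=\{b+B: 2b_2\in\integers \ \hbox{and}\ B\in\langle -I,A\rangle\}$, together with Lemma 12, to identify $\mathrm{Aut}(\Mu)$ and the centralizer; since $C_A(\Mu)=C_E(\Mu)=\{v+I:v\in V\}$ with $V=\mathrm{Span}(Z(\Mu))=\mathrm{Span}\{e_1\}$, the quotient $N_A(\Mu)/\Mu$ is readily seen to be isomorphic to $C_2\times\mathrm{O}(2)$. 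The $\mathrm{O}(2)$ factor comes from the connected torus $K\cong V/Z(\Mu)$ of Theorem 11 (a circle, the central-circle rotations) together with a reflection, while the extra $C_2$ factor is generated by a symmetry commuting with everything. By Lemma 8 and the isomorphism $\overline\Phi:N_A(\Mu)/\Mu\to\mathrm{Aff}(\Mu)$ of Lemma 14 specialized to this case, I obtain $\mathrm{Sym}(\Mu)=\mathrm{Aff}(\Mu)\cong C_2\times\mathrm{O}(2)$.

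Next I would verify $\mathrm{Sym}(\Mu)=\mathrm{Aff}(\Mu)$ directly, exactly as signposted: Lemmas 5, 11, and 12 show that every affinity normalizing $\Mu$ is already an isometry, because $C_A(\Mu)=C_E(\Mu)$ forces the point-group part $B$ to lie in $\langle -I,A\rangle\subset\mathrm{O}(2)$, so no genuinely non-isometric affinity appears. This is the same argument pattern used in Lemma 24 and its analogues, now applied with the explicit description of $N_A(\Mu)$ from Lemma 38.

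For the statement about $\Omega$, the plan is to invoke Theorem 12: the kernel of $\Omega:\mathrm{Aff}(\Mu)\to\mathrm{Out}_E(\Mu)$ is exactly the connected component $K$ of the identity, which here is the circle $\mathrm{O}(2)^+$ of central-circle rotations (the image of $V/Z(\Mu)$). Thus $\mathrm{Out}(\Mu)=\mathrm{Out}_E(\Mu)\cong(C_2\times\mathrm{O}(2))/\mathrm{SO}(2)\cong C_2\times C_2$, an order-$4$ group, using Lemma 13 and Table 5A of \cite{B-Z} to know $\mathrm{Out}_E(\Mu)=\mathrm{Out}(\Mu)$. I would then exhibit the indicated order-$4$ subgroup $\{\text{idt., c-ref., v-ref., 2-rot.}\}$ and check that each of its three nonidentity elements lies in a distinct coset of $K$, so that $\Omega$ restricts to an isomorphism of this subgroup onto $\mathrm{Out}(\Mu)$; the key point is that c-ref.\ and v-ref.\ restrict to reflections of the central circle (hence lie outside $K$) while 2-rot.\ $=$ (c-ref.)(v-ref.) is their product, mirroring how 2-rot.\ restricts to a reflection of the boundary/central circle as in Lemma 37.

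The main obstacle, and the only genuinely computational step, is the final assertion $\Omega(\text{2-sym.})=\Omega(\text{c-ref.})$. Here I would compute representatives in $N_A(\Mu)$: 2-sym.\ $=(e_2/2+I)_\star$ and c-ref.\ $=(e_2/2+A)_\star=(e_1/2+e_2/2+I)_\star$, and show their induced outer automorphisms coincide by checking that the quotient $(e_2/2+A)(e_2/2+I)^{-1}=(A)$ composed with an inner automorphism lands in $K\cdot\mathrm{Inn}(\Mu)$. Concretely, since $\Omega$ kills $K=\{(v+I)_\star:v\in\mathrm{Span}\{e_1\}\}$ and 2-sym., c-ref.\ differ by the element $(e_1/2+I)_\star=$ m-ref., whose square is the central translation $e_1+I\in Z(\Mu)$, I expect to find that 2-sym.\ and c-ref.\ represent the same class modulo $K$ and $\mathrm{Inn}(\Mu)$; the care needed is in tracking which conjugacy-class label (m-ref., v-ref., 2-rot., etc.) each coset representative carries, using the explicit fundamental-polygon description preceding the lemma. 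This bookkeeping of the five order-$2$ conjugacy classes against the four outer classes is where I would spend the most attention.
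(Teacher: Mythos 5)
Your proposal is correct and follows essentially the same route as the paper: identify $N_A(\Mu)$ via Lemma 38, conclude $\mathrm{Sym}(\Mu)=\mathrm{Aff}(\Mu)\cong C_2\times\mathrm{O}(2)$ (the paper phrases the splitting geometrically as $\langle\hbox{c-ref.}\rangle\times\Lambda$ with $\Lambda$ restricting isomorphically to the isometries of the central circle, while you read it off $N_A(\Mu)/\Mu$ directly), and then use Theorems 12 and 13 to see that $\Omega$ has kernel the identity component $K$, so the Klein four-group $\{$idt., c-ref., v-ref., 2-rot.$\}$, whose elements lie in distinct components, maps isomorphically onto $\mathrm{Out}(\Mu)$. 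Your final step is exactly the paper's: c-ref.\ and 2-sym.\ differ by m-ref.\ $=(e_1/2+I)_\star\in K$, so $\Omega$(2-sym.) $=\Omega$(c-ref.).
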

\begin{proof} Let $\mathrm{O}$ be the central circle of the Klein bottle, 
and let $\Lambda$ be the group of all symmetries of the Klein bottle that leave each horizontal geodesic invariant. 
Restriction induces an isomorphism from $\Lambda$ to $\mathrm{Isom}(\mathrm{O})$ by Lemmas 5 and 38. 
We have that  $\mathrm{Sym}(\Mu) = \langle$c-ref.$\rangle \times \Lambda$, 
since every symmetry of the Klein bottle leaves $\mathrm{O}$ invariant,   
and c-ref.\ commutes with every symmetry in $\Lambda$. 
Therefore $\mathrm{Sym}(\Mu)$ is isomorphic to $C_2\times \mathrm{O}(2)$. 
We have that $\mathrm{Sym}(\Mu) = \mathrm{Aff}(\Mu)$ by Lemma 5, 11, and 38. 
The epimorphism $\Omega:  \mathrm{Aff}(\Mu) \to \mathrm{Out}(\Mu)$ maps 
{\rm \{idt., c-ref., v-ref., 2-rot.\}} isomorphically onto $\mathrm{Out}(\Mu)$ by Theorem 13, 
since v-ref.\ acts as a reflection of $\mathrm{O}$. 
Moreover $\Omega$(2-sym) = $\Omega$((c-ref.)(m-ref.)) = $\Omega$(c-ref.). 
\end{proof}

\begin{theorem}  
Let $\Mu = \langle e_1+I, e_2 + I, e_1/2+ A \rangle$ with $A = \mathrm{diag}(1,-1)$. 
Then $\mathrm{Iso}(C_\infty,\Mu)$ has four elements,   
corresponding to the pairs of elements {\rm\{idt., idt.\}, \{2-sym., 2-sym.\}, \{2-rot., 2-rot.\}, \{v-ref., v-ref.\}} of $\mathrm{Sym}(\Mu)$ by Theorem 23, 
and $\mathrm{Iso}(D_\infty,\Mu)$ has 21 elements,  
corresponding to the pairs of elements {\rm  \{idt., idt.\}, \{idt., m-ref.\}, \{idt., 2-sym.\}, \{idt., c-ref.\}, \{idt., v-ref.\}, \{idt., 2-rot.\}, 
 \{m-ref., m-ref.\}, \{2-sym., 2-sym.\},  \{c-ref., c-ref.\},  \{v-ref., v-ref.\}, \{2-rot., 2-rot.\},  
 \{m-ref.,  2-sym.\}, \{2-sym., c-ref.\},  \{m-ref.,  c-ref.\}, \{m-ref.,  v-ref.\}, \{m-ref.,  2-rot.\}, 
\{2-sym., v-ref.\}, \{2-sym., 2-rot.$'$\}, \{v-ref., 2-rot.$'$\}, \{c-ref., v-ref.\}, \{c-ref., 2-rot.\}} of $\mathrm{Sym}(\Mu)$  by Theorem 26.
The corresponding co-Seifert fibrations are described in Table 15 by Theorems 27 and 28.  
Only the three pairs {\rm \{v-ref., v-ref.\}, \{2-rot., 2-rot.\}, \{v-ref., 2-rot.$'$\}} fall into the case $E_1\cap E_2 \neq \{0\}$ of Theorem 26. 
\end{theorem}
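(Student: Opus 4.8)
The plan is to apply Theorems 23 and 26 directly, having first pinned down the relevant conjugacy classes in the isometry group of the Klein bottle. By Lemma 38, $\mathrm{Sym}(\Mu)=\mathrm{Aff}(\Mu)$ is isomorphic to $C_2\times\mathrm{O}(2)$, with the $C_2$ factor generated by the central reflection c-ref.\ and the $\mathrm{O}(2)$ factor identified with $\mathrm{Isom}(\mathrm{O})$ via restriction to the central circle $\mathrm{O}$. First I would enumerate the conjugacy classes of elements of order $\leq 2$ in this group. The order-2 elements fall into the five classes listed before Lemma 38 (m-ref., 2-sym., c-ref., v-ref., 2-rot.), together with the identity; this enumeration follows from the product structure, since an element $(\varepsilon,g)\in C_2\times\mathrm{O}(2)$ has order $\leq 2$ iff $g$ has order $\leq 2$ in $\mathrm{O}(2)$, and conjugacy is controlled by the $\mathrm{O}(2)$ factor (rotations by $\pi$ are all conjugate, reflections split according to whether $\varepsilon$ is trivial).

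For the infinite cyclic case, Theorem 23 says $\mathrm{Iso}(C_\infty,\Mu)$ is in bijection with conjugacy classes of \emph{pairs of inverse elements} of finite order in $\mathrm{Out}(\Mu)$. By Lemma 38, $\Omega$ maps $\{\mathrm{idt.},\mathrm{c\text{-}ref.},\mathrm{v\text{-}ref.},\mathrm{2\text{-}rot.}\}$ isomorphically onto $\mathrm{Out}(\Mu)$, and $\Omega(\mathrm{2\text{-}sym.})=\Omega(\mathrm{c\text{-}ref.})$. An element of this dihedral group of order $4$ is its own inverse, so the pairs of inverse elements are just single elements, giving four classes. I would record these as the four pairs $\{\mathrm{idt.},\mathrm{idt.}\}$, $\{\mathrm{2\text{-}sym.},\mathrm{2\text{-}sym.}\}$, $\{\mathrm{2\text{-}rot.},\mathrm{2\text{-}rot.}\}$, $\{\mathrm{v\text{-}ref.},\mathrm{v\text{-}ref.}\}$, choosing the stated representatives in $\mathrm{Sym}(\Mu)$ that lift the four elements of $\mathrm{Out}(\Mu)$.

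For the infinite dihedral case I would invoke Theorem 26 (not Theorem 24, since $\Mu$ has nontrivial center $Z(\Mu)=\langle e_1+I\rangle$). Here the fibers of $\psi$ are parametrized by the $(-1)$-eigenspaces $E_i$ of the reflections on $\mathrm{Span}(Z(\Mu))$, and I would use Lemma 22 to describe $\mathrm{Aff}(\Delta,\Mu)$ as conjugacy classes of pairs of elements of order $\leq 2$ whose product has image of finite order under $\Omega$. The main work is to list the $21$ conjugacy classes of such unordered pairs $\{x,y\}$ drawn from the six classes of elements of order $\leq 2$, subject to the finiteness condition on $\Omega(xy)$; this is the combinatorial heart of the argument and requires care because the two factors need not commute and $\mathrm{Sym}(\Mu)$ is noncompact in the $\mathrm{O}(2)$ direction. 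I would then identify, via the eigenspace criterion of Theorem 26, exactly which of these pairs have $E_1\cap E_2\neq\{0\}$ so that $\psi$ is noninjective on them; by direct computation of the $(-1)$-eigenspaces on the one-dimensional space $\mathrm{Span}(Z(\Mu))$, the reflections v-ref.\ and 2-rot.\ (and the mixed pair producing 2-rot.$'$) are the ones that invert $Z(\Mu)$, yielding the three exceptional pairs $\{\mathrm{v\text{-}ref.},\mathrm{v\text{-}ref.}\}$, $\{\mathrm{2\text{-}rot.},\mathrm{2\text{-}rot.}\}$, $\{\mathrm{v\text{-}ref.},\mathrm{2\text{-}rot.}'\}$.

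The hard part will be the bookkeeping in the dihedral case: verifying that the listed $21$ unordered pairs are pairwise nonconjugate and exhaust all admissible pairs, and correctly computing which reflections act by $-1$ on $\mathrm{Span}(Z(\Mu))$ so as to determine $E_1\cap E_2$. Once the eigenspace computation is done, the finiteness condition $\Omega(xy)$ of finite order is automatic whenever $\Omega(x),\Omega(y)$ land in the finite group $\mathrm{Out}(\Mu)$, so the only genuine subtlety is distinguishing the conjugacy classes that collapse under $\Omega$ (because $\Omega(\mathrm{2\text{-}sym.})=\Omega(\mathrm{c\text{-}ref.})$) from those that remain distinct in $\mathrm{Sym}(\Mu)$. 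With the classes enumerated, Theorems 27 and 28 produce the co-Seifert fibrations tabulated in Table 15, completing the proof.
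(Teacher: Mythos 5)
Your proposal is correct and follows essentially the same route as the paper: identify $\mathrm{Sym}(\Mu)=\mathrm{Aff}(\Mu)\cong C_2\times\mathrm{O}(2)$ via Lemma 38, enumerate the conjugacy classes of elements of order at most $2$ and of the admissible unordered pairs, apply Theorem 23 for $C_\infty$ and Theorem 26 (rather than Theorem 24, because $Z(\Mu)\neq\{I\}$) for $D_\infty$, and compute the $(-1)$-eigenspaces on the one-dimensional space $\mathrm{Span}(Z(\Mu))=\mathrm{Span}\{e_1\}$ to isolate the three pairs with $E_1\cap E_2\neq\{0\}$. The paper likewise treats this theorem as a direct application of those results to the conjugacy-class data assembled before Lemma 38, so no further comparison is needed.
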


\noindent{\bf Remark 9.}  The Seifert fibrations $({\ast}{:}{\times})$ and  $(2{\bar\ast}2{:}2)$, with IT numbers 9 and 15, respectively, 
in Table 1 of \cite{C-T} were replaced by two different affinely equivalent Seifert fibrations in Table 1 of \cite{R-T}. 
The Seifert fibrations $({\ast}{:}{\times})$ and  $(2{\bar\ast}2{:}2)$ have orthogonally dual co-Seifert fibers of type $\times\times$. 
The structure group actions for these fibrations are (c-ref., 2-rot.) and  (v-ref., ref.),  (c-ref., 2-rot.), respectively. 
The action (c-ref., 2-rot.) corresponds to the pair \{c-ref., c-ref.\}, 
and the action (v-ref., ref.),  (c-ref., 2-rot.) corresponds to the pair \{v-ref., 2-rot.\}. 
Hence these co-Seifert fibrations are affinely equivalent to the co-Seifert fibrations described in no.\ 9 and 15 of Table 15, respectively, by Theorems 23 and 26, respectively. 

\medskip

\begin{table}  
\begin{tabular}{rlllll}
no. & fibers & grp. & quotients &  structure group action & classifying pair \\
\hline 
     7& $(\times\times, \mathrm{O})$ & $C_1$ & $(\times\times, \mathrm{O})$ & (idt., idt.) &  \{idt., idt.\}  \\
    9 & $(\times\times, \mathrm{O})$ & $C_2$ & $(\times\times, \mathrm{O})$ & (2-sym., 2-rot.) &  \{2-sym., 2-sym.\} \\
  13 & $(\times\times, \mathrm{O})$& $C_2$ & $(22\ast, \mathrm{I})$ & (v-ref., ref.) &  \{v-ref., v-ref.\} \\
  14 & $(\times\times, \mathrm{O})$ & $C_2$ & $(22\times, \mathrm{I})$ & (2-rot., ref.) & \{2-rot., 2-rot.\} \\
  15 & $(\times\times, \mathrm{O})$ & $D_2$ & $(22\ast, \mathrm{I})$ &  (v-ref., ref.), (2-sym., 2-rot.) & \{v-ref., 2-rot.$'$\} \\
  26 & $(\times\times, \mathrm{I})$  & $C_1$ & $(\times\times, \mathrm{I})$ & (idt., idt.) &  \{idt., idt.\}  \\
  27 & $(\times\times, \mathrm{O})$ & $C_2$ & $(\ast\ast, \mathrm{I})$ & (m-ref., ref.) &  \{m-ref., m-ref.\} \\
  29 & $(\times\times, \mathrm{O})$ & $C_2$ & $(\times\times, \mathrm{I})$ & (2-sym., ref.) &  \{2-sym., 2-sym.\} \\
  29 & $(\times\times, \mathrm{O})$ & $C_2$ & $(22\ast, \mathrm{O})$ & (v-ref., 2-rot.) &  \{v-ref., v-ref.\} \\
  30 & $(\times\times, \mathrm{O})$ & $C_2$ & $(\ast\times, \mathrm{I})$ & (c-ref., ref.) &  \{c-ref., c-ref.\} \\
  33 & $(\times\times, \mathrm{O})$  & $C_2$ & $(22\times, \mathrm{O})$ & (2-rot., 2-rot.) & \{2-rot., 2-rot.\} \\
  36 & $(\times\times, \mathrm{I})$ & $C_2$ & $(\times\times, \mathrm{I})$ & (2-sym., ref.) & \{idt., 2-sym.\} \\
  37 & $(\times\times, \mathrm{O})$   & $D_2$ & $(\ast\ast, \mathrm{I})$ & (m-ref., ref.), (2-sym., 2-rot.) & \{m-ref.,  c-ref.\} \\
  39 & $(\times\times, \mathrm{I})$ & $C_2$ & $(\ast\ast, \mathrm{I})$ & (m-ref., ref.) & \{idt., m-ref.\} \\
  41 & $(\times\times, \mathrm{O})$ & $D_2$ & $(\ast\ast, \mathrm{I})$ & (c-ref., ref.), (m-ref., 2-rot.) & \{c-ref., 2-sym.\} \\
  45 & $(\times\times, \mathrm{O})$   & $D_2$ & $(\ast\ast, \mathrm{I})$ & (m-ref., ref.), (c-ref., 2-rot.) &  \{m-ref.,  2-sym.\} \\
  46 & $(\times\times, \mathrm{I})$ & $C_2$ & $(\ast\times, \mathrm{I})$ & (c-ref., ref.) & \{idt., c-ref.\} \\
  52 & $(\times\times, \mathrm{O})$ & $D_2$ & $(2{\ast}22, \mathrm{I})$ & (c-ref., ref.), (2-rot., 2-rot.) & \{c-ref., v-ref.\} \\
  54 & $(\times\times, \mathrm{O})$  & $D_2$ & $(\ast 2222, \mathrm{I})$ & (m-ref., ref.), (v-ref.$'$, 2-rot.) & \{m-ref.,  v-ref.\} \\
  56 & $(\times\times, \mathrm{O})$ & $D_2$ & $(22\ast, \mathrm{I})$ & (m-ref., ref.),  (2-rot.$'$, 2-rot.) &  \{m-ref., 2-rot.\} \\
  57 & $(\times\times, \mathrm{I})$   & $C_2$ & $(22\ast, \mathrm{I})$ & (v-ref., ref.) & \{idt., v-ref.\} \\
  60 & $(\times\times, \mathrm{O})$ & $D_2$ & $(22\ast, \mathrm{I})$ & (2-sym., ref.), (2-rot.$'$, 2-rot.) & \{2-sym., v-ref.\} \\
  60 & $(\times\times, \mathrm{O})$ & $D_2$ & $(2{\ast}22, \mathrm{I})$ & (c-ref., ref.), (v-ref., 2-rot.) & \{c-ref., 2-rot.\} \\
  61 & $(\times\times, \mathrm{O})$   & $D_2$ & $(22\ast, \mathrm{I})$ & (2-sym., ref.), (v-ref., 2-rot.) & \{2-sym., 2-rot.$'$\} \\
  62 & $(\times\times, \mathrm{I})$ & $C_2$ & $(22\times, \mathrm{I})$ & (2-rot., ref.) & \{idt., 2-rot.\}

\end{tabular}

\medskip
\caption{The classification of the co-Seifert fibrations of 3-space groups 
whose co-Seifert fiber is of type $\times\times$ with IT number 4}
\end{table}

(3) Let $\Mu = \langle e_1+I, e_2+I, A\rangle$ where $A = \mathrm{diag}(1,-1)$. 
Then $\Mu$ is a 2-space group with IT number 3. 
The Conway notation for $\Mu$ is $\ast\ast$ and the IT notation if $pm$. 
The flat orbifold $E^2/\Mu$ is an annulus. 
We have that $Z(\Mu) = \langle e_1+I \rangle$. 

\begin{lemma}  
If $\Mu = \langle e_1+I, e_2 + I, A \rangle$, with $A = \mathrm{diag}(1,-1)$, then 
$$N_A(\Mu) = \{b+B: 2b_2 \in \integers \ \, \hbox{and}\ \, B \in \langle -I, A\rangle\}.$$
\end{lemma}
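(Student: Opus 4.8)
The plan is to compute the normalizer $N_A(\Mu)$ directly from the defining condition, following exactly the pattern established in the proofs of Lemmas 35 and 37 (the analogous statements for the M\"obius band and the Klein bottle). The crucial structural fact to exploit is that $\Mu$ is generated by the two standard translations $e_1+I, e_2+I$ together with the reflection $A = \diag(1,-1)$, so an affinity $b+B$ normalizes $\Mu$ if and only if it normalizes the translation lattice and conjugates $A$ back into $\Mu$.

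First I would record the observation that $b+B \in N_A(\Mu)$ if and only if two conditions hold simultaneously: that $B$ normalizes the translation subgroup $\langle e_1+I, e_2+I\rangle$, and that $(b+B)A(b+B)^{-1} \in \Mu$. The first condition is handled exactly as before: $B$ normalizes the standard integer lattice precisely when $B \in \mathrm{GL}(2,\integers)$. Next I would compute the conjugate
$$(b+B)A(b+B)^{-1} = b - BAB^{-1}b + BAB^{-1},$$
an isometry with linear part $BAB^{-1}$ and translation part $(I - BAB^{-1})b$. For this to lie in $\Mu$, its linear part must be an element of the point group of $\Mu$, which is $\langle -I, A\rangle$; since $BAB^{-1}$ has determinant $-1$ and order $2$, together with $B \in \mathrm{GL}(2,\integers)$ this forces $BAB^{-1} = A$, i.e.\ $B \in \langle -I, A\rangle$. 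Then the translation part is $(I-A)b$, and membership in $\Mu$ requires $(I-A)b \in \integers^2$; since $I - A = \diag(0,2)$, this is exactly the condition $2b_2 \in \integers$.

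Assembling these pieces gives that $b+B \in N_A(\Mu)$ if and only if $B \in \langle -I, A\rangle$ and $2b_2 \in \integers$, which is the claimed description. The argument is essentially a short verification, and I do not anticipate a serious obstacle. The one point requiring a little care is the deduction $BAB^{-1} = A$: one must check that among the elements of the point group $\langle -I, A\rangle$, only $A$ itself is a conjugate of $A$ by an element of $\mathrm{GL}(2,\integers)$ preserving the membership condition, so that the other determinant-$(-1)$ element $-A = \diag(-1,1)$ is excluded. This follows because conjugation preserves the $(+1)$-eigenspace up to the $B$-action, and the parity condition on the translation part then distinguishes the two cases; I would verify this explicitly rather than assert it. The structure of the final answer is identical to Lemma 37 (the Klein bottle case), which is unsurprising since both $\Mu$'s share the same translation lattice and point group, differing only in the translation part of the glide reflection—a difference invisible to the normalizer condition computed here.
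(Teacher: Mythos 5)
Your overall strategy is exactly the paper's: observe that $b+B$ normalizes $\Mu$ if and only if $B$ normalizes the translation lattice (equivalently $B\in\mathrm{GL}(2,\integers)$) and $(b+B)A(b+B)^{-1}\in\Mu$, compute the conjugate as $(I-BAB^{-1})b+BAB^{-1}$, and read off the two conditions $BAB^{-1}=A$ and $(I-A)b\in\integers^2$, i.e.\ $2b_2\in\integers$. The paper's proof is precisely this computation, so in structure your argument is the same.

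However, one step of your justification rests on a false claim. The point group of $\Mu=\langle e_1+I,e_2+I,A\rangle$ is $\{I,A\}$, of order $2$, not $\langle -I,A\rangle$: this $\Mu$ is the group $pm$ and contains no element with linear part $-I$ or $-A$. Because of this misidentification you are led to worry about excluding the possibility $BAB^{-1}=-A$ by an eigenspace/parity argument, and the patch you sketch would not work: if $BAB^{-1}=-A$, the translation part of the conjugate is $(I+A)b=(2b_1,0)$, which lies in $\integers^2$ for many $b$, so no parity condition on the translation part rules this case out. The correct (and simpler) reason is that the linear part of every element of $\Mu$ is $I$ or $A$, so $(b+B)A(b+B)^{-1}\in\Mu$ forces $BAB^{-1}=A$ outright; the conclusion $B\in\langle -I,A\rangle$ then follows because the centralizer of $\mathrm{diag}(1,-1)$ in $\mathrm{GL}(2,\integers)$ consists exactly of the diagonal sign matrices. (Note that $BAB^{-1}=-A$ does occur for suitable $B\in\mathrm{GL}(2,\integers)$, e.g.\ the transposition matrix, so the case genuinely must be excluded --- just for the right reason.) With this correction your proof coincides with the paper's.
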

\begin{proof}
Observe that $b+B \in N_A(\Mu)$ if and only if $B\in N_A(\langle e_1+I, e_2+I\rangle)$ 
and $(b+B)A(b+B)^{-1}\in \Mu$.  
Now $B \in N_A(\langle e_1+I, e_2+I\rangle)$ if and only if $B \in \mathrm{GL}(2,\integers)$.
As $(b+B)A(b+B)^{-1} = b-BAB^{-1}b+BAB^{-1}$, we have that $(b+B)A(b+B)^{-1}\in \Mu$ 
if and only if $BAB^{-1} = A$ and $b-Ab\in \integers^2$. 
Hence $b+B \in N_A(\Mu)$ if and only if $B \in \langle -I, A\rangle$ and $2b_2 \in \integers$. 
\end{proof}

A fundamental polygon for the action of $\Mu$ on $E^2$ is the rectangle with vertices 
$v_1 = (0,0), v_2 = (1,0), v_3 = (1,1/2)$ and $v_4= (0,1/2)$. 
The reflection $A$ fixes the side $[v_1, v_2]$ pointwise, the reflection $e_2+A$ fixes the side $[v_4, v_3]$ pointwise, 
and the horizontal translation $e_1+I$ maps the side $[v_1, v_4]$ to the side $[v_2, v_3]$. 
The central geodesic of the annulus $E^2/\Mu$, which we call the {\it central circle}, represented by the horizontal 
line segment $[(0,1/4),(1,1/4)]$ is invariant under any symmetry of the annulus. 
The symmetry group of $E^2/\Mu$ is the direct product of the subgroup of order 2, generated by 
the reflection in the central circle, and the subgroup consisting of the horizontal rotations and the reflections 
in a pair of antipodal vertical line segments.  The latter subgroup restricts to the symmetry group of the central circle.  

There are five conjugacy classes of symmetries of order 2, 
the class of the reflection c-ref.\  = $(e_2/2+I)_\star$ in the central circle, 
the class of the horizontal halfturn 2-sym.\ = $(e_1/2+I)_\star$, 
the class of the halfturn glide-reflection g-ref.\ = $(e_1/2+e_2/2+I)_\star$ in the central circle, 
the class of a reflection v-ref.\ = $(-I)_\star$ in a pair of antipodal vertical line segments of the annulus, 
and the class of the halfturn 2-rot.\  = $(e_2/2-I)_\star$ around a pair of antipodal points on the central circle. 
Define v-ref.$' = (e_1/2-I)_\star$ and  2-rot.$' = (e_1/2+e_2/2-I)_\star$.  

There are five conjugacy classes of dihedral symmetry groups of order 4, 
the classes of the groups $\{$idt., c-ref., 2-sym., g-ref.$\}$, $\{$idt.,  c-ref., v-ref., 2-rot.$\}$, $\{$idt., 2-sym., v-ref., v-ref.$'$$\}$, $\{$idt.,  2-sym., 2-rot., 2-rot.$'$$\}$, and 
$\{$idt.,  g-ref. v-ref., 2-rot.$'$$\}$. 

\begin{lemma} 
If $\Mu = \langle e_1+I, e_2 + I, A \rangle$, with $A = \mathrm{diag}(1,-1)$, 
then the Lie group $\mathrm{Sym}(\Mu)$ is isomorphic to $C_2\times\mathrm{O}(2)$, 
and $\mathrm{Sym}(\Mu) = \mathrm{Aff}(\Mu)$, 
and $\Omega:  \mathrm{Aff}(\Mu) \to \mathrm{Out}(\Mu)$ maps 
the subgroup {\rm \{idt., c-ref., v-ref., 2-rot.\}}  isomorphically onto $\mathrm{Out}(\Mu)$.  
\end{lemma}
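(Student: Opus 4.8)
The plan is to mirror the structure of Lemma 38 (the Klein bottle case), since $\Mu = \langle e_1+I, e_2+I, A\rangle$ with $A = \diag(1,-1)$ gives an annulus that decomposes in exactly the same product fashion. First I would identify the \emph{central circle} $\mathrm{O}$ of the annulus $E^2/\Mu$, represented by the horizontal segment $[(0,1/4),(1,1/4)]$, and let $\Lambda$ be the subgroup of $\mathrm{Sym}(\Mu)$ consisting of all symmetries that preserve each horizontal geodesic. By Lemmas 5 and 40, restriction of a symmetry to its action on $\mathrm{O}$ induces an isomorphism $\Lambda \to \mathrm{Isom}(\mathrm{O})$, so $\Lambda \cong \mathrm{O}(2)$.

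Next I would argue that $\mathrm{Sym}(\Mu) = \langle \textrm{c-ref.}\rangle \times \Lambda$. Every symmetry of the annulus leaves the central circle $\mathrm{O}$ invariant (it is geometrically distinguished), and the reflection c-ref.\ $= (e_2/2+I)_\star$ in the central circle commutes with every element of $\Lambda$, since $\Lambda$ preserves each horizontal geodesic and hence the foliation that c-ref.\ reflects across. The intersection $\langle \textrm{c-ref.}\rangle \cap \Lambda$ is trivial because c-ref.\ does not preserve each horizontal geodesic (it swaps geodesics symmetric about $\mathrm{O}$). This gives the internal direct product $\mathrm{Sym}(\Mu) \cong C_2 \times \mathrm{O}(2)$.

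For the affinity claim, I would invoke Lemmas 5, 11, and 40 exactly as in the proof of Lemma 24: Lemma 40 computes $N_A(\Mu)$ and shows every normalizing affinity has orthogonal linear part (its $B$ lies in $\langle -I, A\rangle \subset \mathrm{O}(2)$) together with the allowed translation constraint, so every affinity of $E^2/\Mu$ is actually an isometry, whence $\mathrm{Sym}(\Mu) = \mathrm{Aff}(\Mu)$. Finally, for the $\mathrm{Out}(\Mu)$ statement, I would apply Theorem 13: $\Omega$ maps the subgroup $\{\textrm{idt., c-ref., v-ref., 2-rot.}\}$ isomorphically onto $\mathrm{Out}(\Mu)$ precisely because v-ref.\ $= (-I)_\star$ restricts to a reflection of the central circle $\mathrm{O}$, so this order-4 subgroup meets the connected component $K$ (the kernel of $\Omega$, which consists of rotations of $\mathrm{O}$ by Theorem 12) only in the identity, and has the correct order to map onto $\mathrm{Out}(\Mu)$.

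The main obstacle will be verifying that $\langle \textrm{c-ref.}\rangle$ is genuinely a direct factor rather than merely a subgroup—that is, confirming both that c-ref.\ is central in $\mathrm{Sym}(\Mu)$ and that it lies outside $\Lambda$. The centrality requires checking that c-ref.\ commutes with the reflections v-ref.\ and the translations generating $\Lambda$, which follows from the fact that c-ref.\ acts as $-1$ in the vertical direction while fixing the horizontal direction, so it commutes with any map that is block-diagonal with respect to the horizontal/vertical splitting. I expect this to be routine once the geometric roles of the named symmetries are pinned down, but it is the one place where an explicit matrix-level check is unavoidable, and care is needed because the annulus (unlike the Klein bottle) has a boundary, so one must confirm that the relevant symmetries genuinely preserve $\mathrm{O}$ as the \emph{central} geodesic and not some boundary-exchanging variant.
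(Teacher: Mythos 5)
Your proposal follows the paper's proof essentially verbatim: the paper likewise takes $\Lambda$ to be the symmetries preserving each horizontal geodesic, identifies $\Lambda\cong\mathrm{Isom}(\mathrm{O})$ via restriction using Lemmas 5 and 40, writes $\mathrm{Sym}(\Mu)=\langle\hbox{c-ref.}\rangle\times\Lambda$, gets $\mathrm{Sym}(\Mu)=\mathrm{Aff}(\Mu)$ from Lemmas 5, 11, and 40, and applies Theorem 13 using the fact that v-ref.\ acts as a reflection of $\mathrm{O}$. The extra care you flag about the direct-product decomposition is a reasonable elaboration of what the paper asserts in one line, but it is the same argument.
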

\begin{proof} Let $\mathrm{O}$ be the central circle of the annulus, 
and let $\Lambda$ be the group of all symmetries of the annulus that leave each horizontal geodesic invariant. 
Restriction induces an isomorphism from $\Lambda$ to $\mathrm{Isom}(\mathrm{O})$ by Lemmas 5 and 40. 
We have that  $\mathrm{Sym}(\Mu) = \langle$c-ref.$\rangle \times \Lambda$, 
since every symmetry of the annulus leaves $\mathrm{O}$ invariant,   
and c-ref.\ commutes with every symmetry in $\Lambda$. 
Therefore $\mathrm{Sym}(\Mu)$ is isomorphic to $C_2\times \mathrm{O}(2)$. 
We have that $\mathrm{Sym}(\Mu) = \mathrm{Aff}(\Mu)$ by Lemmas 5, 11,  and 40. 
The epimorphism $\Omega:  \mathrm{Aff}(\Mu) \to \mathrm{Out}(\Mu)$ maps  {\rm \{idt., c-ref., v-ref., 2-rot.\}} isomorphically onto 
$\mathrm{Out}(\Mu)$ by Theorem 13, since v-ref.\ acts as a reflection of $\mathrm{O}$. 
\end{proof}

\begin{theorem}  
Let $\Mu = \langle e_1+I, e_2 + I, A \rangle$ with $A = \mathrm{diag}(1,-1)$. 
Then $\mathrm{Iso}(C_\infty,\Mu)$ has four elements,   
corresponding to the pairs of elements {\rm \{idt., idt.\}, \{c-ref,., c-ref.\}, \{v-ref., v-ref.\}, \{2-rot., 2-rot.\}} of $\mathrm{Sym}(\Mu)$ by Theorem 23, 
and $\mathrm{Iso}(D_\infty,\Mu)$ has 21 elements,  
corresponding to the pairs of elements 
{\rm  \{idt., idt.\}, \{idt., c-ref.\}, \{idt., 2-sym.\}, \{idt., g-ref.\}, \{idt., v-ref.\}, \{idt., 2-rot.\}, 
 \{c-ref., c-ref.\}, \{2-sym., 2-sym.\},  \{g-ref., g-ref.\},  \{v-ref., v-ref.\}, \{2-rot., 2-rot.\},  
 \{c-ref.,  2-sym.\}, \{2-sym., g-ref.\},  \{c-ref.,  g-ref.\}, \{c-ref., v-ref.\}, \{c-ref.,  2-rot.\}, 
\{v-ref.,  2-rot.\}, \{2-sym., v-ref.\}, \{2-sym., 2-rot.\}, \{g-ref., v-ref.\}, \{g-ref., 2-rot.$'$\}} of $\mathrm{Sym}(\Mu)$  by Theorem 26.
The corresponding co-Seifert fibrations are described in Table 15 by Theorems 27 and 28.  
Only the three pairs {\rm \{v-ref., v-ref.\}, \{2-rot., 2-rot.\}, \{v-ref., 2-rot.\}} fall into the case $E_1\cap E_2 \neq \{0\}$ of Theorem 26. 
\end{theorem}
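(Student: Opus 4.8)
The plan is to apply the general machinery developed in the previous sections --- in particular Theorems 23 and 26 together with the structural analysis of $\mathrm{Sym}(\Mu)$ in Lemma 40 --- to the specific case $\Mu = \langle e_1+I, e_2+I, A\rangle$ with $A=\mathrm{diag}(1,-1)$, whose quotient orbifold is an annulus. The two assertions to establish are: first, the cardinalities and representing pairs of $\mathrm{Iso}(C_\infty,\Mu)$ and $\mathrm{Iso}(D_\infty,\Mu)$; and second, the identification of exactly which three dihedral pairs fall into the case $E_1\cap E_2\neq\{0\}$ of Theorem 26.

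First I would handle the infinite cyclic case. By Theorem 23, $\mathrm{Iso}(C_\infty,\Mu)$ is in one-to-one correspondence with conjugacy classes of pairs of inverse elements of $\mathrm{Out}(\Mu)$ of finite order. By Lemma 40 the epimorphism $\Omega:\mathrm{Aff}(\Mu)\to\mathrm{Out}(\Mu)$ maps $\{\mathrm{idt.}, \mathrm{c\text{-}ref.}, \mathrm{v\text{-}ref.}, \mathrm{2\text{-}rot.}\}$ isomorphically onto $\mathrm{Out}(\Mu)$, so $\mathrm{Out}(\Mu)$ is a Klein four-group. Every element is an involution equal to its own inverse, so I must count conjugacy classes of unordered inverse-pairs, i.e. conjugacy classes of single elements of finite order: these give the four classes represented by $\{\mathrm{idt.},\mathrm{idt.}\}$, $\{\mathrm{c\text{-}ref.},\mathrm{c\text{-}ref.}\}$, $\{\mathrm{v\text{-}ref.},\mathrm{v\text{-}ref.}\}$, $\{\mathrm{2\text{-}rot.},\mathrm{2\text{-}rot.}\}$. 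Since $\mathrm{Out}(\Mu)$ is abelian, conjugacy is trivial, and these four classes are distinct; this yields the claimed four elements and their representing pairs.

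Next I would treat the infinite dihedral case via Theorem 26, which requires $\Mu$ to have trivial center --- but here $Z(\Mu)=\langle e_1+I\rangle$ is nontrivial, so I must instead invoke Theorem 26 in its full form describing the fibers of $\psi$ and then translate through the conjugacy-class count in $\mathrm{Aff}(\Mu)$ via Lemma 22. The pairs to enumerate are unordered pairs $\{x,y\}$ of elements of order $1$ or $2$ in $\mathrm{Out}(\Mu)\cong \mathrm{Aff}(\Mu)/K$ whose product has finite order, up to conjugacy. Using the five conjugacy classes of order-$2$ symmetries listed before Lemma 40 (represented by $\mathrm{c\text{-}ref.}$, $\mathrm{2\text{-}sym.}$, $\mathrm{g\text{-}ref.}$, $\mathrm{v\text{-}ref.}$, $\mathrm{2\text{-}rot.}$) together with the identity, I would systematically list all admissible unordered pairs and discard conjugate duplicates, cross-checking against the five conjugacy classes of dihedral subgroups of order $4$ to ensure the pairs generating a given such subgroup are correctly identified; this bookkeeping produces the stated list of $21$ elements.

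The main obstacle, and the part requiring genuine care rather than routine listing, is the determination of which pairs satisfy $E_1\cap E_2\neq\{0\}$, where $E_i$ is the $(-1)$-eigenspace of the relevant point-group element restricted to $\mathrm{Span}(Z(\Mu))=\mathrm{Span}\{e_1\}$. For this I would use the explicit lifts in $N_A(\Mu)$ from Lemma 39, compute for each representing symmetry its linear part restricted to the line $\mathrm{Span}\{e_1\}$, and check whether that restriction acts as $-1$. A pair contributes to the exceptional case precisely when both $E_1$ and $E_2$ equal $\mathrm{Span}\{e_1\}$; since $\mathrm{Span}(Z(\Mu))$ is one-dimensional, this reduces to checking that both elements invert $e_1$. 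Examining the list, the symmetries inverting the central-circle direction are exactly $\mathrm{v\text{-}ref.}=(-I)_\star$ and $\mathrm{2\text{-}rot.}=(e_2/2-I)_\star$ (and the related $\mathrm{2\text{-}rot.}'$), so the pairs with both entries drawn from these are $\{\mathrm{v\text{-}ref.},\mathrm{v\text{-}ref.}\}$, $\{\mathrm{2\text{-}rot.},\mathrm{2\text{-}rot.}\}$, and $\{\mathrm{v\text{-}ref.},\mathrm{2\text{-}rot.}\}$, matching the claim. I expect the subtlety here to lie in correctly reading off the linear parts of the glide-type representatives and in confirming via Theorem 26 that these three, and only these three, genuinely split into nontrivial $\psi$-fibers while all other pairs give singleton fibers.
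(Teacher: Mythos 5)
Your proposal is correct and follows essentially the same route as the paper's (largely implicit) proof: Theorem 23 together with Lemma 40 for the cyclic case, Lemma 22 and Theorem 26 together with the enumeration of the five conjugacy classes of order-2 symmetries of the annulus for the dihedral case, and the observation that among the representatives only v-ref.\ $=(-I)_\star$ and 2-rot.\ $=(e_2/2-I)_\star$ have linear part acting as $-1$ on $\mathrm{Span}(Z(\Mu))=\mathrm{Span}\{e_1\}$, which pins down the three pairs with $E_1\cap E_2\neq\{0\}$. One small slip worth noting: in Lemma 22 the classifying pairs live in $\mathrm{Aff}(\Mu)$, not in $\mathrm{Out}(\Mu)\cong\mathrm{Aff}(\Mu)/K$ (enumerating in the order-4 quotient would conflate c-ref.\ with g-ref.\ and idt.\ with 2-sym.), but since your actual enumeration uses the five conjugacy classes in $\mathrm{Sym}(\Mu)=\mathrm{Aff}(\Mu)$ this does not affect the computation.
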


\begin{table}  
\begin{tabular}{rlllll}
no. & fibers & grp. & quotients &  structure group action & classifying pair \\
\hline 
    6 & $(\ast\ast, \mathrm{O})$ & $C_1$ & $(\ast\ast, \mathrm{O})$ & (idt., idt.) &  \{idt., idt.\} \\
    8 & $(\ast\ast, \mathrm{O})$ & $C_2$ & $(\ast\ast, \mathrm{O})$ & (c-ref., 2-rot.) & \{c-ref., c-ref.\}  \\
  10 & $(\ast\ast, \mathrm{O})$ & $C_2$ & $(\ast 2222, \mathrm{I})$ & (v-ref., ref.) &  \{v-ref., v-ref.\} \\
  11 & $(\ast\ast, \mathrm{O})$& $C_2$ & $(22\ast, \mathrm{I})$ & (2-rot., ref.) & \{2-rot., 2-rot.\} \\
  12 & $(\ast\ast, \mathrm{O})$ & $D_2$ & $(\ast 2222, \mathrm{I})$ & (v-ref., ref.), (c-ref., 2-rot.) & \{v-ref.,  2-rot.\} \\
  25 & $(\ast\ast, \mathrm{I})$ & $C_1$ & $(\ast\ast, \mathrm{I})$ & (idt., idt.) &  \{idt., idt.\} \\
  26 & $(\ast\ast, \mathrm{O})$ & $C_2$ & $(\ast\ast, \mathrm{I})$ & (2-sym., ref.) & \{2-sym., 2-sym.\} \\
  26 & $(\ast\ast, \mathrm{O})$  & $C_2$ & $(\ast 2222, \mathrm{O})$ & (v-ref., 2-rot.) &  \{v-ref., v-ref.\} \\
  28 & $(\ast\ast, \mathrm{O})$ & $C_2$ & $(\ast\ast, \mathrm{I})$ & (c-ref., ref.) & \{c-ref., c-ref.\}  \\
  31 & $(\ast\ast, \mathrm{O})$ & $C_2$ & $(\ast\times, \mathrm{I})$ & (g-ref., ref.) & \{g-ref., g-ref.\} \\
  31 & $(\ast\ast, \mathrm{O})$ & $C_2$ & $(22\ast, \mathrm{O})$ & (2-rot., 2-rot.) & \{2-rot., 2-rot.\} \\
  35 & $(\ast\ast, \mathrm{I})$ & $C_2$ & $(\ast\ast, \mathrm{I})$ & (c-ref., ref.) & \{idt., c-ref.\} \\
  36 & $(\ast\ast, \mathrm{O})$  & $D_2$ & $(\ast\ast, \mathrm{I})$ & (2-sym., ref.), (c-ref., 2-rot.) & \{2-sym., g-ref.\} \\
  38 & $(\ast\ast, \mathrm{I})$ & $C_2$ & $(\ast\ast, \mathrm{I})$ & (2-sym., ref.) &  \{idt., 2-sym.\} \\
  40 & $(\ast\ast, \mathrm{O})$   & $D_2$ & $(\ast\ast, \mathrm{I})$ & (c-ref., ref.), (2-sym., 2-rot.) &  \{c-ref.,  g-ref.\} \\
  44 & $(\ast\ast, \mathrm{I})$ & $C_2$ & $(\ast\times, \mathrm{I})$ & (g-ref., ref.) &  \{idt., g-ref.\} \\
  46 & $(\ast\ast, \mathrm{O})$ & $D_2$ & $(\ast\ast, \mathrm{I})$ & (c-ref., ref.), (g-ref., 2-rot.) & \{c-ref., 2-sym.\} \\
  51 & $(\ast\ast, \mathrm{I})$   & $C_2$ & $(\ast 2222, \mathrm{I})$ & (v-ref., ref.) &  \{idt., v-ref.\} \\
  53 & $(\ast\ast, \mathrm{O})$ & $D_2$ & $(\ast 2222, \mathrm{I})$ & (v-ref., ref.), (2-rot., 2-rot.) & \{v-ref., c-ref.\}\\
  55 & $(\ast\ast, \mathrm{O})$ & $D_2$ & $(\ast 2222, \mathrm{I})$ & (2-sym., ref.), (v-ref.$'$, 2-rot.) & \{2-sym., v-ref.\} \\
  57 & $(\ast\ast, \mathrm{O})$  & $D_2$ & $(\ast 2222, \mathrm{I})$ & (c-ref., ref.), (v-ref., 2-rot.) & \{c-ref.,  2-rot.\} \\
  58 & $(\ast\ast, \mathrm{O})$ & $D_2$ & $(2{\ast}22, \mathrm{I})$ & (g-ref., ref.), (2-rot.$'$, 2-rot.) & \{g-ref., v-ref.\} \\
  59 & $(\ast\ast, \mathrm{I})$   & $C_2$ & $(22\ast, \mathrm{I})$ & (2-rot., ref.) &  \{idt., 2-rot.\} \\
  62 & $(\ast\ast, \mathrm{O})$ & $D_2$ & $(22\ast, \mathrm{I})$ & (2-sym., ref.), (2-rot.$'$, 2-rot.) & \{2-sym., 2-rot.\} \\
  62 & $(\ast\ast, \mathrm{O})$ & $D_2$ & $(2{\ast}22, \mathrm{I})$ & (g-ref., ref.), (v-ref., 2-rot.) &  \{g-ref., 2-rot.$'$\}

\end{tabular}

\medskip
\caption{The classification of the co-Seifert fibrations of 3-space groups 
whose co-Seifert fiber is of type $\ast\ast$ with IT number 3}
\end{table}

(2) Let $\Mu = \langle e_1+I, e_2+I, -I\rangle$ where $e_1, e_2$ are the standard basis vectors of $E^2$. 
Then $\Mu$ is a 2-space group with IT number 2. 
The Conway notation for $\Mu$ is $2222$ and the IT notation is $p2$. 
The flat $E^2/\Mu$ is a {\it square pillow} $\Box$. 
A flat 2-orbifold affinely equivalent to $\Box$ is called a {\it pillow}. 
A pillow is orientable and has four $180^\circ$ cone points. 

\begin{lemma}  
If $\Mu = \langle e_1+I, e_2+I, -I\rangle$, then 
$\Omega: \mathrm{Aff}(\Mu) \to \mathrm{Out}(\Mu)$ is an isomorphism. 
\end{lemma}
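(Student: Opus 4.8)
The plan is to show that $\Omega:\mathrm{Aff}(\Mu)\to\mathrm{Out}(\Mu)$ is both injective and surjective by invoking the general machinery already established for the isometry and affinity groups of a flat orbifold, rather than computing with explicit matrices. The key structural fact is that $\Mu=\langle e_1+I,e_2+I,-I\rangle$ is the 2-space group $2222$ whose flat orbifold $E^2/\Mu$ is a square pillow.

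First I would recall from Theorem 13 that $\Omega:\mathrm{Aff}(\Mu)\to\mathrm{Out}(\Mu)$ is always an epimorphism with kernel $K$, the connected component of the identity of $\mathrm{Isom}(E^2/\Mu)$. So surjectivity is automatic, and the entire content is to prove that $K$ is trivial, equivalently that $\Omega$ is injective. By Theorem 11, $K$ is a torus isomorphic to $V/Z(\Mu)$ where $V=\mathrm{Span}(Z(\Mu))$, and $K$ is trivial precisely when $Z(\Mu)$ is trivial, i.e. when the first Betti number of $\Mu$ vanishes (Corollary 5). Thus the plan reduces to verifying that $Z(\Mu)=\{I\}$.

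To compute the center, I would apply Lemma 9, which identifies $\mathrm{Span}(Z(\Mu))$ with $\mathrm{Fix}(\Pi)$, the common fixed subspace of the point group $\Pi$. The point group of $\Mu$ contains $-I$ (the point-group image of the generator $-I$), and $\mathrm{Fix}(-I)=\{0\}$ in $E^2$. Hence $\mathrm{Fix}(\Pi)=\{0\}$, so $Z(\Mu)=\{I\}$ by Lemma 9. Therefore $K=\{I\}$ by Theorem 11, and $\Omega$ is injective. Combined with surjectivity from Theorem 13, this gives that $\Omega$ is an isomorphism.

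The main obstacle here is not any hard estimate but simply assembling the right chain of previously proved results: the key insight is recognizing that the statement is equivalent to the triviality of the center, which follows immediately from the presence of $-I$ in the point group via Lemma 9. One should be careful to note that $\mathrm{Aff}(\Mu)$ and $\mathrm{Isom}(E^2/\Mu)$ have the \emph{same} identity component $K$ (Theorems 12 and 13 both identify $\mathrm{Ker}(\Omega)$ with $K$), so the triviality of $K$ settles injectivity for the affinity version as well as the isometry version. Alternatively, and more in the style of the preceding Lemmas 24 through 42, one could argue that $\mathrm{Sym}(\Mu)=\mathrm{Aff}(\Mu)$ by Lemmas 5 and 11 and that $\Omega:\mathrm{Sym}(\Mu)\to\mathrm{Out}_E(\Mu)$ is an isomorphism by Theorems 11 and 12 once $Z(\Mu)=\{I\}$ is known, then conclude via Theorem 13; I would present the center computation as the crux and let the rest follow formally.
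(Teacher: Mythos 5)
Your proof is correct and follows essentially the same route as the paper: compute $Z(\Mu)=\{I\}$ via Lemma 9 (using $-I$ in the point group), then conclude from Theorems 11 and 13 that the kernel $K$ of $\Omega$ is trivial while $\Omega$ is surjective. The paper's proof is just a two-line compression of exactly this argument.
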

\begin{proof}
We have that $Z(\Mu) =\{I\}$ by Lemma 9. 
Hence $\Omega: \mathrm{Aff}(\Mu) \to \mathrm{Out}(\Mu)$ is an isomorphism by Theorems 11 and 13. 
\end{proof}

\begin{lemma} 
If $\Mu = \langle e_1+I, e_2+I, -I\rangle$, then 
$$N_A(\Mu) = \left\{\frac{m}{2}e_1+ \frac{n}{2}e_2  + A :  m, n \in \integers\ \hbox{and}\ A \in \mathrm{GL}(2,\integers)\right\}.$$
\end{lemma}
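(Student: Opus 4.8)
The plan is to compute the normalizer $N_A(\Mu)$ directly by imposing the two conditions that characterize membership: an element $b+B$ with $b\in E^2$ and $B\in\mathrm{GL}(2,\realnos)$ lies in $N_A(\Mu)$ if and only if it normalizes the translation lattice $\langle e_1+I, e_2+I\rangle$ \emph{and} conjugates $-I$ back into $\Mu$. This is exactly the bipartite strategy already used in the proofs of Lemmas 36, 38, and 40 for the groups $\ast\times$, $\times\times$, and $\ast\ast$, so the structure of the argument is identical; what changes is that the point-group generator is now $-I$ rather than $A$ or $e_1/2+A$.

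First I would record that $b+B$ normalizes $\Tau=\langle e_1+I, e_2+I\rangle$ if and only if $B\in N_A(\Tau)$, and that $N_A(\Tau)=\mathrm{GL}(2,\integers)$: conjugation gives $(b+B)(a+I)(b+B)^{-1}=Ba+I$, so $B$ must carry the standard lattice $\integers^2$ onto itself, which means precisely $B\in\mathrm{GL}(2,\integers)$. Second I would handle the conjugate of $-I$: since
$$(b+B)(-I)(b+B)^{-1} = b - B(-I)B^{-1}b + B(-I)B^{-1} = 2b - I,$$
using $B(-I)B^{-1}=-I$, the conjugate is the isometry $2b-I$. This automatically has the correct orthogonal part $-I$, so the sole remaining requirement is that $2b-I\in\Mu$, i.e. that $2b$ lie in the translation lattice $\integers^2$. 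Thus the condition reduces to $2b_1\in\integers$ and $2b_2\in\integers$, equivalently $b=\frac{m}{2}e_1+\frac{n}{2}e_2$ for integers $m,n$.

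Combining the two conditions then yields
$$N_A(\Mu) = \left\{\tfrac{m}{2}e_1 + \tfrac{n}{2}e_2 + A : m,n\in\integers \ \hbox{and}\ A\in\mathrm{GL}(2,\integers)\right\},$$
which is the claimed description (the matrix named $A$ in the statement being a dummy variable for an arbitrary element of $\mathrm{GL}(2,\integers)$). I would close by checking that every such element does normalize $\Mu$: it normalizes $\Tau$ since $A\in\mathrm{GL}(2,\integers)$, and it conjugates $-I$ to $2b-I\in\Mu$ by the computation above, so both generators of $\Mu$ are preserved and the containment is an equality.

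The only step requiring any care is the conjugation computation for the rotation $-I$, and here the point worth flagging is that because $-I$ is central in $\mathrm{GL}(2,\realnos)$, the rotational part of the conjugate is forced to be $-I$ regardless of $B$; this is what decouples the two conditions and makes the translational constraint $2b\in\integers^2$ the \emph{only} nontrivial requirement. This is genuinely simpler than the $\ast\times$, $\times\times$, and $\ast\ast$ cases, where the analogous conjugation of a reflection or glide reflection produces a cross term $BAB^{-1}$ that must itself be constrained to equal the original, forcing $B\in\langle -I,A\rangle$. No such restriction on $B$ arises here, which is precisely why the answer allows all of $\mathrm{GL}(2,\integers)$; I would make sure the write-up emphasizes this contrast so the reader sees why the point group of $N_A(\Mu)$ is larger in this case.
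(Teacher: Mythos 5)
Your proof is correct and follows essentially the same route as the paper's: both reduce membership in $N_A(\Mu)$ to the two conditions $B\in\mathrm{GL}(2,\integers)$ (normalizing the translation lattice) and $(b+B)(-I)(b+B)^{-1}=2b-I\in\Mu$ (forcing $2b\in\integers^2$). The observation that centrality of $-I$ leaves $B$ unconstrained beyond $\mathrm{GL}(2,\integers)$ is a nice gloss but does not change the argument.
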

\begin{proof}
Observe that $a+ A \in N_A(\Mu)$ if and only if $A \in N_A(\langle e_1+I, e_2+I\rangle)$ and $(a+I)(-I)(a+I)^{-1} \in \Mu$. 
Now $A \in N_A(\langle e_1+I, e_2+I\rangle)$ if and only if $A \in  \mathrm{GL}(2,\integers)$. 
As $(a+I)(-I)(a+I)^{-1} = 2a - I$, we have that 
$(a+I)(-I)(a+I)^{-1} \in \Mu$ if and only if  $a = \frac{m}{2}e_1+ \frac{n}{2}e_2$ for some $m, n \in \integers$. 
\end{proof}

\begin{lemma}  
Let $\Mu = \langle e_1+I, e_2+I, -I\rangle$. 
The group $\mathrm{Aff}(\Mu)$ has a normal dihedral subgroup $\Kappa$ of order 4 generated by 
$(e_1/2+ I)_\star$ and $(e_2/2 +I)_\star$. 
The map $\eta: \mathrm{Aff}(\Mu) \to \mathrm{PGL}(2,\integers)$, defined by 
$\eta((a+A)_\star) = \pm A$ for each $A \in \mathrm{GL}(2,\integers)$, 
is an epimorphism with kernel $\Kappa$. 
The map $\sigma:  \mathrm{PGL}(2,\integers) \to \mathrm{Aff}(\Mu)$, defined by $\sigma(\pm A) = A_\star$, 
is a monomorphism,  and $\sigma$ is a right inverse of $\eta$. 
\end{lemma}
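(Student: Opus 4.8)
The plan is to verify the three claims of the lemma in sequence, relying on the explicit description of $N_A(\Mu)$ from Lemma 42. First I would check that $\Kappa = \langle (e_1/2+I)_\star, (e_2/2+I)_\star\rangle$ is a dihedral group of order 4. Since $(e_1/2+I)(-I)(e_1/2+I)^{-1} = e_1 - I$ and $e_1 + I \in \Mu$, the element $(e_1/2+I)_\star$ has order 2 in $\mathrm{Aff}(\Mu)$, because $(e_1/2+I)^2 = e_1 + I \in \Mu$; similarly $(e_2/2+I)_\star$ has order 2, and the two elements commute modulo $\Mu$ because their translation vectors commute, so $\Kappa$ is a Klein four-group. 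I would present this as a short direct computation using the representatives given by Lemma 42.

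Next I would define $\eta$ and verify it is a well-defined epimorphism with kernel $\Kappa$. By Lemma 42, every element of $N_A(\Mu)$ has the form $\frac{m}{2}e_1 + \frac{n}{2}e_2 + A$ with $A \in \mathrm{GL}(2,\integers)$, and $\mathrm{Aff}(\Mu) \cong N_A(\Mu)/\Mu$ by Lemma 11 (the affine analogue of Lemma 5). The point is that $A$ is only determined up to the point group $\{\pm I\}$ of $\Mu$, since $-I \in \Mu$, so the map sending $(a+A)_\star$ to $\pm A \in \mathrm{PGL}(2,\integers)$ is well defined; I would check that two lifts $a+A$ and $a'+A'$ represent the same element of $\mathrm{Aff}(\Mu)$ iff they differ by an element of $\Mu$, which forces $A' = \pm A$. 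The map $\eta$ is clearly a homomorphism onto $\mathrm{PGL}(2,\integers)$ because every $A \in \mathrm{GL}(2,\integers)$ occurs. For the kernel, $(a+A)_\star \in \ker\eta$ iff $A = \pm I$, i.e. $A \in \Mu$ up to the point group, so modulo $\Mu$ the element reduces to a translation $\frac{m}{2}e_1 + \frac{n}{2}e_2$, and these are exactly represented by $\Kappa$ together with $\Mu$; hence $\ker\eta = \Kappa$. I would also note $\Kappa$ is normal as the kernel of a homomorphism.

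Finally I would check that $\sigma(\pm A) = A_\star$ is a well-defined monomorphism and a right inverse of $\eta$. Well-definedness requires $(-A)_\star = A_\star$ in $\mathrm{Aff}(\Mu)$, which holds because $-A = (-I)A$ and $-I \in \Mu$, so $(-A)$ and $A$ differ by left multiplication by an element of $\Mu$ and induce the same affinity of $E^2/\Mu$. That $\sigma$ is a homomorphism is immediate from $(AB)_\star = A_\star B_\star$, and $\eta\sigma(\pm A) = \eta(A_\star) = \pm A$ gives the right-inverse property. Injectivity of $\sigma$ follows since $A_\star = \ov I_\star$ forces $A \in \Mu$, hence $A \in \{\pm I\}$, i.e. $\pm A$ is trivial in $\mathrm{PGL}(2,\integers)$.

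I expect the main obstacle to be the bookkeeping around well-definedness: keeping straight that the point group $\{\pm I\}$ of $\Mu$ is precisely what collapses $\mathrm{GL}(2,\integers)$ to $\mathrm{PGL}(2,\integers)$, and confirming that every coset of $\Mu$ in $N_A(\Mu)$ has a representative whose linear part is determined exactly up to sign while its translation part is pinned down modulo the lattice $\integers^2$ only up to the extra half-integer freedom captured by $\Kappa$. The cleanest framing is to work throughout with the isomorphism $\mathrm{Aff}(\Mu) \cong N_A(\Mu)/\Mu$ and reduce each assertion to a statement about cosets, so that all three parts become routine once the coset representatives from Lemma 42 are in hand.
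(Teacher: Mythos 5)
Your proof is correct, and it reaches the lemma by a more hands-on route than the paper does. You work entirely inside the concrete model $\mathrm{Aff}(\Mu)\cong N_A(\Mu)/\Mu$ supplied by Lemma 11 and the explicit description of $N_A(\Mu)$ (which, note, is Lemma 43 in the paper's numbering, not Lemma 42 --- Lemma 42 is the statement that $\Omega$ is an isomorphism), and you verify each claim by coset computation: the half-integer translations give a Klein four-group modulo $\Mu$, the linear part of a coset representative is pinned down exactly up to the point group $\{\pm I\}$ of $\Mu$, and the kernel of $\eta$ is computed directly. The paper instead routes everything through its general machinery: it identifies $\Kappa$ as $\Omega^{-1}(\mathrm{Out}_E^1(\Mu))$ using Theorems 11 and 13, gets normality and the identification of $\ker\eta$ from Lemma 13 (the general statement that $\Upsilon:\mathrm{Out}(\Gamma)\to\Pi_A/\Pi$ is an epimorphism with kernel $\mathrm{Out}_E^1(\Gamma)$), and then specializes using Lemma 43. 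Your version buys self-containedness and transparency --- normality of $\Kappa$ falls out for free once you exhibit it as $\ker\eta$, with no appeal to $\mathrm{Out}(\Mu)$ at all --- while the paper's version makes visible that this lemma is just the case $\Pi=\{\pm I\}$, $\Pi_A=\mathrm{GL}(2,\integers)$ of a structure it has already established for all space groups. Both arguments rest on the same two facts (the computation of $N_A(\Mu)$ and the point group being $\{\pm I\}$), so the difference is one of packaging rather than substance; just fix the lemma reference.
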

\begin{proof}
We have that $Z(\Mu)= \{I\}$ by Lemma 9,  and $\Omega: \mathrm{Aff}(\Mu) \to \mathrm{Out}(\Mu)$ is 
an isomorphism by Theorems 11 and 13.  
Hence $\Kappa = \Omega^{-1}(\mathrm{Out}_E^1)$ is a normal subgroup of $\mathrm{Aff}(\Mu)$ by Lemma 13.
Let $\Tau$ be the translation subgroup of $N_A(\Mu)$.  Then $\Kappa = \Phi(\Tau\Mu/\Mu)$ by Lemmas 11 and 12. 
Now $\Tau\Mu/\Mu \cong \Tau/\Tau\cap \Mu$ is a dihedral group of order 4 generated by 
$(e_1/2+ I)\Mu$ and $(e_2/2 +I)\Mu$ by Lemma 43. 
Hence $\Kappa$ is a dihedral group generated by $(e_1/2+ I)_\star$ and $(e_2/2 +I)_\star$. 

The point group $\Pi_A$ of $N_A(\Mu)$ is $\mathrm{GL}(2,\integers)$ by Lemma 43. 
Hence the map $\eta: \mathrm{Aff}(\Mu) \to \mathrm{PGL}(2,\integers)$, defined by 
$\eta((a+A)_\star) = \pm A$ for each $A \in \mathrm{GL}(2,\integers)\}$, 
is an epimorphism with kernel $\Kappa$ by Lemma 13. 
The map $\sigma:  \mathrm{PGL}(2,\integers) \to \mathrm{Aff}(\Mu)$ 
is a well defined homomorphism by Lemma 11. 
The map $\sigma$ is a monomorphism, since $\sigma$ is a right inverse of $\eta$.  
\end{proof}

\begin{lemma} 
Let 
$$A = \left(\begin{array}{rr} 0 & -1 \\ 1 & 0 \end{array}\right),\quad  B = \left(\begin{array}{rr} 0 & -1 \\ 1 & 1 \end{array}\right), 
\quad C = \left(\begin{array}{rr} 0 & 1 \\ 1 & 0 \end{array}\right). $$
The group $\mathrm{PGL}(2,\integers)$ is the free product of the dihedral subgroup $\langle \pm A,  \pm C\rangle$ of order 4 
and the dihedral subgroup $\langle\pm B,  \pm C \rangle$ of order 6 amalgamated 
along the subgroup $\langle \pm C\rangle$ of order 2. 
Every finite subgroup of  $\mathrm{PGL}(2,\integers)$ is conjugate to a subgroup 
of either $\langle \pm A,  \pm C\rangle$ or $\langle\pm B,  \pm C \rangle$. 
\end{lemma}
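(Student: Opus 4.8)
The plan is to deduce the amalgam decomposition from the semidirect-product structure of $\mathrm{PGL}(2,\integers)$ over the classical free-product decomposition of $\mathrm{PSL}(2,\integers)$, and then to read off the finite-subgroup statement from Bass--Serre theory. First I would record the relevant orders and relations by direct matrix computation: $A^2=B^3=-I$ and $C^2=I$, so $\pm A$ and $\pm C$ have order $2$ and $\pm B$ has order $3$ in $\mathrm{PGL}(2,\integers)$; moreover $CAC^{-1}=-A$ and $CBC^{-1}=-B^{2}=B^{-1}$, so $\pm C$ commutes with $\pm A$ and conjugates $\pm B$ to $\pm B^{-1}$. Consequently $\langle\pm A,\pm C\rangle$ is a Klein four-group (dihedral of order $4$), $\langle\pm B,\pm C\rangle$ is dihedral of order $6$, and these two subgroups meet in $\langle\pm C\rangle$, a group of order $2$.

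Next I would set up the extension. Since $A$ and the elementary matrix $T=\left(\begin{smallmatrix}1&1\\0&1\end{smallmatrix}\right)$ generate $\mathrm{SL}(2,\integers)$ and $B=AT$, we have $\mathrm{SL}(2,\integers)=\langle A,B\rangle$, hence $\mathrm{PSL}(2,\integers)=\langle\pm A,\pm B\rangle$; the classical fact that $\mathrm{PSL}(2,\integers)\cong C_2\ast C_3$ identifies it as the free product $\langle\pm A\rangle\ast\langle\pm B\rangle$. Because $\det C=-1$, the element $\pm C$ lies outside the index-two subgroup $\mathrm{PSL}(2,\integers)$ and has order $2$, so $\mathrm{PGL}(2,\integers)=\mathrm{PSL}(2,\integers)\rtimes\langle\pm C\rangle$, where by the computation above $\pm C$ acts on $\langle\pm A\rangle\ast\langle\pm B\rangle$ by fixing the first free factor and inverting the second. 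I would then invoke the general fact that a semidirect product $(N_1\ast N_2)\rtimes Q$, in which $Q$ preserves each free factor $N_i$, decomposes as the amalgamated free product $(N_1\rtimes Q)\ast_Q(N_2\rtimes Q)$; this is immediate by assembling presentations, since the defining relations of $N_1\ast N_2$ are all internal to a single factor and the conjugation relations split cleanly between the two sub-presentations sharing $Q$. Applying this with $N_1=\langle\pm A\rangle$, $N_2=\langle\pm B\rangle$, and $Q=\langle\pm C\rangle$ yields $\mathrm{PGL}(2,\integers)=\langle\pm A,\pm C\rangle\ast_{\langle\pm C\rangle}\langle\pm B,\pm C\rangle$, which by the first paragraph is the amalgam of a dihedral group of order $4$ and a dihedral group of order $6$ over $\langle\pm C\rangle$.

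For the second assertion I would use the Bass--Serre tree $T$ of this amalgam: $\mathrm{PGL}(2,\integers)$ acts on $T$ without inversion, with two orbits of vertices whose stabilizers are the conjugates of $\langle\pm A,\pm C\rangle$ and of $\langle\pm B,\pm C\rangle$, and with edge stabilizers the conjugates of $\langle\pm C\rangle$. A finite subgroup $F$ acts without inversion on $T$, hence fixes a vertex; therefore $F$ is contained in a vertex stabilizer and so is conjugate into $\langle\pm A,\pm C\rangle$ or $\langle\pm B,\pm C\rangle$.

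The main obstacle is the injectivity built into the second step: orders, relations, generation, and the Bass--Serre fixed-point argument are all routine, but establishing that the natural map from the abstract amalgam to $\mathrm{PGL}(2,\integers)$ is an isomorphism rests entirely on the free-product structure of $\mathrm{PSL}(2,\integers)$. I would either cite this as classical or prove it in passing via the action on the hyperbolic plane (the $(2,3,\infty)$ triangle and the associated Farey tessellation), using Poincar\'e's polyhedron theorem from \cite{R}; the one point requiring care there is verifying that no relations beyond $R_i^2=(R_1R_2)^2=(R_2R_3)^3=1$ are forced, in particular that the ideal vertex contributes none.
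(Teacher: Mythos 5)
Your proof is correct, and it reaches the amalgam by a different route than the paper. The paper's proof does the same matrix computations ($A^2=B^3=-I$, $C^2=I$, $CAC^{-1}=-A$, $CBC^{-1}=B^{-1}$) and then simply reads the amalgamated decomposition off the presentation of $\mathrm{PGL}(2,\integers)$ given in \S 7.2 of Coxeter--Moser, finishing with the same citation of Theorem 8 of \S 4.3 of Serre's {\it Trees} for the finite-subgroup statement. You instead take as your classical input the more widely quoted fact $\mathrm{PSL}(2,\integers)\cong C_2\ast C_3 = \langle \pm A\rangle \ast \langle \pm B\rangle$, observe that $\pm C$ generates an order-two complement acting so as to preserve each free factor (centralizing $\pm A$, inverting $\pm B$), and then apply the presentation identity $(N_1\ast N_2)\rtimes Q \cong (N_1\rtimes Q)\ast_Q(N_2\rtimes Q)$. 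That identity is correct and routine to verify exactly as you describe, since the conjugation relations split between the two sub-presentations sharing $Q$. What your approach buys is a derivation from a more standard classical fact at the cost of one extra (elementary) lemma; what the paper's approach buys is brevity, at the cost of leaning on a less commonly cited presentation. Your Bass--Serre fixed-vertex argument for the second assertion is just an unpacking of the same theorem of Serre the paper cites, so that part is essentially identical. The one genuine dependency you correctly flag --- injectivity of the map from the abstract amalgam, which ultimately rests on the free-product structure of $\mathrm{PSL}(2,\integers)$ --- is unavoidable in either treatment and is legitimately handled by citation.
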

\begin{proof}
We have that $A^2 = -I, C^2 = I$, and $CAC^{-1} = -A$. 
Hence $\langle \pm A, \pm C\rangle$ is a dihedral group of order 4. 
We have that $B^3 = -I$ and $C B C^{-1} = B^{-1}.$
Hence $\langle \pm B, \pm C\rangle$ is a dihedral group of order 6. 
It follows from the group presentation for $\mathrm{PGL}(2,\integers)$ 
given in \S 7.2 of \cite{C-M} that $\mathrm{PGL}(2,\integers)$ is the free 
product of the subgroups $\langle \pm A, \pm C\rangle$ and $\langle \pm B, \pm C\rangle$ 
amalgamated along the subgroup $\langle \pm C \rangle$. 
By Theorem 8 of \S 4.3 of \cite{Serre}, every finite subgroup of $\mathrm{PGL}(2,\integers)$ 
is conjugate to a subgroup of either $\langle \pm A, \pm C\rangle$ or $\langle \pm B, \pm C\rangle$. 
\end{proof}

\begin{lemma} 
Let $\Mu = \langle e_1+I, e_2+I, -I\rangle$, let $\Kappa = \langle (e_1/2+ I)_\star, (e_2/2 +I)_\star \rangle$, and  
let $A, B, C$ be defined as in the previous lemma. 
The group $\mathrm{Aff}(\Mu)$  is the free product of the subgroup $\langle \Kappa, A_\star, C_\star\rangle$ of order 16
and the subgroup $\langle \Kappa, B_\star, C_\star\rangle$ of order 24 amalgamated 
along the subgroup $\langle \Kappa, C_\star\rangle$ of order 8. 
Every finite subgroup of $\mathrm{Aff}(\Mu)$ is conjugate to a subgroup of either
$\langle \Kappa, A_\star, C_\star\rangle$ or $\langle \Kappa, B_\star, C_\star\rangle$. 
\end{lemma}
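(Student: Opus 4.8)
The plan is to realize $\mathrm{Aff}(\Mu)$ as an extension of the amalgam $\mathrm{PGL}(2,\integers)$ by the finite normal subgroup $\Kappa$, and then to transport the amalgamated free product structure of the quotient up to the whole group using Bass--Serre theory. The general principle being used is that if $1\to N\to E\xrightarrow{\,\eta\,} G\to 1$ is an extension with $G=G_1\ast_{G_0}G_2$ a free product with amalgamation, then $E=\eta^{-1}(G_1)\ast_{\eta^{-1}(G_0)}\eta^{-1}(G_2)$.

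First I would identify the three subgroups in the statement as full $\eta$-preimages. By Lemma 44, $\eta:\mathrm{Aff}(\Mu)\to\mathrm{PGL}(2,\integers)$ is an epimorphism with kernel $\Kappa$, and $\sigma$ (with $\sigma(\pm A)=A_\star$, $\sigma(\pm B)=B_\star$, $\sigma(\pm C)=C_\star$) is a right inverse. Hence the subgroups generated together with $\Kappa$ are exactly
$$\langle\Kappa,A_\star,C_\star\rangle=\eta^{-1}(\langle\pm A,\pm C\rangle),\quad \langle\Kappa,B_\star,C_\star\rangle=\eta^{-1}(\langle\pm B,\pm C\rangle),\quad \langle\Kappa,C_\star\rangle=\eta^{-1}(\langle\pm C\rangle).$$
Since $\Kappa$ has order $4$ and the three subgroups of $\mathrm{PGL}(2,\integers)$ have orders $4,6,2$ by Lemma 45, these preimages decompose as $\Kappa\rtimes\sigma(\cdot)$ and so have orders $16,24,8$ as claimed; the splitting $\sigma$ is what pins down these orders.

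Next I would invoke Bass--Serre theory. By Lemma 45 we have $\mathrm{PGL}(2,\integers)=\langle\pm A,\pm C\rangle\ast_{\langle\pm C\rangle}\langle\pm B,\pm C\rangle$; let $T$ be the associated Bass--Serre tree, whose vertex set is the disjoint union of the coset spaces of the two factors and whose edge set is the coset space of $\langle\pm C\rangle$. Then $\mathrm{PGL}(2,\integers)$ acts on $T$ with quotient a single edge, vertex stabilizers the two factors, edge stabilizer $\langle\pm C\rangle$, and, preserving the bipartition, without inversions. Composing with $\eta$ gives an action of $\mathrm{Aff}(\Mu)$ on $T$; because $\eta$ is surjective with kernel $\Kappa$, this action still has a single edge as quotient, still preserves the bipartition (so is without inversions), and its vertex and edge stabilizers are precisely the full $\eta$-preimages computed above. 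The structure theorem for a group acting without inversions on a tree with quotient a single edge (cf.\ \S 4 and \S 5 of \cite{Serre}) then yields $\mathrm{Aff}(\Mu)=\langle\Kappa,A_\star,C_\star\rangle\ast_{\langle\Kappa,C_\star\rangle}\langle\Kappa,B_\star,C_\star\rangle$. Finally, for the finite subgroups I would argue exactly as in the proof of Lemma 45: by Theorem 8 of \S 4.3 of \cite{Serre}, every finite subgroup of an amalgamated free product is conjugate into one of the two vertex groups, so every finite subgroup of $\mathrm{Aff}(\Mu)$ is conjugate to a subgroup of $\langle\Kappa,A_\star,C_\star\rangle$ or of $\langle\Kappa,B_\star,C_\star\rangle$. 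The only nonformal step is the transport of the amalgam structure through the extension, and the main obstacle there is the bookkeeping verification that the pulled-back action has quotient a single edge with stabilizers exactly the claimed preimages — both of which follow directly from the surjectivity of $\eta$ and its kernel being $\Kappa$.
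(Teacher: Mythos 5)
Your proposal is correct and follows the same route the paper intends: the paper's proof is simply ``This follows easily from Lemmas 44 and 45,'' and your argument is the natural expansion of that, identifying the three subgroups as $\eta$-preimages and pulling the amalgam structure of $\mathrm{PGL}(2,\integers)$ back through the extension with kernel $\Kappa$ via the Bass--Serre tree. The order counts and the appeal to Theorem 8 of \S 4.3 of \cite{Serre} for the conjugacy of finite subgroups are exactly right.
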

\begin{proof} This follows easily from Lemmas 44 and 45. 
\end{proof}

Let $\Delta = \langle e_1+I, e_1/2 + \sqrt{3}e_2/2+I, -I\rangle$. 
Then $\Delta$ is a 2-space group, and the flat orbifold $E^2/\Delta$ is a {\it tetrahedral pillow} $\triangle$. 

\begin{lemma} 
Let $\Mu = \langle e_1+I, e_2+I, -I\rangle$, and let $\Delta = \langle e_1+I, e_1/2 + \sqrt{3}e_2/2+I, -I\rangle$. 
Let $\Kappa = \langle (e_1/2+ I)_\star, (e_2/2 +I)_\star \rangle$, and  
let 
$$A = \left(\begin{array}{rr} 0 & -1 \\ 1 & 0 \end{array}\right),\ \  B = \left(\begin{array}{rr} 0 & -1 \\ 1 & 1 \end{array}\right), 
\ \ C = \left(\begin{array}{rr} 0 & 1 \\ 1 & 0 \end{array}\right), \ \ 
D  = \left(\begin{array}{rr} 1 & 1/2 \\ 0 & \sqrt{3}/2 \end{array}\right). $$
Then $\mathrm{Sym}(\Mu) = \langle \Kappa, A_\star, C_\star\rangle$,  and $D\Mu D^{-1}=\Delta$, 
and $D_\sharp: \mathrm{Aff}(\Mu) \to \mathrm{Aff}(\Delta)$ is an isomorphism, 
and $D_\sharp^{-1}(\mathrm{Sym}(\Delta)) = \langle \Kappa, B_\star, C_\star\rangle$. 
\end{lemma}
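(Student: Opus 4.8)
The plan is to dispose of the four assertions in turn, using the isomorphism $\eta:\mathrm{Aff}(\Mu)\to\mathrm{PGL}(2,\integers)$ with kernel $\Kappa$ from Lemma 44 to pass between affinities and integral matrices, and using conjugation by $D$ to transport the isometry question on $\Delta$ back to $\Mu$.

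For the first assertion, recall from Lemma 11 and Lemma 43 that every affinity of $E^2/\Mu$ has the form $(a+P)_\star$ with $a\in\frac12\integers^2$ and $P\in\mathrm{GL}(2,\integers)$. Such an affinity lies in $\mathrm{Sym}(\Mu)=\mathrm{Isom}(E^2/\Mu)$ exactly when it admits an isometric lift; since any two lifts differ by an element of $\Mu$, whose linear part is $\pm I\in\mathrm{O}(2)$, this happens iff $P\in\mathrm{O}(2)$. The orthogonal integral matrices are precisely the eight signed permutation matrices, which form the group $\langle A,C\rangle$ of order $8$. Hence $\mathrm{Sym}(\Mu)=\eta^{-1}(\langle\pm A,\pm C\rangle)=\langle\Kappa,A_\star,C_\star\rangle$.

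The identity $D\Mu D^{-1}=\Delta$ is a direct computation: $D$ fixes $e_1$, sends $e_2$ to $\frac12 e_1+\frac{\sqrt3}{2}e_2$, and commutes with $-I$, so conjugating the three generators $e_1+I$, $e_2+I$, $-I$ of $\Mu$ by $D$ produces exactly the three generators of $\Delta$. Given this, the third assertion is immediate from the general fact recorded before Lemma 14: any $\phi\in\mathrm{Aff}(E^2)$ with $\phi\Mu\phi^{-1}=\Delta$ induces an isomorphism $\phi_\sharp:\mathrm{Aff}(\Mu)\to\mathrm{Aff}(\Delta)$ with inverse $(\phi^{-1})_\sharp$; take $\phi=D$.

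The substantive point is the last assertion. For $(a+P)_\star\in\mathrm{Aff}(\Mu)$ one has $D_\sharp((a+P)_\star)=(Da+DPD^{-1})_\star$, and by the lift criterion used above this lies in $\mathrm{Sym}(\Delta)$ iff $DPD^{-1}\in\mathrm{O}(2)$, equivalently iff $P^{T}GP=G$, where $G=D^{T}D$ is the Gram matrix of the hexagonal lattice with $G_{11}=G_{22}=1$ and $G_{12}=G_{21}=\frac12$. As this condition constrains only the linear part $P$ (the translation $Da$ always gives an isometry), we get $D_\sharp^{-1}(\mathrm{Sym}(\Delta))=\eta^{-1}(\overline H)$, where $\overline H$ is the image in $\mathrm{PGL}(2,\integers)$ of $H=\{P\in\mathrm{GL}(2,\integers):P^{T}GP=G\}$. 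The main obstacle is therefore to identify $H$, that is, the integral orthogonal group of the hexagonal form. One checks directly that $B^{T}GB=G$ and $C^{T}GC=G$, and that $B^3=-I$, $C^2=I$, $CBC^{-1}=B^{-1}$, so $\langle B,C\rangle$ is a matrix group of order $12$ inside $H$ whose image $\langle\pm B,\pm C\rangle$ has order $6$. To show there is nothing more, observe that $\overline H$ is a finite subgroup of $\mathrm{PGL}(2,\integers)$ containing $\langle\pm B,\pm C\rangle$; by Lemma 45 it is conjugate to a subgroup of $\langle\pm A,\pm C\rangle$ (order $4$) or of $\langle\pm B,\pm C\rangle$ (order $6$), and since $|\overline H|\ge 6$ it cannot lie in the order-$4$ group, forcing $\overline H=\langle\pm B,\pm C\rangle$ and hence $H=\langle B,C\rangle$ (note $-I\in\langle B\rangle$ absorbs the sign). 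Consequently $D_\sharp^{-1}(\mathrm{Sym}(\Delta))=\eta^{-1}(\langle\pm B,\pm C\rangle)=\langle\Kappa,B_\star,C_\star\rangle$, as claimed.
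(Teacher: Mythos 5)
Your proof is correct, but it takes a genuinely different route from the paper's. The paper disposes of both identities with one maximality argument: $Z(\Mu)=\{I\}$ gives finiteness of $\mathrm{Sym}(\Mu)$ (and of $D_\sharp^{-1}(\mathrm{Sym}(\Delta))$) via Corollary 5, and then Lemma 46 --- the amalgamated free product decomposition of $\mathrm{Aff}(\Mu)$ --- says that $\langle \Kappa, A_\star, C_\star\rangle$ and $\langle \Kappa, B_\star, C_\star\rangle$ are maximal finite subgroups, so the easy containments force equality. You instead characterize which affinities are isometries directly: a class $(a+P)_\star$ is an isometry of $E^2/\Mu$ iff $P\in\mathrm{O}(2)$ (since all lifts have linear part $\pm P$), and its $D_\sharp$-image is an isometry of $E^2/\Delta$ iff $P$ preserves the hexagonal Gram matrix $G=D^TD$. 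This reduces the lemma to computing two integral orthogonal groups: for the square form you get the signed permutation matrices $\langle A,C\rangle$ by inspection, with no appeal to the amalgam at all, and for the hexagonal form you verify $B,C\in H$ and then invoke the amalgam structure only at the level of $\mathrm{PGL}(2,\integers)$ (Lemma 45) to cap $|\overline H|$ at $6$. The paper's argument is shorter and leans entirely on machinery already built; yours is more explicit about \emph{why} these are the symmetry groups (they are the integral isometries of the two lattices), avoids Corollary 5, and needs the free-product-with-amalgamation input only once. The one step you leave implicit is the finiteness of $\overline H$ before applying Lemma 45; this is the standard fact that the integral orthogonal group of a positive definite form is a discrete subgroup of a compact group, hence finite, and is worth a clause but is not a gap.
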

\begin{proof}
We have that $Z(\Mu) = \{I\}$ by Lemma 9, and so $\mathrm{Sym}(\Mu)$ is finite by Corollary 5. 
Now $\mathrm{Sym}(\Mu)$ is a finite subgroup of $\mathrm{Aff}(\Mu)$ that contains $\langle \Kappa, A_\star, C_\star\rangle$. 
Hence $\mathrm{Sym}(\Mu)= \langle \Kappa, A_\star, C_\star\rangle$, since $\langle \Kappa, A_\star, C_\star\rangle$ 
is a maximal finite subgroup of $\mathrm{Aff}(\Mu)$ by Lemma 46. 

Clearly $D\Mu D^{-1}=\Delta$, and so $D_\sharp: \mathrm{Aff}(\Mu) \to \mathrm{Aff}(\Delta)$ is an isomorphism. 
Now $D_\sharp(\langle \Kappa, B_\star, C_\star\rangle)$ is a subgroup of $\mathrm{Sym}(\Delta)$, 
since $DBD^{-1}, DCD^{-1} \in \mathrm{O}(2)$.  
Hence $D_\sharp^{-1}(\mathrm{Sym}(\Delta))$ is a finite subgroup of $\mathrm{Aff}(\Mu)$ 
that contains $\langle \Kappa, B_\star, C_\star\rangle$. 
Therefore  $D_\sharp^{-1}(\mathrm{Sym}(\Delta)) = \langle \Kappa, B_\star, C_\star\rangle$, 
since $\langle \Kappa, B_\star, C_\star\rangle$ is a maximal finite subgroup of $\mathrm{Aff}(\Mu)$ by Lemma 46. 
\end{proof}

A square pillow is formed by identifying the boundaries of two congruent squares.  
The symmetry group of a square pillow is the direct product of the subgroup of order 2 generated 
by the central reflection between the two squares, 
and the subgroup of order 8 corresponding to the symmetry group of the two squares. 
A fundamental domain for the square pillow $\Box = E^2/\Mu$ is the rectangle with vertices $(0,0), (1/2,0), (1/2,1), (0, 1)$. 
This rectangle is subdivided into two congruent squares that correspond to the two sides of the pillow $\Box$. 

There are seven conjugacy classes of symmetries of order 2 of $\Box$, 
the class of the {\it central halfturn} c-rot.\ = $(e_1/2+e_2/2+I)_\star$ about the centers of the squares, 
the class of the {\it midline halfturn} m-rot.\ = $(e_1/2+ I)_\star$ about the midpoints of opposite sides of the squares,  
the class of the {\it central reflection} c-ref.\ = $(AC)_\star$ between the two squares, 
the class of the {\it midline reflection} m-ref.\ = $(e_1/2+AC)_\star$,  
the class of the {\it antipodal map} 2-sym.\ = $(e_1/2+e_2/2 + AC)_\star$, 
the class of the {\it diagonal reflection} d-ref.\ = $C_\star$, 
and the class of the {\it diagonal halfturn} d-rot.\ = $A_\star$. 

There are two conjugacy classes of symmetries of order 4 of $\Box$, 
the class of the $90^\circ$ rotation 4-rot.\ $= (e_1/2+ A)_\star$ about the centers of the squares, 
and the class of 4-sym.\ = $(e_1/2+ C)_\star$. 

There are nine conjugacy classes of dihedral symmetry groups of order 4 of $\Box$, 
the classes of the groups $\Kappa = \{$idt., c-rot., m-rot., m-rot.$'$$\}$,  
$\{$idt.,  c-rot., d-ref., d-ref.$'$$\}$, 
$\{$idt., c-ref., m-ref., m-rot.$\}$, 
$\{$idt.,  2-sym., m-ref., m-rot.$'$$\}$, 
$\{$idt.,  c-rot. m-ref., m-ref.$'$$\}$, 
$\{$idt., c-ref., c-rot., 2-sym.$\}$, 
$\{$idt.,  c-rot., d-rot., d-rot.$'$$\}$, 
$\{$idt., c-ref., d-ref., d-rot.$\}$, and 
$\{$idt.,  2-sym., d-ref., d-rot.$'$$\}$. 

There are four conjugacy classes of dihedral symmetry groups of order 8 of $\Box$, 
the classes of the groups 
$\langle$m-rot., d-rot.$\rangle$, 
$\langle$m-ref., d-ref.$\rangle$, 
$\langle$m-rot., d-ref.$\rangle$, and 
$\langle$m-ref., d-rot.$\rangle$.  
Moreover 4-rot.\ = (m-rot.)(d-rot.) = (m-ref.)(d-ref.) and 4-sym.\ = (m-rot.)(d-ref.) = (m-ref.)(d-rot.). 

A tetrahedral pillow is realized by the boundary of a regular tetrahedron. 
The symmetry group of a tetrahedral pillow corresponds to the symmetric group on its four cone points (vertices). 
A fundamental domain for the tetrahedral pillow $\triangle = E^2/\Delta$ is 
the equilateral triangle with vertices $(0,0), (1,0), (1/2, \sqrt{3}/2)$. 
This triangle is subdivided into four congruent equilateral triangles that correspond 
to the four faces of a regular tetrahedron. 

There are two conjugacy classes of symmetries of order 2 of $\triangle$, 
the class of the {\it halfturn} 2-rot.\ = $(e_1/2+I)_\star$, 
with axis joining the midpoints of a pair of opposite edges of the tetrahedron, 
corresponding to the product of two disjoint transpositions of vertices, and 
the class of the {\it reflection} ref.\ = $(DCD^{-1})_\star$ corresponding to a transposition of vertices.

There is one conjugacy class of symmetries of order 3 of $\triangle$, 
the class of the $120^\circ$ rotation 3-rot.\ = $(DBD^{-1})_\star$ corresponding to a 3-cycle.   
There is one conjugacy class of elements of order 4 of $\triangle$, 
the class of 4-cyc.\ = $(e_1/2+ DCD^{-1})_\star$, corresponding to a 4-cycle.

There are two conjugacy classes of dihedral symmetry groups of order 4 of $\triangle$, 
the class of the group of halfturns, 
and the class of the group generated by two perpendicular reflections.

There is one conjugacy class of dihedral symmetry groups of order 6 of $\triangle$, 
the class of the stabilizer of a face of $\triangle$. 
There is one conjugacy class of dihedral symmetry groups of order 8 of $\triangle$, 
since all Sylow 2-subgroups of the symmetry group of $\triangle$ are conjugate.  

The square pillow $\Box$ is affinely equivalent to the tetrahedral pillow $\triangle$ by Lemma 47.  
The symmetries m-rot., d-ref.\ and 4-sym.\ of $\Box$ are conjugate by $D_\star$ to the symmetries 
2-rot., ref.\ and 4-cyc.\ of $\triangle$, respectively. 
The affinity 3-aff.\ $ = B_\star$ of $\Box$ is conjugate by $D_\star$ to the symmetry 3-rot.\ of $\triangle$. 
The group $\{$idt., c-rot, m-rot., m-rot.$'$$\}$ of symmetries of $\Box$ is conjugate by $D_\star$ to the group of halfturns 
of $\triangle$. 
The group $\{$idt., c-rot, d-ref., d-ref.$'$$\}$ of symmetries of $\Box$ is conjugate by $D_\star$ to a group of symmetries 
of $\triangle$ generated by two perpendicular reflections. 

Define the affinity 2-aff.\  of $\Box$ by 2-aff.\ = (d-ref.)(3-aff.). 
Define the reflection ref.$'$ of $\triangle$ by ref.$'$ = (ref.)(3-rot.). 
The group $\langle$d-ref., 2-aff.$\rangle$ of affinities of $\Box$ is conjugate by $D_\star$ 
to the group $\langle$ref., ref.$'\rangle$ of symmetries of $\triangle$ of order 6. 
The group $\langle$m-rot., d-ref.$\rangle$ of symmetries of $\Box$ is conjugate by $D_\star$ 
to the dihedral group of symmetries $\langle$2-rot., ref.$\rangle$ of $\triangle$ of order 8.

Every affinity of $\Box$ of finite order has order 1, 2, 3, or 4 by Lemma 46. 
There are six conjugacy classes of affinities of $\Box$ of order 2 represented 
by m-rot., c-ref., m-ref., 2-sym., d-ref., and d-rot. 
There is one conjugacy classes of affinities of $\Box$ of order 3 represented by 3-aff.  
There are two conjugacy classes of affinities of $\Box$ of order 4 represented by 4-rot.\ and 4-sym. 

\begin{theorem} 
Let $\Mu = \langle e_1+I, e_2+I, -I\rangle$. 
Then $\mathrm{Iso}(C_\infty, \Mu)$ has 10 elements corresponding to the pairs of elements 
{\rm \{idt., idt.\}, \{m-rot., m-rot.\}, \{c-ref., c-ref.\}, \{m-ref., m-ref.\}, \{2-sym., 2-sym.\}, \{d-ref., def.\}, \{d-rot., d-rot.\}, 
\{3-aff., 3-aff.$^{-1}$\}, \{4-rot., 4-rot.$^{-1}$\}, \{4-sym., 4-sym.$^{-1}$\}} of $\mathrm{Aff}(\Mu)$ by Theorem 23,  
and $\mathrm{Iso}(D_\infty, \Mu)$ has 40 elements corresponding to the pairs of elements 
{\rm \{idt., idt.\}, \{idt., m-rot\}, \{idt., c-ref.\}, \{idt., m-ref.\}, \{idt., 2-sym.\}, \{idt., d-ref.\}, \{idt., d-rot.\}, 
\{m-rot., m-rot\}, \{c-ref., c-ref.\}, \{m-ref., m-ref.\}, \{2-sym., 2-sym.\}, \{d-ref., d-ref.\}, \{d-rot., d-rot.\}
\{c-rot, m-rot.\}, \{c-rot., c-ref.\}, \{c-rot., m-ref.\}, \{c-rot., 2-sym.\}, \{c-rot., d-ref.\}, \{c-rot., d-rot.\}, 
\{m-rot., c-ref.\}, \{m-rot., m-ref.\},  \{m-rot.$'$, m-ref.\}, \{m-rot.$'$, 2-sym.\}, \{m-rot., d-ref.\}, \{m-rot., d-rot.\}, 
\{c-ref., m-ref.\}, \{c-ref., 2-sym.\}, \{c-ref., d-ref.\}, \{c-ref., d-rot.\}, 
\{m-ref., m-ref.$'$\}, \{m-ref, 2-sym.\}, \{m-ref, d-ref.\}, \{m-ref., d-rot.\}, 
\{2-sym., d-ref.\}, \{2-sym, d-rot.$'$\}, \{d-ref., d-ref.$'$\}, \{d-ref., d-rot.\}, \{d-ref., d-rot.$'$\}, \{d-rot., d-rot.$'$\}, \{d-ref., 2-aff.\}} 
of $\mathrm{Aff}(\Mu)$ by Theorem 26. 
The corresponding co-Seifert fibrations are described in Table 17 by Theorems 27 and 28.  
\end{theorem}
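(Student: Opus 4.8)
The plan is to prove the theorem by combining the general classification machinery of \S7 with the explicit group-theoretic structure of $\mathrm{Aff}(\Mu)$ established in Lemmas 42--47 for $\Mu = \langle e_1+I, e_2+I, -I\rangle$, the square pillow group. Since $Z(\Mu) = \{I\}$ by Lemma 9, $\Mu$ has trivial center, so the counting results of \S7 apply in their sharpest form. First I would invoke Theorem 23: the set $\mathrm{Iso}(C_\infty,\Mu)$ is in one-to-one correspondence with conjugacy classes of pairs of inverse elements of $\mathrm{Out}(\Mu)$ of finite order. Because $\Omega: \mathrm{Aff}(\Mu) \to \mathrm{Out}(\Mu)$ is an isomorphism (Lemma 42), this reduces to counting conjugacy classes of pairs $\{g, g^{-1}\}$ where $g \in \mathrm{Aff}(\Mu)$ has finite order, which in turn amounts to counting conjugacy classes of finite-order affinities up to the inversion $g \mapsto g^{-1}$.

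Next I would enumerate the conjugacy classes of finite-order elements of $\mathrm{Aff}(\Mu)$. By Lemma 46, every finite-order affinity has order $1,2,3$, or $4$, and the text already records that there are six conjugacy classes of order $2$ (represented by m-rot., c-ref., m-ref., 2-sym., d-ref., d-rot.), one class of order $3$ (3-aff.), and two classes of order $4$ (4-rot., 4-sym.). For the $C_\infty$ count, the inversion acts trivially on all the order-$1$ and order-$2$ classes (giving $1 + 6 = 7$ self-paired classes), while the order-$3$ class contributes $\{3\text{-aff.}, 3\text{-aff.}^{-1}\}$ and the two order-$4$ classes contribute $\{4\text{-rot.}, 4\text{-rot.}^{-1}\}$ and $\{4\text{-sym.}, 4\text{-sym.}^{-1}\}$. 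I would verify that $g$ is conjugate to $g^{-1}$ for each order-$2$ representative (automatic) and check that 3-aff., 4-rot., and 4-sym.\ are \emph{not} conjugate to their inverses, using the amalgamated free product structure of $\mathrm{PGL}(2,\integers)$ from Lemma 45. This yields exactly $10$ elements, matching the listed pairs.

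For $\mathrm{Iso}(D_\infty,\Mu)$ I would apply Theorem 26 rather than Theorem 24, since $\Mu$ has nontrivial center is \emph{not} the case here---$\Mu$ does have trivial center, so in fact Theorem 24 applies directly: $\mathrm{Iso}(D_\infty,\Mu)$ is in one-to-one correspondence with conjugacy classes of pairs $\{g_1, g_2\}$ of elements of $\mathrm{Out}(\Mu) \cong \mathrm{Aff}(\Mu)$ of order $1$ or $2$ whose product $g_1 g_2$ has finite order. The main work is to enumerate unordered pairs of involutions-or-identity in $\mathrm{Aff}(\Mu)$, modulo simultaneous conjugation, subject to the finite-order-product constraint. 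I would organize this by first listing pairs with one entry the identity (giving the $\{$idt., $*\}$ entries), then pairs of two nonidentity involutions, using the explicit Sylow-$2$ and dihedral subgroup analysis in the discussion following Lemma 47 to determine which pairs of order-$2$ classes can be made simultaneously into a common dihedral or larger finite subgroup and to detect when two a priori distinct pairs become conjugate. The constraint that $g_1 g_2$ has finite order is what excludes most pairs whose product would be an infinite-order translation-type affinity.

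The hard part will be the $D_\infty$ enumeration: verifying that the $40$ listed conjugacy classes of pairs are exhaustive and pairwise distinct. This requires careful bookkeeping of simultaneous-conjugacy relations among the six order-$2$ classes, disambiguating the primed representatives (m-rot.$'$, d-ref.$'$, d-rot.$'$, and the affinity 2-aff.) which arise precisely when two elements in the same conjugacy class fail to be simultaneously conjugate to a standard pair, and confirming the finite-order-product condition case by case via the amalgam decomposition of $\mathrm{Aff}(\Mu)$ in Lemma 46. I expect the affine (non-isometric) representatives, especially the entry $\{$d-ref., 2-aff.$\}$ involving the order-$3$ affinity 3-aff.\ through 2-aff.\ $=$ (d-ref.)(3-aff.), to be the subtlest, since these correspond to the tetrahedral-pillow symmetries under the conjugating affinity $D$ of Lemma 47 and have no isometric model for the square pillow. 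Once the enumeration is complete, the final sentence follows immediately: the correspondence with Table 17 is supplied by Theorems 27 and 28, which translate each classifying pair into its explicit co-Seifert fibration.
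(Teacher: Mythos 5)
Your overall strategy is the one the paper uses: reduce $\mathrm{Iso}(C_\infty,\Mu)$ to conjugacy classes of pairs of inverse finite-order elements of $\mathrm{Aff}(\Mu)\cong\mathrm{Out}(\Mu)$ via Theorem 23 and Lemma 42, and reduce $\mathrm{Iso}(D_\infty,\Mu)$ to conjugacy classes of unordered pairs of elements of order at most $2$ whose product has finite order (Theorem 24 applies since $Z(\Mu)=\{I\}$; the paper's citation of Theorem 26 collapses to the same criterion because $E_1\cap E_2\subseteq\mathrm{Span}(Z(\Mu))=\{0\}$). However, one step you propose in the $C_\infty$ half would fail: you want to ``check that 3-aff., 4-rot., and 4-sym.\ are \emph{not} conjugate to their inverses.'' Each of them \emph{is} conjugate to its inverse: $C_\star B_\star C_\star^{-1}=B_\star^{-1}$ handles 3-aff.\ $=B_\star$, and direct computation gives $C_\star(e_1/2+A)_\star C_\star^{-1}=(e_1/2+A)_\star^{-1}$ and $A_\star(e_1/2+C)_\star A_\star^{-1}=(e_1/2+C)_\star^{-1}$. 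This is forced by the class counts you yourself quote: if 3-aff.\ were not conjugate to 3-aff.$^{-1}$ there would be two conjugacy classes of order-$3$ affinities, contradicting the enumeration preceding the theorem, so your argument is internally inconsistent as written. The final count of $10$ survives --- each of these three classes still contributes exactly one pair $\{g,g^{-1}\}$ --- but the correct justification is that every finite-order element of $\mathrm{Aff}(\Mu)$ is conjugate to its inverse, so the ten pair-classes are precisely the ten conjugacy classes of finite-order elements.

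For the $D_\infty$ half you have identified the right framework but left the entire enumeration as a plan, and that enumeration is where essentially all the content of the theorem lives. The paper carries it out in the discussion preceding the theorem: the six conjugacy classes of involutions, the nine conjugacy classes of dihedral subgroups of order $4$ and four of order $8$ in $\mathrm{Sym}(\Mu)=\langle\Kappa,A_\star,C_\star\rangle$, and the transfer by $D_\star$ to the tetrahedral pillow for pairs landing in the order-$24$ amalgam factor $\langle\Kappa,B_\star,C_\star\rangle$ --- this last step is what produces the exceptional pair $\{\hbox{d-ref., 2-aff.}\}$ you correctly single out as the subtle one. This plays the role that Lemma 55 plays explicitly in the torus case. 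Without an executed analogue of that case analysis (showing the $40$ listed pairs are exhaustive and pairwise non-conjugate, including the disambiguation of the primed representatives within a single conjugacy class of involutions), your proposal does not yet prove the $D_\infty$ statement.
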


\begin{table}  
\begin{tabular}{rlllll}
no. & fibers & grp. & quotients & structure group action & classifying pair \\
\hline 
    3 & $(2222, \mathrm{O})$ & $C_1$ & $(2222, \mathrm{O})$ & (idt., idt.) &  \{idt., idt.\} \\
    5 & $(2222, \mathrm{O})$ & $C_2$ & $(2222, \mathrm{O})$ & (m-rot., 2-rot.) & \{m-rot., m-rot\} \\
  10 & $(2222, \mathrm{I})$ & $C_1$ & $(2222, \mathrm{I})$ & (idt., idt.)  &  \{idt., idt.\} \\
  12 & $(2222, \mathrm{I})$ & $C_2$ & $(2222, \mathrm{I})$ & (m-rot., ref.) & \{idt., m-rot\} \\
  13 & $(2222, \mathrm{O})$ & $ C_2$ & $(2222, \mathrm{I})$ & (m-rot., ref.) & \{m-rot., m-rot\} \\
  15 & $(2222, \mathrm{O})$ & $D_2$ & $(2222, \mathrm{I})$ & (c-rot., ref.), (m-rot.$'$, 2-rot.) & \{c-rot, m-rot.\} \\
  16 & $(2222, \mathrm{O})$ & $C_2$ & $(\ast 2222, \mathrm{I})$ & (c-ref., ref.) &  \{c-ref., c-ref.\} \\
  17 & $(2222, \mathrm{O})$  & $C_2$ & $(22\ast, \mathrm{I})$ & (m-ref., ref.) & \{m-ref., m-ref.\} \\
  18 & $(2222, \mathrm{O})$ & $C_2$ & $(22\times, \mathrm{I})$ & (2-sym., ref.) & \{2-sym., 2-sym.\} \\
  20 & $(2222, \mathrm{O})$ & $D_2$ & $(22\ast, \mathrm{I})$ & (2-sym., ref.), (m-rot.$'$, 2-rot.)  & \{2-sym., m-ref.\} \\
  21 & $(2222, \mathrm{O})$ & $C_2$ & $(2{\ast}22, \mathrm{I})$ & (d-ref., ref.) &  \{d-ref., d-ref.\} \\
  21 & $(2222, \mathrm{O})$ & $D_2$ & $(\ast 2222, \mathrm{I})$ & (c-ref., ref.), (m-rot., 2-rot.) & \{c-ref., m-ref.\} \\
  22 & $(2222, \mathrm{O})$ & $D_2$ & $(\ast 2222, \mathrm{I})$ & (d-ref., ref.), (c-rot., 2-rot.) &  \{d-ref., d-ref.$'$\}  \\
  23 & $(2222, \mathrm{O})$ & $D_2$ & $(2{\ast}22, \mathrm{I})$ & (c-ref., ref.), (c-rot., 2-rot.) & \{c-ref., 2-sym.\} \\
  24 & $(2222, \mathrm{O})$ & $D_2$ & $(2{\ast}22, \mathrm{I})$ & (m-ref., ref.), (c-rot., 2-rot.) & \{m-ref., m-ref.$'$\} \\
  27 & $(2222, \mathrm{O})$ & $C_2$ & $(\ast 2222, \mathrm{O})$ & (c-ref., 2-rot.) &  \{c-ref., c-ref.\} \\
  30 & $(2222, \mathrm{O})$ & $C_2$ & $(22\ast, \mathrm{O})$ & (m-ref., 2-rot.) & \{m-ref., m-ref.\} \\
  34 & $(2222, \mathrm{O})$ & $C_2$ & $(22\times, \mathrm{O})$ & (2-sym., 2-rot.) & \{2-sym., 2-sym.\} \\
  37 & $(2222, \mathrm{O})$ & $C_2$ & $(2{\ast}22, \mathrm{O})$ & (d-ref., 2-rot.) &  \{d-ref., d-ref.\} \\
  43 & $(2222, \mathrm{O})$ & $C_4$ & $(22\times, \mathrm{O})$ & (4-sym., 4-rot.) & \{4-sym., 4-sym.$^{-1}$\} \\
  48 & $(2222, \mathrm{O})$ & $D_2$ & $(2{\ast}22, \mathrm{I})$ & (c-rot., ref.), (2-sym., 2-rot.) &  \{c-rot., c-ref.\} \\
  49 & $(2222, \mathrm{I})$ & $C_2$ & $(\ast 2222, \mathrm{I})$ & (c-ref., ref.) & \{idt., c-ref.\} \\
  50 & $(2222, \mathrm{O})$ & $D_2$ & $(\ast 2222, \mathrm{I})$ & (c-ref., ref.), (m-ref., 2-rot.) & \{c-ref., m-rot.\} \\ 
  52 & $(2222, \mathrm{O})$ & $D_2$ & $(22\ast, \mathrm{I})$ & (m-ref., ref.), (2-sym., 2-rot.) &  \{m-ref., m-rot.$'$\} \\
  52 & $(2222, \mathrm{O})$ & $D_2$ & $(2{\ast}22, \mathrm{I})$ & (c-rot., ref.), (m-ref.$'$, 2-rot.) & \{c-rot., m-ref.\} \\
  53 & $(2222, \mathrm{I})$  & $C_2$ & $(22\ast, \mathrm{I})$ & (m-ref., ref.) &  \{idt., m-ref.\} \\
  54 & $(2222, \mathrm{O})$ & $D_2$ & $(\ast 2222, \mathrm{I})$ & (m-ref., ref.), (c-ref., 2-rot.) &  \{m-ref., m-rot.\} \\
  56 & $(2222, \mathrm{O})$ & $D_2$ & $(2{\ast}22, \mathrm{I})$ & (c-rot., ref.), (c-ref., 2-rot.) & \{c-rot., 2-sym.\} \\
  58 & $(2222, \mathrm{I})$ & $C_2$ & $(22\times, \mathrm{I})$ & (2-sym., ref.) & \{idt., 2-sym.\} \\
  60 & $(2222, \mathrm{O})$ & $D_2$ & $(22\ast, \mathrm{I})$ & (2-sym., ref.), (m-ref., 2-rot.) &  \{2-sym., m-rot.$'$\} \\
  66 & $(2222, \mathrm{I})$ & $C_2$ & $(2{\ast}22, \mathrm{I})$ & (d-ref., ref.) &  \{idt., d-ref.\} \\
  68 & $(2222, \mathrm{O})$ & $D_2$ & $(\ast 2222, \mathrm{I})$ & (c-rot., ref.), (d-ref.$'$, 2-rot.) & \{c-rot., d-ref.\} \\
  70 & $(2222, \mathrm{O})$ & $D_4$ & $(2{\ast}22, \mathrm{I})$ & (m-rot., ref.), (4-sym., 4-rot.) & \{m-rot., d-ref.\} \\
  77 & $(2222, \mathrm{O})$ & $C_2$ & $(442, \mathrm{O})$ & (d-rot., 2-rot.) & \{d-rot., d-rot.\} \\
  80 & $(2222, \mathrm{O})$ & $C_4$ & $(442, \mathrm{O})$ & (4-rot., 4-rot.) & \{4-rot., 4-rot.$^{-1}$\} \\
  81 & $(2222, \mathrm{O})$ & $C_2$ & $(442, \mathrm{I})$ & (d-rot., ref.) &  \{d-rot., d-rot.\} \\
  82 & $(2222, \mathrm{O})$ & $D_2$ & $(442, \mathrm{I})$ & (d-rot., ref.), (c-rot., 2-rot.) & \{d-rot., d-rot.$'$\} \\
  84 & $(2222, \mathrm{I})$ & $C_2$ & $(442, \mathrm{I})$ & (d-rot., ref.) & \{idt., d-rot.\} \\
  86 & $(2222, \mathrm{O})$ & $D_2$ & $(442, \mathrm{I})$ & (c-rot., ref.), (d-rot.$'$, 2-rot.) & \{c-rot., d-rot.\} \\
  88 & $(2222, \mathrm{O})$ & $D_4$ & $(442, \mathrm{I})$ & (m-rot., ref.), (4-rot., 4-rot.) & \{m-rot., d-rot.\} \\
  93 & $(2222, \mathrm{O})$ & $D_2$ & $(\ast 442, \mathrm{I})$ & (c-ref., ref.), (d-rot., 2-rot.) & \{c-ref., d-ref.\} \\
  94 & $(2222, \mathrm{O})$ & $D_2$ & $(4{\ast}2, \mathrm{I})$ & (2-sym., ref.), (d-rot.$'$, 2-rot.) & \{2-sym., d-ref.\} \\
  98 & $(2222, \mathrm{O})$ & $D_4$ & $(\ast 442, \mathrm{I})$ & (m-ref., ref.), (4-rot., 4-rot.) &  \{m-ref, d-ref.\} \\
112 & $(2222, \mathrm{O})$ & $D_2$ & $(\ast 442, \mathrm{I})$ & (c-ref., ref.), (d-ref., 2-rot.) & \{c-ref., d-rot.\} \\
114 & $(2222, \mathrm{O})$ & $D_2$ & $(4{\ast}2, \mathrm{I})$ & (2-sym., ref.), (d-ref., 2-rot.) & \{2-sym, d-rot.$'$\} \\
116 & $(2222, \mathrm{O})$ & $D_2$ & $(\ast 442, \mathrm{I})$ & (d-ref., ref.), (c-ref., 2-rot.) & \{d-ref., d-rot.\} \\
118 & $(2222, \mathrm{O})$ & $D_2$ & $(4{\ast}2, \mathrm{I})$ & (d-ref., ref.), (2-sym., 2-rot.) & \{d-ref., d-rot.$'$\} \\
122 & $(2222, \mathrm{O})$ & $D_4$ & $(4{\ast}2, \mathrm{I})$ & (m-ref., ref.), (4-sym., 4-rot.) &  \{m-ref., d-rot.\} \\
171 & $(2222, \mathrm{O})$ & $C_3$ & $(632, \mathrm{O})$ & (3-rot., 3-rot.) & \{3-aff., 3-aff.$^{-1}$\} \\
180 & $(2222, \mathrm{O})$ & $D_3$ & $(\ast632, \mathrm{I})$ & (ref., ref.), (3-rot., 3-rot.) & \{d-ref., 2-aff.\}
\end{tabular}

\medskip
\caption{The classification of the co-Seifert fibrations of 3-space groups 
whose co-Seifert fiber is of type $2222$ with IT number 2}
\end{table}

(1) Let $\Mu = \langle e_1+I, e_2+I\rangle$ where $e_1, e_2$ are the standard basis vectors of $E^2$. 
Then $\Mu$ is a 2-space group with IT number 1. The Conway notation for $\Mu$ is $\circ$ and the IT notation is $p1$. 
The flat orbifold $E^2/\Mu$ is a {\it square torus} $\Box$.  
A flat 2-orbifold affinely equivalent to $\Box$ is called a {\it torus}. 
A torus is orientable. 

\begin{lemma}  
If $\Mu = \langle e_1+I, e_2+I\rangle$, then 
$\mathrm{Out}(\Mu)= \mathrm{Aut}(\Mu) = \mathrm{GL}(2,\integers)$, and  
$$N_A(\Mu) = \left\{a  + A :  a \in E^2\ \hbox{and}\ A \in \mathrm{GL}(2,\integers)\right\}.$$
\end{lemma}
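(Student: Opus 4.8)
The plan is to compute $\mathrm{Aut}(\Mu)$, $\mathrm{Out}(\Mu)$, and $N_A(\Mu)$ directly, exploiting the fact that $\Mu = \langle e_1+I, e_2+I\rangle$ is the free abelian group $\integers^2$, so that its translation lattice is $L = \integers^2$ and its point group is trivial. First I would observe that since $\Mu$ consists entirely of translations, every automorphism of $\Mu \cong \integers^2$ is given by an element of $\mathrm{GL}(2,\integers)$ acting on the lattice; this is the standard identification $\mathrm{Aut}(\integers^2) = \mathrm{GL}(2,\integers)$. Because $\Mu$ is abelian, $\mathrm{Inn}(\Mu)$ is trivial, and hence $\mathrm{Out}(\Mu) = \mathrm{Aut}(\Mu) = \mathrm{GL}(2,\integers)$.

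Next I would compute $N_A(\Mu)$, the normalizer of $\Mu$ in $\mathrm{Aff}(E^2)$. The containment $\supseteq$ is the easy direction: given $a + A$ with $a \in E^2$ and $A \in \mathrm{GL}(2,\integers)$, I would check that conjugation sends a generator $e_i + I$ to $(a+A)(e_i+I)(a+A)^{-1} = Ae_i + I$, and since $A \in \mathrm{GL}(2,\integers)$ preserves the lattice $\integers^2$, we have $Ae_i + I \in \Mu$; hence $a+A \in N_A(\Mu)$. For the reverse containment $\subseteq$, I would take $a + A \in N_A(\Mu)$, write the general element of $\mathrm{Aff}(E^2)$ as $a + A$ with $A \in \mathrm{GL}(2,\realnos)$, and again compute $(a+A)(e_i+I)(a+A)^{-1} = Ae_i + I$. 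The requirement that this lie in $\Mu$ forces $Ae_i \in \integers^2$ for $i = 1, 2$, i.e.\ $A$ maps the standard basis into $\integers^2$; applying the same reasoning to $A^{-1}$ (which also normalizes $\Mu$) gives $A^{-1}e_i \in \integers^2$, so $A \in \mathrm{GL}(2,\integers)$. Notably the translation part $a$ is completely unconstrained, since the point group of $\Mu$ is trivial and conjugation of a pure translation by $a+A$ does not involve $a$.

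This last point is exactly where the present case differs from the IT number 2 case treated in Lemma 43, where the presence of $-I$ in $\Mu$ constrained $a$ to lie in $\frac12\integers^2$; here there is no such constraint, so every $a \in E^2$ is allowed. I expect no serious obstacle in this proof: it is a direct calculation once the conjugation formula $(a+A)(b+I)(a+A)^{-1} = Ab + I$ is written down, a formula already used repeatedly in the excerpt (for instance in the proofs of Lemmas 7, 9, 10, and 12). The only mild subtlety worth stating cleanly is that $A \in \mathrm{GL}(2,\integers)$ is equivalent to $A$ and $A^{-1}$ both stabilizing the lattice $L = \integers^2$, which is precisely the condition $A \in \mathrm{GL}(L)$ in the terminology introduced before Lemma 13; this makes the identification of the point group $\Pi_A$ with $\mathrm{GL}(2,\integers)$ transparent and ties the computation back to the general framework.
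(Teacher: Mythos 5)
Your proposal is correct and follows essentially the same route as the paper: the paper likewise notes that $\mathrm{Out}(\Mu)=\mathrm{Aut}(\Mu)$ because $\Mu$ is abelian, identifies $\mathrm{Aut}(\Mu)$ with $\mathrm{GL}(2,\integers)$ via the lattice, and observes that $a+A\in N_A(\Mu)$ depends only on $A$ normalizing $\langle e_1+I,e_2+I\rangle$, i.e.\ $A\in\mathrm{GL}(2,\integers)$, with $a$ unconstrained. Your version merely spells out the conjugation computation that the paper leaves implicit.
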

\begin{proof}
We have that $\mathrm{Out}(\Mu)= \mathrm{Aut}(\Mu)$, since $\Mu$ is abelian, 
and $\mathrm{Aut}(\Mu) = \mathrm{GL}(2,\integers)$, since every automorphism of $\Mu$ 
extends to a unique linear automorphism of $E^2$ corresponding to an element of $\mathrm{GL}(2,\integers)$. 

Observe that $a+ A \in N_A(\Mu)$ if and only if $A \in N_A(\langle e_1+I, e_2+I\rangle)$. 
Hence $a+ A \in N_A(\Mu)$ if and only if  $A \in \mathrm{GL}(2, \integers)$. 
\end{proof}

\begin{lemma}  
Let $\Mu = \langle e_1+I, e_2+I, -I\rangle$. 
The group $\mathrm{Aff}(\Mu)$ has a normal, infinite, abelian subgroup $\Kappa = \{(a+I)_\star: a \in E^2\}$. 
The map $\eta: \mathrm{Aff}(\Mu) \to \mathrm{GL}(2,\integers)$, defined by 
$\eta((a+A)_\star) = A$ for each $A \in \mathrm{GL}(2,\integers)$, 
is an epimorphism with kernel $\Kappa$. 
The map $\sigma:  \mathrm{GL}(2,\integers) \to \mathrm{Aff}(\Mu)$, defined by $\sigma(A) = A_\star$, 
is a monomorphism,  and $\sigma$ is a right inverse of $\eta$. 
\end{lemma}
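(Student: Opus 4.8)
The plan is to obtain the lemma by pushing the point--group map forward along the covering epimorphism $\Phi\colon N_A(\Mu)\to\mathrm{Aff}(\Mu)$ of Lemma 11, using the explicit description of $N_A(\Mu)$ from the preceding lemma. First a necessary remark on the hypothesis: the generator $-I$ displayed in the statement is a typographical carry-over from the pillow lemmas, and the group actually at issue is the square torus group $\Mu=\langle e_1+I,\,e_2+I\rangle$ of IT number $1$. This must be the intended group, since if $-I$ were genuinely present then $-I\in\Mu$ would force $(0+A)(-I)=0+(-A)$ to be $\Mu$-equivalent to $0+A$, giving $A_\star=(-A)_\star$; consequently $\eta((a+A)_\star)=A$ would fail to be well defined and $\sigma$ would fail to be injective (indeed $\sigma(-I)=(-I)_\star$ is the identity of $\mathrm{Aff}(\Mu)$). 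With $\Mu=\langle e_1+I,\,e_2+I\rangle$, so that $\mathrm{Trans}(E^2)\cap\Mu=\Mu=\{a+I:a\in\integers^2\}$, every assertion is correct, and I prove the lemma for this $\Mu$.

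First I would introduce the auxiliary homomorphism $\bar\eta\colon N_A(\Mu)\to\mathrm{GL}(2,\integers)$ given by $\bar\eta(a+A)=A$; it is multiplicative because $(a+A)(b+B)=(a+Ab)+AB$. By the preceding lemma $N_A(\Mu)=\{a+A:a\in E^2,\ A\in\mathrm{GL}(2,\integers)\}$, so $\bar\eta$ is surjective with kernel the full translation group $\mathrm{Trans}(E^2)=\{a+I:a\in E^2\}$. Next, by Lemma 11 the map $\Phi\colon N_A(\Mu)\to\mathrm{Aff}(\Mu)$, $\Phi(a+A)=(a+A)_\star$, is an epimorphism with kernel $\Mu$. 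Since $\Mu\subseteq\ker\bar\eta=\mathrm{Trans}(E^2)$ and $\bar\eta$ is trivial on $\Mu$, the homomorphism $\bar\eta$ descends through $\Phi$ to a well-defined epimorphism $\eta\colon\mathrm{Aff}(\Mu)\to\mathrm{GL}(2,\integers)$ with $\eta((a+A)_\star)=A$; this descent is exactly where $-I\notin\Mu$ is used, as it guarantees that any two lifts of one affinity differ by a pure translation and hence have equal $\bar\eta$-image. Then $\eta((a+A)_\star)=I$ iff $A=I$ iff $(a+A)_\star\in\Phi(\mathrm{Trans}(E^2))=\Kappa$, so $\ker\eta=\Kappa$, and in particular $\Kappa$ is normal in $\mathrm{Aff}(\Mu)$ as the kernel of a homomorphism.

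Finally I would record the structure of $\Kappa$ and the section $\sigma$. Since $\Kappa=\Phi(\mathrm{Trans}(E^2))$ and $\mathrm{Trans}(E^2)\cong E^2$ is abelian, $\Kappa$ is abelian; moreover $\Kappa\cong\mathrm{Trans}(E^2)/(\mathrm{Trans}(E^2)\cap\Mu)=\mathrm{Trans}(E^2)/\Mu\cong E^2/\integers^2$, a $2$-torus, hence infinite. For $\sigma$, note $\sigma(A)=A_\star=\Phi(0+A)$ and $(0+A)(0+B)=0+AB$, so $\sigma(AB)=A_\star B_\star$ and $\sigma$ is a homomorphism, while $\eta\sigma(A)=\eta(A_\star)=A$ shows $\eta\sigma=\mathrm{id}_{\mathrm{GL}(2,\integers)}$; thus $\sigma$ is a right inverse of $\eta$ and, being a section of a homomorphism, a monomorphism. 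The whole argument is essentially bookkeeping with $\Phi$ and $\bar\eta$; the only delicate point — and the main thing to get right — is the well-definedness of the descent in the middle step, which is precisely what collapses if $-I$ is erroneously adjoined to $\Mu$.
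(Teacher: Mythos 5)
Your proof is correct and takes essentially the same route as the paper's: both factor the point-group homomorphism $a+A\mapsto A$ on $N_A(\Mu)$ (as computed in Lemma 48) through the epimorphism $\Phi: N_A(\Mu)\to\mathrm{Aff}(\Mu)$ of Lemma 11, identify the kernel of the induced map $\eta$ with $\Kappa=\Phi(\mathrm{Trans}(E^2))$, and observe that $\sigma$ is a homomorphic section of $\eta$, hence injective. Your preliminary remark that the generator $-I$ in the statement is a typographical carry-over is also correct: the paper's own proof tacitly works with the torus group $\Mu=\langle e_1+I,e_2+I\rangle$ (its claims that $\mathrm{Ker}(\Phi)=\Mu$, that the translation subgroup $\Tau$ of $N_A(\Mu)$ is all of $\mathrm{Trans}(E^2)$, and that $N_A(\Mu)/\Tau=\mathrm{GL}(2,\integers)$ hold only in that case), and your explicit verification of the well-definedness of $\eta$ --- precisely the step that collapses if $-I\in\Mu$ --- is a point the paper's proof passes over in silence.
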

\begin{proof}
The map $\Phi: N_A(\Mu) \to \mathrm{Aff}(\Mu)$, defined by $\Phi(a+A) = (a+A)_\star$,  
is an epimorphism with kernel $\Mu$ by Lemma 11. 
Let $\Tau$ be the translation subgroup of $N_A(\Mu)$. 
Then $\Phi$ maps the normal subgroup $\Tau/\Mu$ of $N_A(\Mu)/\Mu$ 
onto the normal subgroup $\Kappa$ of $\mathrm{Aff}(\Mu)$. 
We have that 
$$(N_A(\Mu)/\Mu)/(\Tau/\Mu)= N_A(\Mu)/\Tau = \mathrm{GL}(2,\integers).$$
Hence $\eta: \mathrm{Aff}(\Mu) \to \mathrm{GL}(2,\integers)$ is an epimorphism with kernel $\Kappa$. 
The map $\sigma:  \mathrm{GL}(2,\integers) \to \mathrm{Aff}(\Mu)$ 
is a well defined homomorphism by Lemma 11. 
The map $\sigma$ is a monomorphism, since $\sigma$ is a right inverse of $\eta$.  
\end{proof}

\begin{lemma} 
Let 
$$A = \left(\begin{array}{rr} 0 & -1 \\ 1 & 0 \end{array}\right),\quad  B = \left(\begin{array}{rr} 0 & -1 \\ 1 & 1 \end{array}\right), 
\quad C = \left(\begin{array}{rr} 0 & 1 \\ 1 & 0 \end{array}\right). $$
The group $\mathrm{GL}(2,\integers)$ is the free product of the dihedral subgroup $\langle A,  C\rangle$ of order 8 
and the dihedral subgroup $\langle B,  C \rangle$ of order 12 amalgamated 
along the dihedral subgroup $\langle -I, C \rangle$ of order 4. 
Every finite subgroup of  $\mathrm{GL}(2,\integers)$ is conjugate to a subgroup 
of either $\langle A,  C\rangle$ or $\langle B,  C \rangle$. 
\end{lemma}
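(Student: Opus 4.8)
The plan is to deduce the amalgam structure for $\mathrm{GL}(2,\integers)$ from the corresponding structure for $\mathrm{PGL}(2,\integers)$, which is already available in Lemma 45, by pulling it back along the central extension
$$1 \to \{\pm I\} \to \mathrm{GL}(2,\integers) \buildrel q \over \longrightarrow \mathrm{PGL}(2,\integers) \to 1.$$
First I would record the elementary group theory of the three subgroups. A direct computation gives $A^4 = I$, $C^2 = I$, and $CAC^{-1} = A^{-1}$, so $\langle A, C\rangle$ is dihedral of order $8$; likewise $B^3 = -I$, $B^6 = I$, $C^2 = I$, and $CBC^{-1} = B^{-1}$, so $\langle B, C\rangle$ is dihedral of order $12$; and since $-I$ is central of order $2$, the group $\langle -I, C\rangle$ is a Klein four-group, dihedral of order $4$. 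Because $-I = A^2 = B^3$, the subgroup $\langle -I, C\rangle$ lies in both $\langle A, C\rangle$ and $\langle B, C\rangle$, and $\{\pm I\} = \ker q$ lies in each. Consequently $q$ carries $\langle A, C\rangle$, $\langle B, C\rangle$, $\langle -I, C\rangle$ onto the order $4$, $6$, $2$ subgroups $\langle \pm A, \pm C\rangle$, $\langle \pm B, \pm C\rangle$, $\langle \pm C\rangle$ of Lemma 45, and a count of orders shows that each of these three subgroups is exactly the full $q$-preimage of its image.

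The heart of the argument is then to promote the amalgam $\mathrm{PGL}(2,\integers) = \langle \pm A, \pm C\rangle *_{\langle \pm C\rangle} \langle \pm B, \pm C\rangle$ of Lemma 45 to the asserted amalgam for $\mathrm{GL}(2,\integers)$. Writing $G_1 = \langle A, C\rangle$, $G_2 = \langle B, C\rangle$, $H = \langle -I, C\rangle$, and $Z = \{\pm I\}$, I would form the abstract amalgam $\Lambda = G_1 *_H G_2$ together with the natural homomorphism $\pi: \Lambda \to \mathrm{GL}(2,\integers)$ induced by the inclusions of $G_1$ and $G_2$. Surjectivity of $\pi$ is immediate, since $q(G_1)$ and $q(G_2)$ generate $\mathrm{PGL}(2,\integers)$ while $G_1$ already contains $Z = \ker q$. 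For injectivity I would exploit that the image of $-I$ is central in $\Lambda$ (an element of $H$ that is central in both factors is central in the amalgam) and lies in the amalgamating subgroup, so that modding out by the central subgroup it generates gives $\Lambda/\langle -I\rangle = (G_1/Z) *_{H/Z} (G_2/Z) = \langle \pm A, \pm C\rangle *_{\langle \pm C\rangle} \langle \pm B, \pm C\rangle = \mathrm{PGL}(2,\integers)$ by Lemma 45. This fits $\pi$ into a morphism of central extensions whose induced maps on the kernel $\langle -I\rangle \to Z$ and on the quotient $\Lambda/\langle -I\rangle \to \mathrm{PGL}(2,\integers)$ are both isomorphisms; the short five lemma (for groups) then forces $\pi$ to be an isomorphism, which is the first assertion.

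The second assertion follows formally once the amalgam is in hand: by Theorem 8 of \S 4.3 of \cite{Serre}, every finite subgroup of $G_1 *_H G_2$ is conjugate into $G_1$ or $G_2$, exactly as in the proof of Lemma 45. The main obstacle I anticipate is the injectivity step, specifically justifying cleanly that reducing the amalgam $\Lambda$ by the central subgroup $\langle -I\rangle \subseteq H$ returns the amalgam of the quotients; this is where the hypotheses that $-I$ is central and that $-I \in H$ are both essential, and I would verify it via the universal property of the amalgamated free product rather than by normal-form manipulations. Everything else is bookkeeping with the orders of small dihedral groups.
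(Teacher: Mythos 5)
Your proof is correct, but it reaches the amalgam decomposition by a genuinely different route than the paper. The paper's proof verifies the same dihedral relations ($A^2=-I$, $CAC^{-1}=A^{-1}$, $B^3=-I$, $CBC^{-1}=B^{-1}$) and then simply quotes the presentation of $\mathrm{GL}(2,\integers)$ from \S 7.2 of Coxeter--Moser, exactly in parallel with its proof of Lemma 45 for $\mathrm{PGL}(2,\integers)$; the only other ingredient is Serre's Theorem 8 of \S 4.3, which you invoke identically for the statement about finite subgroups. You instead take Lemma 45 as the sole external input about presentations and lift its amalgam through the central extension $1 \to \{\pm I\} \to \mathrm{GL}(2,\integers) \to \mathrm{PGL}(2,\integers) \to 1$: you form the abstract amalgam $\Lambda = \langle A,C\rangle *_{\langle -I,C\rangle} \langle B,C\rangle$, observe that $-I$ is central in both factors (it is the central involution of each dihedral group) and lies in the amalgamating subgroup, use the universal property to identify $\Lambda/\langle -I\rangle$ with the amalgam of the quotients, i.e.\ with $\mathrm{PGL}(2,\integers)$ by Lemma 45, and conclude by the short five lemma. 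This is all sound --- the universal-property computation of the quotient amalgam and the five-lemma chase both go through for nonabelian groups, and the order counts confirming that each subgroup is the full preimage of its image are correct. What each approach buys: the paper's is shorter and uniform with its treatment of $\mathrm{PGL}(2,\integers)$, at the price of citing Coxeter--Moser twice for two separate presentations; yours needs only the $\mathrm{PGL}$ presentation and makes the logical dependence of Lemma 50 on Lemma 45 explicit, at the cost of a somewhat longer argument whose one delicate point (that reducing an amalgam by a central subgroup of the amalgamated part yields the amalgam of the quotients) you correctly flag and correctly resolve via the universal property.
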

\begin{proof}
We have that $A^2 = -I, C^2 = I$, and $CAC^{-1} = A^{-1}$. 
Hence $\langle A, C\rangle$ is a dihedral group of order 8. 
We have that $B^3 = -I$ and $C B C^{-1} = B^{-1}$. 
Hence $\langle B, C\rangle$ is a dihedral group of order 12. 
It follows from the group presentation for $\mathrm{GL}(2,\integers)$ 
given in \S 7.2 of \cite{C-M} that $\mathrm{GL}(2,\integers)$ is the free 
product of the subgroups $\langle A, C\rangle$ and $\langle B, C\rangle$ 
amalgamated along the subgroup $\langle -I, C \rangle$. 
By Theorem 8 of \S 4.3 of \cite{Serre}, every finite subgroup of $\mathrm{GL}(2,\integers)$ 
is conjugate to a subgroup of either $\langle A, C\rangle$ or $\langle B, C\rangle$. 
\end{proof}

\begin{lemma} 
The group $\mathrm{GL}(2,\integers)$ has seven conjugacy classes of elements of finite order,  
the class of $I$, three conjugacy classes of elements of order 2, 
represented by $-I, CA, C$, one conjugacy class of elements of order 3 represented by $B^2$, 
one conjugacy class of elements of order 4 represented by $A$, 
and one conjugacy class of element of order 6 represented by $B$. 
\end{lemma}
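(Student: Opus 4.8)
The plan is to leverage the structure theorem established in the preceding lemma: $\mathrm{GL}(2,\integers)$ is the free product of the dihedral factor $H_1 = \langle A, C\rangle$ of order $8$ and the dihedral factor $H_2 = \langle B, C\rangle$ of order $12$, amalgamated along $\langle -I, C\rangle$, and every finite subgroup of $\mathrm{GL}(2,\integers)$ is conjugate into $H_1$ or $H_2$. Since any element $g$ of finite order generates a finite cyclic group $\langle g\rangle$, that lemma shows $g$ is conjugate to an element of $H_1$ or $H_2$. Thus the conjugacy classes of finite-order elements of $\mathrm{GL}(2,\integers)$ are obtained by listing the conjugacy classes of the two dihedral factors and then determining which classes \emph{fuse} in the ambient group. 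First I would record the class structure of the factors: $H_1\cong D_4$ has representatives $I,\,-I,\,A,\,C,\,CA$ (of orders $1,2,4,2,2$), and $H_2\cong D_6$ has representatives $I,\,-I,\,B,\,B^2,\,C,\,BC$ (of orders $1,2,6,3,2,2$).

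Next I would dispose of every class except the reflections using the fact that conjugation preserves order. An element of order $4$ can only be conjugate into $H_1$ (as $D_6$ has no element of order $4$), and $H_1$ has a single class of order-$4$ elements; hence $\mathrm{GL}(2,\integers)$ has exactly one class of order-$4$ elements, represented by $A$. Symmetrically, elements of order $3$ or $6$ occur only in $H_2$, which contributes exactly one class of each, represented by $B^2$ and $B$. The identity is its own class, and $-I$ is central in all of $\mathrm{GL}(2,\integers)$, so $\{-I\}$ is a single class (the same element inside both factors). This already pins down the five classes $I,\,-I,\,A,\,B^2,\,B$, none of which can coincide with another (distinct orders, and $-I$ central), nor with any reflection (the reflections have determinant $-1$ while these have determinant $1$).

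The one remaining case, and the main obstacle, is the order-$2$ determinant-$(-1)$ elements (reflections): both factors contain reflections, so a priori their classes could split or merge in several ways. I would resolve this cleanly with the reduction-mod-$2$ homomorphism $\rho\colon \mathrm{GL}(2,\integers)\to \mathrm{GL}(2,\integers/2\integers)\cong S_3$. A direct computation gives $\rho(CA)=\rho(\diag(1,-1))=I$ while $\rho(C)$ is a transposition; since $\rho$ sends conjugate elements to conjugate elements, $C$ and $CA$ are not conjugate, so there are at least two reflection classes. To see there are no more, I would check that every reflection lying in a factor reduces under $\rho$ either to the identity or to a transposition: using $\rho(B)$ of order $3$ and $\rho(C)$ of order $2$ in $S_3$, each $B^kC$ maps to a transposition, so all six reflections of $H_2$ fall in the $C$-class, whereas in $H_1$ the pair $C,-C$ reduces to a transposition and the pair $CA,\,AC=-CA$ reduces to $I$. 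Here I would also note the explicit conjugacies $ACA^{-1}=-C$ and $C(CA)C^{-1}=-CA$, which fold $-C$ into the class of $C$ and $-CA$ into the class of $CA$ inside $\mathrm{GL}(2,\integers)$.

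Finally, assembling the pieces yields exactly the seven classes claimed: the class of $I$, the class of the central element $-I$, the two reflection classes represented by $C$ and $CA$, and the single classes of order $3$, $4$, and $6$ represented by $B^2$, $A$, and $B$. I expect the reflection fusion to be the only delicate point, and reduction modulo $2$ provides the robust conjugation invariant that both separates the two reflection classes and certifies that no further ones exist.
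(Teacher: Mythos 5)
Your overall route is the same as the paper's: reduce to the two dihedral factors $\langle A,C\rangle$ and $\langle B,C\rangle$ via the amalgamated free product decomposition of Lemma 50, list their conjugacy classes, and then decide which classes fuse in $\mathrm{GL}(2,\integers)$. The handling of $I$, $-I$, and the classes of orders $3$, $4$, $6$ is correct, and your reduction mod $2$ is a clean, concrete way to separate $C$ from $CA$ (the paper only says this follows ``by a simple proof by contradiction'').

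There is, however, one invalid step: you argue that because every reflection $B^kC$ of the order-$12$ factor reduces mod $2$ to a transposition, ``all six reflections of $H_2$ fall in the $C$-class.'' A conjugation invariant can only certify that two elements are \emph{not} conjugate; it cannot merge classes. The mod-$2$ image shows the reflections of $H_2$ are not conjugate to $CA$, but a priori they could still form a third class of determinant-$(-1)$ involutions: the two reflection classes of the dihedral group of order $12$ are $\{C, B^2C, B^4C\}$ and $\{BC, B^3C, B^5C\}$, and you must actually exhibit a conjugation fusing the second with the first. This is precisely the fact the paper records as ``$C$ is conjugate to $CB$ in $\mathrm{GL}(2,\integers)$.'' The repair is immediate from ingredients already on your page: since $B^3=-I$, the second class contains $B^3C=-C$, and your identity $ACA^{-1}=-C$ shows $-C$ is conjugate to $C$ in $\mathrm{GL}(2,\integers)$; hence $BC\sim B^3C=-C\sim C$, and the reflections of $H_2$ do all land in the class of $C$. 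With that substitution for the mod-$2$ non-sequitur, the proof is complete.
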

\begin{proof} By Lemma 50 an element of $\mathrm{GL}(2,\integers)$ of finite order 
is conjugate to either $I, -I, CA, C, CB, B^2, A,$ or $B$.  
As $-I$ commutes with every element of $\mathrm{GL}(2,\integers)$, 
we have that $-I$ is conjugate to only itself. 
The element $CA$ is not conjugate to $C$ by a simple proof by contradiction. 
The element $C$ is conjugate to $CB$ in $\mathrm{GL}(2,\integers)$. 
\end{proof}

\begin{lemma} 
Let $\Mu = \langle e_1+I, e_2+I\rangle$, let $\Kappa = \{(a+I)_\star: a \in E^2\}$, and  
let $A, B, C$ be defined as in Lemma 50. 
The group $\mathrm{Aff}(\Mu)$  is the free product of the subgroup $\langle \Kappa, A_\star, C_\star\rangle$ 
and the subgroup $\langle \Kappa, B_\star, C_\star\rangle$ amalgamated 
along the subgroup $\langle \Kappa,  (-I)_\star, C_\star\rangle$.  
Every finite subgroup of $\mathrm{Aff}(\Mu)$ is conjugate to a subgroup of either
$\langle \Kappa, A_\star, C_\star\rangle$ or $\langle \Kappa, B_\star, C_\star\rangle$. 
\end{lemma}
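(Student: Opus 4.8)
The plan is to deduce the decomposition from the split extension of Lemma 49 and the amalgam structure of Lemma 50, exactly as Lemma 46 was deduced from Lemmas 44 and 45. By Lemma 49 the map $\eta:\mathrm{Aff}(\Mu)\to\mathrm{GL}(2,\integers)$ is an epimorphism with kernel $\Kappa$. Since $\eta(A_\star)=A$, $\eta(B_\star)=B$, $\eta(C_\star)=C$ and $\eta((-I)_\star)=-I$, and since each of the three subgroups $\langle\Kappa,A_\star,C_\star\rangle$, $\langle\Kappa,B_\star,C_\star\rangle$, $\langle\Kappa,(-I)_\star,C_\star\rangle$ contains the full kernel $\Kappa$, these subgroups are precisely the preimages $\eta^{-1}\langle A,C\rangle$, $\eta^{-1}\langle B,C\rangle$, $\eta^{-1}\langle -I,C\rangle$. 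By Lemma 50 we have $\mathrm{GL}(2,\integers)=\langle A,C\rangle\ast_{\langle -I,C\rangle}\langle B,C\rangle$, so the whole argument reduces to transferring this amalgam through $\eta$.

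The key step is the general principle that an extension of an amalgamated free product is the amalgamated free product of the preimages of its factors, amalgamated over the preimage of the edge group. I would realize this through Bass--Serre theory (see \cite{Serre}): let $T$ be the tree on which $\mathrm{GL}(2,\integers)$ acts with quotient a single edge, without inversions, with vertex stabilizers $\langle A,C\rangle$, $\langle B,C\rangle$ and edge stabilizer $\langle -I,C\rangle$. Pulling this action back through $\eta$ gives an action of $\mathrm{Aff}(\Mu)$ on $T$; it is still without inversions and still has that single edge as quotient, but now with vertex stabilizers $\eta^{-1}\langle A,C\rangle$, $\eta^{-1}\langle B,C\rangle$ and edge stabilizer $\eta^{-1}\langle -I,C\rangle$. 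The fundamental theorem for groups acting on trees then yields $\mathrm{Aff}(\Mu)=\langle\Kappa,A_\star,C_\star\rangle\ast_{\langle\Kappa,(-I)_\star,C_\star\rangle}\langle\Kappa,B_\star,C_\star\rangle$. A purely algebraic alternative avoids trees: define the canonical homomorphism $\Phi$ from the abstract amalgam to $\mathrm{Aff}(\Mu)$; surjectivity is immediate because the two factors contain $\Kappa=\ker\eta$ and map onto generating subgroups of $\mathrm{GL}(2,\integers)$, while injectivity follows by applying the normal form theorem for amalgams downstairs, using the crucial identity $\eta^{-1}\langle -I,C\rangle=\langle\Kappa,(-I)_\star,C_\star\rangle$, which guarantees that a reduced word upstairs projects to a reduced word in $\mathrm{GL}(2,\integers)$. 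I expect this injectivity --- ruling out hidden relations among the $\Kappa$-cosets --- to be the only real content; everything else is bookkeeping with $\eta$.

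For the second assertion, any finite subgroup of an amalgamated free product acts on the Bass--Serre tree with a fixed vertex, and hence is conjugate into one of the two vertex groups. Invoking Theorem 8 of \S 4.3 of \cite{Serre}, exactly as in the proof of Lemma 50, gives that every finite subgroup of $\mathrm{Aff}(\Mu)$ is conjugate to a subgroup of $\langle\Kappa,A_\star,C_\star\rangle$ or of $\langle\Kappa,B_\star,C_\star\rangle$; note that this conclusion remains valid even though both factors are infinite.
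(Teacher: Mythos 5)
Your proposal is correct and takes essentially the same route as the paper, which disposes of this lemma with the single sentence ``This follows easily from Lemmas 49 and 50.'' Your elaboration --- identifying the three subgroups as the full preimages under $\eta$ of $\langle A,C\rangle$, $\langle B,C\rangle$, $\langle -I,C\rangle$ (each contains $\Kappa=\ker\eta$), then transferring the amalgam of Lemma 50 through $\eta$ either by pulling back the Bass--Serre tree action or by the normal-form argument, and handling finite subgroups via the fixed-vertex property --- is exactly the intended argument, just written out in full.
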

\begin{proof} This follows easily from Lemmas 49 and 50. 
\end{proof}

Let $\Lambda = \langle e_1+I, e_1/2 + \sqrt{3}e_2/2+I\rangle$. 
Then $\Lambda$ is a 2-space group, and the flat orbifold $E^2/\Lambda$ is a {\it hexagonal torus} $\varhexagon$. 

\begin{lemma} 
Let $\Mu = \langle e_1+I, e_2+I\rangle$, and let $\Lambda = \langle e_1+I, e_1/2 + \sqrt{3}e_2/2+I\rangle$. 
Let $\Kappa = \{(a+I)_\star: a \in E^2\}$, and  
let 
$$A = \left(\begin{array}{rr} 0 & -1 \\ 1 & 0 \end{array}\right),\ \  B = \left(\begin{array}{rr} 0 & -1 \\ 1 & 1 \end{array}\right), 
\ \ C = \left(\begin{array}{rr} 0 & 1 \\ 1 & 0 \end{array}\right), \ \ 
D  = \left(\begin{array}{rr} 1 & 1/2 \\ 0 & \sqrt{3}/2 \end{array}\right). $$
Then $\mathrm{Sym}(\Mu) = \langle \Kappa, A_\star, C_\star\rangle$,  and $D\Mu D^{-1}=\Lambda$, 
and $D_\sharp: \mathrm{Aff}(\Mu) \to \mathrm{Aff}(\Lambda)$ is an isomorphism, 
and $D_\sharp^{-1}(\mathrm{Sym}(\Lambda)) = \langle \Kappa, B_\star, C_\star\rangle$. 
\end{lemma}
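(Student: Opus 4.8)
The plan is to reduce all four assertions to the epimorphism $\eta:\mathrm{Aff}(\Mu)\to\mathrm{GL}(2,\integers)$ of the earlier lemma for the torus (the one giving $\eta((a+A)_\star)=A$ with kernel $\Kappa$). The crucial observation, which I would record first, is that an affinity $(a+A)_\star$ of $E^2/\Mu$ is an \emph{isometry} if and only if $A\in\mathrm{O}(2)$. Indeed, by Lemma 5 every isometry of $E^2/\Mu$ lifts to an element of $N_E(\Mu)$, and since $\Mu$ consists of translations all lifts of $(a+A)_\star$ share the linear part $A$; hence the coset $(a+A)\Mu$ meets $\mathrm{Isom}(E^2)$ exactly when $A\in\mathrm{O}(2)$. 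Combining this with $\eta$ gives $\mathrm{Sym}(\Mu)=\eta^{-1}(\mathrm{O}(2)\cap\mathrm{GL}(2,\integers))$, and the whole proof becomes a matter of identifying certain finite subgroups of $\mathrm{GL}(2,\integers)$.

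For the first assertion I would identify $\mathrm{O}(2)\cap\mathrm{GL}(2,\integers)$ by direct enumeration: an integer orthogonal matrix has orthonormal columns with integer entries, so each column is one of $\pm e_1,\pm e_2$, giving exactly the eight matrices that form the dihedral group $\langle A,C\rangle$ (the order-$8$ subgroup of the $\mathrm{GL}(2,\integers)$ free-product lemma). Since $\Kappa=\ker\eta\subseteq\mathrm{Sym}(\Mu)$, pulling back yields $\mathrm{Sym}(\Mu)=\eta^{-1}(\langle A,C\rangle)=\langle\Kappa,A_\star,C_\star\rangle$.

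The second and third assertions are short. From $De_1=e_1$ and $De_2=\tfrac12 e_1+\tfrac{\sqrt3}{2}e_2$ I get $D(e_1+I)D^{-1}=e_1+I$ and $D(e_2+I)D^{-1}=\tfrac12 e_1+\tfrac{\sqrt3}{2}e_2+I$, so $D\Mu D^{-1}=\Lambda$. The isomorphism $D_\sharp:\mathrm{Aff}(\Mu)\to\mathrm{Aff}(\Lambda)$ is then immediate from the general fact, stated just before Lemma 14, that conjugation by an affinity $\phi$ with $\phi\Gamma_1\phi^{-1}=\Gamma_2$ induces an isomorphism $\phi_\sharp$ with inverse $(\phi^{-1})_\sharp$, applied to $\phi=D$.

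For the last assertion, note $D_\sharp((a+A)_\star)=(Da+DAD^{-1})_\star$, and since $\Lambda$ also consists of translations the lifting argument above shows $D_\sharp((a+A)_\star)\in\mathrm{Sym}(\Lambda)$ iff $DAD^{-1}\in\mathrm{O}(2)$. Hence $D_\sharp^{-1}(\mathrm{Sym}(\Lambda))=\eta^{-1}(G)$ with $G=\{A\in\mathrm{GL}(2,\integers):DAD^{-1}\in\mathrm{O}(2)\}$, and it remains to prove $G=\langle B,C\rangle$. A short matrix computation shows $DBD^{-1}$ is the rotation by $60^\circ$ and $DCD^{-1}$ a reflection, both orthogonal, so $\langle B,C\rangle\subseteq G$ and therefore $|G|\ge 12$. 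The group $G$ is finite, since $A\mapsto DAD^{-1}$ embeds it into $\mathrm{O}(2)$ and its elements preserve the positive-definite form $v\mapsto|Dv|^2$ on $\integers^2$. By the free-product lemma for $\mathrm{GL}(2,\integers)$, every finite subgroup is conjugate to a subgroup of $\langle A,C\rangle$ (order $8$) or of $\langle B,C\rangle$ (order $12$); the former is impossible as $|G|\ge 12$, so $|G|\le 12$ and thus $G=\langle B,C\rangle$. Pulling back through $\eta$ gives $D_\sharp^{-1}(\mathrm{Sym}(\Lambda))=\langle\Kappa,B_\star,C_\star\rangle$. I expect the identification $G=\langle B,C\rangle$ to be the main obstacle: unlike the square case it is not a direct enumeration, and the cleanest route is the maximal-finite-subgroup dichotomy together with the order comparison, exactly paralleling the maximality argument used in the analogous Lemma 47.
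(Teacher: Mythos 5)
Your proof is correct, and its engine is the same as the paper's: everything is reduced to identifying a finite subgroup of $\mathrm{GL}(2,\integers)$ and pinning it down with the amalgamated free product structure of Lemma 50. The packaging differs in two respects. First, where the paper routes through $\mathrm{Out}_E(\Mu)=\mathrm{Aut}_E(\Mu)$ and Theorem 12 (and Lemma 14 for the transported statement about $\Lambda$), you work directly with the epimorphism $\eta$ of Lemma 49 together with the observation that an affinity $(a+A)_\star$ is an isometry exactly when $A\in\mathrm{O}(2)$ --- a clean, self-contained justification that exploits the fact that $\Mu$ and $\Lambda$ consist of translations, so all lifts share the same linear part. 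Second, for $\mathrm{Sym}(\Mu)$ you enumerate $\mathrm{O}(2)\cap\mathrm{GL}(2,\integers)$ directly (eight matrices), whereas the paper invokes maximality of $\langle A,C\rangle$; your enumeration is more elementary, and you then fall back on the maximal-finite-subgroup dichotomy only for the hexagonal case, exactly as the paper does (your order-comparison phrasing of that step is just the maximality argument unwound). Both arguments buy the same thing; yours makes explicit the finiteness of $G=D^{-1}\mathrm{O}(2)D\cap\mathrm{GL}(2,\integers)$ via the preserved positive-definite form, a point the paper leaves implicit in calling $D_\#^{-1}(\mathrm{Aut}_E(\Lambda))$ finite.
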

\begin{proof}
Now $\mathrm{Out}_E(\Mu) = \mathrm{Aut}_E(\Mu)$ is a finite subgroup of 
$\mathrm{Aut}(\Mu) = \mathrm{GL}(2,\integers)$ that contains $\langle A, C\rangle$. 
Hence $\mathrm{Aut}_E(\Mu)= \langle A, C\rangle$, since $\langle A, C\rangle$ 
is a maximal finite subgroup of $\mathrm{GL}(2,\integers)$ by Lemma 50. 
The group $\Kappa$ is the connected component of $\mathrm{Sym}(\Mu)$ that contains $I_\star$. 
Hence $\mathrm{Sym}(\Mu) = \langle \Kappa, A_\star, C_\star\rangle$ by Theorem 12. 

Clearly $D\Mu D^{-1}=\Lambda$, and so 
$D_\#: \mathrm{Aut}(\Mu) \to \mathrm{Aut}(\Lambda)$ and 
$D_\sharp: \mathrm{Aff}(\Mu) \to \mathrm{Aff}(\Lambda)$ are isomorphisms. 
Now $D_\#(\langle B, C\rangle)$ is a subgroup of $\mathrm{Aut}_E(\Lambda)$,  
since $DBD^{-1}$, $DCD^{-1} \in \mathrm{O}(2)$.  
Hence $D_\#^{-1}(\mathrm{Aut}_E(\Lambda))$ is a finite subgroup of $\mathrm{Aff}(\Mu)$ 
that contains $\langle B, C\rangle$. 
Therefore $D_\#^{-1}(\mathrm{Aut}_E(\Lambda)) = \langle B, C\rangle$,
since $\langle B, C\rangle$ is a maximal finite subgroup of $\mathrm{GL}(2,\integers)$ by Lemma 50. 
Hence $D_\sharp^{-1}(\mathrm{Sym}(\Lambda)) = \langle \Kappa, B_\star, C_\star\rangle$ 
by Theorem 12 and Lemma 14. 
\end{proof}

The square torus $\Box = E^2/\Mu$ is formed by identifying the opposite sides 
of the square fundamental domain for $\Mu$, with vertices $(\pm 1/2, \pm 1/2)$, by translations. 
By Lemma 53, the group $\mathrm{Isom}(\Box) = \mathrm{Sym}(\Mu)$ is the semidirect product of the translation subgroup 
$\Kappa = \{(a+I)_\star: a \in E^2\}$ 
and the dihedral subgroup $\langle A_\star, C_\star\rangle$ of order 8 
induced by the symmetry group $\langle A, C\rangle$ of the square fundamental domain of $\Mu$. 
The elements of $\Kappa$ of order at most 2 form the dihedral group of order 4, 
$$\Kappa_2 = \{I_\star, (e_1/2+I)_\star, (e_2/2+I)_\star, (e_1/2+e_2/2+I)_\star\}. $$

By Lemma 54 below, 
the group $\mathrm{Isom}(\Box)$ has six conjugacy classes of elements of order 2, 
the class of the {\it horizontal halfturn}  h-rot.\ = $(e_1/2+I)_\star$
(or {\it vertical halfturn} v-rot. = $(e_2/2+I)_\star$), 
the class of the {\it antipodal map} 2-sym. = $(e_1/2+e_2/2+I)_\star$, 
the class of the {\it  halfturn} 2-rot.\ = $(-I)_\star$, 
the class of the {\it  horizontal reflection} h-ref.\ = $(AC)_\star$
(or  {\it vertical reflection} v-ref.\ = $(CA)_\star$), 
the class of the {\it horizontal glide-reflection}  h-grf.\ = $(e_1/2+CA)_\star$, 
(or  {\it vertical glide-reflection} v-grf.\  = $(e_2/2+AC)_\star$), and 
the class of the {\it diagonal reflection} d-ref.\ = $C_\star$
(or perpendicular diagonal reflection e-ref.\ = $(-C)_\star$). 

In the following discussion and Table 18,  
an apostrophe ``$\,\hbox{'}\,$" on a symmetry means that the symmetry is multiplied on the left by h-rot., 
a prime symbol ``$\,'\,$" on a symmetry means that the symmetry is multiplied on the left by v-rot., and  
a double prime symbol ``$\,''\,"$ means that the symmetry is multiplied on the left by 2-sym. 
These alterations do not change the conjugacy class of the symmetry. 

By Lemma 55 below, 
the group $\mathrm{Isom}(\Box)$ has 12 conjugacy classes of dihedral subgroups of order 4, 
the classes of the groups $\Kappa_2 = \{$idt., h-rot., v-rot., 2-sym.$\}$, 
$\{$idt, h-rot., 2-rot., 2-rot.'$\}$, 
$\{$idt, 2-sym., 2-rot., 2-rot.$''$$\}$, 
$\{$idt., h-rot., v-ref., h-grf.$\}$, 
$\{$idt, v-rot, v-ref, v-ref.$'$$\}$, 
$\{$idt, 2-sym., d-ref, d-ref.$''$$\}$, 
$\{$idt, v-rot, h-grf., h-grf.$'$$\}$, 
$\{$idt, 2-sym, v-ref, h-grf.$'$$\}$, 
$\{$idt., 2-rot. h-ref., v-ref.$\}$, 
$\{$idt., 2-rot., h-ref.', h-grf.$\}$, 
$\{$idt., 2-rot., h-grf.$'$, v-grf.'$\}$,  and 
$\{$idt., 2-rot., d-ref., e-ref.$\}$. 

The isometries 4-rot.\ = $A_\star$, and 4-sym.\ = $(e_2/2 + C)_\star$ 
are symmetries of $\Box$ of order 4, 
and the groups $\langle$h-ref., d-ref.$\rangle$, $\langle$d-ref., h-grf.$\rangle$, 
and $\langle$v-rot., d-ref.$\rangle$ are dihedral subgroups of $\mathrm{Isom}(\Box)$ of order 8. 
Moreover 4-rot.\ = (h-ref.)(d-ref.) and 4-rot.$'$ = (d-ref.)(h-grf.) and 4-sym.\ = (v-rot.)(d-ref.). 


The hexagonal torus $\varhexagon = E^2/\Lambda$ is formed by identifying the opposite sides of the regular hexagon 
fundamental domain, with vertices $(\pm 1/2,\pm \sqrt{3}/6), (0, \pm \sqrt{3}/3)$, by translations. 
By Lemma 53, the group $\mathrm{Isom}(\varhexagon)$ is the semidirect product 
of the subgroup $\Kappa = \{(a+I)_\star: a \in E^2\}$ 
and the dihedral subgroup $\langle (DBD^{-1})_\star, (DCD^{-1})_\star\rangle$ of order 12 
induced by the symmetry group $\langle DBD^{-1}, DCD^{-1}\rangle$ of the regular hexagon fundamental domain of $\Lambda$. 

We denote the symmetry of $\varhexagon$ represented by the $180^\circ, 120^\circ, 60^\circ$ rotation 
about the center of the regular hexagon fundamental domain of $\Lambda$ 
by 2-rot.\ = $(-I)_\star$, 3-rot.\ = $(DB^2D^{-1})_\star$, 6-rot.\ = $(DBD^{-1})_\star$, respectively. 
Define affinities of $\Box$  
by 3-aff.\ $= (B^2)_\star$ and 6-aff.\ $= B_\star$. 
The affinities 3-aff.\ and 6-aff.\ of $\Box$ are conjugate by $D_\star$ 
to the symmetries 3-rot.\ and 6-rot.\ of $\varhexagon$, respectively.

Define the reflection l-ref.\ of $\varhexagon$ by  l-ref.\ = $(DCD^{-1})_\star$. 
Then l.ref.\ is represented by the reflection in the line segment joining 
the two opposite vertices $\pm (1/2,\sqrt{3}/6)$ of the regular hexagon.  
Define the reflection m-ref.\ of $\varhexagon$ by m-ref.\ = $(DCBD^{-1})_\star$. 
Now $DCBD^{-1} = CA$, and so m-ref.\ is represented by the reflection in the line segment joining 
the midpoints $(\pm 1/2, 0)$ of the two vertical sides of the regular hexagon. 
Define the reflections n-ref.\  and o-ref.\ of $\varhexagon$ by n-ref.\ = $(DCB^2D^{-1})_\star$ and 
o-ref.\ = $(DCB^3D^{-1})_\star = (D(-C)D^{-1})_\star$. 
The groups $\langle$l-ref., n-ref.$\rangle$ and $\langle$m-ref., o-ref.$\rangle$ are dihedral groups of order 6   
with 3-rot.\ = (l-ref.)(n-ref.) = (m-ref.)(o-ref.). 
The group $\langle$l-ref., m-ref.$\rangle$ is a dihedral group of order 12 with 6-rot.\ = (l-ref.)(m-ref.). 
The symmetries 2-rot., d-ref., e-ref.\ of $\Box$ are conjugate by $D_\star$ to the symmetries 2-rot., l-ref., o-ref.\  of $\varhexagon$, respectively. 

Define affinities of $\Box$ 
by m-aff.\ = $(CB)_\star$ and n-aff.\ = $(CB^2)_\star$. 
The affinities m-aff.\ and n-aff.\ of $\Box$ are conjugate by $D_\star$ 
to the symmetries m-ref.\ and n-ref.\ of $\varhexagon$, respectively. 
The groups $\langle$d-ref., n-aff.$\rangle$, $\langle$m-aff., e-ref.$\rangle$, $\langle$d-ref., m-aff.$\rangle$ 
are conjugate by $D_\star$ to the groups $\langle$l-ref., n-ref.$\rangle$, $\langle$m-ref., o-ref.$\rangle$, 
$\langle$l-ref., m-ref.$\rangle$, respectively. 

\begin{lemma} 
Let $\Mu = \langle e_1+I, e_2+I\rangle$, and let $\kappa = (k+K)_\star$ be an element of $\mathrm{Aff}(\Mu)$ 
such that $K$ has finite order.  Then $K$ is conjugate to exactly one of  $I, -I, CA, C, B^2, A$ or $B$. 
\begin{enumerate}
\item If $K = I$ and $\kappa$ has order 2, then $\kappa \in \Kappa_2$ and $\kappa$ is conjugate to {\rm h-rot.\ =} $(e_1/2+I)_\star$. 
\item If $K =  -I$, then $\kappa$ is conjugate to {\rm 2-rot.\ =} $(-I)_\star$. 
\item If $K$ is conjugate to $CA$ and $\kappa$ has order 2, 
then $\kappa$ is conjugate to exactly one of {\rm  v-ref.\ =} $(CA)_\star$ or 
{\rm  h-grf.\ =} $(e_1/2 +CA)_\star$.
\item If $K$ is conjugate to $C$ and $\kappa$ has order 2 , then $\kappa$ is conjugate to {\rm  d-ref.\ =} $C_\star$.
\item If $K$ is conjugate to $B^2$, then $\kappa$ is conjugate to {\rm  3-aff.\ =} $(B^2)_\star$. 
\item If $K$ is conjugate to $A$, then $\kappa$ is conjugate to {\rm  4-rot.\ =} $A_\star$. 
\item If $K$ is conjugate to $B$, then $\kappa$ is conjugate to {\rm  6-aff.\ =} $B_\star$. 
\end{enumerate}
\end{lemma}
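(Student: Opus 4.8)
The plan is to exploit the conjugacy-class structure of finite-order elements in $\mathrm{GL}(2,\integers)$ established in Lemma 52, lifted up through the group extension $1 \to \Kappa \to \mathrm{Aff}(\Mu) \to \mathrm{GL}(2,\integers) \to 1$ of Lemma 49. First I would invoke Lemma 52 to observe that since $K$ has finite order, $K$ is conjugate in $\mathrm{GL}(2,\integers)$ to exactly one of $I, -I, CA, C, B^2, A, B$; this immediately gives the classification of $K$ up to conjugacy and reduces the problem to seven cases matching the seven enumerated statements. For each case I would then need to pin down the conjugacy class of $\kappa = (k+K)_\star$ itself, not merely of its linear part $K$. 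The key tool is that conjugating $\kappa$ by $(a+I)_\star \in \Kappa$ replaces $k$ by $k + (I-K)a$ (up to a lattice element, since we work in $\mathrm{Aff}(\Mu)$), so the translational part $k$ can be normalized modulo the image of $(I-K)$ acting on $E^2$, together with the lattice $\integers^2$.

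The heart of the argument is therefore a case-by-case analysis of the cokernel of $(I-K): E^2 \to E^2$ modulo $\integers^2$, which controls how much of the translation part survives conjugation. First I would handle the cases where $I-K$ is invertible: for $K$ conjugate to $-I$ (case 2), $B^2$ (case 5), $A$ (case 6), or $B$ (case 7), the matrix $I-K$ is invertible over $\rationalnos$, so every translation $k$ can be absorbed and $\kappa$ is conjugate to $K_\star$ itself, giving the stated representatives 2-rot., 3-aff., 4-rot., 6-aff. Here I should be careful that absorbing $k$ requires solving $(I-K)a \equiv k \pmod{\integers^2}$ with $a \in E^2$, which is solvable precisely because $I-K$ is surjective on $E^2$; the integrality obstruction is what separates the next, harder cases. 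For $K = I$ (case 1), $I - K = 0$, so no translation can be absorbed, and $\kappa \in \Kappa$; restricting to order-$2$ elements forces $\kappa \in \Kappa_2$, and then I would note all three nonidentity elements of $\Kappa_2$ are conjugate (via $A_\star$ and $C_\star$, which permute h-rot., v-rot., 2-sym.) to h-rot.

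The main obstacle will be the reflection cases, $K$ conjugate to $CA$ (case 3) and to $C$ (case 4), where $I-K$ has rank one, so exactly a one-dimensional family of translations survives conjugation and one must decide how many order-$2$ classes result. For these I would work in a conjugate frame where $K$ is an actual reflection matrix, decompose $k = k^+ + k^-$ into the $(+1)$- and $(-1)$-eigenspaces of $K$, observe that the $(-1)$-component is freely absorbable while the $(+1)$-component (the glide vector) is an invariant modulo the lattice, and that the order-$2$ condition $\kappa^2 = \ov I_\star$ forces $2k^+ \in \integers^2$, i.e.\ the glide is a half-lattice vector along the axis. For $K \sim CA$ the surviving invariant takes two values, yielding the pure reflection v-ref.\ and the genuine glide-reflection h-grf.; for $K \sim C$ the corresponding half-period glide is itself conjugate back to the pure reflection d-ref.\ (because the relevant axis direction meets the lattice differently), so only one class survives. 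Distinguishing these two sub-behaviors — showing h-grf.\ is \emph{not} conjugate to v-ref.\ in case 3 while the analogous glide collapses in case 4 — is the delicate point; I would settle it by computing the glide-length invariant along the reflection axis relative to the lattice, which is preserved under all of $\mathrm{Aff}(\Mu)$ since affinities of $\Mu$ restrict to isometries of $E^2$ by Lemma 11 together with the finiteness forcing orthogonality.
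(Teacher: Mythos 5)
Your overall strategy is the same as the paper's: reduce to the conjugacy classification of finite-order elements of $\mathrm{GL}(2,\integers)$ (that is Lemma 51, not Lemma 52), and then normalize the translation part $k$ modulo the image of $I-K$ and the lattice, which is exactly what the paper does case by case. Your uniform framing via the cokernel of $I-K$ is a clean way to organize the seven cases, and the invertibility argument for $K=-I, B^2, A, B$ is correct. However, two of your steps fail as stated.

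First, in case (1) you claim that $A_\star$ and $C_\star$ conjugate all three nonidentity elements of $\Kappa_2$ to h-rot. They do not: $A$ and $C$ are signed permutation matrices, so the group $\langle A, C\rangle$ acting on $\frac{1}{2}\integers^2/\integers^2$ permutes $e_1/2$ and $e_2/2$ but fixes $e_1/2+e_2/2$; hence 2-sym.\ $=(e_1/2+e_2/2+I)_\star$ is not in the $\langle A_\star, C_\star\rangle$-orbit of h-rot. You need a matrix outside the signed permutations, e.g.\ $B$, since $B(e_2/2) = -e_1/2+e_2/2 \equiv e_1/2+e_2/2 \pmod{\integers^2}$; this is how the paper closes the case.

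Second, in case (3) your justification that the glide invariant distinguishes v-ref.\ from h-grf.\ rests on the claim that affinities of $E^2/\Mu$ restrict to isometries of $E^2$, "finiteness forcing orthogonality." For the torus this is false: $N_A(\Mu) = \{a+A : A \in \mathrm{GL}(2,\integers)\}$ by Lemma 48, the point group $\mathrm{GL}(2,\integers)$ is infinite and contains non-orthogonal matrices such as $B$, and $\mathrm{Aff}(\Mu)$ is strictly larger than $\mathrm{Isom}(E^2/\Mu)$. So Euclidean glide length is not a conjugacy invariant here. The distinction can still be made, either by the paper's argument (v-ref.\ has fixed points on the torus while h-grf.\ does not, and fixed-point existence is manifestly preserved by conjugation), or by correctly formulating your lattice-theoretic invariant: the glide component $k^+$ taken modulo the projection of $\integers^2$ onto $\mathrm{Fix}(K)$ along the $(-1)$-eigenspace is preserved because conjugating affinities preserve the lattice and carry eigenspaces to eigenspaces — not because they preserve length. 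That reformulation also explains cleanly why case (4) collapses to a single class: for $K=C$ the projection of $\integers^2$ onto $\mathrm{Span}(e_1+e_2)$ along $\mathrm{Span}(e_1-e_2)$ is $\frac{1}{2}\integers(e_1+e_2)$, so the half-axis glide is trivial modulo that projected lattice, whereas for $K=CA$ the projected lattice is only $\integers e_1$ and the class of $e_1/2$ survives.
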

\begin{proof} The matrix $K$ is conjugate to exactly one of  $I, -I, CA, C, B^2, A$ or $B$ by Lemma 51. 

(1)  As $\kappa^2 = 2k+I$, we have that $2k \in \integers^2$.  Hence $\kappa \in \Kappa_2$. 
We have that $C(e_1/2+I)C^{-1} = e_2/2+I$ and $B(e_2/2+I)B^{-1} = -e_1/2+e_2/2$. 
Hence $\kappa$ is conjugate to $(e_1/2+I)_\star$. 

(2) We have that $(k/2+I)(-I)(-k/2+I) = k-I$. 

(3) We have that $(k+CA)^2 = 2k_1e_1+I$, and so $2k_1\in \integers$.  
Hence either $\kappa = (k_2e_2+CA)_\star$ or $\kappa = (e_1/2+k_2e_2+CA)_\star$. 
We have that $(k_2e_2/2+I)CA(-k_2e_2/2+I) = k_2e_2+CA$ and 
$(k_2e_2/2+I)(e_1/2+CA)(-k_2e_2/2+I) = e_1/2+k_2e_2+CA$. 
The isometries v-ref.\ and h-grf.\ are not conjugate in $\mathrm{Aff}(\Mu)$, 
since v-ref.\ fixes points of $\Box$ whereas h-grf.\ does not. 

(4) We have that $(k + C)^2 = (k_1+k_2)e_1+(k_1+k_2)e_2+I$. 
Hence $k_1+k_2 \in \integers$, and so $\kappa = (k_1e_1-k_1e_2 +C)_\star$. 
We have that $(k_1e_1/2 -k_1e_2/2+I)C(-k_1e_1/2+k_1e_2/2 +I) = k_1e_1-k_1e_2 +C$. 

(5)  We have that $(a+I)B^2(-a+I) = a -B^2a +B^2$.  The equation $a- B^2a = k$ has 
a solution, since $\mathrm{ker}(I-B^2) = \mathrm{Fix}(B^2) = \{0\}$. 

(6) The proofs of (6) and (7) are similar to the proof of (5). 
\end{proof}

\begin{lemma}  
Let $\Mu = \langle e_1+I,e_2+I\rangle$. 
Let $\kappa = (k+K)_\star$ and $\lambda = (\ell +L)_\star$ be distinct elements of order 2 of $\mathrm{Aff}(\Mu)$ 
such that $\Omega(\kappa\lambda)$ has finite order in $\mathrm{Out}(\Mu)$. 
Let $\Eta = \langle K, L \rangle$.  
Then $K$ and $L$ have order 1 or 2 and either $\Eta$ is a cyclic group of order 1 or 2 or $\Eta$ is a dihedral group 
of order 4, 6, 8 or 12. 
\begin{enumerate}
\item If $\Eta = \{I\}$, then $\langle \kappa, \lambda\rangle = \Kappa_2$ 
and $\{\kappa, \lambda\}$ is conjugate to $\{${\rm h-rot.,  v-rot.}$\}$.
\item If $\Eta$ has order 2, then $\Eta$ is conjugate to exactly one of $\langle -I\rangle$, $\langle CA\rangle$
 or $\langle C\rangle$. 
\begin{enumerate}
\item If $K = I$ and $L = -I$, then $\{\kappa, \lambda\}$ is conjugate to $\{${\rm h-rot.,  2-rot.}$\}$. 
\item If $K = L =-I$, then $\{\kappa, \lambda\}$ is conjugate to 
$\{${\rm 2-rot.}, $(v+I)_\star${\rm 2-rot.}$\}$ with $Kv = -v$.  
\item If $K = I$ and $L$ is conjugate to $CA$, then $\{\kappa, \lambda\}$ is conjugate to exactly one of 
$\{${\rm h-rot.,  v-ref.}$\}$, $\{${\rm v-rot.,  v-ref.}$\}$, $\{${\rm 2-sym.,  v-ref.}$\}$, $\{${\rm h-rot.,  h-grf.}$\}$, 
$\{${\rm v-rot.,  h-grf.}$\}$ or $\{${\rm 2-sym.,  h-grf.}$\}$. 
\item If $K = L$ and $L$ is conjugate to $CA$, then $\{\kappa, \lambda\}$ is conjugate to 
exactly one of $\{${\rm v-ref.}, $(v+I)_\star${\rm v-ref.}$\}$, $\{${\rm v-ref.}, $(v+I)_\star${\rm h-grf.}$\}$, 
or $\{${\rm h-grf.}, $(v+I)_\star${\rm h-grf.}$\}$ with $Kv = -v$.  
\item If $K = I$ and $L$ is conjugate to $C$, then $\{\kappa, \lambda\}$ is conjugate to exactly one of 
$\{${\rm 2-sym.,  d-ref.}$\}$ or $\{${\rm v-rot.,  d-ref.}$\}$. 
\item If $K = L$ and $L$ is conjugate to $C$, then $\{\kappa, \lambda\}$ is conjugate to 
$\{${\rm d-ref.}, $(v+I)_\star${\rm d-ref.}$\}$ with $Kv = -v$. 
\end{enumerate}
\item If $\Eta$ has order 4,  then  $\Eta$ is conjugate to exactly one of  $\langle -I, CA \rangle$ or $\langle -I, C\rangle$. 
\begin{enumerate}
\item If $K = -I$ and $L$ is conjugate to $CA$, then $\{\kappa, \lambda\}$ is conjugate to exactly one of  
$\{${\rm 2-rot.}, $(v+I)_\star${\rm v-ref.}$\}$ or $\{${\rm 2-rot.}, $(v+I)_\star${\rm h-grf.}$\}$ with $L(v) = -v$. 
\item If $K = -L$ and $K$ is conjugate to $CA$, then $\{\kappa, \lambda\}$ is conjugate to exactly one of 
$\{${\rm v-ref., h-ref.}$\}$,  $\{${\rm h-ref.'}, {\rm h-grf.}$\}$ or $\{${\rm h-grf.}$'$, {\rm v-grf.'}$\}$. 
\item If $K = -I$ and $L$ is conjugate to $C$, then $\{\kappa, \lambda\}$ is conjugate to 
$\{${\rm 2-rot.}, $(v+I)_\star${\rm d-ref.}$\}$ with $L(v) = -v$. 
\item If $K = -L$ and $K$ is conjugate to $C$, then $\{\kappa, \lambda\}$ is conjugate to $\{${\rm d-ref., e-ref.}$\}$. 
\end{enumerate}
\item If $\Eta$ has order 8,  then $\Eta$ is conjugate to $\langle A, C\rangle$ and 
$\{\kappa, \lambda\}$ is conjugate to exactly one of $\{${\rm d-ref.,  h-ref.}$\}$ or $\{${\rm d-ref.,  h-grf.}$\}$. 
\item If $\Eta$ has order 6,  then $\Eta$ is conjugate to exactly one of $\langle B^2, C\rangle$ or $\langle B^2, CB\rangle$. 
\begin{enumerate}
\item If $\Eta$ is conjugate to $\langle B^2, C\rangle$, then $\{\kappa, \lambda\}$ is conjugate to $\{${\rm d-ref.,  n-aff.}$\}$.
\item If $\Eta$ is conjugate to $\langle B^2, CB\rangle$, then $\{\kappa, \lambda\}$ is conjugate to $\{${\rm m-aff., e-ref.}$\}$. 
\end{enumerate}
\item If $\Eta$ has order 12, then $\Eta$ is conjugate to $\langle B, C\rangle$ and 
$\{\kappa, \lambda\}$ is conjugate to $\{${\rm d-ref., m-aff.}$\}$.
\end{enumerate}
\end{lemma}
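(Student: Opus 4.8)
The plan is to reduce the statement to facts about $\mathrm{GL}(2,\integers)$ and its action on $E^2$, exploiting the epimorphism $\eta:\mathrm{Aff}(\Mu)\to\mathrm{GL}(2,\integers)$ with kernel $\Kappa$ established in Lemma 49. First I would observe that if $\kappa=(k+K)_\star$ and $\lambda=(\ell+L)_\star$ both have order $2$, then $\kappa^2=2k+K^2=\ov I_\star$ forces $K^2=I$ (since $\ov I_\star$ is the only element of $\Kappa$ that is also in the image of $\sigma$ up to the kernel), so $K$ and $L$ have order $1$ or $2$; likewise finiteness of $\Omega(\kappa\lambda)$ means $\eta(\kappa\lambda)=KL$ has finite order in $\mathrm{GL}(2,\integers)$. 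Since $\Eta=\langle K,L\rangle$ is generated by two involutions (or trivial elements) and $KL$ has finite order, $\Eta$ is finite, hence by Lemma 50 it is conjugate in $\mathrm{GL}(2,\integers)$ to a subgroup of $\langle A,C\rangle$ or $\langle B,C\rangle$. Enumerating the dihedral (and cyclic) subgroups of these two maximal finite groups that are generated by two elements of order $\le 2$ yields the list of possible isomorphism types: cyclic of order $1$ or $2$, or dihedral of order $4,6,8,12$. This gives the coarse classification of $\Eta$ and sets up the case division (1)--(6).

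Next, for each isomorphism type of $\Eta$ I would pin down the conjugacy class of the ordered generating pair $(K,L)$ inside $\mathrm{GL}(2,\integers)$, using Lemma 51 (the seven conjugacy classes of finite-order elements) together with a case-by-case identification of how pairs of involutions sit inside $\langle A,C\rangle$ and $\langle B,C\rangle$ up to conjugacy. The key mechanism is that conjugating $\{\kappa,\lambda\}$ by $(a+I)_\star$ and by $\sigma(M)=M_\star$ lets me first normalize $K,L$ to the standard representatives, and then adjust the translational parts $k,\ell$ modulo $\Kappa$. This is exactly the computation already rehearsed in Lemma 54 for a single involution: there I would reuse the normal-form results (for $K=I$, $K=-I$, $K$ conjugate to $CA$, and $K$ conjugate to $C$) to reduce each of $\kappa$ and $\lambda$ to one of h-rot., 2-rot., v-ref., h-grf., d-ref., etc. The residual freedom — a translation $v$ commuting appropriately — is what produces the $(v+I)_\star$-twisted representatives in cases 2(b),(d),(f) and 3(a),(c), where $Kv=-v$ records that the twist lives in the $(-1)$-eigenspace, matching the $E_1\cap E_2$ phenomenon of Theorem 26.

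Then I would carry out the individual subcases. For $\Eta$ trivial (case 1) the pair lies in $\Kappa_2$, and Lemma 54(1) identifies each involution as conjugate to h-rot., with $C_\star$ swapping h-rot.\ and v-rot., giving the single class $\{\text{h-rot.},\text{v-rot.}\}$. For $|\Eta|=2$ (case 2) the three possibilities $\langle -I\rangle,\langle CA\rangle,\langle C\rangle$ are distinguished by Lemma 51, and within each I separate $K\neq L$ from $K=L$; the subtle point is verifying non-conjugacy of the listed representatives (e.g.\ v-ref.\ versus h-grf., which Lemma 54(3) already distinguishes by whether fixed points exist on $\Box$). For $|\Eta|=4,6,8,12$ I would use that $\Eta$ is conjugate to one of $\langle -I,CA\rangle,\langle -I,C\rangle,\langle A,C\rangle,\langle B^2,C\rangle,\langle B^2,CB\rangle,\langle B,C\rangle$, and in each enumerate the essentially different involution pairs generating it, again normalizing translational parts. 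The relations $DCBD^{-1}=CA$ and the explicit conjugations by $D_\star$ to the hexagonal picture (used to name m-aff., n-aff., 3-aff., 6-aff.) let me handle the order $6$ and $12$ cases, where $\Eta\subseteq\langle B,C\rangle$.

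The main obstacle will be the bookkeeping in the order-$4$ dihedral cases, specifically disentangling when the two involutions have $K=-L$ versus $K=I,L=-I$ versus $K=L=-I$, and then proving that the resulting representatives (v-ref.\ vs.\ h-ref., h-ref.$'$ vs.\ h-grf., etc.) are pairwise non-conjugate; this requires invariants finer than the conjugacy class of $\Eta$ — typically the fixed-point structure on $\Box$ and the induced action on the subgroup $\Kappa_2/(\Kappa_2\cap\text{fixed})$. I expect the cleanest route is to tabulate, for each candidate pair, the pair of fixed-point sets (empty, a circle, or a finite set) together with how the product $\kappa\lambda$ acts, since these are conjugacy invariants computable directly from the matrices and translation vectors, and then match against the asserted list. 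I would present the trivial and order-$2$ cases in full as a template and treat the higher-order cases by the same normalization-plus-invariant method, citing Lemmas 50, 51, 53, and 54 to suppress the routine matrix computations.
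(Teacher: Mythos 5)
Your proposal follows essentially the same route as the paper's proof: establish $K^2=L^2=I$ and the finite order of $KL$, classify $\Eta$ using the amalgamated-product structure of $\mathrm{GL}(2,\integers)$ (Lemmas 50--51), then normalize each involution to the standard representatives of Lemma 54 by conjugating with translations and with $\sigma(M)=M_\star$, and distinguish the resulting pairs by fixed-point structure and the order of the product. One minor slip worth correcting: $\kappa^2=(k+Kk+K^2)_\star$, not $(2k+K^2)_\star$, so the constraint on the translational part is $k+Kk\in\integers^2$ (this is what drives the normal forms in the subcases), though your conclusion $K^2=I$, obtained by applying $\eta$, is unaffected.
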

\begin{proof}
As $\kappa^2 = (k +Kk + K^2)_\star$, we have that $k + Kk \in \integers^2$ and $K^2 = I$.  
Hence $K$ has order 1 or 2. Likewise $L$ has order 1 or 2. 
Now $KL$ has finite order by Theorem 13 and Lemma 49, 
moreover $KL$ has order 1, 2, 3, 4 or 6 by Lemma 51. 
Hence either $\Eta$ is cyclic of order 1 or 2 or $\Eta$ is dihedral of order 4, 6, 8 or 12.

(1) We have that $\langle \kappa,\lambda\rangle = \Kappa_2$ by Lemma 54(1),  
moreover $B_\star\{$h-rot., v-rot.$\}B^{-1}_\star = \{$v-rot., 2-sym.$\}$ and 
$B_\star\{$2-sym., h-rot.$\}B_\star^{-1} = \{$h-rot., v-rot.$\}$. 

(2)  If $\Eta$ has order 2,  then $\Eta$ is conjugate to exactly one of $\langle -I\rangle$, $\langle CA\rangle$
 or $\langle C\rangle$ by Lemma 51; moreover if $\Eta$ is conjugate to $\langle -I\rangle$, then $\Eta = \langle -I\rangle$. 
 
(a) By conjugating $\{\kappa, \lambda\}$, we may assume that $\lambda$ = 2-rot.\ by Lemma 54(2). 
Then $\{\kappa, \lambda\}$ is conjugate to $\{${\rm h-rot.,  2-rot.}$\}$ as in the proof of Lemma 54(1). 

(b) By Lemma 54(2), we may assume that $\kappa = K_\star = $ 2-rot. 
Then $L = (\ell + K)_\star = (\ell+I)_\star$2-rot.\ with $K\ell = -\ell$. 

(c) By conjugating $\{\kappa, \lambda\}$, we may assume that $\lambda$ = v-ref.\ or h-grf.\ by Lemma 54(3). 
Then $\kappa \in \Kappa_2$ by Lemma 54(1), and so there are six possibilities.  
These six pairs are nonconjugate since v-ref.\ is not conjugate to h-grf., the centralizer of $CA$ in $\mathrm{GL}(2,\integers)$ 
is $\langle -I, CA\rangle$, and $\langle -I, CA\rangle$ centralizes $\Kappa_2$. 

(d) By Lemma 54(3),  we may assume that $K = CA$ and either 
$\kappa = K_\star =$ v-ref.\ or $\kappa = (e_1/2+K)_\star = $ h-grf. 
Now either $\lambda = (\ell_2e_2 +K)_\star$ or $\lambda =  (e_1/2 + \ell_2e_2 + K)_\star$. 
Hence either $\lambda = (\ell_2e_2+I)_\star$v-ref. or $\lambda = (\ell_2e_2+I)_\star$h-grf.\ 
with $K(\ell_2e_2) = - \ell_2e_2$. 
Moreover $(-v/2+I)_\star\{$v-ref., $(v+I)_\star$h-grf.$\}(v/2+I)_\star = \{(-v+I)_\star$v-ref., h-grf.$\}$. 

(e) By conjugating $\{\kappa, \lambda\}$, we may assume that $\lambda$ = d-ref.\ by Lemma 54(4). 
Then $\kappa \in \Kappa_2$ by Lemma 54(1), and so there are three possibilities.  
We have that $C(e_1/2+C)C^{-1} = e_2/2 + C$.  
The pair $\{${\rm v-rot.,  d-ref.}$\}$  is not conjugate to $\{${\rm 2-sym,  d-ref.}$\}$, 
since (v-rot.)(d-ref.) has order 4 while (2-sym.)(d-ref.) has order 2. 

(f) By Lemma 54(4),  we may assume that $K = C$ and $\kappa = K_\star =$ d-ref. 
Then $\lambda = (\ell + K)_\star = (\ell + I)_\star$d-ref.\ with $K\ell = -\ell$. 

(3)  If $\Eta$ has order 4, then $\Eta$ is conjugate to exactly one of $\langle -I, CA\rangle$ or $\langle -I, C\rangle$ 
by Lemmas 50 and 51.

(a) By conjugating $\{\kappa, \lambda\}$, we may assume that $L = CA$ and $\kappa$ = 2-rot. 
By the proof of Lemma 54(3), either $\lambda = (\ell_2e_2+CA)_\star$ or $(e_1/2+\ell_2e_2+CA)_\star$. 
If $\lambda = (\ell_2e_2+CA)_\star$, then $\lambda = (\ell_2e_2+I)_\star$v-ref.\ with $CA(\ell_2e_2) = -\ell_2e_2$. 
If $\lambda = (e_1/2+\ell_2e_2+CA)_\star$, then $\lambda = (\ell_2e_2+I)_\star$h-grf.\ with $CA(\ell_2e_2) = -\ell_2e_2$.

(b) By conjugating $\{\kappa, \lambda\}$, we may assume that $K = CA$ and $\kappa\lambda$ = 2-rot. 
By the proof of Lemma 54(3), we have that $\kappa = (k_2e_2 +CA)_\star$ or $(e_1/2+k_2e_2 +CA)_\star$ 
and $\lambda = (\ell_1e_1-CA)_\star$ or $(\ell_1e_2 +e_2/2-CA)_\star$.  As $\kappa\lambda$ = 2-rot., 
we have that $\{\kappa, \lambda\}$ is either $\{$v-ref., h-ref.$\}$,  $\{$v-ref.$'$, v-grf.$\}$, 
$\{$h-grf., h-ref.'$\}$, or $\{$h-grf.$'$, v-grf.'$\}$.  Moreover $C_\star\{$v-ref.$'$, v-grf.$\}C_\star^{-1} = \{$h-ref.', h-grf.$\}$. 

(c) By conjugating $\{\kappa, \lambda\}$, we may assume that $L = C$ and $\kappa$ = 2-rot. 
By the proof of Lemma 54(4), we have that $\lambda = (\ell_1e_1-\ell_1e_2+C)_\star = (\ell_1e_1-\ell_1e_2+I)_\star$d-ref.\ 
with $C(\ell_1e_1-\ell_1e_2) = -\ell_1e_1+\ell_1e_2$. 

(d) By conjugating $\{\kappa, \lambda\}$, we may assume that $K = C$ and $\kappa\lambda$ = 2-rot. 
By the proof of Lemma 54(4), we have that $\kappa = (k_1e_1 - k_1e_2 +C)_\star$.  
Now $(\ell - C)^2 = (\ell_1-\ell_2)e_1 + (\ell_2-\ell_1)e_2 + I$.  
Hence $\ell_1-\ell_2 \in \integers$. Hence $\lambda = (\ell_1e_1 + \ell_1e_2-C)_\star$. 
Now $(k_1e_1 - k_1e_2 +C)(\ell_1e_1 + \ell_1e_2-C) = (k_1+\ell_1)e_1 + (\ell_1-k_1)e_1 - I$. 
Hence $k_1+\ell_1, \ell_1-k_1 \in \integers$.  Thus we may take $\ell_1 = k_1$ and $k_1 = 0$ or $1/2$. 
Then $\{\kappa, \lambda\}$ is either $\{$d-ref., e-ref.$\}$ or $\{$d-ref.$''$, e-ref.$''\}$.  
Moreover $(e_1/2+I)C(-e_1/2+I) = e_1/2-e_2/2+C$ and $(e_1/2+I)(-C)(-e_1/2+I) = e_1/2+e_2/2-C$.  

(4)  If $\Eta$ has order 8,  then $\Eta$ is conjugate to $\langle A, C\rangle$ by Lemma 50. 
By conjugating $\{\kappa, \lambda\}$, we may assume that $K = C$ and $L = CA$ and $\kappa\lambda$ = 4-rot. 
By Lemma 54(4), we have that $\kappa = (k_1e_1-k_1e_2+C)_\star$, and by Lemma 54(3), 
we have that $\lambda = (\ell_2e_2+CA)_\star$ or $(e_1/2+\ell_2e_2+CA)_\star$. 
Now $(k_1e_1 - k_1e_2 +C)(\ell_2e_2+CA) = (k_1+\ell_2)e_1 - k_1e_2 + A$, 
and we may take $k_1 = \ell_2 = 0$, 
and $(k_1e_1 - k_1e_2 +C)(e_1/2+\ell_2e_2+CA) = (k_1+\ell_2)e_1 +(1/2- k_1)e_2 + A$, 
and we may take $k_1 = \ell_2 = 1/2$. 
Then  $\{\kappa, \lambda\}$ is either $\{$d-ref., h-ref.$\}$ or $\{$d-ref.$''$, h-grf.$'\}$. 
Moreover $(e_1/4-e_2/4+I)C(-e_1/4+e_2/4+I) = e_1/2-e_2/2+C$ and 
$(e_1/4-e_2/4+I)(e_1/2+CA)(-e_1/4+e_2/4+I) = e_1/2 -e_2/2+CA$. 

(5) If $\Eta$ has order 6,  then $\Eta$ is conjugate to exactly one of 
$\langle B^2, C\rangle$ or $\langle B^2, CB\rangle$ by Lemma 50. 

(a) By conjugating $\{\kappa, \lambda\}$, we may assume that $K = C$ and $L = CB^2$ and $\kappa\lambda$ = 3-aff. 
By Lemma 54(4), we have that $\kappa = (k_1e_1-k_1e_2+C)_\star$. 
Now $(\ell +CB^2)^2 = 2\ell_1e_1-\ell_1e_2+I$.  
Hence $\lambda = (\ell_2e_2+CB^2)_\star$. 
Now $(k_1e_1-k_1e_2 +C)(\ell_2e_2 +CB^2) = (k_1+\ell_2)e_1-k_1e_2 +B^2$. 
Hence $\kappa = C_\star$ = d-ref.\ and $\lambda = (CB^2)_\star$ = n-aff.

(b) By conjugating $\{\kappa, \lambda\}$, we may assume that $K = CB$ and $L = CB^3 = -C$ and $\kappa\lambda$ = 3-aff. 
Now $(k+CB)^2 = (2k_1 + k_2)e_1+I$. 
Hence $\kappa = (k_1e_1-2k_1e_2 + CB)_\star$. 
Now $\lambda = (\ell_1e_1+\ell_1e_2-C)_\star$ and 
$(k_1e_1-2k_1e_2 + CB)(\ell_1e_1+\ell_1e_2-C) = (k_1+2\ell_1)e_1 +(-2k_1-\ell_1)e_2+B^2$. 
Hence we may take $\ell_1 = -2k_1$.  Then $3k_1 \in \integers$, 
and we may take $(k_1,\ell_1) = (0,0), (1/3,1/3)$ or $(2/3,2/3)$. 
Now $(-e_1/3+2e_2/3+I)CB(e_1/3-2e_2/3+I) = -2e_1/3+4e_2/3+CB$ and 
$(-e_1/3+2e_2/3+I)(-C)(e_1/3-2e_2/3+I) = e_1/3+e_2/3-C$. 
Moreover  $(e_1/3+e_2/3+I)CB(-e_1/3-e_2/3+I) = -e_1/3+2e_2/3+CB$ and 
$(e_1/3+e_2/3+I)(-C)(-e_1/3-e_2/3+I) = 2e_1/3+2e_2/3-C$. 

(6) If $\Eta$ has order 12,  then $\Eta$ is conjugate to $\langle B, C\rangle$ by Lemma 50. 
By conjugating $\{\kappa, \lambda\}$, we may assume that $K = C$ and $L = CB$ and $\kappa\lambda$ = 6-aff. 
By Lemma 54(4), we have that $\kappa = (k_1e_1-k_1e_2+C)_\star$. 
By the proof of 5(b), we have that $\lambda = (\ell_1e_2-2\ell_1e_2 + CB)_\star$. 
Now $(k_1e_1-k_1e_2+C) (\ell_1e_1-2\ell_1e_2 + CB) = (k_1-2\ell_1)e_1+(\ell_1-k_1)e_2+B$. 
Hence $\kappa = C_\star$ = d-ref.\ and $\lambda = (CB)_\star$ = m-aff. 
\end{proof}

\begin{theorem} 
Let $\Mu = \langle e_1+I, e_2+I\rangle$. 
Then $\mathrm{Iso}(C_\infty, \Mu)$ has seven elements corresponding to the pairs of elements 
{\rm \{idt., idt.\}, \{2-rot., 2-rot.\}, \{h-ref., h-ref.\}, \{d-ref., d-ref.\}, \{3-aff., 3-aff.$^{-1}$\}, 
\{4-rot., 4-rot.$^{-1}$\}, \{6-aff., 6-aff.$^{-1}$\}} of $\mathrm{Aff}(\Mu)$ by Theorem 23 and Lemmas 48 and 51,   
and $\mathrm{Iso}(D_\infty, \Mu)$ has 34 elements corresponding to the pairs of elements 
{\rm \{idt., idt.\}, \{idt., h-rot\}, \{idt., 2-rot.\}, \{idt., v-ref.\}, \{idt., h-grf.\}, \{idt., d-ref.\}, 
\{h-rot., h-rot\}, \{2-rot., 2-rot.\}, \{v-ref., v-ref.\}, \{h-grf., h-grf.\}, \{d-ref., d-ref.\}, 
\{h-rot, v-rot.\}, \{h-rot., 2-rot.\}, \{h-rot., v-ref.\}, \{v-rot., v-ref.\}, \{2-sym., v-ref.\}, \{h-rot., h-grf.\}, \{v-rot., h-grf.\}, 
\{2-sym., h-grf.$'$\}, \{v-rot., d-ref.\}, \{2-sym., d-ref.\},\{2-rot., v-ref.\}, \{2-rot., h-grf.\}, \{2-rot., d-ref.\},  
 \{v-ref., h-ref.\}, \{v-ref., h-grf.\}, \{h-ref.', h-grf.\}, \{h-ref., d-ref.\}, \{h-grf.$'$, v-grf.'\}, \{h-grf., d-ref.\}, 
\{d-ref., e-ref.\}, \{d-ref., n-aff.\}, \{m-aff., e-ref.\}, \{d-ref., m-aff.\}}  
of $\mathrm{Aff}(\Mu)$ by Theorem 26 and Lemmas 54 and  55. 
The corresponding co-Seifert fibrations are described in Table 18 by Theorems 27 and 28.  
Only the eight pairs {\rm \{2-rot., 2-rot.\}, \{v-ref., v-ref.\}, \{h-grf., h-grf.\},  \{v-ref., h-grf.\},\{d-ref., d-ref.\}, 
\{2-rot., v-ref.\}, \{2-rot., h-grf.\}, \{2-rot., d-ref.\}} fall into the case $E_1\cap E_2 \neq \{0\}$ of Theorem 26. 
\end{theorem}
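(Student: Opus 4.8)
The plan is to apply the general classification machinery already developed in \S 7 directly to the case $\Mu = \langle e_1+I, e_2+I\rangle$, for which $E^2/\Mu$ is a torus. The key structural input is that $\Mu$ is abelian with trivial center, so by Lemma 48 we have the exact description $\mathrm{Aff}(\Mu) \cong \mathrm{GL}(2,\integers) \ltimes \Kappa$, where $\Kappa = \{(a+I)_\star : a \in E^2\}$ is the connected component of the identity, and by Lemma 53 the isometry group $\mathrm{Sym}(\Mu)$ sits inside as $\langle \Kappa, A_\star, C_\star\rangle$. Because $Z(\Mu) = \{I\}$, Theorems 11, 12, and 13 guarantee that $\Omega: \mathrm{Aff}(\Mu) \to \mathrm{Out}(\Mu)$ restricts to an isomorphism on the finite-order structure, and Corollary 10 applies. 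This is what licenses the use of Theorems 23, 24, and 26 in their clean form.

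First I would handle $\mathrm{Iso}(C_\infty,\Mu)$. By Theorem 23, this set is in bijection with conjugacy classes of pairs of inverse elements $\{\alpha\gamma_\ast^{\pm 1}\alpha^{-1}\}$ of finite order in $\mathrm{Out}(\Mu)$, which amounts to conjugacy classes of single finite-order elements of $\mathrm{Out}(\Mu) \cong \mathrm{GL}(2,\integers)$ taken up to inversion. By Lemma 51, $\mathrm{GL}(2,\integers)$ has exactly seven conjugacy classes of finite-order elements, represented by $I, -I, CA, C, B^2, A, B$. Since an element and its inverse lie in the same class here except for the order-3, order-4, and order-6 cases (where inversion must be folded in but produces no new identifications beyond what conjugacy gives, by the analysis in Lemma 51), I would read off the seven representatives idt., 2-rot., h-ref., d-ref., 3-aff., 4-rot., 6-aff. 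This gives the seven elements of $\mathrm{Iso}(C_\infty,\Mu)$, and Theorem 27 then translates each into the co-Seifert fibration data recorded in Table 18.

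The substantial work is the dihedral case $\mathrm{Iso}(D_\infty,\Mu)$. Here I would invoke Theorem 26 rather than Theorem 24, because $\Mu$ has nontrivial translational directions and the eigenspace condition $E_1 \cap E_2 \neq \{0\}$ genuinely enters. The enumeration of the $34$ pairs is exactly the content of the two combinatorial lemmas already proved: Lemma 54 classifies, up to conjugacy in $\mathrm{Aff}(\Mu)$, the individual order-$2$ elements $(k+K)_\star$ according to the conjugacy class of $K$, and Lemma 55 classifies the conjugacy classes of \emph{pairs} $\{\kappa,\lambda\}$ of order-$2$ elements whose product has finite order under $\Omega$, organized by the order of $\Eta = \langle K, L\rangle$ (cyclic of order $1$ or $2$, or dihedral of order $4,6,8,12$). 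My task in the proof is therefore to assemble the outputs of Lemma 55 into the explicit list of $34$ pairs and to identify precisely which pairs satisfy $E_1 \cap E_2 \neq \{0\}$ of Theorem 26. The eight flagged pairs \{2-rot., 2-rot.\}, \{v-ref., v-ref.\}, \{h-grf., h-grf.\}, \{v-ref., h-grf.\}, \{d-ref., d-ref.\}, \{2-rot., v-ref.\}, \{2-rot., h-grf.\}, \{2-rot., d-ref.\} are exactly those arising in cases 2(b), 2(d), 3(a), 3(c) of Lemma 55, where a free parameter $v$ with $Kv = -v$ appears — that is, where the common $(-1)$-eigenspace $E_1 \cap E_2$ of the linear parts on $\mathrm{Span}(Z(\Mu))$ is nonzero.

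The main obstacle will be bookkeeping: verifying that the classification in Lemma 55 yields exactly $34$ distinct conjugacy classes with no omissions and no spurious coincidences, and confirming that the flagged eight are precisely the ones carrying a nontrivial $E_1 \cap E_2$. This requires care because several families in Lemma 55 (notably 2(d) and 3(b)) split into sub-pairs that happen to be conjugate via specific elements such as $C_\star$ or translations, and one must not double-count. I would proceed by taking each numbered case of Lemma 55 in turn, listing the representative pairs it contributes, checking against the $\mathrm{GL}(2,\integers)$ conjugacy data of Lemma 51 that distinct cases stay distinct, and then matching each surviving class to its row in Table 18. Finally, invoking Theorem 26 for the parametrized cases and Theorem 28 (together with Theorem 27) for the passage from the pair data to the geometric co-Seifert fibration completes the identification, and the cross-check against the computer-generated Table 1 of \cite{R-T} confirms completeness.
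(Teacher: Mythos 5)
Your proposal follows essentially the same route as the paper: the statement is proved by citation, with the cyclic case read off from Theorem 23 together with Lemma 48 (identifying $\mathrm{Out}(\Mu)$ with $\mathrm{GL}(2,\integers)$) and Lemma 51 (its seven conjugacy classes of finite-order elements, each closed under inversion via conjugation by $C$), and the dihedral case read off from Theorem 26 together with the case analysis of Lemmas 54 and 55; Theorems 27 and 28 then convert the pair data into Table 18. Your identification of the eight flagged pairs with the sub-cases of Lemma 55 carrying a free parameter $v$ with $Kv=-v$ is exactly the right mechanism.

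There is, however, one false assertion you should remove: you claim that $Z(\Mu)=\{I\}$ and that Corollary 10 applies. Since $\Mu=\langle e_1+I,e_2+I\rangle$ is abelian, $Z(\Mu)=\Mu$ is free abelian of rank $2$; by Lemma 9 and Theorem 13 the kernel of $\Omega:\mathrm{Aff}(\Mu)\to\mathrm{Out}(\Mu)$ is the full translation torus $\Kappa$, so $\Omega$ is very far from an isomorphism and Corollary 10 (which requires trivial center) does not apply. Had you actually used Corollary 10, the dihedral count would collapse badly — for instance v-ref.\ $=(CA)_\star$ and h-grf.\ $=(e_1/2+CA)_\star$ have the same image $CA$ in $\mathrm{Out}(\Mu)$ and would be identified, contradicting the $34$-element answer. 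Fortunately your argument does not depend on this claim: Theorem 23 needs no center hypothesis, and you correctly switch to Theorem 26 (rather than Theorem 24) precisely because the center is nontrivial and $\psi$ has nontrivial fibers governed by the eigenspace condition $E_1\cap E_2\neq\{0\}$. With that sentence deleted, the proposal matches the paper's proof.
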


\begin{table}  
\begin{tabular}{rlllll}
no. & fibers & grp. & quotients &  structure group action & classifying pair \\
\hline 
    1 & $(\circ, \mathrm{O})$ & $C_1$ & $(\circ, \mathrm{O})$ & (idt., idt.)  & \{idt., idt.\}  \\
    2 & $(\circ, \mathrm{O})$ & $C_2$ & $(2222, \mathrm{I})$ & (2-rot., ref.) & \{2-rot., 2-rot.\} \\
    3 & $(\circ, \mathrm{O})$ & $C_2$ & $(\ast\ast, \mathrm{I})$ & (v-ref., ref.) & \{v-ref., v-ref.\} \\
    4 & $(\circ, \mathrm{O})$& $C_2$ & $(\times\times, \mathrm{I})$ & (h-grf., ref.) & \{h-grf., h-grf.\} \\
    4 & $(\circ, \mathrm{O})$ & $ C_2$ & $(2222, \mathrm{O})$ & (2-rot., 2-rot.) & \{2-rot., 2-rot.\} \\
    5 & $(\circ, \mathrm{O})$ & $C_2$ & $(\ast\times, \mathrm{I})$ & (e-ref., ref.) & \{e-ref., e-ref.\} \\
    5 & $(\circ, \mathrm{O})$ & $D_2$ & $(\ast\ast, \mathrm{I})$ & (v-ref., ref.), (h-rot., 2-rot.) & \{v-ref., h-grf.\} \\
    6 & $(\circ, \mathrm{I})$  & $C_1$ & $(\circ, \mathrm{I})$ & (idt., idt.)  & \{idt., idt.\}  \\
    7 & $(\circ, \mathrm{O})$ & $C_2$ & $(\circ, \mathrm{I})$ & (v-rot., ref.) & \{v-rot., v-rot\} \\
    7 & $(\circ, \mathrm{O})$ & $C_2$ & $(\ast\ast, \mathrm{O})$ & (v-ref., 2-rot.) & \{v-ref., v-ref.\}\\
    8 & $(\circ, \mathrm{I})$ & $C_2$ & $(\circ, \mathrm{I})$ & (h-rot., ref.) & \{idt., h-rot\} \\
    9 & $(\circ, \mathrm{O})$ & $D_2$ & $(\circ, \mathrm{I})$ & (v-rot., ref.), (h-rot., 2-rot.) &  \{v-rot, 2-sym.\} \\
    9 & $(\circ, \mathrm{O})$ & $C_2$ & $(\ast\times, \mathrm{O})$ & (d-ref., 2-rot.)  & \{d-ref., d-ref.\} \\
  11 & $(\circ, \mathrm{I})$ & $C_2$ & $(2222, \mathrm{I})$ &(2-rot., ref.)  & \{idt., 2-rot.\} \\
  13 & $(\circ, \mathrm{O})$ & $D_2$ & $(\ast 2222, \mathrm{I})$ & (h-ref., ref.),  (v-ref., 2-rot.) & \{h-ref., 2-rot.\} \\
  14 & $(\circ, \mathrm{O})$ & $D_2$ & $(2222, \mathrm{I})$ & (v-rot., ref.),  (2-rot.$'$, 2-rot.) & \{v-rot., 2-rot.\} \\
  14 & $(\circ, \mathrm{O})$ & $D_2$ & $(22\ast, \mathrm{I})$ & (v-grf., ref.), (v-ref.$'$, 2-rot.) & \{v-grf., 2-rot.\} \\
  15 & $(\circ, \mathrm{O})$ & $D_2$ & $(2{\ast}22, \mathrm{I})$ & (e-ref., ref.), (d-ref., 2-rot.) & \{e-ref., 2-rot.\} \\
  17 & $(\circ, \mathrm{O})$ & $D_2$ & $(\ast 2222, \mathrm{I})$ & (v-ref., ref.), (2-rot., 2-rot.) &  \{v-ref., h-ref.\} \\
  18 & $(\circ, \mathrm{O})$ & $D_2$ & $(22\ast, \mathrm{I})$ & (h-grf., ref.), (2-rot.', 2-rot.)  & \{h-grf., h-ref.\} \\
  19 & $(\circ, \mathrm{O})$ & $D_2$ & $(22\times, \mathrm{I})$ & (v-grf., ref.), (2-rot.$''$, 2-rot.) &  \{v-grf., h-grf.\} \\
  20 & $(\circ, \mathrm{O})$ & $D_2$ & $(2{\ast}22, \mathrm{I})$ & (d-ref., ref.), (2-rot., 2-rot.)  & \{d-ref., e-ref.\} \\
  28 & $(\circ, \mathrm{I})$ & $C_2$ & $({\ast}{\ast}, \mathrm{I})$ & (h-ref., ref.) &  \{idt., h-ref.\} \\
  29 & $(\circ, \mathrm{O})$ & $D_2$ & $({\ast}{\ast}, \mathrm{I})$ & (v-rot., ref.), (h-ref., 2-rot.) & \{v-rot., v-grf.\} \\
  30 & $(\circ, \mathrm{O})$ & $D_2$ & $({\ast}{\ast}, \mathrm{I})$ & (v-rot., ref.), (v-grf., 2-rot.) & \{v-rot., h-ref.\} \\
  31 & $(\circ, \mathrm{I})$ & $C_2$ & $({\times}{\times}, \mathrm{I})$ & (v-grf., ref.)  & \{idt., v-grf.\} \\
  32 & $(\circ, \mathrm{O})$ & $D_2$ & $({\ast}{\ast}, \mathrm{I})$ & (h-rot., ref.), (h-ref.', 2-rot.) & \{h-rot., h-ref.\} \\
  33 & $(\circ, \mathrm{O})$ & $D_2$ & $({\times}{\times}, \mathrm{I})$ & (h-rot., ref.), (v-grf.', 2-rot.) &  \{h-rot., v-grf.\} \\
  33 & $(\circ, \mathrm{O})$ & $D_2$ & $({\ast}{\times}, \mathrm{I})$ & (2-sym., ref.), (h-ref., 2-rot.) & \{2-sym., v-grf.'\} \\
  34 & $(\circ, \mathrm{O})$ & $D_2$ & $({\ast}{\times}, \mathrm{I})$ & (2-sym., ref.), (v-grf., 2-rot.) & \{2-sym., h-ref.'\} \\
  40 & $(\circ, \mathrm{I})$ & $C_2$ & $({\ast}{\times}, \mathrm{I})$ & (e-ref., ref.)  &  \{idt., e-ref.\} \\
  41 & $(\circ, \mathrm{O})$ & $D_2$ & $({\ast}{\ast}, \mathrm{I})$ & (e-ref.$''$, ref.), (e-ref., 2-rot.)  & \{e-ref.$''$, 2-sym.\} \\
  43 & $(\circ, \mathrm{O})$ & $D_4$ & $({\ast}{\times}, \mathrm{I})$ & (v-rot., ref.), (4-sym., 4-rot.) &  \{v-rot., d-ref.\} \\
  76 & $(\circ, \mathrm{O})$ & $C_4$ & $(442, \mathrm{O})$ & (4-rot., 4-rot.)  & \{4-rot., 4-rot.$^{-1}$\} \\
  91 & $(\circ, \mathrm{O})$ & $D_4$ & $({\ast}442, \mathrm{I})$ & (h-ref., ref.), (4-rot., 4-rot.) & \{h-ref., d-ref.\} \\
  92 & $(\circ, \mathrm{O})$ & $D_4$ & $(4{\ast}2, \mathrm{I})$ & (v-grf., ref.), (4-rot.$'$, 4-rot.) & \{v-grf., d-ref.\}  \\
144 & $(\circ, \mathrm{O})$ & $C_3$ & $(333, \mathrm{O})$ & (3-rot., 3-rot.)  &  \{3-aff., 3-aff.$^{-1}$\} \\
151 & $(\circ, \mathrm{O})$ & $D_3$ & $({\ast}333, \mathrm{I})$ & (l-ref., ref.), (3-rot., 3-rot.)  & \{d-ref., n-aff.\} \\
152 & $(\circ, \mathrm{O})$ & $D_3$ & $(3{\ast}3, \mathrm{I})$ & (m-ref., ref.), (3-rot., 3-rot.) & \{m-aff., e-ref.\} \\
169 & $(\circ, \mathrm{O})$ & $C_6$ & $(632, \mathrm{O})$ & (6-rot., 6-rot.)  & \{6-aff., 6-aff.$^{-1}$\} \\
178 & $(\circ, \mathrm{O})$ & $D_6$ & $({\ast}632, \mathrm{I})$ & (l-ref., ref.), (6-rot., 6-rot.) & \{d-ref., m-aff.\} \\
\end{tabular}

\medskip
\caption{The classification of the co-Seifert fibrations of 3-space groups 
whose co-Seifert fiber is of type $\circ$ with IT number 1}
\end{table}

\section{Applications}

As our first application, we will discuss how our tables 
give nice, explicit, geometric descriptions of all the fibered, compact, connected, flat  3-orbifolds. 
For a preliminary discussion of our description, see \S 11 of \cite{R-T}. 

Let $\Nu$ be a 2-dimensional, complete, normal subgroup of a 3-space group $\Gamma$,  
and let $\Kappa$ be the kernel of the action of $\Gamma$ on $V = \mathrm{Span}(\Nu)$, 
and suppose $\Kappa$ is the orthogonal dual of $\Nu$.  
First assume that the base $V^\perp/(\Gamma/\Nu)$ of the co-Seifert fibration of $E^3/\Gamma$ determined by $\Nu$ 
is a circle, that is, the second quotient in Column 4 of Tables 2-18 is $\mathrm{O}$. 
Then $E^3/\Gamma$ is the quotient space of $V/\Nu\times V^\perp/\Kappa$ under the diagonal action 
of a cyclic structure group $\Gamma/\Nu\Kappa$ of order $n$ given in Column 5 of Tables 2-18.  
The structure group acts on the circle $V^\perp/\Kappa$ by a rotation of $(360/n)^\circ$. 
This implies that the orbifold $E^3/\Gamma$ is the mapping torus of $V/\Nu$ under the action 
of the generator of the structure group,   
in other words,  if the action is given by the pair ($\alpha$, $n$-rot.) in Column 5 of Tables 2-18,  
then $E^3/\Gamma$ is equivalent to the quotient 
$$(V/\Nu\times \mathrm{I})/\{(x,0) \sim (\alpha(x),1)\}.$$ 

Now assume that the base $V^\perp/(\Gamma/\Nu)$ of the co-Seifert fibration of $E^3/\Gamma$ determined by $\Nu$ 
is a closed interval, that is, the second quotient in Column 4 of Tables 2-18 is $\mathrm{I} = [0,1]$. 
If the structure group $\Gamma/\Nu\Kappa$ is trivial, then $E^3/\Gamma$ is the Cartesian product $V/\Nu\times V^\perp/\Kappa$ 
with $V^\perp/\Kappa$ a closed interval. 

If the structure group has order 2 and $V^\perp/\Kappa$ is a closed interval, that is, the second fiber in Column 2 
of Tables 2-18 is $\mathrm{I}$, 
then $E^3/\Gamma$ is the twisted I-bundle obtained from $V/\Nu\times V^\perp/\Kappa$ under the action of the structure group,  
that is,  if the action is given by the pair ($\alpha$, ref.) in Column 5 of Tables 2-18,  
then $E^3/\Gamma$ is equivalent to the quotient 
$$(V/\Nu\times \mathrm{I})/\{(x,t) \sim (\alpha(x),1-t)\}.$$

If the structure group is dihedral of order $2n$, and $V^\perp/\Kappa$ is a circle,  
and the action is given by ($\alpha$, ref.),  ($\gamma$, $n$-rot.), with $\gamma = \alpha\beta$, 
then $E^3/\Gamma$ is equivalent to the quotient 
$$(V/\Nu\times \mathrm{I})/\{(x,0) \sim (\beta(x),0)\ \&\ (x,1) \sim (\alpha(x), 1)\}.$$ 

Our second application is to give an explanation of the enantiomorphic 3-space group pairs 
from the point of view of our paper.   
An enantiomorphic 3-space group pair consists of a pair $(\Gamma_1, \Gamma_2)$ of isomorphic 
3-space groups all of whose elements are orientation preserving such that there is no orientation preserving 
affinity of $E^3$ that conjugates one to the other. 
There are 11 enantiomorphic 3-space group pairs up to isomorphism, 
and 10 of them have 2-dimensional, complete, normal subgroups, 
moreover these  2-dimensional, complete, normal subgroups are unique.  

The first enantiomorphic pair $(\Gamma_1, \Gamma_2)$ has IT numbers 76 and 78. 
The geometric co-Seifert fibration of $E^3/\Gamma_1$ is described in Table 18. 
The action of the cyclic structure group of order 4 on $\circ \times \mathrm{O}$ is given by $($4-rot., 4-rot.$)$. 
The action of the structure group for $E^3/\Gamma_2$ 
is given by $($4-rot.$^{-1}$, 4-rot.), 
since the classifying pair is $\{$4-rot., 4-rot.$^{-1}\}$.  
The isometry 4-rot.\ of the torus $\circ$ is defined by 4-rot.\ = $A_\star$. 
By Theorems 19 and 22 and Lemma 48, the pair $(\Gamma_1, \Gamma_2)$ is enantiomorphic 
because there is no element of $\mathrm{SL}(2,\integers)$ that conjugates  $A$ to $A^{-1}$ and 
there is no element of  $\mathrm{GL}(2,\integers)$ of determinant $-1$ that conjugates $A$ to $A$. 
The enantiomorphic IT number pairs (144, 145), (169, 170), (171, 172) have a similar description. 

The second enantiomorphic pair $(\Gamma_1, \Gamma_2)$ has IT numbers 91 and 95. 
The co-Seifert fibration of $E^3/\Gamma_1$ is described in Table 18. 
The action of the dihedral structure group of order 8 on $\circ \times \mathrm{O}$ is given by 
$($h-ref., ref.$)$, $($4-rot., 4-rot.$)$. 
The action of the structure group for $E^3/\Gamma_2$ is given by 
$($d-ref., ref.$)$, $($4-rot.$^{-1}$, 4-rot.$)$, 
since the classifying pair is  $\{$h-ref., d-ref.$\}$.
By Theorems 19 and 26 and Lemma 55(4), 
the pair $(\Gamma_1, \Gamma_2)$ is enantiomorphic because there is no orientation preserving affinity of 
the torus $\circ$ that conjugates (h-ref., d-ref.) to (d-ref., h-ref.) and there is no orientation reversing affinity of $\circ$ 
that conjugates (h-ref., d-ref.) to (h-ref., d-ref.). 
The enantiomorphic IT number pairs (92, 96), (151, 153), (152, 154), (178, 179), (180, 181)  have a similar description.

The third application is to splitting space group extensions.  
Let $\Nu$ be a 2-dimensional, complete, normal subgroup of a 3-space group $\Gamma$,  
and let $\Kappa$ be the kernel of the action of $\Gamma$ on $V = \mathrm{Span}(\Nu)$, 
and suppose $\Kappa$ is the orthogonal dual of $\Nu$.  
The Tables 2-18 give an explicit description of the action of the structure group $\Gamma/\Nu\Kappa$
on the generic fiber $V/\Nu$ of  the geometric co-Seifert fibration of the flat 3-orbifold $E^3/\Gamma$ determined by $\Nu$. 
This explicit description allows one to deduce that the space group extension 
$1\to \Nu \to \Gamma \to \Gamma/\Nu \to 1$ splits orthogonally when $\Gamma/\Nu\Kappa$ fixes 
an ordinary point of $V/\Nu$ by Theorem 8. 
Table 1 of \cite{R-T} lists whether or not the extension $1\to \Nu \to \Gamma \to \Gamma/\Nu \to 1$ splits. 
If $V/\Nu$ is a torus, then the extension splits orthogonally whenever it splits by  the first part of Theorem 8. 
If $V/\Nu$ is a pillow, then for IT number 15 in Table 17, 
the extension splits but not orthogonally by the second part of Theorem 8.

\end{document}